\newcounter{extralabel}[section]
\newtheorem{ittheorem}{Theorem}
\newtheorem{itlemma}{Lemma}
\newtheorem{itproposition}{Proposition}
\newtheorem{itdefinition}{Definition}
\newtheorem{itcorollary}{Corollary}
\newtheorem{itconjecture}{Conjecture}
\newtheorem{itremark}{Remark}
\newenvironment{theorem}{\addtocounter{extralabel}{1}
	\begin{ittheorem}}{\end{ittheorem}}
\newenvironment{lemma}{\addtocounter{extralabel}{1}
	\begin{itlemma}}{\end{itlemma}}
\newenvironment{proposition}{\addtocounter{extralabel}{1}
	\begin{itproposition}}{\end{itproposition}}
\newenvironment{definition}{\addtocounter{extralabel}{1}
	\begin{itdefinition}}{\end{itdefinition}}
\newenvironment{corollary}{\addtocounter{extralabel}{1}
	\begin{itcorollary}}{\end{itcorollary}}
\newenvironment{remark}{\addtocounter{extralabel}{1}
	\begin{itremark}}{\end{itremark}}
\newcommand{\be}{\begin{equation}}
	\newcommand{\bea}[1]{\Rand{\vspace{0,7cm}\tt #1\vspace{-0,7cm}}\begin{eqnarray}\label{#1}}
		\renewcommand{\d}{{\rm d}}
		\newcommand{\e}{{\rm e} }
		\newcommand{\sign}{{\rm sign}}
		\newcommand{\R}{\mathbb{R}}
		\newcommand{\N}{\mathbb{N}}
		\newcommand{\Z}{\mathbb{Z}}
		\newcommand{\E}{\mathbb{E}}
		\renewcommand{\P}{\mathbb{P}}
		\def\1{{\mathchoice {1\mskip-4mu\mathrm l} 
				{1\mskip-4mu\mathrm l}
				{1\mskip-4.5mu\mathrm l} {1\mskip-5mu\mathrm l}}}
		\newcommand{\bt}{\bar{t}}
		\def\CB{\mathcal{B}}
		\def\CC{\mathcal{C}}
		\def\CD{\mathcal{D}}
		\def\CF{\mathcal{F}}
		\def\CG{\mathcal{G}}
		\def\CM{\mathcal{M}}
		\def\CL{\mathcal{L}}
		\def\CO{\mathcal{O}}
		\def\CP{\mathcal{P}}
		\def\CX{\mathcal{X}}
		\def\E{\mathbb{E}}
		\def\N{\mathbb{N}}
		\def\P{\mathbb{P}}
		\def\Q{\mathbb{Q}}
		\def\R{\mathbb{R}}
		\def\S{\mathbb{S}}
		\def\Z{\mathbb{Z}}
		\DeclareMathSymbol{\varNu}{\mathord}{letters}{78}
		\newcommand{\ee}{\end{equation}}
	\newcommand{\eea}{\end{eqnarray}}
\newcommand{\bean}{\begin{eqnarray*}}
	\newcommand{\eean}{\end{eqnarray*}}
\newcommand{\noi}{\noindent}
\newcommand{\suml}{\sum\limits}
\newcommand{\vt}{\vartheta}
\newcommand{\eff}{\mathrm{eff}}
\newcommand{\aux}{\mathrm{aux}}
\newtheorem{xx}{\bf xxx}
\newtheorem{zz}{\bf zzz}
\newtheorem{yy}{\bf yyy}
\pgfplotsset{compat=1.16}
\begin{document}
	
	
	\title{Spatial populations with seed-bank:\\
		renormalisation on the hierarchical group}
	
	\author{Andreas Greven$^1$, Frank den Hollander$^2$, Margriet Oomen$^3$}
	
	\date{6 October 2021}
	
	\maketitle
	
	\begin{abstract}
		We consider a system of interacting diffusions labeled by a geographic space that is given by the {\em hierarchical group} $\Omega_N$ of order $N\in\N$. Individuals live in colonies and are subject to resampling and migration as long as they are {\em active}. Each colony has a seed-bank into which individuals can retreat to become {\em dormant}, suspending their resampling and migration until they become active again. The migration kernel has a {\em hierarchical structure}: individuals hop between colonies at a rate that depends on the hierarchical distance between the colonies. The seed-bank has a {\em layered structure}: when individuals become dormant they acquire a colour that determines the rate at which they become active again. The latter allows us to model seed-banks whose wake-up times have a fat tail. We analyse a system of coupled stochastic differential equations that describes the population in the large-colony-size limit. 
		
		For fixed $N\in\N$, the system exhibits a dichotomy between \emph{coexistence} (= locally multi-type equilibria) and \emph{clustering} (= locally mono-type equilibria). To identify the \emph{clustering regime}, i.e., the range of parameters controlling the migration and the seed-bank for which clustering prevails, we apply a necessary and sufficient criterion derived in \cite{GdHOpr1} that is valid for any geographic space given by a countable Abelian group endowed with the discrete topology. The diffusion function controlling the resampling does not play a role in this criterion. 
		
		We carry out a \emph{multi-scale renormalisation analysis} in the \emph{hierarchical mean-field limit} $N\to\infty$. In particular, we show that block averages on hierarchical space-time scale $k \in \N$ perform a diffusion with a renormalised diffusion function that depends on $k$. In the clustering regime, after an appropriate scaling with $k$, this diffusion function converges to the Fisher-Wright diffusion function as $k\to\infty$, irrespective of the diffusion function controlling the resampling. Thus, the system exhibits \emph{full universality} on large space-time scales in terms of the scaling limit. For several subclasses of parameters we identify the speed at which the scaled renormalised diffusion function converges to the Fisher-Wright diffusion as $k\to\infty$. This speed in turn determines the speed at which \emph{mono-type clusters grow in space and time}. We show that the seed-bank reduces the speed compared to the model without seed-bank. This reduction is the result of a delicate interplay between migration and seed-bank, which in the limit as $N\to\infty$ can be worked out explicitly.  
		
		\medskip\noindent
		\emph{Keywords:} 
		Fisher-Wright diffusion, resampling, migration, seed-bank, multi-scale renormalisation,
		universality.
		
		\medskip\noindent
		\emph{MSC 2010:} 
		Primary 
		60J70, 
		60K35; 
		Secondary 
		92D25. 
		
		\medskip\noindent 
		\emph{Acknowledgements:} 
		AG was supported by the Deutsche Forschungsgemeinschaft (grant DFG-GR 876/6-1,2). FdH and MO were supported by the Netherlands Organisation for Scientific Research (NWO Gravitation Grant NETWORKS-024.002.003). FdH visited Bonn and Erlangen in the Fall of 2019 and 2020, supported by a Research Award of the Alexander von Humboldt Foundation. AG visited Leiden in February 2020, as Kloosterman Chair. 
	\end{abstract}
	
	\bigskip
	
	\footnoterule
	\noi
	\hspace*{0.3cm} {\footnotesize $^{1)}$ 
		Department Mathematik, Universit\"at Erlangen-N\"urnberg, Cauerstr.\ 11,
		D-91058 Erlangen, Germany\\
		greven@mi.uni-erlangen.de}\\
	\hspace*{0.3cm} {\footnotesize $^{2)}$ 
		Mathematisch Instituut, Universiteit Leiden, Niels Bohrweg 1, 2333 CA  Leiden, NL\\
		denholla@math.leidenuniv.nl}\\
	\hspace*{0.3cm} {\footnotesize $^{3)}$ 
		Mathematisch Instituut, Universiteit Leiden, Niels Bohrweg 1, 2333 CA  Leiden, NL\\
		m.oomen@math.leidenuniv.nl}
	
	
	\newpage
	
	\small
	\tableofcontents
	\normalsize
	\newpage
	\printnoidxglossary[type=symbols,style=long,title={List of Symbols}]
	\newpage
	\section{Background, goals and outline}
	\label{s.background}
	
	\subsection{Background}
	\label{ss.back}
	
	\paragraph{Single colony with seed-bank.}
	
	In populations with a \emph{seed-bank}, individuals can temporarily become dormant and refrain from reproduction, until they can become active again. In \cite{BCEK15} and \cite{BCKW16} the evolution of a population evolving according to the two-type Fisher-Wright model with seed-bank was studied. Individuals move in and out of the seed-bank at prescribed rates. Outside the seed-bank individuals are subject to \emph{resampling}, while inside the seed-bank their resampling is \emph{suspended}. Both the long-time behaviour and the genealogy of the population were analysed in detail. 
	
	Seed-banks are observed in many taxa, including plants, bacteria and other micro-organisms. Typically, they arise as a response to unfavourable environmental conditions. The dormant state of an individual is characterised by low metabolic activity and interruption of phenotypic development (see e.g.~\cite{LJ11}). After a varying and possibly large number of generations, a dormant individual can be resuscitated under more favourable conditions and reprise reproduction after having become active again. This strategy is known to have important implications for population persistence, maintenance of genetic variability and stability of ecosystems. It acts as a \emph{buffer} against evolutionary forces such as genetic drift, selection and environmental variability. 
	
	\paragraph{Multiple colonies with seed-bank.}
	
	In \cite{GdHOpr1} we considered a \emph{spatial} version of the two-type Fisher-Wright model with seed-bank in which individuals can \emph{migrate} between colonies, organised into a \emph{geographic space}, each having a seed-bank consisting of \emph{multiple layers}, each with their own rate of moving in (becoming dormant) and moving out (waking up). We found that the presence of the seed-bank \emph{enhances genetic diversity} compared to the spatial model without seed-bank. Interestingly, we found that the seed-bank can affect the longtime behaviour of the system both qualitatively and quantitatively.
	
	In \cite{GdHOpr1} we settled existence and uniqueness of the spatial model when the geographic space is $\Z^d$, $d\in\N$. We proved convergence to equilibrium, showed that there is a dichotomy between \emph{coexistence} (= locally multi-type equilibria) and \emph{clustering} (= locally mono-type equilibria), and identified the parameter regime for both. We found a change of the dichotomy due to the presence of the seed-bank. Without seed-bank, for migration in the domain of attraction of Brownian motion, clustering occurs in $d=1,2$ and coexistence in $d \geq 3$, i.e., the \emph{critical dimension} for the dichotomy is $d=2$. With seed-bank, however, clustering becomes more difficult and occurs in $d=2$ only when the wake-up time of a typical individual in the seed-bank has finite mean, and in $d=1$ only when the wake-up time has a sufficiently thin tail. In other words, the seed-bank has a tendency to \emph{lower} the critical dimension. 
	
	In fact, in \cite{GdHOpr1} we found that our technique of proof works for geographic spaces that are \emph{arbitrary} countable Abelian groups endowed with the discrete topology. The reason is that the dichotomy can be formulated in terms of how the \emph{degree} of the random walk that underlies the migration balances with the exponent of the \emph{tail} of the typical wake-up time. This raises the question how we can better understand the behaviour of spatial models with seed-bank close to criticality. 
	
	In \cite{GdHOpr2} we established the so-called \emph{finite-systems scheme}, i.e., we identified how a finite truncation of the system (both in the geographic space and in the seed-bank) behaves as both the time and the truncation level tend to infinity, properly tuned together. We found that if the wake-up time has finite mean, then the scaling time is proportional to the volume of the system and there is a \emph{single universality class} for the scaling limit, namely, the system moves through a succession of equilibria of the infinite system with a density that evolves according to a Fisher-Wright diffusion. On the other hand, we found that if the wake-up time has infinite mean, then the scaling time grows faster than the volume of the system, and there are \emph{two universality classes} depending on how fast the truncation level of the seed-bank grows compared to the truncation level of the geographic space.
	
	\subsection{Goals}
	
	In the present paper we take as geographic space the \emph{hierarchical group $\Omega_N$} of order $N$. The reason for this choice is that $\Omega_N$ allows for more detailed computations. At the same time, migration on $\Omega_N$ can be used to \emph{approximate} migration on $\Z^d$ in the \emph{hierarchical mean-field limit} $N\to\infty$. In particular, by playing with the migration kernel we can approximate two-dimensional migration in the sense of potential theory.  We consider migration kernels that in the limit as $N\to\infty$ are \emph{critically recurrent}, i.e., the degree of the class of hierarchical migrations that we consider in the present paper converges to $0$, 
	either from above or from below. 
	
	The present paper has three goals:
	\begin{itemize}
		\item[(1)]
		We apply the results obtained in \cite{GdHOpr1} to $\Omega_N$ with $N<\infty$ fixed. We again find that part of the coexistence regime without seed-bank shifts into the clustering regime with seed-bank when the average wake-up time of a typical individual is infinite.
		\item[(2)]
		We analyse a \emph{space-time renormalised} system in the limit as $N\to\infty$. Namely, we show that the block averages on successive space-time scales each perform a diffusion with a \emph{renormalised diffusion function}. In other words, we establish a \emph{multi-scale version of the finite-systems scheme}. Also, we compare the behaviour of the space-time renormalised system with seed-bank to the one analysed in \cite{DG93a} and \cite{DG93b} without seed-bank. 
		\item[(3)]
		We exhibit \emph{universal behaviour} in the clustering regime close to criticality. To do so, we analyse the attracting orbits of the renormalisation transformation, acting on the space of diffusion functions, that connects successive hierarchical levels. We show that, in the \emph{clustering regime} and after appropriate scaling, the renormalised diffusion function  converges to the Fisher-Wright diffusion function as we move up in the hierarchy, irrespective of the diffusion function controlling the resampling. This convergence shows that the hierarchical system exhibits \emph{universality} on large space-time scales in terms of the scaling limit. For several subclasses of parameters we identify the scaling of the renormalised diffusion function, which reveals a delicate interplay between the parameters controlling the migration and the seed-bank. This rate in turn determines the speed at which \emph{mono-type clusters grow in space and time}.
	\end{itemize}
	
	In the \emph{coexistence regime}, universality does \emph{not} hold and the equilibrium depends on the diffusion function. Since the seed-bank enhances genetic diversity, it may be expected that equilibrium correlations between far away colonies decay faster with seed-bank than without seed-bank, an issue that will not be addressed. 
	
	\begin{remark}{\bf[More general types]}
		{\rm Throughout the paper we consider the \emph{two-type} Fisher-Wright model with seed-bank, in the \emph{continuum limit} where the number of individuals per colony tends to infinity. The extension to a general type space, called the Fleming-Viot model (see \cite{DGV95}), requires only standard adaptations and will not be considered here. In what follows, we \emph{only} work with continuum models. However, we motivate these models by viewing them as the \emph{large-colony-size limit} of individual-based models. For earlier work on hierarchically interacting Fisher-Wright diffusions without seed-bank we refer the reader to \cite{DG93b,DG93a,DGV95,DG96} and \cite{BCGH95,BCGH97,H01}.} \hfill $\blacksquare$
	\end{remark}

	\subsection{Outline}
	
	The present paper consist of two parts:
	\begin{itemize}
		\item {\bf Part I: Model and main results.}
		Sections~\ref{s.introduct}--\ref{s.orbit} collect the main propositions and theorems.  In Section~\ref{s.introduct} we define the hierarchical model and state some basic properties: the \emph{well-posedness} of the associated martingale problem (Proposition~\ref{P.wellp}), the \emph{duality relation} (Proposition~\ref{P.dual1}), and the \emph{clustering criterion} via duality (Proposition~\ref{T.dichcrit}). These properties were all derived in \cite{GdHOpr1}. In Section~\ref{s.mainfinite} we state our main results for $N<\infty$. In particular, we compute the scaling of the wake-up time and the migration kernel (Theorem~\ref{T.scalcoeff}) and identify the \emph{clustering regime} in terms of the coefficients controlling the migration and the seed-bank under the assumption that these are asymptotically polynomial or pure exponential (Theorem~\ref{T.cluscritreg})). In Section~\ref{s.intromultscallim} we state our main results for $N\to\infty$, the hierarchical mean-field limit. In particular, we introduce \emph{block averages on successive hierarchical space-time scales}, analyse their limiting dynamics (Theorems~\ref{T.multiscalehiereff} and \ref{T.multiscalehier}), offer a heuristic explanation how this limiting dynamics arises, introduce a path topology called the Meyer-Zheng topology that is needed for a proper formulation, and introduce an object called the interaction chain, which describes how the diffent hierarchical levels interact with each other. In Section~\ref{s.orbit} we identify the \emph{orbit of the renormalisation transformation} in the clustering regime (Theorem~\ref{T.scalren}), identify the rate of scaling for the renormalised diffusion function (Theorem~\ref{T.dichotomy}), and link this scaling to the rate of growth of mono-type clusters.   
		\item {\bf Part II: Preparations and proofs.}
		Sections~\ref{s.clusreg}--\ref{s.renormasym} provide the proofs of the theorems stated in Part I. These proofs consist of a long series of propositions and lemmas needed to build up the argument. In Section~\ref{s.clusreg} we prove our main results for $N<\infty$. In Sections~\ref{ss.IntroMeanfield}--\ref{ss.pabstracts} we focus on the \emph{mean-field model} (consisting of a single hierarchy) and, respectively, state and prove a number of results that serve as preparation. In Sections~\ref{s.finlevel}--\ref{s.multilevel} we consider extensions of the mean-field model (consisting of finitely hierarchies), which serve as further preparation. In Section~\ref{s.multilevel} we use the results in Sections~\ref{ss.IntroMeanfield}--\ref{s.multilevel} to deal with the full hierarchical model (consisting of infinitely many hierarchies), and prove our main results for $N\to\infty$. In Section~\ref{s.renormasym} we analyse the orbit of the renormalisation transformation controlling the multi-scaling. Appendix~\ref{app.comp} contains a technical computation needed for the identification of the clustering regime.  Appendix~\ref{apb} contains a basic introduction to convergence of paths in the Meyer-Zheng topology, which is needed for the main theorems. 
	\end{itemize}
	
	\noindent
	Part I contains all the main results and their interpretations, and can be read without reference to Part II.

	\part{MODEL AND MAIN RESULTS}

	\section{Introduction of model and basic properties}
	\label{s.introduct}
	
	Section~\ref{ss.model} introduces the model ingredients, Section~\ref{ss.evoleqs} gives the evolution equations, Section~\ref{ss.wellpos} states the well-posedness, Section~\ref{ss.duality} introduces the dual and states the duality relation, while Section~\ref{ss.clcr} formulates the dichotomy between clustering versus coexistence in terms of the dual.
	
	\subsection{Model: geographic space $\Omega_N$, hierachical group of order $N$}
	\label{ss.model}
	
	\paragraph{Single colony.}
	Our building block is the single-colony Fisher-Wright model with seed-bank defined in \cite{BCKW16}. In that model, each individual in the population carries one of two types, $\heartsuit$ or $\diamondsuit$, and each individual can be either active or dormant. Active individuals \emph{resample} until they become dormant. Dormant individuals suspend resampling until they become actieve again. The repository for the dormant individuals is called the \emph{seed-bank}. When an active individual resamples, it randomly chooses another active individual and \emph{adopts its type}. When an active individual becomes dormant, it randomly chooses a dormant individual and forces it to becomes active, i.e., the active and the dormant population \emph{exchange} individuals (see Fig.~\ref{fig:singlecolony}). This exchange guarantees that the sizes of the active and the dormant population stay fixed over time. During the swap both the active and the dormant individual \emph{retain their type}. 
	
	The types of the active population evolve through resampling and through exchange with the dormant population. The types of the the dormant population evolve only through exchange with the active population. It was shown in \cite{BCKW16} that in the large-colony-size limit, i.e., as the number of individuals per colony tends to infinity and time is speeded up by the size of the colony, the two quantities 
	\begin{itemize}
		\item 
		$x(t)$ = the fraction of active individuals of type $\heartsuit$ at time $t$,
		\item 
		$y(t)$ = the fraction of dormant individuals of type $\heartsuit$ at time $t$,  
	\end{itemize}
	satisfy the following system of coupled SDEs:
	\begin{equation}
		\label{seedbankwithoutmigration}
		\begin{aligned}
			\d x(t)  &= Ke\,[y(t)-x(t)]\,\d t+\sqrt{x(t)(1-x(t))}\,\d w(t),\\
			\d y(t) &= e\,[x(t)-y(t)]\,\d t.
		\end{aligned}
	\end{equation}
	Here, $e$ denotes the rate at which an active individual \emph{exchanges} with a dormant individual from the seed-bank, $K$ denotes the relative size of the dormant population with respect to the active population, and $(w(t))_{t \geq 0}$ is a Brownian motion. The first term in the first equation describes the flow from the dormant population to the active population, the term in the second equation describes the flow from the active population to the dormant population, while the second term in the first equation describes the effect of resampling on the active population (see Fig.~\ref{fig:singlecolony}). Active individuals resample at rate $1$. Since dormant individuals do not resample, we do not see such a term in the second equation. The formal derivation of the continuum equations can be found in \cite{BCKW16} and in \cite[Appendix A]{GdHOpr1}. 
	
	\medskip\noindent
	\vspace{-0.5cm}
	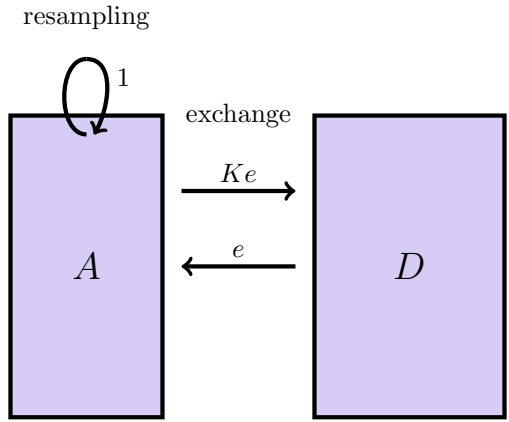
\begin{figure}[htbp]
		\begin{center}
			\begin{tikzpicture}
				\draw [fill=red!20!blue!20!,ultra thick] (0,0) rectangle (2,4);
				\draw [fill=red!20!blue!20!,ultra thick] (4,0) rectangle (6.5,4);
				\node  at (1,2) {\text{\Large $A$}};
				\node  at (5.25,2) {\text{\Large $D$}};
				\draw[ultra thick,<-](2.25,2)--(3.75,2);
				\draw[ultra thick,->](2.25,3)--(3.75,3);
				\node at (3,4) {\text{exchange}};
				\node at (1,5.3) {\text{resampling}};
				\draw [ultra thick] (1,3.75)to [out=180,in=180](1, 4.75) ;
				\draw [ultra thick,->] (1,4.75)to [out=0,in=60](1.1, 3.75) ;
				\node[above]  at (3,3) {$Ke$};
				\node[above]  at (3,2) {$e$};
				\node[above]  at (-0.75,3) {};
				\node[above]  at (-0.75,2) {};
				\node[left]  at (1.7,4.5) {1};
			\end{tikzpicture}
		\end{center}
		\caption{\small Active individuals resample at rate $1$. Active and dormant individuals exchange at rate $e$. The extra factor $K$ arises from the fact that the dormant population is $K$ times as large as the active population. Dormant individuals suspend resampling.}
		\label{fig:singlecolony}
		\vspace{-.5cm}
	\end{figure}
	
	\paragraph{Multi-colony.}
	The present paper focuses on a \emph{multi-colony} setting of the model described above, where the underlying geographic space is the hierarchical lattice of order $N$, given by ($\N_0 = \N \cup \{0\}$)
	\begin{equation}
		\label{hiergroup} 
		\gls{hiern} = \left\{\xi=(\xi_k)_{k\in\mathbb{N}_0}\colon\, 
		\xi_k\in\{0,1,\ldots,N-1\},\ \sum_{k\in\mathbb{N}_0}\xi_k<\infty\right\},
	\end{equation}
	which with addition modulo $N$ becomes the hierarchical group of order $N$ (see Fig.~\ref{fig-hierargr}). The \emph{hierarchical distance} on $\Omega_N$ is defined by
	\begin{equation}
		\gls{dhier}(\xi,\eta)=d_{\Omega_N}(0,\xi-\eta)
		=\min\left\{k\in\mathbb{N}_0\colon\,\xi_l=\eta_l\ \forall\, l \geq k\right\},
		\qquad \xi,\eta \in \Omega_N,
	\end{equation} 
	and is an ultra-metric, i.e., 
	\begin{equation}
		\label{ultra}
		d_{\Omega_N}(\xi,\eta) \leq \max\big\{d_{\Omega_N}(\xi,\zeta),d_{\Omega_N}(\eta,\zeta)\big\}
		\qquad \forall\,\xi,\eta,\zeta \in \Omega_N.
	\end{equation}
	
	The choice of $\Omega_N$ as geographic space plays an important role for population models, and was first exploited in \cite{SF83} in an attempt to formalise ideas coming from ecology. One interpretation is that the sequence $(\xi_k)_{k\in\N_0}$ encodes the `address' of colony $\xi$: $\xi_0$ is the `house', $\xi_1$ is the `street', $\xi_2$ is the `village', $\xi_3$ is the `province', $\xi_4$ is the `country', and so on. To describe the system on the hierarchical group we need three ingredients: 
	\begin{enumerate}
		\item[1.] Hierarchical migration (Section \ref{ss.2.2}).
		\item[2.] Layered seed-bank (Section \ref{ss.2.3}).
		\item[3.] Resampling rate (Section \ref{ss.2.4}).
	\end{enumerate}
	
	\begin{figure}[htbp]
		\centering
		\includegraphics[width=\textwidth]{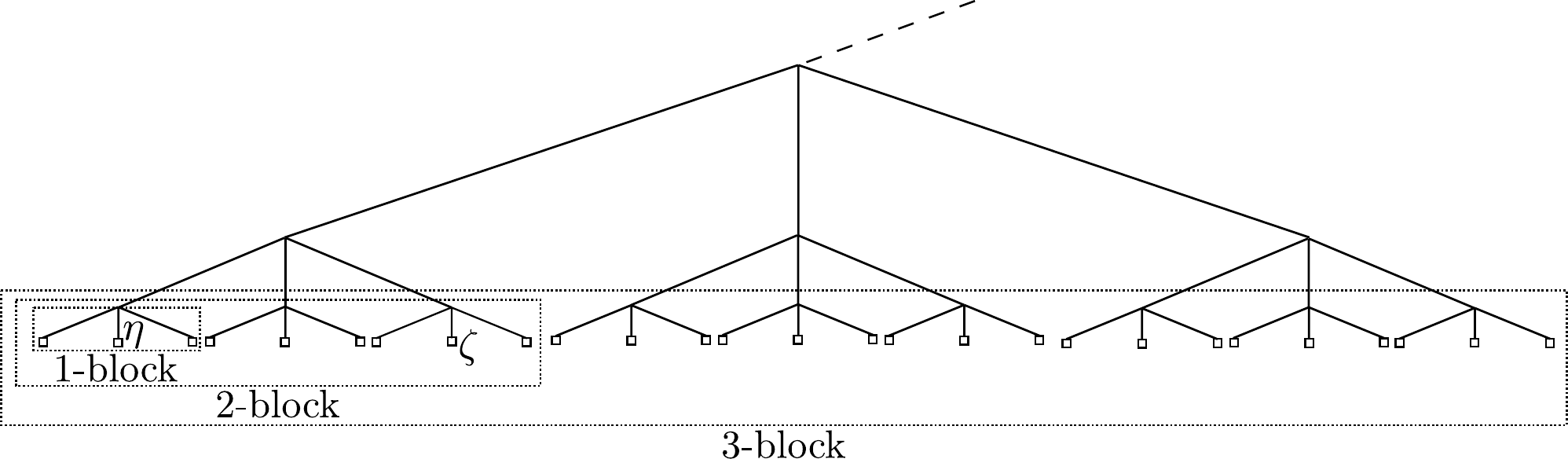}
		\caption{\small Close-ups of a 1-block, a 2-block and a 3-block in the hierarchical group 
			of order $N=3$. The elements of the group are the leaves of the tree (indicated by $\Box$'s). 
			The hierarchical distance between two elements in the group is the graph distance to the most 
			recent common ancestor in the tree: $d_{\Omega_3} (\eta,\zeta) = 2$ for $\eta$ and $\zeta$ 
			in the picture.}
		\label{fig-hierargr}
	\end{figure}
	
	\subsubsection{Hierarchical migration}
	\label{ss.2.2}
	
	We construct a migration kernel $a^{\Omega_N}(\cdot,\cdot)$ on the hierarchical group $\Omega_N$ built from a sequence of migration rates
	\begin{equation}
		\label{738}
		\gls{mig} = (\gls{migk})_{k \in \N_0} \in (0,\infty)^{\N_0}
	\end{equation}
	that do not depend on $N$. Individuals migrate as follows: 
	\begin{itemize}
		\item
		For all $k\in\N$, each individual chooses at rate $c_{k-1}/N^{k-1}$ the block of radius $k$ around its present location and selects a colony uniformly at random from that block. Subsequently it selects an individual in this colony uniformly at random and adopts its type. 
	\end{itemize}
	Note that the block of radius $k$ contains $N^k$ colonies, and that the migration kernel \gls{migker}  is therefore given by
	\begin{equation}
		\label{739}
		a^{\Omega_N} (\eta,\xi) = \suml_{k \geq d_{\Omega_N}(\eta,\xi)} \frac{c_{k-1}}{N^{k-1}}\frac{1}{N^k}, 
		\quad  \eta, \xi \in \Omega_N, \eta \neq \xi , \qquad a^{\Omega_N}(\eta,\eta)=0, \quad \eta \in \Omega_N.
	\end{equation}
	Throughout the paper, we assume that
	\begin{equation}\label{740}
		\limsup_{k \to \infty} \frac{1}{k}\,\log c_k < \log N.
	\end{equation}
	This guarantees that the total migration rate per individual is finite. Indeed, note that for every $\eta \in \Omega_N$,
	\be{}
	\sum_{\xi\in\Omega_N}a^{\Omega_N}(\eta,\xi) 
	= \sum_{\xi\in\Omega_N} \sum_{k\geq d_{\Omega_N}(\eta,\xi)} \frac{c_{k-1}}{N^{2k-1}} 
	= \sum_{k\in\mathbb{N}} \left[\sum_{\xi\in\Omega_N} \textbf{1}_{\{d_{\Omega_N}(\eta,\xi)\leq k\}}\right] 
	\frac{c_{k-1}}{N^{2k-1}} = \sum_{k\in\mathbb{N}} \frac{c_{k-1}}{N^{k-1}},
	\ee 
	which is finite because of \eqref{740}.
	
	\begin{remark}{\bf [Degree of random walk]}
		\label{rem:degree}
		{\rm For a random walk on an Abelian group with time-$t$ transition kernel \gls{migkert}, the \emph{degree} is defined as (see \cite{DGW05})
			\begin{equation}
				\label{e1434}
				\gls{degrw} = \sup\left\{\zeta \in (-1,\infty) \colon\,\int^\infty_0  \d t\,t^\zeta a^{\Omega_N}_t(0,0)< \infty\right\}.
			\end{equation}
			The degree is said to be $\delta^+$, respectively, $\delta^-$ when the integral is finite, respectively, infinite \emph{at} the degree. If $\delta>0$, then $\delta$ is called the \emph{degree of transience}. If $\delta \in (-1,0)$, then $-\delta$ is called the \emph{degree of recurrence}. If the degree is $0^-$, then the random walk is called \emph{critically recurrent}. (It would be interesting to have a version of \eqref{e1434} that includes a slowly varying function in front of the power $t^\zeta$. However, such an extension appears not to have been explored in the literature.)} \hfill $\blacksquare$
	\end{remark}
	
	By playing with $\underline{c}$ and letting $N\to\infty$, we can approximate migration for which the corresponding random walk is critically recurrent, i.e., $\delta^{-}=0$. In that case both the potential theory and the Green function for the hierarchical random walk have the same asymptotics as the potential theory and the Green function for a critically recurrent random walk on $\Z^2$ in the domain of attraction of Brownian motion. Therefore, by tuning $\underline{c}$ properly, we can mimic migration on the geographic space $\Z^2$ (for which $\delta^-=0$), an idea that was exploited in \cite{DGV95}, \cite{DG96}, \cite{DGsel14}, \cite{GHKK14}, \cite{GHK18}.

	\subsubsection{Layered seed-bank}
	\label{ss.2.3}
	
	To create a layered seed-bank, dormant individuals are labeled with a colour $m\in\N_0$. An active individual that becomes dormant is assigned a colour $m\in\N_0$. When an active individual becomes dormant with colour $m$, it exchanges with a dormant individual of colour $m$. This dormant individual becomes active, \emph{loses} its colour, but \emph{retains} its type. To describe the layered seed-bank we need two sequences
	\begin{equation}
		\begin{aligned}
			\label{defKem}
			&\gls{ssbs} = (K_m)_{m\in\N_0}\in (0,\infty)^{\N_0},\\
			&\gls{sexp} = (\gls{expar})_{m\in\N_0}\in(0,\infty)^{\N_0},
		\end{aligned}
	\end{equation}
	both not depending on $N$, which we interpret as follows: 
	\begin{itemize}
		\item 
		\gls{sbsm} is the relative size of the dormant population of colour $m$ with respect to the active 
		population, i.e.,
		\begin{equation}\label{ratio}
			K_m=\frac{\text{size $m$-dormant population}}{\text{size active population}}.
		\end{equation}
		\item 
		At rate $K_m\frac{e_m}{N^m}$ an active individual becomes dormant, is assigned colour $m$, and retains its type. At the same time a dormant individual with colour $m$ becomes active, loses its colour, and retains its type. By defining the rates in this way, the layered structure of the seed-bank is tuned to the hierarchical structure of the geographic space.
	\end{itemize} 
	
	\noindent
	By giving the seed-bank a layered structure, we are able to tune the distribution of the wake-up time, i.e., the time an individual spends in the seed-bank before waking up. In particular, we will see that a layered seed-bank enables us to model wake up times with a \emph{fat tail}, while at the same time preserving the Markov property of the evolution.
	
	Since active and dormant individuals {\em exchange}, $K_m$ remains constant over time for all $m\in\N_0$. Throughout the paper we assume that
	\begin{equation}
		\label{740alt}
		\limsup_{m \to \infty} \frac{1}{m}\,\log (K_m e_m) < \log N.
	\end{equation}
	This guarantees that the total rate of exchange per individual, given by 
	\begin{equation}
		\label{chidef}
		\gls{chi} = \sum_{m\in\mathbb{N}_0} K_m\frac{e_m}{N^m},
		\end {equation}
		is finite. On the other hand, the relative size of the dormant population with respect to the active population 
		\begin{equation}
			\label{rhodef}
			\gls{rho}=\sum_{m\in\N_0} K_m
		\end{equation}
		can be either finite or infinite. We will see that $\rho<\infty$ and $\rho=\infty$ represent two \emph{different regimes}.

		\subsubsection{Resampling rate}
		\label{ss.2.4}
		
		To describe the resampling we use a diffusion function \gls{g} that is taken from the set
		\begin{equation}
			\label{setG}
			\gls{cldf} = \Big\{g(x)\colon\,[0,1] \to [0,\infty)\colon\,g(0) = g(1) =0, \, g(x)>0 \,\,\forall\,x \in (0,1),
			\, g \mbox{ Lipschitz}\Big\},
		\end{equation}
		and think of $h(x)=g(x)/x(1-x)$ as the rate of resampling at type frequency $x$. The choice $g=dg_{\mathrm{FW}}$, $d \in (0,\infty)$, with $g_{\mathrm{FW}}(x)=x(1-x)$, $x \in [0,1]$, corresponds to Fisher-Wright resampling at rate $d$. We use a collection of independent Brownian motions 
		\begin{equation}
			W=\big(\gls{bm}\big)_{\xi\in\Omega_N}
		\end{equation}
		to describe the fluctuations of the type frequencies caused by the resampling in each colony.

		\subsection{Evolution equations}
		\label{ss.evoleqs}

		\subsubsection{Evolution of single colonies}
		
		With the above three ingredients, we can now describe the evolution of the system. For $\xi\in\Omega_N$, define
		\begin{equation}
			\begin{aligned} 
				\gls{x} &= \text{ the fraction of active individuals of type $\heartsuit$ at colony $\xi$ at time $t$},\\
				\gls{y}(t) &= \text{ the fraction of $m$-dormant individuals of type $\heartsuit$ at colony $\xi$ and time $t$}.  
			\end{aligned}
		\end{equation}
		Note that $x_\xi(t)\in [0,1]$ and $y_{\xi,m}(t)\in [0,1]$ for all $\xi\in\Omega_N$, $m\in\N_0$, $t \geq 0$.  Therefore the state space of a single colony is $\mathfrak{s} = [0,1]\times [0,1]^{\N_0}$, and the state space of the system is 
		\begin{equation}
			\gls{S} = \mathfrak{s}^{\Omega_N}.
		\end{equation} 
		Our object of interest is the random process taking values in $S$, written
		\begin{equation}
			\label{XYdef}
			\gls{fpz}=\gls{fp}, \qquad 
			(X^{\Omega_N}(t),Y^{\Omega_N}(t)) = \big(x_\xi(t),(y_{\xi,m}(t))_{m\in\N_0}\big)_{\xi\in\Omega_N},
		\end{equation}
		whose components evolve according to the following SSDE (= system of stochastic differential equations):
		\begin{equation}
			\label{moSDE}
			\begin{aligned}
				\d x_\xi(t) &= \sum_{\eta \in \Omega_N} a^{\Omega_N}(\xi,\eta)[x_\eta(t)-x_\xi(t)]\,\d t
				+\sqrt{g(x_\xi(t))}\,\d w_\xi(t)\\
				&\qquad +\sum_{m\in\N_0} \frac{K_me_m}{N^m}\, [y_{\xi,m}(t)-x_\xi(t)]\,\d t,\\[0.2cm]
				\d y_{\xi,m}(t) &= \frac{e_m}{N^m}\, [x_\xi(t)-y_{\xi,m} (t)]\,\d t,  
				\quad m\in\mathbb{N}_0, \qquad \xi \in \Omega_N.
			\end{aligned}
		\end{equation} 
		The first term in the first equation describes the evolution of the active population at colony $\xi$ due to migration, the second term due to the resampling. The third term in the first equation and the term in the second equation describe the exchange between the active and the dormant population at colony $\xi$ (see Fig.~\ref{fig:multicolony}). Since dormant individuals are not subject to resampling or migration, the dynamics of the dormant population is completely determined by the exchange with the active population. For the \emph{initial state} we assume that
		\begin{equation}
			\label{initialvalue}
			\begin{aligned}
				&\CL(X^{\Omega_N}(0),Y^{\Omega_N}(0)) = \mu^{\otimes\Omega_N}\\ 
				&\text{with } \E^\mu[x_\xi(0)]=\gls{thetax},\, \E^\mu[y_{\xi,m}(0)]=\gls{thetay} \text{ with } 
				\lim_{m\to\infty}\theta_{y_m}=\theta \text{ for some } \theta\in[0,1].
			\end{aligned}
		\end{equation}
		The last assumption in \eqref{initialvalue}, which in \cite{GdHOpr1} was referred to as $\mu$ being \emph{colour regular}, guarantees that for finite $N$ the system in \eqref{moSDE} converges to an ergodic equilibrium. 
		
		\begin{remark}{\bf [Notation]}
			\label{rem:not1}
			{\rm Throughout the sequel we use lower case letters for \emph{single components} and upper case letters for \emph{systems of single components}. We exhibit the geographic space for the system, but suppress it from the components.} \hfill $\blacksquare$ 
		\end{remark}
		
		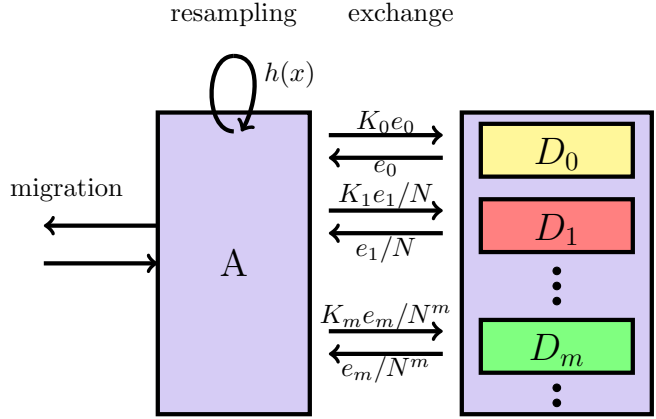
\begin{figure}[htbp]
			\begin{center}
				\begin{tikzpicture}
					\draw [fill=red!20!blue!20!,ultra thick] (0,0) rectangle (2,4);
					\draw [fill=red!20!blue!20!,ultra thick] (4,0) rectangle (6.5,4);
					\draw [fill=yellow!50!, ultra thick] (4.25,3.15) rectangle (6.25,3.85); 
					\draw [fill=red!50!, ultra thick] (4.25,2.15) rectangle (6.25,2.85); 
					\draw [fill=green!50!, ultra thick] (4.25,0.55) rectangle (6.25,1.25); 
					\node  at (1,2) {\text{\Large A}};
					\node  at (5.25,3.45) {\text{\Large $D_0$}};
					\node  at (5.25,2.45) {\text{\Large $D_1$}};
					\node  at (5.25,0.85) {\text{\Large $D_m$}};
					\draw[ultra thick,<-](2.25,3.4)--(3.75,3.4);
					\draw[ultra thick,->](2.25,3.7)--(3.75,3.7);
					\draw[ultra thick,<-](2.25,2.4)--(3.75,2.4);
					\draw[ultra thick,->](2.25,2.7)--(3.75,2.7);
					\draw[ultra thick,<-](2.25,0.8)--(3.75,0.8);
					\draw[ultra thick,->](2.25,1.1)--(3.75,1.1);
					\node at (3.2,5.3) {\text{exchange}};
					\node at (1,5.3) {\text{resampling}};
					\draw [ultra thick] (1,3.75) to [out=180,in=180](1, 4.75) ;
					\draw [ultra thick,->] (1,4.75) to [out=0,in=60](1.1, 3.75) ;
					\foreach \x in {1.9,1.7,1.5,0.35,0.15}
					\draw[fill] (5.25,\x) circle [radius=0.05];
					\draw[ultra thick, ->](0,2.5)--(-1.5,2.5);
					\draw[ultra thick, <-](0,2)--(-1.5,2);
					\node[above] at (-1.2,2.7){\text{migration}};
					\node[above]  at (3,3.6) {$K_0e_0$};
					\node[below]  at (3,3.49) {$e_0$};
					\node[above]  at (3,2.6) {$K_1e_1/N$};
					\node[below]  at (3,2.49) {$e_1/N$};
					\node[above]  at (3,1.0) {$K_me_m/N^m$};
					\node[below]  at (3,0.89) {$e_m/N^m$};
					\node[left]  at (2.2,4.5) {$h(x)$};	
				\end{tikzpicture}
			\end{center}
			\caption{\small Active individuals ($A$) are subject to migration, resampling and exchange 
				with dormant individuals ($D$). When active individuals become dormant they are assigned a 
				colour ($D_m$, $m\in\N_0$), which they lose when they become active again. The resampling rate 
				in the active state at type-$\heartsuit$ frequency $x$ equals $h(x)=g(x)/x(1-x)$ with $g \in \CG$ (e.g.\ for the 
				standard Fisher-Wright diffusion the resampling rate is $1$).}
			\label{fig:multicolony}
		\end{figure}
		
		\subsubsection{Evolution of block averages }\label{ss.blav}
		
		The choice of the migration kernel in \eqref{739} implies that, for every $k\in\N$,  at rate $\asymp\frac{1}{N^k}$ individuals choose a space horizon of distance $k+1$ and subsequently choose a random colony from that space horizon. Therefore, in order to see interactions over a distance $k+1$, we need to speed up time by a factor $N^k$. A similar observation applies to the interaction with the seed-bank. Dormant individuals with colour $k$ become active at rate $\asymp\frac{1}{N^k}$. Therefore, in order to see interactions with the $k$-dormant population, we need to speed up time by a factor $N^k$. To analyse the effective interaction on time scale $N^k$, we introduce \emph{successive block averages} labelled by $k\in\N_0$. 
		
		\begin{definition}{\bf [Block averages]}
			\label{defblockav}
			{\rm For $k\in\mathbb{N}_0$, let $B_k(0)=\{\eta\in\Omega_N\colon\,d_{\Omega_N}(0,\eta\leq k\}$ denote the $k$-block around $0$. Define the $k$-block average around $0$ at time $N^kt$ by 
				\begin{equation}
					\label{blockav}
					\begin{aligned}
						\gls{xk} &= \frac{1}{N^k} \sum_{\eta \in B_k(0)} x_\eta(N^kt),\\
						\gls{yk} &= \frac{1}{N^k} \sum_{\eta \in B_k(0)} y_{\eta,m}(N^kt),
						\qquad  m\in\mathbb{N}_0.
					\end{aligned}
				\end{equation}
				The $k$-block average represents the dynamics of the system on space-time scale $k$.} \hfill $\blacksquare$
		\end{definition}
		
		\noindent
		By translation invariance of the SSDE in  \eqref{moSDE}, each $\xi\in\Omega_N$ can serve as the origin. In the remainder of the paper we consider without loss of generality the $k$-block average around $\xi=0$, and suppress the center $0$ from the notation. 
		
		\begin{remark}{\bf [Notation]}
			\label{rem:not2}
			{\rm We use lower case letters for the block averages because they live in the space of components $\mathfrak{s}=[0,1]\times[0,1]^{\N_0}$. At the same time we exhibit the geographic space $\Omega_N$ for the block averages because they are functionals of the system of components (recall Remark~\ref{rem:not1}).} \hfill$\blacksquare$
		\end{remark}
		
		Using Definition~\ref{defblockav} and inserting the specific choice of the migration kernel defined in \eqref{739}, we can rewrite \eqref{moSDE} for $\xi=0$ as follows ($0$-blocks are single components): 
		\begin{equation}
			\label{blockavx*z*alt}
			\begin{aligned}
				\d x_{0}^{\Omega_N}(t) &= \sum_{l\in\mathbb{N}} \frac{c_{l-1}}{N^{l-1}}
				\big[x_{l}^{\Omega_N}(N^{-l}t)-x_{0}^{\Omega_N}(t)\big]\,\d t+\sqrt{g\big(x_{0}^{\Omega_N}(t)\big)}\,\d w(t)\\
				&\qquad + \sum_{m\in\mathbb{N}_0} \frac{K_me_m}{N^m}
				\big[y_{m,0}^{\Omega_N}(t)-x_{0}^{\Omega_N}(t)\big]\,\d t,\\[0.2cm]
				\d y_{m,0}^{\Omega_N}(t)
				&= \frac{e_m}{N^m} \big[x_{0}^{\Omega_N}(t)-y_{m,0}^{\Omega_N}(t)\big]\,\d t,\qquad m\in\N_0.
			\end{aligned}
		\end{equation}
		From \eqref{blockavx*z*alt} we see that migration between colonies can be expressed as a drift towards block averages at a higher hierarchical level.
		
		The SSDE for the $k$-block average on time scale $N^k$ reads as follows (recall \eqref{blockav}): 
		\begin{equation}
			\label{rblockavxzmulti}
			\begin{aligned}
				\d{x}_{k}^{\Omega_N}(t)
				&= \sum_{l\in\N} 
				\frac{c_{k+l-1}}{N^{l-1}}\big[{x}_{k+l}^{\Omega_N}(N^{-l}t)-{x}_{k}^{\Omega_N}(t)\big]\,\d t
				+\sqrt{\frac{1}{N^k} \sum_{i\in B_k(0)} g\big(x_i(N^kt)\big)}\,\,\d w_k(t)\\
				&\qquad +\sum_{m\in\N_0} N^k \frac{K_m e_m}{N^m}
				\big[{y}_{m,k}^{\Omega_N}(t)-{x}_{k}^{\Omega_N}(t)\big]\,\d t,\\[0.2cm]
				\d{y}_{m,k}^{\Omega_N}(t)
				&=N^k \frac{e_m}{N^{m}}\big[{x}_{k}^{\Omega_N}(t)-{y}_{m,k}^{\Omega_N}(t)\big]\,\d t,
				\qquad m\in\mathbb{N}_0.
			\end{aligned}
		\end{equation}
		To deduce these equations from \eqref{moSDE}, we sum over $\xi\in B_k$, speed up time by a factor $N^k$, insert the specific choice of the migration kernel in \eqref{739}, and use the standard scaling properties of Brownian motion: $w(ct) =^d \sqrt{c}\,w(t)$ and $\sqrt{a}w(t)+\sqrt{b}w^\prime(t)=^d \sqrt{a+b}\,w^{\prime\prime}(t)$, with $w(t)$ and $w^\prime(t)$ independent Brownian motions, and with $=^d$ denoting equality in distribution. This computation is spelled out in Section~\ref{s.multilevel}. 
		
		\begin{remark}
			{\rm The \emph{block averages} and their evolution equations in \eqref{rblockavxzmulti} will be key objects in the analysis of the hierarchical mean-field limit $N\to\infty$. We will see that the limit $N\to\infty$ brings about considerable simplifications. In Section \ref{s.intromultscallim} we discuss these simplifications in detail. In particular, a \emph{complete separation of space-time scales} takes places, in which each block average lives on its own time scale, effectively interacts with only one seed-bank, and effectively feels a drift towards the block average one hierarchical level up.} \hfill$\blacksquare$
		\end{remark}
		
		\subsection{Well-posedness}
		\label{ss.wellpos}
		Let \gls{cf} denote the space of continuous function form $[0,\infty)$ to the state space $S$ and let \gls{cfb} be the space of bounded continuous functions form $[0,\infty)$ to $S$. 
		The generator of the system in \eqref{moSDE} is given by
		\begin{eqnarray}
			\gls{G}&=&\sum_{\xi\in\Omega_N}\Bigg(\sum_{\eta\in\Omega_N}a^{\Omega_N}(\xi,\eta)[x_\eta(t)-x_\xi(t)]
			\frac{\partial }{\partial x_\xi}+\frac{1}{2}g(x_\xi(t))\frac{\partial^2 }{\partial x_\xi^2}\\ \nonumber
			&&\quad +\sum_{m\in\N_0} \left[\frac{K_m e_m}{N^m}[y_{\xi,m}(t)-x_\xi(t)]
			\frac{\partial }{\partial x_\xi}+\frac{ e_m}{N^m}[x_{\xi}(t)-y_{\xi,m}(t)]
			\frac{\partial }{\partial y_{\xi,m}}\right]\Bigg).
		\end{eqnarray}
		Let 
		\begin{equation}
			\label{eq401}
			\begin{aligned}
				\gls{DF}= \Big\{&f \in C_b([0,\infty),S) \colon\, f \mbox{ depends on finitely many components}\\[-0.2cm] 
				&\mbox{and is twice continuously differentiable in each component}\Big\}.
			\end{aligned}
		\end{equation}
		
		\begin{proposition}{{\bf [Well-posedness]}}
			\label{P.wellp}\begin{itemize}
				\item[\rm (a)] 
				The SSDE in \eqref{moSDE} has a unique strong solution in \gls{cf}, whose law is the unique solution of the $(G,\mathbf{F},\delta_u)$-martingale problem for all $u\in S$.
				\item[\rm (b)] 
				The process starting from $u \in S$ is Feller and strong Markov. Consequently, the SSDE in \eqref{moSDE} defines a unique Borel Markov process starting from any initial law on $S$. 
			\end{itemize} 
		\end{proposition}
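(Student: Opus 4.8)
The plan is to deduce Proposition~\ref{P.wellp} from the general well-posedness result established in \cite{GdHOpr1}, which is valid for systems of exactly this form indexed by an \emph{arbitrary} countable Abelian group endowed with the discrete topology. The hierarchical group $\Omega_N$ is such a group, so the only thing that needs checking is that the migration kernel $a^{\Omega_N}(\cdot,\cdot)$ and the collection of exchange rates $(K_m e_m/N^m)_{m\in\N_0}$ satisfy the standing hypotheses there. Both are immediate: the total migration rate per site equals $\sum_{k\in\N} c_{k-1}/N^{k-1}$, which is finite by the computation following \eqref{740}; the total exchange rate per site is $\chi=\sum_{m\in\N_0} K_m e_m/N^m<\infty$ by \eqref{740alt}; and $g\in\CG$ is Lipschitz, as required. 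With these verifications in place, parts (a) and (b) are the corresponding statements in \cite{GdHOpr1}.

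For completeness I sketch why that general result holds, since its proof is what dictates the hypotheses. \emph{Existence.} One approximates the infinite system by finite subsystems, restricting the geographic space to a finite ball $B_n(0)$ and the seed-bank to colours $m\le n$, with the excluded coordinates frozen at their initial values. Each such subsystem is a finite-dimensional SDE on the compact set $\mathfrak{s}^{B_n(0)}$ (with dormant coordinates truncated to $m\le n$) whose drift is Lipschitz and whose diffusion coefficient $\sqrt{g(\cdot)}$ is continuous, bounded, and vanishes on the boundary, hence it has a unique strong solution confined to $[0,1]$-valued coordinates. One then proves tightness of the laws in $C([0,\infty),S)$: since $S=\mathfrak{s}^{\Omega_N}$ is compact Polish, tightness reduces to a per-coordinate modulus-of-continuity estimate, which follows from a Kolmogorov moment bound exploiting the $\ell^1$-summability of the interaction and exchange rates. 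Extracting a weak limit point and passing to the limit in the associated martingale problem yields a solution of the $(G,\mathbf{F},\mu^{\otimes\Omega_N})$-martingale problem.

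\emph{Uniqueness.} The cleanest route, followed in \cite{GdHOpr1}, is via the duality of Proposition~\ref{P.dual1}, which is established independently by a generator identity on the mixed-moment duality function: the dual (the spatial seed-bank coalescent run against the migration and exchange kernels) determines all mixed moments $\E\big[\prod_i x_{\xi_i}(t)\prod_j y_{\eta_j,m_j}(t)\big]$ of any solution in terms of the initial condition alone, and since these moments are measure-determining this pins down the law of the solution for each fixed $u\in S$; together with the Markov property this gives uniqueness of the martingale problem. Alternatively one proves pathwise uniqueness directly: all drift terms are globally Lipschitz with $\ell^1$-summable interaction coefficients, and the only non-Lipschitz ingredient is $x\mapsto\sqrt{g(x)}$, which is $\tfrac12$-Hölder; a Yamada--Watanabe regularisation applied coordinate by coordinate, using $g(x)\le\|g\|_{\mathrm{Lip}}\min\{x,1-x\}$ (valid because $g(0)=g(1)=0$), combined with a weighted $\ell^1$ coupling estimate $\sum_\xi\gamma_\xi\,\E|x_\xi(t)-x'_\xi(t)|$ with summable weights $(\gamma_\xi)$ and Gronwall, closes the argument. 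Strong existence together with pathwise (or weak) uniqueness then yields the unique strong solution in $C([0,\infty),S)$ by Yamada--Watanabe, which is part (a); the equivalence with the martingale problem is Itô's formula in one direction and the standard martingale-representation argument in the other.

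\emph{Feller and strong Markov.} The Feller property follows from well-posedness of the martingale problem together with continuity of $u\mapsto\CL_u$ on $S$, which one reads off from the weighted coupling estimate above or, more directly, from the dual, whose expectation is bounded and continuous in $u$; the strong Markov property is then automatic for the unique solution of a well-posed martingale problem, and the last sentence of (b) follows by integrating over the initial law. The main obstacle — and the reason the hypotheses \eqref{740} and \eqref{740alt} are imposed — is the simultaneous presence of infinitely many interacting coordinates with merely summable (rather than finitely supported) interaction rates \emph{and} a degenerate, non-Lipschitz diffusion coefficient; the device that handles both at once is the weighted $\ell^1$ estimate combined with the Yamada--Watanabe regularisation (equivalently, the duality argument), which is exactly why the economical route is to verify the hypotheses of \cite{GdHOpr1} and invoke its conclusions.
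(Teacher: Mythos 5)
Your proposal takes essentially the same route as the paper: the paper's proof simply observes that \eqref{moSDE} on $\Omega_N$ is an instance of the general model in \cite{GdHOpr1} (model 2 there), with summability of the migration and exchange rates guaranteed by \eqref{740}--\eqref{740alt}, and then invokes \cite{SS80} as in the proof of \cite[Theorem 2.1]{GdHOpr1}. Your verification of the hypotheses and the supplementary sketch of existence (finite truncation, tightness, martingale problem), uniqueness (duality or Yamada--Watanabe with a weighted $\ell^1$ Gronwall bound), and Feller/strong Markov is a correct filling-in of that citation.
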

		
		\begin{proof}
			Comparing with what is called model 2 in \cite{GdHOpr1}, we see that the Abelian group is chosen as in \eqref{hiergroup}, the transition kernel is chosen as in \eqref{739}, and the rates in and out of the seed-bank are $\frac{e_m}{N^m}$ and $\frac{K_me_m}{N^m}$ for colour $m$. Hence the claim follows from \cite{SS80}, in the same way as shown in the proof of \cite[Theorem 
			2.1]{GdHOpr1}.
		\end{proof}
		
		Henceforth we write $\P$ and $\E$ to denote probability and expectation with respect to the random process in \eqref{XYdef}.
		
		\subsection{Duality}
		\label{ss.duality}
		
		If $g= d\gls{FW}$, then our model has a tractable dual, which turns out to play a crucial role in the analysis of the long-time behaviour. In this section we introduce the dual process following the same line of argument as in \cite[Section 2.4]{GdHOpr1}. There it was shown that the spatial Fisher-Wright diffusion with seed-bank is dual to a so-called \emph{block-counting process} of a seed-bank coalescent. The latter describes the ancestral lines of $n\in\N$ individuals sampled from the current population backwards in time in terms of partition elements. At time zero the ancestral line of each individual is represented by a partition element. Traveling backwards in time, two partition elements merge as soon as their ancestral lines coalesce, i.e., two individuals have the same ancestor from that time onwards. Hence the seed-bank coalescent divides the ancestral lines of the $n\in\N$ individuals into subgroups of individuals with the same ancestor (i.e., individuals that are identical by descent). Therefore the seed-bank coalescent generates the ancestral lineages of the individuals evolving according to a Fisher-Wright diffusion with seed-bank, i.e., generates their full genealogy. The corresponding block-counting process counts the number of partition elements that are left when we travel backwards in time.    
		
		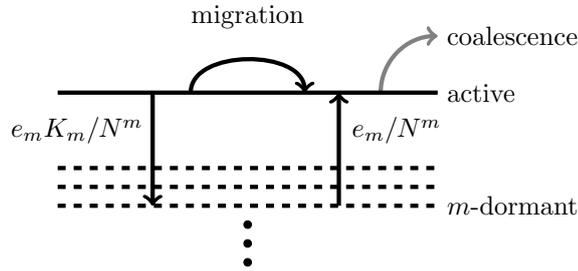
\begin{figure}[htbp]
			\begin{center}
				\begin{tikzpicture}[scale=0.5]
					\draw [ultra thick,->] (-1.5,1)to [out=90,in=90](1.5,1);
					\draw [ultra thick,black!50!,->] (3.5,1)to [out=90,in=180](5,2.5); 
					\node[right] at (5,2.5){coalescence};
					\node[] at (0,3){migration};
					\node[left] at (-2.5,0) {$e_mK_m/N^m$};
					\node[right] at (2.5,0) {$e_m/N^m$};
					\node[right] at (5,-2) {$m$-dormant};
					\node[right] at (5,1) {active};
					\draw[ultra thick, ->] (-2.5,1)--(-2.5,-2);
					\draw[ultra thick, <-] (2.4,1)--(2.4,-2);
					\draw[ultra thick] (-5,1)--(5,1);
					\draw[dashed, ultra thick](-5,-1)--(5,-1);
					\draw[dashed, ultra thick](-5,-1.5)--(5,-1.5);
					\draw[dashed, ultra thick](-5,-2)--(5,-2);
					\draw[fill] (0,-2.5) circle [radius=0.1];
					\draw[fill] (0,-3) circle [radius=0.1];
					\draw[fill] (0,-3.5) circle [radius=0.1];
				\end{tikzpicture}
			\end{center}
			\vspace{-.2cm}
			\caption{\small Transition scheme for an ancestral lineage in the dual, which moves according to the transition kernel $b(\cdot,\cdot)$ defined in \eqref{mrw}. Two active ancestral lineages that are at the same colony coalesce at rate $d$.}
			\label{fig:dualrw}	
		\end{figure}
		
		Formally, the spatial seed-bank coalescent is described as follows. Let $\S=\Omega_N\times\{A,(D_m)_{m\in\N_0}\}$ be the \emph{effective geographic space}. For $n\in\N$ the state space of the \emph{$n$-spatial seed-bank coalescent} is the set of partitions of $\{1,\ldots,n\}$, where the partition elements are marked with a position vector giving their locations. A state is written as $\pi$, where
		\begin{equation}
			\label{e402}
			\begin{aligned}
				&\pi = ((\pi_1,\eta_1), \ldots, (\pi_{\bar{n}},\eta_{\bar{n}})), \quad \bar{n} = |\pi|,\\
				&\pi_\ell\subset \{1,\ldots, n\},  \quad \{\pi_1,\cdots \pi_{\bar{n}}\}\ \mbox{  is a partition of } \{1,\ldots, n\},\\ 
				&\eta_\ell \in \S, \quad \ell \in \{1,\ldots, \bar{n}\}, \quad 1\leq \bar{n}\leq n.
			\end{aligned}
		\end{equation}
		A marked partition element $(\pi_\ell,\eta_\ell)$ is called active if $\eta_\ell=(\xi,A)$ and $m$-dormant if $\eta_\ell=(\xi,D_m)$ for some $\xi\in\Omega_N$. The $n$-spatial seed-bank coalescent is denoted by
		\begin{equation}
			\label{e403}
			\gls{SC},
		\end{equation}
		and starts from
		\begin{equation}
			\label{e403alt}
			\CC^{(n)}(0) = \pi(0), \qquad  \pi(0) = \{(\{1\},\eta_{\ell_1}),\ldots,(\{n\},\eta_{\ell_n})\}, \qquad
			\eta_{\ell_1},\ldots,\eta_{\ell_n} \in \S.
		\end{equation}
		
		\begin{figure}[htbp]
			\begin{center}
				\vspace{.2cm}
				\setlength{\unitlength}{.5cm}
				\begin{tikzpicture}[scale=0.4]
					\draw [fill=blue!10!] (0,-0.5) rectangle (1,4.5);
					\draw [fill=blue!10!] (2,-0.5) rectangle (3,4.5);
					\draw [fill=blue!10!] (4,-0.5) rectangle (5,4.5);
					\draw [fill=blue!10!] (6,-0.5) rectangle (7,4.5);
					\draw [fill=blue!10!] (8,-0.5) rectangle (9,4.5);
					\draw [fill=black, ultra thick] (10,2) circle [radius=0.05] ;
					\draw [fill=black, ultra thick] (11,2) circle [radius=0.05] ;
					\draw [fill=black, ultra thick] (12,2) circle [radius=0.05] ;
					\draw [fill=blue!10!] (13,-0.5) rectangle (14,4.5);
					\draw [ultra thick, ->] (-1,-.5) -- (-1,5);
					\node[left] at (-1.5,2.5) {$t$};
					\draw [fill=black] (.25,-.5) circle [radius=0.1] ;
					\draw [fill=black] (.75,-.5) circle [radius=0.1] ;
					\draw[black,ultra thick] (.25,-.5)--(.25,2);	
					\draw[black,ultra thick] (.75,-.5)--(.75,2)--(.25,2)--(.25,4.5);
					\draw [fill=black] (2.25,-.5) circle [radius=0.1] ;
					\draw[black,ultra thick] (2.25,-.5)--(2.25,1.5);
					\draw[green,ultra thick] (2.25,1.5)--(2.25,3.5);
					\draw[black,ultra thick] (2.25,3.5)--(2.25,4)--(.75,4)--(.75, 4.5);
					\draw [fill=red] (4.25,-.5) circle [radius=0.1] ;
					\draw[red,ultra thick] (4.25,-.5)--(4.25,4.5);	
					\draw [fill=black] (6.25,-.5) circle [radius=0.1] ;
					\draw[black,ultra thick] (6.25,-.5)--(6.25,4.5);
					\draw [fill=yellow] (6.75,-.5) circle [radius=0.1] ;
					\draw[yellow,ultra thick] (6.75,-.5)--(6.75,1);
					\draw[black,ultra thick] (6.75,1)--(6.75,3)--(8.75,3)--(8.75,4.5);
					\draw [fill=black] (13.25,-.5) circle [radius=0.1] ;
					\draw[black,ultra thick] (13.25,-.5)--(13.25,4.5);	
				\end{tikzpicture}
			\end{center}
			\vspace{-.2cm}
			\caption{\small Picture of the evolution of lineages in the spatial coalescent. The purple blocks depict the colonies, the black lines the active lineages, and the coloured lines the dormant lineages. Blue lineages can migrate. Two black lineages can coalesce when they are at the same colony. Red dormant lineages first have to become black and active before they can migrate or coalesce with other black and active lineages. Note that the dual runs \emph{backwards in time}.}
			\label{fig:Duality}
		\end{figure}
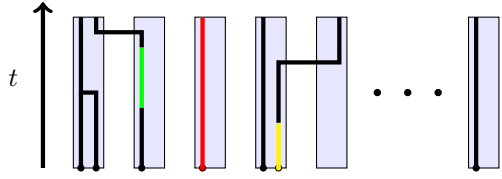

		The $n$-spatial seed-bank coalescent is the Markov process that evolves according to the following two rules (see Figs.~\ref{fig:dualrw}--\ref{fig:Duality}).
		\begin{enumerate}
			\item 
			Each partition element moves independently of all other partition elements according to the transition kernel \gls{tkerd}
			\begin{equation}
				\label{mrw}
				\hspace{-0.2cm}
				b^{}((\xi,R_\xi), (\eta, R_\eta)) = \left\{ \begin{array}{ll}
					a^{\Omega_N} (\xi, \eta), &\text{ if } R_\xi = R_\eta =A,\\
					K_m\frac{e_m}{N^m}, &\text{ if } \xi = \eta,\ R_\xi=A,\ R_\eta = D_m, \text{ for some } m\in\N_0, \\
					\frac{e_m}{N^m}, &\text{ if } \xi = \eta,\ R_\xi=D_m,\ R_\eta = A, \text{ for some } m\in\N_0,\\
					0, &\mbox{ otherwise},
				\end{array}
				\right.
			\end{equation}
			where $a^{\Omega_N}(\cdot,\cdot)$ is the migration kernel defined in \eqref{739}, $K_m$, $m\in\N_0$ are the relative sizes of the $m$-dormant population and the active population defined in \eqref{ratio}, and $e_m$, $m\in\N_0$ are the coefficients controlling the exchange between the active and the dormant population defined in \eqref{defKem}. Thus, an active partition element migrates according to the transition kernel $a^{\Omega_N}(\cdot,\cdot)$ and becomes $m$-dormant at rate $K_m\frac{e_m}{N^m}$, while an $m$-dormant partition element can only become active and does so at rate $\frac{e_m}{N^m}$. 
			\item 
			Independently of all other partition elements, two partition elements that are at the same colony and are both active coalesce with rate $d$, i.e., the two partition elements merge into one partition element.
		\end{enumerate}
		Fig.~\ref{fig:dualrw} gives a schematic overview of the possible transitions of a single lineage, while Fig.~\ref{fig:Duality} gives an example of the evolution in the dual. The \emph{spatial seed-bank coalescent} $(\CC(t))_{t\geq 0}$ is defined as the projective limit of the $n$-spatial seed-bank coalescents $(\CC^{(n)}(t))_{t\geq 0}$ as $n\to\infty$. This object is well-defined by Kolmogorov's extension theorem (see \cite[Section 3]{BCKW16}).
		
		For $n\in\N$ we define the block-counting process \gls{L} corresponding to the $n$-spatial seed-bank coalescent as the process that counts at each site $(\xi,R_\xi)\in\Omega_N\times\{A,(D_m)_{m\in\N_0}\}$ the number of partition elements of $\CC^{(n)}(t)$,  i.e., 
		\begin{equation}
			\label{blctpr}
			\begin{aligned}
				&L(t)=\big(L_{(\xi,A)}(t),\left(L_{(\xi,D_m)}(t)\right)_{m\in\N_0}\big)_{\xi\in\Omega_N},\\
				&L_{(\xi,A)}(t)=L_{(\xi,A)}(\CC^{(n)}(t))
				=\sum_{\ell=1}^{\bar{n}}1_{\{\eta_\ell(t)=(\xi,A)\}},\\ 
				&L_{(\xi,D_m)}(t)=L_{(\xi,D_m)}(\CC^{(n)}(t))
				=\sum_{\ell=1}^{\bar{n}}1_{\{\eta_\ell(t)=(\xi,D_m)\}}, \qquad m \in \N_0.
			\end{aligned}
		\end{equation}
		The state space of $(L(t))_{t\geq 0}$ is $S'=(\N_0\times\N_0^{\N_0})^{\Omega_N}$. We denote the elements of $S^\prime$ by sequences $(m_\xi,(n_{\xi,D_m})_{m\in\N_0})_{\xi\in\Omega_N}$, and define $\delta_{(\eta,R_\eta)}\in S^\prime$ to be the element of $S^\prime$ that is $0$ at all sites $(\xi,R_\xi)\in\Omega_N\times\{A,(D_m)_{m\in\N_0}\}\backslash (\eta,R_\eta)$, and $1$ at the site $(\eta,R_\eta)$. From the evolution of $\CC^{(n)}(t)$ described below \eqref{e403} we see that the block-counting process has the following transition kernel:
		\begin{equation}\label{blockproc}
			\begin{aligned}
				&(m_\xi,(n_{\xi,D_m})_{m\in\N_0})_{\xi\in\Omega_N} \rightarrow\\
				&\qquad\begin{cases}
					({m}_\xi,(n_{\xi,D_m})_{m\in\N_0})_{\xi\in\Omega_N}-\delta_{(\eta,A)}+\delta_{(\zeta,A)},\ 
					&\text{at rate } m_\eta a(\eta,\zeta) \text{ for } \eta,\zeta\in\Omega_N,\\
					({m}_\xi,(n_{\xi,D_m})_{m\in\N_0})_{\xi\in\Omega_N}-\delta_{(\eta,A)},\ 
					&\text{at rate } d{m_\eta \choose 2} \text{ for } \eta\in\Omega_N,\\
					({m}_\xi,(n_{\xi,D_m})_{m\in\N_0})_{\xi\in\Omega_N}-\delta_{(\eta,A)}+\delta_{(\eta,D_m)},\ 
					&\text{at rate } m_\eta K_m\frac{e_m}{N^m} \text{ for } \eta\in\Omega_N, \\
					({m}_\xi,(n_{\xi,D_m})_{m\in\N_0})_{\xi\in\Omega_N}+\delta_{(\eta,A)}-\delta_{(\eta,D_m)},\ 
					&\text{at rate } n_{\eta,m}\frac{e_m}{N^m} \text{ for } \eta\in\Omega_N.\\
				\end{cases}
			\end{aligned}
		\end{equation}
		
		The process $(Z(t))_{t\geq 0}$ defined in \eqref{moSDE} is dual to the block-counting process $(L(t))_{t\geq 0}$ with \emph{duality function} $\gls{H}\colon\,S \times S^\prime\to \R$ defined by
		\begin{equation}
			\label{hpoldef}
			H\Big(\big(x_\xi,(y_{\xi,m})_{m\in\N_0}\big)_{\xi\in\Omega_N},\big(m_\xi,(n_{\xi,D_m})_{m\in\N_0}\big)_{\xi\in\Omega_N}\Big) 
			= \prod_{\xi\in\Omega_N} x_\xi^{m_\xi}\prod_{m\in \N_0}y_{\xi,m}^{n_{\xi,D_m}}.
		\end{equation}
		
		\begin{proposition}{{\bf [Duality relation]}}. 
			\label{P.dual1}
			Let $H$ be as in \eqref{hpoldef}. Then, for all $(x_\xi,(y_{\xi,m})_{m\in\N_0})_{\xi\in\Omega_N}$ $\in S$ and $(m_\xi,(n_{\xi,D_m})_{m\in\N_0})_{\xi\in\Omega_N}\in S^\prime$,
			\begin{equation}
				\label{e401}
				\begin{aligned}
					&\E_{\big(x_\xi,(y_{\xi,m})_{m\in\N_0}\big)_{\xi\in\Omega_N}}
					\Big[H\Big(\big(x_\xi(t),(y_{\xi,m}(t))_{m\in\N_0}\big)_{\xi\in\Omega_N},(m_\xi,n_\xi)_{\xi\in\Omega_N}\Big)\Big]\\
					&=\E_{\big(m_\xi,(n_{\xi,D_m})_{m\in\N_0}\big)_{\xi\in\Omega_N}}
					\Big[H\Big(\big(x_\xi,(y_{\xi,m})_{m\in\N_0 }\big)_{\xi\in\Omega_N},
					\big(L_{(\xi,A)}(t),(L_{(\xi,D_m)}(t))_{m\in\N_0}\big)_{\xi\in\Omega_N}\Big)\Big]
				\end{aligned}
			\end{equation}
			with $\E$ the generic symbol for expectation (on the left over the original process, on the right over the dual process).
		\end{proposition}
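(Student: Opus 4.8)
The plan is to prove the asserted identity by the generator criterion for Markov duality. Let $G$ be the generator of the SSDE in \eqref{moSDE} and let $G^\dagger$ be the generator of the block-counting process $(L(t))_{t\ge0}$ with jump rates read off from \eqref{blockproc}. It suffices to establish the pointwise generator identity
\[
\big(G\,H(\cdot,w)\big)(u) \;=\; \big(G^\dagger H(u,\cdot)\big)(w),
\qquad u \in S,\ w \in S',
\]
and then to invoke the standard theorem (used for ``model 2'' in \cite[Section~2.4]{GdHOpr1}, going back to \cite{BCKW16}) that upgrades such an identity, together with boundedness of $H$ and well-posedness of the two martingale problems, to the duality relation. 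Boundedness is immediate, since $0\le x_\xi,y_{\xi,m}\le1$ gives $|H|\le1$; well-posedness on the diffusion side is Proposition~\ref{P.wellp}, and on the dual side the block-counting process is a pure-jump Markov chain whose number of partition elements is non-increasing, hence it does not explode.

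To verify the generator identity, fix $w=(m_\xi,(n_{\xi,D_m})_{m\in\N_0})_{\xi\in\Omega_N}\in S'$ with only finitely many non-zero entries --- the only relevant case, since the $n$-coalescent has at most $n$ partition elements at all times. For such $w$ the function $u\mapsto H(u,w)=\prod_\xi x_\xi^{m_\xi}\prod_m y_{\xi,m}^{n_{\xi,D_m}}$ (recall \eqref{hpoldef}) is a bounded polynomial depending on finitely many components, hence lies in the domain $\mathbf{F}$ of $G$ (recall \eqref{eq401}), so one may apply $G$ to it directly. The computation splits into the four terms of $G$. The migration term $\sum_\eta a^{\Omega_N}(\xi,\eta)[x_\eta-x_\xi]\partial_{x_\xi}$ applied to the monomial produces $\sum_\eta m_\xi\,a^{\Omega_N}(\xi,\eta)\big[H(u,w-\delta_{(\xi,A)}+\delta_{(\eta,A)})-H(u,w)\big]$, matching the migration line of \eqref{blockproc}. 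The resampling term $\tfrac12 g(x_\xi)\partial_{x_\xi}^2=\tfrac{d}{2}\,x_\xi(1-x_\xi)\,\partial_{x_\xi}^2$ (recall that in this section $g=d\,g_{\mathrm{FW}}$ with $g_{\mathrm{FW}}(x)=x(1-x)$) produces $d\binom{m_\xi}{2}\big[H(u,w-\delta_{(\xi,A)})-H(u,w)\big]$, matching the coalescence line. The term $\tfrac{K_me_m}{N^m}[y_{\xi,m}-x_\xi]\partial_{x_\xi}$ produces $m_\xi\tfrac{K_me_m}{N^m}\big[H(u,w-\delta_{(\xi,A)}+\delta_{(\xi,D_m)})-H(u,w)\big]$, matching the ``active becomes $m$-dormant'' line; and the term $\tfrac{e_m}{N^m}[x_\xi-y_{\xi,m}]\partial_{y_{\xi,m}}$ produces $n_{\xi,D_m}\tfrac{e_m}{N^m}\big[H(u,w+\delta_{(\xi,A)}-\delta_{(\xi,D_m)})-H(u,w)\big]$, matching the ``$m$-dormant becomes active'' line. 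Summing over $\xi$ (a finite sum) gives exactly $G^\dagger H(u,\cdot)$ evaluated at $w$, which is the desired identity.

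With the generator identity in hand, the duality relation follows: the present system is precisely ``model 2'' of \cite{GdHOpr1} with the Abelian group chosen as in \eqref{hiergroup}, the migration kernel as in \eqref{739}, and the seed-bank rates $e_m/N^m$ and $K_me_m/N^m$, so the argument of \cite[Section~2.4]{GdHOpr1} carries over. The one point that needs genuine care --- and which I expect to be the main obstacle --- is that $H(u,L(t))$, regarded as a function of the diffusion state $u$, can depend on infinitely many components once the (finitely many) dual partition elements have been spread out over infinitely many colonies by migration, so it is not a finite-dimensional functional to which the martingale problem applies verbatim. This is handled by localising on the almost surely finite random set of colonies visited by the partition elements up to time $t$ --- using that the hierarchical random walk underlying $a^{\Omega_N}(\cdot,\cdot)$ has finite total rate per particle by \eqref{740}, hence does not explode --- applying the finite-dimensional version of the duality on that event, and removing the localisation by dominated convergence.
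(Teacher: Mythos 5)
Your proposal is correct and takes the same approach that the cited reference must take: the paper itself does not spell out a proof of Proposition~\ref{P.dual1} but merely refers to \cite[Section~2.4]{GdHOpr1}, and the argument there is exactly the generator criterion for moment duality that you reconstruct. Your term-by-term verification of $(G\,H(\cdot,w))(u)=(G^\dagger H(u,\cdot))(w)$ against the four jump types in \eqref{blockproc} is right (migration, coalescence at rate $d\binom{m_\xi}{2}$ from $g=dg_{\mathrm{FW}}$, active-to-$m$-dormant, and $m$-dormant-to-active), and your observations about boundedness of $H$, well-posedness on both sides, non-explosion of the dual via \eqref{740} and the finiteness of $\chi$, and the need to localise on the a.s.\ finite random set of visited colonies before removing the cutoff by dominated convergence are all the right ingredients.
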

		
		\noindent
		Proposition \ref{P.dual1} was proved in \cite[Section 2.4]{GdHOpr1}. Since the duality function $H$ captures all the mixed moments of $(Z(t))_{t\geq 0}$, the duality relation is that of a \emph{moment dual}.
		
		\begin{remark}{\bf [Duality relation in terms of the effective geographic space]}
			\label{dualeff}
			{\rm Interpreting $(Z(t))_{t\geq 0}$ as a process on the effective geographic space $\S=\Omega_N\times\{A,(D_m)_{m\in\N_0}\}$, we can rewrite \eqref{moSDE} as
				\begin{equation}
					\label{SDE2}
					\begin{aligned}
						\d \gls{z}&=\sum_{(\xi,R_\xi)\in\S}b((\xi,R_\xi),(\eta,R_\eta))[z_{(\eta,R_\eta)}(t)-z_{(\xi,R_\xi)}(t)]\,\d t\\
						&\qquad+1_{\{R_\xi=A\}}\sqrt{g(z_{(\xi,R_\xi)}(t))}\,\d w_\xi(t),\qquad (\xi,R_\xi)\in\S,
					\end{aligned}
				\end{equation}
				where $b(\cdot,\cdot)$ is the transition kernel defined in \eqref{mrw}. If $g=dg_{\mathrm{FW}}$, then we can write its dual process as follows. Let $(L(t))_{t\geq 0}=(L(\CC(t))_{t \geq 0}$ be the block-counting process that at each site $(\xi,\R_\xi)\in\S$ counts the number of partition elements of $\CC(t)$, i.e.,
				\begin{equation}
					\begin{aligned}
						L(t)&=(L_{(\xi,R_\xi)}(t))_{{(\xi,R_\xi)}\in\S},\\
						L_{(\xi,R_\xi)}(t)&=L_{(\xi,R_\xi)}(\CC(t)) =\sum_{\ell=1}^{\bar{n}}1_{\{\eta_\ell(t)={(\xi,R_\xi)}\}}.
					\end{aligned}
				\end{equation}
				Rewrite the duality function $H$ in \eqref{hpoldef} as
				\begin{equation}
					H\Big((z_{(\xi,R_\xi)},l_{(\xi,R_\xi)})_{{(\xi,R_\xi)}\in\S}\Big) = \prod_{{(\xi,R_\xi)}\in\S}z_{(\xi,R_\xi)}^{l_{(\xi,R_\xi)}}.
				\end{equation}
				Then, for $z\in S$ and $l\in S^\prime$, the duality relation in \eqref{e401} reads 
				\begin{equation}
					\label{e401b}
					\E_{z(\xi,R_\xi)}\big[H(z_{(\xi,R_\xi)}(t),l_{(\xi,R_\xi)})\big] = \E_{l(\xi,R_\xi)}\big[H(z_{(\xi,R_\xi)},L_{(\xi,R_\xi)}(t))\big]. 
				\end{equation}
				Interpreting the duality relation in terms of the effective geographic space $\S$, we see that each ancestral lineage in the dual is a Markov process moving according to the transition kernel $b^{}(\cdot,\cdot)$ defined in \eqref{mrw}. Interpreting the duality relation in terms of the geographic space $\Omega_N$, we see that an ancestral lineage is a random walk on $\Omega_N$, with internal states $A$ and $(D_m)_{m\in\N_0}$. Both interpretations turn out to be useful when we analyse the long-time behaviour of the system.} \hfill$\blacksquare$
		\end{remark}

		\vspace{-0.5cm}
		\begin{figure}[htbp]
			\begin{center}
				\setlength{\unitlength}{.5cm}
				\begin{picture}(20,2)(0,0)
					\put(0,0){\line(20,0){20}}
					\put(0,0){\circle*{.25}}
					\put(2,0){\circle*{.25}}
					\put(5,0){\circle*{.25}}
					\put(7,0){\circle*{.25}}
					\put(11,0){\circle*{.25}}
					\put(13,0){\circle*{.25}}
					\put(18,0){\circle*{.25}}
					\put(.7,.4){$\sigma_1$}
					\put(3.3,.4){$\tau_1$}
					\put(5.7,.4){$\sigma_2$}
					\put(8.6,.4){$\tau_2$}
					\put(11.7,.4){$\sigma_3$}
					\put(15,.4){$\tau_3$}
				\end{picture}
			\end{center}		
			\caption{\small Renewal process induced by a lineage in the dual moving according to the transition kernel $b(\cdot,\cdot)$. For $k\in\N$, $\sigma_k$ denotes the $k$th active period and $\tau_k$ the $k$th dormant period.}
			\label{fig:per}
		\end{figure}
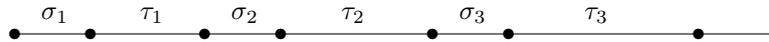
		
		\begin{remark}{\bf [The renewal process induced by the dual process]}
			\label{wakeup} 	
			{\rm The partition elements describing the dual process give rise to a renewal process on the active state $A$ and the dormant state $D=\bigcup_{m\in\N_0}D_m$. Since the only transition a dormant lineage can make is to become active, irrespectively of its colour, each dual lineage induces a sequence of active and dormant time lapses. Let $(\sigma_k)_{k\in\N}$ denote the successive active time periods and $(\tau_k)_{k\in\N}$ the successive dormant time periods (see Fig.~\ref{fig:per}). Then $(\gls{sigmak})_{k\in\N}$ and $(\gls{tauk})_{k\in\N}$ are sequences of i.i.d.\ random variables with marginal laws (recall \eqref{chidef})
				\begin{equation}
					\label{sigtau}
					\P(\sigma_1>t) =\e^{-\chi t}, \quad 
					\P(\tau_1>t) =\sum_{m\in\N_0} \frac{K_m\frac{e_m}{N^m}}{\chi}\,\e^{-\frac{e_m}{N^m} t}, \qquad t \geq 0.
				\end{equation}
				
				\begin{remark}{\bf[Wake up times]}
					{\rm The renewal process in Fig.~\ref{fig:per} is key to understanding the long-time behaviour of the model (as we will see in Section \ref{s.mainfinite}). Note that
						\begin{equation}
							\label{deftau}
							\gls{tau} = \tau_1
						\end{equation} 
						represents \emph{the typical wake-up time of a lineage in the dual}. By choosing specific sequences $(K_m)_{m\in\N_0}$ and $(e_m)_{m\in\N_0}$ we can mimic different wake-up time distributions. In particular, if we allow $\rho=\sum_{m\in\N_0}K_m=\infty$ (recall \eqref{rhodef}), then $\tau$ may have a fat-tail (examples are given in Section \ref{s.mainfinite}). In other words, \emph{the internal structure of the seed-bank allows us to model fat-tailed wake-up times without loosing the Markov property of the evolution.}} \hfill$\Box$
				\end{remark}
				
				Note that even when there is no dual, i.e., $g\in\CG$ with $g\neq dg_{\mathrm{FW}}$, we can still define $\tau$ by \eqref{deftau}, since $\tau_1$ in \eqref{sigtau} is a random variable that depends only on the sequences $(K_m)_{m\in\N_0}$ and $(e_m)_{m\in\N_0}$, and we can still interpret $\tau$ as the typical wake-up time of an individual in the population.} \hfill$\blacksquare$
		\end{remark}
		
		\subsection{Clustering criterion}
		\label{ss.clcr} 
		
		\subsubsection{Clustering criterion for Fisher-Wright diffusion function}
		
		In \cite{GdHOpr1} we showed that the system exhibits a dichotomy between \emph{coexistence} (= locally multi-type equilibria) and \emph{clustering} (= locally mono-type equilibria). The clustering criterion is based on the dual and requires the notion of colour regularity.
		We call a law translation invariant when it is \emph{invariant under the group action}. 
		
		\begin{definition}{\bf [Colour regular initial measures]}
			\label{D.regular}
			{\rm We say that a translation invariant initial measure $\mu(0)$ is \emph{colour regular} when
				\begin{equation}
					\label{covcond1}
					\lim_{m\to\infty} \E_{\mu(0)}[y_{0,m}] \quad \text{ exists}.
				\end{equation}	
				This condition is needed because, as time progresses, lineages starting from slower and slower seed-banks become active and bring new types into the active population. Without control on the initial states of the slow seed-banks, there may be no convergence to equilibrium.} \hfill$\blacksquare$
		\end{definition}
		
		The key clustering criterion is the following.
		
		\begin{proposition}{{\bf [Clustering criterion]}}
			\label{T.dichcrit} 
			Suppose that $\mu(0)$ is translation invariant. If $\rho=\infty$ (recall \eqref{rhodef}), then additionally suppose that $\mu(0)$ is colour regular. Let $d\in(0,\infty)$. Then the system with $g=dg_{\mathrm{FW}}$ clusters if and only if in the dual two partition elements coalesce with probability $1$.
		\end{proposition}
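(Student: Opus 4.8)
The plan is to prove the criterion entirely through the two-particle moment duality of Proposition~\ref{P.dual1}; this is also the reason the hypothesis $g=dg_{\mathrm{FW}}$ is imposed. Recall from \cite{GdHOpr1} that, under the stated assumptions on $\mu(0)$, the system converges as $t\to\infty$ to an equilibrium $\nu$, and that clustering means that $\nu$ is supported on the mono-type configurations (all coordinates equal and lying in $\{0,1\}$), coexistence that it is not. So it suffices to control the large-$t$ behaviour of the second moments
\[
\E_\mu\big[x_\xi(t)(1-x_\xi(t))\big],\qquad
\E_\mu\big[(x_\xi(t)-x_\eta(t))^2\big],\qquad
\E_\mu\big[(x_\xi(t)-y_{\xi,m}(t))^2\big].
\]
Expanding the squares and invoking Proposition~\ref{P.dual1}, each term is an expectation over the dual started from \emph{two} partition elements — placed at $(\xi,A)$ and $(\eta,A)$, at $(\xi,A)$ twice, or at $(\xi,A)$ and $(\xi,D_m)$ — integrated against the $\mu(0)$-means of the coordinates. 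Translation invariance of $\mu(0)$ makes every active coordinate have mean $\theta_x$ and every $m$-dormant coordinate mean $\theta_{y_m}$; writing $\Theta(A)=\theta_x$ and $\Theta(D_m)=\theta_{y_m}$, once the two lineages have coalesced the surviving one contributes precisely $\Theta(R(t))$, where $R(t)$ is its active/dormant label at time $t$.

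For the implication ``coalescence with probability $1$ $\Rightarrow$ clustering'' I would split each dual expectation according to whether the two lineages have coalesced by time $t$. The not-yet-coalesced contribution is at most $\P(\text{no coalescence by }t)$, which vanishes by hypothesis. On the coalesced part one is left with $\E\big[\Theta(R(t))\,\1\{\text{coalesced by }t\}\big]$, and since the coalescence time is a.s.\ finite, for large $t$ the surviving lineage has been running the alternating renewal dynamics of Remark~\ref{wakeup} (active periods exponential with rate $\chi$, dormant periods distributed as $\tau$) for a time tending to infinity. I would then show this contribution converges to a limit $v$ that is \emph{independent of the starting configuration}: if $\rho<\infty$ then $\E[\tau]=\rho/\chi<\infty$, the alternating renewal process is positive recurrent, and $v$ is the $\Theta$-average of its stationary law; if $\rho=\infty$ then $\E[\tau]=\infty$, one has $\P\big(R(t)\in\{A,D_0,\dots,D_{M-1}\}\big)\to 0$ for every fixed $M$, and colour regularity ($\theta_{y_m}\to\theta$) forces $v=\theta$. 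Since each of the three displayed quantities is an alternating sum of dual expectations all converging to the same $v$, they all tend to $0$; hence $\nu$ is mono-type and the system clusters.

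For the converse I would argue the contrapositive. Take the deterministic start $\mu(0)=\delta_{(1/2,(1/2)_{m\in\N_0})}^{\otimes\Omega_N}$, which is translation invariant and colour regular; against it every dual expectation equals $(1/2)^{\#\{\text{partition elements}\}}$. Starting the two-particle dual from two active lineages at the same colony, $\E_\mu[x_\xi(t)^2]=\tfrac12\,\P(\text{coalesced by }t)+\tfrac14\,\P(\text{not coalesced by }t)$, whereas $\E_\mu[x_\xi(t)]=\tfrac12$. If two partition elements fail to coalesce with positive probability — a $0$--$1$ property that, by irreducibility of the two-particle dual, does not depend on the starting configuration and is therefore inherited by the same-colony, both-active start — then letting $t\to\infty$ gives $\E_\nu[x_\xi(1-x_\xi)]>0$, so $\nu$ is not mono-type and the system coexists. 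Together with the dichotomy of \cite{GdHOpr1} (exactly one of the two alternatives occurs), this yields the equivalence.

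I expect the main obstacle to be the $\rho=\infty$ case of the first implication: establishing that a null-recurrent alternating renewal process asymptotically escapes every finite set of states, i.e.\ $\P\big(R(t)\in\{A,D_0,\dots,D_{M-1}\}\big)\to 0$, and then combining this with colour regularity to identify the common limit $v=\theta$ and thereby close the telescoping of the three second moments. The remaining ingredients — a.s.\ finiteness of coalescence times, the configuration-independence of certain coalescence, and convergence to equilibrium — are either routine renewal/irreducibility arguments or can be quoted from \cite{GdHOpr1}.
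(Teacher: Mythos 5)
Your route — reduce clustering to the vanishing of $\E_\nu[x_\xi(1-x_\xi)]$, $\E_\nu[(x_\xi-x_\eta)^2]$, $\E_\nu[(x_\xi-y_{\xi,m})^2]$, expand these via the two-particle moment dual of Proposition~\ref{P.dual1}, and control the post-coalescence contribution through the alternating renewal dynamics of Remark~\ref{wakeup} — is the right one, and it is the way to make rigorous the genealogical heuristic the paper states (the paper itself defers the detailed proof to \cite[Section~4.3]{GdHOpr1}). The direct implication is correctly structured, and you have isolated the genuine technical heart: for $\rho=\infty$ you must show the null-recurrent renewal label $R(t)$ escapes every finite set, $\P\big(R(t)\in\{A,D_0,\dots,D_{M-1}\}\big)\to0$ for each fixed $M$, so that colour regularity delivers a start-independent limit $v=\theta$; for $\rho<\infty$ positive recurrence gives $v=(\theta_x+\sum_m K_m\theta_{y_m})/(1+\rho)$, which is precisely $\theta$. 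With a common limit $v$ for the one-lineage dual and for the post-coalescence lineage, the three alternating sums telescope to $0$, as you say.

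There is, however, a real gap in the converse. You establish coexistence only for the particular start $\mu(0)=\delta_{(1/2,(1/2)^{\N_0})}^{\otimes\Omega_N}$, and the appeal to ``the dichotomy of \cite{GdHOpr1} (exactly one of the two alternatives occurs)'' does not close the distance to the given $\mu(0)$: that dichotomy says clustering and coexistence are mutually exclusive for a \emph{fixed} initial law, not that the answer is uniform across all translation-invariant colour-regular starts — and the latter is what you need. The clean fix is to run the same second-moment computation for a general product $\mu(0)$ with $\theta\in(0,1)$: split the not-yet-coalesced dual contribution according to whether the two lineages occupy different colonies or the same colony; the different-colony part factorizes as $\Theta(R_1(t))\Theta(R_2(t))\to\theta^2$ once the two lineages separate (which they do a.s.\ on the non-coalescence event, since they are eventually transient relative to each other), and one arrives at $\lim_{t\to\infty}\E_\mu[x_\xi(t)(1-x_\xi(t))]=\theta(1-\theta)\,\P(\text{never coalesce})>0$. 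This also removes your ``$0$--$1$ property'' claim, which as stated is not quite right: in the transient regime $\P(\text{never coalesce})$ started from the same-colony both-active configuration lies strictly between $0$ and $1$, not in $\{0,1\}$ — but you only ever use strict positivity, and that follows directly from transience of the difference walk together with the finite coalescence rate $d$.
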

		
		The idea behind Theorem \ref{T.dichcrit} is as follows. If in the dual two partition elements coalesce with probability 1, then a random sample of $n$ individuals drawn from the current population has a common ancestor some finite time backwards in time. Since individuals inherit their type from their parent individuals, this means that all $n$ individuals have the same type. A formal proof was given in \cite[Section 4.3]{GdHOpr1}. The proof is valid for any geographic space given by a countable Abelian group endowed with the discrete topology, of which $\Omega_N$ is an example.
		
		\subsubsection{Clustering criterion for general diffusion function}
		\label{ss.comparisonarg}
		
		For $g\in\CG$ with $g\neq dg_{\mathrm{FW}}$ no dual is available and hence we cannot use the clustering criterion in Proposition \ref{T.dichcrit}. However, as shown in \cite{GdHOpr1}, we can argue by duality comparison arguments (see \cite[Lemma 5.5 and Lemma 6.3]{GdHOpr1}) that the system evolving according to \eqref{moSDE} with $g \in \CG$ clusters if and only if the system with $g=dg_{\mathrm{FW}}$ for some $d\in (0,\infty)$ clusters. In particular, for $g=d g_{\mathrm{FW}}$, $d\in(0,\infty)$, whether or not the system clusters does not depend on the resampling rate $d$.

		\section{Main results: $N<\infty$, identification of clustering regime}
		\label{s.mainfinite}
		
		In this section we identify the \emph{clustering regime}, i.e., the range of \emph{parameters} for which the clustering criterion in Proposition~\ref{T.dichcrit} is met. In \cite[Section 3.2, Theorem 3.3]{GdHOpr1} we derived a necessary and sufficient condition for when clustering prevails, for any geometric space given by a countable Abelian group endowed with the discrete topology. Recall $\chi$ in \eqref{chidef}, $\rho$ in \eqref{rhodef} and $\tau$ in \eqref{deftau},. From \eqref{sigtau} it follows that
		\begin{equation}
			\E[\tau]=\sum_{m\in\N_0}\frac{K_m}{\chi}=\frac{\rho}{\chi},
		\end{equation}
		and hence the mean wake-up time is finite if $\rho<\infty$ and infinite if $\rho=\infty$. In Section~\ref{ss.finsb} we look at $\rho<\infty$ and in Section~\ref{ss.infsb} at $\rho=\infty$. In Section~\ref{ss.clusreg} we summarise our findings and identify the clustering regime. 
		
		\subsection{Finite mean wake-up time}
		\label{ss.finsb}
		
		Suppose that the system evolving according to \eqref{moSDE} has a translation invariant initial measure $\mu(0)$ with density $\theta \in (0,1)$. Then \cite[Theorem 3.3]{GdHOpr1} says that for $\rho<\infty$ clustering occurs if and only if
		\begin{equation}
			\label{cluscritnoseed-a}
			\int_1^\infty  \d t\,a^{\Omega_N}_t(0,0) = \infty.
		\end{equation} 
		It is known that \eqref{cluscritnoseed-a} holds for the hierarchical migration defined in \eqref{739} if and only if \cite[Section 3]{DGW05}
		\begin{equation}
			\label{cluscritnoseed-cals}
			\sum_{k\in\N_0} \frac{1}{c_k} = \infty.
		\end{equation}
		Hence, for $\rho<\infty$, the clustering criterion depends on the migration kernel \emph{only} and the seed-bank has no effect.
		
		\begin{figure}[htbp]
			\vspace{-.3cm}
			\begin{center}
				\setlength{\unitlength}{.5cm}
				\begin{picture}(20,2)(0,0)
					\put(0,0){\line(20,0){20}}
					\put(0,-1.98){\line(20,0){20}}
					\put(0,0){\circle*{.25}}
					\put(2,0){\circle*{.25}}
					\put(5,0){\circle*{.25}}
					\put(7,0){\circle*{.25}}
					\put(11,0){\circle*{.25}}
					\put(13,0){\circle*{.25}}
					\put(19,0){\circle*{.25}}
					\put(0,-2){\circle*{.25}}
					\put(3,-2){\circle*{.25}}
					\put(4,-2){\circle*{.25}}
					\put(8,-2){\circle*{.25}}
					\put(12,-2){\circle*{.25}}
					\put(16,-2){\circle*{.25}}
					\put(18,-2){\circle*{.25}}
					\put(.7,.4){$\sigma_1$}
					\put(3.3,.4){$\tau_1$}
					\put(5.7,.4){$\sigma_2$}
					\put(8.6,.4){$\tau_2$}
					\put(11.7,.4){$\sigma_3$}
					\put(15,.4){$\tau_3$}
					\put(1.2,-1.6){$\sigma'_1$}
					\put(3.2,-1.6){$\tau'_1$}
					\put(5.7,-1.6){$\sigma'_2$}
					\put(9.7,-1.6){$\tau'_2$}
					\put(13.7,-1.6){$\sigma'_3$}
					\put(16.7,-1.6){$\tau'_3$}
					\put(20,-1){$\ldots$}
					\qbezier[15](0,-2)(0,-1)(0,0)
					\qbezier[15](2,-2)(2,-1)(2,0)
					\qbezier[15](5,-2)(5,-1)(5,0)
					\qbezier[15](7,-2)(7,-1)(7,0)
					\qbezier[15](12,-2)(12,-1)(12,0)
					\qbezier[15](13,-2)(13,-1)(13,0)
					\qbezier[15](19,-2)(19,-1)(19,0)
				\end{picture}
				\vspace{0.8cm}
			\end{center}
			\caption{\small Successive periods during which the two random walks are active and dormant (recall \eqref{sigtau} and Fig.~\ref{fig:per}). The time lapses between successive pairs of dotted lines represent periods of \emph{joint activity}.}
			\label{fig:periods}
		\end{figure}
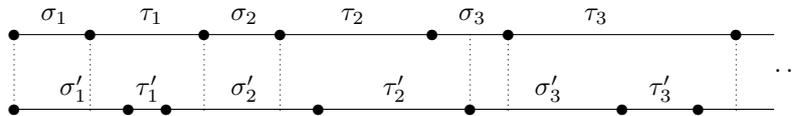 
		
		In view of Proposition \ref{T.dichcrit}, if $g=dg_{\mathrm{FW}}$, then clustering prevails if and only if two lineages in the dual coalesce with probability 1. Recall that two lineages in the dual can only coalesce when they are at the same site and are both active. Since the rate of coalescence is $d\in(0,\infty)$, each time this happens the two lineages have a positive probability to coalesce before moving or becoming dormant. Therefore, clustering prevails if and only if two lineages meet infinitely often while being active. This happens exactly when \eqref{cluscritnoseed-a} holds. The fact that the seed-bank plays no role can be seen from the dual. Each lineage in the dual moves according to the transition kernel $b(\cdot,\cdot)$ (recall \eqref{mrw}). Looking at the renewal process induced by the dual process (recall Remark \ref{wakeup} and Fig.~\ref{fig:periods}), we see that for $\rho<\infty$ the probability that a lineage in the dual is active at time $t$ is approximately $\frac{1}{1+\rho}$ for large $t$. The total activity time of a lineage up to time $t$ is therefore approximately $\frac{1}{1+\rho} t$ for large $t$. Hence the total time the two lineages in the dual are at the same site and are both active is approximately
		\begin{equation}
			\label{mo1}
			\int_1^\infty \d t\left(\frac{1}{1+\rho}\right)^2 a_{2\frac{1}{1+\rho} t}(0,0),
		\end{equation}
		By Polya's argument, if the integral in \eqref{mo1} is infinite, then two lineages in the dual meet infinitely often while being active. After a variable transformation, \eqref{mo1} becomes the integral in \eqref{cluscritnoseed-a} up to a constant. (For a formal proof of the criterion in \eqref{cluscritnoseed-a}, we refer to \cite{GdHOpr1}.) By the above argument, we can think of the integral in \eqref{cluscritnoseed-a} as the \emph{total hazard of coalescence} of two dual lineages. To get the result for general $g\in\CG$ we must invoke the duality comparison arguments mentioned in Section \ref{ss.comparisonarg}.
		
		In terms of the degree of the random walk (recall Remark~\ref{rem:degree}), \eqref{cluscritnoseed-a} corresponds to hierarchical migration with degree $0^-$. The same criterion as in \eqref{cluscritnoseed-a} was found in \cite{FG96} for interacting Fisher-Wright diffusions on the hierarchical lattice without seed-bank ($\rho=0$). Hence we conclude that for $\rho<\infty$ the seed-bank does not affect the dichotomy.
		
		\subsection{Infinite mean wake-up time} 
		\label{ss.infsb}
		
		If $\rho=\infty$, then the seed-bank does affect the dichotomy. To apply the criterion in \cite[Theorem 3.3]{GdHOpr1}, we assume that the system evolving according to \eqref{moSDE} has a translation-invariant initial measure $\mu(0)$ with density $\theta \in (0,1)$ that is colour regular. 
		
		The criterion for clustering that was derived in \cite{GdHOpr1} for $\rho=\infty$ applies to wake-up times $\tau$ (recall \eqref{deftau}) of the form    
		\begin{equation}
			\label{mod}
			\frac{\P(\tau \in \d t)}{\d t}  \sim \varphi(t)\, t^{-(1+\gamma)}, \quad t \to\infty, \quad \gamma\in (0,1],  
		\end{equation}
		with $\gls{phi}$ \emph{slowly varying at infinity}. Define
		\begin{equation}
			\label{hatphidefplus}
			\hat\varphi(t) = \left\{\begin{array}{ll}
				\varphi(t), &\gamma \in (0,1),\\[0.2cm]
				\E\left[\tau\wedge t\right] &\gamma=1. 
			\end{array}
			\right.
		\end{equation} 
		As shown in \cite[Section 1.3]{BGT87}, every slowly varying function $\varphi$ may be assumed to be infinitely differentiable and to be represented by the integral  
		\begin{equation}
			\label{hatphirepr}
			\varphi(t) = \exp\left[\int_{(\cdot)}^t \frac{\d u}{u}\,\psi(u)\right]
		\end{equation}
		for some $\psi\colon [0,\infty) \to \R$ such that $\lim_{u\to\infty} |\psi(u)| = 0$. From \eqref{mod} we see that $\hat{\varphi}(t)$ is also slowly varying. If we assume that $|\psi(u)| \leq C/\log u$ for some $C<\infty$, then the system clusters if and only if (see \cite[Section 3.2]{GdHOpr1})
		\begin{equation}
			\label{cluscritseed-b}
			\int_{(\cdot)}^\infty  \d t\,\hat\varphi(t)^{-1/\gamma}\,
			t^{-(1-\gamma)/\gamma}\,\hat{a}_t(0,0)=\infty.
		\end{equation} 
		Note that $\gls{gamma}$ in \eqref{mod} is the \emph{tail exponent} of the typical wake-up time $\tau$ (recall Remark \ref{wakeup}) and depends on the sequences $\underline{e},\underline{K}$ in \eqref{defKem} governing the exchange with the seed-bank. If $g=dg_{\mathrm{FW}}$, then in view of Theorem~\ref{T.dichcrit} the criterion in \eqref{cluscritseed-b} determines whether two lineages in the dual coalesce with probability 1. 
		
		In Section~\ref{s.clusreg} we will use the renewal process induced by the dual (recall Remark~\ref{wakeup}) to show that \eqref{cluscritseed-b} indeed gives the \emph{total hazard of coalescence} of two dual lineages. Therefore the integral in \eqref{cluscritseed-b} is the counterpart of \eqref{cluscritnoseed-a}. The rate of coalescence again does not affect the dichotomy: in \cite{GdHOpr1} a duality comparison argument was used to show that \eqref{cluscritseed-b} gives the clustering criterion also for $g\in\CG$ with $g\neq dg_{\mathrm{FW}}$. The effect of the seed-bank on the dichotomy is embodied by the term $\hat\varphi(t)^{-1/\gamma}\,t^{-(1-\gamma)/\gamma}$ in \eqref{cluscritseed-b}.  The criterion in \eqref{cluscritseed-b} shows that there is a competition between migration and exchange with the seed-bank. 
		
		For the special case where $\hat\varphi(t) \asymp 1$, the criterion in \eqref{cluscritseed-b} says that (recall Remark~\ref{rem:degree})
		\begin{equation}
			\label{cluscrit}
			\text{clustering} \quad \Longleftrightarrow \quad  \text{ either } \delta^- \leq -\frac{1-\gamma}{\gamma} \text{ or } 
			\delta^+ < -\frac{1-\gamma}{\gamma}.
		\end{equation} 
		Condition \eqref{cluscritseed-b} implies that for $\gamma \in (0,\tfrac12)$ no clustering is possible: the typical wake-up time has such a heavy tail that with a positive probability two dual lineages do not meet, irrespective of the migration.
		
		\begin{definition}
			{\rm In what follows we will focus on the following two specific parameter regimes:
				\begin{enumerate}
					\item[$\bullet$] 
					\emph{Asymptotically polynomial}, i.e.,
					\begin{equation}
						\label{regvar}
						K_k \sim Ak^{-\alpha}, \quad e_k \sim Bk^{-\beta}, \quad c_k \sim Fk^{-\phi},
						\quad k \to \infty,\, A,B,F \in (0,\infty),\,\alpha,\beta,\phi \in \R.
					\end{equation}
					\item[$\bullet$] 
					\emph{Pure exponential}, i.e.,
					\begin{equation}
						\label{pureexp}
						K_k = K^k, \quad e_k = e^k, \quad c_k = c^k,
						\quad k \in\N_0,\, K,e,c \in (0,\infty).
					\end{equation}
				\end{enumerate}
				Note that both \eqref{740} and \eqref{740alt} are satisfied for $N\to\infty$. Also note that an infinite seed-bank corresponds to $\alpha \in (-\infty,1]$, respectively, $K \in [1,\infty)$.} \hfill $\Box$
		\end{definition} 
		
		The scaling of the wake-up time and the migration kernel in these parameter regimes are as follows.
		
		\begin{theorem}{{\bf [Infinite seed-bank: Scaling of wake-up time and migration kernel]}}
			\label{T.scalcoeff}
			Suppose that $\rho=\infty$. Then
			\begin{itemize}
				\item[(a)] Subject to \eqref{regvar}, 
				\begin{equation}
					\label{mo3}
					\begin{aligned}
						&\gamma = 1, \quad \varphi(t) \asymp (\log t)^{-\alpha}, \quad \hat\varphi(t) \asymp 
						\left\{\begin{array}{ll}
							(\log t)^{1-\alpha}, &\alpha \in (-\infty,1),\\[0.2cm]
							\log\log t, &\alpha=1,
						\end{array}
						\right.
						\\
						&a_t^{\Omega_N}(0,0) \asymp t^{-1}\log^\phi t.
					\end{aligned}
				\end{equation}
				\item[(b)] Subject to \eqref{pureexp}, 
				\begin{equation}\label{mo2}
					\begin{aligned}
						&\gamma = \gamma_{N,K,e} = \frac{\log(N/Ke)}{\log(N/e)}, \quad \varphi(t) \asymp 1, \quad \hat\varphi(t) \asymp 
						\left\{\begin{array}{ll}
							1, &K \in (1,\infty),\\[0.2cm]
							\log t, &K = 1,
						\end{array}
						\right. 
						\\
						&a_t^{\Omega_N}(0,0) \asymp t^{-1-\delta_{N,c}},
					\end{aligned}
				\end{equation} 
				where
				\begin{equation}
					\label{deltaNc}
					\delta_{N,c} = \frac{\log c}{\log (N/c)}.
				\end{equation}
			\end{itemize}
		\end{theorem}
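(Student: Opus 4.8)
The two scalings are obtained by separate arguments. For the wake-up time $\tau$ we use the explicit mixture-of-exponentials representation \eqref{sigtau}, and for the return probability $a_t^{\Omega_N}(0,0)$ we use the block structure of the hierarchical random walk with kernel \eqref{739}, as in \cite{DGW05}. In both cases the analysis is organised around a single \emph{dominant scale}: for $\tau$ this is the colour $m^\ast(t)$ for which the wake-up rate $e_m/N^m$ has order $1/t$; for $a_t^{\Omega_N}(0,0)$ it is the hierarchical level $k(t)$ at which the walk has escaped the $k$-block around the origin but not yet the $(k+1)$-block. Note that the hypothesis $\rho=\infty$ forces $\alpha\le 1$ in \eqref{regvar} and $K\ge 1$ in \eqref{pureexp} (equivalently $\gamma\le 1$), so the two cases listed in \eqref{mo3}, resp.\ \eqref{mo2}, are exhaustive. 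The detailed estimates are routine but somewhat lengthy; they are carried out in Section~\ref{s.clusreg} (with the more technical computation deferred to Appendix~\ref{app.comp}). The outline is as follows.

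\emph{Wake-up time.} By \eqref{sigtau},
\[
\P(\tau>t)=\frac{1}{\chi}\sum_{m\in\N_0}K_m\,\frac{e_m}{N^m}\,\e^{-\frac{e_m}{N^m}t},
\]
a mixture of exponentials with weights $w_m=\chi^{-1}K_m e_m/N^m$, summable by \eqref{740alt}, and rates $\lambda_m=e_m/N^m$ strictly decreasing to $0$. Let $m^\ast(t)$ solve $\lambda_{m^\ast(t)}t=1$. In both \eqref{regvar} and \eqref{pureexp} the rates $\lambda_m$ decay geometrically (ratio $\to 1/N$ in \eqref{regvar}, ratio $e/N$ in \eqref{pureexp}), and so do the weights $w_m$ up to powers of $m$. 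Hence the terms with $m>m^\ast(t)$ satisfy $\e^{-\lambda_m t}\asymp 1$, so $\sum_{m>m^\ast(t)}w_m\e^{-\lambda_m t}\asymp\sum_{m>m^\ast(t)}w_m\asymp w_{m^\ast(t)}$, while the remaining terms ($m\le m^\ast(t)$) contribute $\lesssim w_{m^\ast(t)}$; thus $\P(\tau>t)\asymp w_{m^\ast(t)}$, and the same argument applied to the density $\chi^{-1}\sum_m K_m(e_m/N^m)^2\e^{-(e_m/N^m)t}$ gives $\frac{\P(\tau\in\d t)}{\d t}\asymp w_{m^\ast(t)}/t$, which is \eqref{mod}. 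It remains to express $w_{m^\ast(t)}$ through $t$ via $N^{m^\ast(t)}\asymp e_{m^\ast(t)}\,t$. Under \eqref{pureexp} this yields $m^\ast(t)\sim\log t/\log(N/e)$ and $w_{m^\ast(t)}\asymp (Ke/N)^{m^\ast(t)}=t^{-\gamma}$ with $\gamma=\log(N/Ke)/\log(N/e)$, so $\varphi\asymp 1$; under \eqref{regvar}, $m^\ast(t)\sim\log t/\log N$ and the polynomial corrections combine to give $\P(\tau>t)\asymp t^{-1}(\log t)^{-\alpha}$, i.e.\ $\gamma=1$ and $\varphi(t)\asymp(\log t)^{-\alpha}$. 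Finally $\hat\varphi$ follows from \eqref{hatphidefplus}: for $\gamma\in(0,1)$ one has $\hat\varphi=\varphi\asymp 1$; for $\gamma=1$, $\hat\varphi(t)=\E[\tau\wedge t]=\int_0^t\P(\tau>s)\,\d s$, which is $\asymp\log t$ when $\P(\tau>s)\asymp s^{-1}$ (i.e.\ $K=1$ in \eqref{pureexp}) and $\asymp(\log t)^{1-\alpha}$, resp.\ $\asymp\log\log t$, when $\P(\tau>s)\asymp s^{-1}(\log s)^{-\alpha}$ with $\alpha<1$, resp.\ $\alpha=1$ (in \eqref{regvar}).

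\emph{Migration kernel.} For the walk with kernel \eqref{739}, the total rate to leave the $k$-block around the current site is $\sum_{j\ge k}c_{j-1}/N^{j-1}\asymp c_k/N^k$, the series being dominated by its first term thanks to \eqref{740}; hence the escape time from a $k$-block has order $N^k/c_k$, and these orders increase in $k$. Since $c_k/N^k\ll c_{k-1}/N^{k-1}$, the walk equilibrates over the $\asymp N^k$ sites of a $k$-block before leaving it, so conditionally on the current scale being $k$ (escaped the $(k-1)$-block, not the $k$-block) the walk is at the origin with probability $\asymp N^{-k}$. As $N^{-k}$ decreases in $k$ and the scale concentrates around the level $k(t)$ determined by $N^{k(t)}/c_{k(t)}\asymp t$, summing over scales gives $a_t^{\Omega_N}(0,0)\asymp N^{-k(t)}\asymp 1/(c_{k(t)}\,t)$. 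Under \eqref{regvar}, $k(t)\sim\log t/\log N$ and $c_{k(t)}\asymp(\log t)^{-\phi}$, so $a_t^{\Omega_N}(0,0)\asymp t^{-1}\log^\phi t$. Under \eqref{pureexp}, $(N/c)^{k(t)}\asymp t$ gives $k(t)=\log t/\log(N/c)$, so $a_t^{\Omega_N}(0,0)\asymp N^{-k(t)}=t^{-\log N/\log(N/c)}=t^{-1-\delta_{N,c}}$ with $\delta_{N,c}$ as in \eqref{deltaNc}.

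\emph{Main difficulty.} The heuristics are straightforward; the work lies in making the two ``dominant-scale'' reductions rigorous with the error control that the $\asymp$ statements require --- showing for $\tau$ that the head $m<m^\ast(t)$ and the tail $m>m^\ast(t)$ of the mixture do not disturb the leading term uniformly over the slowly varying range of $t$, and for the walk that scales $k\ne k(t)$ contribute only lower order (which rests on quantitative control of the escape-time distributions and of within-block mixing, i.e.\ on the separation of time scales $c_k/N^k\ll c_{k-1}/N^{k-1}$). The borderline case $\gamma=1$ of \eqref{regvar} is the most delicate: there the value of $\hat\varphi$ depends on the precise logarithmic correction in $\P(\tau>t)$, and one must verify that the $\asymp$ in $\P(\tau>t)\asymp t^{-1}(\log t)^{-\alpha}$ is preserved under the Karamata-type integration defining $\E[\tau\wedge t]$.
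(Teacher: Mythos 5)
Your proof follows essentially the same dominant-scale strategy as the paper. For $\P(\tau>t)$ the paper converts the exponential mixture \eqref{sigtau} to an integral via Euler--MacLaurin and changes variables $s=e_m/N^m$ (Appendix~\ref{app.comp}); your discrete dominant-term argument is the elementary twin of this and reaches the same asymptotics, including the $\hat\varphi$ cases read off from \eqref{hatphidefplus}. For $a_t^{\Omega_N}(0,0)$ the paper starts from the exact Fourier representation \eqref{ak:dgw-asympt} quoted from \cite{FG94}, \cite{DGW05}, which makes the ``mixture of $N^{-j}\exp[-h_j(N)t]$ over scales $j$'' structure exact and then simplifies $h_j(N)$ as in \eqref{Ninfsimp}--\eqref{fdh:Ncomp}; your escape-time and within-block-equilibration picture is the probabilistic reading of that formula and ultimately needs it (or a Green-function bound of comparable strength) to turn the $\asymp$ into a proof, as you acknowledge, so the two routes converge. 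One small slip: the escape rate from the $k$-ball should be $\sum_{j\ge k+1}c_{j-1}/N^{j-1}\asymp c_k/N^k$ rather than $\sum_{j\ge k}$, whose leading term is $c_{k-1}/N^{k-1}$; your subsequent use of $N^k/c_k$ for the escape time is unaffected.
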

		
		\noindent
		Theorem \ref{T.scalcoeff} will be proved in Section \ref{s.clusreg}. 
		
		Note that, by \eqref{mo2}, $\gamma_{N,K,e} =1$ for all $N$ when $K=1$, while $\gamma_{N,K,e}<1$ for all $N$ when $K>1$, but with $\gamma_{N,K,e} \uparrow 1$ as $N\to\infty$. Also note that, subject to \eqref{regvar}, \eqref{mo3} says that the degree of the random walk is $0^-$ for $\phi \geq -1$ and $0^+$ for $\phi < -1$, while subject to \eqref{pureexp}, by \eqref{mo2}, the degree of the random walk is $\delta_{N,c}^-$, which is $0$ for all $N$ when $c=1$, and tends to $0$ as $N\to\infty$ from above when $c>1$ and from below as $c<1$. Thus, both \eqref{regvar} and \eqref{pureexp} with $N\to\infty$ correspond to a \emph{critically recurrent migration} and a \emph{critically infinite seed-bank}.
		
		\subsection{Clustering regime}
		\label{ss.clusreg}
		
		Summarising the above discussion, we can now identify the clustering regime for both finite and infinite seed-banks.
		
		\begin{theorem}{{\bf [Clustering regime]}}
			\label{T.cluscritreg}
			\begin{enumerate}
				\item[\rm (1)] If $\rho<\infty$, then clustering prevails if and only if
				\begin{equation}
					\label{cluscritnoseed-c}
					\sum_{k\in\N_0} \frac{1}{c_k} = \infty.
				\end{equation}
				\item[\rm (2)] If $\rho=\infty$, then clustering prevails for $N$ large enough   
				\begin{itemize}
					\item[(a)] Subject to \eqref{regvar}  if and only if
					\begin{equation}
						\label{clusregregvar}
						- \phi \leq \alpha \leq 1.
					\end{equation}
					\item[(b)] Subject to \eqref{pureexp} if and only if
					\begin{equation}
						\label{clusregpureexp}
						Kc \leq 1 \leq K.
					\end{equation}
				\end{itemize}
			\end{enumerate}
		\end{theorem}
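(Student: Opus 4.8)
The plan is to reduce everything to the two clustering criteria already quoted in the excerpt and then feed in the wake‑up‑time and Green‑function asymptotics of Theorem~\ref{T.scalcoeff}. For $\rho<\infty$ I would invoke the criterion \eqref{cluscritnoseed-a} from \cite[Theorem~3.3]{GdHOpr1}, and for $\rho=\infty$ the criterion \eqref{cluscritseed-b} from the same source; in both cases the extension from $g=dg_{\mathrm{FW}}$ to arbitrary $g\in\CG$ is handled by the duality‑comparison arguments recalled in Section~\ref{ss.comparisonarg}, so clustering never depends on the diffusion function. Part~(1) is then immediate: \eqref{cluscritnoseed-a} says the system clusters iff $\int_1^\infty\d t\,a_t^{\Omega_N}(0,0)=\infty$, and for the hierarchical kernel \eqref{739} the Green‑function computation in \cite[Section~3]{DGW05} turns this into $\sum_{k\in\N_0}1/c_k=\infty$, which is \eqref{cluscritnoseed-c}; there is essentially nothing to prove beyond chaining these two equivalences.

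For part~(2) I would proceed in three steps. First, check that in each of the regimes \eqref{regvar} and \eqref{pureexp} the tail of $\tau$ is of the regularly varying form \eqref{mod} and that the function $\psi$ in the representation \eqref{hatphirepr} obeys $|\psi(u)|\le C/\log u$, so that \eqref{cluscritseed-b} is legitimately applicable; the required values of $\gamma$, $\varphi$, $\hat\varphi$ and of $a_t^{\Omega_N}(0,0)$ are exactly what Theorem~\ref{T.scalcoeff} provides. Second, plug these asymptotics into the integral in \eqref{cluscritseed-b}. In the asymptotically polynomial case $\gamma=1$, so $t^{-(1-\gamma)/\gamma}\equiv1$ and the integrand is $\asymp\hat\varphi(t)^{-1}\,t^{-1}(\log t)^\phi\asymp t^{-1}(\log t)^{\phi+\alpha-1}$, up to a harmless $(\log\log t)^{-1}$ when $\alpha=1$; this diverges iff $\phi+\alpha\ge0$, and since $\rho=\infty$ forces $\alpha\le1$ (recall \eqref{rhodef}) this is precisely \eqref{clusregregvar}. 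In the pure exponential case $\hat\varphi\asymp1$ for $K>1$ (resp.\ $\hat\varphi\asymp\log t$ for $K=1$) and $a_t^{\Omega_N}(0,0)\asymp t^{-1-\delta_{N,c}}$, so \eqref{cluscritseed-b} reduces to divergence of $\int^\infty\d t\,t^{-1-[(1-\gamma_{N,K,e})/\gamma_{N,K,e}+\delta_{N,c}]}$ (with an extra factor $(\log t)^{-1/\gamma_{N,K,e}}$ when $K=1$), i.e.\ to the sign condition $(1-\gamma_{N,K,e})/\gamma_{N,K,e}+\delta_{N,c}\le0$. Third, substitute $\gamma_{N,K,e}=\log(N/Ke)/\log(N/e)$ and $\delta_{N,c}=\log c/\log(N/c)$ and clear denominators: the left‑hand side has the same sign as $\log(Kc)\,\log N-\log c\,(2\log K+\log e)$, whose leading term as $N\to\infty$ is $\log(Kc)\log N$, so for $N$ large the sign condition is equivalent to $Kc\le1$; combined with $K\ge1$ (again because $\rho=\infty$) this gives \eqref{clusregpureexp}.

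The hard part will be the third step — the uniform‑in‑$N$ sign analysis of $(1-\gamma_{N,K,e})/\gamma_{N,K,e}+\delta_{N,c}$ — and in particular the boundary cases $Kc=1$ and $-\phi=\alpha$, where the leading term vanishes and one must track the subleading corrections (involving $\log K$, $\log e$ and $\log c$) to decide on which side of the dichotomy the system lands; this is precisely the computation I would relegate to Appendix~\ref{app.comp}. A secondary, more routine difficulty is promoting the $\asymp$‑asymptotics of Theorem~\ref{T.scalcoeff} to the genuine slow‑variation statements required to invoke \eqref{cluscritseed-b}, i.e.\ verifying that replacing $\varphi$, $\hat\varphi$ and $a_t^{\Omega_N}(0,0)$ by their asymptotic equivalents does not alter convergence versus divergence of the integral — which is where the hypothesis $|\psi(u)|\le C/\log u$ and standard Karamata estimates enter.
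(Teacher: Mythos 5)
Your proposal is correct and mirrors the paper's proof in Section~\ref{ss.cluspr}: part~(1) chains \eqref{cluscritnoseed-a} with \eqref{cluscritnoseed-cals}, and part~(2) substitutes the asymptotics of Theorem~\ref{T.scalcoeff} into \eqref{cluscritseed-b}, yielding the divergence condition $-\phi\le\alpha\le1$ in the polynomial case and, after clearing denominators, the sign condition $\log N\,\log(Kc)\le\log c\,\log(K^2e)$ in the exponential case, which for $N$ large forces $Kc\le1\le K$. One small misattribution: the boundary cases $-\phi=\alpha$ and $Kc=1$ are settled directly in Section~\ref{ss.cluspr} (the former because $\int^\infty u^{-1}\,\d u=\infty$, the latter by a short sub-case analysis in $c$ and $K^2e$), while Appendix~\ref{app.comp} instead contains the Euler--MacLaurin and slow-variation estimates behind Theorem~\ref{T.scalcoeff} --- precisely the ``secondary, more routine difficulty'' you flagged about promoting $\asymp$-asymptotics to legitimate inputs for \eqref{cluscritseed-b}.
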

		
		\noindent
		Also Theorem \ref{T.cluscritreg} will be proved in Section \ref{s.clusreg}. 
		
		Note that for $\rho<\infty$ the clustering regime follows by combining \eqref{cluscritnoseed-a} and \eqref{cluscritnoseed-cals}, while for $\rho=\infty$ the clustering regime follows by substituting into \eqref{cluscritseed-b} the scaling of the wake-up times and the migration kernel stated in Theorem \ref{T.scalcoeff}. 
		
		\begin{remark}
			{\rm Note that subject to \eqref{regvar}, respectively, \eqref{pureexp}, $\rho<\infty$ implies that $\alpha >1$, respectively, $K<1$, and so the clustering regime is $-\phi \leq 1$, respectively, $c \leq 1$ (recall \eqref{cluscritnoseed-c}), which are less stringent than \eqref{clusregregvar}, respectively, \eqref{clusregpureexp}.} \hfill$\blacksquare$  
		\end{remark}

		\section{Main results: $N\to\infty$, renormalisation and multi-scale limit}
		\label{s.intromultscallim}
		
		This section contains our multi-scale hierarchical limit theorems. The multi-scale hierarchical limit theorems analyse the evolution of the block averages defined in Definition~ \ref{defblockav}. In Section~\ref{MeyerZheng} we recall a path topology referred to as the \emph{Meyer-Zheng topology}, which we will need in part of our multi-scale hierarchical limit theorems. In Section~\ref{ss.ing} we present the conceptual ingredients needed for our theorems. In Section~\ref{ss.ht} we state two versions of the hierarchical multi-scale limit (Theorems~\ref{T.multiscalehiereff} and \ref{T.multiscalehier}), and comment on how they are related to each other. In Section \ref{ss.hml} we explain how they arise from a heuristic analysis of the SSDE in \eqref{moSDE}.

		\subsection{Intermezzo: Meyer-Zheng topology}
		\label{MeyerZheng}
		
		Recall the block averages defined in \eqref{defblockav} and their evolution equations in \eqref{rblockavxzmulti}. In the limit as $N\to\infty$, some of the pre-factors in \eqref{rblockavxzmulti} diverge as a result of the speeding up of time. This makes the processes increasingly more volatile: paths becomes rougher and rougher during rarer and rarer times. Therefore we cannot work with weak convergence on path space $C([0,\infty),E)$ w.r.t.\ the topology generated by the \emph{sup-norm} on compacts, or on path space $\gls{cadlag}$ w.r.t.\ the Skorohod metric on compacts. Rather we must follow the methodology used in \cite[pp.\ 792--794]{DGsel14} and employ the so-called \emph{Meyer-Zheng topology} on \emph{pseudopaths}, (see \cite{MZ84}), which is based on the following idea. Consider functions $f\colon\,[0,\infty) \to E$, with $(E,d)$ a Polish space, and sequences of functions $(f_n)_{n \in \N}$ that are c\`adl\`ag paths, i.e., functions in the Skorohod space $D([0,\infty),E)$. Then  $(f_n)_{n \in \N}$ converges to $f$ in the Meyer-Zheng topology if and only if 
		\begin{equation}
			\label{e1514}
			\lim_{n\to\infty} \int_a^b \d t\,\big[1 \wedge d(f(t),f_n(t))\big] = 0 \qquad \forall\,\,0 \leq a < b< \infty.
		\end{equation}
		However, the topology induced by the metric in \eqref{e1514} does not turn $D([0,\infty),E)$ into a \emph{closed} space (while in order to apply the classical theory of weak convergence of probability laws on path space we need the path space to be Polish). 
		
		To turn the idea from \eqref{e1514} into a manageable topology, we proceed by defining a \emph{space of pseudopaths} equipped with the \emph{Meyer-Zheng topology}. If $(E,d)$ is a Polish space and $s \mapsto v(s)$ is a measurable map from $[0,\infty)$ to $E$, then the pseudopath  $\gls{pseudo}$   is the \emph{probability measure} $\rho$ on $[0,\infty) \times E$, defined by
		\begin{equation}
			\label{e1519}
			\rho((a,b) \times B) = \int_a^b \d s\,\e^{-s}\,1_B(v(s)), \qquad B \in \CB(E),
		\end{equation}
		where $\CB(E)$ is the Borel $\sigma$-algebra over $E$.
		Hence $\psi_v$  is the image measure of $\e^{-t} \d t$ under the mapping $t\to (t,v(t))$. In other words, we consider the \emph{weighted occupation measure} of the path in $E$ in order to describe paths that are \emph{regular representatives} in the space of functions once we take into account \eqref{e1514}. Note that a piece-wise constant c\`adl\`ag path is uniquely determined by its occupation measure. So is a continuous path with continuous local times. The space of all pseudopaths is denoted by $\gls{pseudospace}$.
		
		Since pseudopaths are measures on $[0,\infty]\times E$, convergence of pseudopaths is defined as \emph{weak convergence} of \emph{probability measures on $[0,\infty]\times E$}. A sequence $(v_n)_{n\in\N}$ of measurable maps from $[0,\infty)\times E$ is said to converge in the Meyer-Zheng topology to a measurable map $v$ if $\lim_{n\to \infty} \psi_{v_n}=\psi_v$, i.e., $\lim_{n\to \infty} \psi_{v_n}f=\psi_vf$ for all continuous bounded functions $f$ on $[0,\infty] \times E$.	
		
		\begin{remark}{\bf [Pseudopaths]}
			\label{r.1556}
			{\rm The space $\Psi$ of pseudopaths endowed with the Meyer-Zheng topology is Polish, but the space $D([0,\infty),E)$ endowed with the Meyer-Zheng topology is not Polish (see \cite[p.\ 372]{MZ84}).}
			\hfill $\blacksquare$
		\end{remark}
		
		In what follows, each time convergence holds in the Meyer-Zheng topology we will say so explicitly. If no topology is mentioned, then we mean convergence in $\CC_b([0,\infty),[0,1])$. In Appendix~\ref{apb} we collect some basic facts about the Meyer-Zheng topology taken from \cite{MZ84} and \cite{K91}.

		\subsection{Main ingredients for the hierarchical multi-scale limit}
		\label{ss.ing}
		
		Recall the definition of $\theta_x$ and $\theta_{y_m}$ in \eqref{initialvalue}. Define
		\begin{equation}
			\label{deftheta}
			\gls{varthetak} = \frac{\theta_x + \sum_{m=0}^k K_m \theta_{y_m}}{1+\sum_{m=0}^k K_m}, \qquad k \in \N_0.
		\end{equation}
		For $\rho<\infty$, and for $\rho=\infty$ under the additional assumption of colour regularity (recall Proposition~\ref{T.dichcrit}), we have
		\begin{equation}
			\label{thetantotheta}
			\lim_{k \to \infty}\vartheta_{k}=\theta \quad \text{ for some } \theta\in[0,1].
		\end{equation}
		Define the \emph{slowing-down constants} ($E_0=1$)
		\begin{equation}
			\label{Ekdef}
			\gls{Ek} = \frac{1}{1+\sum_{m=0}^{k-1} K_m}, \qquad k \in \N_0.
		\end{equation}
		For $l\in\N_0$, let
		\begin{equation}
			\big(\theta,(y_{m,l})_{m\in\N_0}\big)
		\end{equation}
		be a sequence of random variables taking values in $[0,1]$, and let
		\begin{equation}
			\label{z2a}
			\gls{zl} = \Big(x_l(t),(y_{m,l}(t))_{m \in \N_0}\Big)_{t\geq 0}
		\end{equation}
		be the \emph{full process} on level $l$ evolving according to
		\begin{equation}
			\label{z11a}
			\begin{aligned}
				&\d x_l(t) =  E_l\Bigg[c_l [\theta - x_l(t)]\, \d t 
				+ \sqrt{(\CF^{(l)} g)(x_l(t))}\, \d w (t) +  K_l e_l\, [y_{l,l}(t)-x_{l}(t)]\,\d t\Bigg],\\
				&\begin{array}{lll} 
					&y_{m,l}(t) = x_l(t), &0\leq m<l,\\
					&\d y_{l,l}(t) = e_l\,[x_l(t)-y_{l,l}(t)]\, \d t, &m=l,\\
					& y_{m,l}(t) =y_{m,l}, &m > l.
				\end{array}
			\end{aligned}
		\end{equation}
		where $\CF^{(l)}g$ is an element of $\CG$, (recall \eqref{setG}), that will be defined in \eqref{frenormit} below. By \cite{YW71} the above SSDE has a unique solution for every initial measure. For $l \in\N_0$, let  
		\begin{equation}
			\label{92}
			\gls{zlf}=\left(x^\eff_l(t),y^\eff_{l,l}(t)\right)_{t\geq 0}
		\end{equation}
		be the \emph{effective process} evolving according to 
		\begin{equation}
			\label{927}
			\begin{aligned}
				&\d x^{\eff}_l(t) = E_l\left[c_l\,[\theta - x^{\eff}_l(t)]\, \d t + \sqrt{(\CF^{(l)}g)(x^{\eff}_l(t))}\, \d w (t) 
				+ K_le_l\,[y^{\eff}_{l,l}(t)-x^{\eff}_l(t)]\,\d t\right],\\
				&\d y^{\eff}_{l,l}(t) = e_l\, [x^{\eff}_l(t)-y^{\eff}_{l,l}(t)]\, \d t.
			\end{aligned}
		\end{equation}
		Comparing \eqref{z11a} with \eqref{927}, we see that the effective process looks at the non-trivial components of the full process. 
		
		Apart from \eqref{z2a} and \eqref{92}, we need the following list of \emph{ingredients} to formally state the multi-scale limit:
		\begin{enumerate}
			\item 
			For $l\in\N_0$ and $t>0$, define the \emph{estimators} for the finite system by
			\begin{equation}
				\label{tam6alt}
				\begin{aligned}
					\gls{thetal}
					&=\frac{1}{N^l}\sum_{\xi\in B_l}\frac{ x^{\Omega_N}_\xi(t)
						+\sum_{m=0}^{l-1}K_m y^{\Omega_N}_{\xi,m}(t)}{1+\sum_{m=0}^{l-1}K_m},\\
					\gls{thetalx}
					&=\frac{1}{N^l}\sum_{\xi\in B_l} x^{\Omega_N}_{\xi}(t),\\
					\gls{thetaly}
					&=\frac{1}{N^l}\sum_{\xi\in B_l} y^{\Omega_N}_{\xi,m}(t),\qquad m\in\N_0,
				\end{aligned}
			\end{equation}
			and put
			\begin{equation}\label{412}
				\begin{aligned}
					\gls{thetalfull}
					&=\big(\Theta^{(l) ,\Omega_N}_x(t),\big(\Theta^{(l) ,\Omega_N}_{y_m}(t)\big)_{m\in\N_0}\big),\\
					\gls{thetaleff}
					&=\big(\bar{\Theta}^{(l) ,\Omega_N}(t),\Theta^{(l) ,\Omega_N}_{y_{l}}(t)\big).
				\end{aligned}
			\end{equation}
			We call $(\boldsymbol{\Theta}^{(l),\Omega_N}(t))_{t\geq 0}$ the \emph{full estimator process} and $(\boldsymbol{\Theta}^{\eff,(l),\Omega_N}(t))_{t \geq 0}$ the \emph{effective process}. Note that $\Theta_x^{(l),\Omega_N}(t)$ is the empirical average of the active components in the $l$-block, while $\Theta_{y_m}^{(l),\Omega_N}(t)$, is the empirical average of the $m$-dormant components in the $l$-block, both without scaling of time. Note that $\Theta^{(l),\Omega_N}_x(N^lt)=x^{\Omega_N}_l(t)$. The \emph{level-$l$ estimator} $\bar{\Theta}^{(l) ,\Omega_N}(t)$ will play an important role in our analysis. Using \eqref{rblockavxzmulti}, we can derive the evolution equations of $\bar{\Theta}^{(l),\Omega_N}(N^lt)$. We see that in the evolution of $\bar{\Theta}^{(l),\Omega_N}(N^lt)$ no rates appear that tend to infinity as $N\to\infty$. However, in the evolution of $\Theta^{(l) ,\Omega_N}_x(N^lt)$ and $\Theta^{(l) ,\Omega_N}_{y_m}(N^lt)$ for $m<l$ the rates describing the interaction between the active and the dormant population tend to infinity as $N\to\infty$. 
			\item  
			For $l\in\N_0$, consider \emph{time scales} $N^lt_l$ such that 
			\begin{equation}
				\CL\big[\bar{\Theta}^{(l),\Omega_N} (N^{l}t_l-L(N)N^{l-1})-\bar\Theta^{(l),\Omega_N}( N^{l}t_l)\big]=\delta_0
			\end{equation}
			for all sequences $\gls{LN}$ satisfying $\lim_{N\to\infty} L(N)=\infty$ and $\lim_{N\to\infty} L(N)/N=0$, but not for $L(N)=N$. In words, $N^lt_l$ is the time scale on which $(\bar{\Theta}^{(l),\Omega_N}(N^lt_l))_{t_l>0}$ is no longer a fixed process. 
			\item 
			For $l \in \N_0$ the \emph{invariant measure} for the limiting evolution of the full process on level $l$ in \eqref{z11a} is denoted by
			\begin{equation}
				\label{singcoleq322alt}
				\gls{Gammal}, \qquad y_l = (y_{m,l})_{m\in\N_0}.
			\end{equation}
			(The existence of and convergence to this equilibrium will be proved in Section~\ref{ss.flmfss}.) Note that $\Gamma_{(\theta,y_l)}^{(l)}$ depends on choice of the rates  $E_l,c_l,K_l,e_l$ in \eqref{z11a}.   The \emph{invariant measure} of the limiting evolution for the effective $l$-block process in \eqref{927} is denoted by
			\begin{align}
				\label{inveff}
				\gls{Gammaleff}.
			\end{align}
			Also $\Gamma_{\theta}^{\eff,(l)}$ depends on the choice of the rates $E_l,c_l,K_l,e_l$.
			\item 
			For $l\in\N_0$, let $\gls{Fl}$ denote the \emph{renormalisation transformation} acting on $\CG$ defined by
			\begin{equation}
				\label{renor}
				(\CF^{E_l,c_l,K_l,e_l}g)(\theta) = \int_{[0,1]^2} g(x)\,\Gamma_{\theta}^{\eff,(l)}(\d x), \qquad \theta \in [0,1].
			\end{equation}
			(In Section~\ref{ss.pabstracts} we show that $\CF g\in \CG$.) For $k\in\N_0$, define the \emph{iterate} of the renormalisation transformation as the composition
			\begin{equation}
				\label{frenormit}
				\gls{Flit} = \CF^{E_{k-1},c_{k-1},K_{k-1},e_{k-1}} \circ \cdots \circ \CF^{E_0,c_0,K_0,e_0}.
			\end{equation}
			\item 
			For $k\in\N_0$, define the \emph{interaction chain} \cite{DGV95}
			\begin{equation}
				\label{fintchain}
				\gls{intchain}
			\end{equation}
			as the \emph{time-inhomogeneous} Markov chain on $[0,1]\times[0,1]^{\N_0}$ with initial state 
			\begin{equation}\label{intch}
				M^k_{-(k+1)} =(\vartheta_k,\overbrace{\vartheta_k,\cdots, \vartheta_k}^{k+1\text{ times }},
				\theta_{y_{k+1}},\theta_{y_{k+2}},\cdots)
			\end{equation} 
			that evolves from time $-(l+1)$ to time $-l$ according to the transition kernel $\gls{Q}$ on $[0,1]\times[0,1]^{\N_0}$ given by
			\begin{equation}\label{ker}
				Q^{[l]}(u,\d v) =  \Gamma_{u}^{(l)}(\d v).
			\end{equation}
			(See Fig.~\ref{fig:int}.) For $k\in\N_0$, define the \emph{effective interaction chain}
			\begin{equation}
				\label{fintchaineff}
				\gls{intchaineff}
			\end{equation}
			as the \emph{time-inhomogeneous} Markov chain on $[0,1]\times[0,1]$ with initial state
			\begin{equation}
				M^{\eff,k}_{-(k+1)} =(\vartheta_k,\theta_{y_{k+1}})
			\end{equation} 
			that evolves  from time $-(l+1)$ to time $-l$ according to the transition kernel $\gls{Qeff}$ on $[0,1]\times[0,1]$ given by 
			\begin{equation}\label{kereff}
				Q^{\eff,[l]}(u,\d v) =  \Gamma^{\eff,(l)}_{u_x}(\d v),
			\end{equation}
			where $u_x$ denotes the first component of $u=(u_x,u_y)$.(See Fig.~\ref{fig:effint}.) We denote the components of $\left(M^{\eff,k}_{-l}\right)$ by 
			\begin{equation}
				M^{\eff,k}_{-l} =\big(M^{\eff,k}_{-l,x},M^{\eff,k}_{-l,y}\big).
			\end{equation}
		\end{enumerate}
		
		\subsection{Hierarchical multi-scale limit theorems}
		\label{ss.ht}
		
		First we present and discuss the scaling of the effective process. Afterwards we do the same for the full process.  
		
		\begin{figure}[htbp]
			\begin{center}
				\includestandalone[width=.75\textwidth]{oefenplaatje}
			\end{center}
			\caption{Effective interaction chain.}
			\label{fig:effint}
		\end{figure}
		
		\subsubsection{Effective process}
		
		We present one of our main theorems, the hierarchical mean-field limit for the effective process. We will use the process and notation introduced in Section~\ref{ss.ing}. 
		
		\begin{theorem}{{\bf [Hierarchical mean-field: the effective process]}}
			\label{T.multiscalehiereff}  
			Suppose that the initial state of the hierarchical system is given by \eqref{initialvalue}. Let $L(N)$ be such that $\lim_{N\to\infty} L(N)=\infty$ and $\lim_{N\to\infty} L(N)/N=0$. For $k \in \N$ and $t_k,\ldots,t_0 \in (0,\infty)$, set $\bar{t}=N^k L(N) + \sum_{n=0}^k N^n t_n$.
			\begin{itemize}
				\item[(a)]
				For $k \in \N$,
				\begin{equation}
					\lim_{N\to\infty} \CL\left[\big(\boldsymbol{\Theta}^{\eff,(l),\Omega_N}(\bar{t}\,)\big)_{l=k+1,k,\ldots,0}\right]
					= \CL\left[(M^{\eff,k}_{-l})_{-l = -(k+1),-k,\ldots,0}\right].
				\end{equation}
				\item[(b)]
				For $k\in\N$, 
				\begin{equation}
					\label{multiconv}
					\begin{aligned}
						&l>k\colon \lim\limits_{N\to\infty} 
						\CL\left[\left(\boldsymbol{\Theta}^{\eff,(l),\Omega_N}(\bar{t}+N^kt)\right)_{t > 0}\right] 
						= \delta_{(\vt_l, \theta_{y_l})},\\[0.3cm]
						&l=k\colon \lim\limits_{N\to\infty} 
						\CL\left[\left(\boldsymbol{\Theta}^{\eff,(l),\Omega_N}(\bar{t}+N^lt)\right)_{t > 0}\right]
						= \CL\left[\left(z^\eff_{k,M^{\eff,k}_{-(k+1),x}}(t)\right)_{t > 0}\right],\\[0.3cm]
						&l<k\colon \lim\limits_{N\to\infty} 
						\CL\left[\left(\boldsymbol{\Theta}^{\eff,(l),\Omega_N}(\bar{t}+N^lt)\right)_{t > 0}\right] 
						= \CL\left[\left(z^\eff_{l,M^{\eff,k}_{-(l+1),x}}(t)\right)_{t > 0}\right],
					\end{aligned}
				\end{equation} 
				where the initial laws of the limiting processes are given by (see Fig.~\ref{fig:effint})
				\begin{equation}
					\begin{aligned}
						&\CL\left[z^\eff_{k,M^k_{-(k+1),x}}(0)\right]=\Gamma^{\eff,(k)}_{M^k_{-(k+1),x}},\\
						&\CL\left[z^\eff_{k,M^k_{-(k+1),x}}(0)\right]=\Gamma^{\eff,(k)}_{M^k_{-(l+1),x}},\\
						&\Gamma^{\eff,(l)}_{M^k_{-(l+1)}}=\int_{[0,1]^2}\cdots\int_{[0,1]^2}\Gamma^{\eff,(k)}_{M^k_{-(k+1)}}(\d u_k)\cdots\Gamma^{\eff,(l+1)}_{u_{l+2}}(\d u_{l+1}) \Gamma^{\eff,(l)}_{u_{l+1}}
					\end{aligned}
				\end{equation}
			\end{itemize}
		\end{theorem}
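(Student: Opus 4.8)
I would combine three ingredients: the mean-field (propagation-of-chaos) analysis of a single hierarchy from Sections~\ref{ss.IntroMeanfield}--\ref{ss.pabstracts}; the finitely-many-levels extensions of Sections~\ref{s.finlevel}--\ref{s.multilevel}; and an induction on the number $k$ of resolved levels, powered by the \emph{separation of time scales} between consecutive hierarchical levels as $N\to\infty$ together with the Markov property at the anchoring time $\bar t$. The structural fact that makes this work is that $\bar\Theta^{(l),\Omega_N}(N^l\cdot)$ solves an SSDE whose coefficients remain bounded as $N\to\infty$ — the fast exchange with the dormant colours $0,\ldots,l-1$ cancels in the weighted average defining $\bar\Theta^{(l),\Omega_N}$ — so on its natural time scale $N^l$ the level-$l$ effective estimator is tight in $\CC_b([0,\infty),[0,1])$; this is exactly why the convergence here is in $\CC_b$ and not merely in the Meyer--Zheng topology. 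As a preliminary reduction I would truncate the system at a high level $K\ge k+1$, both in the geographic hierarchy and in the seed-bank, and show via a coupling/comparison estimate (using \eqref{740}, \eqref{740alt} and the colour regularity in \eqref{initialvalue}) that the law of $(\boldsymbol\Theta^{\eff,(l),\Omega_N})_{l\le k+1}$ differs from its truncated version by an error that vanishes as $K\to\infty$, uniformly in $N$; this reduces everything to finitely many scales and dormant colours.

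For the truncated system I would first establish the one-scale statement: when the $(l+1)$-block estimator is held at a value $u$ that is asymptotically frozen on time scale $N^l$, the $l$-block average converges, as $N\to\infty$ and on that scale, to the effective McKean--Vlasov process $z^\eff_{l,u_x}$ of \eqref{927} with renormalised coefficient $\CF^{(l)}g\in\CG$, which relaxes to its unique equilibrium $\Gamma^{\eff,(l)}_{u_x}$ (existence of, and convergence to, this equilibrium being the content of Section~\ref{ss.flmfss}). The substance here is propagation of chaos: the active components inside an $l$-block decouple in the limit, so the $l$-block average concentrates and feels only the average one level up. I would obtain this from first- and second-moment estimates for the block averages, which also furnish the \emph{uniform-in-$N$ exponential ergodicity} of $z^\eff_{l,\cdot}$ toward $\Gamma^{\eff,(l)}_{\cdot}$ that is needed in the next step.

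Part~(a) I would prove by induction on $k$, adding one (slower) level at each step. For the level $k+1$, observe that $\boldsymbol\Theta^{\eff,(k+1),\Omega_N}$ has not moved by time $\bar t=N^kL(N)+\sum_{n=0}^kN^nt_n$, since $\bar t\ll N^{k+1}$ (because $L(N)/N\to0$); hence its limit is the point mass at its initial value $(\vartheta_k,\theta_{y_{k+1}})=M^{\eff,k}_{-(k+1)}$. Given this, the one-scale result of the previous paragraph applied at level $k$ says that on time scale $N^k$, with the level-$(k+1)$ value frozen at $\vartheta_k$, the level-$k$ estimator runs as $z^\eff_{k,\vartheta_k}$; since by time $\bar t$ it has run for a diverging amount $\asymp L(N)$ of its own time, it has reached $\Gamma^{\eff,(k)}_{\vartheta_k}$, i.e.\ $M^{\eff,k}_{-k}\sim Q^{\eff,[k]}(M^{\eff,k}_{-(k+1)},\cdot)$. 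For the faster levels $l=k-1,\ldots,0$, I would apply the induction hypothesis to the sub-hierarchies nested inside a $k$-block — whose effective ``global'' drift target is the level-$k$ estimator — so that, conditionally on the level-$k$ value, each level $l<k$ is at $\Gamma^{\eff,(l)}_{\cdot}$ given the value one level up (its scale $N^l$ being negligible against $N^k$, it has had ample time to re-equilibrate), which delivers the remaining steps $M^{\eff,k}_{-(k-1)},\ldots,M^{\eff,k}_{0}$ of the chain. Part~(b) then follows from part~(a) by the strong Markov property at $\bar t$: conditionally on the time-$\bar t$ configuration — the interaction chain, by~(a) — propagating forward on the scale appropriate to level $l$ gives a frozen limit for $l>k$, the stationary process $z^\eff_{k,\vartheta_k}$ started from $\Gamma^{\eff,(k)}_{\vartheta_k}$ for $l=k$, and $z^\eff_{l,\cdot}$ started from $\Gamma^{\eff,(l)}_{\cdot}$ with drift target read off the chain for $l<k$.

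The hard part will be making the separation of time scales \emph{quantitative and uniform in $N$}: on a window of length of order $N^k$ around $\bar t$ one must show simultaneously that every level $l>k$ moves by $o(1)$ and every level $l<k$ relaxes to within $o(1)$ of its conditional equilibrium, with errors that can be summed over the finitely many levels $0,\ldots,K$ and then sent to $0$ by letting $N\to\infty$ and afterwards $K\to\infty$. This requires marrying the uniform ergodic-rate bounds for the effective block processes to the propagation-of-chaos variance estimates that control the gap between a genuine block average and the solution of the limiting McKean--Vlasov equation, and then propagating these estimates up the hierarchy; that bookkeeping is the technical heart of Part~II.
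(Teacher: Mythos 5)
Your plan captures the architecture of Part~II: a reduction to finitely many hierarchical levels, a McKean--Vlasov/propagation-of-chaos analysis on each scale, and an induction through the hierarchy powered by separation of time scales. You also correctly identify \emph{why} the effective process converges in $\CC_b$ and not merely in Meyer--Zheng topology — the fast exchange rates cancel in the weighted average $\bar\Theta^{(l),\Omega_N}$ — which is the structural backbone of the statement. Two places, however, differ from what the paper does, and one of them is a genuine gap.

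On the truncation: you propose to cut the hierarchy at a floating level $K\ge k+1$ and pass $K\to\infty$ after $N\to\infty$, invoking a coupling estimate uniform in one of the parameters. The paper truncates at exactly level $k+1$ and needs no second limit: on time scales $\le \bar t+N^k t$, the SSDE for the auxiliary estimator (see \eqref{464pr}) shows that the aggregate interaction with blocks and seed-banks beyond level $k+1$ is of order $\sum_{n>k+1}c_{n-1}/N^{n-1-k}+\sum_{m>k}K_m e_m/N^{m-k}=\CO(1/N)$ by \eqref{740}--\eqref{740alt}, while the optional-stopping estimate \eqref{1272} shows that $l$-block estimators with $l>k$ have not moved by these times. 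So level $k+1$ directly supplies the frozen ``outside world'' of Proposition~\ref{P.finitesysmfmulti}; a floating $K$ and a uniform-in-$N$ comparison are unnecessary and introduce avoidable technical debt.

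The genuine gap is the claim that part~(b) ``follows from part~(a) by the strong Markov property at $\bar t$.'' The Markov property at $\bar t$ restarts the \emph{full} configuration on $\mathfrak{s}^{\Omega_N}$, which is not a function of the block estimators, so (a) — a marginal statement at a single time — does not by itself determine the limit law of the subsequent paths. To get (b) you must still prove, conditionally on the higher-level estimators, the McKean--Vlasov convergence of the path $(\boldsymbol\Theta^{\eff,(l),\Omega_N}(\bar t+N^l t))_{t>0}$ to $z^{\eff}_{l,\cdot}$ with renormalised coefficient $\CF^{(l)}g$, including tightness and identification of the limiting SSDE; this is precisely steps 5--7 of the scheme in Section~\ref{ss.flmfss}. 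The paper in fact runs the implication the other way: it establishes the path-level limits first and then derives (a) by evaluating at the anchoring time and conditioning through the levels (step 8). If you elevate your ``one-scale statement'' to a full level-by-level path convergence with an explicit tightness argument and a uniqueness statement for the limiting martingale problem, the induction closes; but as written, (b) is not a corollary of (a). A smaller caveat: the paper does not establish ``uniform-in-$N$ exponential ergodicity'' of $z^{\eff}_{l,\cdot}$; convergence to the unique equilibrium $\Gamma^{\eff,(l)}_\theta$ is obtained by coupling (Proposition~\ref{P.equergod}, Lemma~\ref{lemerg}), and the relaxation is secured by the diverging window $L(N)$, not by an $N$-uniform exponential rate.
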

		
		Theorem~\ref{T.multiscalehiereff} can be interpreted as follows. The statement in (a) shows that if we look at the effective process on multiple space-time scales simultaneously, then the joint distribution of the different block averages is the law of the two-dimensional interaction chain defined in \eqref{fintchaineff} and depicted in Fig.~\ref{fig:effint}. Note that the process $(\boldsymbol{\Theta}^{\eff,(l),\Omega_N}(\bar{t}+N^lt))_{t > 0}$ has at each level a different colour seed-bank average as second component, which is called \emph{the effective seed-bank}. The statement in (b) describes the law of the path on different time scales. 
		\begin{itemize}
			\item 
			On time scale $\bar{t}+N^kt$ the $l$-block averages with $l>k$ are not moving, i.e., $(\boldsymbol{\Theta}^{\eff,(l),\Omega_N}(\bar{t}+N^kt))_{t>0}$ converges to the constant process taking the value $(\vt_l,\theta_{y_l})=\boldsymbol{\Theta}^{\eff,(l),\Omega_N}(0)$.   
			\item 
			On time scale $\bar{t}+N^kt$ the $k$-block averages have reached equilibrium. The full $k$-block average feels a drift towards the full $(k+1)$-block average, which is still in its initial state $\vt_k$. Therefore migration between the $k$-blocks in the hierarchical mean-field limit is replaced by a drift towards $\vt_k$, and the $k$-blocks become independent. This phenomenon is called \emph{decoupling} (or `propagation of chaos'). The resampling function for the full estimator converges to $\CF^{(k)}g$ (see \eqref{renor}), the average diffusion function of the $k$-blocks. Finally, the full $k$-block exchanges individuals with the $k$-dormant population. Hence the $k$-dormant population is the effective seed-bank on space-timescale $k$  Both the migration and the renormalisation are qualitatively similar to that of the hierarchical system without seed-bank \cite{DG93a}. However, the seed-bank still quantitatively affects the migration and the resampling through the slowing-down factor $E_k$. (In Section~\ref{ss.hml} we will see how the latter arises.) 
			\item 
			On time scale $\bar{t}+N^lt$ the $l$-block averages with $l<k$ are in a \emph{quasi-equilibrium}. The full $l$-block averages feel a drift towards the instantaneous value of the $(l+1)$-block average at time $\bar{t}$. Therefore also the $l$-block averages decouple. The $(l+1)$-block average is not moving on time scale $\bar{t}+N^lt$, and so for $t=L(N)$ we see that the $l$-block averages equilibrate faster than the $(l+1)$-block averages moves. The resampling function is given by $\CF^{(l)}g$, which is to be interpreted as the average diffusion function of the $l$-blocks. The full average interacts with the $l$-blocks of the $l$-dormant population, which is the effective seed-bank on level $l$. Again the full $l$-block average feels a slowing-down factor $E_l$.  
		\end{itemize}
		Note that Theorem~\ref{T.multiscalehiereff} only describes the limiting process of the combined block average $\bar{ \Theta}^{(l),\Omega_N}$  and the effective seed-bank $\Theta_{y_l}^{(l), \Omega_N}$. It does not provide a full description of the system, which is in Theorem~\ref{T.multiscalehier} below. We will see later that Theorem~\ref{T.multiscalehiereff} does describe all the \emph{non-trivial} components of the system. 
		
		\begin{remark}{\bf [Quasi equilibria]} 
			\label{remarkeff}
			{\rm Note that Theorem~\ref{T.multiscalehiereff} does not depend on the choice of $t_k,\ldots,t_0 \in (0,\infty)$. Since at each level $0\leq l\leq k$ we start from time $\bar{t}$, the $l$-block averages have already reached a quasi-equilibrium. } \hfill$\blacksquare$
		\end{remark}   
		
		\begin{figure}[htbp]
			\begin{center}
				\includestandalone[width=1\textwidth]{interactionchain}
			\end{center}
			\caption{Full interaction chain.}
			\label{fig:int}
		\end{figure}

		\subsubsection{Full process}
		
		To state our second main theorem, we will again use the process and the notation as defined in Section~\ref{ss.ing}.
		
		\begin{theorem}{{\bf [Hierarchical mean-field: full process]}}
			\label{T.multiscalehier} 
			Suppose that the initial state is given by \eqref{initialvalue}. Let $L(N)$ be such that $\lim_{N\to\infty} L(N)=\infty$ and $\lim_{N\to\infty} L(N)/N=0$. For $k \in \N$ and $t_k,\ldots,t_0 \in (0,\infty)$, set $\bar{t}=N^k L(N) + \sum_{n=0}^k N^n t_n$. 
			\begin{itemize}
				\item[(a)]
				For $k \in \N$, 
				\begin{equation}
					\begin{aligned}
						&\lim_{N\to\infty} \CL\left[\left(\boldsymbol{\Theta}^{(l),\Omega_N}\left(\bar{t}\,\right)\right)_{l=k+1,k,\ldots,0}\right]
						= \CL\left[(M^k_{-l})_{-l = -(k+1),-k,\ldots,0}\right].
					\end{aligned}
				\end{equation}
				\item[(b)]
				For $k\in\N$, 
				\begin{equation}
					\label{multiconvalt}
					\begin{aligned}
						&l>k\colon \lim\limits_{N\to\infty} 
						\CL\left[\left(\boldsymbol{\Theta}^{(l),\Omega_N}(\bar{t}+N^kt)\right)_{t > 0}\right] 
						= \delta_{(M^k_{-(k+1)})},\\[0.3cm]
						&l=k\colon \lim\limits_{N\to\infty} 
						\CL\left[\left(\boldsymbol{\Theta}^{(l),\Omega_N}(\bar{t}+N^lt)\right)_{t > 0}\right]
						= \CL\left[\left(z_{k,M^k_{-(k+1)}}(t)\right)_{t > 0}\right],\\[0.3cm]
						&l<k\colon \lim\limits_{N\to\infty} 
						\CL\left[\left(\boldsymbol{\Theta}^{(l),\Omega_N}(\bar{t}+N^lt)\right)_{t > 0}\right] 
						= \CL\left[\left(z_{l,M^k_{-(l+1)}}(t)\right)_{t > 0}\right],\\[0.3cm]
						&\text{in the Meyer-Zheng topology},
					\end{aligned}
				\end{equation}
				where the initial laws of the limiting processes are given by (see Fig.~\ref{fig:int})
				\begin{equation}
					\begin{aligned}
						&\CL\left[z_{k,M^k_{-(k+1),x}}(0)\right]=\Gamma^{(k)}_{M^k_{-(k+1)}},\\
						&\CL\left[z_{k,M^k_{-(k+1),x}}(0)\right]=\Gamma^{(l)}_{M^k_{-(l+1)}},\\
						&\Gamma^{(l)}_{M^k_{-(l+1)}}=\int_{\mathfrak{s}^{}}\cdots\int_{\mathfrak{s}^{}}\int_{\mathfrak{s}^{}}\Gamma^{(k)}_{M^k_{-(k+1)}}(\d u_k)\Gamma^{(k-1)}_{u_k}(\d u_{k-1})\cdots\Gamma^{(l+1)}_{l+2}(\d u_1) \Gamma^{(l)}_{u_{l+1}}
					\end{aligned}
				\end{equation}
			\end{itemize}
		\end{theorem}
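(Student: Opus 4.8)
The plan is to derive the full-process statement from the effective-process statement (Theorem~\ref{T.multiscalehiereff}) by supplementing the information already contained in the effective estimator $\boldsymbol{\Theta}^{\eff,(l),\Omega_N}$ with the behaviour of the remaining ("trivial") components. Recall that the effective estimator tracks the combined $l$-block average $\bar{\Theta}^{(l),\Omega_N}$ together with the effective ($l$-coloured) seed-bank average $\Theta^{(l),\Omega_N}_{y_l}$; what is missing in the full estimator $\boldsymbol{\Theta}^{(l),\Omega_N}$ are the components $\Theta^{(l),\Omega_N}_{y_m}$ with $m<l$ and with $m>l$. The first step is to show that on space-time scale $\bar{t}+N^l t$ the fast seed-bank components $\Theta^{(l),\Omega_N}_{y_m}(\bar t + N^l t)$ with $m<l$ are \emph{slaved} to the active average: the exchange rate $N^l e_m / N^m$ with colour $m<l$ diverges as $N\to\infty$, so by a standard averaging/Tikhonov-type argument (the same mechanism that produces the slowing-down factor $E_l$ and is worked out in the heuristic of Section~\ref{ss.hml}) one gets $\Theta^{(l),\Omega_N}_{y_m}(\bar t+N^l t)-\bar\Theta^{(l),\Omega_N}(\bar t+N^l t)\to 0$; equivalently, in the limit $y_{m,l}(t)=x_l(t)$ for $0\le m<l$, which is exactly the first block of equations in \eqref{z11a}. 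The second step is to show that the slow components $\Theta^{(l),\Omega_N}_{y_m}(\bar t+N^l t)$ with $m>l$ do \emph{not move} on this time scale: the relevant wake-up rate $N^l e_m/N^m$ tends to $0$, so these components stay frozen at their value at time $\bar t$, which by part~(a) is the corresponding coordinate of the interaction-chain state $M^k_{-(l+1)}$. Together these two steps identify the limiting law of the full process on level $l$ as the full process $z_{l,M^k_{-(l+1)}}$ of \eqref{z2a}--\eqref{z11a}, with the stated initial law $\Gamma^{(l)}_{M^k_{-(l+1)}}$ obtained by composing the one-step kernels $Q^{[j]}=\Gamma^{(j)}_{\cdot}$ down from level $k$, i.e.\ the full interaction chain of \eqref{fintchain} (Fig.~\ref{fig:int}).

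Concretely I would proceed by induction on $k$, mirroring the multi-level induction used for the finite-level approximations in Sections~\ref{s.finlevel}--\ref{s.multilevel}. For the base case and the inductive step, the key tool is the martingale-problem characterisation from Proposition~\ref{P.wellp} applied to the block-average SSDE \eqref{rblockavxzmulti}: one writes the generator of $\big(\boldsymbol{\Theta}^{(l),\Omega_N}(\bar t + N^l\cdot)\big)_{l\le k}$, splits it into the $O(1)$ part (migration-as-drift towards the $(l+1)$-level average, resampling, exchange with colour $l$) and the $O(N^{l-m})$ part (exchange with colours $m<l$), and shows tightness of the laws plus convergence of the generators applied to test functions. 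Tightness on $C_b$ holds for the combined estimator $\bar\Theta^{(l),\Omega_N}$ and for the effective seed-bank (no diverging coefficients there, as noted after \eqref{412}), whereas for the fast components $\Theta^{(l),\Omega_N}_{y_m}$, $m<l$, only the weak Meyer-Zheng notion survives — this is precisely why the statement is phrased in the Meyer-Zheng topology. Here I would invoke the criterion recalled in Appendix~\ref{apb} (bounded variation of a suitable functional, or the Meyer--Zheng tightness condition via conditional variation) to get relative compactness of the pseudopaths, and then use the fact that a continuous limit with continuous local times is uniquely determined by its occupation measure (Remark~\ref{r.1556}) to identify the limit as the genuine process $z_{l,M^k_{-(l+1)}}$.

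The main obstacle, as usual in this type of multi-scale renormalisation, is controlling the \emph{interaction between consecutive levels} and closing the induction: one must show that, conditionally on the $(l+1)$-level average being (in the limit) frozen at the value dictated by the interaction chain, the $l$-level average equilibrates to $\Gamma^{(l)}_{\cdot}$ \emph{before} the $(l+1)$-level average has time to move — this is the quasi-equilibrium/decoupling ("propagation of chaos") statement, and it requires the separation-of-time-scales estimates that $N^l t_l = o(N^{l+1})$ together with the ergodicity (with quantitative rate) of the level-$l$ limiting dynamics \eqref{z11a}, established in Section~\ref{ss.flmfss}. The extra difficulty compared with the seed-bank-free case of \cite{DG93a} is that the level-$l$ dynamics now carries the $l$-coloured dormant component $y_{l,l}$, so the relevant equilibrium $\Gamma^{(l)}_{(\theta,y_l)}$ lives on $\mathfrak{s}$ rather than on $[0,1]$, and its ergodicity has to be re-proved; I would handle this via the duality of Proposition~\ref{P.dual1} restricted to a single level (a finite seed-bank coalescent) when $g=dg_{\mathrm{FW}}$, and via the duality-comparison arguments of Section~\ref{ss.comparisonarg} to extend to general $g\in\CG$. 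Finally, the identification of the initial law $\Gamma^{(l)}_{M^k_{-(l+1)}}$ as the iterated composition of the $\Gamma^{(j)}_{\cdot}$'s follows from the Markov property of the interaction chain \eqref{fintchain} together with the consistency of the limiting equilibria across levels, exactly as in the effective case of Theorem~\ref{T.multiscalehiereff}(b).
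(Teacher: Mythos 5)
Your outline gets the phenomenology right — fast dormant components slave to the combined block average, slow dormant components freeze, the $l$-block equilibrates faster than the $(l+1)$-block moves, and the Meyer--Zheng topology handles the diverging exchange rates — and your use of the combined/effective estimator as the object with $C_b$-path convergence, from which the full estimator follows in the Meyer--Zheng sense, matches the structure the paper builds up in Sections~\ref{s.finlevel}--\ref{s.multilevel}. Two points, however, leave genuine gaps.

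First, proposing to establish ergodicity of the level-$l$ limiting dynamics~\eqref{z11a} via the duality of Proposition~\ref{P.dual1} and the duality-comparison arguments of Section~\ref{ss.comparisonarg} is a detour the paper does not take, and it is not clear it works as stated. Those comparison arguments transfer the \emph{clustering criterion} between different $g$ by comparing the probability that two dual lineages coalesce; they do not transfer convergence to equilibrium. The paper instead proves ergodicity of the single-site level-$l$ process with constant drift towards $\theta$ by a direct Brownian coupling together with the generalised It\^o formula for $|x_1-x_2|$ and vanishing local time (Proposition~\ref{P.equergod}, Lemma~\ref{lemerg}, Lemma~\ref{lemerg2}, Lemma~\ref{lem15}), which is simpler and works for all $g\in\CG$ at once without any dual.

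Second, and more seriously, your treatment of the slow seed-banks is too cursory. You state that the components $\Theta^{(l),\Omega_N}_{y_m}$ with $m>l$ are ``frozen at their value at time $\bar t$, which by part~(a) is the corresponding coordinate of $M^k_{-(l+1)}$''. This begs the question of what that value \emph{is}. The non-trivial fact, which makes the interaction chain close up, is that for $l<m\le k$ a single $m$-dormant colony at time $\bar t + N^l t$ equals the $m$-dormant $m$-block average frozen at the level where colour $m$ was effective, not its initial state $\theta_{y_m}$. The paper establishes this via a random-walk coupling with the dual kernel $b^{\Omega_N}(\cdot,\cdot)$ (Lemma~\ref{lem:ds}, generalised in Lemma~\ref{lemflss}), using the moment identities of Lemma~\ref{lem1cg9422} and the event that two walks started from $(i,D_m)$ and $(j,D_m)$ in the same $m$-block have migrated at least once. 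Without this identification the induction does not close and the nested composition $\Gamma^{(l)}_{M^k_{-(l+1)}}$ is not determined. In addition, you do not address passing from the truncated group $\Omega_N^{k+1}$ with $k+2$ colours to the full $\Omega_N$ with infinitely many levels and colours; this requires the $O(1/N)$ bounds on the extra migration and exchange terms in the generator, as carried out around~\eqref{464pr}--\eqref{1272}, to show the tail does not contribute in the limit.
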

		
		\begin{remark}{\bf[convergence in the Meyer-Zheng topology]}
			{\rm Note that Theorem~\ref{T.multiscalehier}(b) is stated in the Meyer-Zheng topology. This  topology is needed because at time-scales $N^lt$ rates occur in\eqref{rblockavxzmulti} that tend to infinity as $N\to\infty$. In Section~\ref{ss.hml} we define the Meyer-Zheng topology and explain why it is neeeded.}\hfill$\blacksquare$
		\end{remark}
		
		The statement in (a) shows that if we look at multiple space-time scales simultaneously, then the joint distribution of the different block averages behaves like the infinite-dimensional interaction chain defined in \eqref{fintchain}. The statement in (b) describes the law of the path on different times scales.
		\begin{itemize}
			\item 
			On time scale $N^kt$, the $l$-block averages with $l>k$ are not moving, i.e., $(\boldsymbol{\Theta}^{(l),\Omega_N}(\bar{t}+N^kt))_{t > 0}$ is a constant process. However, there is a difference between seed-banks with colour $m>k$ and seed-banks with colour $0\leq m\leq k$ in the way they interact with the active population. For $m>k$, even the $m$-dormant single colonies have not yet moved at time $\bar{t}+N^kt$, and hence are still in their initial states, with expectations $(\theta_{y_m})_{m=l+1}^\infty$. Therefore, also the $l$-block averages of $m$-dormant populations are still in their initial states, with expectations $(\theta_{y_m})_{m=l+1}^\infty$. For $l\leq k$ the $m$-dormant single colonies with $m\leq k$ at time $\bar{t}+N^kt$ have already interacted with the active population. Due to this interaction, for $l>k$ the $l$-block averages of $m$-dormant populations with $m\leq k$ are in state $\vt_k$ instead of their initial state $\theta_{y_m}$. However, on time scale $\bar{t}+N^kt$ $l$-block averages of $m$-dormant populations are not moving. (In Section~\ref{ss.hml} we explain how the shift from $\theta_{y_m}$ to $\vt_k$ occurs.) Also the $l$-block averages of the active population are in state $\vt_k$.
			\item 
			On time scale $\bar{t}+N^k t$, the $k$-block averages have reached equilibrium. We see that the active $k$-block average and the $k$-dormant $k$-block average evolve together like the effective $k$-block process in Theorem~\ref{T.multiscalehiereff}. Therefore the evolution of the active $k$-block average is slowed down by a factor $E_k$, the active $k$-block feels a drift towards $\vt_k$ (the value of the active $(k+1)$-block average at time $\bar{t}$), resamples with diffusion function $\CF^{(k)}g$, and exchanges individuals with the $k$-dormant $k$-block. The $k$-dormant $k$-block average evolves only via interaction with the active $k$-block. The $m$-dormant $k$-block averages with colour $0\leq m<k$ equal the active $k$-block average and hence follow their evolution. The $m$-dormant $k$-blocks with colour $m>k$ are still in their initial states, since on time scale $\bar{t}+N^kt$ even single colony seed-banks with colour $m>k$ have not yet started to interact with the active population. 
			\item 
			On time scale $\bar{t}+N^lt$, for $0\leq l<k$, the $l$-block averages are in a quasi-equilibrium. Again, the active $l$-block and the $l$-dormant $l$-block average, which is the effective seed-bank, behave as the effective process in \ref{T.multiscalehiereff}. Hence, the active $l$-block average feels a drift towards the instantaneous value of the active $(l+1)$-block average, which is given by the first component of the interaction chain $M^k_{-(l+1)}$, resamples according to the renormalised diffusion function $\CF^{(l)}g$, and exchanges with the $l$-block of the $l$-dormant population. The evolution of the active $l$-block average is slowed down by a factor $E_l$. The $l$-block of the $l$-dormant population exchanges individuals with the active population. The $l$-blocks of the $m$-dormant population with colours $0\leq m<l$ follow the active population. The states of the $m$-dormant population with colour $m>l$ are given by the corresponding components in the interaction chain $M^k_{-(l+1)}$. Hence the $l$-block averages with colours $m>k$ are still in their initial states $\theta_{y_m}$, because on time scale $\bar{t}+N^lt$ even the single dormant colonies with colour $m>k$ have not yet interacted with the active population. However, something interesting is happening with the colours $l<m\leq k$: they are in a state obtained on the time scale in which they where effective, i.e., for $l<m\leq k$ the $m$-dormant $l$-block average is in state $y_{m,m}(\bar{t}\,)$. This happens because at time $\bar{t}$ the single colonies have already interacted with the active population, but on time scale $N^lt$ they do not interact anymore with the active population. (Also this effect will be further explained in Section~\ref{ss.hml}.) 
		\end{itemize}
		
		\begin{remark}{\bf [Comparison to the hierachical multi-scale limit without seed-bank]}
			{\rm Comparing Theorem~\ref{T.multiscalehier} with the multi-scale limit theorems derived for the hierarchical system without seed-bank \cite{DG93a}, \cite{DG93b}, \cite{DGV95}, we see that the seed-bank affects the system both quantitatively and qualitatively. First, the active population is \emph{slowed down} by the total size of the seed-banks it has interacted with, represented by the slowing-down factors $(E_l)_{l\in\N_0}$. Second, the interaction with the \emph{effective seed-bank} on each space-time scale is special to the system with seed-bank. Still, the \emph{decoupling} of the active component and the \emph{renormalisation transformation} for the diffusion function are similar as in the system without seed-bank.}
		\end{remark}
		
		\begin{remark}{\bf [$k\to\infty$ limit of the interaction chain]}
			\label{remark1}
			{\rm The result in \eqref{multiconv} raises the question how the hierarchical multi-scale limit behaves for large $k$. We find the following dichotomy:
				\begin{equation}
					\lim_{k\to\infty} \CL\left[(M^k_{-(k+1),-k,\cdots,0})\right] = \CL\left[(M^\infty_{k})_{k\in\Z^{-}}\right], 
				\end{equation}
				where in the \emph{clustering regime}
				\begin{equation}
					\CL\left[(M^\infty_k)_{k\in\Z^-}\right]=\theta\delta_{(1,1^{\N_0})^{\Z^-}}+(1-\theta)\delta_{(0,0^{\N_0})^{\Z^-}}
				\end{equation}
				and in the \emph{coexistence regime} $M^\infty=(M_k^\infty)_{k\in\Z^-}$ is a realisation of the unique entrance law of the interaction chain at time $-\infty$ with
				\begin{equation}
					\lim_{l\to\infty} M_{-l}^\infty = (\theta,\theta).
				\end{equation}
				In the latter case, $M^\infty$ corresponds to the equilibrium vector of block averages around site $0$, whose law agrees with that of the equilibrium block averages for the mean-field model after we take the limit $N\to\infty$ (see \cite[Proposition 6.2 and 6.3]{DGV95}).} \hfill $\blacksquare$
		\end{remark}
		
		\begin{remark}{\bf [Interaction field]} 
			\label{remark2}
			{\rm Theorem~\ref{T.multiscalehier} looks at the tower of block averages over a fixed site, namely, $0$. In order to study the cluster formation in the clustering regime or the equilibria in the coexistence regime, we must analyse the dependence structure between the towers of block averages over \emph{different} sites.  We can show that, in the limit as $N\to\infty$, an interacting random field emerges, indexed by a tree with countably many edges coming out of every site at every level. This random field has the property that the averages over any two points $\eta,\eta^\prime$ with $d(\eta,\eta^\prime)=l$, follow a single interaction chain in equilibrium from $k+1$ until $l$ (or from $-\infty$ until $l$ in the entrance law) and, conditional on the state in $l$, evolve independently as the interaction chain beyond $l$. This corresponds to what is called \emph{propagation of chaos} of the $(l-1)$-block averages given the $l$-block average. For the model without seed-bank such results are described in \cite[Section 0(e)]{DGV95}. Using our results for the model with seed-bank above, we can in principle follow an analogous line of argument. We refrain from spelling out the details.} \hfill$\blacksquare$
		\end{remark}

		\subsection{Heuristics behind the multi-scale limit}
		\label{ss.hml}
		
		The proofs of Theorems~\ref{T.multiscalehiereff} and \ref{T.multiscalehier} written out in Sections~\ref{ss.IntroMeanfield}--\ref{s.multilevel}, are long and technical. In order to help the reader appreciate these proofs, we provide the heuristics in Sections~\ref{sss.blockavh}--\ref{sss.formin}.
		
		\subsubsection{Evolution of the block-averages}
		\label{sss.blockavh}
		
		Recall the block averages introduced in Definition \ref{defblockav} and their evolution defined in \eqref{rblockavxzmulti}. In the limit as $N\to\infty$, we heuristically obtain from the SSDE in \eqref{rblockavxzmulti} the following results for the $k$-block process 
		\begin{equation}
			({x}_{k}^{\Omega_N}(t),({y}_{m,k}^{\Omega_N}(t))_{m\in\N_0})_{t \geq 0}.
		\end{equation}
		\begin{itemize}
			\item 
			{\bf Migration.} Recall that the migration is captured by the first term of the first equation in \eqref{rblockavxzmulti}, i.e., the first term of the evolution of the active part of the population. Letting $N\to\infty$, we see that in the sum over $l$ only the term $l=1$ contributes. Hence we expect that the effective migration felt by the active $k$-block average is towards ${x}_{k+1}^{\Omega_N}(0)$, the initial state of the active $(k+1)$-block average. Note that the migration term in \eqref{rblockavxzmulti} can be written as
			\begin{equation}
				\label{mig}
				\sum_{l\in\N} \frac{c_{k+l-1}}{N^{l-1}}\big[{x}_{k+l}^{\Omega_N}(N^{-l}t)-{x}_{k}^{\Omega_N}(t)\big]\,\d t=\sum_{l\in\N} 
				\frac{c_{k+l-1}}{N^{l-1}}\left[\frac{1}{N^l}\sum_{k=0}^{N^l-1}{x}_{k}^{\Omega_N}(t)-{x}_{k}^{\Omega_N}(t)\right]\,\d t.
			\end{equation}
			The drift towards the $(k+1)$-block average is therefore also a drift towards the current average of the $k$-blocks in the $(k+1)$-block. In the limit as $N\to\infty$, the latter can be approximated by $\E[x_k(t)]$. Effectively, as $N\to\infty$, the $k$-blocks become independent given the the value of ${x}_{k+1}^{\Omega_N}(0)$, i.e., there is \emph{decoupling}.
			\item 
			{\bf Resampling.} Recall that the diffusion term in the evolution equation of the active population represents the resampling. Therefore we see that the active $k$-block resamples at a rate that is the average resampling rate over the $k$-block. For $k=1$, the resampling rate of the $1$-block is the average of the resampling ratse of the single colonies.  Therefore, in the limit $N\to\infty$, due to the decoupling described above, we expect that the resampling rate for the $1$ block is given by $\E[g]$, where the expectation is w.r.t.\ the quasi-equilibrium of the single colonies. This expectation is exaclty the renormalised diffusion function $\CF g$ (see \eqref{renor}). For the $k$-block, we may interpret the diffusion function to be the average of the diffusion function for the $(k-1)$-blocks. By ``induction" we assume that the $(k-1)$ blocks resample at rate $\CF^{(k-1)}g$. Hence, due to the decoupling of the $(k-1)$-blocks as $N\to\infty$, we expect the resampling rate for the $k$ blocks to equal $\E[\CF^{(k-1)}g]$, where the expectation is w.r.t.\ the quasi-equlibrium of the $(k-1)$-blocks. This yields another iteration of the renormalisation transformation (see \eqref{frenormit}). Hence, we expect the diffusion function for the $k$-blocks to converge to $\CF^{(k)}g$.
			\item 
			{\bf Exchange with the seed-bank.} Recall that the last term of the first equation in \eqref{rblockavxzmulti} and the second equation in \eqref{rblockavxzmulti} together describe the exchange of the active $k$-block with the $m$-dormant $k$-block. To describe the limiting behaviour as $N\to\infty$, we distinguish three cases: $0\leq m<k$, $m=k$, $m>k$.
			\begin{itemize}		
				\item	
				If $0\leq m<k$, then we see that the rate of exchange between the active $k$-block and the $m$-dormant $k$-block tends to infinity as $N\to\infty$. We therefore expect them to equalise, i.e.,
				\begin{equation}
					\label{14}
					\lim_{N \to \infty} \CL\left[\left({x}_{k}^{\Omega_N}(t)-{y}_{m,k}^{\Omega_N}(t)\right)_{t>0}\right] = \delta_0,
				\end{equation}
				where $0$ should be interpreted as the process equal to $0$, $(0)_{t>0}$.
				Hence we see that $m$-dormant $k$-block follows the active $k$-block immediately. (To formalise this fact, we need the \emph{Meyer-Zheng topology} \cite{MZ84}.) 
				\item		
				If $m=k$, then there is a non-trivial exchange between the active $k$-block and the $k$-dormant $k$-block. 
				\item		
				If $m>k$, then the exchange rate between the active $k$-block and the $m$-dormant $k$-block tends to zero as $N\to\infty$. 
			\end{itemize}		
			Thus, only the $k$-dormant $k$-block has a non-trivial interaction with the active $k$-block. We express this by saying that on space-time scale $k$ the $k$-dormant population plays the role of the \emph{effective seed-bank}.   
		\end{itemize}
		
		\subsubsection{Limiting evolution of the block-averages}
		\label{sss.efproc}
		
		To determine the limiting evolution of the full block-averages process, we first have a look at the limiting evolution of the effective process.
		
		\paragraph{The effective process.}
		
		To determine the limit as $N\to\infty$ of \eqref{rblockavxzmulti}, we need to get rid of the diverging rates. Instead of only looking at the $k$-block process $({x}_{k}^{\Omega_N}(t),({y}_{m,k}^{\Omega_N}(t))_{m\in\N_0})_{t \geq 0}$, which evolves according to \eqref{rblockavxzmulti}, we look at the \emph{effective $k$-block process} defined as
		\begin{equation}
			\label{419}
			\left(\bar{x}_{k}^{\Omega_N}(t),y_{k,k}^{\Omega_N}(t)\right)_{t \geq 0},
		\end{equation}
		where we abbreviate 
		\begin{equation}
			\gls{effx}=\frac{x_{k}^{\Omega_N}(t)+\sum_{m=0}^{k-1}K_my_{m,k}^{\Omega_N}(t)}
			{1+\sum_{m=0}^{k-1}K_m}.
		\end{equation}
		By \eqref{14} and the heuristic discussion given above, the process in \eqref{419} equals $(x_{k}^{\Omega_N}(t),y_{k,k}^{\Omega_N}(t))_{t\geq 0}$ in the limit as $N\to\infty$, i.e., it describes the joint distribution of the active $k$-block and the effective dormant $k$-block, which is the $k$-dormant $k$-block. Using \eqref{rblockavxzmulti}, we see that the process in \eqref{419} evolves according to the SSDE
		\begin{equation}
			\label{rblockavxzmultieff}
			\begin{aligned}
				\d\bar{x}_{k}^{\Omega_N}(t)
				&= E_k \sum_{l\in\N} \frac{c_{k+l-1}}{N^{l-1}}\big[{x}_{k+l}^{\Omega_N}(N^{-l}t)-{x}_{k}^{\Omega_N}(t)\big]\,\d t\\
				&\qquad + E_k \sqrt{\frac{1}{N^k} \sum_{\xi\in B_k(0)} g(x_\xi(N^kt))}\,\,\d w_k(t)\\
				&\qquad + E_k \sum_{m=k}^\infty N^k \frac{K_m e_m}{N^m}
				\big[{y}_{m,k}^{\Omega_N}(t)-{x}_{k}^{\Omega_N}(t)\big]\,\d t,\\[0.2cm]
				\d{y}_{k,k}^{\Omega_N}(t)
				&=e_k\big[{x}_{k}^{\Omega_N}(t)-{y}_{k,k}^{\Omega_N}(t)\big]\,\d t.
			\end{aligned}
		\end{equation}  
		In \eqref{rblockavxzmultieff} no infinite rates appear anymore. In the limit as $N\to\infty$, by \eqref{14} we can approximate
		\begin{equation}
			\label{ap1}
			x_{k}^{\Omega_N}(t)\approx y_{m,k}^{\Omega_N}(t), \qquad 0\leq m<k,
		\end{equation}
		such that
		\begin{equation}
			\label{ap2}
			x_{k}^{\Omega_N}(t) \approx \bar{x}_{k}^{\Omega_N}(t).
		\end{equation}
		We can therefore approximate \eqref{rblockavxzmultieff} by
		\begin{equation}
			\label{eveff}
			\begin{aligned}
				\d \bar{x}_{k}^{\Omega_N}(t)
				&= E_k \sum_{l\in\N} \frac{c_{k+l-1}}{N^{l-1}}\big[\bar{x}_{k+l}^{\Omega_N}(N^{-l}t)-\bar{x}_{k}^{\Omega_N}(t)\big]\,\d t\\
				&\qquad + E_k \sqrt{\frac{1}{N^k} \sum_{\xi\in B_k(0)} g\big(\bar{x}_\xi(N^kt)\big)}\,\,\d w_k(t)\\
				&\qquad + E_k \sum_{m=k}^\infty N^k \frac{K_m e_m}{N^m}
				\big[{y}_{m,k}^{\Omega_N}(t)-\bar{x}_{k}^{\Omega_N}(t)\big]\,\d t,\\[0.2cm]
				\d{y}_{k,k}^{\Omega_N}(t)
				&=e_k\big[\bar{x}_{k}^{\Omega_N}(t)-{y}_{k,k}^{\Omega_N}(t)\big]\,\d t.
			\end{aligned}
		\end{equation}  
		Hence, in the limit as $N\to\infty$, the process in \eqref{419} becomes autonomous. Moreover, assuming that $\lim_{N \to \infty}\bar{x}_{l+1}^{\Omega_N}(0)=\vt_l$, we see that \eqref{eveff} approaches the effective process defined in \eqref{927}, with 
		\begin{equation}
			\theta=\vt_l, \quad E=E_k, \quad c=c_k, \quad e=e_k, \quad K=K_k, \quad g=\CF^{(k)}g. 
		\end{equation}
		In particular, we see that the \emph{slowing-down constant} $E_k$ arises because the active population is the only part of the first component of \eqref{419}. Note that therefore only a part, the active part,  from the first component migrates, resamples and exchanges with the seed-bank. Due to the infinite rates, the active population ``drags along all the fast seed-banks with total size $\sum_{m=0}^{k-1}K_m$". This causes the slowing down factors $E_k$.
		
		Since there are no infinite rates in the evolution of the effective process, we can use the classical path space topology. This allows us in the proof in Sections~\ref{ss.IntroMeanfield}--\ref{s.multilevel} to build on techniques developed for the hierarchical mean-field model without seed-bank in \cite{DG93b}, \cite{DGV95}. It turns out that the \emph{effective process} is very useful in our analysis.
		
		\paragraph{From the effective process to the full process.}
		
		For large $N$, by \eqref{ap1} and \eqref{ap2}, the evolution of our original process $({x}_{k}^{\Omega_N}(t),({y}_{m,k}^{\Omega_N}(t))_{m\in\N_0})_{t \geq 0}$ can be approximated by 
		\begin{equation}
			\label{ev}
			\begin{aligned}
				\d \bar{x}_{k}^{\Omega_N}(t)
				&\approx E_k \sum_{l\in\N} \frac{c_{k+l-1}}{N^{l-1}}\big[\bar{x}_{k+l}^{\Omega_N}(N^{-l}t)-\bar{x}_{k}^{\Omega_N}(t)\big]\,\d t\\
				&\qquad + E_k \sqrt{\frac{1}{N^k} \sum_{\xi\in B_k(0)} g(\bar{x}_\xi(N^kt))}\,\,\d w_k(t)\\
				&\qquad + E_k \sum_{m=k}^\infty N^k \frac{K_m e_m}{N^m}
				\big[{y}_{m,k}^{\Omega_N}(t)-\bar{x}_{k}^{\Omega_N}(t)\big]\,\d t,\\
				{y}_{k,k}^{\Omega_N}(t)&=\bar{x}_{k}^{\Omega_N}(t),\\
				\d{y}_{k,k}^{\Omega_N}(t)
				&=e_k\big[\bar{x}_{k}^{\Omega_N}(t)-{y}_{k,k}^{\Omega_N}(t)\big]\,\d t,\\[0.2cm]
				{y}_{k,k}^{\Omega_N}(t)&={y}_{m,k}^{\Omega_N}(0).
			\end{aligned}
		\end{equation}  
		By the ergodic theorem for exchangeable measures, we can assume that $\lim_{N \to \infty}\bar{x}_{k+1}^{\Omega_N}(0)=\vt_k\, a.s.$. We expect that \eqref{ev} approaches \eqref{z11a} with 
		\begin{equation}
			\theta=\vt_k, \quad E=E_k, \quad c=c_k, \quad e=e_k, \quad K=K_k, \quad g=\CF^{(k)}g.
		\end{equation} 
		To prove that
		\begin{equation}
			\lim_{N \to \infty}\CL[{y}_{m,k}^{\Omega_N}(t)]=\lim_{N\to\infty}\CL[\bar{x}_{k}^{\Omega_N}(t)], \qquad 0\leq m<k-1,
		\end{equation} 
		we need \emph{the Meyer-Zheng topology} explained in Section~\ref{MeyerZheng}. In the proof in Sections~\ref{ss.ProofsMeanfield}--\ref{s.multilevel} we show how the above approximations can be made rigorous. 
		
		\paragraph{Conserved quantities.}
		
		Note that, by \eqref{rblockavxzmulti} and \eqref{mig}, for each $k\in\N_0$
		\begin{equation}
			\E\left[\frac{x^{\Omega_N}_k(t)+\sum_{m\in\N_0}K_my^{\Omega_N}_k(t)}{1+\sum_{m\in\N_0}K_m}\right] = \theta,
			\qquad t \geq 0,
		\end{equation}
		is a conserved quantity. For each $k\in\N$ we obtain that for $l\geq k$ 
		\begin{equation}
			\lim_{N \to \infty}\E\left[\frac{x^{\Omega_N}_k(t)+\sum_{m=0}^{l}K_my^{\Omega_N}_k(t)}{1+\sum_{m=0}^l K_m}\right]
			=\vt_l,
		\end{equation}
		is a conserved quantity. For the effective process, \eqref{eveff} implies that
		\begin{equation}
			\label{359}
			\begin{aligned}
				\frac{\d}{\d t}\E[\bar{x}_k^{\Omega_N}(t)] 
				&=E_{k} K_ke_k\,\Big(\E[y_{k,k}^{\Omega_N}(t)]-\E[\bar{x}_k^{\Omega_N}(t)]\Big),\\
				\frac{\d}{\d t}\E[y_{k,k}^{\Omega_N}(t)] &= e_k\,\Big(\E[\bar{x}_k^{\Omega_N}(t)]-\E[y_{k,k}^{\Omega_N}(t)]\Big).
			\end{aligned}
		\end{equation}
		Recall that 
		\begin{equation}
			\E[\bar{x}_k^{\Omega_N}(0)] = \E\left[\frac{x_k^{\Omega_N}(0)
				+\sum_{m=0}^{k-1}K_my_{m,k}^{\Omega_N}(0)}{1+\sum_{m=0}^{k-1}K_m}\right]
			=\vt_{k-1},\qquad \E[y_{k,k}^{\Omega_N}(0)]=\theta_{y_k}.
		\end{equation}
		Therefore we can solve \eqref{359} explicitly as
		\begin{equation}
			\label{expz}
			\begin{aligned}
				\E[\bar{x}_k^{\Omega_N}(t)] &= \vt_k + \frac{E_kK_k}{1+E_kK_k} (\vt_{k-1}-\theta_{y_k})\, \e^{-(E_kK_k+1)e_kt},\\
				\E[y_{k,k}^{\Omega_N}(t)]&= \vt_k - \frac{1}{1+E_kK_k} (\vt_{k-1}-\theta_{y_k})\, \e^{-(E_kK_k+1)e_kt}.
			\end{aligned}
		\end{equation}
		
		The above computation shows what happens to $\E[\bar{x}_k^{\Omega_N}(t)]$  if we move one space-time scale up in the hierarchy, namely, a new seed-bank starts interacting with the active population. This causes that $\vt_{k-1}$ is pulled a bit towards $\theta_{y_k}$, so that also $\E[\bar{x}_k^{\Omega_N}(t)]$ changes a bit. Each new seed-bank that opens up changes the expectation of the active population, which results in the sequence $(\theta_x,\vt_0, \vt_1,\vt_2,\ldots)$ for the expectation of the active population on space-time scales $\{0,1,2,3,\cdots\}$. From \eqref{mig} we see that the drift of $\bar{x}_k^{\Omega_N}(t)$ is towards
		\begin{equation}
			\bar{x}_{k+1}^{\Omega_N}(N^{-1}t)=\frac{1}{N^k}\sum_{k=0}^{N^k-1}{x}_{k}^{\Omega_N}(t)\approx\E[\bar{x}_k^{\Omega_N}(t)],
		\end{equation}
		where the last approximation can be made because the $k$-blocks decouple. Hence, in the limit as $N\to\infty$, once the $k$-blocks are in a quasi-equilibrium we can replace the drift towards $\bar{x}_{k+1}^{\Omega_N}(N^{-1}t)$ by a drift towards $\E[\bar{x}_k^{\Omega_N}(t)]=\vt_k$.
		
		\paragraph{Shifting averages.}
		
		Recall the full estimator process $(\mathbf{\Theta}^{(l) ,\Omega_N}(t))_{t >0}$ defined in \eqref{412}. Equation~\ref{blockavx*z*alt} implies that the evolution of the estimator process is given by
		\begin{equation}
			\label{463}
			\begin{aligned}
				\d \Theta^{(l) ,\Omega_N}_x(t)
				&=\sum_{n=l+1}^\infty\frac{c_{n-1}}{N^{n-1}}[\Theta^{(n) ,\Omega_N}_x (t)-\Theta^{(l)}_x(t)]\,\d t\\
				&\qquad+\sqrt{\frac{1}{N^{2l}}\sum_{\xi\in B_l}g\big(x_\xi(t)\big)}\,\d w(t)\\
				&\qquad+\sum_{m\in\N_0}\frac{K_me_m}{N^m}[\Theta^{(l) ,\Omega_N}_{y_m}(t)-\Theta^{(1) ,\Omega_N}_x(t)]\,\d t,\\[0.2cm]
				\d \Theta^{(l) ,\Omega_N}_{y_m}(t)&=\frac{e_m}{N^m}[\Theta^{(l) ,\Omega_N}_x(t)-\Theta^{(l) ,\Omega_N}_{y_m}(t)]\,\d t,\, m\in\N_0.
			\end{aligned}
		\end{equation}
		Looking at the estimator process $(\mathbf{\Theta}^{(l),\Omega_N}(t))_{t> 0}$ on time scale $N^kt$, we see that 
		\begin{equation}
			\label{464}
			\begin{aligned}
				\d \Theta_x^{(l) ,\Omega_N}(N^kt)&=\sum_{n=l+1}^\infty\frac{c_{n-1}}{N^{n-1-k}}
				\left[\Theta_x^{(n),\Omega_N}(N^kt)-\Theta_x^{(l),\Omega_N}(N^kt)\right]\,\d t\\
				&\qquad+\sqrt{\frac{N^k}{N^{2l}}\sum_{\xi\in B_l}g\big(x_\xi(N^kt)\big)}\,\d w(t)\\
				&\qquad+\sum_{m\in\N_0} \frac{K_me_m}{N^{m-k}}
				\left[\Theta^{(l) ,\Omega_N}_{y_m}(N^{k}t)-\Theta^{(l) ,\Omega_N}_{x}(N^{k}t)\right]\,\d t,\\[0.2cm]
				\d \Theta^{(l) ,\Omega_N}_{y_m}(N^{k}t)&=\frac{e_m}{N^{m-k}}
				\left[\Theta^{(l) ,\Omega_N}_{x}(N^{k}t)-\Theta^{(l) ,\Omega_N}_{y_m}(N^{k}t)\right]\,\d t, \qquad m\in\N_0.
			\end{aligned}
		\end{equation}
		From \eqref{464} we get that, on time scale $N^{k}t$, for all $l\geq k+1$,
		\begin{equation}
			\Theta^{(l) ,\Omega_N}(N^kt)=\frac{\Theta^{(l) ,\Omega_N}_x(N^kt)
				+\sum_{m=0}^{l-1}K_m\Theta^{(l) ,\Omega_N}_{y_m}(N^kt)}{1+\sum_{m=0}^{l-1}K_m}, \qquad t \geq 0, 
		\end{equation}
		is a conserved quantity in the limit as $N\to\infty$, and for $t\geq 0$,
		\begin{equation}
			\label{vyt}
			\lim_{N \to \infty}\Theta^{(l) ,\Omega_N}(N^kt)=\frac{\theta_x
				+\sum_{m=0}^{l-1}K_m \theta_{y_m}}{1+\sum_{m=0}^{l-1}K_m}=\vt_l, \text{ in probability}.
		\end{equation}
		For $m>k$, also $\Theta^{(l) ,\Omega_N}_{y_m}(N^{k}t)$ is a conserved quantity, and
		\begin{equation}
			\lim_{N \to \infty}\Theta^{(l) ,\Omega_N}_{y_m}(N^{k}t)=\theta_{y_m}, \qquad t \geq 0.
		\end{equation}
		However, for $l\geq k+1$, $\Theta^{(l) ,\Omega_N}_{x}(N^{k}t)$ and $(\Theta^{(l) ,\Omega_N}_{y_m}(N^{k}t))_{m=0}^k$ are not conserved quantities in the limit as $N\to\infty$. Note that from \ref{rblockavxzmulti} we heuristically see that the full $l$-block estimator process $({\bf \Theta}^{(l),\Omega_N}(N^kt))_{t\geq 0}$ with $l>k$ converges to the process
		\begin{equation}
			\left(\Theta^{(l)}_x(t),\left(\Theta^{(l)}_{y_m}(t)\right)_{m\in\N_0}\right)_{t>0},
		\end{equation}
		which evolves according to 
		\begin{equation}
			\begin{aligned}
				&\d \Theta^{(l) }_x(t) = E_{k-1}K_{k}e_{k}[\Theta^{(l) }_{y_k}(t)-\Theta^{(l),\Omega_N }_x(t)]\,\d t,\\
				&\begin{array}{llll}
					\Theta^{(l) }_{y_{m}}(t) &=& \Theta^{(l) }_x(t), &m<k,\\
					\d \Theta^{(l) }_{y_{k}}(t) &=& e_{k}[\Theta^{(l) }_x(t)-\Theta^{(l) }_{y_{k}}(t)]\,\d t, &m=k,\\
					\Theta^{(l) }_{y_m}(t) &=& \theta_{y_m}, &m>k.
				\end{array}
			\end{aligned}
		\end{equation}   
		This system can be explicitly solved as
		\begin{equation}
			\label{234}
			\begin{aligned}
				&\Theta^{(l) }_x(t) = \frac{\Theta^{(l) }_x(0)+E_{k-1}K_k\Theta^{(l) }_{y_{k}}(0)}{1+E_{k-1}K_k}\\
				&\qquad\qquad\qquad +\frac{E_{k-1}K_k}{1+E_{k-1}K_k}
				[\Theta^{(l),\Omega_N }_x(0)-\Theta^{(l) }_{y_k}(0)]\,\e^{-(E_{k-1}K_ke_k+e_k)t},\\
				&\begin{array}{llll}
					\Theta^{(l) }_{y_{m}}(t) &=& \Theta^{(l) }_x(t), &m<k,\\[0.2cm]
					\Theta^{(l) }_{y_{k}}(t) &=& \frac{\Theta^{(l) }_x(0)+E_{k-1}K_k\Theta^{(l) }_{y_{k}}(0)}{1+E_{k-1}K_k}\\[0.2cm]
					&&\qquad -\frac{1}{1+E_{k-1}K_k}[\Theta^{(l),\Omega_N }_x(0)
					-\Theta^{(l) }_{y_k}(0)]\,\e^{-(E_{k-1}K_ke_k+e_k)t}, &m=k,\\
					\Theta^{(l) }_{y_m}(t) &=& \theta_{y_m}, &m>k.
				\end{array}
			\end{aligned}
		\end{equation} 
		The latter shows that, each time we enter a new space-time scale, all the large active blocks interact with the large effective dormant blocks until they equalise. Thus, on each space-time scale, all the active $l$-blocks and the dormant $l$-blocks of colour $m\leq l$ move for a short period of time. As a consequence, the value of $\Theta_x^{(l)}(0)$ depends on the scaling we choose. To illustrate this, we note that
		\begin{equation}
			\begin{aligned}
				\lim_{N\to\infty}\Theta_{x}^{(l),\Omega_N}(0) &=\theta_x,\\
				\lim_{N \to \infty}\Theta_x^{(l),\Omega_N}(L(N)+t) &=\vt_0,\\
				\lim_{N \to \infty}\Theta_x^{(l),\Omega_N}((L(N)N^n+N^nt)) &=\vt_n, \quad 0\leq n\leq k.
			\end{aligned}
		\end{equation}
		However, the scaling $N^kt$ would imply that
		\begin{equation}
			\begin{aligned}
				\lim_{N\to\infty}\Theta_{x}^{(l),\Omega_N}(0) &=\theta_x,\\
				\lim_{N \to \infty}\Theta_x^{(l),\Omega_N}\left(N^k\frac{L(N)+t}{N^k}\right) &=\vt_0,\\
				\lim_{N \to \infty}\Theta_x^{(l),\Omega_N}\left(N^k\frac{L(N)N^n+N^nt}{N^k}\right) &=\vt_n,\quad 0\leq n\leq k.
			\end{aligned}
		\end{equation}
		Hence, if $(t(N))_{N\in\N}$ is a sequence such that $\lim_{N \to \infty}N^kt(N)=0$, then the value of the limit
		\begin{equation}
			\lim_{N\to\infty} \Theta^{(l),\Omega_N}_x(N^k t(N))
		\end{equation}
		depends on $(t(N))_{N\in\N}$. Moreover, to obtain a limiting process for $({\bf \Theta}^{(l),\Omega_N}(N^kt))_{t\geq 0}$ 
		we need convergence also at time $0$, while it is not clear what $N^kt \downarrow  0$ means. To circumvent these subtleties, we look at the process at times $t>0$ and use as starting time $\bar{t}$ defined in Theorem~\ref{T.multiscalehiereff}. 
		
		From \eqref{234} it follows that, for $l\geq k$,
		\begin{equation}
			\label{acin}
			\begin{aligned}
				\lim_{N \to \infty}\Theta^{(l) ,\Omega_N}_{x}(\bar{t}\,) &=\vt_k, \text{ in probability},\\
				\lim_{N \to \infty}\Theta^{(l) ,\Omega_N}_{y_m}(\bar{t}\,) &=\vt_k, \quad m\leq k, \text{ in probability},\\
				\lim_{N \to \infty}\Theta^{(l) ,\Omega_N}_{y_m}(\bar{t}\,) &=\theta_{y_m}, \quad m > k\text{ in probability}. 
			\end{aligned}
		\end{equation}    
		Note that the shifting of averages mentioned earlier is closely related to the conserved quantities discussed in Section~\ref{sss.efproc} because, for large $N$,
		\begin{equation}
			\lim_{N \to \infty}\Theta^{(l) ,\Omega_N}_{x}(\bar{t}\,)\approx\E[x_k^{\Omega_N}],
		\end{equation}
		where the expectation is taken in the quasi-equilibrium the $k$-blocks have attained after scaling with time $\bar{t}$. 
		
		\subsubsection{Formation of the interaction chain}
		\label{sss.formin}
		
		In Section~\ref{sss.blockavh} we saw how subsequent space-time scales are connected via the migration term. In this section we show how the interaction chain arises from this connection. We first show how the effective interaction chain arises for the effective process. Then we show how the full interaction chain is formed, by studying the slow seed-banks.
		
		\paragraph{Connections between different space-time scales}
		
		Let $\bar{t}$ be as in Theorem~\ref{T.multiscalehiereff}. From \eqref{463} it follows that the process $({\bf\Theta}^{(l) ,\Omega_N}(\bar{t}+N^lt))_{t>0}$ evolves according to  
		\begin{equation}
			\label{463b}
			\begin{aligned}
				\d \Theta^{(l) ,\Omega_N}_x(\bt+N^lt)
				&=\sum_{n=l+1}^\infty\frac{c_{n-1}}{N^{n-1-l}}[\Theta^{(n) ,\Omega_N}_x (\bt+N^l t)-\Theta^{(l)}_x(\bt+N^lt)  ]\,\d t\\
				&\qquad+\sqrt{\frac{1}{N^{l}}\sum_{\xi\in B_l}g\big(x_\xi(\bt+N^lt)\big)}\,\d w(t)\\
				&\qquad+\sum_{m\in\N_0}\frac{K_me_m}{N^{m-l}}[\Theta^{(l) ,\Omega_N}_{y_m}(\bt+N^lt)
				-\Theta^{(1) ,\Omega_N}_x(\bt+N^lt)]\,\d t,\\
				\d \Theta^{(l) ,\Omega_N}_{y_m}(\bt+N^lt)&=\frac{e_m}{N^{m-l}}[\Theta^{(l) ,\Omega_N}_x(\bt+N^lt)
				-\Theta^{(l),\Omega_N}_{y_m}(\bt+N^lt)]\,\d t,\qquad m\in\N_0.
			\end{aligned}
		\end{equation}
		Therefore, in the limit as $N\to\infty$, the active population $\Theta^{(l) ,\Omega_N}_x (\bt+N^l t)$ feels a drift towards the $(l+1)$-block average $\Theta^{(l+1) ,\Omega_N}_x (\bt+N^{l+1} t)$. If $l=k$, then
		\begin{equation}
			\lim_{N \to \infty}\CL[{\bf\Theta}^{(k+1) ,\Omega_N}(\bar{t}+N^kt)]
			= \lim_{N \to \infty}\CL[{\bf\Theta}^{(k+1) ,\Omega_N}(\bar{t})],
		\end{equation}
		since the $(k+1)$-block has not yet started to move at time $\bar{t}+N^kt$. From \eqref{acin} it follows that
		\begin{equation}
			\lim_{N \to \infty}\Theta^{(k+1) ,\Omega_N}_{x}(\bar{t}+N^kt)=\vt_k\, in\, probability.
		\end{equation}
		Therefore the drift of the active population $\Theta^{(l) ,\Omega_N}_x (\bt+N^l t)$ is towards $\vt_k$. Since $\bar{t}>L(N)N^k$, the process ${\bf\Theta}^{(k) ,\Omega_N}(\bar{t}+N^kt)$ has, in the limit $N\to\infty$, already reached its equilibrium, which is denoted by $\Gamma_{\bar{\vt_k}}$, where 
		\begin{equation}
			\bar{\vt_k}=(\vartheta_k,\overbrace{\vartheta_k,\cdots, \vartheta_k}^{k+1\text{ times }},\theta_{y_{k+1}},\theta_{y_{k+2}},\cdots), 
		\end{equation}
		so that we recognise $(\bar{\vt}_k)=M^k_{-(k+1)}$. From \eqref{acin} with $l=k+1$ we see that $(\bar{\vt}_k)=M^k_{-(k+1)}$  represents the state of ${\bf\Theta}^{(k+1) ,\Omega_N}(\bar{t})$.  
		
		If we look on time scale $\bar{t}+N^{k-1}t$, then we see that the active $(k-1)$-block averages feels a drift towards the active $k$-block average. The active $k$-block does not move on time scale $N^{k-1}$, but it has already moved at time $\bar{t}$. At time $\bar{t}$ the active $k$-block has even reached its quasi-equilibrium, given by $\Gamma^{(k)}_{\vt_k}$. Thus, the drift of the active $(k-1)$-block average is towards the instantaneous state of the active $k$-block average, which has distribution $\Gamma^{(k)}_{\bar{\vt_k}}$. This explains the first step in the interaction chain.
		
		For $0\leq l<k$, the active $l$-block average feels a drift towards the $(l+1)$-block average. The latter does not evolve on time scale $N^lt$, but it has already moved at time $\bar{t}$. Therefore it is no longer in its initial state, but in a quasi-equilibrium $\Gamma^{(l+2)}_{u}$, where $u$ is the value of the active $(l+1)$-block averages determined via the interaction chain, recall Figure~\ref{fig:int}.  This explains how the different space-time scales are connected via the active block averages. The states of the different seed-bank averages is a little bit more complicated. Below we give a very short heuristic explanation of the different seed-banks in the interaction chain.
		
		For the effective process $(\boldsymbol{\Theta}^{\eff,(l),\Omega_N}(t))_{t>0}$ instead of the full process $(\boldsymbol{\Theta}^{(l),\Omega_N}(t))_{t>0}$, we can consider in \eqref{463} only the active block average and the effective seed-bank average with $m=l$ (recall that the full block average equals the active block average). According to the above explanation, we have to replace $\Gamma^{(k)}_{\bar{\vt}_k}$ by $\Gamma^{\eff,(k)}_{\vt_k}$ and $\Gamma^{(l)}_{u}$ by $ \Gamma^{\eff, (l)}_u$. Hence we find the effective interaction chain defined in \eqref{fintchaineff} and depicted in Figure~\ref{fig:effint}. 
		
		\paragraph{Slow seed-banks.}
		
		From \eqref{acin} we see that if $l\geq k$ and we use the scaling $\bar{t}$, then all $l$-blocks of seed-banks with colour $0 \leq m \leq k$ equal $\vt_k$, and all $l$-blocks of seed-banks with colour $m>k$ equal their initial values $\theta_{y_m}$. Something interesting happens when we choose $0\leq l< k$ and use the scaling $\bar{t}+N^lt$. The single colonies of seed-banks with colour $0\leq m<l$ on time scale $\bar{t}+N^lt$ follow the active population, and hence their $l$-block averages equal the $l$-block average of the active population. The $l$-block average of the seed-bank with colour $l$ has a non-trivial interaction with the active $l$-block. The $l$-blocks of seed-banks with colour $m>k$ have not yet moved and hence are still in their initial states $(\theta_{y_m})_{m=k+1}^\infty$. However, \eqref{463b} implies that the single colonies of the seed-banks with colour $k\geq m>l$ are not moving on time scale $\bar{t}+N^l t$, even though they had already moved at time $\bar{t}$. Therefore the $l$-blocks averages of seed-banks with colour  $l<m\leq k$ are no longer in their initial state at time $\bar{t}$. Note that they are also not in the state $\vt_k$, since this is the state of their $k$-block averages and not of their $l$-block averages. The single colony seed-banks with colour $l< m\leq k$ are in the state given by
		\begin{equation}
			y_{m,0}(\bar{t}\,)=\int_0^{\bar{t}} \d s\,\frac{e_m}{N^m}[x_0(s)-y_{m,0}(s)].
		\end{equation}
		Hence, for large $N$,
		\begin{equation}
			y_{m,0}(\bar{t})\approx\int_0^{L(N)N^k+N^kt_k+\cdots+N^mt_m} \d s\,\frac{e_m}{N^m}[x_0(s)-y_{m,0}(s)].
		\end{equation}
		Similarly, for the $l$-block average with colour $m$ we have
		\begin{equation}
			\begin{aligned}
				\Theta^{(l),\Omega_N}_{y_m}(\bar{t}\,)
				&=\int_0^{\bar{t}} \d s\,\frac{e_m}{N^m}[\Theta^{(l),
					\Omega_N}_{x}(s)-\Theta^{(l),\Omega_N}_{y_m}(s)]\\\
				&\approx\int_0^{L(N)N^k+N^kt_k+\cdots+N^mt_m} \d s\,\frac{e_m}{N^m}[\Theta^{(l),\Omega_N}_{x}(s)
				-\Theta^{(l),\Omega_N}_{y_m}(s)].
			\end{aligned}
		\end{equation}
		Thus, we see that the state of $\Theta^{(l),\Omega_N}_{y_m}(\bar{t})$ is completely determined at time $L(N)N^k+N^kt_k+\cdots+N^mt_m$, i.e., the last time before $\bar{t}+N^lt$ that the single colony seed-banks of colour $m$ had an opportunity to move. Up to time $L(N)N^k+N^kt_k+\cdots+N^mt_m$, the single colony seed-banks with colour $m$ interact at a very slow rate with the active single colonies, and similarly for the $l$-blocks. Therefore effectively the colour-$m$ seed-bank interacts with a ``time-average on scale $N^mt_m$" of the active population. On time scale $N^mt_m$, a single active colony migrates very fast in its $(m-1)$-block. As a consequence at time $\bar{t}+N^mt_m$ individuals that start from a particular colony, e.g. site $0$, are spread uniformly over the $m$-block containing this site. Hence the interaction of a single $m$-dormant colony with the active population can be intuitively interpreted as an interaction with the active $m$-block, and similarly for an $m$-dormant $l$-block. Once we move to lower time scales, the $m$-dormant single colonies do not interact with the active colony anymore. In the detailed proofs we show that one consequence of this is that, for $l<m$, 
		\begin{equation}
			\begin{aligned}
				\Theta^{(l),\Omega_N}_{y_m}(\bar{t}+N^lt)&\approx\Theta^{(l),\Omega_N}_{y_m}(L(N)N^k+N^kt_k+\cdots+N^mt_m)\\
				&\approx \Theta^{(m),\Omega_N}_{y_m}(L(N)N^k+N^kt_k+\cdots+N^mt_m).
			\end{aligned}
		\end{equation} 
		Thus, the $l$-block averages of colours $l\leq m\leq k$ equal the state of the corresponding $m$-block. This is the $(m+2)$-th component of the interaction chain at level $l$.
		
		\paragraph{Conclusion.}
		
		Combining the intuitive descriptions in Sections \ref{sss.blockavh}-\ref{sss.formin}, we see how Theorems~\ref{T.multiscalehiereff} and Theorems~\ref{T.multiscalehier} come about. Their proofs will rely on coupling techniques and a detailed analysis of the SSDEs.  This analysis will be done in several steps. In Sections~\ref{ss.IntroMeanfield}--\ref{ss.pabstracts} we first deal with a simplified system, the mean-field model, for which we derive the McKean-Vlasov limit and the mean-field finite-systems scheme. In Sections~\ref{s.finlevel}--\ref{s.multilevel} we extend our analysis to finitely many hierarchical levels. In particular, we go through the following list of systems of increasing complexity, each being a simplified version of the system defined by \eqref{XYdef} and each capturing a key feature:    
		\begin{enumerate}
			\item Two-colour mean-field finite-systems scheme (Section~\ref{ss.tcmfs}).
			\item Two-level hierarchical mean-field system (Section~\ref{ss.tlhmfs*}).
			\item Finite-level mean-field system (Section~\ref{ss.multilevelmf}).
		\end{enumerate}
		In Section \ref{s.multilevel} we put the pieces together to prove the multi-scaling for the infinite-level system given in Theorems~\ref{T.multiscalehiereff} and \ref{T.multiscalehier}.
		
		\section{Main results $N\to\infty$: Orbit and cluster formation}
		\label{s.orbit}
		
		In the hierarchical mean-field limit we say that the system clusters when the colonies gradually form larger and larger mono-type blocks. In Section~\ref{ss.dichotomy} we determine whether, in the hierarchical mean-field limit, the system clusters along successive space-time scales. How this happens is captured by the interaction chain. We introduce a sequence of scaling factors $(A_k)_{k\in\N_0}$, where $A_k$ is defined in terms of the rates $(c_k)_{k\in\N_0}$, $(e_k)_{k\in\N_0}$, $(K_k)_{k\in\N_0}$ and the factors $(E_k)_{k\in\N_0}$. Using these scaling factors, we analyse the orbit of the renormalisation transformation and establish \emph{universality}: $A_k\CF^{(k)}g$ converges as $k\to\infty$ to the Fisher-Wright diffusion function, irrespective of the choice of $g$. In Section~\ref{ss.growclus} we show how the scaling factors $A_k$ are connected to the \emph{growth of mono-type clusters}. In Section~\ref{ss.scalrate} we identify the asymptotics of $A_k$ as $k\to\infty$ in terms of the model parameters. 
		
		\subsection{Orbit of renormalisation transformations}
		\label{ss.dichotomy}
		
		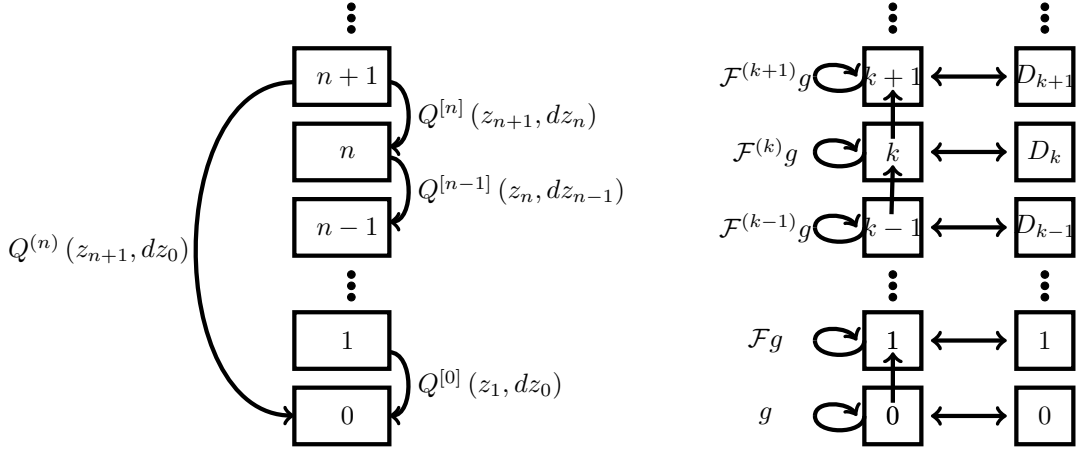
\begin{figure}[htbp]
			\vspace{0.3cm}
			\centering
			\begin{tikzpicture}
				\foreach \x in {1,2,3}
				\draw [ultra thick] (-.5-2,\x) rectangle (0.75-2,\x+0.75);
				\node at (0.2-2,1.375) {$n-1$};
				\node at (0.2-2,2.375) {$n$};
				\node at (0.2-2,3.375) {$n+1$};
				\foreach \x in {4.15,4,4.30,0.75,0.6,0.45}
				\draw[fill] (.25-2,\x) circle [radius=0.05];
				\draw [ultra thick] (-0.5-2,-.75) rectangle (0.75-2,-1.5);
				\node at (0.2-2, -1.125){0};
				\draw [ultra thick] (-0.5-2,.25) rectangle (0.75-2,-.5);
				\node at (0.2-2, -.125){1};
				\draw [ultra thick,->] (.75-2,2.3)to [out=0,in=0](.75-2, 1.45);
				\draw [ultra thick,->] (.75-2,3.3)to [out=0,in=0](.75-2, 2.45);
				\draw [ultra thick,->] (.75-2,-.28)to [out=0,in=0](.75-2, -1.12);
				\draw [ultra thick,->] (-.5-2,3.3)to [out=180,in=180](-0.5-2,-1.12 );
				\node[left] at (-1.75-2,1.09){$Q^{(n)}\left(z_{n+1},d z_0\right)$};
				\node[right] at (1-2,2.875){$Q^{[n]}\left(z_{n+1},d z_n\right)$};
				\node[right] at (1-2,1.875){$Q^{[n-1]}\left(z_{n},d z_{n-1}\right)$};
				\node[right] at (1-2,-.7){$Q^{[0]}\left(z_{1},d z_{0}\right)$};
				
				\foreach \x in {1,2,3}
				\draw [ultra thick] (5,\x) rectangle (5.75,\x+0.75);
				\foreach \x in {1,2,3}
				\draw [ultra thick] (7,\x) rectangle (7.75,\x+0.75);
				\node at (5.375,1.375) {$k-1$};
				\node at (5.375,2.375) {$k$};
				\node at (5.375,3.375) {$k+1$};
				\node at (7.375,1.375) {$D_{k-1}$};
				\node at (7.375,2.375) {$D_k$};
				\node at (7.375,3.375) {$D_{k+1}$};
				\foreach \x in {4.15,4,4.30,0.75,0.6,0.45}
				\draw[fill] (5.375,\x) circle [radius=0.05];
				\foreach \x in {4.15,4,4.30,0.75,0.6,0.45}
				\draw[fill] (7.375,\x) circle [radius=0.05];
				\draw [ultra thick] (5,-.75) rectangle (5.75,-1.5);
				\node at (5.375, -1.125){0};
				\draw [ultra thick] (5,.25) rectangle (5.75,-.5);
				\node at (5.375, -.125){1};
				\draw [ultra thick] (7,-.75) rectangle (7.75,-1.5);
				\node at (5.375, -1.125){0};
				\draw [ultra thick] (7,.25) rectangle (7.75,-.5);
				\node at (5.375, -.125){1};
				\draw [ultra thick,<-] (5.375,2.2)--(5.357, 1.55);
				\draw [ultra thick,<-] (5.375,3.2)--(5.375, 2.55);
				\draw [ultra thick,->] (5.375, -.95)--(5.375, -.28);
				\draw [ultra thick,<->] (5.875,1.375)--(6.875, 1.375);
				\draw [ultra thick,<->] (5.875,2.375)--(6.875, 2.375);
				\draw [ultra thick,<->] (5.875,3.375)--(6.875, 3.375);
				\draw [ultra thick,<->] (5.875, -.125)--(6.875, -.125);
				\draw [ultra thick,<->] (5.875, -1.125)--(6.875, -1.125);
				\node at (7.375, -.125){1};
				\node at (7.375, -1.125){0};
				\foreach \x in {-.5,-1.5,1,2,3}
				{\draw [ultra thick, -](5,\x+.375)to[out=270,in=270](4.35,\x+.375);
					\draw [ultra thick, <-](4.9,\x+.375)to[out=120,in=90](4.35,\x+.375);
				}
				\node at (3.7,1+.375){$\mathcal{F}^{(k-1)}g$};
				\node at (3.7,2+.375){$\mathcal{F}^{(k)}g$};
				\node at (3.7,3+.375){$\mathcal{F}^{(k+1)}g$};
				\node at (3.7,-.5+.375){$\mathcal{F}g$};
				\node at (3.7,-1.5+.375){$g$};
			\end{tikzpicture}
			\vspace{0.3cm}
			\caption{\emph{Left:} The interaction chain that connects successive hierarchical levels downwards. The arrows on the right correspond to \eqref{ker}, the arrow on the left corresponds to \eqref{conn2}. \emph{Right:} The renormalisation transformation that connects successive hierarchical levels upwards. The vertical arrows correspond to \eqref{frenormit}. The horizontal arrows represent the interaction with the effective seed-bank. The arrows on the left represent the resampling driven by the renormalised diffusion function.}
			\label{fig-intchain}
		\end{figure}
		
		To determine whether clustering occurs, we start from larger and larger time scales and use the interaction chain to see whether mono-type clusters are formed in the single colonies. Recall the kernels introduced in \eqref{ker} that describe the connection between subsequent hierarchical levels in the interaction chain. Define the following composition of kernels (see Fig.~\ref{fig-intchain}):
		\begin{equation} 
			\label{conn2}
			\gls{Qn}= Q^{[n]} \circ\cdots\circ Q^{[0]}, \qquad n\in\N.
		\end{equation}
		In words, $Q^{(n)}(z_n,\d z_0)$ is the probability density to see the population of a single colony in state $z_0$ given that the $(n+1)$-block average equals $z_n$. 
		
		In Section~\ref{s.mainfinite} we identified the clustering regime for fixed $N<\infty$. In this section we identify the clustering regime in the hierarchical mean-field limit. In the clustering regime, in the hierarchical mean-field limit, an interesting question is to determine how $\CF^{(n)}g$ (recall \eqref{frenormit}) scales with $n$. We identify the scaling and show that it does \emph{not} depend on $g$ (see Fig.~\ref{fig-flow}). 
		
		To state the clustering result, abbreviate
		\begin{equation}
			\label{seqtheta}
			\gls{thetan}=(\vartheta_n,\overbrace{\vartheta_n,\cdots, \vartheta_n}^{n+1\text{ times }},
			\theta_{y_{n+1}},\theta_{y_{n+2}},\cdots).
		\end{equation}
		
		\begin{theorem}{{\bf [Renormalised scaling]}}
			\label{T.scalren} 
			Let $c_k$ be as in \eqref{738}, $e_k$ and $K_k$ as in \eqref{defKem} and $E_k$ as in \eqref{Ekdef}. Define 
			\begin{equation} 
				\label{modefA}
				\gls{An} =\frac{1}{2}\sum_{k=0}^{n-1}\frac{E_k}{c_k}
				\frac{(E_kc_k+e_k)}{(E_kc_k+e_k)+E_k K_k e_k}, \qquad n\in\N.
			\end{equation}
			Then 
			\begin{equation}
				\label{limKnnu}
				\lim_{n\to\infty} Q^{(n)}\bigl(\bar{\vt}^{(n)},\,\cdot\,\bigr) 
				= (1-\theta)\,\delta_{(0,0^{\N_0})} + \theta\,\delta_{(1,1^{\N_0})}
			\end{equation}
			if and only if 
			\begin{equation}
				\label{cluscond}
				\lim_{n\to\infty} A_n=\infty.
			\end{equation}
			Moreover, if \eqref{cluscond} holds, then for all $g\in\CG$, 
			\begin{equation}
				\label{limcf}
				\lim_{n\to\infty} A_n\CF^{(n)}g=g_{\mathrm{FW}} \quad \text{pointwise},
			\end{equation}
			with $g_{\mathrm{FW}}(x)=x(1-x)$, $x \in [0,1]$. 
		\end{theorem}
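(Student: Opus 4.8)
The plan is to reduce the whole statement to two ingredients: an exact, explicit computation of the renormalisation map on the Fisher--Wright family, and a monotone comparison argument that propagates it to every $g\in\CG$.

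\textbf{Step 1 (Fisher--Wright family).} I would first compute $\CF^{E,c,K,e}(d\,g_{\mathrm{FW}})$. Writing down the generator of the effective process \eqref{927} with resampling function $d\,g_{\mathrm{FW}}$ and imposing stationarity of the second moments $\E[x^2],\E[xy],\E[y^2]$ in the unique equilibrium (using $\E[x]=\E[y]=\theta$, which follows from the first‑moment equations) yields a closed $3\times3$ linear system whose solution gives
\[
\int_{[0,1]} x(1-x)\,\Gamma^{\eff}_\theta(\d x)
=\frac{2p(p+q+r)}{2p(p+q+r)+E^2 d\,(p+q)}\;\theta(1-\theta),\qquad p=Ec,\ q=e,\ r=EKe,
\]
hence $\CF^{E,c,K,e}(d\,g_{\mathrm{FW}})=d'\,g_{\mathrm{FW}}$ with $1/d'=1/d+E(Ec+e)\big/\big(2c(Ec+e+EKe)\big)$. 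The last term equals exactly $A_{k+1}-A_k$ when $(E,c,K,e)=(E_k,c_k,K_k,e_k)$, so by induction $\CF^{(n)}(d_0\,g_{\mathrm{FW}})=d_n\,g_{\mathrm{FW}}$ with $1/d_n=1/d_0+A_n$. In particular $A_n\CF^{(n)}(d_0\,g_{\mathrm{FW}})=\tfrac{A_n}{1/d_0+A_n}\,g_{\mathrm{FW}}\to g_{\mathrm{FW}}$ iff $A_n\to\infty$, and $d_n\downarrow d_\infty>0$ otherwise.

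\textbf{Step 2 (arbitrary $g\in\CG$).} Here I would use (i) monotonicity, $g_1\le g_2$ pointwise $\Rightarrow\CF^{E,c,K,e}g_1\le\CF^{E,c,K,e}g_2$, proved by coupling the effective diffusions and comparing their equilibria as in the seed‑bank‑free case \cite{DG93a}; (ii) a one‑step regularisation, $\CF^{E_0,c_0,K_0,e_0}g\ge\underline d_1\,g_{\mathrm{FW}}$ for some $\underline d_1=\underline d_1(g)>0$, i.e.\ one renormalisation makes the boundary slopes at $0$ and $1$ strictly positive; and (iii) the elementary bound $g\le 2\lip(g)\,g_{\mathrm{FW}}$. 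Combining these with Step 1 applied to the bracketing Fisher--Wright functions gives, for all $n$,
\[
\underline d_n\,g_{\mathrm{FW}}\le\CF^{(n)}g\le\bar d_n\,g_{\mathrm{FW}},\qquad
\tfrac1{\underline d_n}=\tfrac1{\underline d_1}+(A_n-A_1),\quad \tfrac1{\bar d_n}=\tfrac1{2\lip(g)}+A_n,
\]
so $\bar d_n/\underline d_n\to 1$ and $A_n\underline d_n,\,A_n\bar d_n\to 1$ as soon as $A_n\to\infty$. This already proves \eqref{limcf}, in fact uniformly on $[0,1]$.

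\textbf{Step 3 (interaction chain).} The first ($x$‑)coordinate of $(M^k_{-l})_l$ is a bounded martingale, since the kernels $Q^{[l]}(u,\cdot)=\Gamma^{(l)}_u$ preserve the first moment; hence its law converges along the chain, and $\CL[(M^n_{-l})_l]$ converges to the two‑point mixture in \eqref{limKnnu} iff $w_0:=\E[g_{\mathrm{FW}}(M^n_{0,x})]\to 0$, the remaining (slaved and frozen) coordinates being then forced to the common $\{0,1\}$‑value by continuity of $u\mapsto\Gamma^{(l)}_u$ at the corners $(0,0^{\N_0}),(1,1^{\N_0})$. A moment computation on $\Gamma^{(l)}_u$ identical in form to Step 1 gives $\mathrm{Var}_{\Gamma^{(l)}_u}(v_x)=(A_{l+1}-A_l)\,(\CF^{(l+1)}g)(u_x)$, hence, with $w_l:=\E[g_{\mathrm{FW}}(M^n_{-l,x})]$ and $w_{n+1}=\vartheta_n(1-\vartheta_n)$,
\[
w_l=w_{l+1}-(A_{l+1}-A_l)\,\E\big[(\CF^{(l+1)}g)(M^n_{-(l+1),x})\big].
\]
Feeding in the two‑sided bounds $\underline d_{l+1}\,g_{\mathrm{FW}}\le\CF^{(l+1)}g\le\bar d_{l+1}\,g_{\mathrm{FW}}$ from Step 2, one gets $w_0\le w_{n+1}\prod_{l=0}^n\big(1-(A_{l+1}-A_l)\underline d_{l+1}\big)$ and $w_0\ge w_{n+1}\prod_{l=0}^n\big(1-(A_{l+1}-A_l)\bar d_{l+1}\big)$, all factors in $(0,1)$. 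Since $\underline d_{l+1},\bar d_{l+1}\asymp 1/A_{l+1}$ when $A_n\to\infty$, the relevant product vanishes iff $\sum_l(A_{l+1}-A_l)/A_{l+1}=\infty$, iff $\prod_l(A_l/A_{l+1})=0$, iff $A_n\to\infty$; when $A_n$ is bounded the lower product stays bounded away from $0$, so $w_0\not\to0$. Together with $\vartheta_n\to\theta$ (recall \eqref{thetantotheta}) this yields \eqref{limKnnu} $\Leftrightarrow$ \eqref{cluscond}.

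\textbf{Main obstacle.} The genuinely delicate inputs are in Step 2: the monotonicity of $\CF^{E,c,K,e}$ and, above all, the one‑step regularisation. With the seed‑bank the equilibrium $\Gamma^{\eff}_\theta$ is a truly two‑dimensional law coupling the active and dormant components, so the classical diffusion‑comparison and boundary‑slope arguments of \cite{DG93a}, \cite{BCGH95} must be reworked carefully in the presence of the exchange terms; everything else is moment calculus and martingale bookkeeping.
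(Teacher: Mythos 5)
Your Steps~1 and~3 are conceptually sound (the Fisher--Wright closure $1/d_n=1/d_0+A_n$ is correct, and your martingale/variance recursion for the interaction chain is exactly the paper's iterated moment relation), but Step~2 has two genuine gaps, and the comparison machinery you build is not needed. The paper works directly with the moment identity: from Propositions~\ref{P.momreliterate}--\ref{P.momreliterate2} one has
\[
\int_{[0,1]\times[0,1]^{\N_0}} x_0(1-x_0)\,Q^{(n)}\bigl(\bar{\vt}^{(n)},\d z_0\bigr)
=\vartheta_n(1-\vartheta_n)-A_{n+1}\,(\CF^{(n+1)}g)(\vartheta_n)\ \ge\ 0,
\]
which forces $(\CF^{(n+1)}g)(\vartheta_n)\to 0$ whenever $A_n\to\infty$; since $(\CF^{(n+1)}g)(\vartheta_n)=\int g(x_0)\,Q^{(n)}(\bar\vt^{(n)},\d z_0)$ and $g>0$ on $(0,1)$, $Q^{(n)}$ concentrates on $\{0,1\}$, the first moment splits the mass as $(1-\theta,\theta)$, matching $\lim\int x_0^2\,\d Q^{(n)}=\theta$ against the same identity gives $\lim A_n(\CF^{(n)}g)(\theta)=\theta(1-\theta)$, and the mixed moments $\int x_0 y_{0,0}$, $\int(1-x_0)(1-y_{0,0})$ sync the dormant components. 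No sandwich and no regularisation enter; the non-negativity of $g_{\mathrm{FW}}$ does all the work.

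The gap in Step~2 is real. Your claim that monotonicity ($g_1\le g_2\Rightarrow\CF^{E,c,K,e}g_1\le\CF^{E,c,K,e}g_2$) follows ``by coupling the effective diffusions'' is not justified: one cannot synchronously couple two diffusions with different diffusion coefficients, and even granting some coupling, it is unclear which stochastic order on $\Gamma^{g_1}_\theta$, $\Gamma^{g_2}_\theta$ it would yield, let alone that $\int g_2\,\d\Gamma^{g_1}_\theta\le\int g_2\,\d\Gamma^{g_2}_\theta$ would follow --- with $g_2$ concave near the corners, a mean-preserving-spread comparison actually points the wrong way, so any proof must use the structure of the moment identities rather than a generic coupling. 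The one-step regularisation $\CF g\ge\underline d_1\,g_{\mathrm{FW}}$, which would require boundary-slope estimates on the two-component equilibrium $\Gamma^{\eff}_\theta$, you flag as ``genuinely delicate'' but do not supply. Both are avoidable, and avoiding them is precisely what makes the paper's direct moment argument the cleaner route.
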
 
		
		\noindent
		The proof of Theorem~\ref{T.scalren} is given in Section~\ref{s.renormasym}. The scaling factors $A_n$ can be interpreted as clustering coefficients: in Section~\ref{ss.growclus} we will show that the faster the $A_n$ grow to infinity, the faster we expect to see clusters grow. The property in \eqref{limKnnu} corresponds to the \emph{clustering regime}. According to \eqref{limcf}, even though $A_n$ depends on the choice of the sequences $\underline{K},\underline{e},\underline{c}$ in \eqref{738} and \eqref{defKem}, the limit $A_n\CF^{(n)}g$ as $n\to\infty$ is \emph{universal}: irrespective of the choice of $g\in\CG$, the limit is the standard Fisher-Wright diffusion function $g_{\text{FW}}$. Thus, $g_{\text{FW}}$ is the \emph{global attractor of the renormalisation transformation} (see Fig.~\ref{fig-flow}). 
		
		\begin{figure}[htbp]
			\vspace{-.5cm}
			\begin{center}
				\setlength{\unitlength}{0.6cm}
				\begin{picture}(10,8)(0,0)
					{\thicklines
						\qbezier(0,0)(0.5,2)(2,4)
						\qbezier(2,4)(3,5.3)(4,6)
						\qbezier(4,6)(5,6.5)(6,6)
						\qbezier(6,6)(7,5.3)(8,4)
						\qbezier(8,4)(9,2)(6,1)
					}
					\qbezier[10](6,1)(5.5,0.8)(4.8,0.88)
					\qbezier[5](4.8,0.88)(4.9,0.92)(5.0,0.96)
					\qbezier[5](4.8,0.88)(4.9,0.84)(5.0,0.80)
					\put(0,0){\circle*{.25}}
					\put(2,4){\circle*{.25}}
					\put(4,6){\circle*{.25}}
					\put(6,6){\circle*{.25}}
					\put(8,4){\circle*{.25}}
					\put(4,1){\circle*{.25}}
					\put(-.7,0){$g$}
					\put(-.5,4){$A_1\CF^{(1)}g$}
					\put(1.5,6){$A_2\CF^{(2)}g$}
					\put(6.4,6){$A_3\CF^{(3)}g$}
					\put(8.4,4){$A_4\CF^{(4)}g$}
					\put(2.5,1){$g_{\text{FW}}$}
				\end{picture}
			\end{center}
			\caption{Flow of the iterates $\CF^{(n)}g$, $n\in\N_0$, of the renormalisation transformation acting 
				on the class $\CG$. After multiplication by $A_n$, the flow is globally attracted by $g_{\text{FW}}$.} 
			\label{fig-flow}
		\end{figure}
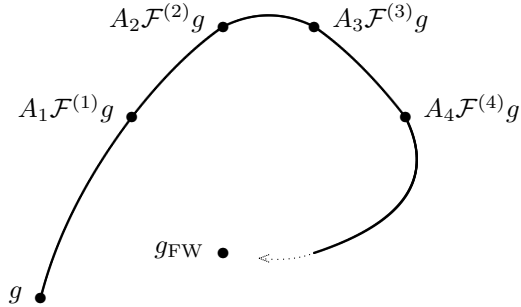

		\subsection{Growth of mono-type clusters}
		\label{ss.growclus}
		
		In the \emph{clustering regime} we are interested in how fast mono-type clusters grow in space over time. For the system on $\Z^d$ and $\Omega_N$ without seed-bank the growth rate has been studied in detail. Different growth rates were found for \emph{strongly recurrent} and \emph{critically recurrent} migration. Typical examples on $\Omega_N$ are migrations with coefficients $c_k=c^k$ with $c \in (0,1)$, respectively, $c_k = C$ with $C \in (0,\infty)$. Typical examples on $\Z$, respectively, $\Z^2$ are migrations with zero mean and finite variance. For these models the following behaviour occurs.
		\begin{itemize}
			\item
			In the strongly recurrent case, mono-type clusters grow fast and cover a volume that increases at time $t$ at a rate that is given by the Green function up to time $t$ of the underlying random walk, times a certain random constant that can be determined explicitly and that is independent of the diffusion function $g \in \CG$ \cite{EF96}, \cite{K96}. The cluster growth is monitored by considering families of balls growing in time at such a speed that, starting from a translation invariant and ergodic initial state, the mean of the configuration in the ball is still close to the starting mean but begins to move. \emph{Fast clustering} means that the cluster covers multiples of a scale that eventually lies in every finite family of balls with the above property.  
			\item
			In the critically recurrent case, the volume grows only moderately fast, like $N^{(1-U) t}$ as $t\to\infty$, with $U \in [0,1]$ a random variable. In other words, the cluster sizes have random orders of magnitude, an effect known as \emph{diffusive clustering}. For $c_k = C \in (0,\infty)$, $k\in\N_0$, the distribution of $U$ can be identified by studying the fraction of active individuals of type $1$ in a ball of size $N^{(1-u) t}$, which can be shown to converge to $V(\log\tfrac{1}{1-u})$ as $t\to\infty$ with $(V(s))_{s \geq 0}$ the standard Fisher-Wright diffusion, irrespective of the choice of $g\in\CG$ \cite{FG94}, \cite{FG96}.
			\item
			For more general migration it is possible that mono-type clusters grow slower than any positive power of $t$ as $t\to\infty$. This occurs for recurrent migration in which the Green function up to time $t$ grows like $o(\log t)$. For this regime only few results are available \cite{DGV95}.    
		\end{itemize}
		
		From the perspective of explaining \emph{universality} in $g\in\CG$ in the hierarchical mean-field limit $N\to\infty$, the above type of behaviour has been studied in detail in \cite{DGV95} and \cite{GHKK14} for the Fleming-Viot model, respectively, the Cannings model without seed-bank. The renormalisation analysis for the model with seed-bank allows us to study how the seed-bank affects the cluster growth. In what follows we give a sketch of \emph{three regimes of cluster growth}. 
		
		\paragraph{Types of clustering.}
		
		If a ball in $\Omega_N$ lies in a mono-type cluster, then the block average of the active and the dormant components in this ball are all close to either $0$ or $1$. We can therefore analyse the growth rate of mono-type clusters by analysing at which hierarchical level block averages hit $0$ or $1$ in the limit as $N\to\infty$. To that end, we look at the interaction chain $M^k_{-l(k)}$ for $k\to\infty$, where the \emph{level scaling function} $l\colon\,\N_0\to\N_0$ is non-decreasing with $\lim_{k\to\infty} l(k)=\infty$ and is suitably chosen such that we obtain a \emph{non-trivial clustering limiting law}, i.e., 
		\begin{equation}
			\label{icsl}
			\lim_{k \to \infty} 
			\mathcal{L} \bigl[M^k_{-l(k)} \bigr] = \mathcal{L} \bigl[ \hat{\theta}\, \bigr],
		\end{equation}
		where the limiting sequence of random frequencies $\hat{\theta}$ satisfies
		\begin{equation}
			\label{cic}
			0 < \P\bigl(\hat{\theta} \in \{0^{\N_0},1^{\N_0}\}\bigr) < 1.
		\end{equation}
		In line with~\cite{DGV95} and~\cite{DG96}, in order to analyse the growth of mono-type clusters on multiple space-time scales in the hierarchical mean-field limit, it is natural to consider a family of non-decreasing functions $l_\chi\colon\,\N_0 \to \N_0$, $\chi \in I \subseteq [0,\infty)$, called the \emph{cluster scales}, satisfying \eqref{icsl}--\eqref{cic}: 
		\begin{itemize}
			\item[\textup{(1)}] {\bf Fast clustering:} 
			$\lim_{k\to\infty} l_\chi(k)/k=1$ for all $\chi \in I$.
			\item[\textup{(2)}] {\bf Diffusive clustering:} 
			$\lim_{k\to\infty} l_\chi(k)/k=\kappa(\chi)$ for all $\chi \in [0,1]$, where $\chi\mapsto\kappa(\chi)$ is 
			continuous and non-increasing with $\kappa(0)=1$ and $\kappa(1)=0$. 
			\item[\textup{(3)}] {\bf Slow clustering:} 
			$\lim_{k\to\infty} l_\chi(k)/k=0$ for all $\chi \in I$. (This regime borders with the regime of 
			coexistence.)
		\end{itemize}
		We write $(M^\infty_\chi)_{\chi\in I}$ with  $M^\infty_\chi= \lim_{k\to\infty} M^k_{-l_\chi(k)}$ to denote the \emph{cluster process}. 
		
		\begin{remark}
			{\rm Examples are:
				\begin{itemize}
					\item[\textup{(1)}] $I=\N_0$, $l_\chi(k)=k-\chi$. 
					\item[\textup{(2)}] $I=[0,1]$, $l_\chi(k)=\lfloor (1-\chi)k \rfloor$.
					\item[\textup{(3)}] $I=[0,1]$, $l_\chi(k)=\lfloor L(k^{1-\chi})\rfloor$ with $L(0)=0$, $L$ non-decreasing and sublinear. 
				\end{itemize}
				In words, the clusters cover blocks of level: (1) $k-\chi$; (2) $\lfloor (1-\chi)k \rfloor$; (3) $L(k^{1-\chi})$. For the model without seed-bank and with migration coefficients $c_k=c^k$ with $c \in (0,1)$, case (1) is realised with a Markov chain $(M^\infty_l)_{l\in\N_0}$ as scaling limit, while for $c_k= C$, case (2) is realised with a time-transformed Fisher-Wright diffusion in $\chi$ as scaling limit. (For finite $N$, this corresponds to the first and the second example given in the first paragraph of this section.) For the model without seed-bank, these scales have been shown to satisfy the required conditions. Case (3) also appears for the model without seed-bank, but detailed information on scales and scaling limits is lacking. As we will see below, seed-banks can slow down cluster growth, so case (3) is worthwhile to be studied in more detail.     
			} \hfill$\blacksquare$ 
		\end{remark}
		
		Recall \eqref{modefA}. Fast clustering corresponds to $A_k \gg k$, diffusive clustering to $A_k \asymp k$, and slow clustering to $A_k \ll k$ for large $k$. Theorem~\ref{T.dichotomy} below shows that, subject to \eqref{regvar} and \eqref{pureexp}, all three regimes are possible for the model with seed-bank. The regimes are the same as for the model without seed-bank when $\rho<\infty$, but different when $\rho=\infty$.    
		
		For systems without seed-bank, examples of the three types of clustering can be found in the literature: diffusive clustering in \cite{A79}, \cite{CG86} (voter model on $\Z$, respectively, $\Z^2$) and in \cite{DG93b}, \cite{FG94}, \cite{DGV95}, \cite{K96} (interacting Fleming-Viot processes on $\Omega_N$ with $N<\infty$, respectively, $N\to\infty$), all types of clustering in \cite{DG96}, \cite{K97} , \cite{W02} (interacting Feller diffusions on $\Omega_N$ with $N\to\infty$) and in \cite{GHKK14}, \cite{GHK18} (interacting Cannings processes on $\Omega_N$ in non-random and random environment with $N\to\infty$). 
		
		For the model with seed-bank we have to use the asymptotics of $(A_k)_{k\in\N}$ to identify the set $I$ and the family $(l_\chi(\cdot))_{\chi \in I}$, and show that $(M^k_{-l_\chi(k)})_{\chi \in I}$ converges as $k\to\infty$ to a Markov process, which we want to identify.     
		
		\paragraph{Computations.}
		
		In the following we demonstrate how we can carry out the above task. The key idea is to study first and second moments of the interaction chain, as well as sums of variances in order to get a handle on the quadratic variation process. To that end  we calculate 
		\begin{equation}
			V^k_{-l} = \text{ the conditional variance of the active part of } M^k_{-l} \text{ given } M^k_{-(l+1)}
		\end{equation}
		and consider the sum of random variables $A_{k,n} = \sum_{-(k+1) \leq -l \leq -n} V^k_{-l}$, $n \in \N_0$. In order for the system to cluster, we must have $\lim_{k\to\infty} A_{k,n} = 0$ for every $n \in \N_0$. The \emph{volatility profile} is given by 
		\begin{equation}
			(p_\chi(k))_{\chi \in [0,1]}, \qquad p_\chi(k) = A_{k,l_\chi(k)}/A_{k,0}.
		\end{equation} 
		This profile is a \emph{random variable} that depends on the interaction chain up to $M^k_{-l_\chi(k)}$. Since $A_{k,l_\chi(k)} = A_{k,0} - A_{l_\chi(k)-1,0}$, we have $p_\chi(k) = (A_{k,0}-A_{l_\chi(k)-1,0})/A_{k,0}$. For diffusive clustering, for instance, we want to show that
		\begin{equation}
			\lim_{k\to\infty} p_\chi(k) = 1 - \kappa(\chi), \qquad \chi \in [0,1], 
		\end{equation}
		while for fast clustering the limit is $0$ and for slow clustering the limit is $1$. From \eqref{limcf} we know that the scaled renormalised diffusion function $A_n\CF^{(n)}(g)$ tends to the standard Fisher-Wright diffusion function as $n\to\infty$. Since the latter hits the boundary $\{0,1\}$ after some finite time, the coefficients $A_n$ describe the speed at which the interaction chain hits this boundary. We next make this idea precise and show how it can be used to obtain information about the growth of mono-type clusters.
		
		The kernels defined in Section \ref{ss.dichotomy} allow us to compute the first and second moments of all the block averages, which will be done in Section \ref{ss.momrels} (Propositions \ref{P.momrel}--\ref{P.momreliterate2}). In particular, using the interaction chain starting between at $-n$ and running until $-m$ with $-n<-m \leq 0$, and considering the $m$-block averages on time scale $N^mt$ in the limit $N\to\infty$, we find that the variance of the active component $x^n_m$ of $M^n_{-m}$ equals 
		\begin{equation}
			\label{varsum}
			\mathbb{V}\mathrm{ar}(x^n_m) = \E\left[(x^n_m-\vartheta_n)^2\right] = A_m^n(\CF^{(n+1)}g)(\vartheta_n),
		\end{equation}
		where 
		\begin{equation}
			\gls{Amn} = \frac{1}{2}\sum_{k=m}^n\frac{E_k}{c_k}\frac{(E_kc_k+e_k)}{(E_kc_k+e_k)+E_kK_ke_k}.
		\end{equation}
		(Note that $A_n=A^{n-1}_0$.) On the other hand, since $x^n_m\in(0,1)$ we have $\mathbb{V}\mathrm{ar}(x^n_m) \in (0,1)$ and $A^n_m(\CF^{(n+1)})(\vartheta_n)\in(0,1)$. Taking $m=0$, we get $(\CF^{(n+1)}g)(\vartheta_n)\in(0,\frac{1}{A^n_0})$. This implies that 
		\begin{equation}
			(\CF^{(n+1)}g)(\vartheta_n) = \int_{[0,1]^2} (\CF^m g)(x_m)\,Q^{(n)}_m((\vartheta_n,\theta_{y,n}),\d z_m)
			\in \left(0,\tfrac{1}{A^n_0}\right). 
		\end{equation}
		Since $\lim_{n\to\infty}A_n(\CF^{(n+1)}g)=g_{\text{FW}}$, for large enough $m,n$ we can approximate $\CF^{(m)}g\approx g_{\text{FW}}/A^m_0$. Therefore 
		\begin{equation}
			\int \frac{g_{\text{FW}}}{A^m_0}(x_m)\,Q^{(n)}_m((\vartheta_n,\theta_{y,n}),\d z_m) \in \left(0,\tfrac{1}{A^n_0}\right),
		\end{equation}
		or, equivalently, 
		\begin{equation}
			\int_{[0,1]^2} x_m(1-x_m)\,Q^{(n)}_m((\vartheta_n,\theta_{y,n}),\d z_m) \in \left(0,\tfrac{A^m_0}{A^n_0}\right).
		\end{equation} 
		Hence, if $A^m_0/A^n_0<\epsilon$ with $\epsilon>0$ small, then we know that with high probability the system on time scale $n$ has clusters with a radius of size $m$. (Note that for the interaction chain this means that the variance is almost entirely centred between $n$ and $m$.)  Therefore the speed at which $A^m_0/A^n_0$ converges to zero as $m,n\to\infty$ says something about the speed at which monotype clusters form.
		
		To capture the cluster growth, we must decide how we let $m,n\to\infty$. For this we look for clusters of radius $l_\chi(n)$ with $\chi \in I$. Put
		\begin{equation}
			f^n(l_\chi(n))=\frac{A^{l_\chi(n)}_0}{A^n_0},
		\end{equation}
		and define, for $\epsilon>0$,
		\begin{equation}
			\CX^n_{\epsilon}=\inf\{\chi \in I\colon\,f^n(l_\chi(n))<\epsilon\}.
		\end{equation}
		Then the three types of clustering correspond to:
		\begin{itemize}
			\item[\textup{(1)}] {\bf Fast clustering:} 
			$\lim_{n\to\infty} l_{\CX^n_{\epsilon}}(n)/n=1$. 
			\item[\textup{(2)}] {\bf Diffusive clustering:} 
			$\lim_{n\to\infty} l_{\CX^n_{\epsilon}}(n)/n=R$ for some random variable $R$ taking values in $(0,1)$. 
			\item[\textup{(3)}] {\bf Slow clustering:}
			$\lim_{n\to\infty} l_{\CX^n_{\epsilon}}(n)/n=0$.
		\end{itemize}
		In terms of the interaction chain starting from $-k$ with $k\to\infty$, in view of \eqref{varsum} this corresponds to the variance in the interaction chain being concentrated near the beginning, being spread out or being concentrated near the end.

		\subsection{Rates of scaling for renormalised diffusion function} 
		\label{ss.scalrate} 
		
		For the system without seed-bank, we have $K_k=e_k=0$ and $E_k=1$ for all $k\in\N_0$. Hence
		\begin{equation}
			\label{Anscalnoseed}
			A_n = \frac{1}{2} \sum_{k=0}^{n-1} \frac{1}{c_k}
		\end{equation}
		and \eqref{limKnnu} holds if and only if 
		\begin{equation}
			\label{cluscondnoseed}
			\sum_{k\in \N_0} \frac{1}{c_k} = \infty.
		\end{equation} 
		Various subcases were analysed in \cite{BCGH95}. For the system with seed-bank because $E_0=1$ and $E_k<1$ (see \eqref{Ekdef}), it follows from \eqref{modefA} that 
		\begin{equation}
			A_n < \frac{1}{2} \sum_{k=0}^{n-1} \frac{1}{c_k}.
		\end{equation}
		Thus we see that the seed-bank \emph{weakens clustering}, i.e., enhances genetic diversity, even in the hierarchical mean-field limit. 
		
		We identify the clustering regime in the setting where the coefficients are \emph{asymptotically polynomial}, as in \eqref{regvar}, or are \emph{pure exponential}, as in \eqref{pureexp}. It turns out that there is a delicate interplay between the migration and the seed-bank, resulting in 4 different scalings for asymptotically polynomial coefficients and 8 different scalings for pure exponential coefficients.
		
		\begin{theorem}{\bf [Rates of scaling for diffusion function]} Let $\rho$ be as defined in \eqref{rhodef}.
			\label{T.dichotomy}
			\begin{itemize}
				\item[{\rm (I)}] 
				If $\rho<\infty$, then \eqref{cluscond} holds if and only \eqref{cluscondnoseed} hold, and
				\begin{equation}
					A_n \sim \frac{1}{2(1+\rho)} \sum_{m=0}^{k-1} \frac{1}{c_k}. 
				\end{equation} 
				\item[{\rm (II)}] 
				If $\rho=\infty$, then \eqref{cluscond} holds in the following cases:
				\begin{itemize}
					\item[$\bullet$] 
					Subject to \eqref{regvar} if and only if $-\phi \leq \alpha \leq 1$, with
					\begin{equation}
						\begin{aligned}
							&-\phi < \alpha < 1\colon  &&A_n \sim C_1\,n^{\alpha+\phi},\\
							&-\phi = \alpha < 1\colon  &&A_n \sim C_2\,\log n,\\ 
							&-\phi < \alpha = 1\colon  &&A_n \sim C_3\,\frac{n^{1+\phi}}{\log n},\\
							&-\phi = \alpha = 1\colon  &&A_n \sim C_4\,\log\log n,
						\end{aligned}
					\end{equation}
					where
					\begin{equation}
						\begin{array}{ll}
							&C_1 = \frac{1}{2AF} \frac{1-\alpha}{\alpha+\phi}, \quad C_2 = \frac{1}{2AF} (1-\alpha), 
							\quad C_3 = \frac{1}{2AF} \frac{1}{1+\phi}, \quad C_4 = \frac{1}{2AF}.\\
							&
						\end{array}
					\end{equation}
					The values of $B,\beta$ play no role for the clustering, nor for the asymptotics.
					\item[$\bullet$] 
					Subject to \eqref{pureexp} if and only if $Kc \leq 1 \leq K$, with
					\begin{equation}
						\begin{aligned}
							&c<Ke,\,Kc<1\colon           &&A_n \sim \hat{C}_1\,(Kc)^{-(n-1)},\\
							&c<Ke,\,Kc=1\colon           &&A_n \sim \bar{C}_1\,n,\\
							&c=Ke,\,Kc<1\colon           &&A_n \sim\hat{C}_2\,(Kc)^{-(n-1)},\\
							&c=Ke,\,Kc=1\colon           &&A_n \sim \bar{C}_2\,n,\\
							&c>Ke,\,Kc<1\colon           &&A_n \sim \hat{C}_3\,(Kc)^{-(n-1)},\\
							&c>Ke,\,Kc=1\colon           &&A_n \sim \bar{C}_3\,n,\\
							&c<1=K\colon                    &&A_n \sim \tilde{C}_1\,n^{-1}\,c^{-(n-1)},\\
							&c=1=K\colon                    &&A_n \sim \tilde{C}_2 \log n,
						\end{aligned}
					\end{equation}
					where
					\begin{equation}
						\begin{array}{llll}
							&\hat{C}_1 = \frac{K-1}{2K(1-Kc)}, &\hat{C}_2 = \frac{(K-1)^2}{2(2K-1)(1-Kc)}, 
							&\hat{C}_3 =  \frac{K-1}{2(1-Kc)},\\[0.3cm]
							&\bar{C}_1 = \frac{K-1}{2K}, &\bar{C}_2 = \frac{(K-1)^2}{2(2K-1)}, 
							&\bar{C}_3 =  \frac{K-1}{2},\\[0.3cm]
							&\tilde{C}_1 = \frac{1}{2(1-c)}, &\tilde{C_2} = \frac{1}{2}. &
						\end{array}
					\end{equation}
					The value of $e$ plays no role for the clustering, but does for the asymptotics. 
				\end{itemize}
			\end{itemize} 
		\end{theorem}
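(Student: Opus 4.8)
The statement is purely an asymptotic analysis of the explicit series in \eqref{modefA}. Writing $A_n=\tfrac12\sum_{k=0}^{n-1}s_k$ with general term
\[
s_k=\frac{E_k}{c_k}\,\frac{E_kc_k+e_k}{(E_kc_k+e_k)+E_kK_ke_k}=\frac{E_k}{c_k}\,\frac{1}{1+\mu_k},\qquad
\mu_k:=\frac{E_kK_ke_k}{E_kc_k+e_k},
\]
the plan is: (i) determine the asymptotics of $E_k$; (ii) identify $\lim_{k\to\infty}\mu_k$ in each parameter regime, hence the leading term of $s_k$; (iii) sum, reading off both the dichotomy $\lim_{n\to\infty}A_n=\infty$ --- which by Theorem~\ref{T.scalren} is equivalent to clustering, so nothing more is needed there --- and the explicit constants $C_i,\hat C_i,\bar C_i,\tilde C_i$.

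For (i): since $E_k=(1+\sum_{m=0}^{k-1}K_m)^{-1}$, evaluate $\sum_{m<k}K_m$ by Karamata's theorem. Under \eqref{regvar}: if $\rho<\infty$ (i.e.\ $\alpha>1$) then $E_k\to(1+\rho)^{-1}$; if $\alpha<1$ then $\sum_{m<k}K_m\sim\tfrac{A}{1-\alpha}k^{1-\alpha}$, so $E_k\sim\tfrac{1-\alpha}{A}k^{-(1-\alpha)}$; if $\alpha=1$ then $\sum_{m<k}K_m\sim A\log k$, so $E_k\sim(A\log k)^{-1}$. Under \eqref{pureexp} the sum is geometric and exact: $E_k=\tfrac{K-1}{K^k+K-2}\sim(K-1)K^{-k}$ for $K>1$, while $E_k=(1+k)^{-1}$ for $K=1$. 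From these, $E_kK_k\to0$ in the polynomial case and in pure-exponential with $K=1$, while $E_kK_k\to K-1$ in pure-exponential with $K>1$.

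For (ii): since $\mu_k\le E_kK_k$, whenever $E_kK_k\to0$ one has $\mu_k\to0$, hence $s_k\sim E_k/c_k$. This already settles part~(I) ($s_k\sim(1+\rho)^{-1}c_k^{-1}$, so $A_n\sim\tfrac{1}{2(1+\rho)}\sum_{k=0}^{n-1}c_k^{-1}$, and $\lim_nA_n=\infty\Longleftrightarrow\sum_kc_k^{-1}=\infty$), the whole polynomial case of part~(II) (where $E_k/c_k\sim\tfrac{1-\alpha}{AF}k^{\alpha+\phi-1}$ for $\alpha<1$ and $\tfrac{1}{AF\log k}k^{\phi}$ for $\alpha=1$, with $e_k$, $B$, $\beta$ dropping out), and the pure-exponential case $K=1$. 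For pure-exponential with $K>1$ write $\mu_k=\tfrac{E_kK_ke^k}{E_kK_k(c/K)^k+e^k}$ and use $E_kK_k\to K-1$: comparing $(c/K)^k$ with $e^k$, i.e.\ $c$ with $Ke$, gives $\mu_k\to K-1$ if $c<Ke$, $\mu_k\to\tfrac{K-1}{K}$ if $c=Ke$, and $\mu_k\to0$ if $c>Ke$; since $E_k/c_k\sim(K-1)(Kc)^{-k}$, this makes $s_k$ asymptotic to $\tfrac{K-1}{K}(Kc)^{-k}$, $\tfrac{K(K-1)}{2K-1}(Kc)^{-k}$, and $(K-1)(Kc)^{-k}$ in the three sub-cases. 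Throughout, the $o(1)$ corrections to $E_k$ and to the three-way comparison $E_kc_k$ vs $e_k$ vs $E_kK_ke_k$ must be carried along just far enough to see that they contribute a term one power of $k$ (or a factor $1/\log k$) smaller, hence negligible after summation.

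For (iii): each $s_k$ is now asymptotically geometric ($\kappa\rho^k$), a power ($\kappa k^{\gamma}$), a power over a logarithm ($\kappa k^{\gamma}/\log k$), or $\kappa/[(1+k)c^k]$, and one sums using the standard estimates $\sum_{k<n}\rho^k\sim\tfrac{\rho}{\rho-1}\rho^{n-1}$ for $\rho>1$ (respectively $n$ for $\rho=1$, convergent for $\rho<1$), $\sum_{k<n}k^{\gamma}\sim\tfrac{n^{\gamma+1}}{\gamma+1}$ for $\gamma>-1$ (respectively $\log n$ for $\gamma=-1$, convergent for $\gamma<-1$), and the log-corrected variants via Euler--Maclaurin ($\sum_{k<n}k^{\gamma}/\log k\sim\tfrac{n^{\gamma+1}}{(\gamma+1)\log n}$, $\sum_{k<n}1/(k\log k)\sim\log\log n$, $\sum_{k<n}r^k/(1+k)\sim\tfrac{r}{r-1}\tfrac{r^{n-1}}{n}$ for $r>1$). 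Feeding in the values of $\kappa,\rho,\gamma$ from (ii) and the $E_k$-asymptotics from (i) produces each stated equivalence $A_n\sim C\cdot(\cdots)$, while the divergence criterion reduces precisely to $\sum_kc_k^{-1}=\infty$ when $\rho<\infty$, to $-\phi\le\alpha\le1$ under \eqref{regvar}, and to $Kc\le1\le K$ under \eqref{pureexp} --- which, via Theorem~\ref{T.scalren}, is the clustering dichotomy. The main care points I anticipate are the borderline regimes $\alpha=-\phi=1$, $\alpha=1$ with $\phi=-1$, and pure-exponential $c=1=K$, where the leading sum is only of order $\log n$ or $\log\log n$ and one must verify that the per-term relative errors are summable against $1/(k\log k)$ so as not to pollute the leading constant, together with the bookkeeping of the three sub-cases $c<Ke$, $c=Ke$, $c>Ke$; beyond that the argument is routine Karamata/Euler--Maclaurin asymptotics.
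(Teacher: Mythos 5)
Your proof follows essentially the same route as the paper: isolate the asymptotics of $E_k$, classify the summand $s_k=\tfrac{E_k}{c_k}\cdot\tfrac{1}{1+\mu_k}$ according to which of $E_kc_k+e_k$ and $E_kK_ke_k$ dominates (your $\mu_k$ is exactly the paper's regime indicator), and then sum by Karamata/Euler--Maclaurin, with the divergence criterion $\lim_nA_n=\infty$ fed into Theorem~\ref{T.scalren}. One thing you should flag rather than gloss over: your own (correct) intermediate step for the pure-exponential case $c=Ke$, namely $s_k\sim\tfrac{K(K-1)}{2K-1}(Kc)^{-k}$, upon summing yields
\[
\hat{C}_2=\frac{K(K-1)}{2(2K-1)(1-Kc)},\qquad \bar{C}_2=\frac{K(K-1)}{2(2K-1)},
\]
not the constants $\tfrac{(K-1)^2}{2(2K-1)(1-Kc)}$ and $\tfrac{(K-1)^2}{2(2K-1)}$ appearing in the statement. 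The factor really is $\tfrac{K}{2K-1}$ rather than $\tfrac{K-1}{2K-1}$: setting $(c/K)^k=e^k$ in the paper's intermediate display gives
\[
\frac{(K-1)(c/K)^k+e^k}{(K-1)(c/K)^k+Ke^k}=\frac{Ke^k}{(2K-1)e^k}=\frac{K}{2K-1},
\]
so the prefactor in the paper's simplification should read $\tfrac{K}{2(2K-1)}$ rather than $\tfrac{K-1}{2(2K-1)}$, and the stated $\hat{C}_2,\bar{C}_2$ inherit that slip. You should therefore not write that feeding in your $s_k$ ``produces each stated equivalence''; in the $c=Ke$ sub-cases it produces different (and apparently correct) constants, and you ought to note the discrepancy explicitly. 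All other constants and all the divergence criteria in your outline check out against the explicit formula for $A_n$.
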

		
		\noindent
		The proof of Theorem~\ref{T.dichotomy} is given in Section~\ref{s.renormasym}. Part (I) shows that for $\rho<\infty$ the clustering regime is the same as for the system without seed-bank. The scaling of $A_n$ is controlled by the migration and is reduced by a factor $1/(1+\rho)$ with respect to the seed-bank. Part (II) shows that for $\rho=\infty$ the clustering regime is different from that for the system without seed-bank. Clustering is harder to achieve: since $\lim_{k\to\infty} E_k = 0$ the growth rate of $A_n$ is \emph{strictly smaller} than without seed-bank.
		
		Furthermore, subject to \eqref{regvar}, if $-\phi<\alpha<1$, then the growth rate of $A_n$ drops down from $\asymp n^{1+\phi}$ without seed-bank to $\asymp n^{\alpha+\phi}$ with seed-bank, while if $-\phi=\alpha=1$, then it drops down from $\asymp \log n$ to $\asymp \log \log n$. Similarly, subject to \eqref{pureexp}, if $Kc<1<K$, then the growth rate of $A_n$ drops down from $\asymp c^{-n}$ to $\asymp (Kc)^{-n}$, while if $c=K=1$, then it drops down from $\asymp n$ to $\asymp \log n$.  
		
		Returning to the observations made in Section~\ref{ss.growclus}, we see that the three clustering regimes also appear in the model with seed-bank, both for $\rho<\infty$ and $\rho=\infty$, and in the latter case are accompanied by different migration coefficients. The scaling results mentioned in Section~\ref{ss.growclus} can in principle be deduced from the asymptotics of $A_n$ as $n\to\infty$ in Theorem~\ref{T.dichotomy}. It would be interesting to work out the details and to identify the limiting processes that control the cluster growth.

		\part{PREPARATIONS AND PROOFS}

		\section{Proofs: $N<\infty$, identification of clustering regime}
		\label{s.clusreg}
		
		In this section we prove Theorems~\ref{T.scalcoeff}--\ref{T.cluscritreg}. The integral criterion for $\rho=\infty$ in \eqref{cluscritseed-b} is explained in Section~\ref{ss.intcrit}. Theorem~\ref{T.scalcoeff} is proved in Section~\ref{ss.scal} and Theorem~\ref{T.cluscritreg} in Section~\ref{ss.cluspr}. 
		
		\subsection{Explanation of clustering criterion for infinite seed-bank}
		\label{ss.intcrit} 
		
		Recall Fig.~\ref{fig:per}. Suppose that $g=dg_{\mathrm{FW}}$, so that we have a dual. We will show that the integral criterion in \eqref{cluscritseed-b} determines whether or not two dual lineages coalesce with probability 1. Since two lineages in the dual can only coalesce when they are active at the same site, we need to keep track of the probabilities that the lineages are active at a given time. Because the lineages can only migrate when they are active, we also need to keep track of the total time they are active up to a given time.
		
		Recall the renewal interpretation of the dual process (see Remark~\ref{wakeup}). We argue heuristically as follows. If $\rho=\infty$, then the activity times $\sigma_k$ are much smaller than the sleeping times $\tau_k$, and we may assume that $\tau_k+\sigma_k \asymp \tau_k$, $k\to\infty$. Discretising time, we can use the results from \cite{AB16} for the intersection of two independent renewal processes. Then the integral criterion in \eqref{cluscritseed-b} can be interpreted as follows:
		\begin{itemize}
			\item
			If $\gamma \in (0,1)$, then the probability for each of the lineages to be active at time $s$ decays like $\asymp \varphi(s)^{-1} s^{-(1-\gamma)}$ \cite{AB16}. Hence the total time they are active up to time $s$ is $\asymp \varphi(s)^{-1} s^\gamma$. Because the lineages only move when they are active, the probability that the two lineages meet at time $s$ is $\asymp a^{(N)}_{\varphi(s)^{-1} s^\gamma}(0,0)$. Hence the total hazard is $\asymp \int_1^\infty \d s\, [\varphi(s)^{-1}s^{-(1-\gamma)}]^2\,a^{(N)}_{\varphi(s)^{-1} s^\gamma}(0,0)$. After the transformation $t=t(s)=\varphi(s)^{-1} s^\gamma$, the latter turns into the integral in \eqref{cluscritseed-b}, modulo a constant. When carrying out this transformation, we need that $s\varphi'(s)/\varphi(s) \to 0$, which follows from \eqref{hatphirepr}, and $\varphi(t(s))/\varphi(s) \asymp 1$, which follows from the bound we imposed on $\psi$ in \eqref{hatphirepr} together with the fact that $\log \varphi(s)/\log s \to 0$. This computation is spelled out in Appendix~\ref{app.comp}.  
			\item
			If $\gamma = 1$, then the probability for each of the lineages to be active at time $s$ decays like $\hat\varphi(s)^{-1}$ \cite{AB16}, and so the total time they are active up to time $s$ is $\asymp s \hat\varphi(s)^{-1}$. Recall from \eqref{hatphidefplus} that $\hat{\varphi}(t)=\E[\tau\wedge t]$ is also slowly varying.) Hence the total hazard is $\asymp \int_1^\infty \d s\, [\hat\varphi(s)^{-1}]^2 \,a^{(N)}_{\hat\varphi(s)^{-1}s}(0,0)$. After the transformation $t=t(s)=\hat\varphi(s)^{-1}s$ (for which we can use the same type of computation as in Appendix~\ref{app.comp}), the latter turns into the integral in \eqref{cluscritseed-b}, modulo a constant.   
		\end{itemize}
		
		\subsection{Scaling of wake-up time and migration kernel for infinite seed-bank}
		\label{ss.scal}
		
		We can prove Theorem~\ref{T.scalcoeff} by direct computation via assumptions \eqref{regvar}--\eqref{pureexp}. We start by computing $\gamma$. Afterwards we compute $\hat{\varphi}(t)$ and $a_t^{\Omega_N}(0,0)$.
		
		\paragraph{Computation of $\gamma$.}
		
		Recall \eqref{sigtau}, which reads 
		\begin{equation}
			\label{probtau}
			\P(\tau>t) = \frac{1}{\chi} \sum_{m\in\N_0} K_m \frac{e_m}{N^m}\,\e^{-(e_m/N^m) t}.
		\end{equation} 
		Since we are interested in the asymptotic behaviour of $\P(\tau>t) $ as $t\to\infty$, we need to consider only large values of $t$. For large values of $t$, only large values of $m$ (for which $\frac{e_m}{N^m}$ is small) contribute to the sum in \eqref{probtau}. Hence we can estimate the latter by an integral and insert the assumptions made in \eqref{regvar}--\eqref{pureexp}. Subsequently, using the change of variable $s=\frac{e_m}{N^m}$ and taking the logarithm to express $m$ in terms of $s$, we obtain the following values of $\gamma$ after extracting the $t$-dependence:    
		\begin{equation}
			\label{gammaid}
			\begin{aligned}
				&\eqref{regvar} \quad \Longrightarrow \quad \gamma = 1,\quad \varphi(t) \asymp (\log t)^{-\alpha}, \\[0.2cm]
				&\eqref{pureexp} \quad \Longrightarrow \quad \gamma = \gamma_{N,K,e} = \frac{\log(N/Ke)}{\log(N/e)},
				\quad \varphi(t) \asymp 1.
			\end{aligned}
		\end{equation}
		
		In order to guarantee that $\rho=\infty$, we must require that $\alpha \in (-\infty,1]$, respectively, $K \in [1,\infty)$ (while $\beta$, respectively, $e$ play no role). Subject to \eqref{regvar},
		\begin{equation}
			\label{hatphiscal1}
			\hat\varphi(t) \asymp \left\{\begin{array}{ll}
				(\log t)^{1-\alpha}, &\alpha \in (-\infty,1),\\[0.2cm]
				\log\log t, &\alpha=1,
			\end{array}
			\right.
		\end{equation}
		while subject to \eqref{pureexp},
		\begin{equation}
			\label{hatphiscal2}
			\hat\varphi(t) \asymp \left\{\begin{array}{ll}
				1, &K \in (1,\infty),\\[0.2cm]
				\log t, &K = 1.
			\end{array}
			\right. 
		\end{equation}
		
		\paragraph{Computation of $a^{\Omega_N}_t(0,0)$.} 
		
		To compute $a^{\Omega_N}_t(0,0)$, we first rewrite the migration kernel $a^{\Omega_N}(\cdot,\cdot)$ in \eqref{739} as
		\begin{equation}
			\label{mo4}
			a^{\Omega_N}(0,\eta)=\frac{r_{\|\eta\|}}{N^{\|\eta\|-1}(N-1)}
		\end{equation}
		with
		\begin{equation}
			\label{ak:106} 
			r_{\|\eta\|}=\frac{1}{D(N)}\frac{N-1}{N}\sum_{l \geq \|\eta\|}\frac{c_{l-1}}{N^{l-1}}\frac{1}{N^{l-\|\eta\|}},
		\end{equation}
		where $D(N)$ is a renormalisation constant such that $\sum_{j\in\N} r_j=1$. For transition kernels of the form \eqref{mo4}, the time-$t$ transition kernel $a^{\Omega_N}_t(\cdot,\cdot)$ was computed in \cite{FG94} with the help of Fourier analysis, see also \cite{DGW05}. Namely,
		\begin{equation}
			\label{ak:dgw-asympt}
			a^{\Omega_N}_t(0,\eta) = \sum_{j \geq k} K_{jk}(N)\,\frac{\exp[-h_j(N) t]}{N^j},
			\qquad t \geq 0,\quad \eta \in \Omega_N\colon\, d_{\Omega_N}(0,\eta)=k \in \N_0,
		\end{equation}
		where
		\begin{equation}
			\label{Kjkdef}
			K_{jk}(N) = \left\{\begin{array}{ll}
				0, &j=k=0,\\
				-1, &j=k>0,\\
				N-1, &\mbox{otherwise},
			\end{array}
			\right.
			\qquad j,k \in \N_0,
		\end{equation}
		and
		\begin{equation}
			\label{hjrjrel}
			h_j(N) = \frac{N}{N-1}\,r_j(N) + \sum_{i>j} r_i(N), \qquad j \in \N.
		\end{equation}
		
		The expressions in \eqref{ak:106}--\eqref{hjrjrel} simplify considerably in the limit as $N\to\infty$, namely, the term with $i=j$ dominates and
		\begin{equation}
			\label{Ninfsimp}
			h_j(N) \sim r_j(N) \sim \frac{c_{j-1}}{D(N) N^{j-1}}, \quad j \in \N, 
			\qquad D(N) \sim c_0.
		\end{equation}
		We show why this is true for $h_j(N)$ (the argument for $r_j(N)$ and $D(N)$ is similar). Write
		\begin{equation}
			\begin{aligned}
				h_j(N) &= \frac{N}{N-1}\,r_j(N) + \sum_{i>j} r_i(N)\\
				&=\frac{1}{D(N)} \left(\sum_{l\geq j}\frac{c_{l-1}}{N^{l-1}}\frac{1}{N^{l-j}}
				+ \frac{N-1}{N}\sum_{l>j}\frac{c_{l-1}}{N^{l-1}}\sum_{i < j \leq l} \frac{1}{N^{l-i}}\right)\\
				&= \frac{1}{D(N)}\frac{c_{j-1}}{N^{j-1}}\left(1+\left[1+O\left(\frac{1}{N}\right)\right]\,
				\left(\frac{c_{j-1}}{N^{j-1}}\right)^{-1}\sum_{l>j} \frac{c_{l-1}}{N^{l-1}}\right).	 
			\end{aligned}
		\end{equation}
		Hence it suffices to show that
		\begin{equation}
			\label{520}
			\limsup_{N\to\infty}\left(\frac{c_{j-1}}{N^{j-1}}\right)^{-1}\sum_{l>j}\frac{c_{l-1}}{N^{l-1}}=0, \qquad j \in\N.
		\end{equation}
		To do so, note that, since $\limsup_{k\to\infty}\frac{1}{k}\log c_k<\log N$ by \eqref{740}, for $N$ large enough we have
		\begin{equation}
			\sup_{k\in\N_0} c_k^{1/k}<N.
		\end{equation}
		Let $\bar{N}=\inf\{N\in\N\colon\,\sup_{k\in\N_0} c_k^{1/k}<N\}$. Then  
		\begin{equation}
			\begin{aligned}
				&\limsup_{N\to\infty}\left(\frac{c_{j-1}}{N^{j-1}}\right)^{-1}\sum_{l>j}\frac{c_{l-1}}{N^{l-1}}
				\leq\limsup_{N\to\infty}\frac{1}{c_{j-1}}\sum_{l>j}\frac{\bar{N}^{l-1}}{N^{l-1}}N^{j-1}\\
				&=\frac{\bar{N}^{j-1}}{c_{j-1}} \limsup_{N\to\infty} \frac{\frac{\bar{N}}{N}}{1-\frac{\bar{N}}{N}}=0, \qquad j\in\N,
			\end{aligned}
		\end{equation}
		which settles \eqref{520}. 
		
		To understand what \eqref{hjrjrel} gives for finite $N$, note that for asymptotically polynomial coefficients (recall \eqref{regvar})
		\begin{equation}
			\label{521}
			\begin{aligned}
				&\left(\frac{c_{j-1}}{N^{j-1}}\right)^{-1}\sum_{l>j}\frac{c_{l-1}}{N^{l-1}}
				= [1+o(1)]\, \frac{N^{j-1}}{F(j-1)^{-\phi}}\sum_{l>j}\frac{{F(l-1)^{-\phi}}}{N^{l-1}}\\
				&=  [1+o(1)]\, \sum_{l>j}\frac{{(l-1)^{-\phi}}}{(j-1)^{-\phi}}N^{-(l-j)}
				=  [1+o(1)]\, \sum_{k \geq 1} \left(1+\frac{{k}}{j-1}\right)^{-\phi}N^{-k}, \qquad j \in \N.
			\end{aligned}
		\end{equation}
		For $\phi\geq0$ the right-hand side is bounded from above by $\sum_{k \geq 1} N^{-k} = \frac{1}{N-1}$ and for $\phi<0$ by $N^{-1} \sum_{k\geq 1} (1+k)^{-\phi}N^{-(k-1)} \leq N^{-1} C_\phi$. On the other hand, for pure exponential coefficients (recall \eqref{pureexp}),
		\begin{equation}
			\label{525}
			\left(\frac{c_{j-1}}{N^{j-1}}\right)^{-1}\sum_{l>j} \frac{c_{l-1}}{N^{l-1}} 
			= \sum_{k \geq 1} \left(\frac{c}{N}\right)^{-k} = \frac{c}{N-c}.
		\end{equation}  
		Hence, for both choices of coefficients we have the following:
		\begin{equation}
			\begin{tabular}{ll}
				&\text{For $N\to\infty$ the quantities $h_j(N),r_j(N)$ are bounded from above}\\ 
				&\text{and below by positive finite constants times the right-hand side of}\\
				&\text{\eqref{Ninfsimp} uniformly in $j \in \N$.}
			\end{tabular} 
			\label{fdh:Ncomp}
		\end{equation} 
		
		Picking $\eta=0$ ($k=0$) in \eqref{ak:dgw-asympt}, we obtain
		\begin{equation}
			\label{523}
			a_t^{\Omega_N}(0,0) = \sum_{j\in\N} (N-1) \frac{\exp[-h_j(N)t]}{N^j}. 
		\end{equation}
		Since we are interested in the asymptotic behaviour of $a_t^{\Omega_N}(0,0)$, only large values of $j$ are relevant and we can estimate the sum in \eqref{523} by an integral. To do so, we change variables by putting $s=h_j(N)$ and exploit \eqref{fdh:Ncomp}. Take the logarithm to express $j$ in terms of $s$, compute $\d s/\d j$, and extract the $t$-dependence. This gives
		\begin{equation}
			\label{atscal}
			\begin{aligned}
				&\eqref{regvar} \quad \Longrightarrow \quad a_t^{\Omega_N}(0,0) \asymp t^{-1}\log^\phi t,\\[0.2cm]
				&\eqref{pureexp} \quad \Longrightarrow \quad a_t^{\Omega_N}(0,0) \asymp t^{-1-\delta_{N,c}},
			\end{aligned}
		\end{equation}
		where
		\begin{equation}
			\label{deltaNcalt}
			\delta_{N,c} = \frac{\log c}{\log (N/c)}.
		\end{equation}
		
		\subsection{Hierarchical clustering}
		\label{ss.cluspr}
		
		In this section we prove Theorem~\ref{T.cluscritreg} by substituting the results of Theorem~\ref{T.scalcoeff} into the clustering criterion in \eqref{cluscritseed-b}.
		
		Combining \eqref{cluscrit}, \eqref{gammaid}--\eqref{hatphiscal2} and \eqref{atscal}--\eqref{deltaNcalt}, we find the following clustering criterion for \emph{fixed} $N$ and infinite seed-bank:
		\begin{itemize}
			\item
			Subject to \eqref{regvar}, clustering prevails if and only if 
			\begin{equation}
				\label{phialphacond}
				- \phi \leq \alpha \leq 1.
			\end{equation}  
			\item
			Subject to \eqref{pureexp}, clustering prevails if and only
			\begin{equation}
				\label{crc2}
				\delta_{N,c} \leq -\frac{1-\gamma_{N,K,e}}{\gamma_{N,K,e}}.
			\end{equation}
		\end{itemize}
		In view of \eqref{gammaid} and \eqref{deltaNc}, the condition in \eqref{crc2} amounts to 
		\begin{equation}
			\label{crc2expl}
			\log N \times \log (Kc) \leq \log c \times \log (K^2e),
		\end{equation}
		where we use that $c<N$ and $Ke<N$ (recall \eqref{740} and \eqref{740alt}). The condition in \eqref{crc2expl} holds for all $N$ when 
		\begin{equation}
			Kc=1 \text{ with } \left\{
			\begin{array}{ll}
				c=1, &K^2e \in (0,\infty),\\
				c>1, &K^2e \geq 1,\\
				c<1, &K^2e \leq 1.
			\end{array}
			\right.
		\end{equation}
		It also holds for $N$ large enough when $Kc<1$ and fails for $N$ large enough when $Kc>1$. Thus, for infinite seed-bank, clustering prevails for $N$ large enough if and only if
		\begin{equation}
			\label{Kccond}
			Kc \leq 1 \leq K,
		\end{equation}  
		which is the analogue of \eqref{phialphacond}.

		\section{Preparation: $N\to\infty$, McKean-Vlasov process and mean-field system}
		\label{ss.IntroMeanfield}
		
		To analyse the scaling of our hierarchical system in the hierarchichal mean-field limit $N\to\infty$, we first need to understand simpler systems. In this section we consider the mean-field system consisting of a \emph{single hierarchy}, and introduce the following:
		\begin{enumerate}
			\item McKean-Vlasov process (Section~\ref{ss.Mckeanvlasov}).
			\item Mean-field system and McKean-Vlasov limit (Section~\ref{mfsmkv}).
		\end{enumerate}
		For each we derive a key proposition that will play a crucial role in our analysis of the truncated system with \emph{finitely many hierarchies} in Sections~\ref{s.finlevel}--\ref{s.multilevel} and the full system with \emph{infinitely many hierarchies} in Section~\ref{s.multilevel}. The proofs of the propositions stated in this section will be given in Sections~\ref{sec:equergod} and \ref{sec:mkvlim}. 
		
		\subsection{McKean-Vlasov process}
		\label{ss.Mckeanvlasov}
		In this section we introduce the McKean-Vlasov process, which will play an important role in our analysis of the mean-field system to be introduced in Sections~\ref{mfsmkv}--\ref{mffss}. (In the full system the effective process introduced in \eqref{92} will be seen to be an example of a McKean-Vlasov process.)
		
		For $g\in\CG$ and $c,K,e \in (0,\infty)$, consider the single-colony process 
		\begin{equation}
			\label{SC}
			z(t)=(x(t),y(t))_{t \geq 0},
		\end{equation} 
		taking values in $[0,1]^2$, with initial law $\CL[(x(0),y(0))]=\mu$ and with components evolving according to
		\begin{eqnarray}
			\label{gh5}
			&&\d x(t) = c\,[\E[x(t)] - x(t)]\, \d t + \sqrt{g(x(t))}\, \d w (t) + Ke\,[y(t)-x(t)]\,\d t,\\
			&&\d y(t) = e\, [x(t)-y(t)]\, \d t, \nonumber
		\end{eqnarray}
		where $\E$ denotes expectation with respect to $\mu$. With the help of It\^o-calculus we can compute the expectation $\E[x(t)]$. Indeed, from \eqref{gh5} we get
		\begin{equation}
			\label{359alt}
			\begin{aligned}
				\frac{\d}{\d t}\E[x(t)] &= Ke\,\big[\E[y(t)]-\E[x(t)]\big],\\
				\frac{\d}{\d t}\E[y(t)] &= e\,\big[\E[x(t)]-\E[y(t)]\big].
			\end{aligned}
		\end{equation}
		Define
		\begin{equation}
			\label{inc}
			\theta_x=\E^{\mu}[x(0)],\qquad \theta_y=\E^{\mu}[y(0)],\qquad
			\theta=\E^\mu\left[\frac{x(0)+Ky(0)}{1+K}\right].
		\end{equation}
		Note that \eqref{359alt} implies that $\theta$ is a preserved quantity, i.e.,
		\begin{equation}
			\E^\mu\left[\frac{x(0)+Ky(0)}{1+K}\right]=\E^\mu\left[\frac{x(t)+Ky(t)}{1+K}\right]=\theta,
			\qquad t \geq 0.
		\end{equation} 
		Solving \eqref{359alt}, we find 
		\begin{equation}
			\label{expzalt}
			\begin{aligned}
				\E[x(t)] &= \theta + \frac{K}{1+K} (\theta_x-\theta_y)\, \e^{-(K+1)et},\\
				\E[y(t)] &= \theta - \frac{1}{1+K} (\theta_x-\theta_y)\, \e^{-(K+1)et}.
			\end{aligned}
		\end{equation}
		In particular, from \eqref{expz} we see that
		\begin{equation}
			\lim_{t \to \infty} (\E[x(t)],\E[y(t)]) =  (\theta,\theta). 
		\end{equation}
		Hence, in equilibrium we can replace $\E[x(t)]$ in \eqref{gh5} by $\theta$. After inserting \eqref{expzalt} into \eqref{gh5}, we can use \cite[Theorem 1, Remark on p.156]{YW71} to show that for every deterministic initial state $(x(0),y(0)) \in [0,1]^2$ the SSDE in \eqref{gh5} has a unique strong solution. We will refer to this solution as the \emph{McKean-Vlasov process}.
		
		\begin{remark}{\bf [Self-consistency]}
			\label{R.dh12}
			{\rm To prove uniqueness of the solution to \eqref{gh5} we can also use \cite{G88}, where self-consistent mean-field dynamics are treated in detail. The solution has the Feller property.} \hfill$\blacksquare$ 
		\end{remark}
		
		\begin{proposition}{{\bf [Single-colony McKean-Vlasov process: equilibrium]}}
			\label{P.equergod}
			For every initial law $\mu\in\CP([0,1]^2)$ satisfying
			\begin{equation}
				\label{e587alt}
				\E^\mu\left[\frac{x(0)+Ky(0)}{1+K}\right]=\theta, \qquad \theta\in[0,1],
			\end{equation}  
			the process in \eqref{SC} converges to a unique equilibrium,
			\begin{equation}
				\label{e587}
				\lim_{t \to \infty} \CL[(x(t),y(t))] = \Gamma_\theta, 
			\end{equation}
			and 
			\begin{equation}
				\label{ag20}
				\Gamma_\theta \in \CP([0,1]^2), 
			\end{equation}
			satisfies
			\begin{equation}
				\label{e588}
				\theta = \int_{[0,1]^2} x\,\Gamma_\theta (\d x,\d y)=\int_{[0,1]^2} y\,\Gamma_\theta (\d x,\d y). 
			\end{equation}
		\end{proposition}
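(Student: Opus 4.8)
The plan is to analyse the McKean-Vlasov process in \eqref{SC}--\eqref{gh5} by first reducing it to a genuine (non-self-consistent) Markov process via the explicit expectation formula \eqref{expzalt}, and then studying the resulting time-inhomogeneous diffusion on $[0,1]^2$. Since $\E[x(t)] = \theta + \tfrac{K}{1+K}(\theta_x-\theta_y)\,\e^{-(K+1)et}$ is a known deterministic function that converges exponentially fast to $\theta$, the SSDE in \eqref{gh5} becomes, after substitution, an autonomous-in-the-limit system with a drift that is an exponentially small perturbation of the drift $c[\theta - x(t)]$. The first step is therefore to establish existence and uniqueness (already noted via \cite{YW71} and \cite{G88}) and the Feller property, so that the process is well-defined and we may speak of its transition semigroup.

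The main step is to prove convergence to a unique equilibrium. I would first treat the \emph{limiting} autonomous diffusion obtained by replacing $\E[x(t)]$ by $\theta$, i.e.,
\begin{equation}
\label{eq:auxdiff}
\begin{aligned}
\d \tilde x(t) &= c\,[\theta - \tilde x(t)]\,\d t + \sqrt{g(\tilde x(t))}\,\d w(t) + Ke\,[\tilde y(t)-\tilde x(t)]\,\d t,\\
\d \tilde y(t) &= e\,[\tilde x(t) - \tilde y(t)]\,\d t.
\end{aligned}
\end{equation}
This is a two-type Fisher-Wright-type diffusion with seed-bank and an external drift towards the fixed level $\theta$; the quantity $\tfrac{\tilde x + K\tilde y}{1+K}$ is \emph{not} conserved for \eqref{eq:auxdiff} but the process is positive recurrent on $[0,1]^2$ because the drift $c[\theta-\tilde x]$ pulls $\tilde x$ back towards the interior and the seed-bank component $\tilde y$ is slaved to $\tilde x$ through a stable linear ODE. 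To get uniqueness of the invariant law $\Gamma_\theta$ and convergence, I would use a coupling argument: run two copies driven by the same Brownian motion from different initial conditions, and show the difference process contracts. The $y$-coordinates satisfy a linear contraction once the $x$-coordinates are close, and for the $x$-coordinates one exploits that the migration-type drift $c[\theta-x]$ is order-preserving and that the diffusion coefficient $\sqrt{g(x)}$ is the same for both copies (so the classical Yamada-Watanabe comparison / reflection coupling for one-dimensional SDEs applies after controlling the seed-bank feedback). This yields $\lim_{t\to\infty}\CL[(\tilde x(t),\tilde y(t))] = \Gamma_\theta$ independently of the initial law, and $\Gamma_\theta \in \CP([0,1]^2)$ is automatic since $[0,1]^2$ is compact and invariant.

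Passing from \eqref{eq:auxdiff} back to the true process \eqref{gh5} is then a perturbation argument: the difference between the two drifts is $c\,\bigl(\E[x(t)] - \theta\bigr) = c\,\tfrac{K}{1+K}(\theta_x-\theta_y)\,\e^{-(K+1)et}$, which is integrable in $t$, so a Gronwall-type estimate on the coupled pair (true process versus limiting diffusion, same Brownian motion) gives that their difference tends to $0$ in $L^1$, whence $\CL[(x(t),y(t))] \to \Gamma_\theta$ as well, proving \eqref{e587}. Finally, the identities in \eqref{e588} follow by taking $t\to\infty$ in \eqref{expzalt}: $\E[x(t)] \to \theta$ and $\E[y(t)] \to \theta$, and since $\CL[(x(t),y(t))]\to\Gamma_\theta$ weakly with uniformly bounded (indeed $[0,1]$-valued) coordinates, the first moments converge, giving $\int x\,\Gamma_\theta = \int y\,\Gamma_\theta = \theta$. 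The main obstacle I anticipate is the coupling/contraction step for \eqref{eq:auxdiff}: because $g$ is only assumed Lipschitz and vanishing at the boundary (not, e.g., uniformly elliptic), one cannot simply invoke a spectral-gap or Lyapunov argument off the shelf, and care is needed near $x \in \{0,1\}$ where the diffusion degenerates; here I would lean on the one-dimensional comparison techniques for Fisher-Wright-type diffusions (as used for the model without seed-bank in \cite{DG93b}, \cite{DGV95}) together with the fact that the seed-bank coordinate only enters through a drift that is Lipschitz and contractive in $y$.
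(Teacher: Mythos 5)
Your plan and the paper's proof share the same core mechanism (synchronous coupling of two copies driven by a common Brownian motion, reduction of \eqref{gh5} to a time-inhomogeneous SDE via \eqref{expzalt}, and taking $t\to\infty$ in \eqref{expzalt} to obtain \eqref{e588}), but the overall architecture differs and your assessment of where the difficulty lies is off.

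Structurally, the paper works in one step: it couples two copies of the time-inhomogeneous SDE \eqref{gh554} directly and applies a Tanaka/It\^o formula to $|\Delta(t)| + K|\delta(t)|$ with $\Delta=x_1-x_2$, $\delta=y_1-y_2$. The $\e^{-(K+1)et}$ perturbation and the contraction are handled simultaneously via an integrability argument on the function $h(t)=c\,\E[|\Delta(t)|]+2Ke\,\E[\1_{\sign\Delta\neq\sign\delta}(|\delta|+|\Delta|)]$. You instead propose the two-step route: first prove ergodicity of the autonomous diffusion \eqref{eq:auxdiff}, then a Gronwall perturbation to pass back to \eqref{gh5}. This would work, but it is heavier machinery than needed, and the paper's one-step version also gives the exponential rate $\E[|\delta(t)|]\to 0$ rather cleanly via a scalar ODE.

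More importantly, the obstacle you flag as the ``main'' one --- degeneracy of $\sqrt{g}$ near $x\in\{0,1\}$, lack of uniform ellipticity, and the suggestion to fall back on reflection coupling or a spectral gap --- is not an obstacle at all, and here your plan misidentifies the source of the contraction. In the paper's argument the diffusion term is entirely passive: with the same Brownian motion for both copies, $(\sign\Delta)(\sqrt{g(x_1)}-\sqrt{g(x_2)})\,\d w$ is a martingale with no drift contribution, and the local time of $\Delta$ at $0$ vanishes precisely because $g$ is Lipschitz (\cite[Proposition V.39.3]{RoWi00}), regardless of whether $g$ degenerates at the boundary. The strict contraction on $|\Delta|$ comes from the term $-c\,\E[|\Delta(t)|]$ produced by the migration drift $c[\theta-x]$, and the seed-bank cross-term $-2Ke\,\E[\1_{\sign\Delta\neq\sign\delta}(|\delta|+|\Delta|)]$ is sign-favourable. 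No ellipticity, Yamada--Watanabe comparison, reflection coupling, or boundary analysis is used or needed. So while your plan would likely succeed, the anticipated difficulty is a red herring, and recognising that the ergodicity is drift-driven (not diffusion-driven) would both simplify the argument and match the paper's reasoning. Your remaining points --- existence of $\Gamma_\theta$ via compactness of $[0,1]^2$, and deducing \eqref{e588} from the first-moment limit in \eqref{expzalt} --- are correct and agree with the paper.
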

		
		\noindent
		The proof of Proposition~\ref{P.equergod} is given in Section~\ref{sec:equergod}. Note that $\Gamma_\theta = \Gamma_\theta^{g,c,K,e}$ depends on all the parameters appearing in \eqref{gh5}. In Section \ref{ss.ProofsMeanfield} we will see that $\Gamma_\theta$ is continuous as a function of $\theta$. 
		
		\begin{remark}{\bf [Non-linear Markov process]}
			{\rm Note that \eqref{SC} is a \emph{non-linear} Markov process: the evolution not only depends on the current state $z(t)$, but also on the current law $\CL[z(t)]$ via the expectation $\E[x(t)]$ appearing in the SSDE \eqref{gh5}. This is different from the model without seed-bank, where the non-linearity is replaced by a drift towards $\theta$ that is constant in time. In equilibrium we can replace $\E[x(t)]$ by $\theta$ in \eqref{gh5}, but before equilibrium is reached we cannot, because $t \mapsto \E[x(t)]$ is not constant, as is clear from \eqref{expz}. Note that $\E[x(t)]$ is a linear functional of $z(0)$. This fact will play an important role in the renormalisation analysis in Section~\ref{s.renormasym}.} 
			\hfill $\blacksquare$
		\end{remark}

		\subsection{Mean-field system and McKean-Vlasov limit}
		\label{mfsmkv}
		
		In this section we consider a simplified version of the SSDE in \eqref{moSDE}, namely, we restrict to the finite geographic space
		\begin{equation}
			\gls{spaceN}=\{0,1,\ldots,N-1\}, \qquad N \in \N.
		\end{equation} 
		In this simplified version, the migration kernel $a^{\Omega_N}(\cdot,\cdot)$ is replaced by $a^{[N]}(\xi,\eta) =c N^{-1}$ for all $(\xi,\eta)\in [N]$, where $c\in (0,\infty)$ is a constant. The seed-bank consists of only \emph{one colour} and the exchange rates between active and dormant are given by $Ke,e$. The state space is
		\begin{equation}
			S = \mathfrak{s}^{[N]}, \qquad \mathfrak{s} = [0,1]^2,
		\end{equation}
		the system is denoted by 
		\begin{equation}
			\label{e715}
			Z^{[N]}(t)=\big(X^{[N]}(t),Y^{[N]}(t)\big)_{t \geq 0}, \qquad 
			\big(X^{[N]}(t),Y^{[N]}(t)\big) = \big(x^{[N]}_i(t), y^{[N]}_i(t)\big)_{i \in [N]},
		\end{equation}
		and its components evolve according to the SSDE 
		\begin{equation}
			\label{gh45a}
			\begin{aligned}
				&\d x^{[N]}_i(t) = \frac{c}{N} \sum_{j \in [N]} \big[x^{[N]}_j(t) - x^{[N]}_i(t)\big]\, \d t 
				+ \sqrt{g\big(x^{[N]}_i(t)\big)}\, \d w_i (t) + K e\, \big[y^{[N]}_i(t)-x^{[N]}_i(t)\big]\,\d t,\\
				&\d y^{[N]}_i(t) = e\,\big[x^{[N]}_i(t)-y^{[N]}_i(t)\big]\, \d t, \qquad i \in [N],
			\end{aligned}
		\end{equation}
		which is the special case of \eqref{moSDE} obtained by setting $a^{\Omega_N}(\eta,\xi)=0$ if $d (\eta,\xi)>1$ and $K_m=e_m=0$ for $m\geq1$. It is natural to take an \emph{exchangeable random initial state}, because the evolution preserves exchangeability. According to De Finetti's theorem, there is no loss of generality in taking an i.i.d.\ initial state, i.e.,
		\begin{equation}
			\CL\big[X^{[N]}(0),Y^{[N]}(0)\big] = \mu^{\otimes [N]}, \qquad\qquad \mu\in\CP\left([0,1]^2\right).
		\end{equation}  
		By \cite[Theorem 3.1]{SS80}, the SSDE in \eqref{gh45a} is the unique weak solution of a well-posed martingale problem. By \cite[Theorem 3.2]{SS80}, for every deterministic initial state $(X^{[N]}(0),Y^{[N]}(0))$, \eqref{gh45a} has a unique strong solution. We are interested in the limit $N\to\infty$. For the limiting process we define
		\begin{equation}
			\label{gh51}
			(Z(t))_{t\geq 0}=(X(t),Y(t))_{t\geq 0}=\big((x_i(t),y_i(t))_{i\in\N_0}\big)_{t\geq 0}
		\end{equation}
		with components evolving according to \eqref{gh5}, i.e.,
		\begin{equation}
			\label{gh52}
			\begin{aligned}
				&\d x_i(t) = c\,[\E[x_i(t)] - x_i(t)]\, \d t + \sqrt{g(x_i(t))}\, \d w (t) + Ke\,[y_i(t)-x_i(t)]\,\d t,\\
				&\d y_i(t) = e\, [x_i(t)-y_i(t)]\, \d t,\qquad i\in\N_0,
			\end{aligned}
		\end{equation}
		with $\CL\left[(X(0),Y(0))\right]=\mu$ for some exchangeable $\mu\in\CP(([0,1]^2)^{\otimes[\N_0]}$. Note that \eqref{gh52} consists of i.i.d.\ copies of the single-colony McKean-Vlasov process in \eqref{SC}, labelled by $i\in\N_0$.
		
		\begin{proposition}{{\bf [Infinite-system McKean-Vlasov limit: convergence]}}
			\label{P.mkvlim}
			$\mbox{}$\\
			Suppose that $\CL[(X^{[N]}(0),Y^{[N]}(0))] = \mu^{[N]}$ is exchangeable and 
			\begin{equation}
				\label{instate}
				\theta = \E^{\mu^{[N]}}\left[\frac{x(0)+Ky(0)}{1+K}\right].
			\end{equation}
			Then 
			\begin{equation}
				\label{gh29}
				\lim_{N\to\infty} \CL\Big[\big(X^{[N]}(t), Y^{[N]}(t)\big)_{t \geq 0}\Big] = \CL\big[(X(t),Y(t))_{t \geq 0}\big]
			\end{equation}
			with
			\begin{equation}
				\CL\big[(X(0),Y(0))_{t \geq 0}\big]=\mu,\qquad \mu=\lim_{N\to \infty}\mu^{[N]},
			\end{equation}
			where the limit is the McKean-Vlasov process in \eqref{SC}--\eqref{gh5}. 
		\end{proposition}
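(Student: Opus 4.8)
\textbf{Proof proposal for Proposition~\ref{P.mkvlim}.}

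The plan is to follow the classical propagation-of-chaos route, adapted to the present setting with a seed-bank component and a self-consistent (non-linear) drift in the active variables. The key structural fact we exploit is that the dormant variables $y_i^{[N]}$ are slaved to the active variables through an ODE with no noise and no coupling across colonies, so all the real work is at the level of the active coordinates. First I would establish tightness of $\CL[(X^{[N]},Y^{[N]})_{t\geq0}]$ on $C([0,\infty),S)$. Since all rates are bounded and the diffusion function $g\in\CG$ is Lipschitz (hence bounded on $[0,1]$), each coordinate $x_i^{[N]}$ is a semimartingale with drift bounded by a constant and quadratic variation $\int_0^\cdot g(x_i^{[N]}(s))\,\d s$ of bounded derivative; Aldous's criterion then gives tightness of each coordinate, and tightness of finitely many coordinates follows coordinate-wise. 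For $y_i^{[N]}$ tightness is immediate since $t\mapsto y_i^{[N]}(t)$ is Lipschitz uniformly in $N$. Exchangeability is preserved by the dynamics, so it suffices to work with the empirical measure and finite-dimensional marginals.

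Next I would identify the limit. By exchangeability and De Finetti, any subsequential limit law is a mixture of i.i.d.\ laws; using the standard martingale-problem characterisation one shows the empirical measure $\tfrac1N\sum_{i\in[N]}\delta_{(x_i^{[N]}(\cdot),y_i^{[N]}(\cdot))}$ converges to a deterministic flow of measures, and this flow solves the non-linear martingale problem associated with \eqref{gh5}. Concretely, for a test function $f$ depending on coordinate $i$ only, the term $\tfrac{c}{N}\sum_{j}[x_j^{[N]}(t)-x_i^{[N]}(t)]$ in the generator is $c[\tfrac1N\sum_j x_j^{[N]}(t)-x_i^{[N]}(t)]$, and the empirical average $\tfrac1N\sum_j x_j^{[N]}(t)$ converges, by the law of large numbers for the limiting exchangeable system together with tightness, to $\E[x_i(t)]$ under the limiting one-colony law. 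This is exactly the self-consistent drift in \eqref{gh52}. The resampling term passes to the limit by continuity of $g$, and the seed-bank exchange terms pass trivially since they involve only coordinate $i$. Hence every subsequential limit is a solution of the McKean-Vlasov martingale problem. By the remark preceding Proposition~\ref{P.equergod} (uniqueness of the strong solution to \eqref{gh5} via \cite{YW71} or \cite{G88}, together with the explicit solution \eqref{expzalt} for $\E[x(t)],\E[y(t)]$ which decouples the non-linearity into a deterministic forcing term), this solution is unique, so the full sequence converges.

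To make the self-consistency argument rigorous, the cleanest device is a coupling: construct on a common probability space the system $(X^{[N]},Y^{[N]})$ driven by Brownian motions $(w_i)_{i\in[N]}$ and $N$ independent copies $(x_i,y_i)_{i\in[N]}$ of the McKean-Vlasov process driven by the same $(w_i)$ and with the same initial values, then estimate $\E[\sup_{s\leq t}|x_i^{[N]}(s)-x_i(s)|^2]$ by Gr\"onwall. The difference of drifts produces a term $c\,\E|\tfrac1N\sum_j x_j^{[N]}(s)-\E[x_1(s)]|$, which we split as $c\,\E|\tfrac1N\sum_j (x_j^{[N]}(s)-x_j(s))|+c\,\E|\tfrac1N\sum_j x_j(s)-\E[x_1(s)]|$; the first is controlled by the quantity we are bounding, the second is $O(N^{-1/2})$ by the independence of the copies $x_j$. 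The resampling difference $|\sqrt{g(x_i^{[N]})}-\sqrt{g(x_i)}|$ is the usual obstacle: $\sqrt{g}$ need not be Lipschitz at the boundary points $0,1$. I expect \textbf{this square-root term to be the main technical obstacle}, handled either by the Yamada--Watanabe argument (approximate $u\mapsto\sqrt{g(u)}$ and use the local-time bound, exploiting that $g(x)/x(1-x)$ is bounded so that $g$ is dominated by $x(1-x)$ near the boundary) or, since we only need weak convergence rather than pathwise uniqueness here, by avoiding the coupling and instead closing the martingale-problem argument directly — using tightness plus uniqueness of the limiting martingale problem, where the square-root causes no difficulty because $g$ itself is continuous. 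The latter route is what \cite[Theorem 3.1--3.2]{SS80} already provides for well-posedness, so I would lean on it and reduce the proof to: (i) tightness, (ii) every limit point solves the McKean-Vlasov martingale problem, (iii) that martingale problem is well-posed. Finally, $\mu=\lim_{N\to\infty}\mu^{[N]}$ exists along the relevant subsequence and the constraint \eqref{instate} passes to the limit by bounded convergence, fixing $\theta$ in the limiting dynamics.
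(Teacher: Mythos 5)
Your proposal matches the paper's strategy: establish tightness, show that every limit point solves the non-linear (McKean--Vlasov) martingale problem associated with \eqref{gh5}, and conclude by uniqueness of that martingale problem. The paper's own proof is a short citation-heavy sketch that invokes Ioffe's theorem (via~\cite{DGV95}) for tightness through boundedness of the generator, the generator criterion of~\cite{JM86} for identifying the limit, Aldous's ergodic theorem for the initial empirical average, and~\cite{G88} both for propagation of chaos/weak law of large numbers and for well-posedness --- which is exactly your route (ii)--(iii), and you correctly observe that closing the argument at the martingale-problem level sidesteps the non-Lipschitzness of $\sqrt{g}$ that would obstruct a Gr\"onwall coupling, so your preference for that route is the right call.
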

		
		\noindent
		The proof of Proposition~\ref{P.mkvlim} is given in Section~\ref{sec:mkvlim}. For the system without seed-bank the McKean-Vlasov limit was proved in \cite{G88}. The fact that the components decouple is a property referred to as \emph{propagation of chaos}.
		
		\subsection{Proof of equilibrium and ergodicity}
		\label{sec:equergod}
		
		In this section we prove Proposition~\ref{P.equergod}.
		
		\begin{proof} 
			Note that, by \eqref{expz}, we can rewrite \eqref{gh5} as
			\begin{equation}
				\begin{aligned}
					\label{gh554}
					&\d x(t) = c\,\left[\theta + \frac{K}{1+K} (\theta_x-\theta_y)\, 
					\e^{-(K+1)et} - x(t)\right]\, \d t + \sqrt{g(x(t))}\, \d w (t) \\
					&\qquad \qquad \qquad + Ke\,[y(t)-x(t)]\,\d t,\\
					&\d y(t) = e\, [x(t)-y(t)]\, \d t.
				\end{aligned} 
			\end{equation} 
			Existence and uniqueness of a strong solution is again standard (see e.g.\ \cite[Theorem 1]{YW71} and recall Remark~\ref{R.dh12}). We start by proving existence and uniqueness of the equilibrium. Afterwards we show that the solution converges to this equilibrium.  
			
			Consider two copies $(x_1,y_1)$ and $(x_2,y_2)$ of the system defined in \eqref{gh554}, with $\CL[(x_1(0),y_1(0))]=\mu_1$ and $\CL[(x_2(0),y_2(0))]=\mu_2$, where $\mu_1$ and $\mu_2$ satisfy 
			\begin{equation}
				\E^{\mu_1}\left[\frac{x_1(0)+Ky_1(0)}{1+K}\right] = \theta =\E^{\mu_2}\left[\frac{x_2(0)+Ky_2(0)}{1+K}\right]
			\end{equation} 
			for some $\theta\in[0,1]$. Write 
			\begin{equation}
				\theta_{x_1} = \E^{\mu_1}[x_1(0)],\quad \theta_{y_1}=\E^{\mu_1}[y_1(0)],
				\quad \theta_{x_2} =\E^{\mu_2}[x_2(0)], \quad \theta_{y_2}=\E^{\mu_2}[y_2(0)].
			\end{equation}
			Couple the two systems by coupling their  Brownian motions. Denote the coupled process by
			\begin{equation}
				\label{m17}
				\begin{aligned}
					&(\bar{z}(t))_{t\geq 0}= (z_1(t),z_2(t))_{t\geq 0},\quad z_1(t)=(x_1(t),y_1(t)),\quad z_2(t)=(x_2(t),y_2(t)),\\
					&\CL(\bar{z}(0))=\mu_1\times\mu_2,
				\end{aligned}
			\end{equation}
			which has a unique strong solution. Put
			\begin{equation}
				\Delta(t)=x_1(t)-x_2(t), \qquad \delta(t)=y_1(t)-y_2(t).
			\end{equation}
			To show that the equilibrium is unique, it is enough to show that
			\begin{equation}
				\lim_{t\to\infty} \E\left[|\Delta(t)|+EK|\delta(t)|\right]=0.
			\end{equation}
			
			Using a generalised form of It\^o's formula, we find
			\begin{equation}
				\label{m12}
				\begin{aligned}
					\d |\Delta(t)|
					&= (\sign\,\Delta(t))\,\d \Delta(t)+\d L_t^0\\
					&= (\sign\,\Delta(t))\,c\,\left[\frac{K}{1+K} \big((\theta_{x_1}-\theta_{x_2})-(\theta_{y_1}-\theta_{y_2})\big)\, 
					\e^{-(K+1)et} - \Delta(t)\right]\, \d t\\
					&\qquad + (\sign\,\Delta(t))\,\left(\sqrt{g(x_1(t))}-\sqrt{g(x_2(t))}\,\right) \d w (t) \\
					& \qquad + (\sign\,\Delta(t))\,Ke\,[\delta(t)-\Delta(t)]\,\d t,
				\end{aligned}
			\end{equation}
			where we use that the local time $L_t^0$ (see \cite[Section IV.43]{RoWi00}) of $\Delta(t)$ at $0$ equals $0$, since $g$ is Lipschitz (see \cite[Proposition V.39.3]{RoWi00}). Again using It\^o's formula, we also find 
			\begin{equation}
				\label{m13}
				\d |\delta(t)| = (\sign\,\delta(t))\,\d\delta(t) = (\sign\,\delta(t))\,e\,[\Delta(t)-\delta(t)]\,\d t.
			\end{equation} 
			Taking expectations in \eqref{m12}--\eqref{m13}, we get
			\begin{equation}
				\label{m14}
				\begin{aligned}
					&\frac{\d}{\d t} \E[|\Delta(t)|+K|\delta(t)|]\\
					&=\E\left[c\,\left[(\sign\,\Delta(t))\frac{K}{1+K} \big((\theta_{x_1}-\theta_{x_2})-(\theta_{y_1}-\theta_{y_2})\big)\, 
					\e^{-(K+1)et} - \left|\Delta(t)\right|\right]\right]\\
					&\quad + Ke\,\E\Big[\left(\sign\,\Delta(t)-\sign\,\delta(t)\right)(\delta(t)-\Delta(t))\Big]\\
					&=\E\left[c\,(\sign\,\Delta(t))\frac{K}{1+K} \big((\theta_{x_1}-\theta_{x_2})-(\theta_{y_1}-\theta_{y_2})\big)\, 
					\e^{-(K+1)et}\right]\\
					&\quad-c\,\E[\left|\Delta(t)\right|]\\
					&\quad-2Ke\,\E\left[1_{\{\sign\,\delta(t)\neq\sign\Delta(t)\}}\left(|\delta(t)|+|\Delta(t)|\right)\right].
				\end{aligned}
			\end{equation}
			Define
			\begin{equation}
				h(t) = c\,\E[\left|\Delta(t)\right|] + 2Ke\,
				\E\left[1_{\{\sign\,\delta(t))\neq\sign\,\Delta(t)\}}\left(|\delta(t)|+|\Delta(t)|\right)\right].
			\end{equation}
			Then $h(t)$ satisfies
			\begin{enumerate}
				\item $h(t)>0$.
				\item $0\leq \int_0^\infty \d t\,h(t) \leq 1+K
				+c\left|(\theta_{x_1}-\theta_{x_2})-(\theta_{y_1}-\theta_{y_2})\right|
				\frac{K}{K+1}\frac{1}{e(K+1)}\left[1-\e^{-(K+1)et}\right]$.
				\item $h$ is differentiable with $h^\prime$ bounded (see \cite[Appendix D]{GdHOpr1}).
			\end{enumerate}
			Hence it follows that $\lim_{t\to\infty}h(t)=0$, which implies that 
			\begin{equation}
				\label{m16}
				\lim_{t \to \infty}\E\left[|\Delta(t)|\right]=0.
			\end{equation}
			
			We are left to prove that $\lim_{t\to\infty} \E[|\delta(t)|]=0$. To do so, we define
			\begin{equation}
				\label{m01}
				f(t)=\E[|\delta(t)|], \qquad G(t)= e\,\E[(\sign\,\delta(t))\Delta(t)].
			\end{equation}
			Note that $G$ is bounded and continuous. Taking expectations in \eqref{m13}, we find
			\begin{equation}
				\label{m15}
				\frac{\d}{\d t}f(t)=-e f(t)+G(t),
			\end{equation}
			Solving \eqref{m15} explicitly, we find that
			\begin{equation}
				f(t)=f(r)\,\e^{-e(t-r)}+\int_{r}^t \d s\,\e^{-e(t-s)}G(s), \qquad r,t\in\R,\,t>r\geq 0.
			\end{equation} 
			By \eqref{m16}, for each $\epsilon>0$ we can find an $r\in\R$ such that $\E[|\Delta(s)|]<\epsilon$ for all $s>r$, and hence $\sup_{t>r}|G(t)|<\epsilon$. Therefore
			\begin{equation}
				f(t) \leq f(r)\,\e^{-e(t-r)}+\epsilon
			\end{equation}
			and, since $|f|<1$, we find, for each $\epsilon>0$,
			\begin{equation}
				\label{m02}
				\lim_{t\to\infty} f(t)<\epsilon.
			\end{equation}
			Therefore $\lim_{t\to\infty}\E[|\delta(t)|]=0$, which completes the proof of uniqueness of the equilibrium for given $\theta$. 
			
			To prove existence of the equilibrium, let $(t_n)_{n\in\N}$ be any increasing sequenc eof times such that $\lim_{n\to\infty} t_n=\infty$. Let $\mu=\CL[(x(0),y(0))]$ be any initial measure of the system in \eqref{gh554} with $\E^\mu[\frac{x(0)+Ky(0)}{1+K}]=\theta$, and let $\mu(t_n)=\CL[(x(t_n),y(t_n))]$. Since the state space is compact, the sequence $(\mu(t_{n}))_{n\in\N}$ is tight, and by Prohorov's theorem we can find a converging subsequence $(\mu(t_{n_k}))_{k\in\N}$. Put $\nu=\lim_{k\to\infty}\mu(t_{n_k})$. We will show that $\nu$ is invariant. To that end, recall from Section~\ref{ss.Mckeanvlasov} that 
			\begin{equation}
				\label{ba}
				\E^\mu\left[\frac{x(t)+Ky(t)}{1+K}\right] = \theta, \qquad t \geq 0.
			\end{equation}
			Hence we can use the coupling in \eqref{m17} to show that the system starting in $\mu$ and the system starting $\mu(t)$ converge to the same law as $t\to\infty$, from which it follows that $\lim_{k \to \infty} \mu(t+t_{n_k})=\nu$. Let $(S_t)_{t \geq 0}$ denote the semigroup of the system in \eqref{gh554}. By the Feller property for semigroups,
			\begin{equation}
				S_t\nu = \lim_{k\to\infty} S_t\mu(t_{n_k}) = \lim_{k\to\infty} S_{t_{n_k}}\left(S_t\mu\right)=\nu,
			\end{equation} 
			where in the last equality we use the uniqueness of the equilibrium given $\theta$. Thus, $\nu$ is an invariant measure. To exhibit its dependence on $\theta$ we write $\nu_\theta$. Using the same coupling as in \eqref{m17}, and starting from $\mu\times\nu_\theta$ with $\nu_\theta$ the invariant measure just obtained, we see that for every $\theta$ the system in \eqref{gh5} converges to a unique equilibrium measure $\nu_\theta$, and so \eqref{e588} is immediate from \eqref{ba}.
		\end{proof}

		\subsection{Proof of McKean-Vlasov limit}
		\label{sec:mkvlim}
		
		In this section we give a sketch of the proof of Proposition~\ref{P.mkvlim}. In Chapters~\ref{ss.ProofsMeanfield}-\ref{s.multilevel} we encounter more difficult versions of Proposition~\ref{P.mkvlim}. There we will give the proofs in full detail.
		
		\begin{proof}
			Since we start from a distribution $\mu(0)$ that is exchangeable, Aldous's ergodic theorem gives
			\begin{equation}
				\lim_{N\to\infty} \frac{1}{N} \sum_{j\in[N]} x_j(0) = \E^{\mu(0)}[x_0] \quad \P\text{-a.s.}
			\end{equation} 
			By Ioffe's theorem \cite[Eqs.\ (1.1)--(1.2)]{DGV95}, tightness of the associated sequence of processes (uniformly on the state space) follows from boundedness of the generator as an operator. To apply the generator criterion in \cite{JM86} we must show propagation of chaos and prove the weak law of large numbers
			\be{}
			\lim_{N\to\infty} \frac{1}{N} \sum_{j\in[N]} x_j(t) = \E[x_0(t)].
			\ee
			The propagation of chaos and the weak law of large numbers for $t>0$ therefore follows from \cite[Section 4]{G88}. Since the martingale problem is well-posed \cite[Section 2]{G88}, the limiting process exists and is unique. 
		\end{proof}

		\section{Proofs: $N\to\infty$, mean-field finite-systems scheme}
		\label{ss.ProofsMeanfield}
		
		In Sections~\ref{mffss} we introduce the so called mean-field finite-systems scheme for the mean-field system introduced in Section~\ref{mfsmkv}. In Section~\ref{sec:finsysmf} we outline the \emph{abstract scheme} behind the proof behind the mean-field finite-systems scheme. The computations in the proof of the abstract scheme are long and technical, and are deferred to Section~\ref{ss.pabstracts}.
		
		\subsection{Mean-field finite-systems scheme}
		\label{mffss}
		
		In this section we describe the limiting dynamics of the finite system in \eqref{e715} from a \emph{multiple space-time scale} viewpoint. To do so, we need the following limiting SSDE for the infinite system $Z(t)=(z_i(t))_{i\in\N_0}=\left(x_i(t),y_i(t)\right)_{i\in \N_0}$, with initial law $\CL[Z(0)]=\mu^{\otimes\N_0}$, evolving according to
		\begin{eqnarray}
			\label{gh5inf}
			&&\d x_i(t) = c\,[\theta - x_i(t)]\, \d t + \sqrt{g(x_i(t))}\, \d w_i (t) + Ke\,[y_i(t)-x_i(t)]\,\d t,\\
			&&\d y_i(t) = e\, [x_i(t)-y_i(t)]\, \d t,\ i\in\N_0, \nonumber
		\end{eqnarray}
		where $\theta$ is defined in \eqref{e588}. Note that each component of \eqref{gh5inf} is an autonomous copy of the McKean-Vlasov process in \eqref{gh5} in equilibrium. 
		
		For the multiscale analysis we will need the following ingredients:
		\begin{enumerate}
			\item 
			The \emph{estimator} for the finite system is defined by 
			\begin{equation}
				\label{slovar}
				\bar{\Theta}^{[N]}(t)=\bar{\Theta}^{[N]}\big(Z^{[N]}(t)\big)=\frac{1}{N}\sum_{i\in[N]}\frac{x^{[N]}_i(t)+Ky_i^{[N]}(t)}{1+K}
			\end{equation}
			and its active and dormant counterparts
			\begin{equation}
				\label{avxy}
				\begin{aligned}
					\bar{\Theta}_{x}^{[N]}(t)&=\frac{1}{N}\sum_{i\in[N]}x^{[N]}_i(t),\\
					\bar{\Theta}_{y}^{[N]}(t)&=\frac{1}{N}\sum_{i\in[N]}y^{[N]}_i(t).
				\end{aligned}
			\end{equation}
			
			\item 
			The \emph{time scale} $N$, on which $\lim_{N\to\infty} \CL[\bar{\Theta}^{[N]} (L(N))-\bar{\Theta}^{[N]}(0)]=\delta_0$ for all $L(N)$ such that $\lim_{N\to\infty} L(N)=\infty$ and $\lim_{N\to\infty} L(N)/N=0$, but not for $L(N)=N$. In words, $N$ is the time scale on which $\bar{\Theta}^{[N]}(\cdot)$ starts evolving, i.e., $\left(\bar{\Theta}^{[N]}(Ns)\right)_{s>0}$ is not a fixed process. When we scale time by $N$, putting $t=Ns$, we view $s$ as the ``fast time scale" and $t$ as the ``slow time scale".
			\item 
			The \emph{invariant measure}, i.e., the equilibrium measure of a single component in \eqref{gh5inf} written  
			\begin{equation}
				\label{singcoleq}
				\Gamma_\theta,
			\end{equation}
			and the \emph{invariant measure} of the infinite system in \eqref{gh5inf}, written $\nu_\theta=\Gamma_\theta^{\otimes\N_0}$, with $\theta \in [0,1]$ controlled by the initial measure (recall \eqref{inc}--\eqref{expz}). 
			\item 
			The \emph{renormalisation transformation} $\CF\colon\,\CG\to\CG$,
			\begin{equation}
				\label{gh4}
				(\CF g)(\theta) = \int_{[0,1]^2} g(x)\,\nu_\theta(\d x, \d y_0), \quad \theta \in [0,1],
			\end{equation} 
			where $\nu_\theta$ is the equilibrium measure of \ref{gh5inf}. Note that $\CF$ is the same transformation as defined in \eqref{renor}, but for the truncated system. Note that we can also write
			\begin{equation}
				\label{gh44}
				(\CF g)(\theta) = \int_{[0,1]^2} g(x)\,\Gamma_\theta(\d x, \d y_0), \quad \theta \in [0,1],
			\end{equation} 
			where $\Gamma_\theta$ is as defined in \eqref{singcoleq}.
			\item
			The \emph{macroscopic observable} $\left(\bar{\Theta}(s)\right)_{s>0}$ satisfying the SSDE
			\begin{equation}
				\label{52}
				\d \bar{\Theta}(s)=\frac{1}{1+K}\sqrt{\E^{\Gamma_{\bar{\Theta}(s)}}[g(u)]}\,\d w(s)
				=\frac{1}{1+K}\sqrt{(\CF g)(\bar{\Theta}(s))}\,\d w(s),
			\end{equation}
		\end{enumerate}
		
		To obtain the multi-scale limit dynamics for the system in \eqref{e715}, we speed up time by a factor $N$ and define the process
		\begin{equation}
			\label{gh41}
			\left(x_1^{[N]}(s), y_1^{[N]}(s)\right)_{s > 0}
			=\left({\Theta}^{[N]}_x(Ns),{\Theta}^{[N]}_y(Ns)\right)_{s>0},
		\end{equation}
		which is the analogue of the $1$-block average in \eqref{blockav}. We use the lower index $1$ to indicate that the average is taken over $[N]$ components. Using \eqref{gh45a}, we see that the dynamics of  \eqref{gh41} is given by the SSDE
		\begin{equation}
			\label{mfevolve}
			\begin{aligned}
				\d x_1^{[N]}(s)&=\sqrt{\frac{1}{N}\sum_{i\in[N]}g\big(x_i(Ns)\big)}\,\d w(s)+NKe\left[y_1^{[N]}(s)-x_1^{[N]}(s)\right]\d s,\\
				\d y_1^{[N]}(s)&=Ne\left[x_1^{[N]}(s)-y_1^{[N]}(s)\right]\d s.
			\end{aligned}
		\end{equation}
		
		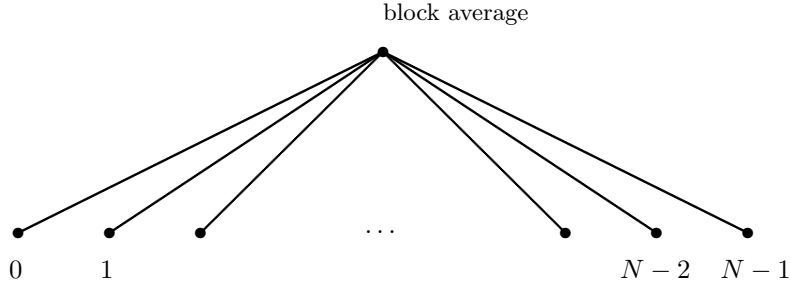
\begin{figure}[htbp]
			\begin{center}
				\setlength{\unitlength}{0.6cm}
				\begin{picture}(16,6)(-7,-1)
					{\thicklines
						\qbezier(-8,0)(-4,2)(0,4)
						\qbezier(-6,0)(-3,2)(0,4)
						\qbezier(-4,0)(-2,2)(0,4)
						\qbezier(4,0)(2,2)(0,4)
						\qbezier(6,0)(3,2)(0,4)
						\qbezier(8,0)(4,2)(0,4)
					} 
					\put(-8,0){\circle*{.25}}
					\put(-6,0){\circle*{.25}}
					\put(-4,0){\circle*{.25}}
					\put(4,0){\circle*{.25}}
					\put(6,0){\circle*{.25}}
					\put(8,0){\circle*{.25}}
					\put(0,4){\circle*{.25}}
					\put(-.4,0){$\dots$}
					\put(-8.2,-1){$0$}
					\put(-6.2,-1){$1$}
					\put(5.2,-1){$N-2$}
					\put(7.4,-1){$N-1$}
					\put(0,4.7){\small block average}
				\end{picture}
			\end{center}
			\caption{Given the value of the block average, the $N \gg 1$ constituent components
				equilibrate on a time scale that is fast with respect to the time scale on which the block 
				average fluctuates. Consequently, the volatility of the block average is the expectation
				of the volatility of the constituent components under the conditional quasi-equilibrium.}
			\label{fig-renorm}
		\end{figure}
		
		In \eqref{mfevolve}, in the limit as $N\to\infty$ infinite rates appear in the exchange between the active and the dormant population. However, looking at the process
		\begin{equation}
			\label{gh1}
			\left(\frac{x_1^{[N]}(s)+Ky_1^{[N]}(s)}{1+K}\right)_{s>0}=\left(\bar{\Theta}^{[N]}(Ns)\right)_{s>0}
		\end{equation}
		we see that the terms carrying a factor $N$ in front cancel out. Consequently, for the process in \eqref{gh1} we can use ideas from \cite{DG93b} to prove tightness as $N\to\infty$ in the \emph{classical topology} of continuum path processes. We will show in Section \ref{step2} that 
		\begin{equation}
			\label{moha44}
			\begin{aligned}
				&\lim_{N\to\infty} \CL\left[\left(\left[x_1^{[N]}(s)-y_1^{[N]}(s)\right]\right)_{s \geq 0}\right] 
				= \CL\left[(0)_{s \geq 0}\right] \\
				&\mbox{ in the \emph{Meyer-Zheng topology}}.
			\end{aligned}
		\end{equation}
		Combining \eqref{gh1} and \eqref{moha44}, we obtain the multiple space-time scaling behaviour of the system in \eqref{e715}. 
		
		\begin{proposition}{{\bf [Mean-field: finite-systems scheme]}}
			\label{P.finsysmf}
			Suppose that the SSDE in \eqref{gh45a} has initial measure $\CL[Z^{[N]}(0)]=\mu^{\otimes [N]}$ for some $\mu\in \CP\left([0,1]^2\right)$. Let 
			\begin{equation}
				\theta =\E^{\mu}\left[ \frac{x+Ky_0}{1+K}\right].
			\end{equation} 
			\begin{itemize}
				\item[(a)]
				For the averages in \eqref{gh41}, 
				\begin{equation}
					\label{gh42alt}
					\begin{aligned}
						&\lim_{N\to\infty} \CL \left[\left(x_1^{[N]}(s),  y_{0,1}^{[N]}(s)\right)_{s > 0}\right] 
						= \CL \left[\left(x_1^{\N_0}(s),y_{0,1}^{\N_0}(s)\right)_{s > 0}\right]\\
						&\text{in the Meyer-Zheng topology},
					\end{aligned}
				\end{equation}
				where the limit process is the unique solution of the SSDE
				\begin{equation}
					\label{gh43}
					\begin{aligned}
						\d x_1^{\N_0}(s) &=  \frac{1}{1+K} \sqrt{(\CF g)\big(x_1^{\N_0}(s)\big)}\, \d w(s),\\
						y_{0,1}^{\N_0}(s) &=  x_1^{\N_0} (s),
					\end{aligned}	
				\end{equation}
				with initial state
				\begin{equation}
					\label{e865}
					\left( x_1^{\N_0} (0),  y_{0,1}^{\N_0}(0)\right) = \left(\theta,\theta\right). 
				\end{equation}
				\item[(b)]
				For the weighted sum of the averages in \eqref{gh1}, 
				\begin{equation}
					\label{434}
					\lim_{N\to\infty} \CL \left[\left(\bar{\Theta}^{[N]}(Ns)\right)_{s > 0}\right] 
					= \CL \left[\left(\bar{\Theta}(s)\right)_{s > 0}\right],
				\end{equation}
				where the limit is the macroscopic observable in \eqref{52} with initial state
				\begin{equation}
					\bar{\Theta}(0) = \theta.
				\end{equation}
				\item[(c)]
				Define
				\begin{equation}
					\label{ma4}
					\nu_\theta(s)= \int_{[0,1]} Q_s\bigl(\theta,\d \theta^\prime\bigr)\,\nu_{\theta^\prime} \in \CP([0,1]^2),
				\end{equation}
				where $Q_s(\theta,\cdot)$ is the time-$s$ marginal law of the process $(\bar{\Theta}(s))_{s > 0}$ starting from $\theta \in [0,1]$ (note that $\nu_\theta(0)=\nu_\theta$).  Then, for every $s \in (0,\infty)$,
				\begin{equation}
					\begin{aligned}
						&\lim_{N\to\infty} \CL\left[\left(X^{[N]}(Ns+t),Y^{[N]}(Ns+t)\right)_{t>0}\right]
						= \CL \left[(Z^{\nu_\theta(s)}(t))_{t > 0}\right]\\ 
					\end{aligned}
				\end{equation}
				where, conditional on $\bar{\Theta}(s)=\theta$, $(z^{\nu_\theta(s)}(t))_{t \geq 0}$ is the random process in \eqref{gh5inf} and $z^{\nu_\theta(s)}(0)$ is drawn according to $\nu_\theta(s)$ (which is a mixture of random processes in equilibrium).
			\end{itemize}
		\end{proposition}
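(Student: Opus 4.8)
The plan is to deduce all three parts from a \emph{two--scale averaging argument} built on the observation, recorded below~\eqref{gh1}, that in the weighted combination $\bar\Theta^{[N]}(Ns)=\tfrac{1}{1+K}\bigl(x_1^{[N]}(s)+Ky_{0,1}^{[N]}(s)\bigr)$ the terms in \eqref{mfevolve} carrying the diverging prefactor $N$ cancel exactly. Hence $(\bar\Theta^{[N]}(Ns))_{s\ge0}$ is a bounded martingale with predictable quadratic variation $\tfrac{1}{(1+K)^2}\int_0^\cdot\d u\,\tfrac1N\sum_{i\in[N]}g\bigl(x_i^{[N]}(Nu)\bigr)$, and the family is tight in $C([0,\infty),[0,1])$ by the standard criterion used in \cite{DG93b}. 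The first task is to identify every subsequential limit $\bar\Theta$ as the (unique, by \cite{YW71} since $\sqrt{\CF g}$ is $\tfrac12$-H\"older) solution of the martingale problem for \eqref{52}. Since the drift of $\bar\Theta^{[N]}(N\cdot)$ is identically zero, this reduces to the \emph{averaging identity}
\[
\int_0^s\d u\,\tfrac1N\sum_{i\in[N]}g\bigl(x_i^{[N]}(Nu)\bigr)\ \longrightarrow\ \int_0^s\d u\,(\CF g)\bigl(\bar\Theta(u)\bigr)\quad\text{in probability},\qquad s\ge0,
\]
after which uniqueness upgrades subsequential to full convergence, proving~(b).

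Proving the averaging identity is the main obstacle. The idea is the one depicted in Fig.~\ref{fig-renorm}: given the value of the slow observable, the $N\gg1$ components equilibrate on a fast time scale to the single--colony McKean--Vlasov equilibrium $\Gamma$. To make this rigorous I would partition $[0,s]$ into blocks of small length $\eta$, freeze $\bar\Theta^{[N]}$ at the left endpoint $\theta'$ of each block (legitimate because on a fast--time interval of length $\eta$ the martingale $\bar\Theta^{[N]}(N\cdot)$ moves by $O(\sqrt\eta)$ in $L^2$), and use that, conditionally on the frozen value, Proposition~\ref{P.mkvlim} yields propagation of chaos towards i.i.d.\ copies of the McKean--Vlasov dynamics \eqref{gh5} centred at $\theta'$, while Proposition~\ref{P.equergod} yields convergence of those to $\Gamma_{\theta'}$; hence $\tfrac1N\sum_i g(x_i^{[N]})\approx\int g\,\d\Gamma_{\theta'}=(\CF g)(\theta')$ by~\eqref{gh44}. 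Summing over the blocks and letting $N\to\infty$ and then $\eta\downarrow0$ gives the identity. This step also needs continuity of $\theta\mapsto\Gamma_\theta$ (hence of $\CF g$); that, together with the uniform quantitative versions of the freezing estimates, is exactly what the abstract scheme and its technical computations in Section~\ref{ss.pabstracts} are designed to supply.

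For part~(a) I would first establish the collapse of the fast coordinate, \eqref{moha44}. From \eqref{mfevolve}, $\Delta_N(s):=x_1^{[N]}(s)-y_{0,1}^{[N]}(s)$ solves an Ornstein--Uhlenbeck-type equation with mean-reversion rate $N(1+K)e\to\infty$ and uniformly bounded diffusion coefficient, so $\E[\Delta_N(s)^2]\le\e^{-2N(1+K)es}+\tfrac{\|g\|_\infty}{2N(1+K)e}\to0$ for every $s>0$, and therefore $\int_a^b\d s\,\E[|\Delta_N(s)|]\to0$ for all $0\le a<b<\infty$; by the characterization of the Meyer--Zheng topology recalled in Section~\ref{MeyerZheng} and Appendix~\ref{apb} this gives \eqref{moha44} (carried out in Section~\ref{step2}). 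Writing $x_1^{[N]}=\bar\Theta^{[N]}(N\cdot)+\tfrac{K}{1+K}\Delta_N$ and $y_{0,1}^{[N]}=\bar\Theta^{[N]}(N\cdot)-\tfrac{1}{1+K}\Delta_N$, part~(a) follows from part~(b), since \eqref{gh43} is just \eqref{52} rewritten and the initial value $(\theta,\theta)$ in \eqref{e865} comes from the law of large numbers applied to the i.i.d.\ initial law $\mu^{\otimes[N]}$.

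Finally, for part~(c) I would restart the clock at the random time $Ns$, where by~(b) the slow observable has limiting law $Q_s(\theta,\cdot)$, and run the local--equilibrium argument: conditioning on $\sigma\bigl(Z^{[N]}(r):r\le Ns-L(N)\bigr)$ with $1\ll L(N)\ll N$ freezes $\bar\Theta^{[N]}$ at a value $\theta'$ throughout the window $[Ns-L(N),Ns+t]$, so Proposition~\ref{P.mkvlim} decouples the components into i.i.d.\ copies of \eqref{gh5} centred at $\theta'$, which by Proposition~\ref{P.equergod} have reached $\Gamma_{\theta'}$ by time $Ns$; on the non-accelerated window $[Ns,Ns+t]$ the finite rates of \eqref{gh45a} then produce the autonomous limiting dynamics \eqref{gh5inf}. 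Averaging over the law $Q_s(\theta,\cdot)$ of $\theta'$ yields the mixture $\nu_\theta(s)$ of \eqref{ma4} and the asserted limiting process in the classical topology. Across all three parts the genuine difficulty is concentrated in the second step: turning ``the fast components sit in the conditional quasi-equilibrium $\Gamma_{\bar\Theta^{[N]}}$'' into quantitative estimates that are uniform over the freezing intervals and that simultaneously control the fast fluctuations, the slow drift, and the propagation of chaos.
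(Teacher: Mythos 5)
Your proposal is substantially correct and reconstructs the key structure of the paper's argument: the martingale property and tightness of $\bar\Theta^{[N]}(N\cdot)$ (Lemma~\ref{martav}), the collapse $\Delta_N\to0$ of the fast coordinate (Lemma~\ref{lemav}, with essentially the bound you write down), the upgrade to Meyer--Zheng convergence via Lemmas~\ref{lem91}--\ref{lem94}, uniqueness of the limiting martingale problem (Lemma~\ref{lem:uni}), and the local-equilibrium mixing for part~(c). The one place where your route differs from the paper's, and where the phrasing hides a genuine subtlety, is the averaging identity. You propose to partition $[0,s]$ into blocks of slow length $\eta$, freeze $\bar\Theta^{[N]}$ at the left endpoint $\theta'$ of each block, and then \emph{invoke Proposition~\ref{P.mkvlim}} at time $Nu_k$ to get propagation of chaos to i.i.d.\ McKean--Vlasov copies centred at $\theta'$. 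That invocation is not available as stated: Proposition~\ref{P.mkvlim} requires an exchangeable initial law whose empirical one-particle marginal converges to a known $\mu$; at the running time $Nu_k$ the law of $Z^{[N]}(Nu_k)$ is exchangeable but the identification of its limit is precisely what you are trying to prove, so a naive appeal is circular. The paper avoids this by a different decomposition: rather than discretizing the slow interval, it fixes a single window $[Ns-L(N),Ns]$ with $1\ll L(N)\ll N$, periodically continues the configuration at time $Ns-L(N)$ to define $\mu_N$ (Lemma~\ref{stabest}), couples the finite system to an autonomous infinite system driven by the \emph{frozen} estimator $\bar\Theta^{[N]}$ rather than the random drift $\E[x(t)]$ (Lemma~\ref{l.comp}), and then uses a uniform-in-$N$ ergodic theorem with a carefully built intermediate time scale $\bar L(N)$ (Lemma~\ref{unifergod}) plus a coupling of two finite systems (Lemma~\ref{lem:12}) to identify the one-time marginal as $\int \nu_\rho\,\d P_s(\rho)$ (Proposition~\ref{prop1}); the averaging identity then follows by a triangle argument around $\nu_{\bar\Theta(r)}$ and the LLN for the i.i.d.\ equilibrium measure (proof of Lemma~\ref{lemmart}, display~\eqref{a5}). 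So your sketch is right in spirit, and you correctly flag that ``uniform quantitative versions of the freezing estimates'' are the crux, but the specific citation of Proposition~\ref{P.mkvlim} inside the averaging step should be replaced by the comparison-with-infinite-system machinery of Step~1 of the abstract scheme; otherwise the argument is self-referential.
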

		
		\noindent
		The proof of Proposition~\ref{P.finsysmf} is given in Section~\ref{sec:finsysmf}. 
		
		The result in Part (a) shows that the limit dynamics of the averages follows a similar type of diffusion as a single colony, but with four important changes:
		\begin{itemize}
			\item 
			For the limit of the time-scaled average in \eqref{gh1} the diffusion function $g$ is replaced by a \emph{renormalised diffusion function} $\CF g$, defined by \eqref{gh4} (recall Fig.~\ref{fig-renorm}). In section \ref{sec:finsysmf} we will show that $\CF \CG\subset\CG$, i.e., $\CF$ preserves the class of diffusion functions defined in \eqref{setG}.   
			\item
			The average of the dormant population is the same as the average of the active population, and hence the term that accounts for the exchange between the active and the dormant population \emph{vanishes}. This happens because when time is speeded up by a factor $N$ also the rates of exchange between active and dormant are speeded up by a factor $N$ (see \eqref{mfevolve}). Hence the exchange rates become infinitely large, which implies that the active and the dormant population equilibrate instantly in the Meyer-Zheng topology. 
			\item 
			Since we take the average over all the components, the migration terms in \eqref{gh45a} cancel out against each other. 
			\item 
			Comparing the system in \eqref{e715} with the system of interacting Fisher-Wright diffusions in the mean-field limit studied in \cite{DG93a}, we see from \eqref{gh43} that the single-colour seed-bank \emph{slows down the average} by a factor $1/(1+K)$, but does not change the system qualitatively. This is a direct consequence of the fact that the averages of the active and the dormant population equilibrate (due to the infinite rates), while only individuals in the active part of the population resample.
		\end{itemize}
		The result in Part (b) shows that the limit dynamics of the averages in \eqref{gh1} follows an autonomous SDE, with convergence in the classical topology, i.e., in $C_b([0,\infty),[0,1])$. The Brownian motion in \eqref{SC} is taken to be independent of the initial state. The result in Part (c) says that, on time scale $1$ and starting from time $Ns$ with $N\to\infty$, the system has a \emph{McKean-Vlasov limit}, i.e., exhibits propagation of chaos, with components that are versions of a McKean-Vlasov process with a random initial state whose law depends on $s$. So, in particular, the components become independent, and we see \emph{decoupling} The proof of Part (c) will use Part (b). The proof of Part (a) will follow from Part (b) after we use the Meyer-Zheng topology.
		
		\begin{remark}{\bf[Basic multi-scale]}
			{\rm Note that Proposition~\ref{P.finsysmf} already reveals several phenomena that we encountered in Theorems~\ref{T.multiscalehiereff} and \ref{T.multiscalehier}, capturing the hierarchical multiscale behaviour. Even for the one-layer mean-field system we find decoupling of components, the occurrence of a renormalisation transformation, equalisation of the seed-bank with the active population, and the need for the Meyer-Zheng topology. Later we will see that the role of the macroscopic observable $\bar{\Theta}$ is the same as that of the effective process.} \hfill$\blacksquare$
		\end{remark}
		
		\begin{remark}{\bf [Interchange of limits]}
			{\rm The notation $x_1^{\N_0},y_{0,1}^{\N_0}$ indicates that the limit arises from taking averages over $[N]$ and letting $N\to\infty$. Note that, for i.i.d.\ initial states,
				\begin{equation}
					\label{lim1}
					x_1^{\N_0}(0) = \lim_{N\to\infty} x_1^{[N]}(0)
					= \lim_{N\to\infty}\frac{1}{N}\sum_{i \in [N]} x_i(0) =\theta_{\bar{\Theta}} 
					\qquad \P\text{-a.s.}
				\end{equation} 
				On the other hand, picking any sequence of times $L(N)$ such that $\lim_{N\to\infty} L(N)=\infty$ and $\lim_{N\to\infty} L(N)/N=0$, we get
				\begin{equation}
					\label{lim2}
					x_1^{\N_0}(0+) = \lim_{N\to\infty} x_1^{[N]}\big(\tfrac{L(N)}{N} t\big) 
					= \lim_{N\to\infty}\frac{1}{N} \sum_{i \in [N]} x_i(L(N)t) = \theta
					\qquad \P\text{-a.s.}
				\end{equation}
				The mismatch between \eqref{lim1} and \eqref{lim2} indicates that we must be careful with interchanging the limits $N\to\infty$ and $s \downarrow 0$. This is why \eqref{gh42alt}, which lives on the fast time scale, is restricted to $s>0$. } \hfill $\blacksquare$
		\end{remark} 
		
		\subsection{Abstract scheme behind finite-systems scheme}
		\label{sec:finsysmf}
		
		To prove Proposition~\ref{P.finsysmf}, we follow the abstract scheme outlined in \cite[p.\ 2314--2315]{DGV95} and based on \cite{DG93a}, \cite{DG93b}. Below we state the abstract scheme for our model. The scheme consists of 4 steps, each of the steps consists of a series of propositions and lemmas. The proofs of these are given in Section \ref{ss.pabstracts}. 
		
		\paragraph{Step 1. Equilibrium of the single components.}
		
		This step fixes the one-dimensional distributions of the single components when $t,N \to \infty$ in a combined way, and is the equivalent of \cite[Proposition 1]{DG93a}. Recall that $\bar{\Theta}^{[N]}$ is defined in \eqref{slovar}.
		
		\begin{proposition}{\bf [Equilibrium for the infinite system]}
			\label{prop1}
			Let $(N_k)_{k\in\N}$ be a sequence in $\N$. Fix $s>0$. Let $L(N)$ be such that $\lim_{N\to\infty} L(N)=\infty$ and $\lim_{N\to\infty} L(N)/N=0$, and suppose that
			\begin{equation}
				\label{823}
				\begin{aligned}
					&\lim_{k\to\infty}\CL\left[\bar{\Theta}^{[N_k]}(N_ks)\right] = P_s,\\ 
					&\lim_{k\to\infty }\CL\left[\sup_{0\leq t\leq L(N_k)}\left|\bar{\Theta}^{[N_k]}(N_k s)-\bar{\Theta}^{[N_k]}(N_ks-t)
					\right|\right]=\delta_{0},\\[0.2cm]
					&\lim_{k\to\infty} \CL\bigl(X^{[N_k]}(N_ks),Y^{[N_k]}(N_ks)\bigr) = \nu(s).
				\end{aligned}
			\end{equation}
			Then $\nu(s)$ is of the form
			\begin{equation}
				\nu(s) = \int_{[0,1]} P_s(\d \theta)\,\nu_{\theta},
			\end{equation}
			where $\nu_\theta$ is the equilibrium measure of the process defined in \eqref{gh5inf}.
		\end{proposition}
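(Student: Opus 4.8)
The plan is to follow the abstract renormalisation scheme of \cite{DG93a}, \cite{DG93b}, \cite{DGV95}, adapted to accommodate the seed-bank component. Throughout I will use that $\big(X^{[N]}(t),Y^{[N]}(t)\big)$ is exchangeable for every $t$ (the SSDE \eqref{gh45a} preserves exchangeability) and that everything lives in the compact space $\mathfrak{s}^{[N]}=([0,1]^2)^{[N]}$; convergence of the laws on these spaces of growing dimension is understood through finite-dimensional marginals. Hence the limiting object $\nu(s)$ is an exchangeable law on $\mathfrak{s}^{\N_0}$ and, by de Finetti's theorem, $\nu(s)=\int_{\CP([0,1]^2)}\mu^{\otimes\N_0}\,\widetilde P_s(\d\mu)$ for some mixing measure $\widetilde P_s$. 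The proposition then amounts to two statements: (i) $\widetilde P_s$ is concentrated on the set $\{\Gamma_\theta:\theta\in[0,1]\}$ of equilibria of the frozen-drift dynamics \eqref{gh5inf}; and (ii) the induced law of the parameter $\theta$ equals $P_s$.

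Statement (ii) is the easier one. By the weak law of large numbers for exchangeable arrays, $\bar\Theta^{[N_k]}(N_ks)$ (recall \eqref{slovar}) converges to $\int\frac{x+Ky}{1+K}\,\mu(\d x,\d y)$, where $\mu$ is the de Finetti random measure of $\nu(s)$; comparing with the first line of \eqref{823} identifies the law of $\int\frac{x+Ky}{1+K}\,\mu$ under $\widetilde P_s$ as $P_s$. Since, by (i), such $\mu$ is of the form $\Gamma_\theta$, and $\Gamma_\theta$ is determined by its conserved mean $\theta$ (Proposition~\ref{P.equergod}, \eqref{e588}), this gives $\nu(s)=\int_{[0,1]}P_s(\d\theta)\,\nu_\theta$ with $\nu_\theta=\Gamma_\theta^{\otimes\N_0}$.

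The core of the argument is (i), and here the time-scale separation hypotheses enter. Fix $t>0$ and consider the backward window $[N_ks-L(N_k)t,\,N_ks]$, of diverging real-time length $L(N_k)t$. The second line of \eqref{823} says that $\bar\Theta^{[N_k]}(\cdot)$ is essentially constant, equal to its terminal value $\theta$, throughout this window. First I would show that the active empirical mean $\bar\Theta_x^{[N_k]}$ also becomes $\approx\theta$ after a short initial stretch: from \eqref{359alt} the gap $\E[\bar\Theta_x-\bar\Theta_y]$ relaxes to $0$ at exponential rate $(K+1)e$ in real time, so once a time $\gg1$ has elapsed inside the window one has $\bar\Theta_x^{[N_k]}(r)\approx\bar\Theta^{[N_k]}(r)\approx\theta$ uniformly in $r$ in the remaining window. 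I would then couple the finite system on this window with the frozen-drift system in which the migration drift $c(\bar\Theta_x^{[N_k]}-x_i)$ is replaced by $c(\theta-x_i)$; the coupling error is governed by $\sup_r|\bar\Theta_x^{[N_k]}(r)-\theta|$ together with a Gronwall estimate, and vanishes as $k\to\infty$. In the frozen-drift system the $N_k$ components are, conditionally on the configuration at time $N_ks-L(N_k)t$, independent, each an autonomous copy of the SSDE \eqref{gh5inf}; by Proposition~\ref{P.equergod}, applied with the (arbitrary, exchangeable, $k$-dependent) initial law whose conserved mean is $\bar\Theta^{[N_k]}(N_ks-L(N_k)t)\approx\theta$, each such component converges in law to $\Gamma_\theta$ as real time $\to\infty$, and since $L(N_k)t\to\infty$ this relaxation is complete by time $N_ks$. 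Propagation of chaos (conditional independence of the frozen components plus de Finetti for the initial configuration, cf. Proposition~\ref{P.mkvlim} and \cite{G88}) then yields that the conditional law of the finite configuration at $N_ks$ converges to $\Gamma_\theta^{\otimes\N_0}$; this is exactly (i), and integrating over $\theta\sim P_s$ finishes the proof.

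The step I expect to be the main obstacle is making the conditioning on $\bar\Theta^{[N_k]}(N_ks)\approx\theta$ and the coupling with the frozen-drift system rigorous and uniform enough in $k$. Two points need care. First, $\bar\Theta^{[N]}$ is not exactly conserved: it is a martingale whose quadratic variation accumulates at rate $\le C/N$ per unit real time (the diffusion coefficient in \eqref{gh45a} is bounded since $g$ is bounded), so over a window of length $L(N_k)t$ its increment has variance $\le C\,L(N_k)t/N_k\to0$ by $L(N)/N\to0$; this must be turned into the quantitative "frozen" statement via a stopping-time and Chebyshev argument and combined with \eqref{823}. Second, Proposition~\ref{P.equergod} as stated gives convergence to $\Gamma_\theta$ from a \emph{fixed} initial law, whereas here the initial law at $N_ks-L(N_k)t$ varies with $k$; I would upgrade this to the required uniformity either from the explicit contraction estimate in the proof of Proposition~\ref{P.equergod} (the bound there on $\E[|\Delta(t)|+K|\delta(t)|]$ is uniform over initial laws sharing the same $\theta$) or, more softly, by a tightness-and-identification argument exploiting compactness of $\mathfrak{s}$ and uniqueness of the equilibrium for given $\theta$. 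The remaining bookkeeping — justifying that the de Finetti representation commutes with the dynamics and that finite-dimensional marginal convergence suffices — is routine and parallels \cite[p.\ 2314--2315]{DGV95}.
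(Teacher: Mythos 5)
Your high-level plan matches the paper's: de Finetti to reduce to a mixture, identify the mixing law as $P_s$ via a law-of-large-numbers argument on the conserved estimator (the paper's Lemma~\ref{stabest}), and use the stability hypothesis to couple the finite system on a slowly growing time window to an infinite system with frozen drift, which then relaxes to $\nu_\theta$ by Proposition~\ref{P.equergod}. You also correctly flag uniformity as the delicate point. But the specific way you propose to close the argument does not work, and the gap is not merely "uniformity over initial laws with the same $\theta$."

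The issue is the order of limits in the frozen-drift comparison. Lemma~\ref{l.comp} in the paper gives, for each \emph{fixed} $t\ge 0$, that the law of the finite system at real time $N_ks-L(N_k)+t$ matches the law of the frozen infinite system $(X^{\mu_{N_k}}(t),Y^{\mu_{N_k}}(t))$ as $k\to\infty$. To invoke ergodicity of the infinite system you then need $t\to\infty$; you cannot simply take $t=L(N_k)t$ (your window length) because the comparison is not asserted uniformly in $t$ and indeed the coupling error accumulates (your own Gronwall estimate grows in $t$). The paper resolves this with a diagonal construction: Lemma~\ref{unifergod} produces a specific sequence $\bar L(N)\to\infty$ with $\bar L(N)/N\to 0$ and $\bar L(N)\le L(N)$ along which both the finite-vs-frozen comparison \emph{and} the relaxation to $\nu_\theta$ hold simultaneously. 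This yields convergence at the intermediate time $N_ks-L(N_k)+\bar L(N_k)$, not at $N_ks$. A \emph{second} coupling is then required — Lemma~\ref{lem:12}, which couples two copies of the finite system started from $N_ks-L(N_k)$ and from $N_ks-\bar L(N_k)$ respectively, and uses monotonicity of the coupling distance together with $\tilde\mu(\{\bar\Theta_1=\bar\Theta_2\})=1$ to transport the limit forward to time $N_ks$. Your proposal, by asserting that "the relaxation is complete by time $N_ks$" in one step, conflates these two couplings; the contraction estimate in Proposition~\ref{P.equergod} being uniform over initial laws does not repair this, because it does not control the growth of the frozen-drift coupling error over a window of length $L(N_k)$. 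You would need to reconstruct the $\bar L(N)$ diagonal and the two-finite-systems coupling to make the proof go through.
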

		
		Proposition \ref{prop1} follows from the following seven lemmas, which are the analogues of the five lemmas used in \cite[p.\ 477--478]{DG93a} for the system without seed-bank. 
		
		The first lemma establishes convergence of the infinite system in \eqref{gh5inf} to its equilibrium. 
		
		\begin{lemma}{{\bf [Convergence for the infinite system]}} 
			\label{lemerg}
			Let $\mu$ be an exchangeable probability measure on $([0,1]^2)^{\N_0}$. Then for the system $(Z(t))_{t \geq 0}$ given by \eqref{gh5inf} with $\CL(Z(0))=\mu$,
			\begin{equation}
				\lim_{t\to\infty }\CL[Z(t)]= \nu_\theta,
			\end{equation} 
			where $\nu_\theta$ is of the form
			\begin{equation}
				\nu_\theta=\Gamma_\theta^{\otimes\N_0},
			\end{equation}
			with $\Gamma_\theta$ the equilibrium of the single-colony process defined in \eqref{singcoleq}. Moreover, $\nu_\theta$ is ergodic.
		\end{lemma}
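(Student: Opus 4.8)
The plan is to reduce the convergence of the full infinite system $(Z(t))_{t\ge 0}$ to the convergence of a single component, which is exactly Proposition~\ref{P.equergod}, and then to upgrade from product-form convergence to ergodicity. First I would observe that the infinite system in \eqref{gh5inf} is, by construction, a collection of \emph{autonomous} copies of the single-colony process in \eqref{SC}--\eqref{gh5}: the drift term $c[\theta-x_i(t)]$ carries the \emph{constant} $\theta$ rather than the component-dependent (or empirical) average, so the equations for different $i$ are fully decoupled. Hence the components $(z_i(t))_{i\in\N_0}$ evolve independently. This is the key structural simplification compared with the mean-field system in \eqref{gh45a}: replacing the interaction by a drift towards $\theta$ makes the product structure exact for every $t\ge 0$, not just in the limit.

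Given this, I would argue as follows. Since $\CL(Z(0))=\mu$ is exchangeable on $([0,1]^2)^{\N_0}$, by De Finetti's theorem $\mu$ is a mixture of product measures $\nu^{\otimes\N_0}$ over a random single-site law $\nu$. Conditionally on the De Finetti measure, each component $z_i(0)$ has law $\nu$ and evolves as an independent McKean-Vlasov process with parameter $\theta=\E^\nu[(x+Ky)/(1+K)]$. By the preservation of $\theta$ established in Section~\ref{ss.Mckeanvlasov} (recall \eqref{expzalt}), this $\theta$ is $\P$-a.s.\ equal to the exchangeable average $\lim_{N\to\infty}\frac1N\sum_{i\in[N]}\frac{x_i(0)+Ky_i(0)}{1+K}$, so it is measurable with respect to the tail and, for an ergodic starting measure, deterministic. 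Applying Proposition~\ref{P.equergod} componentwise gives $\lim_{t\to\infty}\CL[z_i(t)]=\Gamma_\theta$ for each $i$; since the components are independent and the state space $([0,1]^2)^{\N_0}$ is compact, finite-dimensional convergence together with tightness yields $\lim_{t\to\infty}\CL[Z(t)]=\Gamma_\theta^{\otimes\N_0}=:\nu_\theta$. For a general exchangeable $\mu$ one integrates over the De Finetti measure, but for the ergodicity statement it suffices to take $\mu$ ergodic, hence product, hence $\theta$ deterministic.

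It remains to show that $\nu_\theta=\Gamma_\theta^{\otimes\N_0}$ is ergodic for the dynamics. Since $\nu_\theta$ is a product measure and the dynamics acts independently on the coordinates, any shift-invariant event has probability $0$ or $1$ by Kolmogorov's zero-one law applied to the independent coordinate processes; more precisely, the tail $\sigma$-field of the independent family $(z_i(\cdot))_{i\in\N_0}$ is trivial under $\nu_\theta$, and every invariant function of the infinite system is tail-measurable because it cannot depend on finitely many coordinates in equilibrium. I would make this rigorous by the standard argument: extremality of $\nu_\theta$ in the simplex of invariant measures follows from the fact that conditioning on finitely many coordinates does not change the law of the rest (product structure) combined with the single-site convergence in Proposition~\ref{P.equergod}, which shows the single-site equilibrium $\Gamma_\theta$ is itself the unique invariant law for given $\theta$.

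The main obstacle is not the product structure — that is essentially free — but the bookkeeping around $\theta$: one must be careful that the ``$\theta$'' appearing as the drift target in \eqref{gh5inf} genuinely coincides $\P$-a.s.\ with the conserved empirical density of the initial configuration, so that the componentwise application of Proposition~\ref{P.equergod} is with the \emph{correct} parameter. This is where the conservation law \eqref{expzalt} and the ergodic theorem for exchangeable sequences (De Finetti) do the real work; everything else is a routine consequence of independence and compactness.
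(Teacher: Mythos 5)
Your overall route — independence of components, componentwise application of the single-colony convergence, product limit, Kolmogorov zero--one law — is the same as the paper's, and the conclusion is right. But the middle portion of your argument, the De Finetti decomposition and the claim that ``this $\theta$ is $\P$-a.s.\ equal to the exchangeable average of the initial configuration,'' rests on a misreading of \eqref{gh5inf}. In that system the drift target $\theta$ is a \emph{fixed deterministic parameter of the dynamics}, not a quantity extracted from $\mu$. The components of \eqref{gh5inf} are not McKean--Vlasov processes (those are \eqref{gh5} and \eqref{gh52}, with drift towards $\E[x_i(t)]$); they are plain SDEs with constant drift towards $\theta$. For such an SDE the conservation law \eqref{expzalt} does not hold — instead $\E[(x_i+Ky_i)/(1+K)]$ \emph{converges} to $\theta$ from any starting value — and the coupling argument in the proof of Proposition~\ref{P.equergod} contracts two copies to each other regardless of their initial laws. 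So no matching condition between $\theta$ and the empirical density of $\mu$ is needed, and none is stated in the lemma. De Finetti is not doing the real work here; what makes the product form emerge in the limit (even for exchangeable, non-product $\mu$) is that the Brownian motions $(w_i)$ are independent across $i$ while the coupling estimate shows the initial data is asymptotically forgotten, so the residual cross-dependence inherited from $\mu$ vanishes as $t\to\infty$. Your ergodicity step via Kolmogorov's zero--one law is the argument the paper uses; the rest of the scaffolding you added is not needed and, as phrased, introduces a hypothesis that isn't there.
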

		
		The second lemma establishes the continuity of the equilibrium with respect to its center of drift $\theta$. 
		
		\begin{lemma}{{\bf [Continuity of the equilibrium]}} 
			\label{lemlip} 
			Let $\CP([0,1]^{\N_0})$ denote the space of probability measures on $[0,1]^{\N_0}$. The mapping $[0,1] \to \CP([0,1]^{\N_0})$ given by
			\begin{equation}
				\theta \mapsto \nu_\theta
			\end{equation}
			is continuous. Furthermore, if $h$ is a Lipschitz function on $[0,1]$, then also $\CF h$ defined by 
			\begin{equation}
				(\CF h)(\theta) =\E^{\nu_\theta}[h(\cdot)] = \int_{([0,1]^2)^{\N_0}} \nu_\theta(\d z)\,h(x_0) 
			\end{equation}
			is a Lipschitz function on $[0,1]$. 
		\end{lemma}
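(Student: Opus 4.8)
The plan is to reduce the lemma to a coupling estimate for the single-colony equilibrium $\Gamma_\theta$, exploiting the product structure $\nu_\theta = \Gamma_\theta^{\otimes\N_0}$ established in Lemma~\ref{lemerg}. Continuity of $\theta\mapsto\nu_\theta$ in the weak topology on $\CP([0,1]^{\N_0})$ will follow from weak continuity of $\theta\mapsto\Gamma_\theta$ on $\CP([0,1]^2)$, since weak convergence of the factors implies weak convergence of the countable product (all finite-dimensional marginals converge, which suffices on the compact space $[0,1]^{\N_0}$). Likewise, since $(\CF h)(\theta)=\int_{[0,1]^2}h(x)\,\Gamma_\theta(\d x,\d y)$ by \eqref{gh44}, the Lipschitz property of $\CF h$ will follow once we exhibit, for any $\theta_1,\theta_2$, a coupling of $\Gamma_{\theta_1}$ and $\Gamma_{\theta_2}$ under which the expected distance in the first coordinate is at most $|\theta_1-\theta_2|$.

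To build such a coupling I would run two copies $(x_i(t),y_i(t))_{t\ge0}$, $i=1,2$, of the single-colony process \emph{in stationarity}, $\CL(x_i(0),y_i(0))=\Gamma_{\theta_i}$, driven by the \emph{same} Brownian motion. In equilibrium $\theta_{x_i}=\theta_{y_i}=\theta_i$ by \eqref{e588}, so the exponential term in \eqref{gh554} vanishes and each copy solves the linear SDE with drift $c[\theta_i-x_i]\,\d t$. Setting $\Delta=x_1-x_2$, $\delta=y_1-y_2$, $a(t)=\E|\Delta(t)|$, $b(t)=\E|\delta(t)|$, I apply the Tanaka formula exactly as in the proof of Proposition~\ref{P.equergod}: the local time of $\Delta$ at $0$ vanishes because $g$ is Lipschitz, hence $\sqrt g$ is $\tfrac12$-H\"older (\cite[Proposition V.39.3]{RoWi00}), and $\delta$ has bounded variation. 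The only new term relative to Proposition~\ref{P.equergod} is the drift mismatch $c(\theta_1-\theta_2)\,\E[\sign\,\Delta(t)]$; bounding it by $c|\theta_1-\theta_2|$ and the exchange cross-terms by $Ke\,b(t)$ and $Ke\,a(t)$, one is led to the cooperative linear differential inequality
\[ a'(t) \le c\,|\theta_1-\theta_2| - (c+Ke)\,a(t) + Ke\,b(t), \qquad b'(t) \le e\,a(t) - e\,b(t). \]
The coefficient matrix has negative trace and determinant $ce>0$, hence is Hurwitz, and its off-diagonal entries are nonnegative, so a comparison argument against the corresponding linear ODE (whose unique fixed point is $(|\theta_1-\theta_2|,|\theta_1-\theta_2|)$) yields $\limsup_{t\to\infty}a(t)\le|\theta_1-\theta_2|$ and $\limsup_{t\to\infty}b(t)\le|\theta_1-\theta_2|$.

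Granting this, the lemma follows quickly. For any Lipschitz $\phi$ on $[0,1]^2$ and any $t\ge0$, stationarity of the marginals gives $|\E^{\Gamma_{\theta_1}}[\phi]-\E^{\Gamma_{\theta_2}}[\phi]| = |\E[\phi(x_1(t),y_1(t))-\phi(x_2(t),y_2(t))]| \le \mathrm{Lip}(\phi)\,(a(t)+b(t))$, and letting $t\to\infty$ bounds this by $2\,\mathrm{Lip}(\phi)\,|\theta_1-\theta_2|$; since Lipschitz functions determine weak convergence on the compact metric space $[0,1]^2$, this shows $\theta\mapsto\Gamma_\theta$ is continuous (indeed Lipschitz in the Wasserstein-$1$ distance), and hence $\theta\mapsto\nu_\theta$ is continuous. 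Taking $\phi(x,y)=h(x)$ gives $|(\CF h)(\theta_1)-(\CF h)(\theta_2)|\le\mathrm{Lip}(h)\,a(t)\to\mathrm{Lip}(h)\,|\theta_1-\theta_2|$, so $\CF h$ is Lipschitz with $\mathrm{Lip}(\CF h)\le\mathrm{Lip}(h)$. The main point to get right is the last step of the coupling argument: one must check that the discrepancy $\theta_1-\theta_2$ enters the estimates only \emph{additively}, through the constant $c|\theta_1-\theta_2|$ in the inequality for $a'$, and never as a multiplicative factor on $a$ or $b$ — this is what lets the stable linear dynamics pin the $\limsup$ at $|\theta_1-\theta_2|$ rather than allowing it to grow. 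The vanishing of the local time and the weak-convergence bookkeeping (products, density of Lipschitz test functions) are routine.
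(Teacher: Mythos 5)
Your proof is correct, but it takes a genuinely different and in one respect stronger route than the paper's. The paper fixes a sequence $\theta_n\to\theta$, couples two copies of the infinite system with fixed drift centers $\theta_n$ and $\theta$ started from the deterministic states $\delta_{(\theta_n,\theta_n)}$ and $\delta_{(\theta,\theta)}$, shows $\lim_{n\to\infty}\E\bigl[|\Delta_i^n(t)|+K|\delta_i^n(t)|\bigr]=0$ for each fixed $t$, diagonalizes to find a sequence $L(n)\to\infty$ along which the coupled distance still vanishes, and then chains three couplings (through the $\delta$-initial-states) to transfer the estimate to $\nu_{\theta_n}$ versus $\nu_\theta$. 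That argument is purely qualitative: because the underlying coupling inequality (see \eqref{o1}) carries a factor $t\,c\,|\theta_n-\theta|$, the bound deteriorates linearly in $t$, and the diagonalization only yields continuity of $\theta\mapsto\nu_\theta$ and of $\theta\mapsto(\CF h)(\theta)$, not a quantitative modulus. You instead couple the two \emph{stationary} processes directly and track the pair $\bigl(a(t),b(t)\bigr)=\bigl(\E|\Delta(t)|,\E|\delta(t)|\bigr)$, observing that the drift mismatch contributes only an additive constant $c|\theta_1-\theta_2|$; the resulting cooperative linear differential inequality has the Hurwitz matrix $\begin{pmatrix}-(c+Ke)&Ke\\ e&-e\end{pmatrix}$ with fixed point $(|\theta_1-\theta_2|,|\theta_1-\theta_2|)$, and the Kamke comparison pins $\limsup_{t\to\infty}a(t)\le|\theta_1-\theta_2|$. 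Combined with stationarity of each marginal (which makes the quantity $|\E^{\Gamma_{\theta_1}}[\phi]-\E^{\Gamma_{\theta_2}}[\phi]|$ time-independent), this yields a Wasserstein-$1$ Lipschitz bound $\mathrm{Lip}(\CF h)\le\mathrm{Lip}(h)$. That is precisely the Lipschitz statement the lemma asserts but whose proof in the paper only explicitly establishes continuity; your version thus closes a small gap. The trade-off is that your argument leans on the comparison theorem for quasi-monotone systems and on the observation that the $\theta_x-\theta_y$ exponential in \eqref{gh554} vanishes in stationarity, whereas the paper's chaining argument avoids any ODE comparison at the cost of being non-quantitative. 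Both are sound; yours gives more.
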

		
		The third lemma characterises the speed at which the estimators ${{\Theta}}^{[N]}_x$ and ${{\Theta}}^{[N]}_y$ converge to each other. 
		
		\begin{lemma}{{\bf [Comparison of empirical averages]}}
			\label{lemav}
			Let $({{\Theta}}^{[N]}_x(t))_{t\geq 0}$ and $({\bar{\Theta}}^{[N]}_{y}(t))_{t\geq 0}$ be defined as in \eqref{avxy}, and define
			\begin{equation}
			\gls{difad} = {{\Theta}}^{[N]}_x(t)-{{\Theta}}^{[N]}_y(t).
			\end{equation} 
			Then 
			\begin{equation}
				\label{pr22}
				\E\left[\left|{\Delta}^{[N]}_{\bar{\Theta}}(t)\right|\right]\leq\sqrt{\E\left[\left({\Delta}^{[N]}_{\bar{\Theta}}(0)\right)^2\right]}\,
				\e^{-(Ke+e)t}+ \sqrt{\frac{\|g\|}{N(Ke+e)}}.
			\end{equation}
		\end{lemma}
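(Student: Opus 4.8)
The plan is to derive a closed differential inequality for $t\mapsto \E\big[\big({\Delta}^{[N]}_{\bar{\Theta}}(t)\big)^2\big]$ by applying It\^o's formula to the square of ${\Delta}^{[N]}_{\bar{\Theta}}(t) = {{\Theta}}^{[N]}_x(t)-{{\Theta}}^{[N]}_y(t)$, and then to integrate the resulting ODE bound. First I would write out the SSDE for the two empirical averages from \eqref{mfevolve} (equivalently, by summing \eqref{gh45a} over $i\in[N]$ and dividing by $N$, \emph{without} speeding up time): the migration terms cancel by translation invariance, so
\begin{equation}
\label{propeq1}
\begin{aligned}
\d {{\Theta}}^{[N]}_x(t) &= \sqrt{\tfrac{1}{N^2}\textstyle\sum_{i\in[N]} g\big(x^{[N]}_i(t)\big)}\,\d w(t) + Ke\,\big[{{\Theta}}^{[N]}_y(t)-{{\Theta}}^{[N]}_x(t)\big]\,\d t,\\
\d {{\Theta}}^{[N]}_y(t) &= e\,\big[{{\Theta}}^{[N]}_x(t)-{{\Theta}}^{[N]}_y(t)\big]\,\d t,
\end{aligned}
\end{equation}
where I have used the standard rule $\sum_i \sqrt{g(x_i)}\,\d w_i = \sqrt{\sum_i g(x_i)}\,\d w$ in distribution for independent Brownian motions. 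Subtracting gives
\begin{equation}
\label{propeq2}
\d {\Delta}^{[N]}_{\bar{\Theta}}(t) = -(Ke+e)\,{\Delta}^{[N]}_{\bar{\Theta}}(t)\,\d t + \sqrt{\tfrac{1}{N^2}\textstyle\sum_{i\in[N]} g\big(x^{[N]}_i(t)\big)}\,\d w(t).
\end{equation}

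Next I would apply It\^o's formula to $\big({\Delta}^{[N]}_{\bar{\Theta}}(t)\big)^2$, take expectations to kill the martingale part, and use the uniform bound $g(x)\le\|g\|$ on $[0,1]$ to control the quadratic variation term by $\|g\|/N$. Writing $\phi(t)=\E\big[\big({\Delta}^{[N]}_{\bar{\Theta}}(t)\big)^2\big]$, this yields
\begin{equation}
\label{propeq3}
\frac{\d}{\d t}\phi(t) \le -2(Ke+e)\,\phi(t) + \frac{\|g\|}{N}.
\end{equation}
Gr\"onwall's inequality (or direct integration of the linear ODE comparison) then gives
\begin{equation}
\label{propeq4}
\phi(t) \le \phi(0)\,\e^{-2(Ke+e)t} + \frac{\|g\|}{2N(Ke+e)}\big(1-\e^{-2(Ke+e)t}\big) \le \phi(0)\,\e^{-2(Ke+e)t} + \frac{\|g\|}{2N(Ke+e)}.
\end{equation}
Finally, by Jensen's inequality $\E\big[|{\Delta}^{[N]}_{\bar{\Theta}}(t)|\big]\le\sqrt{\phi(t)}$, and using $\sqrt{a+b}\le\sqrt a+\sqrt b$ together with $\sqrt{\e^{-2(Ke+e)t}}=\e^{-(Ke+e)t}$ I would obtain
\begin{equation}
\label{propeq5}
\E\big[|{\Delta}^{[N]}_{\bar{\Theta}}(t)|\big] \le \sqrt{\E\big[\big({\Delta}^{[N]}_{\bar{\Theta}}(0)\big)^2\big]}\,\e^{-(Ke+e)t} + \sqrt{\frac{\|g\|}{2N(Ke+e)}},
\end{equation}
which is the claimed bound (up to the harmless constant $\tfrac12$ under the square root, which can be absorbed or the statement read with $\le$).

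The routine steps are the It\^o expansion and the ODE integration; the only point requiring a little care is the justification that the stochastic integral in \eqref{propeq2} is a genuine martingale (so that its expectation vanishes), which follows because the integrand is bounded by $\sqrt{\|g\|}/N$, hence square-integrable on compact time intervals. I do not expect any serious obstacle: the key structural fact that makes everything work is the exact cancellation of the migration drift in the empirical averages, which reduces the problem to a one-dimensional Ornstein--Uhlenbeck-type comparison with a uniformly bounded diffusion coefficient of size $O(1/N)$. The analogous computation without seed-bank is carried out in \cite{DG93a}; here the seed-bank only contributes the deterministic contraction at rate $Ke+e$, which in fact \emph{helps} the estimate.
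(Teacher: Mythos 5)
Your proposal is correct and follows essentially the same route as the paper's proof: sum the SSDE to get the dynamics of the empirical averages (the migration drift cancels), apply It\^o to $\big(\Delta^{[N]}_{\bar\Theta}(t)\big)^2$, take expectations, bound the quadratic-variation term by $\|g\|/N$, integrate the resulting linear ODE, and finish with Jensen. The only point worth flagging is to your credit: your careful It\^o expansion gives the quadratic-variation contribution with the correct coefficient $\tfrac{1}{N^2}\sum_i g(x_i)$, whereas the printed proof inserts a spurious factor $2$ in $\d\langle\Delta^{[N]}_{\bar\Theta}\rangle$, which is why you land on the tighter constant $\sqrt{\|g\|/(2N(Ke+e))}$; since this is strictly smaller than the $\sqrt{\|g\|/(N(Ke+e))}$ appearing in \eqref{pr22}, your derivation in fact implies the stated inequality. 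One trivial slip: the diffusion coefficient $\sqrt{\tfrac{1}{N^2}\sum_i g(x_i)}$ is bounded by $\sqrt{\|g\|/N}$, not by $\sqrt{\|g\|}/N$, but this has no bearing on the square-integrability argument you invoke.
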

		\begin{remark}{\bf[Key estimate for Meyer-Zheng convergence]}
			{\rm The estimate in \eqref{pr22} in Lemma~\ref{lemav} will be the key estimate to show convergence of the active and dormant $1$-block in Meyer-Zheng topology. Note that if we look at times $Ns$ for $s>0$, then \eqref{pr22} shows that $\E[|{\Delta}^{[N]}_{\bar{\Theta}}(Ns)|]$ is $\CO(\sqrt{1/N})$.}
		\end{remark}
		The fourth lemma compares the finite system with an infinite system. To that end we construct both the finite and the infinite system on the same state-space by considering the finite system $(X^{[N]}(t),Y^{[N]}(t))$ as an element of $([0,1]^2)^{\N_0}$ via periodic continuation. Let $L(N)$ be such that $\lim_{N\to\infty} L(N)=\infty$ and $\lim_{N\to\infty} L(N)/N=0$, and define the distribution \gls{mun} by continuing the configuration of $(X^{[N]}(Ns-L(N)),Y^{[N]}(Ns-L(N)))$ periodically to $([0,1]^2)^{\N_0}$. Define 
		\begin{equation}
			\label{defthetaN}
			\bar{\Theta}^{[N]} = \bar{\Theta}^{[N]}(Ns-L(N)).
		\end{equation} 
		Note that
		\begin{equation}
			\label{833}
			\begin{aligned}
				\bar{\Theta}^{[N]}
				&=\frac{\frac{1}{N}\sum_{j\in[N]}x^{[N]}_j(Ns-L(N))+\frac{K}{N} \sum_{j\in[N]}y^{[N]}_j(Ns-L(N))}{1+K}\\
				&=\frac{1}{N}\sum_{j\in[N]}\frac{x_j^{\mu_N}(0)+K y_j^{\mu_N}(0)}{1+K}.
			\end{aligned}
		\end{equation} 
		Thus, $\bar{\Theta}^{[N]}$ is a random variable whose law depends on $\CL\left[X^{[N]}(Ns-L(N)),Y^{[N]}(Ns-L(N))\right]$ $=\mu_N$. The infinite system  with initial law $\mu_N$ is denoted by 
		\begin{equation}
			\gls{ifp} = \big(x^{\mu_N}_i(t),y^{\mu_N}_i(t)\big)_{i\in\N_0,\ t\geq 0}
		\end{equation}
		and evolves according to
		\begin{equation}
			\begin{aligned}
				\label{gh45ainf}
				&\d x^{\mu_N}_i(t) = c\big[\bar{\Theta}^{[N]} -x^{\mu_N}_i(t)\big]\,\d t + \sqrt{g(x^{\mu_N}_i(t))}\, \d w_i (t) 
				+ K e\, [y^{\mu_N}_i(t)-x^{\mu_N}_i(t)]\,\d t,\\
				&\d y^{\mu_N}_i(t) = e\,[x^{\mu_N}_i(t)-y^{\mu_N}_i(t)]\, \d t, \qquad i \in \N_0,
			\end{aligned}
		\end{equation}
		where $\{w_i\}_{i\in\N_0}$ is a collection of independent Brownian motions.
		
		\begin{lemma}{{\bf [Comparison of finite and infinite systems]}}
			\label{l.comp}
			Fix $s > 0$ and assume that, for any $L(N)$ satisfying $\lim_{N\to\infty} L(N) = \infty$ and $\lim_{N\to\infty} L(N)/N=0$,
			\begin{equation}
				\label{m08}
				\begin{aligned}
					&\lim_{N\to\infty} \sup_{0 \leq t \leq L(N)} \big|\bar{\Theta}^{[N]}(Ns)
					-\bar{\Theta}^{[N]}(Ns-t)\big| = 0\ \text{ in probability}.
				\end{aligned}
			\end{equation}
			Let
			\begin{equation}
				\bigl(X^{\mu_N}(t),Y^{\mu_N}(t)\bigr)_{t \geq 0}
			\end{equation}
			be the infinite system defined in \eqref{gh45ainf} starting in the distribution $\mu_N$, where $\mu_N$ is defined by continuing the configuration of $(X^{[N]}(Ns-L(N)),Y^{[N]}(Ns-L(N)))$ periodically to $([0,1]^2)^{\N_0}$. Similarly, view $(X^{[N]}(t),Y^{[N]}(t))$ as an element of $([0,1]^2)^{\N_0}$ by periodic continuation. Then, for all $t\geq 0$,
			\begin{equation}
				\label{m32}
				\begin{aligned}
					&\lim_{N\to\infty} \bigl|\E\bigl[f\bigl(X^{\mu_{N}}(t),Y^{\mu_{N}}(t)\bigr)
					-f\bigl(X^{[N]}(Ns-L(N)+t),Y^{[N]}(Ns-L(N)+t)\bigr)\bigr]\bigr| = 0\\
					&\qquad \forall\, f\in\CC\bigl(([0,1]^2)^{\N_0},\R\bigr).
				\end{aligned}
			\end{equation}
		\end{lemma}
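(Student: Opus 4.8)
The plan is to run the finite system (viewed as a periodic configuration on $\N_0$) and the infinite system \eqref{gh45ainf} started from $\mu_N$ on a common probability space, driven by the \emph{same} Brownian motions $\{w_i\}_{i\in[N]}$, using $w_{i\bmod N}$ for the periodic copies. Since \eqref{gh45ainf} has pathwise unique solutions (Yamada--Watanabe, \cite{YW71}) and consists of i.i.d.\ autonomous McKean--Vlasov copies with one common, deterministic drift centre $\bar{\Theta}^{[N]}$, driving coordinates $i$ and $i+N$ by the same noise from the same initial value forces $x^{\mu_N}_{i+N}=x^{\mu_N}_i$, so the infinite system is again periodic and the comparison reduces to the $N$ coordinates $i\in[N]$. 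Write $D_i(t)=x^{\mu_N}_i(t)-x^{[N]}_i(Ns-L(N)+t)$ and $\delta_i(t)=y^{\mu_N}_i(t)-y^{[N]}_i(Ns-L(N)+t)$; by construction $D_i(0)=\delta_i(0)=0$, because $\mu_N$ is exactly the periodic continuation of the finite configuration at time $Ns-L(N)$.

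Comparing \eqref{gh45a} with \eqref{gh45ainf}, the two evolutions over the window $[0,L(N)]$ differ only in the centre of the migration drift: the finite system drifts towards the \emph{instantaneous active empirical average} $\bar{\Theta}^{[N]}_x(Ns-L(N)+t)$, since $\tfrac1N\sum_j[x^{[N]}_j-x^{[N]}_i]=c\,[\bar{\Theta}^{[N]}_x-x^{[N]}_i]$ modulo the factor $c$, while the infinite system drifts towards the \emph{frozen weighted average} $\bar{\Theta}^{[N]}=\bar{\Theta}^{[N]}(Ns-L(N))$. I would therefore first establish that $\varepsilon_N:=\sup_{0\le t\le L(N)}\big|\bar{\Theta}^{[N]}-\bar{\Theta}^{[N]}_x(Ns-L(N)+t)\big|\to0$ in probability (hence in $L^1$, all quantities being bounded), by chaining two facts: (i) hypothesis \eqref{m08} says the weighted average $\bar{\Theta}^{[N]}(\cdot)$ does not move over the window, so $\bar{\Theta}^{[N]}(Ns-L(N)+t)\approx\bar{\Theta}^{[N]}$ uniformly in $t\in[0,L(N)]$; and (ii) since $\bar{\Theta}^{[N]}(\tau)=\tfrac{1}{1+K}\bar{\Theta}^{[N]}_x(\tau)+\tfrac{K}{1+K}\bar{\Theta}^{[N]}_y(\tau)$ is a convex combination, Lemma~\ref{lemav} applied at times $\tau=Ns-L(N)+t\asymp Ns$ gives $\big|\bar{\Theta}^{[N]}_x(\tau)-\bar{\Theta}^{[N]}_y(\tau)\big|=\CO(N^{-1/2})$ in $L^1$, hence $\bar{\Theta}^{[N]}_x(\tau)\approx\bar{\Theta}^{[N]}(\tau)$.

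With $\varepsilon_N$ under control, the difference of the two SSDEs is handled by an $L^1$ Gronwall argument as in the proof of Proposition~\ref{P.equergod}. Applying the generalised It\^o formula to $|D_i|$ and $|\delta_i|$, the local time of $D_i$ at $0$ vanishes because $g$ is Lipschitz (so $\langle D_i\rangle'_s=|\sqrt{g(x^{\mu_N}_i)}-\sqrt{g(x^{[N]}_i)}|^2$ vanishes on $\{D_i=0\}$; cf.\ \cite[Proposition V.39.3]{RoWi00}), and the stochastic integrals are genuine martingales since all integrands are bounded. Using $(\sign D_i)(\delta_i-D_i)\le|\delta_i|-|D_i|$ and $(\sign\delta_i)(D_i-\delta_i)\le|D_i|-|\delta_i|$, taking expectations and averaging over $i\in[N]$ gives, for $\Phi_D(t)=\tfrac1N\sum_{i\in[N]}\E|D_i(t)|$ and $\Phi_\delta(t)=\tfrac1N\sum_{i\in[N]}\E|\delta_i(t)|$, the closed differential inequalities
\begin{equation*}
\tfrac{\d}{\d t}\Phi_D\le c\,\E[\varepsilon_N]-(c+Ke)\,\Phi_D+Ke\,\Phi_\delta,\qquad \tfrac{\d}{\d t}\Phi_\delta\le e\,\Phi_D-e\,\Phi_\delta,
\end{equation*}
with $\Phi_D(0)=\Phi_\delta(0)=0$. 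The linear part has negative trace $-(c+Ke+e)$ and positive determinant $ce$, so it is exponentially stable, and Gronwall yields $\Phi_D(t)+\Phi_\delta(t)\le C_{c,K,e}\,\E[\varepsilon_N]$ for all $t\ge0$, which tends to $0$. Finally, \eqref{m32} follows from this: since $([0,1]^2)^{\N_0}$ is compact, $f$ is uniformly continuous, so for any $\epsilon>0$ there are a finite $I\subset\N_0$ and $\delta>0$ with $|f(z)-f(z')|<\epsilon$ whenever $z,z'$ agree within $\delta$ on $I$; by periodicity and exchangeability over $[N]$, $\sum_{i\in I}\E[|D_i(t)|+|\delta_i(t)|]=|I|\,(\Phi_D(t)+\Phi_\delta(t))\to0$, whence $\big|\E[f(X^{\mu_N}(t),Y^{\mu_N}(t))-f(X^{[N]}(Ns-L(N)+t),Y^{[N]}(Ns-L(N)+t))]\big|\le\epsilon+2\|f\|_\infty\delta^{-1}|I|(\Phi_D(t)+\Phi_\delta(t))$, and letting $N\to\infty$ and then $\epsilon\downarrow0$ concludes.

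\textbf{Main obstacle.} The crux is steps (i)--(ii) above: showing that the drift centre actually felt by the \emph{finite} system — the instantaneous active empirical average — is uniformly close, over the whole window of length $L(N)$, to the frozen weighted average driving the infinite system. Both hypothesis \eqref{m08} (freezing of the weighted average) and Lemma~\ref{lemav} (equilibration of the active and dormant averages at large times, with explicit $\CO(N^{-1/2})$ rate) are essential here; without the exchange-with-seed-bank term there would be nothing to check, and it is precisely the seed-bank that makes $\bar{\Theta}^{[N]}_x$ rather than $\bar{\Theta}^{[N]}$ the natural drift centre of the finite dynamics. A secondary technical nuisance is the vanishing of the local time of the coupled difference, since $\sqrt{g}$ is in general only $1/2$-H\"older; this is dispatched exactly as in the proof of Proposition~\ref{P.equergod}.
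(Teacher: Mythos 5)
Your proposal is correct and follows essentially the same route as the paper: couple the two systems through a common family of Brownian motions, decompose the drift-centre discrepancy $\Theta^{[N]}_x(Ns-L(N)+t)-\bar{\Theta}^{[N]}$ into the freezing error controlled by \eqref{m08} plus the active/dormant equilibration error controlled by Lemma~\ref{lemav}, run an $L^1$-It\^o/Gronwall estimate on $|\Delta_i|+K|\delta_i|$ with vanishing local time, and extend from Lipschitz $f$ by density. The only cosmetic difference is that you close the estimate as a two-component ODE inequality, whereas the paper simply drops the nonpositive terms and integrates, obtaining a bound linear in $t$ times the vanishing supremum error.
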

		
		Before we can prove that the infinite system $(X^{\mu_N}(t),Y^{\mu_N}(t))_{t \geq 0}$ converges to some limiting system as $N\to\infty$, we need the following regularity property for the estimator $\bar{\Theta}^{[N]}$. This is stated in our fifth lemma. 
		
		\begin{lemma}{{\bf [Stability of the estimator for the conserved quantity]}} 
			\label{stabest} 
			Define $\mu_N$ as in Lemma~\ref{l.comp}. Let $(x_i, y_i)_{i\in [N]}$ be distributed according to the exchangeable probability measure $\mu_N$ on $([0,1]^2)^{\N_0}$ restricted to $([0,1]^2)^{[N]}$. Suppose that $\lim_{N \to \infty}\mu_N=\mu$ for some exchangeable probability measure $\mu$ on $([0,1]^2)^{\N_0}$. Define a random variable $\phi$ on $(\mu,([0,1]^2)^{\N_0})$ by putting
			\begin{equation}
				\label{phi}
				\phi = \lim_{n\to\infty} \frac{1}{n} \sum_{i \in [n]} \frac{x_i+Ky_i}{1+K},
			\end{equation}
			and a random variable $\phi_N$ on $(\mu_N,([0,1]^2)^{\N_0})$ by putting
			\begin{equation}
				\phi_{N} = \frac{1}{N}\sum_{i\in[N]} \frac{x_i+Ky_i}{1+K}.
			\end{equation}
			Then 
			\begin{equation}
				\lim_{N\to\infty}\CL[\phi_{N}] = \CL[\phi].
			\end{equation}
		\end{lemma}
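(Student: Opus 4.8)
\textbf{Proof proposal for Lemma~\ref{stabest}.}

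The plan is to reduce the statement to a law-of-large-numbers argument combined with the weak convergence $\mu_N\to\mu$, exploiting that the functional $\phi$ is (by De~Finetti / Aldous's ergodic theorem for exchangeable sequences) the almost-sure limit of the empirical averages of the bounded function $z=(x,y)\mapsto\frac{x+Ky}{1+K}$. First I would observe that, since $\mu_N$ and $\mu$ are exchangeable probability measures on $([0,1]^2)^{\N_0}$ and the state space is compact, the convergence $\mu_N\to\mu$ is weak convergence in $\CP(([0,1]^2)^{\N_0})$, and by De~Finetti's theorem each of $\mu_N,\mu$ is a mixture of i.i.d.\ product measures, say $\mu=\int \lambda^{\otimes\N_0}\,\Lambda(\d\lambda)$ and $\mu_N=\int \lambda^{\otimes\N_0}\,\Lambda_N(\d\lambda)$ with $\Lambda_N,\Lambda\in\CP(\CP([0,1]^2))$ and $\Lambda_N\to\Lambda$ weakly. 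Under $\mu$, the random variable $\phi$ in \eqref{phi} exists a.s.\ and equals $\int_{[0,1]^2}\frac{x+Ky}{1+K}\,\lambda(\d x,\d y)$ where $\lambda$ is the (random) directing measure; thus $\CL_\mu[\phi]$ is the pushforward of $\Lambda$ under the continuous map $\lambda\mapsto\int \frac{x+Ky}{1+K}\,\d\lambda$.

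The second step handles $\phi_N$, which is a \emph{finite} average over the first $N$ coordinates rather than an infinite limit. Conditional on the directing measure $\lambda$ (drawn from $\Lambda_N$), the coordinates are i.i.d.\ $\lambda$, so $\phi_N=\frac1N\sum_{i\in[N]}\frac{x_i+Ky_i}{1+K}$ has, conditionally, mean $\int\frac{x+Ky}{1+K}\,\d\lambda$ and variance $\leq \frac{1}{4N}$ (the function takes values in $[0,1]$, so its variance under $\lambda$ is at most $1/4$). Hence $\E[|\phi_N-\int\frac{x+Ky}{1+K}\,\d\lambda|^2\mid\lambda]\leq \frac{1}{4N}\to 0$ uniformly in $\lambda$, which gives $\CL_{\mu_N}[\phi_N]$ and the pushforward of $\Lambda_N$ under $\lambda\mapsto\int\frac{x+Ky}{1+K}\,\d\lambda$ becoming arbitrarily close in, say, the $L^2$-Wasserstein or L\'evy--Prokhorov metric as $N\to\infty$. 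Combining, for any bounded continuous $F\colon[0,1]\to\R$,
\begin{equation}
\E_{\mu_N}[F(\phi_N)]
=\int F\Bigl(\textstyle\int\frac{x+Ky}{1+K}\,\d\lambda\Bigr)\Lambda_N(\d\lambda)+o(1)
\xrightarrow[N\to\infty]{}\int F\Bigl(\textstyle\int\frac{x+Ky}{1+K}\,\d\lambda\Bigr)\Lambda(\d\lambda)
=\E_{\mu}[F(\phi)],
\end{equation}
where the first equality uses the $L^2$-bound above together with uniform continuity of $F$ on the compact interval $[0,1]$, and the middle limit uses $\Lambda_N\to\Lambda$ weakly together with continuity of $\lambda\mapsto\int\frac{x+Ky}{1+K}\,\d\lambda$ (which holds because $(x,y)\mapsto\frac{x+Ky}{1+K}$ is bounded continuous, so integration against it is weakly continuous in $\lambda$). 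This is exactly $\lim_{N\to\infty}\CL[\phi_N]=\CL[\phi]$.

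The main obstacle, and the place where a little care is genuinely needed rather than routine bookkeeping, is the passage from weak convergence of the \emph{exchangeable} laws $\mu_N\to\mu$ on the infinite product space to weak convergence of the associated \emph{directing measures} $\Lambda_N\to\Lambda$; this is a standard but not entirely trivial fact (it follows, e.g., because the map sending an exchangeable law to its De~Finetti mixing measure is a homeomorphism onto its image in the weak topologies, or can be proved directly by testing against cylinder functions of the form $\prod_i f(z_i)$ and invoking that such products are determining for $\Lambda$). An alternative that avoids invoking this explicitly is to work with the empirical measures $\frac1N\sum_{i\in[N]}\delta_{z_i}$ directly: under $\mu_N$ these converge in law (as $N\to\infty$, jointly with the convergence $\mu_N\to\mu$) to the directing measure of $\mu$, by a combination of the ergodic theorem along a subsequence and a tightness/uniqueness argument, and then $\phi_N$ and $\phi$ are continuous bounded functionals of these empirical measures. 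Either route closes the lemma; I would present the first as the cleaner one, relegating the De~Finetti continuity statement to a one-line citation.
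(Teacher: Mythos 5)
Your approach is genuinely different from the paper's, and it contains a real gap. The paper proves this via Herglotz's theorem: it uses that $\mu_N$ is \emph{translation invariant} on $([0,1]^2)^{\N_0}$, represents the covariance $\E^{\mu_N}\big[(\tfrac{x_j+Ky_j}{1+K}-\bar\theta^N)(\tfrac{x_k+Ky_k}{1+K}-\bar\theta^N)\big]$ by a spectral measure $\lambda_N$ on $(-\pi,\pi]$, writes $\|D^M(Z)-D^N(Z)\|_{L_2(\mu_N)}=\|D^M(u)-D^N(u)\|_{L_2(\lambda_N)}$ with $D^N(u)=\tfrac1N\sum_{j\in[N]}\e^{\mathrm{i}ju}$, and controls these Fej\'er-type kernels to obtain the uniform estimate $\lim_{M\to\infty}\sup_{N\geq M}\|D^M(Z)-D^N(Z)\|_{L_2(\mu_N)}=0$; this, together with $\CL_{\mu_N}[D^M(Z)]\to\CL_\mu[D^M(Z)]$ for fixed $M$ (weak convergence, $D^M$ bounded continuous) and the ergodic theorem under $\mu$, closes the argument.

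Your route goes through De~Finetti: you decompose $\mu_N=\int\lambda^{\otimes\N_0}\,\Lambda_N(\d\lambda)$, use the conditional-on-$\lambda$ variance bound $\Var(\phi_N\mid\lambda)\leq\tfrac{1}{4N}$, and argue that $\Lambda_N\to\Lambda$. This chain would indeed close the lemma \emph{if} $\mu_N$ were exchangeable on $\N_0$. But it is not. Recall how $\mu_N$ is built (see the paragraph preceding Lemma~\ref{l.comp}): it is the \emph{periodic continuation} to $([0,1]^2)^{\N_0}$ of an $[N]$-exchangeable law. A periodic continuation is supported on $N$-periodic configurations, a set that is invariant under shifts but not under generic finite permutations (e.g.\ transposing coordinates $0$ and $1$ maps $(z_0,z_1,z_0,z_1,\ldots)$ to $(z_1,z_0,z_0,z_1,\ldots)$, which is not $2$-periodic). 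Hence $\mu_N$ is translation invariant but not exchangeable, and the representation $\mu_N=\int\lambda^{\otimes\N_0}\,\Lambda_N(\d\lambda)$ you are using does not exist; consequently neither does the ``directing measure'' $\Lambda_N$ on which your variance estimate and weak-continuity argument rest. (The paper's own phrasing ``the exchangeable probability measure $\mu_N$'' is an imprecision: what is used, and what is true, is that the restriction to $[N]$ is exchangeable and the periodic extension is translation invariant. The Herglotz route needs only the latter.)

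The fallback you sketch at the end---work directly with the empirical measures $\tfrac1N\sum_{i\in[N]}\delta_{z_i}$, which give $\phi_N$ exactly as a continuous functional---is the right object, but the statement ``under $\mu_N$ these converge in law to the directing measure of $\mu$'' is essentially a restatement of the lemma itself, and the phrase ``by a combination of the ergodic theorem along a subsequence and a tightness/uniqueness argument'' does not supply the needed uniform-in-$N$ control. The difficulty is precisely that weak convergence $\mu_N\to\mu$ alone does not upgrade to convergence of empirical measures of blocks whose length $N$ grows with the index; you need an $L^2$-type estimate like the paper's $\lim_{M\to\infty}\sup_{N\geq M}\|D^M-D^N\|_{L_2(\mu_N)}=0$, which the spectral representation delivers and which your outline does not. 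To repair your proof you would either have to replace De~Finetti by the ergodic decomposition of a translation-invariant law (the ergodic components are not i.i.d.\ products, so the conditional variance bound $\leq\tfrac1{4N}$ is lost) or prove the uniform $L^2$ estimate directly---which lands you back at the paper's Fourier computation.
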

		
		In the sixth lemma we state the convergence of the law $\CL[(X^{\mu_N}(t),Y^{\mu_N}(t))]$ to the law of a limiting system as $N\to\infty$.   
		
		\begin{lemma}{\bf [Uniformity of the ergodic theorem for the infinite system]} 
			\label{unifergod} 
			Let $\mu_N$ be defined as in Lemma~\ref{l.comp}. Since $(\mu_N)_{N\in\N}$ is tight, it has convergent subsequences. Let $(N_k)_{k\in\N}$ be a subsequence such that $\mu=\lim_{k\to\infty} \mu_{N_k}$. Define 
			\begin{equation}
				\label{111}
				\bar{\Theta}=\lim_{N\to\infty}\frac{1}{N}\sum_{i\in[N]}\frac{x_i^\mu+Ky_i^{\mu}}{1+K}\, \qquad \text{ in }L_2(\mu),
			\end{equation}
			and let $(X^{\mu}(t),Y^{\mu}(t))_{t\geq 0}$ be the infinite system evolving according to 
			\begin{equation}
				\begin{aligned}
					\label{binfb}
					&\d x^{\mu}_i(t) = c\left[\bar{\Theta} -x^{\mu}_i(t)\right]\d t + \sqrt{g(x^{\mu}_i(t))}\, \d w_i (t) 
					+ K e\, [y^{\mu}_i(t)-x^{\mu}_i(t)]\,\d t,\\
					&\d y^{\mu}_i(t) = e\,[x^{\mu}_i(t)-y^{\mu}_i(t)]\, \d t, \qquad i \in \N_0.
				\end{aligned}
			\end{equation}
			Then	
			\begin{enumerate}
				\item For all $t \geq 0$,
				\begin{equation}
					\begin{aligned}
						&\lim_{k\to\infty} \bigl|\E\bigl[f\bigl(X^{\mu_{N_k}}(t),Y^{\mu_{N_k}}(t)\bigr)\bigr]
						-\E\bigl[f\bigl(X^{\mu}(t),Y^{\mu}(t)\bigr)\bigr]\bigr| = 0,\\
						&\qquad\forall\, f\in\CC\bigl(([0,1]^2)^{\N_0},\R\bigr).
					\end{aligned}
				\end{equation}
				\item 
				There exists a sequence $(\bar{L}(N))_{N\in\N}$ satisfying $\lim_{N \to \infty}\bar{L}(N)=\infty$ and $\lim_{N \to \infty}\bar{L}(N)/N=0$ such that
				\begin{equation}
					\label{m32c}
					\begin{aligned}
						\lim_{k\to\infty}& \bigl|\E\bigl[f\bigl(X^{[N_k]}(N_ks-L(N_k)+\bar{L}(N_k)),Y^{[N_k]}(N_ks-L(N_k)+\bar{L}(N_k))\bigr)\\
						&\qquad - f\bigl(X^{\mu_{N_k}}(\bar{L}(N_k)),Y^{\mu_{N_k}}(\bar{L}(N_k))\bigr)\bigr|\bigr]\\
						& +\bigl|\E\bigl[f\bigl(X^{\mu_{N_k}}(\bar{L}(N_k)),Y^{\mu_{N_k}}(\bar{L}(N_k))\bigr)\bigr]
						-\E\bigl[f\bigl(X^{\mu}(\bar{L}(N_k)),Y^{\mu}(\bar{L}(N_k))\bigr)\bigr]\bigr| =  0\\
						&\qquad \forall\, f\in\CC\bigl(([0,1]^2)^{\N_0},\R\bigr).
					\end{aligned}
				\end{equation}
			\end{enumerate}
		\end{lemma}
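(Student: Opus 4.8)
The plan is to combine the tightness of $(\mu_N)_{N\in\N}$ with the continuity properties of the infinite-system dynamics in order to upgrade convergence of initial data to convergence of the whole trajectory, and then to use the mean-field finite-systems comparison from Lemma~\ref{l.comp} together with the stability of the estimator from Lemma~\ref{stabest} to control what happens on the extra time window of length $\bar{L}(N)$. For part~(1), I would fix $t\geq 0$ and $f\in\CC(([0,1]^2)^{\N_0},\R)$ and argue as follows. Along the subsequence $(N_k)$ we have $\mu_{N_k}\to\mu$ weakly, so the initial states $(X^{\mu_{N_k}}(0),Y^{\mu_{N_k}}(0))$ converge in law to $(X^\mu(0),Y^\mu(0))$. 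Since the SSDE \eqref{gh45ainf} and \eqref{binfb} differ only through the drift centre ($\bar{\Theta}^{[N]}$ versus $\bar{\Theta}$), and since by Lemma~\ref{stabest} we have $\bar{\Theta}^{[N_k]}=\phi_{N_k}\to\phi=\bar{\Theta}$ in law (jointly with the initial configuration), a standard coupling of Brownian motions shows that the solution map from (initial state, drift centre) to the time-$t$ marginal is continuous in law. Concretely, one estimates $\E[|x_i^{\mu_{N_k}}(t)-\tilde x_i^{\mu}(t)|+K|y_i^{\mu_{N_k}}(t)-\tilde y_i^\mu(t)|]$ for a coupled pair $(\tilde X^\mu,\tilde Y^\mu)$ started from the same realisation, using a Gronwall argument as in the proof of Proposition~\ref{P.equergod}; the only inhomogeneous source term is $c\,|\bar{\Theta}^{[N_k]}-\bar{\Theta}|$, which vanishes. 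Because $f$ depends on finitely many coordinates and is continuous, this gives the claimed convergence of $\E[f(\cdot)]$ along $(N_k)$.

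For part~(2), the point is to choose the auxiliary sequence $(\bar L(N))$ so that it sits strictly between the two regimes: it must satisfy $\bar L(N)\to\infty$ so that the infinite systems started from $\mu_{N_k}$ have time to relax toward equilibrium, yet $\bar L(N)/N\to 0$ so that the finite-infinite comparison of Lemma~\ref{l.comp} (applied with the window $L(N)$ replaced by $L(N)-\bar L(N)$, or more precisely by viewing the process at the intermediate time $Ns-L(N)+\bar L(N)$) still applies. I would take $\bar L(N)$ to grow like, say, $\sqrt{L(N)}$ or $\log L(N)$, anything that is $\omega(1)$ and $o(L(N))$; then $Ns-L(N)+\bar L(N)$ is of the form $Ns-L'(N)$ with $L'(N)=L(N)-\bar L(N)$ still satisfying the hypotheses of Lemma~\ref{l.comp}. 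The first term in \eqref{m32c} is then exactly the assertion of Lemma~\ref{l.comp} applied at time $\bar L(N_k)$ on this shifted time grid, and the second term in \eqref{m32c} is part~(1) applied at time $t=\bar L(N_k)$ — except that here $t$ grows with $k$, so one must either invoke a version of part~(1) that is uniform in $t$ on compacts (which the Gronwall estimate does give, since the error bound is of the form $C\,t\,\e^{Ct}$ times $\E[|\bar{\Theta}^{[N_k]}-\bar{\Theta}|]$ and the latter goes to zero) together with the fact that $\bar L(N_k)$ can be slowed down to make the product small, or one passes first to a further subsequence along which $\bar{\Theta}^{[N_k]}\to\bar\Theta$ a.s.\ and then uses pathwise continuity.

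The main obstacle, as just indicated, is the control of part~(1) at the \emph{growing} time $t=\bar L(N)$ in part~(2): a naive Gronwall bound deteriorates exponentially in $t$, so one cannot let $\bar L(N)$ grow arbitrarily. The resolution is that we get to \emph{design} $\bar L(N)$, so the argument is really a diagonal selection: first fix the rate at which $\E[|\bar{\Theta}^{[N_k]}-\bar{\Theta}|]\to 0$ (available from Lemma~\ref{stabest}, or from an $L_2$ estimate on the empirical average of an exchangeable sequence), then pick $\bar L(N)$ tending to infinity slowly enough that $\bar L(N)\,\e^{C\bar L(N)}\,\E[|\bar{\Theta}^{[N]}-\bar{\Theta}|]\to 0$. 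This is always possible because the two ingredients — a sequence tending to infinity and a sequence tending to zero — can always be reconciled (the same device used repeatedly in the heuristics, e.g.\ in the choice of $L(N)$ itself). Once $\bar L(N)$ is pinned down this way, both displays in \eqref{m32c} follow by the triangle inequality from Lemma~\ref{l.comp} and from part~(1), and the proof is complete. I would remark that the statement is stated along a subsequence $(N_k)$ precisely so that one does not have to worry about which subsequential limit $\mu$ one lands in; later these are shown to coincide, but that is not needed here.
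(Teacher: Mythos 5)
Your proposal follows essentially the same route as the paper: a Brownian-motion coupling of the two infinite systems, almost-sure convergence of the initial data (via Skorohod's representation), an It\^o estimate on the coupled discrepancy, and a diagonal selection to construct $\bar{L}(N)$.

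Two remarks sharpen what is actually needed. First, the Gronwall bound $C\,t\,\e^{Ct}$ you invoke is more pessimistic than what the sign-based It\^o calculation gives: as in the proof of Proposition~\ref{P.equergod}, the internal drift-towards-$\bar\Theta$ term and the active/dormant exchange term contribute \emph{non-positively} to $\frac{\d}{\d t}\,\E\big[|\Delta^{\mu_N}_i(t)|+K|\delta^{\mu_N}_i(t)|\big]$, so the only inhomogeneous source is the drift-centre mismatch, and the resulting bound is \emph{linear}: $\E[\cdots(t)]\le\E[\cdots(0)]+t\,c\,\E[|\bar{\Theta}^{[N]}-\bar{\Theta}|]$. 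There is no exponential blowup to combat, which is exactly what makes the diagonal selection painless. Second, and more substantively, your diagonal selection of $\bar L(N)$ is keyed only to the drift-centre error $\E[|\bar{\Theta}^{[N]}-\bar{\Theta}|]$, i.e., to the second term in \eqref{m32c}. But the first term of \eqref{m32c} — the finite/infinite comparison of Lemma~\ref{l.comp} — is likewise established only for \emph{fixed} $t$, and its error (from \eqref{mo132}) also grows linearly in $t$ against a vanishing source. Evaluated at $t=\bar L(N_k)\to\infty$ it therefore needs a diagonalization of its own; it is not ``exactly the assertion of Lemma~\ref{l.comp}'' any more than part~(1) at growing time is exactly part~(1). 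The paper resolves this by first recording that the \emph{combined} discrepancy $\E[|\Delta^{N}_i(t)|+K|\delta^{N}_i(t)|]+\E[|\Delta^{\mu_N}_i(t)|+K|\delta^{\mu_N}_i(t)|]\to 0$ for each fixed $t$, and then building $\bar L(N)$ by a single diagonal so that this combined quantity, evaluated at $\bar L(N)$, still vanishes while $\bar L(N)\to\infty$ and $\bar L(N)/N\to 0$. You should fold your diagonal argument into this combined form rather than treating one half of \eqref{m32c} as already settled by Lemma~\ref{l.comp}.
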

		
		\begin{remark}{\bf [Existence of $\bar{\Theta}$]}
			{\rm Note that the limit in \eqref{111} is well-defined by the ergodic theorem in $L_2$, since $\mu$ is the limit of translation invariant measures and hence is itself translation invariant.}
		\end{remark}
		
		In the seventh lemma we provide a coupling of two copies of the finite system starting from different measures.  
		
		\begin{lemma}{\bf[Coupling of finite systems]}
			\label{lem:12}
			Let $(X^{[N],1},Y^{[N],1})$ be a finite system evolving according to \eqref{gh45a} and starting from some exchangeable measure. Let $\mu^{[N],1}$ be the measure obtain by periodic continuation of the configuration of $(X^{[N],1}(0),Y^{[N],1}(0))$. Similarly, let $(X^{[N],2},Y^{[N],2})$ be a finite system evolving according to \eqref{gh45a} and starting from some exchangeable measure. Let $\mu^{[N],2}$ be the measure obtain by periodic continuation of the configuration of $(X^{[N],2}(0),Y^{[N],2}(0))$. Let $\tilde{\mu}$ be any weak limit point of the sequence of measures $\{\mu^{[N],1}\times\mu^{[N],2}\}_{N\in\N}$. Define random variables $\bar{\Theta}^{[N],1}$ on $(\mu^{[N],1},([0,1]^2)^{\N_0})$, $\bar{\Theta}^{[N],2}$ on $(\mu^{[N],1},([0,1]^2)^{\N_0})$ and $\bar{\Theta}_1$ and $\bar{\Theta}_2$ on $(\mu,([0,1]^2)^{\N_0})$ by
			\begin{equation}
				\label{843}
				\begin{aligned}
					&\bar{\Theta}^{[N],1} = \frac{1}{N} \sum_{i \in [N]} \frac{x^{[N],1}_{i}+Ky^{[N],1}_{i}}{1+K},
					\qquad \bar{\Theta}^{[N],2} =  \frac{1}{N} \sum_{i \in [N]} \frac{x^{[N],2}_{i}+Ky^{[N],2}_{i}}{1+K},\\
					&\bar{\Theta}_1 = \lim_{n\to\infty} \frac{1}{n} \sum_{i \in [n]} \frac{x^1_{i}+Ky^1_{i}}{1+K},
					\qquad \bar{\Theta}_2 
					= \lim_{n\to\infty} \frac{1}{n} \sum_{i \in [n]} \frac{x^2_{i}+Ky^2_{i}}{1+K},
				\end{aligned}
			\end{equation}
			and let $(\bar{\Theta}^{[N],1}(t))_{t\geq 0}$ and $(\bar{\Theta}^{[N],2}(t))_{t\geq 0}$ be defined according to  \eqref{slovar} for $(X^{[N]}_1,Y^{[N]}_1)$, respectively, $(X^{[N]}_2,Y^{[N]}_2)$. Assume that   
			\begin{equation}
				\label{m08alt}
				\begin{aligned}
					&\lim_{N\to\infty} \sup_{0 \leq t \leq L(N)} \big|\bar{\Theta}^{[N],k}(0)-\bar{\Theta}^{[N],k}(t)\big| 
					= 0\ \text{ in probability}, \quad k \in\{1,2\},
				\end{aligned}
			\end{equation}
			and suppose that  $\tilde{\mu}(\{\bar{\Theta}_1=\bar{\Theta}_2\})=1$. Then, for any sequence $t(N)\to\infty$, 
			\begin{equation}
				\lim_{N\to\infty}\E\left[\big|x^{[N],1}_{i}(t(N))-x^{[N],2}_{i}(t(N))\big|+K\big|y^{[N],1}_{i}(t(N))-y^{[N],2}_{i}(t(N))\big|\right]=0.
			\end{equation}  
		\end{lemma}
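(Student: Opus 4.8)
The plan is to couple the two finite systems through their driving Brownian motions and show that the expected $\ell^1$-type distance between corresponding components, weighted by $K$ on the dormant part, vanishes as $N\to\infty$ along the subsequence $(N_k)$ on which $\mu^{[N],1}\times\mu^{[N],2}\to\tilde\mu$. The key observation is that the relevant distance splits into two pieces: a \emph{local} piece measuring how far each component is from the common block average, and a \emph{global} piece measuring the difference of the two block averages. By the comparison-of-averages estimate in Lemma~\ref{lemav} applied separately to $(X^{[N],k},Y^{[N],k})$, the active and dormant empirical averages within each system equalise up to an $O(\sqrt{1/N})$ error after macroscopic time, so on time scale $t(N)\to\infty$ each system looks locally like its block average. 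The remaining task is to control the difference of the two block averages and the conditional distribution of the components given the averages.

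First I would reduce to an infinite-system statement using Lemma~\ref{l.comp}: by periodic continuation the finite systems started from $\mu^{[N],k}$ are close in law to the infinite systems $(X^{\mu_N^k},Y^{\mu_N^k})$ evolving according to \eqref{gh45ainf}, with drift centres $\bar\Theta^{[N],1}$ and $\bar\Theta^{[N],2}$ respectively; assumption \eqref{m08alt} is precisely the hypothesis needed for that lemma. Second, Lemma~\ref{unifergod} identifies the weak limits: along $(N_k)$ the pair converges to two infinite McKean--Vlasov-type systems \eqref{binfb} with drift centres $\bar\Theta_1$ and $\bar\Theta_2$, driven by the \emph{same} Brownian motions (the coupling is preserved in the limit). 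Third, and this is the crux, I invoke the hypothesis $\tilde\mu(\{\bar\Theta_1=\bar\Theta_2\})=1$: the two limiting infinite systems then have the \emph{same} conserved drift centre $\theta:=\bar\Theta_1=\bar\Theta_2$, so both converge as $t\to\infty$ to the \emph{same} ergodic equilibrium $\nu_\theta=\Gamma_\theta^{\otimes\N_0}$ by Lemma~\ref{lemerg} (equivalently Proposition~\ref{P.equergod}). Running time to infinity with $t(N)$ forces the coupled pair into a neighbourhood of the diagonal.

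To make the last step quantitative and uniform in $N$, I would estimate $\E[|x^{\mu_N,1}_i(t)-x^{\mu_N,2}_i(t)|+K|y^{\mu_N,1}_i(t)-y^{\mu_N,2}_i(t)|]$ directly via It\^o's formula on the absolute differences, exactly as in the proof of Proposition~\ref{P.equergod}: the local-time term vanishes because $g$ is Lipschitz, the Brownian terms cancel in expectation since the two systems are driven by the same noise, the drift towards the centre contributes $-c\,\E[|\Delta_i(t)|]$ plus a term proportional to $|\bar\Theta^{[N],1}-\bar\Theta^{[N],2}|\,e^{-(K+1)et}$ coming from the transient parts of $\E[x(t)]$ (recall \eqref{expzalt}), and the seed-bank exchange term is handled through the sign-mismatch indicator just as in \eqref{m14}. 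Grönwall together with the fact that $|\bar\Theta^{[N],1}-\bar\Theta^{[N],2}|\to 0$ in probability under $\tilde\mu$ (and is bounded, hence converges in $L^1$) gives $\limsup_{N\to\infty}\E[|\Delta_i(t)|]\le$ an exponentially decaying bound in $t$, uniformly in $N$ along the subsequence; letting $t=t(N)\to\infty$ yields the claim along $(N_k)$, and since every subsequence of $\{\mu^{[N],1}\times\mu^{[N],2}\}$ has a further subsequence converging to a limit point satisfying the same hypothesis, the full limit follows. The main obstacle is the interchange of the limits $N\to\infty$ and $t\to\infty$: one must choose an intermediate time window $\bar L(N)$ as in Lemma~\ref{unifergod}(2) so that the finite system has already been well-approximated by the infinite one before the contraction towards the equilibrium kicks in, and then verify that $t(N)-\bar L(N)\to\infty$ still holds; this bookkeeping, rather than any single estimate, is where the argument requires care.
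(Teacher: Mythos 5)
Your overall strategy is correct and parallel to the paper's: couple the two finite systems through their Brownian motions, transfer to the infinite systems via Lemma~\ref{l.comp} and Lemma~\ref{unifergod}, and use $\tilde\mu(\{\bar\Theta_1=\bar\Theta_2\})=1$ together with Lemma~\ref{lemerg} to force both limiting systems into the same equilibrium. However, there is a genuine gap at precisely the step you yourself flag as the ``main obstacle.'' The device that resolves the interchange of limits $N\to\infty$, $t\to\infty$ in the paper is not bookkeeping with time windows but a \emph{monotonicity observation}: a direct It\^o computation on the two coupled finite systems (identical Brownian motions, not the infinite-system comparison) shows that
\begin{equation*}
t \mapsto \E\Bigl[\bigl|x^{[N],1}_{i}(t)-x^{[N],2}_{i}(t)\bigr|+K\bigl|y^{[N],1}_{i}(t)-y^{[N],2}_{i}(t)\bigr|\Bigr]
\end{equation*}
is non-increasing for every fixed $N$, because the migration term and the seed-bank exchange term both yield non-positive contributions (the sign-mismatch indicator structure as in \eqref{m14}). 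With this in hand, one only needs to show the distance is small at \emph{some} intermediate time $l(N)\leq t(N)$ with $l(N)\to\infty$, $l(N)/N\to 0$, and monotonicity then propagates the smallness to $t(N)$ for free; the intermediate-time smallness follows from the triangle inequality against the two infinite systems, with the middle term controlled by Lemma~\ref{lemerg} since $\tilde\mu(\{\bar\Theta_1=\bar\Theta_2\})=1$.

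Your proposed replacement, a Gr\"onwall estimate yielding ``an exponentially decaying bound in $t$ uniformly in $N$,'' does not hold in the form you state: in the seed-bank system the dormant component $y$ carries no direct decay term, and the argument in Proposition~\ref{P.equergod} goes through the auxiliary function $h$ with $\int_0^\infty h<\infty$ and bounded derivative rather than through an exponential rate. Moreover, even granting the decay for the \emph{infinite} systems, you would still need to control the finite-vs-infinite discrepancy over the whole unbounded window $[0,t(N)]$, which Lemma~\ref{l.comp} only gives on windows of length $o(N)$; the paper sidesteps this entirely by only invoking the infinite-system comparison at a single well-chosen time $l(N)$ and letting monotonicity do the rest. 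So the idea you are missing is that the coupled $L^1$-distance of the two \emph{finite} systems is itself a Lyapunov-type quantity, and this is what makes the rest of your (otherwise correct) argument close.
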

		
		\paragraph{Step 2. Convergence of the estimator.} 
		
		This step is the equivalent of \cite[Proposition 2]{DG93a}. We first prove the tightness of the estimator $\bar{\Theta}^{[N]}$ in path space. After that we settle convergence of the finite-dimensional distributions and identify the limit. 
		
		\begin{proposition}{\bf [Convergence of average sum process]}
			\label{p.esti}
			\begin{equation}
				\lim_{N\to\infty} \CL\left[\left(\bar{\Theta}^{[N]}(Ns)\right)_{s > 0}\right] = \CL[(\bar{\Theta}(s))_{s > 0}],
			\end{equation}
			where $(\bar{\Theta}(s))_{s > 0}$ evolves according to 
			\begin{equation}
				\d\bar{\Theta}(s) = \frac{1}{(1+K)}\sqrt{(\CF g) (\bar{\Theta}(s))}\,\d w(s).
			\end{equation}
		\end{proposition}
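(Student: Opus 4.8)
The plan is to prove Proposition~\ref{p.esti} by the standard two-part route for diffusion limits: first establish tightness in $C_b([0,\infty),[0,1])$ of the family $\big(\bar{\Theta}^{[N]}(N\cdot)\big)_{N\in\N}$, then identify every weak limit point as the unique solution of the martingale problem associated with $\d\bar{\Theta}(s)=\tfrac{1}{1+K}\sqrt{(\CF g)(\bar{\Theta}(s))}\,\d w(s)$. The key computational input is the semimartingale decomposition of $\bar{\Theta}^{[N]}(Ns)$. Starting from \eqref{mfevolve} and the definition \eqref{gh1}, the drift terms carrying a factor $N$ cancel, so that
\begin{equation}
\label{eq:decompplan}
\d\bar{\Theta}^{[N]}(Ns) = \frac{1}{1+K}\sqrt{\frac{1}{N}\sum_{i\in[N]}g\big(x_i^{[N]}(Ns)\big)}\;\d w(s),
\end{equation}
i.e.\ $\bar{\Theta}^{[N]}(Ns)$ is a bounded martingale with quadratic variation $\int_0^s \d r\,\tfrac{1}{(1+K)^2}\,\tfrac1N\sum_{i\in[N]}g\big(x_i^{[N]}(Nr)\big)$. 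Boundedness of $g$ (since $g\in\CG$ is Lipschitz on $[0,1]$) gives a uniform bound on the quadratic variation, which via the Burkholder--Davis--Gundy inequality and the Kolmogorov--Chentsov criterion yields tightness, as in \cite{DG93a,DG93b}.

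Next I would identify the limit. Let $\bar{\Theta}$ be any weak limit point along a subsequence $N_k$. By \eqref{eq:decompplan} and the tightness just proved, $\bar{\Theta}$ is a continuous bounded martingale, so by the martingale representation theorem it suffices to identify its quadratic variation. The heart of the argument is to show that
\begin{equation}
\label{eq:qvplan}
\lim_{k\to\infty}\mathbb{E}\left[\,\left|\,\frac{1}{N_k}\sum_{i\in[N_k]}g\big(x_i^{[N_k]}(N_kr)\big) - (\CF g)\big(\bar{\Theta}^{[N_k]}(N_kr)\big)\,\right|\,\right]=0
\end{equation}
for (Lebesgue-a.e.) $r>0$. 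This is exactly the statement that, conditionally on the value of the slow variable $\bar{\Theta}^{[N]}(N r)$, the fast variables $\big(x_i^{[N]}(Nr)\big)_{i\in[N]}$ have equilibrated to the conditional quasi-equilibrium $\nu_\theta$, so that the empirical average of $g$ over components converges to $\int g\,\d\nu_\theta=(\CF g)(\theta)$. To prove \eqref{eq:qvplan} I would invoke the machinery of Step~1: Lemma~\ref{l.comp} compares the finite system (via periodic continuation) with the infinite system \eqref{gh45ainf} started from $\mu_N$ over a window of length $L(N)$ with $L(N)\to\infty$, $L(N)/N\to0$; Lemma~\ref{lemerg} gives convergence of that infinite system to $\nu_{\bar{\Theta}^{[N]}}$; Lemma~\ref{stabest} and Lemma~\ref{unifergod} guarantee that the estimator $\bar{\Theta}^{[N]}$ does not move appreciably over the window (its increments over $L(N)$ vanish, as already flagged in \eqref{m08}) and that the convergence to equilibrium is uniform enough to be used inside the time integral. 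Continuity of $\theta\mapsto\nu_\theta$ and of $\CF g$ (Lemma~\ref{lemlip}) then lets one replace $(\CF g)(\bar{\Theta}^{[N]})$ by a function of the genuine limit. Combining, the quadratic variation of $\bar{\Theta}$ equals $\int_0^s \d r\,\tfrac{1}{(1+K)^2}(\CF g)(\bar{\Theta}(r))$, which pins down the law of $\bar{\Theta}$ since the SDE $\d\bar{\Theta}=\tfrac{1}{1+K}\sqrt{(\CF g)(\bar{\Theta})}\,\d w$ has a unique weak solution (diffusion coefficient continuous and vanishing only at $0,1$, drift zero — uniqueness by \cite{YW71} or Yamada--Watanabe-type arguments as already used for \eqref{gh5}). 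Uniqueness of the limit point upgrades subsequential convergence to full convergence, and the initial condition $\bar{\Theta}(0)=\theta$ follows from the law of large numbers \eqref{lim1} for the i.i.d.\ initial data.

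The main obstacle I expect is the justification of \eqref{eq:qvplan}, i.e.\ the interchange of the $N\to\infty$ limit with the time average and the replacement of the component empirical mean of $g$ by its quasi-equilibrium value. The subtlety is the well-known mismatch between the slow and fast time scales (see the remark around \eqref{lim1}--\eqref{lim2}): at time $0$ the average is $\theta_{\bar\Theta}$, but for any $s>0$ it has already relaxed to the conditional equilibrium, so one must work strictly at times $r>0$ and exploit the two-scale structure carefully. Concretely, one fixes $r>0$, looks back over a window $[N r - L(N), N r]$, uses Lemma~\ref{l.comp} to couple to the infinite system with frozen center $\bar{\Theta}^{[N]}(Nr-L(N))$, uses \eqref{m08}/Lemma~\ref{unifergod} to see that the center barely changed over the window, and uses ergodicity (Lemma~\ref{lemerg}) plus an $L^1$-continuity estimate to conclude that $\tfrac1N\sum_i g(x_i^{[N]}(Nr))$ is close in expectation to $(\CF g)(\bar{\Theta}^{[N]}(Nr))$. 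Everything else — tightness, martingale representation, SDE uniqueness — is routine given the tools already assembled in Sections~\ref{ss.IntroMeanfield}--\ref{ss.ProofsMeanfield}. Parts (a) and (c) of Proposition~\ref{P.finsysmf} then follow from Proposition~\ref{p.esti}: (c) by feeding the converged slow variable into Proposition~\ref{P.mkvlim} applied on time scale $1$, and (a) by combining Proposition~\ref{p.esti} with the Meyer-Zheng convergence \eqref{moha44}, whose key quantitative input is the $\mathcal{O}(1/\sqrt N)$ bound of Lemma~\ref{lemav}.
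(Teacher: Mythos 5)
Your proposal is correct and follows essentially the same route as the paper: Lemma~\ref{martav} supplies the martingale property, explicit quadratic variation, stability over windows of length $L(N)$, and tightness; Lemma~\ref{lemmart} establishes precisely your key estimate \eqref{eq:qvplan} (this is \eqref{a2} in the paper) via the coupling machinery of Step~1 packaged through Corollary~\ref{cor1}; and Lemma~\ref{lem:uni} gives uniqueness of the limiting martingale problem by Lipschitz continuity of $\theta\mapsto\E^{\nu_\theta}[g]$ and \cite{YW71}. The only cosmetic difference is that the paper proves tightness by representing $\bar{\Theta}^{[N]}(N\cdot)$ as a time-changed Brownian motion and applying Kolmogorov's criterion, where you invoke BDG directly; the conclusion is the same.
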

		
		Proposition \ref{p.esti} follows from the following three lemmas, which are the equivalent of the three lemmas used in \cite[p.\ 488--493]{DG93a} for the system without seed-bank.
		
		\begin{lemma}{\bf [Martingale property of average sum process]} 
			\label{martav}
			$\mbox{}$
			\begin{enumerate}
				\item[{\rm (1)}] 
				The process $(\bar{\Theta}^{[N]}(Ns))_{s > 0}$ is a square-integrable martingale with continuous paths and increasing process
				\begin{equation}
					\left\langle \bar{\Theta}^{[N]}(Ns)\right\rangle_{s> 0} 
					= \frac{1}{(1+K)^2} \int_{0}^{s} \d r\,\frac{1}{N} \sum_{i\in[N]}{g\big(x_i^{[N]}(Nr)\big)}.
				\end{equation}
				\item[{\rm (2)}] 
				Let $L(N)$ be such that $\lim_{N\to \infty} L(N)=\infty$ and $\lim_{N\to \infty} L(N)/N=0$. Then 
				\begin{equation}
					\label{a7}
					\lim_{N\to\infty} \sup_{0 \leq t \leq L(N)}\big|\bar{\Theta}^{[N]}(Ns)-\bar{\Theta}^{[N]}(Ns-t)\big | = 0 
					\text{ in probability.}
				\end{equation} 
				\item[{\rm (3)}] 
				$ \left(\CL[(\bar{\Theta}^{[N]}(Ns))_{s > 0}]\right)_{N\in\N}$ is tight as a sequence of probability measures on $\mathcal{C}([0,\infty),[0,1])$.
			\end{enumerate}
		\end{lemma}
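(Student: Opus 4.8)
The plan is to verify the three claims in order, since each builds on the previous one. For part~(1), I would start from the SSDE \eqref{mfevolve} for the $1$-block averages $(x_1^{[N]}(s), y_1^{[N]}(s))$ and form the weighted combination $\bar{\Theta}^{[N]}(Ns) = \tfrac{1}{1+K}\big(x_1^{[N]}(s) + K y_1^{[N]}(s)\big)$, as in \eqref{gh1}. Applying It\^o's formula to this linear combination, the two drift terms $NKe[y_1^{[N]}-x_1^{[N]}]$ and $NKe[x_1^{[N]}-y_1^{[N]}]$ (weighted by $\tfrac{1}{1+K}$ and $\tfrac{K}{1+K}$ respectively) cancel exactly — this is the crucial algebraic point that makes the $N\to\infty$ limit tractable — leaving only the martingale part $\tfrac{1}{1+K}\sqrt{\tfrac1N\sum_i g(x_i^{[N]}(Ns))}\,\d w(s)$. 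Hence $\bar{\Theta}^{[N]}(Ns)$ is a continuous local martingale; square-integrability is immediate because it takes values in $[0,1]$, so it is a genuine square-integrable martingale. Its increasing process is then read off directly as $\langle \bar{\Theta}^{[N]}(N\cdot)\rangle_s = \tfrac{1}{(1+K)^2}\int_0^s \d r\,\tfrac1N\sum_{i\in[N]} g(x_i^{[N]}(Nr))$, using the scaling $w(Ns)=^d \sqrt N\,w(s)$ absorbed into the time change. This step is essentially a computation and I expect no difficulty.

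For part~(2), the statement is that on the "fast" time window of length $L(N)$ (with $L(N)\to\infty$, $L(N)/N\to 0$) the process $\bar{\Theta}^{[N]}$ does not move. I would control the increment via the Doob/Burkholder--Davis--Gundy inequality applied to the martingale from part~(1):
\begin{equation}
\E\Big[\sup_{0\le t\le L(N)}\big|\bar{\Theta}^{[N]}(Ns)-\bar{\Theta}^{[N]}(Ns-t)\big|^2\Big]
\le C\,\E\big[\langle\bar{\Theta}^{[N]}\rangle_{s} - \langle\bar{\Theta}^{[N]}\rangle_{s-L(N)/N}\big]
\le \frac{C\|g\|}{(1+K)^2}\,\frac{L(N)}{N},
\end{equation}
where I bounded the integrand $\tfrac1N\sum_i g(x_i^{[N]})$ by $\|g\|=\sup_{[0,1]} g$. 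Since $L(N)/N\to 0$, the right-hand side vanishes, giving convergence in $L^2$ and hence in probability. (One must be slightly careful with the starting time $Ns-t$ when $t$ can exceed $Ns$; for $s$ fixed and $N$ large this is a non-issue since $L(N)/N\to 0<s$, so $Ns-t\ge Ns-L(N)>0$ eventually.)

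For part~(3), tightness in $\mathcal{C}([0,\infty),[0,1])$: since the state space $[0,1]$ is compact, I only need a modulus-of-continuity / Aldous-type criterion. Because $\bar{\Theta}^{[N]}(N\cdot)$ is a continuous martingale with increasing process absolutely continuous and bounded, $\tfrac{\d}{\d s}\langle\bar{\Theta}^{[N]}\rangle_s \le \|g\|/(1+K)^2$ uniformly in $N$, I would use Kolmogorov's tightness criterion via the fourth-moment bound $\E[|\bar{\Theta}^{[N]}(Ns)-\bar{\Theta}^{[N]}(Ns')|^4]\le C|s-s'|^2$ (again BDG plus the uniform bound on the increasing process), which gives tightness of the laws on $\mathcal{C}([0,T],[0,1])$ for every $T$, hence on $\mathcal{C}([0,\infty),[0,1])$ by a diagonal argument. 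The main obstacle across the three parts is really conceptual rather than technical: making sure the cancellation of the diverging exchange terms in part~(1) is done cleanly, since everything downstream — the uniform bound on the increasing process, hence parts~(2) and~(3) — rests on the fact that the surviving martingale part has a volatility that stays bounded by $\|g\|$ uniformly in $N$. Once that is in place, the estimates are routine martingale inequalities; identification of the limit (that it solves the SDE with diffusion coefficient $\tfrac{1}{(1+K)^2}(\CF g)(\cdot)$) is deferred to the subsequent lemmas on convergence of finite-dimensional distributions, which use Step~1 to replace $\tfrac1N\sum_i g(x_i^{[N]}(Nr))$ by $\E^{\Gamma_{\bar{\Theta}(r)}}[g]=(\CF g)(\bar{\Theta}(r))$ via the quasi-equilibrium of the constituent components.
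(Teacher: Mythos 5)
Your proposal is correct and follows essentially the same route as the paper: derive the martingale and its increasing process from the cancellation of the $O(N)$ exchange terms in the combination $\bar{\Theta}^{[N]}$, exploit the uniform bound $\|g\|$ on the volatility, and then obtain (2) and (3) from standard martingale estimates and Kolmogorov's criterion. The only cosmetic differences are in which tools you invoke for the estimates --- you use Doob/BDG where the paper uses optional sampling on the sub-martingale $(\bar{\Theta}^{[N]}(Ns+t)-\bar{\Theta}^{[N]}(Ns))^2$ for part (2), and you apply BDG directly for the fourth-moment bound in part (3) where the paper passes through the Dambis--Dubins--Schwarz representation as a time-changed Brownian motion --- but the underlying strategy and the key uniform bound on the quadratic variation are the same.
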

		
		\begin{lemma}{\bf [Martingale property of limit process]}
			\label{lemmart}
			Let $(N_k)_{k\in\N}$ be any subsequence such that
			\begin{equation}
				\lim_{k\to\infty} \CL\left[\big(\bar{\Theta}^{[N_k]}(N_ks)\big)_{s > 0}\right] 
				= \CL\bigl[(\bar{\Theta}(s))_{s > 0}\bigr].
			\end{equation}
			Then $(\bar{\Theta}(s))_{s > 0}$ is a square-integrable martingale with continuous paths, and
			\begin{equation}
				\label{a1}
				\left(\bar{\Theta}^2(s)-\int_{0}^{s} \d r\,\frac{1}{(1+K)^2}\,\E^{\nu_{\bar{\Theta}(r)}}[g(x_0)]\right)_{s > 0}
			\end{equation} 
			is a martingale.
		\end{lemma}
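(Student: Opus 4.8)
\textbf{Proof proposal for Lemma~\ref{lemmart}.}
The plan is to pass to the limit in the martingale characterisation of $(\bar{\Theta}^{[N_k]}(N_ks))_{s>0}$ established in Lemma~\ref{martav}. By that lemma, for each $k$ the process $(\bar{\Theta}^{[N_k]}(N_ks))_{s>0}$ is a bounded continuous square-integrable martingale with increasing process $\langle\bar{\Theta}^{[N_k]}(N_ks)\rangle_{s>0}=\tfrac{1}{(1+K)^2}\int_0^s \d r\,\tfrac1{N_k}\sum_{i\in[N_k]}g(x_i^{[N_k]}(N_kr))$. First I would note that, since the state space $[0,1]$ is compact, the martingale property together with continuity of the paths is preserved under weak convergence: for $0\le s_1<\cdots<s_n\le s<s'$ and any bounded continuous $\Phi$ on $\mathcal{C}([0,s],[0,1])^n$ (or on the coordinate evaluations), $\E[(\bar{\Theta}^{[N_k]}(N_ks')-\bar{\Theta}^{[N_k]}(N_ks))\,\Phi]=0$, and passing to the limit gives $\E[(\bar{\Theta}(s')-\bar{\Theta}(s))\,\Phi]=0$; the uniform integrability needed is trivial from boundedness. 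Continuity of the limit paths follows because the jumps of $\bar{\Theta}^{[N_k]}$ are of order $1/N_k\to0$ (a standard argument via the maximal inequality applied to the martingale, or via Aldous's tightness criterion already used in Lemma~\ref{martav}(3)).

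Next I would identify the increasing process of $(\bar{\Theta}(s))_{s>0}$. The square $(\bar{\Theta}^{[N_k]}(N_ks))^2-\langle\bar{\Theta}^{[N_k]}(N_ks)\rangle_s$ is a martingale, so it suffices to show
\begin{equation}
\lim_{k\to\infty}\CL\!\left[\left(\frac{1}{(1+K)^2}\int_0^s \d r\,\frac1{N_k}\sum_{i\in[N_k]}g\big(x_i^{[N_k]}(N_kr)\big)\right)_{\!s>0}\right]
=\CL\!\left[\left(\int_0^s \d r\,\frac{1}{(1+K)^2}\,\E^{\nu_{\bar{\Theta}(r)}}[g(x_0)]\right)_{\!s>0}\right].
\end{equation}
The key input here is Proposition~\ref{prop1} (equilibrium of the single components combined with the time-scale separation): conditional on $\bar{\Theta}^{[N_k]}(N_kr)\approx\theta$, the empirical law of the components $(x_i^{[N_k]}(N_kr),y_i^{[N_k]}(N_kr))_{i\in[N_k]}$ converges to $\nu_\theta$, so the empirical average $\tfrac1{N_k}\sum_i g(x_i^{[N_k]}(N_kr))$ converges to $\E^{\nu_\theta}[g(x_0)]=(\CF g)(\theta)$ by Lemma~\ref{lemerg} and the law of large numbers for the ergodic measure $\nu_\theta$. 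To make this rigorous on the level of the integrated (in $r$) quantity I would use the Feller property, continuity of $\theta\mapsto\nu_\theta$ (Lemma~\ref{lemlip}) and of $\theta\mapsto(\CF g)(\theta)$ (again Lemma~\ref{lemlip}, since $g$ is Lipschitz), together with the uniform regularity of the estimator provided by Lemma~\ref{martav}(2) (so that on the fast time scale $\bar{\Theta}^{[N_k]}$ does not move, which is the hypothesis needed to invoke Proposition~\ref{prop1}/Lemma~\ref{l.comp}). A dominated-convergence argument in $r\in[0,s]$, using $\|g\|_\infty<\infty$, then upgrades the one-time-marginal convergence to convergence of the time integrals.

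Finally, combining the two martingales — $(\bar{\Theta}(s))_{s>0}$ and $(\bar{\Theta}^2(s)-\int_0^s\d r\,\tfrac{1}{(1+K)^2}\E^{\nu_{\bar{\Theta}(r)}}[g(x_0)])_{s>0}$ — gives the claim; the identification of $(\bar{\Theta}(s))_{s>0}$ as the solution of $\d\bar{\Theta}(s)=\tfrac{1}{1+K}\sqrt{(\CF g)(\bar{\Theta}(s))}\,\d w(s)$ (Proposition~\ref{p.esti}) is then a consequence of the martingale problem being well-posed, which follows from $\CF g\in\CG$ (Lipschitz, vanishing at $0,1$) and \cite{YW71}. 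I expect the main obstacle to be the second step: justifying the interchange of the $N\to\infty$ limit with the time integral, i.e.\ showing that the empirical average of $g$ over the components genuinely tracks the conditional quasi-equilibrium $\nu_{\bar{\Theta}(r)}$ uniformly enough in $r$. This is precisely where the separation of time scales enters, and it is handled by feeding the output of Lemma~\ref{martav}(2) into Proposition~\ref{prop1} (via Lemmas~\ref{l.comp}--\ref{unifergod}); the delicate point is that one must argue along the already-chosen convergent subsequence $(N_k)$ and check that the limiting ingredients do not depend on it, which ultimately rests on uniqueness of the equilibrium $\nu_\theta$ for given $\theta$ from Proposition~\ref{P.equergod}.
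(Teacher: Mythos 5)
Your proposal is correct and follows essentially the same line as the paper's proof: the martingale property of the limit is inherited from weak convergence, and the increasing process is identified via the propagation-of-chaos and quasi-equilibrium results (Proposition~\ref{prop1}, Corollary~\ref{cor1}, Lemma~\ref{lemerg}) together with dominated convergence, ultimately resting on the uniqueness of $\nu_\theta$ from Proposition~\ref{P.equergod}. The one technical device the paper uses that your sketch elides is the way it packages the \emph{joint} convergence of $\big(\bar{\Theta}^{[N_k]}(N_k\cdot),\langle\bar{\Theta}^{[N_k]}(N_k\cdot)\rangle\big)$: it proves an $L^1$ estimate for the increasing process, upgrades to almost-sure convergence via Skorohod, and then exploits the time-changed Brownian motion representation of $\bar{\Theta}^{[N_k]}$ together with the continuous mapping theorem — a detail, not a different route.
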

		
		\begin{lemma}{\bf [Uniqueness]}\label{lem:uni}
			The following martingale problem has a unique solution:
			\begin{equation}
				\begin{aligned}
					&(\bar{\Theta}_s)_{s> 0} \text{ is a continuous martingale with values in } [0,1],
					\label{martprob}\\
					&\left(\bar{\Theta}^2(s)-\frac{1}{(1+K)^2} \int_{0}^{s} \d r\,\E^{\nu_{\bar{\Theta}(r)}}[g(x_0)]\right)_{s > 0} 
					\text{ is a martingale.}
				\end{aligned}
			\end{equation}
			The solution of \eqref{martprob} is given by the diffusion generated by $\E^{\nu_{u}}[g(\cdot)]\frac{\partial^2}{\partial u^2}$.
		\end{lemma}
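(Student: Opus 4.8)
Uniqueness for \eqref{martprob} is a one--dimensional statement, and the plan is to reduce it to a stochastic differential equation with a $\tfrac12$--H\"older diffusion coefficient and invoke the Yamada--Watanabe pathwise uniqueness theorem (\cite{YW71}), exactly as was done for \eqref{gh5}. First I would record the regularity of the coefficient. By Lemma~\ref{lemlip} together with the fact, to be established in Section~\ref{sec:finsysmf}, that $\CF$ maps $\CG$ into $\CG$, the function $\CF g$ from \eqref{gh44} belongs to $\CG$; in particular it is non--negative, Lipschitz on $[0,1]$, and vanishes at $0$ and $1$. Hence $\sigma(u) := \tfrac{1}{1+K}\sqrt{(\CF g)(u)}$ satisfies, for all $u,v\in[0,1]$,
\[
|\sigma(u)-\sigma(v)| \;\leq\; \tfrac{1}{1+K}\sqrt{|(\CF g)(u)-(\CF g)(v)|} \;\leq\; C\,|u-v|^{1/2},
\qquad C=\tfrac{1}{1+K}\sqrt{\lip(\CF g)},
\]
using $|\sqrt a-\sqrt b|\leq\sqrt{|a-b|}$ for $a,b\geq 0$.

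Next I would set up the equivalence between \eqref{martprob} and the SDE $\d\bar{\Theta}(s)=\sigma(\bar{\Theta}(s))\,\d w(s)$. Let $(\bar{\Theta}(s))_{s>0}$ be any solution of \eqref{martprob}. Since $\E^{\nu_u}[g(x_0)]=(\CF g)(u)$ by \eqref{gh44}, the second requirement in \eqref{martprob} says precisely that $\bar{\Theta}$ is a continuous $[0,1]$--valued square--integrable martingale with $\langle\bar{\Theta}\rangle_s=\int_0^s\sigma^2(\bar{\Theta}(r))\,\d r$. By the martingale representation theorem --- applied on a possibly enlarged probability space so as to accommodate the zeros of $\sigma$ --- there is a standard Brownian motion $w$ with $\bar{\Theta}(s)=\bar{\Theta}(0)+\int_0^s\sigma(\bar{\Theta}(r))\,\d w(r)$. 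Conversely, It\^o's formula shows that any weak solution of this SDE solves \eqref{martprob}, and existence of a solution (equivalently, of the diffusion generated by $\tfrac{1}{(1+K)^2}(\CF g)(u)\,\partial_u^2$) is classical by the Stroock--Varadhan construction for bounded continuous coefficients on the compact interval $[0,1]$; the vanishing of $\sigma$ at $\{0,1\}$ makes these points traps, so the process never exits $[0,1]$.

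For uniqueness, the H\"older bound above provides a Yamada--Watanabe modulus $\rho(u)=Cu^{1/2}$ with $\int_{0+}\rho(u)^{-2}\,\d u=\infty$, whence pathwise uniqueness holds for $\d\bar{\Theta}=\sigma(\bar{\Theta})\,\d w$, and therefore uniqueness in law for each fixed initial distribution. Combined with the two--way correspondence of the previous paragraph, this yields uniqueness of the solution of the martingale problem \eqref{martprob} and identifies it as the diffusion with generator $\tfrac{1}{(1+K)^2}(\CF g)(u)\,\partial_u^2$, as claimed (modulo the constant $\tfrac{1}{(1+K)^2}$ absorbed into the coefficient, cf.\ the increasing process in \eqref{a1}).

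The only genuinely delicate point --- and the one I expect to be the main obstacle --- is the degeneracy of $\sigma$ at the endpoints: making the martingale--representation step rigorous when $\sigma(\bar{\Theta}(r))$ may vanish on a time set of positive measure (e.g.\ after absorption at $0$ or $1$), and justifying that solutions stay in $[0,1]$. Both are handled by the standard enlargement--of--space device for representation with degenerate coefficients, or alternatively by working directly with the generator and a localization/comparison argument on $(\varepsilon,1-\varepsilon)$ followed by $\varepsilon\downarrow 0$; this is routine but should be spelled out.
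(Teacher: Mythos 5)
Your proposal is correct and follows essentially the same route as the paper: both reduce the weak martingale problem \eqref{martprob} to the SDE $\d\bar\Theta=\frac{1}{1+K}\sqrt{(\CF g)(\bar\Theta)}\,\d w$ and invoke the Yamada--Watanabe criterion on the strength of $\CF g$ being Lipschitz (Lemma~\ref{lemlip}). The only cosmetic difference is the step that ties the two formulations together: the paper appeals to the Dambis--Dubins--Schwarz representation of $\bar\Theta$ as a time-changed Brownian motion and the fact that its quadratic variation is pinned down, whereas you go through the martingale representation theorem on an enlarged space; these are interchangeable, and your treatment of the degeneracy of $\sigma$ at $\{0,1\}$ is more explicit than the paper's (which glosses over it).
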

		
		\paragraph{Step 3. Convergence of the averages in the Meyer-Zheng topology.}
		
		Recall the definition of the Meyer-Zheng topology in Section~\ref{MeyerZheng}. We have to prove the following proposition.
		
		\begin{proposition}{\bf [Convergence in Meyer-Zheng topology]}
			\label{p.estim}
			If
			\begin{equation}
				\lim_{N \to \infty}\CL\left[\left(\bar{\Theta}(Ns)\right)_{s>0}\right]=\CL\left[\left({ \bar{\Theta}}(s)\right)_{s>0}\right],
			\end{equation}
			then
			\begin{equation}
				\begin{aligned}
					&\lim_{N\to\infty}\CL\Bigl[\bigl(x_1^{[N]}(t),y_1^{[N]}(t)\bigr)_{t \geq 0}\Bigr] 
					= \CL\Bigl[\bigl(x_1^{\N_0}(t),y_1^{\N_0}(t)\bigr)_{t \geq 0}\Bigr] \\
					&\mbox{ in the Meyer-Zheng topology},
				\end{aligned}
			\end{equation}
			where $(x_1^{\N_0}(t),y_1^{\N_0}(t))_{t \geq 0})$ evolves according to \eqref{gh43}.
		\end{proposition}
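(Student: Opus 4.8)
\emph{Proof proposal.} The plan is to deduce everything from two facts already at our disposal: the convergence of the macroscopic observable $\bar{\Theta}^{[N]}(N\cdot)$ in the \emph{classical} path topology (Proposition~\ref{p.esti}), and the $\CO(N^{-1/2})$ control on the discrepancy between the active and dormant averages furnished by Lemma~\ref{lemav}. Recall from \eqref{gh1} that $\bar{\Theta}^{[N]}(Ns)=\tfrac{1}{1+K}\big(x_1^{[N]}(s)+Ky_1^{[N]}(s)\big)$, and write $\Delta^{[N]}_{\bar{\Theta}}(Ns)=x_1^{[N]}(s)-y_1^{[N]}(s)$. Then
\begin{equation}
x_1^{[N]}(s)=\bar{\Theta}^{[N]}(Ns)+\tfrac{K}{1+K}\,\Delta^{[N]}_{\bar{\Theta}}(Ns),\qquad
y_1^{[N]}(s)=\bar{\Theta}^{[N]}(Ns)-\tfrac{1}{1+K}\,\Delta^{[N]}_{\bar{\Theta}}(Ns),
\end{equation}
so that pointwise in $s$ one has $\big\|\big(x_1^{[N]}(s),y_1^{[N]}(s)\big)-\big(\bar{\Theta}^{[N]}(Ns),\bar{\Theta}^{[N]}(Ns)\big)\big\|\le|\Delta^{[N]}_{\bar{\Theta}}(Ns)|$. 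I would therefore first establish Meyer--Zheng convergence of the \emph{diagonal} process $(\bar{\Theta}^{[N]}(N\cdot),\bar{\Theta}^{[N]}(N\cdot))$, then show that the error term above is negligible in the Meyer--Zheng sense, and finally glue the two together by a converging-together argument in the pseudopath space $\Psi$.

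\textbf{Diagonal limit.} By Proposition~\ref{p.esti}, $\CL[(\bar{\Theta}^{[N]}(Ns))_{s\ge0}]\to\CL[(\bar{\Theta}(s))_{s\ge0}]$ in $C_b([0,\infty),[0,1])$. Since the canonical embedding of $C([0,\infty),E)$ into $\Psi$ is continuous and the diagonal map $a\mapsto(a,a)$ is continuous, the continuous-mapping theorem yields $\CL[(\bar{\Theta}^{[N]}(Ns),\bar{\Theta}^{[N]}(Ns))_{s\ge0}]\to\CL[(\bar{\Theta}(s),\bar{\Theta}(s))_{s\ge0}]$ in the Meyer--Zheng topology; these are the general facts about $\Psi$ collected in Appendix~\ref{apb}. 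Comparing the SDE \eqref{52} for $\bar{\Theta}$ with \eqref{gh43}, the limit $(\bar{\Theta}(s),\bar{\Theta}(s))_{s\ge0}$ has exactly the law of $(x_1^{\N_0}(s),y_1^{\N_0}(s))_{s\ge0}$.

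\textbf{The error term vanishes.} Lemma~\ref{lemav}, together with $\E[(\Delta^{[N]}_{\bar{\Theta}}(0))^2]\le1$, gives $\E[|\Delta^{[N]}_{\bar{\Theta}}(Ns)|]\le\e^{-(Ke+e)Ns}+\sqrt{\|g\|/(N(Ke+e))}$, whence $\int_a^b\d s\,\E[1\wedge|\Delta^{[N]}_{\bar{\Theta}}(Ns)|]\to0$ for every $0<a<b<\infty$. Consequently $\int_a^b\d s\,\big[1\wedge\|(x_1^{[N]}(s),y_1^{[N]}(s))-(\bar{\Theta}^{[N]}(Ns),\bar{\Theta}^{[N]}(Ns))\|\big]\to0$ in $L^1$, hence in probability; summing over intervals exhausting $[0,\infty)$ and controlling the uniformly small tail shows that the Meyer--Zheng distance between the pseudopaths of $(x_1^{[N]}(\cdot),y_1^{[N]}(\cdot))$ and of $(\bar{\Theta}^{[N]}(N\cdot),\bar{\Theta}^{[N]}(N\cdot))$ tends to $0$ in probability. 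Since $\Psi$ is Polish (Remark~\ref{r.1556}), the converging-together lemma then gives $\CL[(x_1^{[N]}(s),y_1^{[N]}(s))_{s\ge0}]\to\CL[(x_1^{\N_0}(s),y_1^{\N_0}(s))_{s\ge0}]$ in the Meyer--Zheng topology, which is the assertion. Note that tightness of $(x_1^{[N]}(N\cdot),y_1^{[N]}(N\cdot))$ in $\Psi$ is then automatic and need not be argued separately.

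\textbf{Main obstacle.} The substantive inputs are already packaged in Proposition~\ref{p.esti} and Lemma~\ref{lemav}, so I expect the only real subtlety to be topological rather than probabilistic: one must check that convergence in the classical path topology implies convergence of pseudopaths, that the maps on $\Psi$ used above are continuous, and that for uniformly bounded processes convergence in $\d s$-measure on compacts to a deterministic limit is equivalent to weak convergence of the associated pseudopaths — all of which I would import from Appendix~\ref{apb} following \cite{MZ84} and \cite{K91}. The reason the Meyer--Zheng topology is indispensable, and the point that must be respected throughout, is that the drift of $x_1^{[N]}$ has total variation $\int_0^s NKe\,|\Delta^{[N]}_{\bar{\Theta}}(Nr)|\,\d r$, which is of order $\sqrt{N}$ and hence unbounded as $N\to\infty$, so $(x_1^{[N]},y_1^{[N]})$ cannot be tight in the Skorohod topology; only the cancellation producing the martingale $\bar{\Theta}^{[N]}$ and the smallness of $\Delta^{[N]}_{\bar{\Theta}}$ survive in the limit.
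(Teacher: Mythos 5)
Your proposal is correct and is essentially the paper's argument, just organized slightly differently. The paper first proves Meyer--Zheng convergence of the two marginals $x_1^{[N]}$ and $y_1^{[N]}$ separately (each via Lemma~\ref{lemav}, Lemma~\ref{lem91} and the converging-together Lemma~\ref{lem92}), then glues them with Lemma~\ref{lem93} applied to the pair $(x_1^{[N]},\,y_1^{[N]}-x_1^{[N]})$, and finishes with the continuous-mapping Lemma~\ref{lem94} applied to $(x,y)\mapsto(x,y+x)$. You instead compare the pair $(x_1^{[N]},y_1^{[N]})$ directly to the diagonal process $(\bar\Theta^{[N]},\bar\Theta^{[N]})$, which lets you skip the gluing step and invoke only the diagonal map plus converging-together. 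Both routes rest on exactly the same inputs --- Proposition~\ref{p.esti}, the $O(N^{-1/2})$ bound in Lemma~\ref{lemav}, Lemma~\ref{lem90} (classical convergence implies Meyer--Zheng convergence), and the fact that $\Psi$ is Polish --- and your ``error term vanishes'' paragraph is a restatement of the proof of Lemma~\ref{lem91}. The exact algebraic identities expressing $x_1^{[N]}$ and $y_1^{[N]}$ as $\bar\Theta^{[N]}$ plus multiples of $\Delta^{[N]}_{\bar\Theta}$ check out, and your closing remark about why tightness need not be shown separately and why the Skorohod topology fails is accurate.
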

		
		\noindent
		To prove Proposition~\ref{p.estim} we will use Lemma~\ref{lemav} in combination with the following three general lemmas about the Meyer-Zheng topology, which are proven in Appendix~\ref{apb3}. 
		
		\begin{lemma}{\bf[Path convergence in probability in the Meyer-Zheng topology]}
			\label{lem91} 
			Let $((Z_n(t))_{t\geq 0} )_{n\in\N}$ and $(Z(t))_{t\geq 0}$ be stochastic processes on the Polish space $(E,d)$. If, for all $t\geq 0$, 
			\begin{equation}
				\label{t}
				\lim_{n\to\infty}\E\left[d(Z_n(t),Z(t))\right]=0,
			\end{equation}
			then, 
			\begin{equation}
				\lim_{n\to\infty} (Z_n(t))_{t\geq 0}=(Z(t))_{t\geq 0} \text{ in probability in the Meyer-Zheng topology}.
			\end{equation}	
		\end{lemma}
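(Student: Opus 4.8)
The plan is to split the argument into a purely deterministic statement about pseudopaths and a probabilistic lifting of it via a nested-subsequence scheme. Fix once and for all a metric $d_\Psi$ on the pseudopath space $\Psi$ that induces the Meyer--Zheng topology; recall that by definition this is the topology of weak convergence of the occupation measures $\psi_v$ on $[0,\infty]\times E$, so that ``$Z_n\to Z$ in probability in the Meyer--Zheng topology'' means $d_\Psi(\psi_{Z_n},\psi_Z)\to 0$ in probability, where $\psi_{Z_n},\psi_Z$ are the random pseudopaths built from the sample paths. As is automatic for all processes to which the lemma will be applied (continuous or c\`adl\`ag), I will take $(\omega,s)\mapsto Z_n(s,\omega)$ and $(\omega,s)\mapsto Z(s,\omega)$ to be jointly measurable, so that $(\omega,s)\mapsto 1\wedge d(Z_n(s,\omega),Z(s,\omega))$ is jointly measurable and $\omega\mapsto\psi_{Z_n}(\omega)$ is a measurable $\Psi$-valued map.

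First I would introduce the random, $[0,1]$-valued functional
\[
R_n=\int_0^\infty \e^{-s}\,\big[1\wedge d(Z_n(s),Z(s))\big]\,\d s ,
\]
and show $R_n\to 0$ in $L^1$: by Tonelli, $\E[R_n]=\int_0^\infty \e^{-s}\,\E\big[1\wedge d(Z_n(s),Z(s))\big]\,\d s$, and for each $s\ge 0$ the inner expectation is bounded by $1$ and satisfies $\E[1\wedge d(Z_n(s),Z(s))]\le\E[d(Z_n(s),Z(s))]$, which tends to $0$ by \eqref{t}; hence dominated convergence gives $\E[R_n]\to 0$, so $R_n\to 0$ in $L^1$ and in probability. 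In parallel I would record the deterministic fact I ultimately need: if $v_n,v\colon[0,\infty)\to E$ are measurable and $v_n\to v$ in Lebesgue measure on every bounded interval, then $\psi_{v_n}\to\psi_v$ weakly. The short proof is that for $f\in C_b([0,\infty]\times E)$ any subsequence of $(v_n)$ has a further subsequence $(v_{n_j})$ with $v_{n_j}(s)\to v(s)$ for a.e.\ $s$ (extract on each $[0,T]$ and diagonalise over $T\in\N$); then $f(s,v_{n_j}(s))\to f(s,v(s))$ a.e.\ and is bounded by $\|f\|_\infty$, so $\int_0^\infty\e^{-s}f(s,v_{n_j}(s))\,\d s\to\int_0^\infty\e^{-s}f(s,v(s))\,\d s$ by dominated convergence; the limit being subsequence-independent gives $\psi_{v_n}f\to\psi_v f$ for all $f$, i.e.\ $\psi_{v_n}\to\psi_v$ (this can be quoted from \cite{MZ84}). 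I would also note the elementary implication that $\int_0^\infty\e^{-s}[1\wedge d(v_n(s),v(s))]\,\d s\to 0$ forces $\Leb\{s\in[0,T]\colon d(v_n(s),v(s))>\varepsilon\}\to 0$ for all $\varepsilon,T>0$, which is precisely convergence in Lebesgue measure on every bounded interval.

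Then I would combine the two ingredients. Given any subsequence of $\N$, use $R_n\to 0$ in $L^1$ to extract a further subsequence $(n_k)$ with $R_{n_k}\to 0$ almost surely. On that a.s.\ event, for each such $\omega$ the deterministic number $\int_0^\infty\e^{-s}[1\wedge d(Z_{n_k}(s,\omega),Z(s,\omega))]\,\d s$ tends to $0$, so $Z_{n_k}(\cdot,\omega)\to Z(\cdot,\omega)$ in Lebesgue measure on every bounded interval, and hence $d_\Psi(\psi_{Z_{n_k}}(\omega),\psi_Z(\omega))\to 0$ by the deterministic fact. Thus $d_\Psi(\psi_{Z_{n_k}},\psi_Z)\to 0$ almost surely; since every subsequence of $\big(d_\Psi(\psi_{Z_n},\psi_Z)\big)_n$ admits a further subsequence tending to $0$ a.s., the whole sequence tends to $0$ in probability, which is the claim.

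I expect the only genuine subtlety to be that the subsequence used to upgrade ``convergence in Lebesgue measure'' to ``a.e.\ convergence'' depends on $\omega$, so dominated convergence cannot be applied uniformly in $\omega$; quarantining this inside the pathwise deterministic lemma and handling the stochastic part through the nested-subsequence scheme is exactly what circumvents it. The remaining pieces — the Tonelli/dominated-convergence manipulations and the measurability of $\omega\mapsto\psi_{Z_n}(\omega)$ — are routine.
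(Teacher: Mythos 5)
Your proof is correct, but it takes a genuinely different route from the paper's. The paper argues directly at the level of the Prohorov metric $d_P$ on $\Psi$: it observes that the pushforward of $\e^{-t}\,\d t$ under the map $t\mapsto\big((t,Z_n(t)),(t,Z(t))\big)$ is an explicit coupling of the random occupation measures $\psi_{Z_n}$ and $\psi_Z$, so the event $\{d_P(\psi_{Z_n},\psi_Z)>\delta\}$ forces $\int_0^\infty 1_{\{d(Z_n(t),Z(t))\geq\delta\}}\,\e^{-t}\,\d t>\delta$, and a Markov bound plus Tonelli closes the argument in one shot. You instead decouple the topology from the probability: you prove $\E[R_n]\to 0$ with $R_n=\int_0^\infty\e^{-s}[1\wedge d(Z_n(s),Z(s))]\,\d s$, invoke the fact that convergence in $\Psi$ is exactly convergence in Lebesgue measure on compacts (Lemma~1 of \cite{MZ84}), and then lift pathwise convergence to convergence in probability by the standard nested-subsequence device. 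Each approach buys something: the paper's yields an explicit quantitative estimate $\P[d_P(\psi_{Z_n},\psi_Z)>\delta]\lesssim\delta^{-2}\int_0^\infty\e^{-t}\E[d(Z_n(t),Z(t))]\,\d t$ with no appeal to the characterization of the topology, while yours isolates the single deterministic ingredient (convergence in measure implies convergence of pseudopaths) and avoids the coupling-representation of the Prohorov distance, which makes it more transparent why the $\omega$-dependence of the a.e.-convergent subsequence is harmless.
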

		
		\begin{lemma}{\bf[Convergence of the joint law of paths in the Meyer-Zheng topology]}
			\label{lem93} 
			Let $(X_n)_{n\in\N},(Y_n)_{n\in\N}, X$ be stochastic processes on a metric space $(E, d)$ and let $c\in E$ be a constant. If $\lim_{n\to \infty}\CL[X_n]=\CL[X]$ in the Meyer-Zheng topology and for all $t\geq0$,  $\lim_{n\to \infty}\E[d(Y_n(t),c)]=0$, then $\lim_{n\to \infty}\CL[(X_n,Y_n)]=\CL[(X,c)]$ in the Meyer-Zheng topology.
		\end{lemma}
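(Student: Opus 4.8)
The plan is to deduce the joint statement from the assumed convergence of $X_n$ by a converging-together (Slutsky-type) argument on the Polish space $\Psi$ of pseudopaths over $E\times E$ (recall Remark~\ref{r.1556}). I would fix on $E\times E$ the product metric $d_2\big((x,y),(x',y')\big)=d(x,x')+d(y,y')$, denote by $\psi_{(X_n,Y_n)}$, $\psi_{(X_n,c)}$, $\psi_{(X,c)}$ the corresponding (random) pseudopaths built from \eqref{e1519}, and work with the bounded-Lipschitz metric $\rho$ on $\CP([0,\infty]\times(E\times E))$, which metrizes weak convergence and hence the Meyer-Zheng topology.

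First I would show that the ``frozen'' pair converges: the embedding $j\colon[0,\infty]\times E\to[0,\infty]\times(E\times E)$, $j(s,x)=(s,(x,c))$, is continuous, so its pushforward $\Phi\colon\mu\mapsto j_{*}\mu$ is weakly continuous and satisfies $\Phi(\psi_{X_n})=\psi_{(X_n,c)}$ and $\Phi(\psi_X)=\psi_{(X,c)}$. By the continuous-mapping theorem, the hypothesis $\CL[X_n]\Rightarrow\CL[X]$ in the Meyer-Zheng topology then yields $\CL[(X_n,c)]=\Phi_{*}\CL[X_n]\Rightarrow\Phi_{*}\CL[X]=\CL[(X,c)]$.

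Next I would bound the discrepancy between the genuine pair $(X_n,Y_n)$ and the frozen pair $(X_n,c)$. For a test function $f$ with $\|f\|_\infty+\|f\|_{\mathrm{Lip}}\le1$ one has $|f(a)-f(b)|\le2\,[1\wedge d_2(a,b)]$, so from \eqref{e1519} and the identity $d_2\big((X_n(s),Y_n(s)),(X_n(s),c)\big)=d(Y_n(s),c)$,
\[
\big|\psi_{(X_n,Y_n)}f-\psi_{(X_n,c)}f\big|\le 2\int_0^\infty\d s\,\e^{-s}\,\big[1\wedge d\big(Y_n(s),c\big)\big],
\]
hence $\rho\big(\psi_{(X_n,Y_n)},\psi_{(X_n,c)}\big)\le 2\int_0^\infty\d s\,\e^{-s}\,[1\wedge d(Y_n(s),c)]$. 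Taking expectations, using concavity of $t\mapsto1\wedge t$ together with Jensen to get $\E[1\wedge d(Y_n(s),c)]\le1\wedge\E[d(Y_n(s),c)]\to0$ for every $s$, and dominated convergence in $s$ with dominating function $\e^{-s}$, the right-hand side tends to $0$ in $L^1$, hence in probability; the estimate here is the same one that underlies Lemma~\ref{lem91}.

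Finally I would combine the two ingredients via the converging-together lemma on the Polish space $\Psi$: if $U_n\Rightarrow U$ and $\rho(U_n,V_n)\to0$ in probability, then $V_n\Rightarrow U$. Taking $U_n=\psi_{(X_n,c)}$, $U=\psi_{(X,c)}$ and $V_n=\psi_{(X_n,Y_n)}$ gives $\CL[(X_n,Y_n)]\Rightarrow\CL[(X,c)]$ in the Meyer-Zheng topology, which is the claim. I do not expect a genuine obstacle in this argument; the only points that need some care are that the pseudopath of a pair process is \emph{not} a function of the pseudopaths of its two coordinate processes --- which is exactly why the comparison in the third step must be done directly rather than coordinatewise --- and the (routine) verification that the bounded-Lipschitz metric metrizes the Meyer-Zheng topology on $\Psi(E\times E)$, which holds as soon as $E$, and hence $E\times E$, is Polish, as in the standing assumptions of this section.
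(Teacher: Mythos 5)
Your proposal is correct and follows essentially the same route as the paper: the paper also splits $(X_n,Y_n)$ through the frozen pair $(X_n,c)$, handles the frozen-pair convergence by showing that the map $\psi_x \mapsto \psi_{(x,c)}$ is continuous on $\Psi$ (your pushforward $\Phi$ is exactly that), and controls $\psi_{(X_n,Y_n)}$ against $\psi_{(X_n,c)}$ via the $L^1$ estimate $\E[d(Y_n(t),c)]\to 0$ together with the mechanism of Lemma~\ref{lem91}. The only cosmetic differences are that you metrize $\Psi$ with the bounded-Lipschitz metric instead of the Prohorov metric and package the conclusion as a converging-together lemma, whereas the paper runs the triangle inequality on $\E[f(\cdot)]$ directly; you also rightly flag that $\psi_{(X_n,Y_n)}$ is not determined by $\psi_{X_n}$ and $\psi_{Y_n}$, which is exactly why the comparison must be done on the joint path and not coordinatewise.
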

		
		\begin{lemma}{\bf[Continuous mapping theorem in the Meyer-Zheng topology]}
			\label{lem94}
			Let $f\colon\, E\to E$ be a continuous function and $x\in M_E[0,\infty)$.  
			\begin{itemize}
				\item[(a)] The function 
				\begin{equation}
					h\colon\, \Psi\to\Psi,\qquad \psi_{x}\to\psi_{f(x)},
				\end{equation}
				is continuous. 
				\item[(b)] If the stochastic processes $(X_n)_{n\in\N}, X$ on state space $(E, d)$ satisfy 
				\begin{equation}
					\lim_{n\to \infty}\CL[X_n]=\CL[X]\text{ in the  Meyer-Zheng topology},
				\end{equation}
				then 
				\begin{equation}
					\lim_{n\to \infty}\CL[f(X_n)]=\CL[f(X)]\text{ in the Meyer-Zheng topology.}
				\end{equation}
			\end{itemize}
		\end{lemma}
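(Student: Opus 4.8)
The plan is to realise the map $h$ of part (a) as a \emph{pushforward} by a continuous map and then to read off both statements from the classical continuous mapping theorem for weak convergence of probability measures. Set $F\colon [0,\infty]\times E\to[0,\infty]\times E$, $F(t,e)=(t,f(e))$; since $f$ is continuous and the identity on $[0,\infty]$ is continuous, $F$ is continuous, hence Borel. The key step is the pushforward identity
\[
\psi_{f(v)} = F_\ast\,\psi_v \qquad \text{for every measurable } v\colon[0,\infty)\to E .
\]
I would verify this by evaluating both sides on the generating rectangles $(a,b)\times B$, $B\in\CB(E)$, using $F^{-1}((a,b)\times B)=(a,b)\times f^{-1}(B)$ and $\int_a^b\e^{-s}1_{f^{-1}(B)}(v(s))\,\d s=\int_a^b\e^{-s}1_B(f(v(s)))\,\d s$, and then extending to all of $\CB([0,\infty)\times E)$ by a $\pi$--$\lambda$ (monotone class) argument since both sides are probability measures. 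This identity does double duty: it shows that $h$ is well defined on $\Psi$ (the right-hand side does not depend on the representative $v$ of $\psi_v$), and that $h(\psi_v)=\psi_{f(v)}$ is again a pseudopath, because $f\circ v$ is measurable; hence $h$ indeed maps $\Psi$ into $\Psi$.

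With the identity in hand, part (a) is immediate. The map $G\mapsto F_\ast G$ on $\CP([0,\infty]\times E)$ is continuous for the weak topology: for $\varphi$ bounded continuous, $\varphi\circ F$ is bounded continuous and $\int\varphi\,\d(F_\ast G)=\int(\varphi\circ F)\,\d G$, so $G_n\Rightarrow G$ forces $F_\ast G_n\Rightarrow F_\ast G$. Since $\Psi$ carries the subspace topology it inherits from $\CP([0,\infty]\times E)$, and $h$ is exactly the restriction of $F_\ast$ to $\Psi$ by the previous step, the map $h\colon\Psi\to\Psi$ is continuous.

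For part (b), I would first unwind the definition: $\CL[X_n]\to\CL[X]$ in the Meyer--Zheng topology means precisely that the laws of the $\Psi$-valued random variables $\psi_{X_n}$ converge weakly to the law of $\psi_X$ on the Polish space $\Psi$ (recall $\Psi$ is Polish by Remark~\ref{r.1556}). Applying the classical continuous mapping theorem with the continuous self-map $h$ of $\Psi$ from part (a) yields $h_\ast\CL[\psi_{X_n}]\Rightarrow h_\ast\CL[\psi_X]$; since $h(\psi_{X_n})=\psi_{f(X_n)}$ and $h(\psi_X)=\psi_{f(X)}$, this is exactly $\CL[f(X_n)]\to\CL[f(X)]$ in the Meyer--Zheng topology. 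None of this is deep; the only point that requires genuine (if routine) care is the pushforward identity — reducing it to generating rectangles and extending by monotone class, and keeping the bookkeeping straight that $h$ is to be read as a self-map of the pseudopath space $\Psi$ rather than merely of $\CP([0,\infty]\times E)$, using that measurability of $v$ is preserved under composition with the continuous map $f$.
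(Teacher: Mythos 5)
Your proof is correct, and part (a) takes a genuinely different route from the paper's. The paper argues through the characterisation of Meyer--Zheng convergence as convergence in Lebesgue measure (cited from \cite{MZ84}, Lemma 1, and restated in Appendix~\ref{apb}): from $\lim_n\int_0^\infty 1_{\{d(x_n(t),x(t))>\delta\}}\e^{-t}\,\d t=0$ it passes, via continuity of $f$, to $\lim_n\int_0^\infty 1_{\{d(f(x_n(t)),f(x(t)))>\epsilon\}}\,\e^{-t}\,\d t=0$, the step being justified (implicitly) by the subsequence criterion for convergence in measure. You instead realise $h$ as the restriction to $\Psi$ of the pushforward $F_\ast$ with $F(t,e)=(t,f(e))$, verify the identity $\psi_{f(v)}=F_\ast\psi_v$, and then invoke continuity of pushforward under a continuous map together with the fact that $\Psi$ carries the subspace weak topology from $\CP([0,\infty]\times E)$. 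Your argument is more structural: it avoids the convergence-in-measure characterisation altogether, makes explicit why $h$ is well defined on pseudopaths (the identity shows independence of the chosen representative $v$), and reduces everything to the classical continuous mapping theorem for weak convergence. The paper's route is shorter on the page but leans on the convergence-in-measure lemma and leaves the subsequence step tacit. Two small remarks on yours: the pushforward identity actually holds on \emph{all} Borel sets directly from $\psi_v(A)=\int_0^\infty 1_A(t,v(t))\,\e^{-t}\,\d t$ and $F^{-1}(A)=\{(t,e)\colon(t,f(e))\in A\}$, so the $\pi$--$\lambda$ reduction to rectangles, while harmless, is not needed; and for part (b) your argument and the paper's are essentially the same, both being the standard transfer $g\mapsto g\circ h\in\CC_b(\Psi)$.
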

		
		Note that Lemma~\ref{lem94} allows us to use the continuous mapping theorem in the Meyer-Zheng topology.

		\paragraph{Step 4. Mean-field finite-systems scheme.}
		
		Use Steps 1-- 4 to prove Proposition \ref{P.finsysmf}. 
		
		\medskip
		Having completed the abstract scheme of steps 1--4, we set out to prove the constituent propositions and lemmas.

		
		\section{Proofs: $N\to\infty$, mean-field, proof of abstract scheme}
		\label{ss.pabstracts}
		
		In Sections~\ref{step1}--\ref{step4} we prove the propositions and the lemmas stated in Steps 1--4 in Section~\ref{sec:finsysmf}.    
		
		\subsection {Proof of step 1. Equilibrium of the single components}
		\label{step1}
		
		We start by proving Proposition \ref{prop1} with the help of the seven lemmas stated in Step 1 of Section~\ref{sec:finsysmf}. Afterwards we prove each of the lemmas.
		
		\paragraph{$\bullet$ Proof of Proposition \ref{prop1}}
		
		\begin{proof}
			We use an argument similar to the one used in \cite[Section 2 (i)]{DG93a}. Let $(L(N))_{N\in\N}$ be any sequence satisfying $\lim_{N \to \infty}L(N)=\infty$ and $\lim_{N\to\infty} L(N)/N=0$. Let $\mu_{N}$ be the measure on $([0,1]^2)^{\N_0}$ obtained by periodic continuation of $\CL[X^{[N]}(Ns-L(N)),Y^{[N]}(Ns-L(N))]$. Note that $([0,1]^2)^{\N_0}$ is compact. Hence, letting $(N_k)_{k\in\N}$ be the subsequence in Proposition~\ref{prop1}, we can pass to a further subsequence and obtain 
			\begin{equation}
				\lim_{k\to\infty} \mu_{N_k} = \mu.
			\end{equation}
			Since we assumed that $\CL[X^{[N]}(0),Y^{[N]}(0)]$ is exchangeable and the dynamics preserves exchangeability, the measures $\mu_{N_k}$ are exchangeable and also the limiting law $\mu$ is exchangeable. Define $\phi$ as in \eqref{phi} in Lemma \ref{stabest}. Then we can condition on $\phi$ and write
			\begin{equation}
				\mu = \int_{[0,1]} \mu_\rho\, \d \Lambda(\rho), 
			\end{equation}
			where $\Lambda(\cdot)=\CL[\phi]$. By assumption we know that 
			\begin{equation}
				\lim_{k\to\infty} \CL\big[\bar{\Theta}^{[N_k]}(N_ks)\big] = P_s
			\end{equation}
			and
			\begin{equation}
				\lim_{k\to\infty} \CL\left[\sup_{0\leq t\leq L(N_k)}\left|\bar{\Theta}^{[N_k]}(N_ks)
				-\bar{\Theta}^{[N_k]}(N_ks-t)\right|\right]=\delta_0.
			\end{equation}
			Hence
			\begin{equation}
				\begin{aligned}
					\lim_{k\to\infty} \CL\left[\bar{\Theta}^{[N_k]}(N_ks-L(N_k))\right] = P_s.
				\end{aligned}
			\end{equation}
			
			Recall that
			\begin{equation}
				\Lambda =\CL[\phi] =\CL \left[\lim_{n\to\infty}\frac{1}{n}\sum_{i\in[n]} 
				\frac{x_i+Ky_i}{1+K}\right] \quad \text{ on } (\mu,([0,1]^2)^{\N_0}).
			\end{equation}
			By Lemma \ref{stabest}, if $\phi_{N_k}=\frac{1}{N_k} \sum_{i\in[N_k]} \frac{x_i+Ky_i}{1+K}$  on $(\mu_{N_k},([0,1]^2)^{\N_0})$, then $\lim_{k\to\infty} \CL[\phi_{N_k}]=\CL[\phi]$. Taking the subsequence $(\mu_{N_k})_{k\in\N}$, we get $\Lambda(\cdot)=P_s(\cdot)$, and hence 
			\begin{equation}
				\label{97}
				\mu=\int_{[0,1]} \mu_\rho\, \d P_s(\rho).
			\end{equation}
			
			Let $\bar{L}(N)$ be the sequence constructed in Lemma~\ref{unifergod}[b]. We can require that $\bar{L}(N) \leq L(N)$ for all $N\in\N$. Write
			\begin{equation}
				\begin{aligned}
					&\CL\bigl[X^{[N_k]}(N_ks-L(N_k)+\bar{L}(N_k)),Y^{[N_k]}(N_ks-L(N_k)+\bar{L}(N_k))\bigr]\\
					&= \CL\bigl[X^{[N_k]}(N_ks-L(N_k)+\bar{L}(N_k)),Y^{N_k}(N_ks-L(N_k)+\bar{L}(N_k))\bigr]\\
					&\quad -\CL\bigl[X^{\mu_{N_k}}(\bar{L}(N_k)),Y^{\mu_{N_k}}(\bar{L}(N_k))\bigr],\\
					&\label{triangle}
					\quad +\CL\bigl[X^{\mu_{N_k}}(\bar{L}(N_k)),Y^{\mu_{N_k}}(\bar{L}(N_k))\bigr]
					-\CL\bigl[X^{\mu}(\bar{L}(N_k)),Y^{\mu}(\bar{L}(N_k))\bigr]\\
					&\quad +\CL\bigl[X^{\mu}(\bar{L}(N_k)),Y^{\mu}(\bar{L}(N_k))\bigr].
				\end{aligned}
			\end{equation}
			By Lemma \ref{unifergod}, the first and the second term tend to zero as $k\to\infty$. Hence
			\begin{equation}
				\CL\bigl[X^{[N_k]}(N_ks-L(N_k)+\bar{L}(N_k)),Y^{[N_k]}(N_ks-L(N_k)+\bar{L}(N_k))\bigr]
			\end{equation}
			tends to $ \CL\bigl[X^{\mu}(L(N_k)),Y^{\mu}(L(N_k))\bigr]$ as $k\to\infty$. By \eqref{97},
			\begin{equation}
				\CL\bigl[X^{\mu}(\bar{L}(N_k)),Y^{\mu}(\bar{L}(N_k))\bigr]
				=\int_{[0,1]} \CL\bigl[X^{\mu_\rho}(\bar{L}(N_k)),Y^{\mu_\rho}(\bar{L}(N_k))\bigr]\,\d P_s (\rho).
			\end{equation}
			Since $\lim_{k \to \infty}\bar{L}(N_k)=\infty$, by Lemma \ref{lemerg} we have
			\begin{equation}
				\lim_{k\to\infty} \CL\bigl[X^{\mu_\rho}(\bar{L}(N_k)),Y^{\mu_\rho}(\bar{L}(N_k))\bigr] = \nu_\rho.
			\end{equation}
			Therefore, by \eqref{triangle} and Lemma \ref{lemlip},
			\begin{equation}
				\label{911}
				\begin{aligned}
					&\lim_{k\to\infty} \CL\bigl[X^{[N_k]}(N_ks-L(N_k)+\bar{L}(N_k)),Y^{[N_k]}(N_ks-L(N_k)+\bar{L}(N_k))\bigr]\\
					& \qquad = \int_{[0,1]} \nu_\rho\, \d P_s(\rho).
				\end{aligned}
			\end{equation}
			
			To show that
			\begin{equation}
				\lim_{k\to\infty} \CL\bigl[X^{[N_k]}(N_ks),Y^{[N_k]}(N_ks)\bigr] = \int_{[0,1]} \nu_\rho\, \d P_s(\rho).
			\end{equation}
			we invoke Lemma~\ref{lem:12}. Let $(X^{[N],1},Y^{[N],1})$ be the finite system starting from 
			\begin{equation}
				\CL\big[X^{[N]}(Ns-L(N)),Y^{[N]}(Ns-L(N))\big],
			\end{equation}
			and let $(\bar{L}(N))_{N\in\N}$ be the sequence such that \eqref{911} holds. Let $(X^{[N],2}.Y^{[N],2})$ be the finite system starting from 
			\begin{equation}
				\CL\big[X^{[N]}(Ns-\bar{L}(N)),Y^{[N]}(Ns-\bar{L}(N))\big].
			\end{equation}
			Choose for the sequence $t(N)$ in Lemma~\ref{lem:12} the sequence $\bar{L}(N)$. Let $\mu^{[N],1}$ be defined by periodic continuation of $(X^{[N]}(Ns-L(N)),Y^{[N]}(Ns-L(N)))$, and $\mu^{[N],2}$ by periodic continuation of $(X^{[N]}(Ns-\bar{L}(N)),Y^{[N]}(Ns-\bar{L}(N)))$. Defining $\bar{\Theta}_1$ and $\bar{\Theta}_2$ according to \eqref{843}, where for $\mu^{[N],2}$ we replace $L(N)$ by $\bar{L}(N)$, we get 
			\begin{equation}
				\lim_{k\to\infty}|\bar{\Theta}^{N_k}_{1}-\bar{\Theta}^{N_k}_2|
				= \lim_{k\to\infty} \big|\bar{\Theta}^{N_k}(N_ks-L(N_k))-\bar{\Theta}^{N_k}(N_ks-\bar{L}(N_k))\big |=0
				\text{ in probability} 
			\end{equation}
			by the assumptions in \eqref{823}. Hence, if $\mu$ is any weak limit point of the sequence $(\mu^{[N_k],1}\times\mu^{[N_k],2})_{k\in\N}$, then
			\begin{equation}
				\mu(\bar{\Theta}_1=\bar{\Theta}_2)=1. 
			\end{equation}
			By passing to a further subsequence, we can now apply Lemma~\ref{lem:12}, to obtain
			\begin{equation}
				\label{ca1}
				\lim_{k\to\infty}\E\left[|x^{N_k}_{i,1}(\bar{L}(N_k))-x^{N_k}_{i,2}(\bar{L}(N_k))|
				+K|y^{N_k}_{i,1}(\bar{L}(N_k))-y^{N_k}_{i,2}(\bar{L}(N_k))|\right]=0.
			\end{equation}
			Note that 
			\begin{equation}
				\label{c2}
				\begin{aligned}
					&\CL\big[X_1(\bar{L}(N_k),Y_1(\bar{L}(N_k)))\big]\\
					&=\CL\big[X^{[N_k]}(N_ks-L(N_k)+\bar{L}(N_k)),Y^{[N_k]}(N_ks-L(N_k)+\bar{L}(N_k))\big],\\
					&\CL\big[X_2(\bar{L}(N_k),Y_2(\bar{L}(N_k)))\big] = \CL\big[X^{[N_k]}(N_ks),Y^{[N_k]}(N_ks)\big].
				\end{aligned}
			\end{equation}
			Moreover, we know from \eqref{911} that
			\begin{equation}
				\label{c3}
				\lim_{k\to\infty}\CL\left[X^{[N_k]}(N_ks-L(N_k)+\bar{L}(N_k)),Y^{[N_k]}(N_ks-L(N_k)+\bar{L}(N_k))\right]
				=\int_{[0,1]}\nu_\rho P_s(\d \rho).
			\end{equation}
			Combining \eqref{ca1}--\eqref{c3}, we find that
			\begin{equation}
				\CL\big[X^{[N_k]}(N_ks),Y^{[N_k]}(N_ks)\big]= \int_{[0,1]}\nu_\rho P_s(\d \rho).
			\end{equation}
		\end{proof}
		
		In the remainder of this section we prove Lemmas~\ref{lemerg}--\ref{unifergod} and \ref{lem:12}.
		
		\paragraph{$\bullet$ Proof of Lemma \ref{lemerg}}
		\label{L.21}
		
		\begin{proof}
			Since the components of the infinite system in \eqref{gh5inf} evolve independently, it is enough to show that each component converges to $\Gamma_\theta$. This convergence follows from the proof of Proposition \ref{P.equergod} (see Section \ref{sec:equergod}). Hence the infinite system defined by \eqref{gh52} converges to $\nu_\theta=\Gamma_\theta^{\otimes\N_0}$. Ergodicity of $\nu_\theta$ with respect to translations follows from Kolmogorov's zero-one law. 
		\end{proof}

		\paragraph{$\bullet$ Proof of Lemma~\ref{lemav}}
		
		\begin{proof}
			Using the definition of ${{\Theta}}^{[N]}_x(t)$, ${{\Theta}}^{[N]}_y(t)$ in \eqref{avxy} and the SSDE in \eqref{gh45a}, we find the following evolution for the averages:
			\begin{equation}
				\begin{aligned}
					\d {{\Theta}}^{[N]}_x(t) &=\frac{1}{N}\sum_{i\in[N]} \sqrt{g(x_i(t))}\,\d w_i(t)
					+Ke\,[{{\Theta}}^{[N]}_y(t)-{{\Theta}}^{[N]}_x(t)]\,\d t,\\
					\d {{\Theta}}^{[N]}_y(t) &=e\,[{{\Theta}}^{[N]}_x(t)-{{\Theta}}^{[N]}_y(t)]\,\d t.
				\end{aligned}
			\end{equation}
			Consequently,
			\begin{equation}
				\begin{aligned}
					\d\big({\Delta}^{[N]}_{\bar{\Theta}}(t)\big)^2
					&=2{\Delta}^{[N]}_{\bar{\Theta}}(t)\, \d {\Delta}^{[N]}_{\bar{\Theta}}(t)+2\d \langle{\Delta}^{[N]}_{\bar{\Theta}}\rangle(t)\\
					&= {\Delta}^{[N]}_{\bar{\Theta}}(t) \frac{1}{N}\sum_{i\in[N]} \sqrt{g(x_i(t))}\,\d w_i(t)
					- (Ke+e)\,\big({\Delta}^{[N]}_{\bar{\Theta}}(t)\big)^2\,\d t\\
					&\qquad+2\frac{1}{N^2}\sum_{i\in[N]}g(x_i(t))\ \d t,
				\end{aligned}
			\end{equation}
			and hence
			\begin{equation}
				\frac{\d}{\d t}\,\E\left[\big({\Delta}^{[N]}_{\bar{\Theta}}(t)\big)^2\right]
				= -2(Ke+e)\,\E\left[\big({\Delta}^{[N]}_{\bar{\Theta}}(t)\big)^2\right]+\frac{2}{N^2} \sum_{i\in[N]} g(x_i(t))
			\end{equation}
			and
			\begin{equation}
				\label{pr12}
				\E\left[\big({\Delta}^{[N]}_{\bar{\Theta}}(t)\big)^2\right]=\E\left[\big({\Delta}^{[N]}_{\bar{\Theta}}(0)\big)^2\right]
				\e^{-2(Ke+e)t}+\int_0^t \d r\,\e^{-2(Ke+e)(t-r)}\frac{2}{N^2} \sum_{i\in[N]} g(x_i(r)).
			\end{equation}
			Therefore we get the bound
			\begin{equation}
				\label{pr2}
				\E\left[\big|{\Delta}^{[N]}_{\bar{\Theta}}(t)\big|\right]\leq\sqrt{\E\left[\big({\Delta}^{[N]}_{\bar{\Theta}}(0)\big)^2\right]}
				\e^{-(Ke+e)t}+ \sqrt{\frac{2\|g\|}{N(Ke+e)}}.
			\end{equation}
		\end{proof}
		
		
		\paragraph{$\bullet$ Proof of Lemma~\ref{l.comp}}
		\label{os}
		
		\begin{proof}
			To compare the systems in \eqref{gh45a} and \eqref{gh45ainf}, we couple them via their Brownian motions. Therefore for all $i\in[N]$ we assume that the evolution in \eqref{gh45a} and \eqref{gh45ainf} is driven by the same Brownian motion, $\tilde{w}_i=w_i$. If $i\notin [N]$, then we set $w_i=w_j$ for $j=i \mod N$. We denote the coupled process by $\tilde{z}(t)=(\tilde{z}_{i}(t))_{i\in\N_0} = (\tilde{z}^{[N]}_{i}(t),\tilde{z}^{\mu_N}_{i}(t))_{i\in\N_0}$, where $\tilde{z}^{[N]}_{i}(t)=(\tilde x^{[N]}_i(t),\tilde y^{[N]}_i(t))$ and $\tilde{z}^{\mu_N}_i(t)=(\tilde x^{\mu_N}_i(t),\tilde y^{\mu_N}_i(t))$. The tilde indicates that we are considering the coupled process, and 
			\be\label{bl}
			\begin{aligned}
				\CL[\tilde{z}(0)]
				&= \CL\big[X^{[N]}(Ns-L(N)),Y^{[N]}(Ns-L(N))\big]\times\mu_N\\
				&= \CL\big[X^{[N]}(Ns-L(N)),Y^{[N]}(Ns-L(N))\big]^2.
			\end{aligned}
			\ee 
			Define 
			\begin{equation}
				\label{mo19}
				\Delta^N_i(t)=\tilde x_i^{[N]}(t)-\tilde x_i^{\mu_N}(t), \qquad \delta^N_i(t)=\tilde y_i^{[N]}(t)-\tilde y_i^{\mu_N}(t).
			\end{equation}
			To prove that the coupling is successful, we show that, for all $t\geq 0$,
			\begin{equation}
				\label{mo20}
				\lim_{N\to\infty} \E\left[|\Delta_i^N(t)|+K|\delta_i^N(t)|\right] = 0 \quad \forall\, i\in\N_0. 
			\end{equation}
			From now on we will only consider sites $i\in [0,N]$ for which both infinite systems have the same Brownian motion.
			
			From \eqref{gh45a} and \eqref{gh45ainf} it follows that 
			\begin{equation}
				\label{mo23}
				\begin{aligned}
					&\d \left[|\Delta^N_i(t)|+K |\delta^N_i(t)|\right]\\  
					&\quad =(\sign\, \Delta^N_i(t))\ \d\Delta^N_i(t)+ \d L_t^0+K\,\sign\, \delta^N_i(t)\ \d\delta^N_i(t)\\
					&\quad =-c\ (\sign\, \Delta^N_i(t))\ \Delta^N_i(t)\,\d t
					+c\, (\sign\, \Delta^N_i(t))\ \left[\bar{\Theta}^{[N]}(t) - \bar{\Theta}^{[N]}\right]\,\d t\\
					&\qquad+c\,(\sign\, \Delta^N_i(t))\ \left[{\Theta}^{[N]}_x(t)-\bar{\Theta}^{[N]}(t)\right]\, \d t\\
					&\qquad + (\sign\, \Delta^N_i(t))\ \left(\sqrt{g(x^{[N]}_i(t))}\,
					- \sqrt{g(x^{\mu_N}_i(t))}\, \right)\d w_i (t) \\
					&\qquad +(\sign\, \Delta^N_i(t)) \,K e\, [\delta^N_i(t)-\Delta^N_i(t)]\,\d t\\			
					&\qquad +(\sign\,\delta^N_i(t))\,Ke\,[\Delta^N_i(t)-\delta^N_i(t)]\,\d t,
				\end{aligned}
			\end{equation}
			where we use that the local time \gls{localtime} is zero, since $g$ is Lipschitz (see \cite[Proposition V.39.3]{RoWi00}).
			
			Taking expectations in \eqref{mo23}, we find
			\begin{equation}
				\label{mo23a}
				\begin{aligned}
					\frac{\d}{\d t}\E[ |\Delta^N_i(t)|+ K|\delta^N_i(t)|] 
					&= - c\,\E\left[|\Delta^N_i(t)|\right]\\
					&\quad+c\ \E\left[(\sign\, \Delta^N_i(t))\ \big[\bar{\Theta}^{[N]}(t) - \bar{\Theta}^{[N]}\big]\right]\\
					&\quad+c\,\E\left[(\sign\, \Delta^N_i(t))\ \big[{\Theta}^{[N]}_x(t)-\bar{\Theta}^{[N]}(t)\big]\right]\\
					&\quad+Ke\, \E\left[\big(\sign\, \Delta^N_i(t)-\sign\,\delta^N_i(t)\big) \big[\delta^N_i(t)-\Delta^N_i(t)\big]\right].			
				\end{aligned}
			\end{equation}
			Note that we can rewrite \eqref{mo23a} as
			\begin{equation}
				\label{mo23a2}
				\begin{aligned}
					\frac{\d}{\d t}\E[ |\Delta^N_i(t)|+ K|\delta^N_i(t)|] 
					&= -c\,\E\left[|\Delta^N_i(t)|\right]\\
					&\quad-2Ke\, \E\left[1_{\sign\, \Delta^N_i(t)\neq\sign\,\delta^N_i(t)} \,\, [|\delta^N_i(t)|+|\Delta^N_i(t)|]\right]\\
					&\quad+c\ \E\left[(\sign\, \Delta^N_i(t))\ \big[\bar{\Theta}^{[N]}(t) - \bar{\Theta}^{[N]}\big]\right]\\
					&\quad+c\,\E\left[(\sign\, \Delta^N_i(t))\ \big[{\Theta}^{[N]}_x(t)-\bar{\Theta}^{[N]}(t)\big]\right].
				\end{aligned}
			\end{equation}
			It therefore follows that
			\begin{equation}
				\label{mo3a2}
				\begin{aligned}
					\E[ |\Delta^N_i(t)|+ K|\delta^N_i(t)|]
					&=\E[ |\Delta^N_i(0)|+ K|\delta^N_i(0)|]\\
					&\quad-c\int_0^t \d r\,\E\left[|\Delta^N_i(r)|\right]\\ 
					&\quad-2Ke \int_0^t \d r\,\E\left[1_{\sign\, \Delta^N_i(r)\neq\sign\,\delta^N_i(r)} \,\, 
					[|\delta^N_i(r)|+|\Delta^N_i(r)|]\right]\\
					&\quad+\int_0^t \d r\,c\, \E\left[(\sign\, \Delta^N_i(r))\ \big[\bar{\Theta}^{[N]}(r) - \bar{\Theta}^{[N]}\big]\right]\\
					&\quad+\int_0^t \d r\,c\,\E\left[(\sign\, \Delta^N_i(r))\ \big[{\Theta}^{[N]}_x(r)-\bar{\Theta}^{[N]}(r)\big]\right].
				\end{aligned}
			\end{equation}
			Note that, by the choice of initial distribution for the coupling, we have
			\begin{equation}
				\E[ |\Delta^N_i(0)|+ K|\delta^N_i(0)|]=0.
			\end{equation}
			Therefore we get
			\begin{equation}
				\label{mo132}
				\begin{aligned}
					0&\leq \E[ |\Delta^N_i(t)|+ K|\delta^N_i(t)|]\\
					&\leq -c\int_0^t\d r\,\E\left[|\Delta^N_i(r)|\right]\\ 
					&\quad-2Ke \int_0^t \d r\,\E\left[1_{\sign\, \Delta^N_i(r)\neq\sign\,\delta^N_i(r)} \,\, [|\delta^N_i(r)|+|\Delta^N_i(r)|]\right]\\
					&\quad+\int_0^t \d r\,c\, \E\left[ \big|\bar{\Theta}^{[N]}(r) - \bar{\Theta}^{[N]}\big|\right]\\
					&\quad+\int_0^t \d r\, c\,\E\left[ \big|{\Theta}^{[N]}_x(r)-\bar{\Theta}^{[N]}(r)\big|\right]\\
					&\leq t \left(\sup_{0\leq r \leq t}c\ \E\left[ \big|\bar{\Theta}^{[N]}(r) - \bar{\Theta}^{[N]}\big|\right]
					+c\,\E\left[ \big|{\Theta}^{[N]}_x(r)-\bar{\Theta}^{[N]}(r)\big|\right]\right).
				\end{aligned}
			\end{equation}
			Hence, by the assumption in \eqref{m08} and Lemma~\ref{lemav} (recall \eqref{bl}) , we see that, for all $t>0$, 
			\begin{equation}
				\label{35}
				\lim_{N\to\infty} \E[ |\Delta^N_i(t)|+ K|\delta^N_i(t)|]=0.
			\end{equation}
			Therefore, for every Lipschitz function $f\in\CC\bigl(([0,1]),\R\bigr)$ of $x_i(t)$,
			\begin{equation}
				\lim_{n\to\infty}\left|\E[f(x_i^{[N]}(t))-f(x_i^{\mu_N}(t))]\right|
				\leq \lim_{n\to\infty} \text{Lip} f\, \E[||\Delta^N_i(L(N))||] = 0,
			\end{equation}
			and the same holds for Lipschitz functions of $y_i$. Using that the Lipschitz functions are dense in $\CC\bigl(([0,1]),\R\bigr)$, we obtain that the result actually holds for all $f\in\CC\bigl(([0,1]^2)^{\N_0},\R\bigr)$ depending on finitely many components. This in turn implies that the result holds for all $f\in\CC\bigl(([0,1]^2)^{\N_0},\R\bigr)$.
		\end{proof}

		\paragraph{$\bullet$ Proof of Lemma \ref{stabest}}
		
		\begin{proof}
			Define
			\begin{equation}
				D^N(Z)=\frac{1}{N}\sum_{j\in[N]}\frac{x_j+Ky_j}{1+K}, \qquad D(Z) = \lim_{N\to\infty}D^N(Z) \text{ in } L_2(\mu).
			\end{equation}
			Since $\mu$ is translation invariant with $\int_{[0,1]^2} \frac{x_0+Ky_0}{1+K}\,\d\mu<1$, the $L_2(\mu)$-limit $D(Z)$ exists by the ergodic theorem. Since, by assumption, $\mu_N\to\mu$ as $N\to\infty$ for all fixed $M\in\N_0$, we have
			\begin{equation}
				\label{799}
				\lim_{N\to\infty} \CL_{\mu_N}[D^M(Z)]=\CL_\mu[D^M(Z)].
			\end{equation}
			Therefore, in order to prove Lemma \ref{stabest}, we are left to show
			\begin{equation}
				\label{m4}
				\lim_{M\to\infty} \sup_{N\geq M} \|D^M(Z)-D^N(Z)\|_{L_2(\mu_N)} = 0.
			\end{equation}
			This can be done by using Fourier transforms and spectral densities, and to do so we follow the same strategy as in \cite[Lemma 2.5]{DG93a}. 
			
			Define 
			\be
			\bar\theta^N =\E^{\mu_N}\left[\frac{x_0+Ky_0}{1+K}\right].
			\ee
			Since $\mu_N$ is translation invariant on $\N_0$, by Herglotz's theorem there exists a unique measure $\lambda_N$ such that, for all $j,k\in\N_0$,
			\begin{equation}
				\label{h}
				\E^{\mu_N}\left[\left(\frac{x_j+Ky_j}{1+K}-\bar\theta^N\right)\left(\frac{x_k+Ky_k}{1+K}-\bar\theta^N\right)\right]
				=\int_{(-\pi,\pi]} \lambda_N(\d u)\,\e^{\mathrm{i} (j-k)u}.
			\end{equation}
			For $N\in\N_0$, define 
			\begin{equation}
				D^N(u) =\frac{1}{N} \sum_{j\in[N]} \e^{\mathrm{i} ju}.
			\end{equation}
			By \eqref{h}, it follows that
			\begin{equation}
				\left\|D^M(Z)-D^N(Z)\right\|_{L_2(\mu_N)} = \left\|D^M(u)-D^N(u)\right\|_{L_2(\lambda_N)}.
			\end{equation}
			Polynomials of the type $D^N(u)$ are called trigonometric polynomials and satisfy:
			\begin{enumerate}
				\item 
				$\lim_{N\to\infty}D^N(u)=1_{\{0\}}(u)$.
				\item 
				For $\delta>0$ and $M<\infty$ there exists an $\epsilon(M,\delta)$ such that, for all $N\geq M$,
				\be
				|D^N(u)-1_{\{0\}}(u)| \leq 1_{(-\delta,\delta)\backslash\{0\}}+\epsilon(M,\delta) \text{ with } 
				\epsilon(M,\delta)\to 0 \text{ as } M\to\infty.
				\ee 
			\end{enumerate}
			Hence it follows that 
			\begin{equation}
				\left\|D^M(u)-D^N(u)\right\|^2_{L_2(\lambda_N)}
				\leq 2\lambda_N((-\delta,\delta)\backslash\{0\})+2\epsilon(M,\delta).
			\end{equation}
			Now let $M\to\infty$, to obtain
			\begin{equation}
				\sup_{N\geq M}\|D^M(u)-D^N(u)\|_{L_2(\lambda_N)}\leq 2\lambda_N((-\delta,\delta)\backslash\{0\}).
			\end{equation}
			Subsequently let $\delta\to 0$, so that $(-\delta,\delta)\backslash\{0\}\to\emptyset$ and
			\begin{equation}
				\lim_{M\to\infty} \sup_{N\geq M}\|D^M(u)-D^N(u)\|^2_{L_2(\lambda_N)}=0.
			\end{equation}
		\end{proof}
		
		
		\paragraph{$\bullet$ Proof of Lemma~\ref{unifergod}}
		\label{s81}
		
		\begin{proof}
			We first prove Lemma~\ref{unifergod}[1]. Afterwards we construct $(\bar{L}(N))_{N\in \N}$ to prove Lemma~\ref{unifergod}[2].
			
			Since $\lim_{k\to\infty}\mu_{N_k}=\mu$, Lemma~\ref{stabest} implies that $\lim_{k\to\infty}\CL[\bar{\Theta}^{[N_k]}]=\CL[\bar{\Theta}]$. For ease of notation we drop the subsequence notation in the remainder of this proof. By Skohorod's theorem we can construct the random variables $(z_i^{\mu_N})_{N\in\N}$ and $z_i^\mu$ on one probability space such that $\lim_{N\to\infty}z_i^{\mu_N}=z_i$ a.s. Then, as in the proof of Lemma~\ref{stabest}, we obtain
			\begin{equation}
				\label{t0}
				\lim_{N\to\infty}\E[|\bar{\Theta}^{[N]}-\bar{\Theta}|]=0.
			\end{equation} 
			To prove the claim we couple the two infinite systems, namely,
			\begin{equation}
				\begin{aligned}
					\label{ainf}
					&\d x^{\mu_N}_i(t) = c\left[\bar{\Theta}^{[N]} -x^{\mu_N}_i(t)\right]\d t + \sqrt{g(x^{\mu_N}_i(t))}\, \d w_i (t) 
					+ K e\, [y^{\mu_N}_i(t)-x^{\mu_N}_i(t)]\,\d t,\\
					&\d y^{\mu_N}_i(t) = e\,[x^{\mu_N}_i(t)-y^{\mu_N}_i(t)]\, \d t, \qquad i \in \N_0,
				\end{aligned}
			\end{equation}
			and
			\begin{equation}
				\begin{aligned}
					\label{binf}
					&\d x^{\mu}_i(t) = c\left[\bar{\Theta} -x^{\mu}_i(t)\right]\d t + \sqrt{g(x^{\mu}_i(t))}\, \d w_i (t) 
					+ K e\, [y^{\mu}_i(t)-x^{\mu}_i(t)]\,\d t,\\
					&\d y^{\mu}_i(t) = e\,[x^{\mu}_i(t)-y^{\mu}_i(t)]\, \d t, \qquad i \in \N_0,
				\end{aligned}
			\end{equation}
			are coupled by using the same Brownian motions in \eqref{ainf} and \eqref{binf}. Like before, define $\Delta^{\mu_N}_i=x_i^{\mu_N}-x_i^{\mu}$ and $\delta^{\mu_N}_i=y_i^{\mu_N}-y_i^{\mu}$. By the construction with Skohorod's theorem, we have that 
			\begin{equation}\label{o2}
				\lim_{N\to\infty}\E\big[ |\Delta^{\mu_N}_i(0)|+ K|\delta^{\mu_N}_i(0)|\big]=0.
			\end{equation} 
			To prove that, for all $t\geq 0$,
			\begin{equation}
				\label{423}
				\lim_{N\to\infty}\E\big[ |\Delta^{\mu_N}_i(t)|+ K|\delta^{\mu_N}_i(t)|\big]=0,
			\end{equation}
			we proceed as in the proof of Lemma~\ref{l.comp}. By It\^o-calculus, we find that
			\begin{equation}
				\label{o1}
				\begin{aligned}
					\E\big[ |\Delta^{\mu_N}_i(t)|+ K|\delta^{\mu_N}_i(t)|\big]
					&=\E\big[ |\Delta^{\mu_N}_i(0)|+ K|\delta^{\mu_N}_i(0)|\big]\\
					&\quad-c\int_0^t \d r\,\E\left[|\Delta^{\mu_N}_i(r)|\right]\\ 
					&\quad-2Ke \int_0^t \d r\,\E\left[1_{\sign\, \Delta^{\mu_N}_i(r)\neq\sign\,\delta^{\mu_N}_i(r)} \,\, 
					[|\delta^{\mu_N}_i(r)|+|\Delta^{\mu_N}_i(r)|]\right]\\
					&\quad+\int_0^t \d r\,c\ \E\left[ \big|\bar{\Theta}^{[N]} - \bar{\Theta}\big|\right]\\
					&\leq\E\big[ |\Delta^{\mu_N}_i(0)|+ K|\delta^{\mu_N}_i(0)|\big]
					+ tc\,\E\big[ \big|\bar{\Theta}^{[N]} - \bar{\Theta}\big|\big]. 
				\end{aligned}
			\end{equation}
			From \eqref{o1} it the follows that, for every $t\geq0$,
			\begin{equation}
				\label{23}
				\lim_{N\to\infty} \E\big[|\Delta^{\mu_N}_i(t)|+ K|\delta^{\mu_N}_i(t)|\big]=0.
			\end{equation}
			
			We next construct the sequence $(\bar{L}(N))_{N\in\N}$. From \eqref{35} and \eqref{23},we have
			\begin{equation}
				\label{955}
				\lim_{N\to\infty} \E\big[ |\Delta^N_i(t)|+ K|\delta^N_i(t)|\big]+\E\big[ |\Delta^{\mu_N}_i(t)|+ K|\delta^{\mu_N}_i(t)|\big]=0.
			\end{equation}
			Let $(t_k)_{k\in\N}$ be an increasing sequence such that $\lim_{k \to \infty} t_k=\infty$ and $\lim_{k\to\infty}
			t_k/k = 0$. By \eqref{955}, we can for each $k$ find an $N_k\in\N$ such that, for all $N\geq N_k$, 
			\begin{equation}
				\E\big[ |\Delta^N_i(t_k)|+ K|\delta^N_i(t_k)|\big]
				+\E\big[ |\Delta^{\mu_N}_i(t_k)|+ K|\delta^{\mu_N}_i(t_k)|\big]<\frac{1}{k}.
			\end{equation}
			Requiring that $N_{k+1}>N_k$, we obtain a strictly increasing sequence $(N_k)_{k\in\N}$ that partitions $\N$. Set
			\begin{equation}
				\label{957}
				\gls{barln}=\sum_{k\in\N}t_k1_{\{N_k,\cdots, N_{k+1}-1\}}(N).
			\end{equation}
			
			We show that $\bar{L}(N)$ satisfies the required properties:
			\begin{itemize}
				\item 
				$\lim_{N\to\infty}\E[ |\Delta^{N}_i(\bar{L}(N))|+ K|\delta^{N}_i(\bar{L}(N))|]+\E[ |\Delta^{\mu_N}_i(\bar{L}(N))|+ K|\delta^{\mu_N}_i(\bar{L}(N))|]=0$. To proof this, we fix $\epsilon>0$ and let $K$ be such that $\frac{1}{K}<\epsilon$. Then, for all $N\geq N_K$,
				\begin{equation}
					\begin{aligned}
						&\E\big[ |\Delta^{N}_i(\bar{L}(N))|+ K|\delta^{N}_i(\bar{L}(N))|\big]
						+\E\big[ |\Delta^{\mu_N}_i(\bar{L}(N))|+ K|\delta^{\mu_N}_i(\bar{L}(N))|\big]\\
						&\quad =\sum_{k\in\N}\E\big[ |\Delta^N_i(t_k)|+ K|\delta^N_i(t_k)|\big]
						+\E\big[ |\Delta^{\mu_N}_i(t_k)| + K|\delta^{\mu_N}_i(t_k)|\big]\,1_{\{N_k,\cdots,N_{k-1}\}}(N)\\
						&\quad <\frac{1}{K}<\epsilon.
					\end{aligned}
				\end{equation}
				We conclude that
				\begin{equation}
					\label{959}
					\lim_{N\to\infty}\E\big[ |\Delta^{N}_i(\bar{L}(N))|+ K|\delta^{N}_i(\bar{L}(N))|\big]
					+\E\big[ |\Delta^{\mu_N}_i(\bar{L}(N))| v+ K|\delta^{\mu_N}_i(\bar{L}(N))|\big]=0.
				\end{equation}
				\item 
				$\lim_{N\to\infty} \bar{L}(N)=\infty$. By \eqref{957}, for each $k\in\N$ there exists an $N_k\in\N$ such that, for all $N\geq N_k$, $\bar{L}(N) \geq t_k$ and $t_k\to\infty$. We conclude that $\lim_{N\to\infty} \bar{L}(N)=\infty$.
				\item 
				$\lim_{N\to\infty} \bar{L}(N)/N=0$. Recall that $\lim_{k\to\infty} t_k/k=0$ and $N_k\geq k$ by construction. Hence $\lim_{N\to\infty} \bar{L}(N)/N \leq \lim_{N\to\infty} \sum_{k\in\N} (t_k/k)]\,1_{\{N_k,\cdots,N_{k-1}\}}(N)=0$.
			\end{itemize}
			Choosing $(t_k)_{k\in\N}=(L(N))_{N\in\N}$, we complete the proof of Lemma~\ref{unifergod}.
		\end{proof}
		
		
		\paragraph{$\bullet$ Proof  of Lemma~\ref{lemlip}}
		
		\begin{proof}
			The goal is to prove that $\nu_\theta$ is continuous in $\theta$. To do so, let $(\theta_n)_{n\in\N}$ be a sequence in $[0,1]$ (note that $\theta_n$ is not a random variable) such that $\lim_{n\to\infty}\theta_n=\theta$. Couple the two infinite systems
			\begin{equation}
				\begin{aligned}
					\label{dainf}
					&\d x^{n}_i(t) = c\left[\theta_n -x^{n}_i(t)\right]\d t + \sqrt{g(x^{n}_i(t))}\, \d w_i (t) 
					+ K e\, [y^{n}_i(t)-x^{n}_i(t)]\,\d t,\\
					&\d y^{n}_i(t) = e\,[x^{n}_i(t)-y^{n}_i(t)]\, \d t, \qquad i \in \N_0,
				\end{aligned}
			\end{equation}
			and
			\begin{equation}
				\begin{aligned}
					\label{dbinf}
					&\d x^{}_i(t) = c\left[\theta -x^{}_i(t)\right]\d t + \sqrt{g(x^{}_i(t))}\, \d w_i (t) 
					+ K e\, [y^{}_i(t)-x^{}_i(t)]\,\d t,\\
					&\d y^{}_i(t) = e\,[x^{}_i(t)-y^{}_i(t)]\, \d t, \qquad i \in \N_0,
				\end{aligned}
			\end{equation}
			via their Brownian motions, like in the proof of Lemma~\ref{unifergod}. Let $\CL[(x_i^n(0),y_i^n(0))]=\delta_{(\theta_n,\theta_n)}$ and $\CL[(x_i(0),y_i(0))]=\delta_{(\theta,\theta)}$. As before, define  $\Delta^{n}_i=x_i^{n}-x_i^{}$ and $\delta^{n}_i=y_i^{n}-y_i^{}$. Note that
			\begin{equation}\label{o3}
				\lim_{n\to\infty}\E\big[ |\Delta^{n}_i(0)|+ K|\delta^{n}_i(0)|\big]=0.
			\end{equation} 
			By a similar argument as in the proof of Lemma~\ref{unifergod}, we obtain that, for all $t\geq0$,
			\begin{equation}
				\label{o33}
				\lim_{n\to\infty}\E\big[ |\Delta^{n}_i(t)|+ K|\delta^{n}_i(t)|\big]=0.
			\end{equation} 
			Hence we can construct a sequence $(L(n))_{n\in \N}$ satisfying $\lim_{n\to\infty} L(n) = \infty$ and $\lim_{n\to\infty} L(n)/n = 0$ such that  
			\begin{equation}
				\label{312}
				\lim_{n\to\infty}\E\big[ |\Delta^{n}_i(L(n))|+ K|\delta^{n}_i(L(n))|\big]=0.
			\end{equation}
			To prove the continuity of the equilibrium $\nu_\theta$ in $\theta$, we reason as follows. First note that we can couple the system in \eqref{dainf} starting from $\delta_{(\theta_n,\theta_n)}$ with the system in \eqref{dainf} starting from $\nu_{\theta_n}$. By the uniqueness and convergence to equilibrium (see Lemma~\ref{lemerg}), we see that this coupling is successful. Similarly, we can couple the system in \eqref{dbinf} starting from $\delta_{(\theta, \theta)}$ with the system in \eqref{dbinf} starting from $\nu_{\theta_n}$, and see the coupling succesful. Finally, we use \eqref{312} to obtain
			\begin{equation}
				\begin{aligned}
					& \lim_{n\to\infty} \E^{\nu_{\theta_n}\times\nu_\theta}\big[ |\Delta^{n}_i(L(n))|+ K|\delta^{n}_i(L(n))|\big]\\
					&\leq \lim_{n\to\infty} \E^{\nu_{\theta_n}\times\delta_{(\theta_n,\theta_n)}}\big[ |\Delta^{n}_i(L(n))|
					+ K|\delta^{n}_i(L(n))|\big]\\
					&\quad +\lim_{n\to\infty}\E^{\delta_{(\theta,\theta)}\times\delta_{(\theta_n,\theta_n)}}\big[ |\Delta^{n}_i(L(n))|+
					K|\delta^{n}_i(L(n))|\big]\\
					&\quad +\lim_{n\to\infty}\E^{\nu_\theta\times\delta_{(\theta,\theta)}}\big[ |\Delta^{n}_i(L(n))|+ K|\delta^{n}_i(L(n))|\big]
					=0.
				\end{aligned}
			\end{equation} 
			Let $f$ be a Lipschitz function. Then, by the equilibrium property of $\nu_{\theta_n}$ and $\nu_\theta$,
			\begin{equation}
				\begin{aligned}
					&\lim_{n\to\infty}\left|\E^{\nu_{\theta_n}}\big[f(x^n(0))\big]-\E^{\nu_\theta}\big[f(x(0))\big]\right|\\
					&\quad =\lim_{n\to\infty}\left|\E^{\nu_{\theta_n}}\big[f(x^n(L(n)))\big]-\E^{\nu_\theta}\big[f(x(L(n)))\big]\right|\\
					&\quad =\lim_{n\to\infty}\left|\E^{\nu_{\theta_n}\times\nu_\theta}\big[f(x^n(L(n)))-f(x(L(n)))\big]\right|\\
					&\quad \leq \lim_{n\to\infty} (\mathrm{Lip} f)\, \E^{\nu_{\theta_n}\times\nu_\theta}\big[|(x^n(L(n)))-(x(L(n)))|\big]
					=0.
				\end{aligned}
			\end{equation}
			We can also show this if $f$ is a Lipschitz function of the $y$ component. Hence $\nu_\theta$ is indeed continuous as a function of $\theta$.
		\end{proof}
		
		
		\paragraph{$\bullet$ Proof of Lemma~\ref{lem:12}}
		
		\begin{proof}
			Note that for all $N\in\N$ fixed, by It\^o-calculus we find from \eqref{gh45a} that
			\begin{equation}
				\begin{aligned}
					& \frac{\d}{\d t} \E\left[|x^{[N],1}_{i}(t)-x^{[N],2}_{i}(t)|+K|y^{[N],1}_{i}(t)-y^{[N],2}_{i}(t)|\right]\\
					&=-\frac{2c}{N}\sum_{j \in [N]}\E\left[|x^{[N]}_{j,1}(t)-x^{[N]}_{j,2}(t)|\,1_{\big\{\sign\big(x^{[N]}_{j,1}(t)-x^{[N]}_{j,2}(t)\big)
						\neq \sign\big(x^{[N],1}_{i}(t)-x^{[N],2}_{i}(t)\big)\big\}}\right]\\
					&\qquad-2Ke\,\E\Big[|x^{[N],1}_{i}(t)-x^{[N],2}_{i}(t)|\\
					&\qquad\qquad +|y^{[N],1}_{i}(t)-y^{[N],2}_{i}(t)|\,
					1_{\big\{\sign\big(x^{[N],1}_{i}(t)-x^{[N],2}_{i}(t)\big) \neq \sign\big(y^{[N],1}_{i}(t)-y^{[N],2}_{i}(t)\big)\big\}}\Big].
				\end{aligned}
			\end{equation}
			Therefore, for each $N\in\N$,
			\begin{equation}
				\label{mono}
				t \mapsto \E\left[|x^{[N],1}_{i}(t)-x^{[N],2}_{i}(t)|+K|y^{[N],1}_{i}(t)-y^{[N],2}_{i}(t)|\right]
				\text{ is decreasing}.
			\end{equation}
			
			Fix any $t(N)\to\infty$. By the assumption in Lemma~\ref{lem:12}, the proofs of Lemma~\ref{l.comp} and Lemma~\ref{unifergod} imply that \eqref{955} holds for both $(X^{[N],1},Y^{[N],1})$ and $(X^{[N],2},Y^{[N],2}) $. Using the construction in the proof of Lemma~\ref{unifergod}, we can construct \emph{one} sequence $(l(N))_{N\in\N}$, satisfying $l(N)\leq t(N)$, $\lim_{N\to\infty }l(N)=\infty$ and $\lim_{N\to\infty} l(N)/N=0$, such that \eqref{959} with $\bar{L}(N)$ replaced by $l(N)$ holds for both the systems arising from $(X^{[N],1},Y^{[N],1}) $ and $(X^{[N],2},Y^{[N],2})$. 
			
			Write
			\begin{equation}
				\label{p1}
				\begin{aligned}
					&\E\left[|x^{[N],1}_{i}(l(N))-x^{[N],2}_{i}(l(N))|+K|y^{[N],1}_{i}(l(N))-y^{[N],2}_{i}(l(N))|\right]\\
					\leq&\E\left[|x^{[N],1}_{i}(l(N))-x^{\mu,1}_{i}(l(N))|+K|y^{[N],1}_{i}(l(N))-y^{\mu,1}_{i}(l(N))|\right]\\
					&+\E\left[|x^{\mu,1}_{i}(l(N))-x^{\mu,2}_{i}(l(N))|+K|y^{\mu,1}_{i}(l(N))-y^{\mu,2}_{i}(l(N))|\right]\\
					&+\E\left[|x^{\mu,2}_{i}(l(N))-x^{[N],2}_{i}(l(N))|+K|y^{\mu,2}_{i}(l(N))-y^{[N],2}_{i}(l(N))|\right].
				\end{aligned}
			\end{equation} 
			Note that in the right-hand side of the inequality the first and the third term tend to zero by \eqref{959}. The second term tends to zero because $\mu\{\bar{\Theta}_1=\bar{\Theta}_2\}=1$, and hence Lemma~\ref{lemerg} can be applied. Therefore 
			\begin{equation}
				\lim_{N\to\infty}\E\left[|x^{[N],1}_{i}(l(N))-x^{[N],2}_{i}(l(N))|+K|y^{[N],1}_{i}(l(N))-y^{[N],2}_{i}(l(N))|\right]=0.
			\end{equation}
			Using the monotonicity in \eqref{mono}, we get
			\begin{equation}
				\begin{aligned}
					&\lim_{N\to\infty} \E\left[|x^{[N],1}_{i}(t(N))-x^{[N],2}_{i}(t(N))|+K|y^{[N],1}_{i}(t(N))-y^{[N],2}_{i}(t(N))|\right]\\
					&\leq \lim_{N\to\infty}\E\left[|x^{[N],1}_{i}(l(N))-x^{[N],2}_{i}(l(N))|+K|y^{[N],1}_{i}(l(N))-y^{[N],2}_{i}(l(N))|\right]
					=0.
				\end{aligned}
			\end{equation}
		\end{proof}
		Combining the proofs of Proposition~\ref{prop1}, Lemma~\ref{unifergod} and Lemma~\ref{lem:12}, we obtain the following corollary. This corollary turns out to be important in Section~\ref{step3} in the proof of Lemma~\ref{lemmart} to obtain the limiting evolution of the $1$-blocks.
		
		\begin{corollary}
			\label{cor1}
			Fix $s>0$. Let $\mu_{N}$ be the measure obtained by periodic configuration of 
			\begin{equation}
				\CL\big[X^{[N]}(Ns-L(N)),Y^{[N]}(Ns-L(N))\big],
			\end{equation} 
			and let $\mu$ be a weak limit point of the sequence $(\mu_N)_{N\in\N}$. Let
			\begin{equation}
				\label{1a1}
				\bar{\Theta} = \lim_{N\to\infty}\frac{1}{N}\sum_{i\in[N]}\frac{x_i^\mu+Ky_i^{\mu}}{1+K} \text{ in } L^2(\mu).
			\end{equation}
			and let $(X^{\nu_{\bar{\Theta}}},Y^{\nu_{\bar{\Theta}}})$ be the infinite system evolving according to \eqref{binfb} and starting from its equilibrium measure.
			Consider the finite system $(X^{[N]},Y^{[N]})$ as a system on $([0,1]\times[0,1])^{\N_0}$ obtained by periodic continuation.  Construct $(X^{[N]},Y^{[N]})$ and $(X^{\nu_{\bar{\Theta}}},Y^{\nu_{\bar{\Theta}}})$ on one probability space. Then, for all $t\geq0$, 
			\begin{equation}
				\label{937}
				\begin{aligned}
					\lim_{N \to \infty}\E\left[\big|x_i^{[N]}(Ns+t)-x_i^{\nu_{\bar{\Theta}}}(t)\big|\right]
					+K\,\E\left[\big|y_i^{[N]}(Ns+t)-y_i^{\nu_{\bar{\Theta}}}(t)\big|\right]=0,\qquad\forall\, i\in[N].
				\end{aligned}
			\end{equation}
		\end{corollary}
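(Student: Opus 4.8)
The plan is to deduce the corollary by stitching together the three lemmas that have just been proved in Step~1, exactly as they were set up. First I would recall that by Lemma~\ref{unifergod}[1], along the given subsequence (which for notational convenience we relabel as $\N$) we have $\lim_{N\to\infty}\CL[(X^{\mu_N}(t),Y^{\mu_N}(t))] = \CL[(X^\mu(t),Y^\mu(t))]$ for every $t\geq 0$, where $(X^{\mu_N},Y^{\mu_N})$ evolves according to \eqref{gh45ainf} (with drift centre $\bar{\Theta}^{[N]}$) and $(X^\mu,Y^\mu)$ according to \eqref{binfb} (with drift centre $\bar{\Theta}$ defined in \eqref{1a1}). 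Moreover, by the construction via Skorohod's theorem inside that proof, the stronger statement \eqref{23}, $\lim_{N\to\infty}\E[|\Delta_i^{\mu_N}(t)| + K|\delta_i^{\mu_N}(t)|] = 0$, holds, so the infinite system with the finite-system conserved-quantity estimator as drift centre can be compared \emph{componentwise in $L_1$} to the infinite system with the limiting drift centre $\bar{\Theta}$. Since $\bar{\Theta}$ is a.s. in $[0,1]$, the system $(X^{\nu_{\bar{\Theta}}},Y^{\nu_{\bar{\Theta}}})$ of the corollary, started from its equilibrium $\nu_{\bar{\Theta}}$, differs from $(X^\mu,Y^\mu)$ only in its initial law; but by Lemma~\ref{lemerg} and the coupling argument in the proof of Proposition~\ref{P.equergod}, starting the dynamics \eqref{binfb} from $\mu$ versus from $\nu_{\bar{\Theta}}$ leads to a successful coupling, so $\lim_{t\to\infty}\E[|x_i^\mu(t) - x_i^{\nu_{\bar{\Theta}}}(t)| + K|y_i^\mu(t) - y_i^{\nu_{\bar{\Theta}}}(t)|] = 0$, and this will feed into a $\bar{L}(N)$-type argument below.

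Next I would invoke Lemma~\ref{l.comp}, whose hypothesis \eqref{m08} is exactly the content of Lemma~\ref{martav}(2) (which holds for the finite system here because it evolves according to \eqref{gh45a}), to get, for every fixed $t\geq 0$,
\begin{equation*}
\lim_{N\to\infty}\bigl|\E\bigl[f\bigl(X^{\mu_N}(t),Y^{\mu_N}(t)\bigr) - f\bigl(X^{[N]}(Ns - L(N) + t),Y^{[N]}(Ns - L(N) + t)\bigr)\bigr]\bigr| = 0
\end{equation*}
for all $f\in\CC(([0,1]^2)^{\N_0},\R)$, and in fact (from the componentwise $L_1$ bound \eqref{35} inside that proof) $\lim_{N\to\infty}\E[|\Delta_i^N(t)| + K|\delta_i^N(t)|] = 0$. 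The key remaining point is the shift in time origin: I want convergence at $Ns + t$, not at $Ns - L(N) + t$. For this I would use Lemma~\ref{lem:12} with the two finite systems started, respectively, from $\CL[X^{[N]}(Ns - L(N)),Y^{[N]}(Ns - L(N))]$ and from $\CL[X^{[N]}(Ns),Y^{[N]}(Ns)]$, taking $t(N) = t$ (or, if one prefers, running both to a time $l(N)\le t$ and using the monotonicity \eqref{mono} to conclude at $t$); the hypothesis $\tilde\mu(\{\bar{\Theta}_1 = \bar{\Theta}_2\}) = 1$ follows from \eqref{823} / \eqref{m08}, since $|\bar{\Theta}^{[N]}(Ns - L(N)) - \bar{\Theta}^{[N]}(Ns)| \to 0$ in probability. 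Lemma~\ref{lem:12} then gives $\lim_{N\to\infty}\E[|x_i^{[N],1}(t) - x_i^{[N],2}(t)| + K|y_i^{[N],1}(t) - y_i^{[N],2}(t)|] = 0$, i.e. the law at time $Ns - L(N) + t$ and the law at time $Ns + t$ become componentwise $L_1$-indistinguishable. Chaining the three $L_1$ estimates — finite system at $Ns+t$ versus finite system at $Ns - L(N) + t$ (Lemma~\ref{lem:12}), the latter versus $(X^{\mu_N},Y^{\mu_N})$ at time $t$ (Lemma~\ref{l.comp}, eq.~\eqref{35}), and that versus $(X^{\nu_{\bar{\Theta}}},Y^{\nu_{\bar{\Theta}}})$ at time $t$ (eq.~\eqref{23} together with the equilibrium coupling from Lemma~\ref{lemerg}, absorbed via a $\bar{L}(N)$-sequence as in Lemma~\ref{unifergod}[2]) — yields \eqref{937} once we realise all four processes on one probability space using a common family of Brownian motions.

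The main obstacle I anticipate is purely bookkeeping rather than conceptual: one must be careful that the \emph{same} auxiliary sequence $l(N)$ (or $\bar L(N)$) with $l(N)\le t(N)$, $l(N)\to\infty$, $l(N)/N\to 0$ works simultaneously for all the coupled pairs of systems appearing above, and that the monotonicity \eqref{mono} is genuinely available to transfer the estimate from time $l(N)$ up to the arbitrary fixed time $t$ — this is exactly the device used in the proof of Lemma~\ref{lem:12}, so it transplants directly. A secondary subtlety is that \eqref{937} is asserted for \emph{every} $i\in[N]$ and every $t\ge 0$, not just in the limiting topology; but since all the constituent lemmas deliver componentwise $L_1$ (indeed uniform-in-$i$, by exchangeability) control, this is automatic, and no Meyer–Zheng machinery is needed here because the relevant $1$-block rates in the infinite systems \eqref{gh45ainf} and \eqref{binfb} do not blow up. I would therefore present the proof as: (i) realise all processes on one space; (ii) insert the triangle inequality across the three comparisons; (iii) quote \eqref{35}, \eqref{23}, Lemma~\ref{lem:12} and the equilibrium coupling for the three vanishing terms; (iv) conclude.
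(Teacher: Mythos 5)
The core difficulty your chain runs into is the step that bridges $X^{[N]}(Ns - L(N) + t)$ and $X^{[N]}(Ns + t)$ for \emph{fixed} $t$. You propose invoking Lemma~\ref{lem:12} with $t(N)=t$, but Lemma~\ref{lem:12} requires $t(N)\to\infty$; its proof runs the coupling to an auxiliary time $l(N)\le t(N)$ with $l(N)\to\infty$, and then uses the monotonicity \eqref{mono} to push forward to $t(N)$. If $t(N)=t$ is fixed, no sequence $l(N)\le t$ with $l(N)\to\infty$ exists, so neither the lemma as stated nor your fallback applies. What would rescue the step is not $l(N)\to\infty$ but closeness of the two initial configurations \emph{at time zero} in a componentwise $L^1$ sense, after which monotonicity alone gives closeness at every $t\ge 0$. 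That initial closeness comes precisely from Proposition~\ref{prop1}: the laws of both $(X^{[N]}(Ns-L(N)),Y^{[N]}(Ns-L(N)))$ and $(X^{[N]}(Ns),Y^{[N]}(Ns))$ converge to the same $\nu(s)=\nu_{\bar\Theta}$, so Skorohod realization makes them close componentwise before either is evolved. Your proposal never invokes Proposition~\ref{prop1}, which is the key ingredient here.

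The paper avoids the shift altogether by basing the coupling at time $Ns$ rather than $Ns-L(N)$: by Proposition~\ref{prop1}, $\CL[X^{[N_k]}(N_ks),Y^{[N_k]}(N_ks)]\to\nu_{\bar\Theta}$, so it defines $\nu_{N_k}$ by periodic continuation of the configuration at time $N_ks$, notes $\nu_{N_k}\to\nu_{\bar\Theta}$, realizes the finite system, $(X^{\nu_{N_k}},Y^{\nu_{N_k}})$, and $(X^{\nu_{\bar\Theta}},Y^{\nu_{\bar\Theta}})$ on one probability space with the same Brownian motions, and then the couplings from the proofs of Lemma~\ref{l.comp} and Lemma~\ref{unifergod} deliver \eqref{937} directly, with no shift step needed. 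This also sidesteps a second, subtler problem in your last comparison: the weak limit $\mu$ of $\mu_N$ is $\int\mu_\rho\,\d P_s(\rho)$, which is \emph{not} equal to $\nu_{\bar\Theta}=\int\nu_\rho\,\d P_s(\rho)$ in general, so $X^\mu(t)$ and $X^{\nu_{\bar\Theta}}(t)$ agree only asymptotically as $t\to\infty$, not at fixed $t$; the "equilibrium coupling from Lemma~\ref{lemerg} absorbed via a $\bar L(N)$-sequence" again implicitly needs $t\to\infty$. Basing at $Ns$, where the distributional limit is already $\nu_{\bar\Theta}$, dissolves this issue.

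So: the ingredients you identify (the couplings of Lemmas~\ref{l.comp}, \ref{unifergod}, \ref{lem:12}, Skorohod realization, monotonicity, exchangeability) are the right ones, and your structural insight that Meyer--Zheng machinery is not needed here is correct. But as assembled, your chain has a genuine gap at the shift step, and the cleanest repair is to replace it with an appeal to Proposition~\ref{prop1} and start the comparison from time $Ns$.
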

		
		\begin{proof}
			By Proposition~\ref{prop1} we have that $\lim_{k\to\infty}\CL[X^{[N_k]}(N_ks)+Y^{[N_k]}(N_ks)]=\nu(s)=\nu_{\bar{\Theta}}$. Let $\nu_{N_k}$ be defined by periodic continuation of the configuration of $(X^{[N_k]}(N_ks),Y^{[N_k]}(N_ks))$ and let $\nu=\lim_{k\to\infty}\nu_{N_k}$, so  $\nu=\nu_{\bar{\Theta}}$. Construct the process $(X^{[N]}(t),Y^{[N]}(t))_{t\geq0}$, $(X^{\nu_{N_k}}(t),Y^{\nu_{N_k}}(t))_{t\geq 0}$ and $(X^\nu(t),Y^\nu(t))_{t\geq 0}$ on one probability space and use for all processes the same Brownian motions. Then the couplings in the proofs of Lemma~\ref{l.comp} and Lemma~\ref{unifergod} imply \eqref{937}.
		\end{proof}
		
		\subsection{Proof of step 2. Convergence of the estimator}
		\label{step3}
		
		In this section we prove the three lemmas stated in Step 2 of Section~\ref{sec:finsysmf}. Afterwards we prove Proposition \ref{p.esti} with the help of these lemmas.
		
		\paragraph{$\bullet$ Proof of Lemma \ref{martav}}
		
		\begin{proof}
			Recall the definition of $\bar{\Theta}^{[N]}(t)$ in \eqref{slovar}. It follows from the SSDE in \eqref{gh45a} that
			\begin{equation}
				\label{3}
				\d \bar{\Theta}^{[N]}(t)= \frac{1}{1+K}\frac{1}{N}\sum_{i\in[N]}\sqrt{g\big(x_i^{[N]}(t)\big)}\,\d w_i(t).
			\end{equation} 
			Hence we see that $t \mapsto \bar{\Theta}^{[N]}(t)$ is a continuous martingale. By It\^o's formula we have
			\begin{equation}
				\label{1}
				\begin{aligned}
					\E\big[\big(\bar{\Theta}^{[N]}(t)\big)^2\big]
					&=\E\big[\big(\bar{\Theta}^{[N]}(0)\big)^2\big]
					+\frac{1}{(1+K)^2}\int_0^t \d r\,\frac{1}{N^2}\sum_{i\in[N]} g\big(x_i^{[N]}(r)\big)\\
					&\leq 1+\frac{1}{N}\frac{\|g\|}{(1+K)^2}\, t.
				\end{aligned}
			\end{equation}
			Since $g$ is a bounded function, we get that $t \mapsto \bar{\Theta}^{[N]}(t)$ is square integrable. It follows that, 
			\begin{equation}
				\left(\big(\bar{\Theta}^{[N]}(Ns+t)-\bar{\Theta}^{[N]}(Ns)\big)^2\right)_{t\geq 0}
			\end{equation}
			is a sub-martingale. Therefore, defining the stopping time 
			\begin{equation}
				S^N_\epsilon=\inf\left\{t\geq 0: \big(\bar{\Theta}^{[N]}(Ns+t)
				-\bar{\Theta}^{[N]}(Ns)\big)^2\geq\epsilon\right\}\wedge L(N),
			\end{equation}
			we find, by the continuity of $t \mapsto \bar{\Theta}^{[N]}(Ns+t)$ and the optional sampling theorem, that
			\begin{equation}
				\label{2}
				\P\left(S^N_\epsilon\in[Ns,Ns+L(N))\right)\leq \frac{1}{\epsilon^2}
				\E\left[\big(\bar{\Theta}^{[N]}(Ns+L(N))-\bar{\Theta}^{[N]}(Ns)\big)^2\right].
			\end{equation}
			Combining \eqref{1} and \eqref{2}, we find
			\begin{equation}
				\lim_{N\to\infty} \sup_{0 \leq t \leq L(N)}\big|\bar{\Theta}^{[N]}(Ns+t)-\bar{\Theta}^{[N]}(Ns)\big|
				= 0 \text{ in probability.}
			\end{equation}
			To obtain the increasing process, note that by It\^o-calculus it follows from \eqref{3} that
			\begin{equation}
				\label{4}
				\big\langle \bar{\Theta}^{[N]}(Ns)\big\rangle_{s\geq 0} 
				= \frac{1}{(1+K)^2}\int_{0}^{s} \d r\,\frac{1}{N} \sum_{i\in[N]}{g\big(x_i^{[N]}(Nr)\big)}.
			\end{equation}
			
			We are left to show that the sequence of processes $((\bar{\Theta}^{[N]}(Ns))_{s\geq 0})_{N\in\N}$ is tight. Note that \eqref{4} implies that, for all $N\in\N$ and $s\geq 0$,
			\begin{equation}
				\big\langle\bar{\Theta}^{[N]}(Ns)\big\rangle\leq \frac{\|g\|}{(1+K)^2}\,s 
			\end{equation} 
			and 
			\begin{equation}
				\frac{\d}{\d s}\big\langle \bar{\Theta}^{[N]}(Ns)\big\rangle\leq \frac{\|g\|}{(1+K)^2}.
			\end{equation}
			Hence the sequence $(\langle\bar{\Theta}^{[N]}(Ns)\rangle_{s\geq 0})_{N\in\N}$ is equicontinuous. Therefore, by the Arzela-Ascoli theorem (see e.g.\ \cite[Theorem 7.2]{Bi99}), for each $T\geq0$ the set $(\langle\bar{\Theta}^{[N]}(Ns)\rangle_{0\leq s\leq T})_{N\in\N}$ is relatively compact in $C([0,T],\R)$, the space of continuous functions from $[0,T]\to\R$. Therefore the set of laws $(\CL\langle\bar{\Theta}^{[N]}(Ns)\rangle_{0\leq s\leq T})_{N\in\N}$ is tight in $\CP(C([0,T],\R))$. Hence it follows that $(\CL\langle\bar{\Theta}^{[N]}(Ns)\rangle_{s\geq 0})_{N\in\N}$ is tight in $\CP(C([0,\infty),\R))$, the set of probability laws on $C([0,\infty),\R)$. 
			
			Since $(\bar{\Theta}^{[N]}(Ns)-\bar{\Theta}^{[N]}(0))_{s\geq 0}$ is a stochastic integral, we can represent it as a time-transformed Brownian motion (see e.g. \cite[Chapter 5]{RY99}):
			\begin{equation}
				\label{m89}
				\big(\bar{\Theta}^{[N]}(Ns)-\bar{\Theta}^{[N]}(0)\big)_{s\geq 0}
				= w\big(\langle\bar{\Theta}^{[N]}(Ns)\rangle\big)_{s\geq 0}.
			\end{equation} 
			Let $\chi$ be a standard normal random variable. Then
			\begin{equation}
				\begin{aligned}
					\E\left[\left(w\big(\langle\bar{\Theta}^{[N]}(Ns)\rangle\big)
					-w\big(\langle\bar{\Theta}^{[N]}(Nr)\rangle\big)\right)^2\right]
					&\leq\E\left[\left(\langle\bar{\Theta}^{[N]}(Ns)\rangle
					-\langle\bar{\Theta}^{[N]}(Nr)\rangle\right)^2\right]\E\left[\chi^4\right]\\
					&\leq \left(s-r\right)^2\frac{\|g\|^2}{(1+K)^4}\,\E\left[\chi^4\right].
				\end{aligned}
			\end{equation}
			Hence it follows from Kolmogorov's criterion for weak compactness (see e.g.\ \cite[Chapter XIII, Theorem 1.8]{RY99}) that the sequence $(\CL[(\bar{\Theta}^{[N]}(Ns))_{s\geq 0}])_{N\in\N}$ is tight in $\CP(C([0,\infty),\R))$.
		\end{proof}
		
		
		\paragraph{$\bullet$ Proof of Lemma~\ref{lemmart}}
		
		\begin{proof}
			For ease of notation we will suppress the subsequence notation and assume that
			\begin{equation}
				\lim_{N\to\infty} \CL\left[\big(\bar{\Theta}^{[N]}(Ns)\big)_{s > 0}\right] 
				= \CL\bigl[(\bar{\Theta}(s))_{s > 0}\bigr].
			\end{equation}
			
			The processes $(\bar{\Theta}^{[N]}(Ns))_{s \geq 0}$ are martingales, see \eqref{3}, measurable w.r.t.\ the canonical filtration $(\CF_s)_{s \geq 0}$ and so are there weak limit points. Therefore also the weak limit point $(\bar{\Theta}(s))_{s>0}$ is a martingale, (see \cite[Section 3]{DG93a}). To obtain \eqref{a1}, we use the following strategy. Recall from the proof of Lemma~\ref{martav} that the sequence $\{\langle\bar{\Theta}^{[N]}(Ns)\rangle_{s>0}\}_{N\in\N}$ is tight. Hence, in order to prove that 
			\begin{equation}
				\label{m90}
				\lim_{N \to \infty}\CL\left[\langle\bar{\Theta}^{[N]}(Ns)\rangle_{s>0}\right]
				=\CL\left[\left(\int_0^s \d r \frac{1}{(1+K)^2}\,\E^{\nu_{\bar{\Theta}(r)}}[g(x_0)]\right)_{s>0}\right],
			\end{equation}
			it is enough to prove that the finite-dimensional distributions of $(\langle\bar{\Theta}^{[N]}(Ns)\rangle_{s>0})_{N\in\N}$ converge to the finite-dimensional distribution of $(\int_0^s \d r\,\frac{1}{(1+K)^2}\E^{\nu_{\bar{\Theta}(r)}}[g(x_0)])_{s>0}$. We will prove a slightly stronger result, namely,
			\begin{equation}
				\label{a2}
				\lim_{N\to \infty}\E\left[\left|\langle\bar{\Theta}^{[N]}(Ns)\rangle
				-\int_{0}^{s} \d r\,\frac{1}{(1+K)^2}\,\E^{\nu_{\bar{\Theta}(r)}}[g(x_0)]\right|\right] = 0.
			\end{equation}
			Once we have obtained \eqref{a2} and hence \eqref{m90}, by Skohorod's theorem we can construct the processes $(\langle\bar{\Theta}^{[N]}(Ns)\rangle_{s>0})_{N\in\N}$ on a single probability space, to obtain
			\begin{equation}
				\label{m91}
				\lim_{N \to \infty}\langle\bar{\Theta}^{[N]}(Ns)\rangle_{s\geq0}=\left(\int_0^s \d r\, 
				\frac{1}{(1+K)^2}\E^{\nu_{\bar{\Theta}(r)}}[g(x_0)]\right)_{s\geq 0} \quad a.s. 
			\end{equation}
			Using the continuity of Brownian motion, we get that (recall \eqref{m89})
			\begin{equation}
				\begin{aligned}
					\lim_{N \to \infty}\big(\bar{\Theta}^{[N]}(Ns)\big)_{s> 0}
					&= \lim_{N \to \infty} \left[w\big(\langle\bar{\Theta}^{[N]}(Ns)\rangle\big)_{s> 0}+\bar{\Theta}^{[N]}(0)\right]\\
					&=w\left(\int_0^s \d r\, \frac{1}{(1+K)^2}\,\E^{\nu_{\bar{\Theta}(r)}}[g(x_0)]\right)_{s>0}+\vartheta_0
					\quad a.s.
				\end{aligned}
			\end{equation}
			Therefore we can choose a version of $(\bar{\Theta}(s))_{s>0}$ such that
			\begin{equation}
				\lim_{N \to \infty}\big(\bar{\Theta}^{[N]}(Ns)\big)_{s>0}=\lim_{N \to \infty}\big(\bar{\Theta}(s)\big)_{s>0} \quad a.s.
			\end{equation}
			and
			\begin{equation}
				\lim_{N \to \infty} \big(\bar{\Theta}^{[N]}(Ns),\langle\bar{\Theta}^{[N]}(Ns)\rangle\big)_{s>0}
				=\left(\bar{\Theta}^{}(s),\langle\bar{\Theta}^{}(s)\rangle\right)_{s>0} \quad a.s.
			\end{equation}
			By the continuous mapping theorem, \eqref{a1} follows. The martingale property follows from the fact that $\big(\bar{\Theta}^{[N]}(Ns)^2-\langle\bar{\Theta}^{[N]}(Ns)\rangle\big)_{s>0}$ are martingales. Therefore, to finish the proof of Lemma~\ref{lemmart} we are left to prove \eqref{a2}.
			
			To prove \eqref{a2}, define the empirical measures on $[0,1]$ by 
			\begin{equation}
				U^{[N]}(Ns) = \frac{1}{N}\sum_{i\in[N]} \delta_{ x_i(Ns)}.
			\end{equation}
			Note that we can write
			\begin{equation}
				\label{a3}
				\begin{aligned}
					&\E\left[\left|\langle\bar{\Theta}^{[N]}(Ns)\rangle
					-\int_{0}^{s} \d r\,\frac{1}{(1+K)^2}\,\E^{\nu_{\bar{\Theta}(r)}}[g(x_0)]\right|\right]\\
					&=\frac{1}{(1+K)^2}\,\E\left[\left|\int_0^s \d r\,\,\E^{U^{[N]}(Nr)}[g(x_0)]
					-\int_{0}^{s} \d r\,\,\E^{\nu_{\bar{\Theta}(r)}}[g(x_0)]\right|\right]\\
					&\leq \frac{1}{(1+K)^2}\int_0^s \d r\,\E\left[\left|\E^{U^{[N]}(Nr)}[g(x_0)]- \E^{\nu_{\bar{\Theta}(r)}}[g(x_0)]\right|\right].
				\end{aligned}
			\end{equation}
			Hence, to prove \eqref{a2} it is enough to prove that, for all $r>0$,
			\begin{equation}
				\label{a4}
				\lim_{N\to \infty}\E\left[\left|\E^{U^{[N]}(Nr)}[g(x_0)]- \E^{\nu_{\bar{\Theta}(r)}}[g(x_0)]\right|\right]=0
			\end{equation}
			and apply the dominated convergence theorem. 
			
			To prove \eqref{a4}, we will use the coupling arguments from Section~\ref{step1}, as well as ergodicity and invariance under the evolution of $\nu_{\bar{\Theta}}$. As before, let $z^{[N]}(t)$ denote the $[N]$-component system $(x_i^{[N]}(t),y_i^{[N]}(t))_{t\geq 0}$ evolving according to \eqref{gh45a}, viewed as a system on $\N_0$ obtained by periodic continuation and with initial law $\CL[z^{[N]}(0)]=\CL[x_i^{[N]}(Nr-L(N)),y_i^{[N]}(Nr-L(N))]$. Let $(z^{\mu_N}(t))_{t \geq 0}$ denote the infinite system $(x_i^{\mu_N}(t),y_i^{\mu_N}(t))_{t\geq 0}$ evolving according to \eqref{gh45ainf} with initial law $\mu_N$ obtained by periodic continuation of the configuration of $(x_i^{[N]}(Nr-L(N)),y_i^{[N]}(Nr-L(N)))$, and let $\mu$ be a weak limit point of the sequence $\mu_N$. Note that, for all $r>0$, by Lemma~\ref{stabest} we have that $\lim_{N \to \infty}\bar{\Theta}^{[N]}(Nr)=\bar{\Theta}(r)$ for $\bar{\Theta}(r)=\lim_{n\to \infty}\frac{1}{n}\sum_{i\in[n]}\frac{x_i+Ky_i}{1+K}\text{ in }L_2(\mu)$. Let $\bar{L}(N)$ be the sequence constructed in Corollary~\ref{cor1}. Then we can write    
			\begin{equation}
				\label{a5}
				\begin{aligned}
					&\E\left[\left|\E^{U^{[N]}(Nr)}[g(x_0)]- \E^{\nu_{\bar{\Theta}}(r)}[g(x_0)]\right|\right]\\
					&\leq\E\left[\left|\frac{1}{N}\sum_{i\in[N]}g(x_i^{[N]}(Nr))
					- \frac{1}{N}\sum_{i\in[N]} g\big(x_i^{\nu_{\bar{\Theta}}(r)}(\bar{L}(N))\big)\right|\right]\\
					&\qquad+\E\left[\left|\frac{1}{N}\sum_{i\in[N]}g\big(x_i^{\nu_{\bar{\Theta}(r)}}(\bar{L}(N))\big)
					- \E^{\nu_{\bar{\Theta}(r)}}[g(x_0)]   \right|\right]\\
					&\leq (\text{Lip}\,g)\,\E\left[\left|x_0^{[N]}(Nr) - x_0^{\nu_{\bar{\Theta}}(r)}(\bar{L}(N))\right|\right]\\
					&\qquad+\E\left[\left|\frac{1}{N}\sum_{i\in[N]}g\big(x_i^{\nu_{\bar{\Theta}(r)}}(\bar{L}(N))\big)
					- \E^{\nu_{\bar{\Theta}(r)}}[g(x_0)]\right|\right],
				\end{aligned}
			\end{equation}
			where in the second inequality we use the Lipschitz property of $g$ and the translation invariance of the system. Note that the first term tends to $0$ as $N\to\infty$ by Corollary~\ref{cor1}. Finally, note that by Lemma~\ref{lemerg} $(x_i)_{i\in\N_0}$ is a sequence of bounded i.i.d. random variables under $\nu_{\bar{\Theta}(r)}$. Hence the last term tends to zero by the law of large numbers.
		\end{proof}
		
		
		\paragraph{$\bullet$ Proof of Lemma~\ref{lem:uni}}
		
		\begin{proof}
			Note that, since $g$ is Lipschitz, the function
			\begin{equation}
				\theta\mapsto\E^{\nu_\theta}[g],
			\end{equation}
			is Lipschitz by Lemma~\ref{lemlip}. Hence, by \cite[Theorem 1]{YW71}, the SDE
			\begin{equation}
				\label{m28}
				\d \Phi(s)=\frac{1}{(1+K)}\sqrt{\E^{\nu_\Phi(s)}[g]}\,\d w(s)
			\end{equation}
			has a pathwise unique solution (see \c ite[p315--317]{KS91}) and a unique solution in law (see \cite[Proposition 1]{YW71}). This implies that the martingale problem with generator
			\begin{equation}
				\frac{1}{(1+K)^2}\,\E^{\nu_\Phi}[g]\frac{\d}{\d \Phi^2}
			\end{equation}
			has a unique solution. In particular, choosing $f(\Phi)=\Phi^2$, we see that the martingale problem implies that
			\begin{equation}
				\left(\Phi^2(s)-\frac{1}{(1+K)^2}\int_{0}^{s} \d u\,\E^{\nu_{\Phi(u)}}[g(x_0)]\right)_{s > 0} 
			\end{equation}
			is a martingale. 
			
			Since $(\bar{\Theta}(s))_{s>0}$ is a continuous bounded martingale, it could be written as a time transformed Brownian motion. The uniqueness of the martingale problem in \eqref{martprob} now follows from the fact that the quadratic variation of a martingale is unique.
		\end{proof}   
		
		
		\paragraph{$\bullet$ Proof of Proposition \ref{p.esti}}
		
		\begin{proof}
			Combining Lemma~\ref{lemmart}--\ref{lem:uni}, all converging subsequences of $(\CL[(\bar{\Theta}^{[N]}(Ns))_{s\geq 0}])_{N\in\N}$ converge to the same limit, which is the unique process satisfying the martingale problem in \eqref{martprob}. 
		\end{proof}

		\subsection{Proof of step 3. Convergence of the $1$-blocks in the Meyer-Zheng topology}
		\label{step2}
		
		In this section we prove  Proposition~\ref{p.estim}  stated in Step 3 of Section~\ref{sec:finsysmf}. The Lemmas~\ref{lem91}, \ref{lem93} and \ref{lem94} are proven in Appendix~\ref{apb}.
		
		\paragraph{$\bullet$ Proof of Proposition~\ref{p.estim}} 
		
		\begin{proof}
			By Proposition~\ref{p.esti} we have that
			\begin{equation}
				\lim_{N\to\infty}\CL[\bar{\Theta}^{[N]}(Ns)_{s>0}]=\CL[(\bar{\Theta}(s))_{s>0}]
			\end{equation}
			in the normal topology and therefore (see \ref{apb} Lemma~\ref{lem90}) 
			\begin{equation}
				\lim_{N\to\infty}\CL[\bar{\Theta}^{[N]}(Ns)_{s>0}]=\CL[(\bar{\Theta}(s))_{s>0}]\, \text{ in Meyer-Zheng topology}.
			\end{equation}
			By Lemma~\ref{lemav}, for $s>0$,
			\begin{equation}
				\lim_{N\to\infty}\E\left[ \left|\bar{\Theta}^{[N]}(Ns)-x_1^{[N]}(s)\right|\right]=0 
			\end{equation}
			and therefore, by Lemma~\ref{lem91}, 
			\begin{equation}
				\lim_{N\to\infty} d_P(\psi_{\bar{\Theta}^{[N]}},\psi_{x_1^{[N]}})=0  \text{ in probability}.
			\end{equation}
			To apply the above results to our model, we recall the following basic result (see \cite[Chapter 1]{Bi99}), which also holds for the Meyer-Zheng topology.
			
			\begin{lemma}
				\label{lem92}
				Let $X_n$, $Y_n$ be random variables. If 
				\begin{equation}
					\lim_{n\to\infty} \CL[X_n]=\CL[X]
				\end{equation}
				and 
				\begin{equation}
					\lim_{n\to\infty}d(X_n,Y_n)=0 \text{ in probability},
				\end{equation}
				then 
				\begin{equation}
					\lim_{n\to\infty} \CL[Y_n]=\CL(X).
				\end{equation}
			\end{lemma}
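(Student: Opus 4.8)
The final statement to prove is Lemma~\ref{lem92}: a standard convergence result of the Slutsky type, transplanted to the Meyer--Zheng setting. The plan is to reduce it to ordinary weak convergence on the Polish space $\Psi$ of pseudopaths by identifying the right metric, and then invoke the classical Slutsky lemma as stated in \cite[Chapter~1]{Bi99}.

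First I would recall that, by Remark~\ref{r.1556}, the space $\Psi$ of pseudopaths equipped with the Meyer--Zheng topology is Polish; in particular, convergence of probability laws in the Meyer--Zheng topology is genuine weak convergence of Borel probability measures on a Polish space, and the topology is metrized by (say) the Prohorov metric $d_P$ associated with a fixed complete metric on $\Psi$. Here $X_n, Y_n, X$ should be read as the induced pseudopath-valued random variables $\psi_{X_n}, \psi_{Y_n}, \psi_X$ (via the map $v \mapsto \psi_v$ of \eqref{e1519}), and the hypothesis ``$\lim_{n\to\infty} d(X_n,Y_n)=0$ in probability'' is to be interpreted, exactly as in the application just above in the proof of Proposition~\ref{p.estim}, as $d_\Psi(\psi_{X_n},\psi_{Y_n}) \to 0$ in probability, where $d_\Psi$ is the chosen metric on $\Psi$.

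With this setup the proof is essentially a one-line citation. The classical Slutsky/converging-together lemma (\cite[Theorem~3.1]{Bi99}, or the discussion in \cite[Chapter~1]{Bi99}) states: if $(E,d_E)$ is a metric space and $U_n, V_n, U$ are $E$-valued random variables with $\CL[U_n] \to \CL[U]$ weakly and $d_E(U_n,V_n) \to 0$ in probability, then $\CL[V_n] \to \CL[U]$ weakly. Applying this with $E = \Psi$, $d_E = d_\Psi$, $U_n = \psi_{X_n}$, $V_n = \psi_{Y_n}$, $U = \psi_X$ gives precisely $\lim_{n\to\infty}\CL[Y_n] = \CL[X]$ in the Meyer--Zheng topology. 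The only thing to verify is that the hypotheses of \cite[Theorem~3.1]{Bi99} apply, which they do since $\Psi$ is Polish hence separable and metric; no special structure of $\Psi$ beyond this is needed.

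The ``main obstacle'' is therefore not really a difficulty but a bookkeeping point: one must be careful that the statement of Lemma~\ref{lem92} is understood at the level of pseudopaths rather than of the original $E$-valued paths, since the map $v \mapsto \psi_v$ is not injective on all measurable maps and the Meyer--Zheng ``distance'' between paths is really the $\Psi$-distance between their weighted occupation measures. Once that identification is made explicit, the conclusion follows verbatim from the classical result, and no further computation is required. I would write the proof as: interpret the random variables as pseudopath-valued, note $\Psi$ is Polish by Remark~\ref{r.1556}, and apply \cite[Theorem~3.1]{Bi99}.
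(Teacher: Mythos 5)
Your proposal is correct and takes the same route as the paper: the paper does not prove Lemma~\ref{lem92} at all but simply cites \cite[Chapter 1]{Bi99} for the classical converging-together (Slutsky) lemma and remarks that it carries over to the Meyer--Zheng setting, which is exactly your reduction. One small overstatement worth noting: the classical result needs only that the state space be metric, not Polish, so invoking Remark~\ref{r.1556} is more than is required; but since $\Psi$ is indeed Polish this does no harm, and your clarification that ``$d(X_n,Y_n)\to 0$ in probability'' is to be read as the Prohorov distance $d_P(\psi_{X_n},\psi_{Y_n})\to 0$ on $\Psi$ correctly captures how the lemma is actually applied in the proof of Proposition~\ref{p.estim}.
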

			
			\noindent 
			Applying Lemma~\eqref{lem92} to our case, we obtain
			\begin{equation}
				\lim_{N\to\infty}	\CL[(x_1^{[N]}(s))_{s>0}]=\CL[(\bar{\Theta}(s))_{s>0}]\, \text{ in the Meyer-Zheng topology}.
			\end{equation}
			The argument for 
			\begin{equation}
				\lim_{N\to\infty}\CL[(y_1^{[N]}(s))_{s>0}]=\CL[(\bar{\Theta}(s))_{s>0}]\, \text{ in the Meyer-Zheng topology}
			\end{equation}
			follows in the same way. By Lemma~\ref{lem93}, we obtain 
			\begin{equation}
				\lim_{N\to\infty}\CL[(x_1^{[N]}(s),y_1^{[N]}(s)-x_1^{[N]}(s))_{s>0}]=\CL[(\bar{\Theta}(s),0)_{s>0}]\, 
				\text{ in the Meyer-Zheng topology}.
			\end{equation}
			Applying Lemma~\ref{lem94} with $f(x,y)=f(x,y+x)$ and the continuous mapping theorem, we obtain
			\begin{equation}
				\lim_{N\to \infty}\CL\left[\left(x_1^{[N]}(s),y_1^{[N]}(s)\right)_{s>0}\right]=\CL[(\bar{\Theta}(s),\bar{\Theta}(s))_{s>0}]\, 
				\text{ in the Meyer-Zheng topology.}
			\end{equation}
		\end{proof}

		\subsection{Proof of step 4. Mean-field finite-systems scheme}
		\label{step4}
		
		\paragraph{$\bullet$ Proof of Proposition~\ref{P.finsysmf}}
		
		\begin{proof}
			Proposition~\ref{P.finsysmf}(b) follows directly from Propostion~\ref{p.esti}. The proof of Proposition~\ref{P.finsysmf}(a) follows from Proposition~\ref{P.finsysmf}(b) and Proposition~\ref{p.estim}. 
			
			To prove Proposition~\ref{P.finsysmf}(c) fix $t>0$. Consider the processes $(X^{[N]}(sN+t),Y^{[N]}(sN+t))_{t\geq 0}$ as processes on $([0,1]^2)^{\N_0}$ by periodic continuation. Since $([0,1]^2)^{\N_0}$ is compact, the sequence $(X^{[N]}(sN+t),Y^{[N]}(sN+t))_{N\in\N}$ is tight and hence has a converging subsequence. Let $(\bar{\Theta}(s))_{s>0}$ be the limiting process obtained in Proposition \eqref{p.esti}. This has continuous paths and is the unique solution of a well-posed martingale problem, and hence is a Markov process. Denote by $Q_s$ the time-$s$ semigroup corresponding to $(\bar{\Theta}(s))_{s\geq 0}$. Combining Proposition~\ref{prop1}, Proposition~\ref{p.esti} and Lemma~\ref{martav}, we get that, for each converging subsequence, 
			\begin{equation}
				\label{ma1}
				\lim_{k\to \infty} \CL\left[X^{[N_k]}(sN_k+t),Y^{[N_k]}(sN_k+t)\right]
				=\int Q_s\left(\theta,\d \theta^\prime\right)\nu_{\theta^\prime} =\nu(s),
			\end{equation}
			and hence it follows that, for all $t\geq 0$,
			\begin{equation}
				\label{ma2}
				\lim_{N\to \infty}\CL\left[X^{[N]}(sN+t),Y^{[N]}(sN+t)\right]
				= \int Q_s\left(\theta,\d \theta^\prime\right)\nu_{\theta^\prime} =\nu(s).
			\end{equation}
			Let $(X^{\nu(s)}(t),Y^{\nu(s)}(t))_{t\geq 0}$ be the infinite system defined in \eqref{gh52}, starting from initial measure $\nu(s)$. Then it follows from Corollary~\ref{cor1} that we can construct the processes $(X^{[N]}(sN+t),Y^{[N]}(sN+t))_{t\geq 0}$ and $(X^{\nu(s)}(t),Y^{\nu(s)}(t))_{t\geq 0}$ on one probability space such that, for all $t\geq 0$,  
			\begin{equation}
				\lim_{N\to \infty}\E\left[\big|z^{\nu(s)}_{(i,R_i)}(t)-z^{[N]}_{(i,R_i)}(sN+t)\big|\right]=0 
				\qquad \forall\,(i,R_i)\in \Z\times\{A,D\}.
			\end{equation}
			Hence we see that the finite-dimensional distributions of the process $(X^{[N]}(sN+t),Y^{[N]}(sN+t))_{t\geq 0}$ converge to the finite-dimensional distributions of te process $(X^{\nu(s)}(t),Y^{\nu(s)}(t))_{t\geq 0}$. 
			
			Since we want that
			\begin{equation}
				\label{g}
				\lim_{N\to \infty}\CL\left[\big(X^{[N]}(sN+t),Y^{[N]}(sN+t)\big)_{t\geq 0}\right]
				= \CL\left[\big(X^{\nu(s)}(t),Y^{\nu(s)}(t)\big)_{t\geq 0}\right],
			\end{equation} 
			we are left to show the tightness of $\CL[(X^{[N]}(sN+t),Y^{[N]}(sN+t))_{t\geq 0}]_{N\in\N}$ in the path space $\CC([0,\infty), \left([0,1]\times[0,1] \right)^{\N_0})$. Since $([0,1]^2)^{\N_0}$ is endowed with the product topology, it is enough to show for all sequence components $(x^{[N]}_i(t))_{t\geq 0}$ and $(y^{[N]}_i(t))_{t\geq 0}$ that they are tight in path space (see \cite[Theorem 7.3]{Bi99}). 
			
			To prove that the components are tight, we use a tightness criterion for semi-martingales by Joffe and Metivier, \cite[Proposition 3.2.3]{JM86}. To use this criterion, we have to show that for all $i\in [N]$ the components $(x^{[N]}_i(t),y^{[N]}_i(t))$ are $\CD$-semi-martingales as defined in \cite[Definition 3.1.1]{JM86}. To do this, let $\CC^*\subset \CC_b\left([0,1]^2\right)$ be the set of polynomials on $[0,1]^2$, and define the operator $\gls{dsmop}\colon\,(\CC^*\times[0,1]^2\times[0,\infty),\Omega)\to\R$ by
			\begin{equation}
				G_\dagger^{[N]}(f,(x,y),t,\omega)
				=\left[\frac{c}{N}\sum_{j\in[N]}[x^{[N]}_j(t,\omega)-x]+K\,[y-x]\right]\frac{\partial f}{\partial x}\\
				+ \frac{1}{2}g(x)\frac{\partial^2 f}{\partial x^2} + e\,[x-y]\frac{\partial f}{\partial y}.
			\end{equation}
			We use the subscript $\dagger$ to emphasize that $G^{[N]}_\dagger$ is the operator of a $\CD$-semi-martingale and not a generator.
			Below we check in 4 steps that the component processes $(x_i^{[N]}(t),y_i^{[N]}(t))_{t\geq 0}$ are indeed $\CD$-semi-martingales.
			\begin{enumerate}
				\item 
				The functions 
				\begin{equation}
					\label{c1}
					f_1(x_i,y_i) =x_i, \quad f_2(x_i,y_i) =y_i,
				\end{equation}  
				are in $\CC^*$, and so are $f_1^2$, $f_1f_2$, $f_2^2$.
				\item 
				For every $((x,y),t,\omega)\in \left([0,1]^2\times [0,\infty) \times\Omega\right)$, the mapping $f\mapsto G_\dagger^{[N]}(f,(x,y),t,\omega)$ is linear on $\CC^*$ and $G_\dagger^{[N]}(f,\cdot,t,\omega)\in \CC^*$.
				\item 
				Let $(\CF_s)_{s\geq 0}$ be the filtration generated by the Brownian motions $((w_i(s))_{s\geq 0})_{i\in[N]}$, and let $\CP$ be the $\sigma$-algebra generated by the predictable sets, i.e., sets of the form $(s,t]\times F$ for $F\in\CF_s$. Since the component processes $(x_j^{[N]}(t))_{t\geq 0}$ are continuous, $((x,y),t,\omega)\mapsto G^{[N]}_\dagger(f,(x,y),t,\omega)$ is $\CB([0,1]^2) \otimes \CP$ measurable for every $f\in \CC^*$, where $\CP$ is the $\sigma$-algebra generated by the sets of the form $(s,t]\times F$ for $F\in\CF_s$.
				\item 
				Applying It\^o's formula to the SSDE in \eqref{gh45a}, we obtain, for every $f\in\CC^*$,
				\begin{equation}
					\begin{aligned}
						f\big(x_i^{[N]}(t),y_i^{[N]}(t)\big)
						&=f\big(x_i^{[N]}(0),y_i^{[N]}(0)\big)\\
						&\qquad +\int_0^t \d s\,\frac{c}{N}\sum_{j\in[N]} \big[x^{[N]}_j(s,\omega)-x^{[N]}_i(s)\big]\, 
						\frac{\partial f}{\partial x}\big(x_i^{[N]}(t),y_i^{[N]}(t)\big)\\
						&\qquad +\frac{1}{2}\int_0^t \d w_i(s)\,\sqrt{g(x_i^{[N]}(s))}\,\,\frac{\partial f}{\partial x}\big(x_i^{[N]}(t),y_i^{[N]}(t)\big)\\
						&\qquad +\int_0^t \d s\,Ke\big[y_i^{[N]}(s)-x_i^{[N]}(s)\big]\,\frac{\partial f}{\partial x}\big(x_i^{[N]}(t),y_i^{[N]}(t)\big)\\
						&\qquad +\int_0^t \d s\,e \big[x_i^{[N]}(s)-y_i^{[N]}(s)\big]\,\frac{\partial f}{\partial y}\big(x_i^{[N]}(t),y_i^{[N]}(t)\big)\\
						&\qquad +\int_0^t \d s\, g(x_i^{[N]}(s))\,\,\frac{\partial^2 f}{\partial x^2}\big(x_i^{[N]}(t),y_i^{[N]}(t)\big).
					\end{aligned}
				\end{equation}
				Therefore 
				\begin{equation}
					\begin{aligned}
						M^{[N],f}(t,\omega)
						&=f\big(x_i^{[N]}(t,\omega),y_i^{[N]}(t,\omega)\big)-f\big(x_i^{[N]}(0,\omega),y_i^{[N]}(0,\omega)\big)\\
						&\qquad-\int_0^t \d s\,G_\dagger^{[N]}\big(f(x_i^{[N]}(s,\omega),y_i^{[N]}(s,\omega)),s,\omega\big)
					\end{aligned}
				\end{equation}
				is a square-integrable martingale on $\left(\Omega, (\CF_s)_{s\geq 0},\P\right)$.
			\end{enumerate}
			
			To check that the sequence of component processes $((x_i^{[N]}(t),y_i^{[N]}(t)))_{N\in\N}$ is tight, we need the local characteristics of the $\CD$-semi-martingale, which are defined in \cite[Definition 3.1.2]{JM86} as (recall \eqref{c1})
			\begin{equation}
				\begin{aligned}
					b^{[N]}_1((x,y),t,\omega) 
					&=G^{[N]}_\dagger(f_1,(x,y),t,\omega),\\
					b^{[N]}_2((x,y),t,\omega)
					&=G^{[N]}_\dagger(f_2,(x,y),t,\omega),\\
					a^{[N]}_{(1,1)}((x,y),t,\omega)
					&=G^{[N]}_\dagger(f_1 f_1,(x,y),t,\omega)- 2x\, b_1((x,y),t,\omega),\\
					a^{[N]}_{(2,1)}((x,y),t,\omega)  
					&=G^{[N]}_\dagger(f_1 f_2,(x,y), t,\omega)-  x\, b_2((x,y),t,\omega)-  y b_1((x,y),t,\omega),\\
					a^{[N]}_{(1,2)}((x,y),t,\omega)
					&=a_{(2,1)}((x,y),t,\omega),\\
					a^{[N]}_{(2,2)}((x,y),t,\omega)
					&=G^{[N]}_\dagger(f_2 f_2,(x,y),t,\omega)- 2y\, b_2((x,y),t,\omega).
				\end{aligned}
			\end{equation}
			Hence
			\begin{equation}
				\begin{aligned}
					b^{[N]}_1((x,y),t,\omega)
					&=\frac{c}{N}\sum_{j\in[N]}[x_j^{[N]}(t,\omega)-x]+Ke[y-x],\\
					b^{[N]}_2((x,y),t,\omega)
					&=e[x-y],\\
					a^{[N]}_{(1,1)}((x,y),t,\omega)
					&= 2 g(x),\\
					a^{[N]}_{(1,2)}((x,y),t,\omega)
					&=a_{(2,1)}((x,y),t,\omega)=0,\\
					a^{[N]}_{(2,2)}((x,y),t,\omega)
					&=0.
				\end{aligned}
			\end{equation}
			Here, $b^{[N]}_i$ and $a^{[N]}_{i,j}$, $i,j\in\{1,2\}$, are called the local coefficients of first and second order. We check that the hypotheses \cite[H1, H2, H3 in Section 3.2.1]{JM86} are satisfied.
			\begin{itemize}
				\item[$H_1$:] 
				For all $N\in\N$, 
				\begin{equation}
					\sum_{i\in\{1,2\}}
					|b^{[N]}_i((x,y),t,\omega)|^2+\sum_{i,j\in\{1,2\}}|a^{[N]}_{(i,j)}((x,y),t,\omega)|^2
					\leq 4(c+Ke+e)^2+2||g||^2.
				\end{equation} 
				Hence, choosing as positive adapted process the constant process $1$ and letting the constant be equal to $4(c+Ke+e)^2+2||g||^2$, we see that $H_1$ is satisfied.
				\item[$H_2$:] 
				Since the component processes are bounded by $1$, also $H_2$ is satisfied. 
				\item[$H_3$:] 
				Since the increasing c\`adl\`ag function $(A^{[N]}(t))_{t\geq 0}$ in \cite[Definition 3.1.1]{JM86} is in our case $A^{[N]}(t)=t$, also $H_3$ is satisfied.   
			\end{itemize} 
			Since $H_1$, $H_2$, $H_3$ are met, \cite[Proposition 3.2.3]{JM86} implies that $((x_i^{[N]}(t),y_i^{[N]}(t))_{t>0})_{N\in\N}$ are tight in the space of c\`adl\`ag paths $\CD((0,\infty],[0,1]^2)$. Hence \eqref{g} indeed holds. 
		\end{proof}
		
		\section{Preparation: $N\to\infty$, two-colour mean-field}
		\label{s.finlevel}
		
		In this section we extend the results obtained in Section~\ref{mffss} to a mean-field system where the seed-bank consists of two colours, one colour that interacts on the slow time scale and one colour that interacts on the fast time scale. To do so we follow the set-up used in Chapter~\ref{s.intromultscallim}. In particular, we highlight the role of the second colour. Section~\ref{ss.tcmfs} builds up the setting and states the main scaling result: Proposition~\ref{P.finsysmf2}. Section~\ref{pmfs2} provides the proof of this proposition based on a series of lemmas, which are stated and proved first. 
		
		\subsection{Two-colour mean-field finite-systems scheme}
		\label{ss.tcmfs}
		
		
		\paragraph{Setup.}
		
		In this section we consider a simplified version of our SSDE in \eqref{moSDE} on the finite geographic space
		\begin{equation}
			[N]=\{0,1,\ldots,N-1\}, \qquad N \in \N.
		\end{equation} 
		The migration kernel $a^{\Omega_N}(\cdot,\cdot)$ is replaced by the migration kernel $a^{[N]}(i,j)=c_0 N^{-1}$ for all $i,j\in [N]$, where $c_0\in (0,\infty)$ is a constant. The seed-bank consists of \emph{two colours}, labeled $0$ and $1$. The exchange rates between the active and the colour-$0$ dormant population are given by $K_0e_0,e_0$. The exchange rates between active and the colour-$1$ dormant population are given by $\frac{K_1e_1}{N},\frac{e_1}{N}$. The state space is
		\begin{equation}
			S = \mathfrak{s}^{[N]}, \qquad \mathfrak{s} = [0,1]\times [0,1]^2,
		\end{equation}
		and the system, consisting of three components, is denoted by
		\begin{equation}
			\label{e7152}
			\begin{aligned}
				&Z^{[N]}(t)=\big(X^{[N]}(t),(Y_0^{[N]}(t),Y_1^{[N]}(t))\big)_{t \geq 0},\\
				&\big(X^{[N]}(t),(Y_0^{[N]}(t),Y_1^{[N]}(t))\big) = \big(x_i(t), (y_{i,0}(t),y_{i,1}(t))\big)_{i \in [N]}.
			\end{aligned}
		\end{equation}
		The components of $(Z^{[N]}(t))_{t\geq 0}$ evolve according to the SSDE
		\begin{equation}
			\label{gh45a2}
			\begin{aligned}
				&\d x^{[N]}_i(t) = \frac{c_0}{N} \sum_{j \in [N]} [x^{[N]}_j(t) - x^{[N]}_i(t)]\, \d t 
				+ \sqrt{g(x^{[N]}_i(t))}\, \d w_i (t)\\
				&\qquad\qquad + K_0 e_0\, [y^{[N]}_{i,0}(t)-x^{[N]}_{i}(t)]\,\d t+ \frac{K_1 e_1}{N}\, [y^{[N]}_{i,1}(t)-x^{[N]}_{i}(t)]\,\d t,\\
				&\d y^{[N]}_{i,0}(t) = e_0\,[x^{[N]}_i(t)-y^{[N]}_{i,0}(t)]\, \d t,\\
				&\d y^{[N]}_{i,1}(t) = \frac{e_1}{N}\,[x^{[N]}_i(t)-y^{[N]}_{i,1}(t)]\, \d t, \qquad i \in [N],
			\end{aligned}
		\end{equation}
		which is a special case of \eqref{moSDE}. The initial state is $\mu(0)=\mu^{\otimes[N]}$ for some $\mu\in\CP([0,1]^3)$. The SSDE in \eqref{gh45a2} has a unique weak solution coming from a well-posed martingale problem \cite[Theorem 3.1]{SS80}. By \cite[Theorem 3.2]{SS80}, \eqref{gh45a2} has a unique strong solution for every deterministic initial state $Z^{[N]}(0)$. Therefore  the solutino of \eqref{gh45a2} is Feller and Markov for any initial law. The SSDE in \eqref{gh45a2} can alternatively be written as
		\begin{equation}
			\label{gh45a2b}
			\begin{aligned}
				&\d x^{[N]}_i(t) =  c_0\left[\frac{1}{N} \sum_{j \in [N]}x^{[N]}_j(t) - x^{[N]}_i(t)\right]\, \d t 
				+ \sqrt{g\big(x^{[N]}_i(t))\big)}\, \d w_i (t)\\
				&\qquad\qquad\qquad + K_0 e_0\, [y^{[N]}_{i,0}(t)-x^{[N]}_{i}(t)]\,\d t
				+ \frac{K_1 e_1}{N}\, [y^{[N]}_{i,1}(t)-x^{[N]}_{i}(t)]\,\d t,\\
				&\d y^{[N]}_{i,0}(t) = e_0\,[x^{[N]}_i(t)-y^{[N]}_{i,0}(t)]\, \d t,\\
				&\d y^{[N]}_{i,1}(t) = \frac{e_1}{N}\,[x^{[N]}_i(t)-y^{[N]}_{i,1}(t)]\, \d t, \qquad i \in [N].
			\end{aligned}
		\end{equation}
		So the migration term for a single colony can be interpreted as a drift towards the average of the active population. We are interested in $\CL[(Z^{[N]}(t)))_{t \geq 0}]$ in the limit as $N\to\infty$, on time scales $t$ and $Ns$. Heuristically, analysing the SSDE in \eqref{gh45a2b}, we can foresee the following results, which are made precise in Proposition \ref{P.finsysmf2}.
		
		\medskip\noindent 
		$\bullet$ \textbf{On time scale $1=N^0$} (space-time scale $0$), in the limit as $N\to\infty$ the colour-$1$ dormant population $(Y^{[N]}_1(t))_{t\geq0}$  in \eqref{gh45a2} converges to a constant process, since the single components $y_{i,1}$ do not move on time scale $t$. The components of $(X^{[N]}(t),Y_0^{[N]}(t))_{t \geq 0}$ converge to i.i.d.\ copies of the single-colony McKean-Vlasov process in \eqref{SC}, where in the corresponding SSDE the parameters $c,e,K$ are replaced by $c_0,e_0,K_0$ and $E=1$. So on time scale $t$ we only see the colour-$0$ dormant population interacting with the active population, and the colour-$1$ dormant population is not yet coming into play. Therefore the colour-$0$ dormant population is the \textit{effective seed-bank} on time scale $1$, and the process
		\begin{equation}
			z_0^{\eff,[N]}(t)=(x^{[N]}_0(t),y^{[N]}_{0,0}(t))_{t\geq0}
		\end{equation}
		is called the effective process on level $0$. Note that the active population has a drift towards $\frac{1}{N}\sum_{j\in[N]}x_j(t)$, which in the McKean-Vlasov limit is replaced by $\E[x(t)]$ given by \eqref{expz}.
		
		\medskip\noindent
		$\bullet$ \textbf{On time scale $N$} (space-time scale $1$), we look at the averages 
		\begin{equation}
			\label{gh412}
			\begin{aligned}
				(z_1^{[N]}(s))_{s>0}&=\left(x_1^{[N]}(s),( y_{0,1}^{[N]}(s),y_{1,1}^{[N]}(s))\right)_{s > 0}\\
				&=\left( \frac{1}{N} \sum_{i \in [N]} x^{[N]}_i(Ns), \left( \frac{1}{N} 
				\sum_{i \in [N]} y^{[N]}_{i,0}(Ns), \frac{1}{N} \sum_{i \in [N]} y^{[N]}_{i,1}(Ns)\right)\right)_{s> 0}.
			\end{aligned}
		\end{equation} 
		Again the lower index $1$ indicates that the average is the analogue of the 1-block average defined in \eqref{blockav}. Using \eqref{gh45a2}, we see that the dynamics of the system in \eqref{gh412} is given by the SSDE
		\begin{equation}
			\label{mfevolve2}
			\begin{aligned}
				\d x_1^{[N]}(s)&=\sqrt{\frac{1}{N}\sum_{i\in[N]}g(x^{[N]}_i(Ns))}\,\d w(s)
				+NK_0e_0\left[y_{0,1}^{[N]}(s)-x_1^{[N]}(s)\right]\d s\\
				&\qquad +K_1e_1\left[y_{1,1}^{[N]}(s)-x_1^{[N]}(s)\right]\d s,\\
				\d y_{0,1}^{[N]}(s)&=Ne_0\left[x_1^{[N]}(s)-y_{0,1}^{[N]}(s)\right]\d s,\\
				\d y_{1,1}^{[N]}(s)&=e_1\left[x_1^{[N]}(s)-y_{1,1}^{[N]}(s)\right]\d s.
			\end{aligned}
		\end{equation}
		Thus, as in the mean-field system with one-colour, on time scale $N$ infinite rates appear in the interaction of the active population with the colour-$0$ dormant population. Therefore in the limit as $N\to\infty$ the path becomes rougher and rougher at rarer and rarer times. Using the \emph{Meyer-Zheng topology} we can prove that $\lim_{N\to\infty} y_{0,1}^{[N]}(s)=\lim_{N\to\infty }x_1^{[N]}(s)$ most of the time. On the other hand, on time scale $N$, $x_1^{[N]}(s)$ has a non-trivial interaction with $y_{1,1}^{[N]}(s)$, and therefore we say that on time scale $N$ the colour-$1$ dormant population is the \textit{effective seed-bank}. Note that for the evolution of the average $\frac{x_1^{[N]}(s)+K_0y_{0,1}^{[N]}(s)}{1+K_0}$ the rates with a factor $N$ in front cancel out. We will use the quantity $\frac{x_1^{[N]}(s)+K_0y_{0,1}^{[N]}(s)}{1+K_0}$ to obtain results in the classical path-space topology. We call 
		\begin{equation}
			(z_1^{[N],\eff}(s))_{s>0} =	\left(\frac{x_1^{[N]}(s)+K_0y_{0,1}^{[N]}(s)}{1+K_0},y_{1,1}(s)\right)_{s>0}
		\end{equation}
		the effective process on space-time scale 1. We will call space-time scale $1$ also level $1$.
		
		\paragraph{Scaling limit.}
		
		To describe the limiting dynamics of the system in \eqref{gh45a2}, we need the infinite-dimensional process 
		\begin{equation}\label{1010}
			(Z(t))_{t\geq 0} = \big((z_i(t))_{t\geq 0}\big)_{i\in\N_0}= \big((x_i(t),(y_{i,0}(t),y_{i,1}(t)))_{t\geq 0}\big)_{i\in\N_0}
		\end{equation}
		with state space $([0,1]^3 )^{\N_0}$ that evolves according to
		\begin{equation}
			\label{gh45a2binf}
			\begin{aligned}
				\d x_i(t) &=  c_0 [\theta - x_i(t)]\, \d t 
				+ \sqrt{g(x_i(t))}\, \d w_i (t) + K_0 e_0\, [y_{i,0}(t)-x_{i}(t)]\,\d t,\\
				\d y_{i,0}(t) &= e_0\,[x_i(t)-y_{i,0}(t)]\, \d t,\\
				y_{i,1}(t) &= y_{i,1} , \qquad i \in \N_0.
			\end{aligned}
		\end{equation}
		Here, $\theta\in[0,1]$ and $y_{i,1}\in[0,1]$ for all $i\in\N_0$. We will also need the limiting effective process
		\begin{equation}
			(Z^\eff(t))_{t\geq 0} = \big((z^\eff_i(t))_{t\geq 0}\big)_{i\in\N_0} = \big((x_i^\eff(t),y^\eff_{i,0}(t))_{t\geq 0}\big)_{i\in\N_0}
		\end{equation}
		with state space $([0,1]^2)^{\N_0}$ that evolves according to
		\begin{equation}
			\label{m12h}
			\begin{aligned}
				\d x^{\eff}_i(t) &=  c_0 [\theta - x^{\eff}_i(t)]\, \d t 
				+ \sqrt{g\big(x^{\eff}_i(t)\big)}\, \d w (t) + K_0 e_0\, [y^{\eff}_{i,0}(t)-x^{\eff}_{i}(t)]\,\d t,\\
				\d y^{\eff}_{i,0}(t) &= e_0\,[x^{\eff}_i(t)-y^{\eff}_{i,0}(t)]\, \d t,\qquad i\in\N_0.
			\end{aligned}
		\end{equation}
		
		Like for the one-colour mean-field finite-systems scheme, we need the following list of ingredients to formally state our multi-scaling properties:
		\begin{enumerate}
			\item 
			For positive times $t>0$, we define the so-called \emph{estimators} for the finite system by:
			\begin{equation}
				\label{ma6}
				\begin{aligned}
					\bar{\Theta}^{(1),[N]}(t) &=\frac{1}{N}\sum_{i\in[N]}\frac{x^{[N]}_i(t)+K_0y^{[N]}_{i,0}(t)}{1+K_0},\\
					\Theta^{(1),[N]}_x(t) &=\frac{1}{N}\sum_{i\in[N]}x^{[N]}_{i}(t),\\
					\Theta^{(1),[N]}_{y_0}(t) &=\frac{1}{N}\sum_{i\in[N]}y^{[N]}_{i,0}(t),\\
					\Theta^{(1),[N]}_{y_1}(t) &=\frac{1}{N}\sum_{i\in[N]}y^{[N]}_{i,1}(t).
				\end{aligned}
			\end{equation}
			We abbreviate
			\begin{equation}
				\label{1013}
				\begin{aligned}
					{\bf\Theta}^{(1),[N]}(t)&=\left({\Theta}_x^{(1),[N]}(t),\Theta^{(1),[N]}_{y_0}(t),\Theta^{(1),[N]}_{y_1}(t)\right),\\
					{\bf\Theta}^{\eff,(1),[N]}(t)&=\left(\bar{\Theta}^{(1),[N]}(t),\Theta^{(1),[N]}_{y_1}(t)\right).
				\end{aligned}
			\end{equation}
			We refer to $({\bf\Theta}^{\eff,(1),[N]}(t))_{t\geq0}$ as the \emph{effective  estimator process} and to $({\bf\Theta}^{(1),[N]}(t))_{t\geq 0}$ as the \emph{ estimator process}.  
			\item 
			The \emph{time scale} $Ns$ is such that $\CL[\bar\Theta^{[N]} (Ns-L(N))-\bar\Theta^{[N]}(Ns)]=\delta_0$ for all $L(N)$ satisfying $\lim_{N\to\infty}$ $L(N) = \infty$ and $\lim_{N\to\infty} L(N)/N=0$, but not for $L(N)=N$. In words, $Ns$ is the time scale on which $\bar\Theta^{[N]}(\cdot)$ starts evolving, i.e., $(\bar\Theta^{[N]}(Ns))_{s>0}$ is no longer a fixed process. When we scale time by $Ns$, we will use $s$ as a time index, which indicates the ``fast time scale". The ``slow time scale" will be indicated by $t$. Thus, the time scales for the two-colour mean-field system are the same as the time scales for the one-colour mean-field system.
			
			\begin{remark}{\bf{[Notation]}}
				{\rm The upper index $1$ in $\bar\Theta^{(1)}$ and $\Theta_{y_1}^{(1)}$ is used to indicate that we are working with a system of level 1, so the system that lives on space-time scale 1. This can later be easily generalized to levels $2$ and $k$.} \hfill$\blacksquare$
			\end{remark}
			
			\item 
			The \emph{invariant measure} (i.e., the equilibrium measure) for the evolution of a single colony in \eqref{gh45a2binf}, written 
			\begin{equation}
				\label{singcoleq2}
				\Gamma_{\theta,\theta,y_1},
			\end{equation}
			and the \emph{invariant measure} of the infinite system in \eqref{gh45a2binf}, written $\nu_{\theta,\theta,{\bf y_1}}=\Gamma_{\theta,\theta,y_1}^{\otimes\N_0}$ with $\theta \in [0,1]$ and ${\bf y}_1\in[0,1]^{\N_0}$ a random variable. The existence of the invariant measure $\nu_\theta$ and the convergence of $\CL[Z(t)_{t\geq 0}]$ towards $\nu_\theta$ will be shown in the proof of Proposition~\ref{P.finsysmf2}. 
			
			\item
			The invariant measure of the effective process in \eqref{m12h},
			\begin{equation}
				\label{singcoleq2ef}
				\Gamma^\eff_{\theta},
			\end{equation}	
			and the invariant measure for the full process, $\nu^\eff_\theta=(\Gamma^{\eff}_{\theta})^{\otimes\N_0}$.
			
			\item 
			The renormalisation transformation $\CF\colon\,\CG\to\CG$,
			\begin{equation}
				\label{gh42b}
				(\CF g)(\theta) = \int_{([0,1]^2)^{\N_0}} g(x_0)\,\nu^\eff_{\theta}(\d x_0, \d y_{0,0}), \quad \theta \in [0,1],
			\end{equation} 
			where $\Gamma_{\theta}^\eff$ is the equilibrium measure of \eqref{singcoleq2}. Note that this is the same transformation as defined in \eqref{renor}, but for the truncated system. Since $\nu^\eff_{\theta}$ is a product measure, we can write
			\begin{equation}
				\label{gh42b2}
				(\CF g)(\theta) = \int_{[0,1]^2} g(x)\,\Gamma^\eff_{\theta}(\d x, \d y_0), \quad \theta \in [0,1],
			\end{equation} 
			
			\item  
			The limiting 1-block process $(z_1(s))_{s>0}=(x_1(s),(y_{0,1}(s),y_{1,1}(s)))_{s>0}$ evolving according to
			\begin{equation}
				\label{gh432}
				\begin{aligned}
					\d x_1(s) &=  \frac{1}{1+K_0} \left[\sqrt{(\CF g)\big(x_1(s)\big)}\, \d w(s)
					+ K_1 e_1\left[y_{1,1}(s)-x_1(s)\right]\d s\right],\\
					y_{0,1}(s) &=  x_1 (s),\\
					\d y_{1,1}(s) &= e_1\left[x_1 (s)-y_{1,1}(s)\right]\,\d s,
				\end{aligned}
			\end{equation}
			where $\CF g$ is defined in \eqref{gh432}.  The effective process $(z^{\eff}_1(s))_{s>0}=(x^{\eff}_1(s),y^{\eff}_{1,1}(s))_{s>0}$ on space-time scale $1$,
			\begin{equation}
				\label{m64}
				\begin{aligned}
					\d x^{\eff}_1(s) &=  \frac{1}{1+K_0}\left[
					\sqrt{(\CF g)(x^{\eff}_1(s))}\, \d w (s) + K_1 e_1\, [y^{\eff}_{1,1}(s)-x^{\eff}_{1}(s)]\,\d s\right] ,\\
					\d y^{\eff}_{1,1}(s) &= e_1\,[x^{\eff}_1(s)-y^{\eff}_{1,1}(s)]\, \d s.
				\end{aligned}
			\end{equation}
		\end{enumerate}
		We are now ready to state the scaling limit for the evolution of the averages in \eqref{gh412}, which we refer to as the  \emph{mean-field finite-systems scheme with two colours}. 
		
		\begin{proposition}{{\bf [Mean-field: two-colour finite-systems scheme]}}
			\label{P.finsysmf2}
			$\mbox{}$\\
			Suppose that $\CL[Z^{[N]}(0)]=\mu^{\otimes [N]}$ for some $\mu\in \CP\left([0,1]\times[0,1]^2\right)$. Let 
			\begin{equation}
				\vartheta_0 =\E^{\mu}\left[ \frac{x+K_0y_0}{1+K_0}\right],\qquad\qquad \theta_{y_1}=\E^{\mu}\left[ y_1\right].
			\end{equation} 
			\begin{itemize}
				\item[(a)]
				For the effective estimator process defined in \eqref{1013},
				\begin{equation}
					\label{m1a}
					\lim_{N\to\infty} \CL \left[\left({\bf\Theta}^{\eff,(1),[N]}(Ns)\right)_{s > 0}\right] 
					= \CL \left[\left(z_1^\eff(s)\right)_{s > 0}\right],
				\end{equation}
				where the limit is determined by the unique solution of the SSDE \eqref{m64}, with initial state
				\begin{equation}
					\begin{aligned}
						z_1^\eff(0)=\left(x_1^\eff(0),y^\eff_1(0)\right)= \left(\vartheta_0,\theta_{y_1}\right).
					\end{aligned}
				\end{equation}
				\item[(b)] Assume for the $1$-dormant single components that
				\begin{equation}
					\label{as}
					\lim_{N\to\infty}\CL\left[Y_1^{[N]}(Ns)\Big|{\bf\Theta}^{(1),[N]}(Ns)\right] = P^{z_1(s)}_{Y_1(s)}. 
				\end{equation}
				Define
				\begin{equation}
					\label{ma3ef}
					\Gamma^\eff_{(\vt_0,\theta_{y_1})}(s)= \int_{[0,1]^2} S_s\bigl((\vartheta_0,\theta_{y_1}),\d (u_x,u_y)\bigr)\,
					\Gamma^\eff_{u_x} \in \CP([0,1]^2),
				\end{equation}
				where $S_s((\vartheta_0,\theta_{y_1}),\cdot)$ is the time-$s$ marginal law of the process $(z_1^\eff(s))_{s > 0}$  starting from $(\theta_0,\theta_{y_1}) \in [0,1]^2$ and $\Gamma^\eff_{u_x}$ is the equilibrium distribution of the system in \eqref{m12h} with $\theta=u_x$ (note that $\Gamma^\eff_{\vt_0,\theta_{y_1}}(0)=\Gamma^\eff_{\vt_0}$). Let $(z^{\eff,\Gamma_{(\vartheta_0,\theta_{y_1})}(s)}(t))_{t \geq 0}$ be the process  with initial law $z^{\eff,\Gamma_{(\vartheta_0,\theta_{y_1})}(s)}(0)$ drawn according to $\Gamma^\eff_{  (\vartheta_0,\theta_{y_1})}(s)$ (which is a mixture of random processes in equilibrium) that, conditional on $x_1^\eff(s)=\theta$, evolves according to \eqref{m12h}. Then, for every $s \in (0,\infty)$,
				\begin{equation}
					\lim_{N\to\infty} \CL\left[\left(z^{\eff,[N]}_0(Ns+t)\right)_{t \geq 0}\right]
					= \CL \left[(z^{\Gamma^\eff_{  (\vartheta_0,\theta_{y_1})}(s)}(t))_{t \geq 0}\right].
				\end{equation}
				\item[(c)]
				For the averages in \eqref{gh412}, 
				\begin{equation}
					\label{gh42}
					\begin{aligned}
						&\lim_{N\to\infty} \CL \left[\left(z_1^{[N]}(s)\right)_{s > 0}\right] 
						= \CL \left[\left(z_1^{}(s)\right)_{s > 0}\right] \\
						&\text{in the Meyer-Zheng topology},
					\end{aligned}
				\end{equation}
				where the limit process is the unique solution of the SSDE in \eqref{gh432} with initial state
				\begin{equation}
					\label{e8652}
					z_1 (0)=\left( x_1 (0), y_{0,1}(0),y_{1,1}(0)\right) 
					= \left(\vartheta_0,\vartheta_0, \theta_{y_1}\right). 
				\end{equation}
				\item[(d)] 
				Assume \ref{as}	and define 
				\begin{equation}
					\label{ma3}
					\nu(s) = \int_{[0,1]^3} S_s\bigl((\vartheta_0,\vt_0,\theta_{y_1}),\d (u_x,u_x,u_{y_1})\bigr)\,
					\int_{[0,1]^{\N_0}}P^{(u_x,u_x,u_{y_1})}_{Y_1(s)}(\d {\bf y_1})\,\nu_{u_x,{\bf y_1}},
				\end{equation}
				where $S_s((\vartheta_0,\vt_0,\theta_{y_1}),\cdot)$ is the time-$s$ marginal law of the process $\left(z_1(s)\right)_{s > 0}$ in \eqref{gh432},  starting from $(\vartheta_0,\vt_0,\theta_{y_1}) \in [0,1]^3$, and $\nu_{u_x,{\bf y_1}}$ is the equilibrium distribution of the system in \eqref{gh45a2binf} with $\theta=u_x$ and $(y_{i,1})_{i\in\N_0}=\bf{y_1}$, (note that $\nu(0)=\nu_{\vt_0, (y_{i,1}(0))_{i\in\N_0}}$). Let $(z^{\nu(s)}(t))_{t \geq 0}$ be the process on $([0,1]^3 )^{\N_0}$ with initial measure $z^{\nu(s)}(0)$ drawn according to $\nu(s)$ (which is a mixture of random processes in equilibrium) that conditional on $x_1(s)=\theta$ and $Y_1(s)={\bf y_1}$ evolves according to \eqref{gh45a2binf} with $\theta=u_x$ and $(y_{i,1})_{i\in\N_0}={\bf y_1}$. Then, for every $s \in (0,\infty)$,
				\begin{equation}
					\lim_{N\to\infty} \CL\left[\left(Z^{[N]}(Ns+t)\right)_{t \geq 0}\right]
					= \CL \left[(z^{\nu(s)}(t))_{t \geq 0}\right].
				\end{equation}
			\end{itemize}
		\end{proposition}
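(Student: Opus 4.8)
\textbf{Proof proposal for Proposition~\ref{P.finsysmf2}.}

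The plan is to mimic, step by step, the abstract four-step scheme that was used for the one-colour case (Proposition~\ref{P.finsysmf}), with the second colour treated as a slowly-varying inhomogeneous drift term that freezes on time scale $1$ and on time scale $N$ acts only through the effective $1$-block. First I would observe that the two-colour system splits into a ``fast part'' and a ``slow part'': the colour-$1$ exchange rates $\tfrac{K_1e_1}{N},\tfrac{e_1}{N}$ are $O(1/N)$, so on time scale $Ns$ they contribute an $O(1)$ drift, whereas on time scale $t$ (fixed) they contribute $o(1)$ and $Y_1^{[N]}$ stays frozen at its initial value. Thus part~(b) and the ``level-$0$'' statements reduce essentially to Proposition~\ref{P.finsysmf} applied to the subsystem $(X^{[N]},Y_0^{[N]})$ with parameters $c_0,e_0,K_0$, because on time scale $t$ the colour-$1$ population is a constant and its contribution to the drift is negligible; the only new ingredient is that the ``conserved quantity'' $\vartheta_0 = \E^\mu[(x+K_0y_0)/(1+K_0)]$ now plays the role of $\theta$, and that the limiting equilibrium measures $\Gamma_\theta,\Gamma^\eff_\theta$ are those of \eqref{gh45a2binf}--\eqref{m12h}.

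Next I would carry out the four steps for the effective $1$-block. \emph{Step 1 (equilibrium of single components).} The analogue of Proposition~\ref{prop1}: starting from time $Ns-L(N)$ with $L(N)\to\infty$, $L(N)/N\to0$, the single-colony law converges to $\int P_s(\d\theta)\,\nu_{\theta,{\bf y}_1}$; this follows from the seven lemmas (Lemmas~\ref{lemerg}--\ref{unifergod},~\ref{lem:12}), each of which goes through verbatim for the subsystem $(X^{[N]},Y_0^{[N]})$ conditioned on the frozen $Y_1^{[N]}$, using the coupling through Brownian motions and the key estimate of Lemma~\ref{lemav} (now with $K=K_0$, $e=e_0$). \emph{Step 2 (convergence of the estimator).} The weighted sum $\bar\Theta^{(1),[N]}(Ns)$ has the feature that the $O(N)$ colour-$0$ terms cancel, so from \eqref{mfevolve2}
\begin{equation}
\label{eqn:Thetaoneev}
\d \bar\Theta^{(1),[N]}(Ns) = \frac{1}{1+K_0}\left[\sqrt{\tfrac1N\textstyle\sum_{i}g(x_i^{[N]}(Ns))}\,\d w(s) + K_1e_1\big[\Theta^{(1),[N]}_{y_1}(Ns)-\Theta^{(1),[N]}_x(Ns)\big]\d s\right],
\end{equation}
which is a semimartingale with $O(1)$ characteristics. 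Tightness, identification of the quadratic variation via $\E^{\nu_{\bar\Theta(r)}}[g]=\CF g(\bar\Theta(r))$ (using Step~1 and ergodicity of $\nu_\theta$), and uniqueness of the limiting martingale problem then proceed exactly as in Lemmas~\ref{martav}--\ref{lem:uni}; the extra drift term $K_1e_1[y_{1,1}(s)-x_1(s)]$ is bounded and Lipschitz, so it causes no trouble in the uniqueness argument (invoke \cite{YW71} for the closed $2$-dimensional system $(x_1^\eff,y_{1,1}^\eff)$). This gives part~(a). \emph{Step 3 (Meyer--Zheng).} By the analogue of Lemma~\ref{lemav}, $\E[|x_1^{[N]}(Ns)-y_{0,1}^{[N]}(Ns)|]=O(N^{-1/2})$ for $s>0$, so Lemmas~\ref{lem91}--\ref{lem94} yield $y_{0,1}^{[N]}\to x_1$ and $x_1^{[N]}\to x_1$ jointly in the Meyer--Zheng topology, with $x_1^{[N]}\approx\bar\Theta^{(1),[N]}$; combined with $y_{1,1}^{[N]}$ (which by \eqref{mfevolve2} has bounded derivative and converges in the classical topology) this produces part~(c). \emph{Step 4.} Part~(d) then follows by conditioning on $\bar\Theta^{(1),[N]}(Ns)$ and $Y_1^{[N]}(Ns)$, applying Corollary~\ref{cor1} (extended to the subsystem with a frozen colour-$1$ background and with the conserved quantity $\vartheta_0$), and using the McKean--Vlasov convergence of Proposition~\ref{P.mkvlim} on time scale $1$; the semimartingale tightness of the individual components $(x_i^{[N]},y_{i,0}^{[N]},y_{i,1}^{[N]})$ in $\CD([0,\infty),[0,1]^3)$ uses the Joffe--M\'etivier criterion \cite{JM86} exactly as at the end of the proof of Proposition~\ref{P.finsysmf}.

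The main obstacle I anticipate is the bookkeeping of the \emph{slow} colour-$1$ variable across the two time scales, i.e., making precise the conditional-law assumption \eqref{as} and showing that it is actually satisfied (rather than merely assumed) by the original system — concretely, that $Y_1^{[N]}(Ns)$, conditional on the effective $1$-block $z_1^{[N]}(s)$, converges to the prescribed kernel $P^{z_1(s)}_{Y_1(s)}$ obtained by running \eqref{gh432}. This requires showing that the colour-$1$ single colonies $y_{i,1}$ equalise with the active $1$-block average (because on time scale $Ns$ an active colony migrates very fast within $[N]$, so it ``sees'' the average), while the colour-$0$ single colonies track the active single colonies; disentangling these two equalisation mechanisms, which happen on the same time scale but with different rate prefactors, is the delicate point, and it is where the Meyer--Zheng topology together with the $L^1$ estimates of the Lemma~\ref{lemav} type must be combined carefully. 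Everything else is a routine adaptation of the one-colour arguments with $\theta\rightsquigarrow\vartheta_0$ and an additional bounded Lipschitz drift.
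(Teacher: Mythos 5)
Your scheme mirrors the paper's own six-step proof almost exactly, and the core ideas are right: cancel the $O(N)$ colour-$0$ rates in $\bar\Theta^{(1),[N]}$, freeze $Y_1^{[N]}$ on time scale $1$ and treat it as an inhomogeneous background, re-prove the comparison/coupling lemmas for the conditioned subsystem, and use Lemma~\ref{lemav}-type $L^1$ estimates to pass to the Meyer--Zheng topology. The one place you genuinely diverge from the paper is Step~2: you propose to reuse the martingale-problem identification (Lemmas~\ref{martav}--\ref{lem:uni}) to identify the limiting drift and quadratic variation, whereas the paper instead verifies the Joffe--M\'etivier criterion \cite[Theorem 3.3.1]{JM86} directly by showing $\E|G^{(1),[N]}_\dagger f-Gf|\to 0$ for polynomial $f$ (Lemma~\ref{lemlimev}). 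Both routes are valid (the paper itself later remarks that either could be used), but the generator-convergence route is cleaner here because the limiting dynamics \eqref{m64} has a genuine drift term, so the limit is no longer a pure martingale as in the one-colour case; your plan to ``identify the quadratic variation exactly as in Lemmas~\ref{lemmart}--\ref{lem:uni}'' would need to be supplemented by separately identifying the finite-variation part, which is exactly what the generator criterion does in one stroke.

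The one point on which you have misread the statement is the role of \eqref{as}. You flag ``showing that \eqref{as} is actually satisfied (rather than merely assumed)'' as the main obstacle and sketch a coupling argument for why $y_{i,1}$ equalises with the active $1$-block average. That argument is indeed the content of Lemma~\ref{lem:ds} later, but it is \emph{not} required to prove Proposition~\ref{P.finsysmf2}: parts (b) and (d) explicitly take \eqref{as} as a hypothesis, precisely because the one-block system cannot see what the colour-$1$ single colonies converge to (the remark after the proposition explains that weak limit points of $\CL[Y_1^{[N]}(Ns)\,|\,{\bf\Theta}^{(1),[N]}(Ns)]$ exist by compactness, but their identification requires a higher hierarchical level). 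Also, $P^{z_1(s)}_{Y_1(s)}$ is \emph{not} ``the kernel obtained by running \eqref{gh432}''; it is whatever conditional-law limit is assumed in \eqref{as} and is only pinned down in the multi-level analysis. So for this proposition the ``obstacle'' you identify is out of scope, and the rest of your plan — conditioning on the effective $1$-block, applying the two-colour analogue of Corollary~\ref{cor1} (i.e.\ Corollary~\ref{cor2}), and running the Joffe--M\'etivier tightness for single components in $\CD([0,\infty),[0,1]^3)$ — is what the paper does.
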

		
		\begin{remark}[{\bf Law of $1$-dormant single components}]
			{\rm Note that 
				\begin{equation}
					\label{y1}
					\left(\CL\left[Y_1^{[N]}(Ns)\Big|\left(\bar{\Theta}^{[N]}(Ns),\Theta_{y_1}^{[N]}(Ns)\right)
					\right]\right)_{N\in\N_0}
				\end{equation}
				is a tight sequence of measures. Hence there exist weak limit points. In Section~\ref{ss.tlhmfs} we will see that if there is a higher layer in the hierarchy, then we can show that all weak limit points of \eqref{y1} are the same and we can identify the limit. For Theorems~\ref{T.multiscalehiereff} and \ref{T.multiscalehier} we do not need this assumption, since there will alwyas be multiple higher levels.}\hfill$\blacksquare$
		\end{remark}

		\subsection{Proof of the two-colour mean-field finite-systems scheme}
		\label{pmfs2}
		
		The proof of Proposition~\ref{P.finsysmf2}, the finite-systems scheme with one level and two colours, follows the strategy used in Section~\ref{ss.pabstracts} for the proof of Proposition~\ref{P.finsysmf}. Like for the one-colour finite-systems scheme, we denote the slow time scale by $t$ and the fast time scale by $s$. The proof consists of the following 6 steps:
		\begin{enumerate}
			\item 
			Tightness of the effective estimator processes defined in \eqref{1013}. 
			\begin{equation}
				\big(({\bf\Theta}^{\eff,(1),[N]}(Ns))_{s>0}\big)_{N\in\N}
			\end{equation}
			\item 
			Stability property of $({\bf\Theta}^{\eff,(1),[N]}(Ns+t))_{t>0}$, i.e., for $L(N)$ satisfying $\lim_{N\to \infty}L(N)=\infty$ and $\lim_{N\to \infty} L(N)/N=0$, and all $\epsilon>0$,
			\begin{equation}
				\label{ma22}
				\lim_{N\to\infty}\P\left[\sup_{0 \leq t\leq L(N)}\left|\bar{\Theta}^{(1),[N]}(Ns)
				-\bar{\Theta}^{(1),[N]}(Ns-t)\right|>\epsilon\right]=0.
			\end{equation}
			and
			\begin{equation}
				\label{ma22y}
				\lim_{N\to\infty}\P\left[\sup_{0 \leq t\leq L(N)}\left|\Theta_{y_1}^{(1),[N]}(Ns)
				-\Theta_{y_1}^{(1),[N]}(Ns-t)\right|>\epsilon\right]=0.
			\end{equation}
			\item 
			Equilibrium of the infinite system and the one-dimensional distribution of the effective single components $(Z(Ns+t))_{t>0}$, analogous to Proposition \ref{prop1}.
			\item 
			Limiting evolution of the effective processes $(({\bf\Theta}^{\eff,(1),[N]}(Ns))_{s>0})_{N\in\N}$.
			\item 
			Evolution of the $1$-blocks in the Meyer-Zheng topology.
			\item 
			Proof of Proposition~\ref{P.finsysmf2}.
		\end{enumerate}
		
		\paragraph{Step 1: Tightness of the 1-block estimators.}
		
		\begin{lemma}{\bf[Tightness of the 1-block estimator]}
			Let 
			\begin{equation}
				({\bf\Theta}^{\eff,(1),[N]}(Ns))_{s>0}
			\end{equation} 
			be defined as in \eqref{ma6}. Then $(\CL[({\bf\Theta}^{\eff,(1),[N]}(Ns))_{s>0}])_{N\in\N}$ is a tight sequence of probability measures on $\CC((0,\infty), [0,1]^2)$.
		\end{lemma}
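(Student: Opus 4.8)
The statement asserts tightness of the sequence of laws of the effective estimator process $({\bf\Theta}^{\eff,(1),[N]}(Ns))_{s>0}$ on $\CC((0,\infty),[0,1]^2)$. Recall from \eqref{1013} that this process has two components: the combined active/colour-$0$ estimator $\bar{\Theta}^{(1),[N]}(Ns) = \frac{1}{N}\sum_{i\in[N]}\frac{x_i^{[N]}(Ns)+K_0 y_{i,0}^{[N]}(Ns)}{1+K_0}$ and the colour-$1$ dormant estimator $\Theta_{y_1}^{(1),[N]}(Ns) = \frac{1}{N}\sum_{i\in[N]}y_{i,1}^{[N]}(Ns)$. Since the state space $[0,1]^2$ is a product, by \cite[Theorem 7.3]{Bi99} it suffices to prove tightness of each coordinate process separately in $\CC((0,\infty),[0,1])$. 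The plan is therefore to treat the two components independently, reusing the machinery from the one-colour case (Lemma~\ref{martav}) for the first component, and a direct equicontinuity argument for the second.

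\textbf{First component.} The first step is to derive, from the SSDE \eqref{gh45a2}, the evolution equation for $\bar{\Theta}^{(1),[N]}(t)$. Summing the equation for $x_i^{[N]}$ over $i\in[N]$, the migration terms cancel by symmetry; combining with $K_0$ times the sum of the colour-$0$ equations, the infinite exchange rates between active and colour-$0$ dormant cancel as well (this is exactly the cancellation highlighted in the text around \eqref{mfevolve2}). What remains is
\begin{equation}
\label{new1}
\d\bar{\Theta}^{(1),[N]}(t) = \frac{1}{1+K_0}\,\frac{1}{N}\sum_{i\in[N]}\sqrt{g\big(x_i^{[N]}(t)\big)}\,\d w_i(t)
+ \frac{1}{1+K_0}\,\frac{K_1 e_1}{N}\,\big[\Theta_{y_1}^{(1),[N]}(t)-\Theta_x^{(1),[N]}(t)\big]\,\d t.
\end{equation}
After speeding up time by $N$, the drift term becomes $O(1)$ with a bounded integrand (since all quantities lie in $[0,1]$), while the martingale part has increasing process bounded by $\frac{\|g\|}{(1+K_0)^2}\,s$ with derivative bounded by $\frac{\|g\|}{(1+K_0)^2}$, exactly as in the proof of Lemma~\ref{martav}. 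Thus I would decompose $\bar{\Theta}^{(1),[N]}(Ns)$ into a continuous martingale plus a Lipschitz-in-$s$ drift (uniformly in $N$): the drift part is automatically tight by Arzel\`a--Ascoli, and the martingale part is tight by the time-change representation and Kolmogorov's weak-compactness criterion (\cite[Chapter XIII, Theorem 1.8]{RY99}), precisely as in Lemma~\ref{martav}(3). Tightness is preserved under addition, giving tightness of the first coordinate.

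\textbf{Second component and main obstacle.} For $\Theta_{y_1}^{(1),[N]}(t)$, summing the colour-$1$ equations in \eqref{gh45a2} gives $\d\Theta_{y_1}^{(1),[N]}(t) = \frac{e_1}{N}\,[\Theta_x^{(1),[N]}(t)-\Theta_{y_1}^{(1),[N]}(t)]\,\d t$, so after time-speedup $\d\Theta_{y_1}^{(1),[N]}(Ns) = e_1\,[\Theta_x^{(1),[N]}(Ns)-\Theta_{y_1}^{(1),[N]}(Ns)]\,\d s$. This process has \emph{absolutely continuous} paths with derivative bounded in absolute value by $e_1$ uniformly in $N$ and $s$; hence the family $\{\Theta_{y_1}^{(1),[N]}(Ns)\}_{N\in\N}$ is uniformly Lipschitz, hence equicontinuous and bounded, and relative compactness of the laws on $\CC([0,T],[0,1])$ for each $T$ follows from Arzel\`a--Ascoli (\cite[Theorem 7.2]{Bi99}); extending from $[0,T]$ to $(0,\infty)$ is routine. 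Combining the two coordinates via \cite[Theorem 7.3]{Bi99} yields tightness on $\CC((0,\infty),[0,1]^2)$. The only genuinely delicate point is the cancellation of the diverging $N K_0 e_0$ rates in \eqref{new1}: one must verify carefully that the weighted combination $\frac{x+K_0 y_0}{1+K_0}$ is exactly the right linear functional that kills these terms, and that no residual $O(N)$ contribution survives from the interaction with the colour-$1$ seed-bank (it does not, because that rate is only $K_1 e_1/N$, which becomes $O(1)$ after speedup). Once this algebraic reduction is in place, the rest is a routine adaptation of Lemma~\ref{martav}.
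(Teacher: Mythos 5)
Your argument is correct, but it takes a genuinely different route from the paper. The paper proves tightness of the two-dimensional process $({\bf\Theta}^{\eff,(1),[N]}(Ns))_{s>0}$ in one shot by exhibiting it as a $\CD$-semi-martingale with operator $G^{(1),[N]}_\dagger$ and invoking Joffe--Metivier's tightness criterion \cite[Proposition 3.2.3]{JM86}, verifying the hypotheses $H_1,H_2,H_3$ (with a small technical detour: they restrict to $s\geq\epsilon$ for each $\epsilon>0$ and let $\epsilon\downarrow 0$). You instead split into coordinates and argue elementarily: for $\bar{\Theta}^{(1),[N]}(Ns)$ you isolate a continuous martingale (treated by time-change plus the Kolmogorov criterion as in Lemma~\ref{martav}(3)) plus a drift that is uniformly Lipschitz in $s$ after the time-speedup (hence its laws live on a fixed compact subset of path space); for $\Theta_{y_1}^{(1),[N]}(Ns)$ you observe the paths are uniformly Lipschitz with constant $e_1$ and conclude by Arzel\`a--Ascoli. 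Combining coordinatewise tightness to get joint tightness is indeed valid on a product space (and the paper itself uses the same reasoning, citing \cite[Theorem 7.3]{Bi99}, in the proof of Proposition~\ref{P.finsysmf}(c)). In effect, you unwind what the Joffe--Metivier criterion does internally; your approach is more hands-on and elementary, and as a bonus it gives tightness directly on $[0,T]$ rather than only on $[\epsilon,T]$. The paper's route has the advantage of being a ready-made package that scales immediately to the higher-level estimator processes treated later (Lemma~\ref{lem:t3}, Lemma~\ref{tonebl}), where the same operator computation and $H_1$--$H_3$ verification are reused verbatim. One small imprecision worth correcting: \cite[Theorem 7.3]{Bi99} is the Arzel\`a--Ascoli-based tightness criterion on $C$, not a statement about products; the reduction to coordinatewise tightness is an elementary fact about compact sets in product spaces which you should state directly rather than attribute to that theorem.
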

		
		\begin{proof}
			To prove tightness of $(({\bf\Theta}^{\eff,(1),[N]}(Ns))_{s>0})_{N\in\N}$, we will prove for all $\epsilon>0$ that the set of measures $(({\bf\Theta}^{\eff,(1),[N]}(Ns))_{s\geq\epsilon})_{N\in\N}$ is tight. To do so, fix $\epsilon>0$. We will again use \cite[Proposition 3.2.3]{JM86}. From \eqref{gh45a2} we find that the 1-block averages $({\bf\Theta}^{\eff,(1),[N]}(Ns))_{s>0}$ evolve according to 
			\begin{equation}
				\label{ma21}
				\begin{aligned}
					\d \bar{\Theta}^{(1),[N]}(Ns)&=\frac{1}{1+K_0}\Bigg[\sqrt{\frac{1}{N}\sum_{i\in[N]}g\big(x^{[N]}_i(Ns)\big)}\,\d w_i(s)\\
					&\qquad+K_1e_1\left[\Theta_{y_1}^{(1),[N]}-\frac{1}{N}\sum_{i\in[N]} x^{[N]}_i(Ns)\right]\,\d s\Bigg],\\
					\d \Theta^{(1),[N]}_{y_1}(Ns)&=e_1\left[\frac{1}{N}\sum_{i\in[N]} x^{[N]}_i(Ns)-\Theta^{(1),[N]}_{y_1}(Ns)\right]\,\d s.
				\end{aligned}
			\end{equation}
			To use \cite[Proposition 3.2.3]{JM86}, we define $\CC^*$ as the set of polynomials on $([0,1]^2)$. Note that $({\bf\Theta}^{\eff,(1),[N]}(Ns))_{s\geq \epsilon}$ is a semi-martingale. Applying Itô's formula, we get
			\begin{equation}
				\begin{aligned}
					&f\left({\bf\Theta}^{\eff,(1),[N]}(Ns)\right)\\
					&=f\left({\bf\Theta}^{\eff,(1),[N]}(N\epsilon)\right)\\
					&\quad +\int_{\epsilon}^{s} \d w_i(r)\,\frac{1}{1+K_0}\sqrt{\frac{1}{N}\sum_{i\in[N]}g\big(x^{[N]}_i(Nr)\big)}\,
					\frac{\partial f}{\partial x}\left({\bf\Theta}^{\eff,(1),[N]}(Nr)\right)\\
					&\quad +\int_{\epsilon}^{s} \d r\,\frac{K_1e_1}{1+K_0}\left[\Theta_{y_1}^{(1),[N]}(Nr)
					-\frac{1}{N}\sum_{i\in[N]} x^{[N]}_i(Nr)\right]\frac{\partial f}{\partial x}\left({\bf\Theta}^{\eff,(1),[N]}(Nr)\right)\\
					&\quad +\int_{\epsilon }^{s} \d r\,e_1\left[\frac{1}{N}\sum_{i\in[N]} x^{[N]}_i(Nr)-\Theta_{y_1}^{(1),[N]}(Nr)\right]
					\,\frac{\partial f}{\partial y}\left({\bf\Theta}^{\eff,(1),[N]}(Nr)\right)\\
					&\quad +\int_{\epsilon }^{s} \d r\,\frac{1}{2(1+K_0)^2} \frac{1}{N}\sum_{i\in[N]}g\big(x^{[N]}_i(Nr)\big)\,
					\frac{\partial^2 f}{\partial x^2}\left({\bf\Theta}^{\eff,(1),[N]}(Nr)\right)
				\end{aligned}
			\end{equation}
			for all $f\in\CC^*$. Hence, if we define the operator 
			\begin{equation}
				\label{m45a}
				\begin{aligned}
					G^{(1),[N]}_\dagger&\colon\, (\CC^*,[0,1]^2,[\epsilon,\infty),\Omega)\to \R,\\
					G^{(1),[N]}_\dagger(f,(x,y),s,\omega)&=
					\frac{K_1e_1}{1+K_0}\left[y-\frac{1}{N}\sum_{i\in[N]} x^{[N]}_i(Ns,\omega)\right]\frac{\partial f}{\partial x}\\
					&\qquad+e_1\left[\frac{1}{N}\sum_{i\in[N]} x^{[N]}_i(Ns,\omega)-y\right]\frac{\partial f}{\partial y}\\
					&\qquad+\frac{1}{2(1+K_0)^2} \frac{1}{N}\sum_{i\in[N]}g(x^{[N]}_i(Ns,\omega))\, \frac{\partial^2 f}{\partial x^2},
				\end{aligned}
			\end{equation}
			then we see that the process $({\bf\Theta}^{\eff,(1),[N]}(Ns))_{s\geq \epsilon}$ is a $\CD$-semi-martingale for all $\epsilon>0$. For all $\epsilon>0$ the conditions $H_1,\ H_2,\ H_3$ are satisfied as before. Therefore we conclude from \cite[Proposition 3.2.3]{JM86} that the sequence $(({\bf\Theta}^{\eff,(1),[N]}(Ns))_{s\geq \epsilon})_{N\in\N}$ is tight. Since this is true for all $\epsilon>0$, we conclude that $(\CL[({\bf\Theta}^{\eff,(1),[N]}(Ns))_{s>0}])_{N\in\N}$ is tight.
		\end{proof}
		
		\paragraph{Step 2: Stability of the $1$-block estimators.}
		
		\begin{lemma}{\bf[Stability property of the 1-block estimator]}
			\label{stab}
			Let  ${\bf\Theta}^{\eff,(1),[N]}(t)$ be defined as in \eqref{ma6}. For any $L(N)$ satisfying $\lim_{N\to \infty}L(N)=\infty$ and $\lim_{N\to \infty} L(N)/N=0$,
			\begin{equation}
				\label{ma22b}
				\lim_{N\to\infty}\sup_{0 \leq t\leq L(N)}\left|\bar{\Theta}^{(1),[N]}(Ns)-\bar{\Theta}^{(1),[N]}(Ns-t)\right|=0
				\text{ in probability}
			\end{equation}
			and
			\begin{equation}
				\label{may}
				\lim_{N\to\infty}\sup_{0 \leq t\leq L(N)}\left|\Theta_{y_1}^{(1),[N]}(Ns)
				-\Theta_{y_1}^{(1),[N]}(Ns-t)\right|=0 \text{ in probability. }
			\end{equation}
		\end{lemma}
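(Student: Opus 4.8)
\textbf{Proof plan for Lemma~\ref{stab}.}

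The plan is to derive both statements from the explicit form of the evolution equations \eqref{ma21} together with the martingale structure of $\bar\Theta^{(1),[N]}$, reusing the argument from Lemma~\ref{martav}(2). First I would treat the combined estimator $\bar\Theta^{(1),[N]}$. From \eqref{ma21} we see that $(\bar\Theta^{(1),[N]}(Ns))_{s>0}$ is not a pure martingale, because of the drift term carrying $K_1e_1$; however, this drift is bounded by $\tfrac{K_1e_1}{1+K_0}$ uniformly in $N$ and $s$, since both $\Theta_{y_1}^{(1),[N]}$ and $\tfrac1N\sum_i x_i^{[N]}$ take values in $[0,1]$. Write $\bar\Theta^{(1),[N]}(Ns) = M^{[N]}(s) + D^{[N]}(s)$, where $M^{[N]}$ is the stochastic integral part and $D^{[N]}$ is the (Lipschitz, uniformly bounded-derivative) drift part. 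For the drift part, on any time window of length $L(N)/N \to 0$ in the fast time variable $s$, the increment is $O(L(N)/N) \to 0$ deterministically, so it contributes nothing. For the martingale part $M^{[N]}$, the increasing process is $\tfrac{1}{(1+K_0)^2}\int_0^s \d r\,\tfrac1N\sum_{i\in[N]} g(x_i^{[N]}(Nr))$, whose derivative in $s$ is bounded by $\|g\|/(1+K_0)^2$ uniformly. Then exactly as in \eqref{2}, define the stopping time $S^N_\epsilon = \inf\{t\ge0\colon (M^{[N]}(s)-M^{[N]}(s-t/N))^2 \ge \epsilon\}\wedge L(N)$ and apply the optional sampling theorem to the submartingale $(M^{[N]}(s)-M^{[N]}(s-t/N))^2$; the right-hand side is bounded by $\epsilon^{-2}\,\|g\|(1+K_0)^{-2}\,L(N)/N \to 0$. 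Combining the two parts gives \eqref{ma22b}.

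For the $1$-dormant estimator $\Theta_{y_1}^{(1),[N]}$, the situation is even simpler: its evolution in \eqref{ma21} has \emph{no} diffusion term at all, and its drift $e_1[\tfrac1N\sum_i x_i^{[N]}(Ns)-\Theta_{y_1}^{(1),[N]}(Ns)]$ is bounded by $e_1$ uniformly in $N$ and $s$. Hence, going back to the original (unscaled) time $t$, we have $|\d\Theta_{y_1}^{(1),[N]}(u)| \le e_1 N^{-1}\,\d u$ along the time interval $[Ns-t,Ns]$, so
\begin{equation}
\sup_{0\le t\le L(N)}\big|\Theta_{y_1}^{(1),[N]}(Ns)-\Theta_{y_1}^{(1),[N]}(Ns-t)\big| \le \frac{e_1\,L(N)}{N} \xrightarrow[N\to\infty]{} 0
\end{equation}
deterministically, which gives \eqref{may} with convergence even in the almost-sure sense. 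This is strictly stronger than convergence in probability, so \eqref{may} follows.

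The main obstacle, such as it is, is bookkeeping rather than anything deep: one must be careful to distinguish the fast time scale $s$ (in which the window length is $L(N)/N \to 0$) from the slow time scale $t$ appearing inside \eqref{ma22b}--\eqref{may}, and to track how the factor $N$ in front of the exchange rates in \eqref{gh45a2} cancels in the combined estimator $\bar\Theta^{(1),[N]}$ but not in the individual components $\Theta_x^{(1),[N]}, \Theta_{y_0}^{(1),[N]}$. The whole point of working with $\bar\Theta^{(1),[N]}$ (and separately with $\Theta_{y_1}^{(1),[N]}$, which feels only the slow rate $e_1$) is precisely that no rates diverging in $N$ appear in their evolution, so that the Lemma~\ref{martav}-style argument applies verbatim. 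The uniform Lipschitz bound on $g$ and the compactness of $[0,1]^3$ provide all the needed uniform-in-$N$ estimates, exactly as in the one-colour case.
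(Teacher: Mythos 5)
Your plan is correct and is essentially the same argument the paper gives: separate the evolution of $\bar\Theta^{(1),[N]}$ into a drift part (bounded deterministically by $O(L(N)/N)$ because the fast exchange terms cancel in the weighted combination) and a martingale part (controlled by the optional-sampling/Doob argument from Lemma~\ref{martav}), and observe that $\Theta_{y_1}^{(1),[N]}$ has no diffusion term and a drift of order $e_1/N$, giving a deterministic bound. The paper runs the same computation explicitly via the integral form of the SSDE and the triangle inequality; your only imprecision is that the stopping time $S^N_\epsilon$ is written backward in $t$, which should instead be phrased as a forward stopping time from the left endpoint $Ns-L(N)$, as in the proof of Lemma~\ref{martav}(2).
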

		
		\begin{proof}
			Fix $\epsilon>0$. From the SSDE \eqref{gh45a2} we obtain that, for $N$ large enough,
			\begin{equation}
				\begin{aligned}
					&\P\Bigg(\sup_{0\leq t\leq L(N)}\Bigg|\bar{\Theta}^{(1),[N]}(Ns)-\bar{\Theta}^{(1),[N]}(Ns-t)\Bigg|>\epsilon\Bigg)\\
					&=\P\Bigg(\sup_{0 \leq t\leq L(N)}\frac{1}{1+K_0}\Bigg|\int_{Ns-t}^{Ns}\d r\,
					\frac{K_1e_1}{N}\left[\Theta^{(1),[N]}_{y_1}(r)-\frac{1}{N}\sum_{i \in [N]}x^{[N]}_i(r)\right]\\
					&\qquad\qquad\qquad+\int_{Ns-t}^{Ns} \d w_i(r)\,\frac{1}{N}\sum_{i \in [N]}
					\sqrt{g\big(x^{[N]}_i(r)\big)}\,\Bigg|>\epsilon\Bigg]\\
					&\leq\P\Bigg(\Bigg|\frac{L(N)2K_1e_1}{N(1+K_0)}\Bigg|
					+\sup_{0\leq t\leq L(N)}\Bigg|\frac{1}{1+K_0}\int_{Ns-t}^{Ns} 
					\d w_i(r)\,\frac{1}{N}\sum_{i \in [N]}\sqrt{g(x^{[N]}_i(r))}\,
					\Bigg|>\epsilon\Bigg)\\
					&=\P\Bigg(\sup_{0\leq t\leq L(N)}\,\Bigg|\frac{1}{1+K_0}\int_{Ns-t}^{Ns} \d w_i(r)\,\frac{1}{N}
					\sum_{i \in [N]}\sqrt{g\big(x^{[N]}_i(r)\big)}\,\Bigg|>\epsilon-\frac{L(N)2K_1e_1}{N(1+K_0)}\Bigg)\\
					&\leq\P\Bigg(\sup_{0\leq t\leq L(N)}\Bigg|\frac{1}{1+K_0}\int_{Ns-t}^{Ns} \d w_i(r)\,\frac{1}{N}
					\sum_{i \in [N]}\sqrt{g\big(x^{[N]}_i(r)\big)}\,\Bigg|>\frac{\epsilon}{2}\Bigg).
				\end{aligned}
			\end{equation} 
			Applying the same optional stopping argument as used in the proof of Lemma~\ref{martav}, we find \eqref{ma22b}. For \eqref{may}, note that
			\begin{equation}
				\begin{aligned}
					\P&\Bigg(\sup_{0\leq t\leq L(N)}\Bigg|\Theta_{y_1}^{(1),[N]}(Ns)-\Theta_{y_2}^{(1),[N]}(Ns-t)\Bigg|>\epsilon\Bigg)\\
					&=\P\Bigg(\sup_{0 \leq t\leq L(N)}\frac{1}{1+K_0}\Bigg|\int_{Ns-t}^{Ns}\d r\,
					\frac{e_1}{N}\left[\Theta^{(1),[N]}_{y_1}(r)-\frac{1}{N}\sum_{i \in [N]}x_i^{[N]}(r)\right]\Bigg|>\epsilon\Bigg)\\
					&\leq\P\Bigg(\frac{2e_1L(N)}{(1+K_0)N}>\epsilon\Bigg). 
				\end{aligned}
			\end{equation} 
			Let $N\to\infty$ to obtain \eqref{may}.
		\end{proof}

		\paragraph{Step 3: Equilibrium for the infinite system.}
		
		To derive the equilibrium of the single components in the infinite system, we derive the following analoque of Proposition~\ref{prop1}. Recall that the finite system is denoted by $Z^{[N_k]}$ in \eqref{e7152}, and recall the list of ingredients in Section~\ref{ss.tcmfs}.
		
		\begin{proposition}{\bf [Equilibrium for the infinite 2-colour system]}
			\label{prop2}
			Let $(N_k)_{k\in\N}$ be a sequence in $\N$. Fix $s>0$. Let $L(N)$ satisfy $\lim_{N\to\infty} L(N)=\infty$ and $\lim_{N\to\infty} L(N)/N=0$, and suppose that
			\begin{equation}
				\label{112}
				\begin{aligned}
					&\lim_{k\to\infty}\CL\left[{\bf\Theta}^{\eff,(1),[N_k]}(N_ks)\right] = P_{{\bf\Theta^\eff}(s)},\\			
					&\lim_{k\to\infty}\CL\left[Y_1^{[N_k]}(N_ks)\Big|{\bf\Theta}^{\eff,(1),[N_k]}(N_ks)\right] 
					= P^{{\bf\Theta}^{\eff,(1)}(s)}_{Y_1(s)},\\ 
					&\lim_{k\to\infty }\CL\left[\sup_{0\leq t\leq L(N_k)}\left|\bar{\Theta}^{[N_k]}(N_k s)-\bar{\Theta}^{[N_k]}(N_ks-t)\right|
					+\left|{\Theta}_{y_1}^{[N_k]}(N_ks)-{\Theta_{y_1}}^{[N_k]}(N_ks-t)\right|\right]\\
					&\qquad \qquad \qquad \qquad \,\,\, =\delta_{0},\\
					&\lim_{k\to\infty} \CL\bigl[Z^{[N_k]}(N_ks)\bigr] = \nu(s).
				\end{aligned}
			\end{equation}
			Then $\nu(s)$ is of the form
			\begin{equation}
				\nu(s) = \int_{[0,1]^2} P_{{\bf\Theta^\eff}(s)}(\d \theta,\d \theta_y)\,\int_{[0,1]^{\N_0}}
				P^{(\theta,\theta_y)}_{Y_1(s)}(\d {\bf y_1})\,\nu_{\theta,{\bf y_1}},
			\end{equation}
			where ${\bf y_1}=(y_{i,1})_{i\in\N_0}$  is a sequence with elements in $[0,1]$, and $\nu_{\theta,{\bf y_1}}$ is the equilibrium measure of the process in \eqref{1010} evolving according to \eqref{gh45a2binf} with $(y_{i,1})_{i\in\N_0}$ given by the sequence ${\bf y_1}=$.
		\end{proposition}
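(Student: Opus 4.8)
The plan is to follow the same structure as the proof of Proposition~\ref{prop1}, but with the extra colour-$1$ coordinate carried along as a quasi-conserved quantity on the fast time scale. First I would invoke compactness of $([0,1]^3)^{\N_0}$: passing to a further subsequence of $(N_k)$, let $\mu_{N_k}$ be the periodic continuation of $\CL[Z^{[N_k]}(N_ks-L(N_k))]$ and set $\mu = \lim_{k\to\infty}\mu_{N_k}$. Since the dynamics in \eqref{gh45a2} preserves exchangeability, $\mu$ is exchangeable, so by De~Finetti we may condition on the tail $\sigma$-algebra. The key point is that on the infinite system there are now two relevant limiting random variables: the level-$1$ density $\bar\Theta = \lim_{n\to\infty}\frac1n\sum_{i\in[n]}\frac{x_i+K_0y_{i,0}}{1+K_0}$ and the empirical law of the colour-$1$ seed-bank coordinates $(y_{i,1})_{i\in\N_0}$ (which, by exchangeability of $\mu$ and the ergodic theorem, has a well-defined de~Finetti limit). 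Using the analogue of Lemma~\ref{stabest} (stability of the estimator for the conserved quantity) applied to $\bar\Theta^{(1),[N]}$, together with the analogue applied to the $1$-dormant components, and the hypotheses \eqref{112}, I would identify $\CL[\bar\Theta]=P_{\bf\Theta^\eff(s)}$-marginal and the conditional law of $(y_{i,1})_{i\in\N_0}$ with $P^{(\theta,\theta_y)}_{Y_1(s)}$. The output of this step is the representation
\begin{equation}
\mu = \int_{[0,1]^2} P_{{\bf\Theta^\eff}(s)}(\d\theta,\d\theta_y)\int_{[0,1]^{\N_0}} P^{(\theta,\theta_y)}_{Y_1(s)}(\d{\bf y}_1)\,\mu_{\theta,{\bf y}_1},
\end{equation}
where $\mu_{\theta,{\bf y}_1}$ is the appropriate ergodic component.

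Next I would run the infinite system \eqref{gh45a2binf} forward from $\mu$ for an intermediate time $\bar L(N_k)$ chosen (via the construction in Lemma~\ref{unifergod}) so that $\bar L(N)\to\infty$, $\bar L(N)/N\to0$ and $\bar L(N)\le L(N)$, and so that the finite system at time $N_ks-L(N_k)+\bar L(N_k)$ is asymptotically indistinguishable from the infinite system $X^{\mu_{N_k}}$ at time $\bar L(N_k)$. Here I need a two-colour version of the comparison lemma (Lemma~\ref{l.comp}): the coupling of finite and infinite systems via shared Brownian motions still works, because the colour-$1$ exchange rate $e_1/N\to 0$ contributes only an $O(\bar L(N)/N)$ error in the $L^1$-distance, while the colour-$0$ exchange rate $e_0$ and the migration rate $c_0$ are handled exactly as in Lemma~\ref{l.comp}. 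Since $\bar L(N_k)\to\infty$, the infinite system in \eqref{gh45a2binf} started from each ergodic component $\mu_{\theta,{\bf y}_1}$ converges to its equilibrium $\nu_{\theta,{\bf y}_1}$ (this is the two-colour analogue of Lemma~\ref{lemerg}, whose proof reduces to the single-colony convergence of Proposition~\ref{P.equergod} applied componentwise, with the colour-$1$ coordinates frozen). Continuity of $\theta\mapsto\nu_{\theta,{\bf y}_1}$ (the analogue of Lemma~\ref{lemlip}) lets me integrate and conclude
\begin{equation}
\lim_{k\to\infty}\CL\bigl[Z^{[N_k]}(N_ks-L(N_k)+\bar L(N_k))\bigr] = \int_{[0,1]^2} P_{{\bf\Theta^\eff}(s)}(\d\theta,\d\theta_y)\int_{[0,1]^{\N_0}} P^{(\theta,\theta_y)}_{Y_1(s)}(\d{\bf y}_1)\,\nu_{\theta,{\bf y}_1}.
\end{equation}

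Finally, to bridge from time $N_ks-L(N_k)+\bar L(N_k)$ back to time $N_ks$, I would apply a two-colour version of the coupling lemma for finite systems (Lemma~\ref{lem:12}): couple the finite system started at $N_ks-L(N_k)$ with the one started at $N_ks-\bar L(N_k)$, and use that the hypotheses \eqref{112} force the two level-$1$ estimators (and the two $1$-dormant estimators) to agree in the limit, so the de~Finetti measures of the two starting configurations coincide. The coupling is then successful for any diverging time sequence, and in particular at the sequence $\bar L(N_k)$, which gives $\CL[Z^{[N_k]}(N_ks)]\to\nu(s)$ with $\nu(s)$ as claimed. The main obstacle I anticipate is the first step — rigorously identifying the conditional law $P^{(\theta,\theta_y)}_{Y_1(s)}$ of the colour-$1$ coordinates and showing it is stable under the small $O(\bar L(N)/N)$ perturbation coming from the $e_1/N$ exchange — because unlike the active/colour-$0$ block, the colour-$1$ block is only \emph{approximately} frozen, and one must check that the $L^2$-Fourier/spectral-density argument of Lemma~\ref{stabest} still closes when the conserved quantity drifts by a vanishing amount; I expect this to require a short additional estimate showing $\E[|\Theta^{(1),[N]}_{y_1}(Ns)-\Theta^{(1),[N]}_{y_1}(Ns-L(N))|]=O(L(N)/N)$, which follows directly from the third line of \eqref{gh45a2} and boundedness of the components.
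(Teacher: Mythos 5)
Your proposal is correct and follows essentially the same route as the paper's proof: pass to a convergent subsequence of periodic continuations $\mu_{N_k}$, decompose the exchangeable limit $\mu$ by conditioning on the pair $\phi=(\phi_1,\phi_2)$ of level-$1$ and $1$-dormant averages (Lemma~\ref{stabest2}), compare the finite system with the infinite system \eqref{gh45a2binf} in which $Y_1$ is frozen (Lemma~\ref{l.comp2b2}), select a slower scale $\bar L(N)$ via the uniform ergodic theorem (Lemma~\ref{unifergod2}), identify the limit as a mixture of $\nu_{\theta,{\bf y}_1}$ via Lemma~\ref{lemerg2} and the conditional-law hypothesis on $Y_1$, and close the gap between $N_ks-L(N_k)+\bar L(N_k)$ and $N_ks$ by the two-colour coupling (Lemma~\ref{lem:12a}). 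The technical issue you flag at the end is exactly what the paper handles in Lemma~\ref{stab} by the stability estimate $\E[|\Theta^{(1),[N]}_{y_1}(Ns)-\Theta^{(1),[N]}_{y_1}(Ns-L(N))|]\le 2e_1L(N)/((1+K_0)N)$.
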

		
		
		\paragraph{Preparation for the proof of Proposition~\ref{prop2}.}
		
		The proof of Proposition~\ref{prop2} follows the same line of argument as used in the proof of Proposition~\ref{prop1}. We need lemmas that are similar to Lemmas~\ref{lemerg}-\ref{unifergod}, but this time in the setting of the two-colour hierarchical mean-field finite-systems scheme. Afterwards we prove Proposition~\ref{prop2}.
		
		\begin{lemma}{\bf [Convergence for the infinite system]} 
			\label{lemerg2}
			Let $\mu$ be an exchangeable probability measure on $([0,1]^3)^{\N_0}$. Then for the system $(Z(t))_{t \geq 0}$ given by \eqref{1010} with $\CL[Z(0)]=\mu$,
			\begin{equation}
				\lim_{t\to\infty }\CL[Z(t)]= \nu_{\theta,{\bf y_1}},
			\end{equation} 
			where $\nu_{\theta,{\bf y_1}}$ is of the form
			\begin{equation}
				\nu_{\theta,{\bf y_1}}=\prod_{i\in\N_0}\Gamma_{\theta,y_{i,1}}
			\end{equation}
			with $\Gamma_{\theta,y_{i,1}}$ the equilibrium of the $i$th single-component process in \eqref{gh45a2binf}. 
		\end{lemma}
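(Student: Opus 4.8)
The plan is to reduce the statement to the single-colony case already treated in Proposition~\ref{P.equergod}, exploiting that in the limiting SSDE \eqref{gh45a2binf} the components are \emph{decoupled}: each component $(x_i(t),y_{i,0}(t))$ evolves autonomously according to a McKean--Vlasov--type equation with drift centre $\theta$, while the third coordinate $y_{i,1}(t)\equiv y_{i,1}$ is frozen. First I would observe that, since the law $\mu$ is exchangeable, the conserved quantity $\theta$ is a random variable measurable with respect to the tail $\sigma$-algebra; by conditioning on $\theta$ and on the frozen sequence ${\bf y_1}=(y_{i,1})_{i\in\N_0}$ it suffices to prove convergence when $\theta$ and ${\bf y_1}$ are deterministic. (Strictly, one argues that the conditional law given the pair $(\theta,{\bf y_1})$ is preserved under the dynamics, exactly as in the proof of Proposition~\ref{prop1}, so the ergodic decomposition commutes with the evolution.) In that conditioned setting the first two coordinates of component $i$ solve precisely the McKean--Vlasov SSDE \eqref{gh5} with parameters $c_0,e_0,K_0$ and $E=1$, after replacing $\E[x_i(t)]$ by $\theta$ in equilibrium, or, before equilibrium, by the explicit function $\E[x_i(t)]=\theta+\tfrac{K_0}{1+K_0}(\theta_{x}-\theta_{y_0})\,\e^{-(K_0+1)e_0 t}$ coming from \eqref{expzalt}.

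The second step is to invoke Proposition~\ref{P.equergod}: for every deterministic $\theta\in[0,1]$ the single-colony process in \eqref{gh554} (with the appropriate parameters) converges to a unique equilibrium $\Gamma_{\theta}$, and this equilibrium has marginal mean $\theta$ in both the $x$- and the $y_0$-coordinate. Since the frozen coordinate $y_{i,1}$ does not influence the dynamics of $(x_i,y_{i,0})$, and since conditionally on $(\theta,{\bf y_1})$ the pairs $(x_i,y_{i,0})_{i\in\N_0}$ are mutually independent (this is preserved from the product initial law, or, for general exchangeable $\mu$, follows after the ergodic-decomposition reduction above), the joint law factorises and we obtain
\begin{equation}
\lim_{t\to\infty}\CL[Z(t)\mid \theta,{\bf y_1}]=\prod_{i\in\N_0}\Gamma_{\theta,y_{i,1}},
\end{equation}
where $\Gamma_{\theta,y_{i,1}}$ denotes the law on $[0,1]^3$ that is $\Gamma_\theta$ in the first two coordinates and $\delta_{y_{i,1}}$ in the third. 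Integrating back over the ergodic decomposition of $\mu$ gives the claimed form $\nu_{\theta,{\bf y_1}}=\prod_{i\in\N_0}\Gamma_{\theta,y_{i,1}}$, with $\theta$ and ${\bf y_1}$ now the random limits dictated by $\mu$.

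The main technical obstacle is the first step: justifying that one may condition on the (in general random) conserved quantity $\theta$ and the frozen sequence ${\bf y_1}$, and that convergence to equilibrium is \emph{uniform} over this conditioning so that it survives integration. This is handled exactly as in the one-colour case — via the coupling of two copies through their common Brownian motions, the $L_1$-contraction estimate $\frac{\d}{\d t}\E[|\Delta(t)|+K_0|\delta(t)|]\le 0$ up to an exponentially small correction from the mismatch in initial $\E[x]$, and an invocation of the continuity of $\theta\mapsto\nu_\theta$ (Lemma~\ref{lemlip}, in its two-colour incarnation) to pass limits through the random centre. The uniformity needed to interchange the limits $t\to\infty$ and the integration over $(\theta,{\bf y_1})$ follows from the dominated-convergence argument based on that contraction bound, since the bound is uniform in the initial configuration. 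Once these ingredients are in place, the proof is a direct transcription of the argument given for Lemma~\ref{lemerg} together with Proposition~\ref{P.equergod}; I would spell out only the points where the frozen third coordinate enters, namely that it is carried along inertly and contributes the factor $\prod_i\delta_{y_{i,1}}$.
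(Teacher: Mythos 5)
Your plan lands on the right mechanism — decoupled components, a frozen $y_{i,1}$-coordinate, single-colony ergodicity, and the product form — and this is indeed the route the paper takes. However, you have misread two aspects of the setup, and they matter. In equation~\eqref{gh45a2binf} the quantity $\theta$ is a \emph{fixed deterministic parameter} of the drift $c_0[\theta-x_i(t)]\,\d t$, not a tail-measurable random conserved quantity to be recovered from $\mu$; the ergodic-decomposition step, the conditioning on $\theta$, and your entire ``main technical obstacle'' about uniformity over the conditioning and continuity of $\theta\mapsto\nu_\theta$ simply do not arise in this lemma. Relatedly, the two active coordinates do \emph{not} solve the McKean--Vlasov SSDE~\eqref{gh5}: the drift in \eqref{gh45a2binf} is constantly centred at $\theta$, not at $\E[x_i(t)]$, so the exponential formula~\eqref{expzalt} you quote is not the drift of this process, and indeed $\E\big[\tfrac{x_i(t)+K_0 y_{i,0}(t)}{1+K_0}\big]$ is not conserved under~\eqref{gh45a2binf} (it relaxes to $\theta$). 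As a consequence Proposition~\ref{P.equergod} cannot be invoked as stated — its hypothesis~\eqref{e587alt} need not hold here. The paper's phrase ``similar argument as in the proof of Proposition~\ref{P.equergod}'' is exactly the acknowledgement that one reruns the $L^1$-coupling contraction for the constant-drift equation rather than applying the McKean--Vlasov proposition verbatim.

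There is also a third slip, though it is harmless: the claim that conditionally on $(\theta,{\bf y_1})$ the pairs $(x_i,y_{i,0})_{i\in\N_0}$ are mutually independent is false for a general exchangeable $\mu$, since conditioning on $(\theta,{\bf y_1})$ does not recover the de Finetti directing measure. But you do not need this claim. Since each component evolves autonomously with its own Brownian motion, and since the coupling estimate shows that $\CL[(x_i(t),y_{i,0}(t))\mid z_i(0)]\to\Gamma_{\theta,y_{i,1}}$ as $t\to\infty$ for every initial state $z_i(0)$ — uniformly over initial states — the joint law of any finite collection of components converges to the product of the $\Gamma_{\theta,y_{i,1}}$ by dominated convergence, irrespective of any dependence built into $\mu$. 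Once the conditioning on $\theta$ and the McKean--Vlasov framing are dropped, and the independence claim is replaced by this forgetting-of-initial-conditions observation, your argument reduces to the paper's short proof.
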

		
		\begin{proof}
			For each component of the infinite system in \eqref{1010} the $1$-dormant single component process $(y_{i,1}(t))_{t\geq 0}$ does not move on time scale $t$. Hence, given the states of $1$-dormant single components, we can use a similar argument as in the proof of Proposition~\ref{P.equergod} (see Section~\ref{sec:equergod}) to show that the single components converge to an equilibrium measure $\Gamma_{\theta,y_{i,1}}$. Since the single components do not interact, the claim in Lemma~\ref{lemerg2} follows.
		\end{proof}
		
		The second lemma establishes the continuity of the equilibrium with respect to $\theta$, its center of drift.
		
		\begin{lemma}{\bf [Continuity of the equilibrium]} 
			\label{lemlip2} 
			Let $\CP(([0,1]^3)^{\N_0})$ denote the space of probability measures on $([0,1]^3)^{\N_0}$. The mapping $[0,1]\times[0,1]^{\N_0} \to \CP(([0,1]^3)^{\N_0})$ given by
			\begin{equation}
				(\theta,{\bf y_1}) \mapsto \nu_{\theta,{\bf y_1}}
			\end{equation}
			is continuous. Furthermore, if $h$ is a Lipschitz function on $[0,1]$, then also $\CF h$ defined by 
			\begin{equation}
				(\CF h)(\theta) =\E^{\nu_{\theta,{\bf y_1}}}[h(\cdot)] = \int_{([0,1]^3)^{\N_0}} \nu_{\theta,{\bf y_1}}(\d z)\,h(x_0) 
			\end{equation}
			is a Lipschitz function on $[0,1]$, whose values are independent of ${\bf y_1}$. 
		\end{lemma}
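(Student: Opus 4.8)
The plan is to follow the same strategy as the one-colour analogue (Lemma~\ref{lemlip}), adapting the coupling argument to the presence of the frozen colour-$1$ dormant component. The key observation is that in the infinite system \eqref{gh45a2binf} the colour-$1$ components $y_{i,1}$ are constant in time, so they enter only as fixed parameters; the dynamics of $(x_i,y_{i,0})$ is exactly a copy of the one-colour McKean-Vlasov system with drift towards the conserved quantity $\theta$ and rates $c_0,e_0,K_0$. Hence Lemma~\ref{lemerg2} already gives the product form $\nu_{\theta,\mathbf{y_1}} = \prod_{i\in\N_0}\Gamma_{\theta,y_{i,1}}$, and it suffices to establish joint continuity of $(\theta,\mathbf{y_1}) \mapsto \nu_{\theta,\mathbf{y_1}}$ in the product topology, together with the Lipschitz claim for $\CF h$ and its independence of $\mathbf{y_1}$.

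First I would prove continuity in $\theta$ for fixed $\mathbf{y_1}$. Take a sequence $\theta_n \to \theta$ and couple two copies of the infinite system \eqref{gh45a2binf} with the same colour-$1$ values $\mathbf{y_1}$, the same Brownian motions, and initial laws $\delta_{(\theta_n,\theta_n,y_{i,1})}$ and $\delta_{(\theta,\theta,y_{i,1})}$ componentwise. Writing $\Delta_i^n = x_i^n - x_i$ and $\delta_i^n = y_{i,0}^n - y_{i,0}$, an application of It\^o's formula to $|\Delta_i^n| + K_0|\delta_i^n|$ — using that the local time of $\Delta_i^n$ at $0$ vanishes because $g$ is Lipschitz (as in \eqref{m12}--\eqref{mo3a2}) — yields a differential inequality whose only inhomogeneous term is $c_0\,\E[|\theta_n - \theta|]$, which tends to $0$. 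Exactly as in the proof of Lemma~\ref{unifergod}, one extracts a sequence $L(n)\to\infty$ with $L(n)/n\to 0$ along which $\E[|\Delta_i^n(L(n))| + K_0|\delta_i^n(L(n))|] \to 0$, and then, coupling each system started from a point mass to the same system started from its equilibrium $\nu_{\theta_n,\mathbf{y_1}}$, respectively $\nu_{\theta,\mathbf{y_1}}$, and using convergence to equilibrium from Lemma~\ref{lemerg2}, a triangle inequality gives $\E^{\nu_{\theta_n,\mathbf{y_1}}\times\nu_{\theta,\mathbf{y_1}}}[|\Delta_i^n(L(n))|+K_0|\delta_i^n(L(n))|]\to 0$. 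Testing against Lipschitz functions of $x_0$ and of $y_{0,0}$ then forces $\nu_{\theta_n,\mathbf{y_1}} \to \nu_{\theta,\mathbf{y_1}}$ weakly. Continuity in $\mathbf{y_1}$ (for fixed $\theta$) in the product topology is handled the same way: if $\mathbf{y_1}^{(n)} \to \mathbf{y_1}$ coordinatewise, then for each fixed coordinate $i$ the marginal $\Gamma_{\theta,y_{i,1}^{(n)}}$ converges to $\Gamma_{\theta,y_{i,1}}$ by continuity of the single-colony equilibrium in its frozen parameter (again via the same coupling with inhomogeneous term $K_0 e_0\,|y_{i,1}^{(n)} - y_{i,1}|$), and since $\nu_{\theta,\mathbf{y_1}}$ is an infinite product, coordinatewise convergence of the factors gives weak convergence of the product; joint continuity follows by combining the two.

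For the statement about $\CF h$, note that $(\CF h)(\theta) = \E^{\nu_{\theta,\mathbf{y_1}}}[h(x_0)] = \int_{[0,1]^2} h(x)\,\Gamma_{\theta,y_{0,1}}(\mathrm{d}x,\mathrm{d}y_{0,0})$, but the first marginal of $\Gamma_{\theta,y_{0,1}}$ is exactly the first marginal of the equilibrium $\Gamma_\theta$ of the one-colour system in \eqref{gh5inf}: the colour-$1$ component, being frozen, never feeds back into $x_0$ (it appears only through the constant drift towards $\theta$, which already incorporates it via the conserved quantity), so the law of $x_0$ under $\Gamma_{\theta,y_{0,1}}$ does not depend on $y_{0,1}$. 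Hence $\CF h$ here coincides with the one-colour renormalisation map, and the Lipschitz property and $\mathbf{y_1}$-independence both follow from Lemma~\ref{lemlip}; alternatively one re-runs the coupling estimate directly to get $|(\CF h)(\theta) - (\CF h)(\theta')| \le (\mathrm{Lip}\,h)\,\E^{\nu_\theta \times \nu_{\theta'}}[|x_0 - x_0'|] \le C\,|\theta-\theta'|$. The main obstacle, as in the one-colour case, is the time-interval management: one must interleave the ``short'' coupling time $L(n)$ (on which the two solutions come together) with the convergence-to-equilibrium time in such a way that both effects are captured simultaneously, which is precisely the construction carried out in Lemma~\ref{unifergod}; here it must be done uniformly across the infinitely many coordinates carrying possibly different frozen values $y_{i,1}$, but since the bound $\E[|\Delta_i^n(t)|+K_0|\delta_i^n(t)|]$ is controlled by quantities independent of $i$ (the Lipschitz constant of $g$, the rates, and $|\theta_n-\theta|$ or $\sup_i|y_{i,1}^{(n)}-y_{i,1}|$), the same sequence $L(n)$ works for all coordinates and no genuinely new difficulty arises.
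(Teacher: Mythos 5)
Your proof is correct and follows essentially the same coupling strategy the paper relies on: the paper's own proof is the one-line deferral ``Lemma~\ref{lemlip2} follows from the proof of Lemma~\ref{unifergod2},'' and what you reconstruct (same Brownian motions, It\^o on $|\Delta_i^n|+K_0|\delta_i^n|$, a sequence $L(n)$ interleaving coupling time with relaxation time, then a triangle inequality against the equilibria) is precisely the content of that reference, adapted as the paper intends. Your observation that the law of $x_0$ under $\Gamma_{\theta,y_{0,1}}$ cannot depend on $y_{0,1}$ (so $\CF h$ reduces to the one-colour map of Lemma~\ref{lemlip}) is correct and is what makes the ``independent of $\mathbf{y_1}$'' clause hold.

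One small inaccuracy worth flagging: for the continuity in $\mathbf{y_1}$ you invoke a coupling ``with inhomogeneous term $K_0 e_0\,|y_{i,1}^{(n)} - y_{i,1}|$.'' No such term exists. In the limiting infinite system \eqref{gh45a2binf} the colour-$1$ component $y_{i,1}$ is completely frozen and does not appear in the drift of $x_i$ at all — the term $\frac{K_1 e_1}{N}[y_{i,1}-x_i]$ that carried the dependence in the prelimit \eqref{gh45a2} has already vanished in the $N\to\infty$ limit. Consequently the law of $(x_i,y_{i,0})$ under $\Gamma_{\theta,y_{i,1}}$ is \emph{identical} for all values of $y_{i,1}$, and the only $y_{i,1}$-dependence in $\Gamma_{\theta,y_{i,1}}$ is the deterministic third marginal $\delta_{y_{i,1}}$. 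Continuity in $\mathbf{y_1}$ is therefore immediate (coordinatewise weak convergence of point masses, tensored with a factor that does not move), and in fact no coupling estimate is needed for that direction. This is a simplification of your argument rather than an obstruction, so the conclusion stands unchanged.
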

		
		\begin{proof}
			Lemma~\ref{lemlip2} follows from the proof of Lemma~\ref{unifergod2}.
		\end{proof}
		
		The third lemma characterises the speed at which the estimators $({\Theta}^{[N]}_x(t))_{t\geq 0}$ and $({\Theta}^{[N]}_y(t) )_{t\geq 0}$ converge to each other when $N\to\infty$ and $t\to\infty$.  
		
		\begin{lemma}{\bf [Comparison of empirical averages]}
			\label{lemlev1a}
			Let $(\Theta^{(1),[N]}_x(t))_{t\geq 0}$ and $(\Theta^{(1),[N]}_{y_0}(t))_{t\geq 0}$ be defined as in \eqref{ma6}. Then 
			\begin{equation}
				\label{bnd}
				\begin{aligned}
					\E\left[\left|\Theta^{(1),[N]}_{x}(t)-\Theta^{(1),[N]}_{y_0}(t)\right|\right]
					&\leq \sqrt{\E\left[\left(\Theta^{(1),[N]}_{x}(0)-\Theta^{(1),[N]}_{y_0}(0)\right)^2\right]}\e^{-(K_0e_0+e_0)t}\\ 
					&\quad + \sqrt{\frac{2}{K_0e_0+e_0}\left[\frac{||g||}{N}+\frac{4 K_1e_1}{N}\right]}.
				\end{aligned}
			\end{equation}	
		\end{lemma}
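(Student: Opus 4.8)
The plan is to derive the bound in \eqref{bnd} by an It\^o-calculus computation on the difference process, following closely the proof of Lemma~\ref{lemav} in Section~\ref{step1}, but now keeping track of the extra forcing term coming from the colour-$1$ seed-bank. First I would write down the evolution of the two averages $\Theta^{(1),[N]}_x(t)$ and $\Theta^{(1),[N]}_{y_0}(t)$ by summing the SSDE \eqref{gh45a2} over $i\in[N]$ and dividing by $N$: the migration terms cancel (the kernel is doubly stochastic), leaving
\begin{equation*}
\begin{aligned}
\d\Theta^{(1),[N]}_x(t) &= \frac{1}{N}\sum_{i\in[N]}\sqrt{g(x^{[N]}_i(t))}\,\d w_i(t)
+ K_0e_0\,[\Theta^{(1),[N]}_{y_0}(t)-\Theta^{(1),[N]}_x(t)]\,\d t\\
&\qquad + \frac{K_1e_1}{N}\,[\Theta^{(1),[N]}_{y_1}(t)-\Theta^{(1),[N]}_x(t)]\,\d t,\\
\d\Theta^{(1),[N]}_{y_0}(t) &= e_0\,[\Theta^{(1),[N]}_x(t)-\Theta^{(1),[N]}_{y_0}(t)]\,\d t.
\end{aligned}
\end{equation*}
Setting $\Delta(t) = \Theta^{(1),[N]}_x(t)-\Theta^{(1),[N]}_{y_0}(t)$, I would then compute $\d(\Delta(t))^2$ by It\^o's formula. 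The drift contribution is $-2(K_0e_0+e_0)\Delta(t)^2\,\d t$ from the exchange terms, plus a cross term $\tfrac{2K_1e_1}{N}\Delta(t)[\Theta^{(1),[N]}_{y_1}(t)-\Theta^{(1),[N]}_x(t)]\,\d t$ from the colour-$1$ forcing, plus the quadratic variation $\tfrac{2}{N^2}\sum_{i\in[N]}g(x^{[N]}_i(t))\,\d t$ from the martingale part. Taking expectations kills the stochastic integral.

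The next step is to estimate the two non-negative source terms. The quadratic-variation term is bounded by $\tfrac{2\|g\|}{N}$ since $g\le\|g\|$ and there are $N$ summands each divided by $N^2$. For the cross term I would use $|\Theta^{(1),[N]}_{y_1}(t)-\Theta^{(1),[N]}_x(t)|\le 1$ together with $2ab\le \eta a^2 + \eta^{-1}b^2$ (Young's inequality); more simply, bound $2\Delta(t)\cdot 1\le \Delta(t)^2 + 1$ is too lossy for matching the stated constant, so instead I would bound $\bigl|\tfrac{2K_1e_1}{N}\Delta(t)[\cdots]\bigr|\le \tfrac{2K_1e_1}{N}(|\Delta(t)|+|\Delta(t)|)\cdot 1$ — actually the clean route is: since $|\Theta^{(1),[N]}_{y_1}-\Theta^{(1),[N]}_x|\le 1$ and $|\Delta(t)|\le 1$, the cross term is at most $\tfrac{2K_1e_1}{N}\cdot 2 = \tfrac{4K_1e_1}{N}$ in absolute value, but since it has no definite sign one keeps it as an additive perturbation of the inhomogeneous term. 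This yields the differential inequality
\begin{equation*}
\frac{\d}{\d t}\,\E[\Delta(t)^2] \le -2(K_0e_0+e_0)\,\E[\Delta(t)^2] + \frac{2\|g\|}{N} + \frac{8K_1e_1}{N},
\end{equation*}
and Gr\"onwall / explicit integration of the linear ODE gives
\begin{equation*}
\E[\Delta(t)^2]\le \E[\Delta(0)^2]\,\e^{-2(K_0e_0+e_0)t} + \frac{2}{K_0e_0+e_0}\left[\frac{\|g\|}{N}+\frac{4K_1e_1}{N}\right]\bigl(1-\e^{-2(K_0e_0+e_0)t}\bigr).
\end{equation*}
Finally, applying $\sqrt{a+b}\le\sqrt a+\sqrt b$ and $\E|\Delta(t)|\le\sqrt{\E[\Delta(t)^2]}$ (Jensen), and dropping the $(1-\e^{-\cdots})\le 1$ factor, produces exactly \eqref{bnd}.

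The only genuinely delicate point — and hence the main obstacle — is handling the sign-indefinite cross term $\tfrac{2K_1e_1}{N}\Delta(t)[\Theta^{(1),[N]}_{y_1}(t)-\Theta^{(1),[N]}_x(t)]$ so that the constant $4K_1e_1/N$ comes out correctly rather than a worse one; one must be careful to absorb it into the inhomogeneous part using only the crude a priori bounds $|\Delta|\le 1$, $|\Theta^{(1),[N]}_{y_1}-\Theta^{(1),[N]}_x|\le 1$ (which follow because all components live in $[0,1]$), and not into the dissipative $-2(K_0e_0+e_0)\E[\Delta^2]$ term, since that would require a comparison of $K_1e_1/N$ with $K_0e_0+e_0$ that we do not want to assume. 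Everything else — the cancellation of the migration terms under summation, the standard It\^o computation, and the elementary ODE estimate — is routine and parallels Lemma~\ref{lemav} verbatim. I would remark that the extra $4K_1e_1/N$ relative to Lemma~\ref{lemav} is precisely the quantitative signature of the second (slow) seed-bank colour, and that it vanishes as $N\to\infty$, which is what makes the Meyer--Zheng equalisation of the active and colour-$0$ dormant $1$-blocks go through in Step~5.
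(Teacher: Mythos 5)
Your proposal is correct and follows the same route as the paper: sum the SSDE \eqref{gh45a2} over $i\in[N]$, note the migration cancellation, apply It\^o to $\Delta(t)^2$ with $\Delta = \Theta^{(1),[N]}_x - \Theta^{(1),[N]}_{y_0}$, take expectations to kill the stochastic integral, integrate the resulting linear ODE, and then take square roots with Jensen and $\sqrt{a+b}\le\sqrt a+\sqrt b$. Your accounting of the source term is a little sloppy — the quadratic-variation contribution in $\frac{\d}{\d t}\E[\Delta^2]$ is $\frac{1}{N^2}\sum_i\E[g(x_i)]\le\frac{\|g\|}{N}$ (not $\frac{2\|g\|}{N}$), and the cross term is bounded by $\frac{2K_1e_1}{N}$ (not $\frac{8K_1e_1}{N}$); and the explicit solution of the ODE then gives an inhomogeneous contribution $\frac{1}{K_0e_0+e_0}\big[\frac{\|g\|}{2N}+\frac{K_1e_1}{N}\big]$ rather than the factor $\frac{2}{K_0e_0+e_0}$ you wrote — but since every slip is in the loosening direction and the lemma's stated constant $\frac{2}{K_0e_0+e_0}\big[\frac{\|g\|}{N}+\frac{4K_1e_1}{N}\big]$ is itself deliberately generous, the chain of inequalities still closes.
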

		
		\begin{proof} 
			From \eqref{gh45a2} it follows via Itô-calculus that
			\begin{equation}
				\begin{aligned}
					\frac{\d }{\d t} \E\left[\left(\Theta^{(1),[N]}_{x}(t)-\Theta^{(1),[N]}_{y_0}(t)\right)^2\right]
					&=-2(K_0e_0+e_0)\,\E\left[\left(\Theta^{(1),[N]}_{x}(t)-\Theta^{(1),[N]}_{y_0}(t)\right)^2\right]\\
					&\qquad +h^{[N]}(t),
				\end{aligned}
			\end{equation}
			where
			\begin{equation}
				\begin{aligned}
					h^{[N]}(t) &=\E\left[\frac{2K_1e_1}{N}\left(\Theta^{(1),[N]}_{x}(t)-\Theta^{(1),[N]}_{y_0}(t)\right)
					\left[\Theta_{y_1}^{(1),[N]}(t)-\Theta^{(1),[N]}_{x}(t)\right]\right]\\
					&\quad +\frac{2}{N^2}\sum_{i\in[N]}\E\big[g\big(x^{[N]}_i(r)\big)\big].
				\end{aligned}
			\end{equation}
			Hence
			\begin{equation}
				\begin{aligned}
					\E\left[\left(\Theta^{(1),[N]}_{x}(t)-\Theta^{(1),[N]}_{y_0}(t)\right)^2\right]
					&=\E\left[\left(\Theta^{(1),[N]}_{x}(0)-\Theta^{(1),[N]}_{y_0}(0)\right)^2\right]\e^{-2(K_0e_0+e_0)t}\\
					&\qquad+\int_0^t \d r\,\e^{-2(K_0e_0+e_0)(t-r)} h^{[N]}(r).
				\end{aligned}
			\end{equation}
			Take the square root on both sides and use Jensen's inequality to get \eqref{bnd}.
		\end{proof}
		
		Like for the mean-field system with one colour, we need to compare the finite system in \eqref{e7152} with an infinite system. To derive the analogue of Lemma~\ref{l.comp}, let $L(N)$ satisfy $\lim_{N\to\infty} L(N)=\infty$ and $\lim_{N \to \infty} L(N)/N=0$. Define the measure $\mu_N$ on $([0,1]^3)^{\N_0}$ by continuing the configuration of 
		\begin{equation}
			Z^{[N]}(Ns-L(N))=\left(X^{[N]}(Ns-L(N)),\left(Y_0^{[N]}(Ns-L(N)),Y_1^{[N]}(Ns-L(N))\right)\right)
		\end{equation} 
		periodically to $([0,1]^3)^{\N_0}$. Let  
		\begin{equation}
			\bar{\Theta}^{(1),[N]} =\frac{1}{N}\sum_{i\in[N]}\frac{x^{[N]}_i(Ns-L(N))+K_0y^{[N]}_{i,0}(Ns-L(N))}{1+K_0}.
		\end{equation}
		Let
		\begin{equation}
			\label{t5}
			(Z^{\mu_N}(t))_{t\geq 0}=\bigl(X^{\mu_N}(t),\left(Y_0^{\mu_N}(t),Y_1^{\mu_N}(t)\right)\bigr)_{t \geq 0}
		\end{equation}
		be the infinite system evolving according to
		\begin{equation}
			\label{mgh52b2}
			\begin{aligned}
				\d x^{\mu_N}_i(t) &= c_0\,[\bar{\Theta}^{(1),[N]} - x^{\mu_N}_i(t)]\, \d t + \sqrt{g\big(x^{\mu_N}_i(t)\big)}\, \d w_i (t) 
				+ K_0e_0\,[y^{\mu_N}_{i,0}(t)-x^{\mu_N}_i(t)]\,\d t,\\
				\d y^{\mu_N}_{i,0}(t) &= e_0\, [x^{\mu_N}_i(t)-y^{\mu_N}_{i,0}(t)]\, \d t,\\
				y^{\mu_N}_{i,1}(t) &= y^{\mu_N}_{i,1}(0), \qquad i\in\N_0,
			\end{aligned}
		\end{equation} 
		starting from initial distribution $\mu_N$. Then the following lemma is the equivalent of Lemma~\ref{l.comp} for the two-colour mean-field system.
		
		\begin{lemma}{\bf [Comparison of finite and infinite systems]}
			\label{l.comp2b2}
			Fix $s > 0$, and let $L(N)$ satisfy $\lim_{N\to\infty} L(N)=\infty$ and $\lim_{N\to\infty} L(N)/N$. Suppose that 
			\begin{equation}
				\begin{aligned}
					&\lim_{N\to\infty} \sup_{0 \leq t \leq L(N)} \left|\bar{\Theta}^{(1),[N]}(Ns)
					-\bar{\Theta}^{(1),[N]}(Ns-t)\right| = 0\ \text{ in probability}.
				\end{aligned}
			\end{equation}
			Then, for all $t\geq 0$,
			\begin{equation}
				\label{m322b2}
				\lim_{k\to\infty} \left|\E\left[f\big(Z^{\mu_{N}}(t)\big) -f\big(Z^{[N]}(Ns-L(N)+t)\big)\right]\right| = 0
				\qquad \forall\, f\in\CC\bigl(([0,1]^3)^{\N_0},\R\bigr).
			\end{equation}
		\end{lemma}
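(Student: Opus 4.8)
\textbf{Proof plan for Lemma~\ref{l.comp2b2}.}

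The plan is to follow the strategy of the proof of Lemma~\ref{l.comp} almost verbatim, the main novelty being the presence of the extra colour-$1$ dormant coordinate, which however is \emph{frozen} on time scale $t$ in the infinite system \eqref{mgh52b2} and moves only at rate $e_1/N \to 0$ in the finite system \eqref{gh45a2}. First I would couple the finite system $Z^{[N]}$ (viewed as a configuration on $([0,1]^3)^{\N_0}$ by periodic continuation) with the infinite system $Z^{\mu_N}$ of \eqref{mgh52b2} by using the same Brownian motions $\{w_i\}_{i\in[N]}$ for corresponding sites, extended periodically to $i\in\N_0$, and by choosing $\CL[\tilde z(0)]=\CL[Z^{[N]}(Ns-L(N))]\times\mu_N = \CL[Z^{[N]}(Ns-L(N))]^{\otimes 2}$, so that at time zero the two systems coincide in law. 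Writing, for each site $i$,
\begin{equation}
\Delta^N_i(t)=\tilde x^{[N]}_i(t)-\tilde x^{\mu_N}_i(t),\qquad
\delta^{N,0}_i(t)=\tilde y^{[N]}_{i,0}(t)-\tilde y^{\mu_N}_{i,0}(t),\qquad
\delta^{N,1}_i(t)=\tilde y^{[N]}_{i,1}(t)-\tilde y^{\mu_N}_{i,1}(t),
\end{equation}
the goal is to show $\lim_{N\to\infty}\E[|\Delta^N_i(t)|+K_0|\delta^{N,0}_i(t)|+K_1|\delta^{N,1}_i(t)|]=0$ for every $i$ and every $t\geq 0$; by density of Lipschitz cylinder functions in $\CC(([0,1]^3)^{\N_0},\R)$ this yields \eqref{m322b2}.

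Next I would run the It\^o/Tanaka computation for $\d(|\Delta^N_i(t)|+K_0|\delta^{N,0}_i(t)|)$ exactly as in \eqref{mo23}--\eqref{mo132}: the local time of $\Delta^N_i$ at $0$ vanishes because $g$ is Lipschitz (cite~\cite[Proposition V.39.3]{RoWi00}), the martingale part drops out upon taking expectations, the colour-$0$ exchange terms contribute a non-positive $-2K_0e_0\,\E[\mathbf 1_{\{\sign\Delta^N_i\neq\sign\delta^{N,0}_i\}}(|\delta^{N,0}_i|+|\Delta^N_i|)]$, and the migration term produces
\begin{equation}
c_0\,\E\!\left[(\sign\Delta^N_i(r))\big(\bar\Theta^{(1),[N]}(r)-\bar\Theta^{(1),[N]}\big)\right]
+c_0\,\E\!\left[(\sign\Delta^N_i(r))\big(\Theta^{(1),[N]}_x(r)-\bar\Theta^{(1),[N]}(r)\big)\right].
\end{equation}
The first of these is controlled by the stability hypothesis on $\bar\Theta^{(1),[N]}$ and the second by Lemma~\ref{lemlev1a} (both terms are $o(1)$ uniformly on $[0,t]$). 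The one genuinely new term comes from the colour-$1$ coupling: in the finite system $x^{[N]}_i$ feels a drift $\frac{K_1e_1}{N}[y^{[N]}_{i,1}-x^{[N]}_i]$ and $y^{[N]}_{i,1}$ moves at rate $e_1/N$, while in the infinite system these are absent (colour $1$ is frozen). I would bound the resulting contribution by $\frac{2(K_1e_1+e_1)}{N}\int_0^t\d r\,\E[\,\cdot\,]\leq \frac{2(K_1e_1+e_1)}{N}\,t$, which is $\CO(1/N)$, hence negligible as $N\to\infty$. Likewise $\d|\delta^{N,1}_i(t)|$ is driven only by these $\CO(1/N)$ rates, so $\E[|\delta^{N,1}_i(t)|]=\CO(t/N)\to 0$. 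Assembling these estimates in the integrated form $0\le \E[|\Delta^N_i(t)|+K_0|\delta^{N,0}_i(t)|+K_1|\delta^{N,1}_i(t)|]\le t\cdot o_N(1)$ gives the claim.

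I expect the main obstacle to be purely bookkeeping: keeping the three coupled differences and the various mixed-sign indicator terms organised so that every term is either manifestly non-positive or provably $o_N(1)$ uniformly on compact time intervals; in particular one must be slightly careful that the colour-$1$ drift terms in \eqref{gh45a2}, although each of size $\CO(1/N)$, accumulate only over the fixed horizon $[0,t]$ (not over $[0,L(N)]$), so that their total effect is $\CO(t/N)$ and not $\CO(L(N)/N)$ — this is guaranteed because in \eqref{m322b2} the time $t$ is held fixed while $N\to\infty$. No new analytic input beyond Lemma~\ref{lemlev1a}, the Lipschitz property of $g$, and the stated stability hypothesis is needed; everything else is a transcription of the argument already carried out for Lemma~\ref{l.comp}.
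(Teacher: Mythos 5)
Your proposal is correct and follows essentially the same route as the paper: couple the two systems through shared Brownian motions starting from the product of identical marginals, run the Tanaka computation on $|\Delta^N_i|+K_0|\delta^{N,0}_i|$ (local time vanishing since $g$ is Lipschitz), control the migration drift by the stability hypothesis together with Lemma~\ref{lemlev1a}, and absorb the colour-$1$ drift and the evolution of $\delta^{N,1}_i$ as $\CO(t/N)$ terms. The only cosmetic difference is that the paper treats $\delta^{[N]}_{i,1}$ in a separate display (and happens to record the estimate at time $L(N)$, which is stronger than needed here), whereas you fold it into one combined estimate; your remark that the colour-$1$ contribution accumulates only over the fixed horizon $[0,t]$ and is therefore $\CO(t/N)$ rather than $\CO(L(N)/N)$ is a correct and slightly sharper observation than what the lemma actually requires.
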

		
		\begin{proof}
			We proceed as in the proof of Lemma~\ref{l.comp}. We rewrite the SSDE in \eqref{gh45a2} as
			\begin{equation}
				\begin{aligned}
					\label{m24b}
					&\d x^{[N]}_i(t) = c_0\,\big[\Theta^{(1),[N]}-x^{[N]}_i(t)\big]\,\d t\\
					&\qquad\qquad\qquad+c_0\,\big[\bar{\Theta}^{(1),[N]}(t)  - \Theta^{(1),[N]}\big]\, \d t
					+ c_0\,\big[\Theta^{(1),[N]}_x(t) -\bar{\Theta}^{(1),[N]}(t)\big]\, \d t\\
					&\qquad\qquad\qquad + \sqrt{g\big(x^{[N]}_i(t)\big)}\, \d  w_i (t)\\
					&\qquad\qquad\qquad + K_0 e_0\, \big[y^{[N]}_{i,0}(t)-x^{[N]}_i(t)\big]\d t
					+\frac{K_1e_1}{N} \big[y^{[N]}_{i,1}(t)-x_i^{[N]}(t)\big]\d t,\\
					&\d y^{[N]}_{i,0}(t) = e_0\,\big[x^{[N]}_i(t)-y^{[N]}_{i,0}(t)\big]\, \d t, \\
					&\d y^{[N]}_{i,1}(t) = \frac{e_1}{N}\,\big[x^{[N]}_i(t)-y^{[N]}_{i,1}(t)\big]\, \d t, \qquad i \in [N].
				\end{aligned}
			\end{equation}
			As before, we consider the finite system in \eqref{m24b} as a system on $([0,1]^3)^{\N_0}$ by periodic continuation, and we couple the finite system in \eqref{m24b} and the infinite system in \eqref{m322b2} via there Brownian motions. We denote the coupled process by $\tilde{z}(t)=(\tilde{z}_{i}(t))_{i\in\N_0} = (\tilde{z}^{[N]}_{i}(t),\tilde{z}^{\mu_N}_{i}(t))_{i\in\N_0}$, where $\tilde{z}^{[N]}_{i}(t)=(\tilde x^{[N]}_i(t),\tilde y^{[N]}_{i,0}(t),\tilde y^{[N]}_{i,1}(t))$ and $\tilde{z}^{\mu_N}_i(t)=(\tilde x^{\mu_N}_i(t),\tilde y^{\mu_N}_{i,0}(t),\tilde y^{\mu_N}_{i,1}(t))$. We define
			\begin{equation}
				\begin{aligned}
					\Delta_{i,0}^{[N]}(t)&=\tilde{x}_{i}^{[N]}(t)-\tilde{x}_{i}^{\mu_N}(t),\\
					\delta_{i,0}^{[N]}(t)&=\tilde{y}_{i,0}^{[N]}(t)-\tilde{y}_{i,0}^{\mu_N}(t),\\
					\delta_{i,1}^{[N]}(t)&=\tilde{y}_{i,1}^{[N]}(t)-\tilde{y}_{i,1}^{\mu_N}(t).
				\end{aligned}
			\end{equation}
			As in the proof of Lemma~\ref{l.comp}, we have to show that, for all $t\geq0$,
			\begin{equation}
				\label{m113}
				\lim_{N\to \infty}\E[|\Delta^{[N]}_i(t)|]=0,\qquad \lim_{N\to \infty}\E[|\delta^{[N]}_{i,0}(t)|]=0, 
				\qquad \lim_{N\to \infty}\E[|\delta^{[N]}_{i,1}(t)|]=0.
			\end{equation}
			
			To prove the third limit in \eqref{m112}, note that, by \eqref{mgh52b2}, \eqref{m24b} and the choice of the initial measure in the coupling,
			\begin{equation}
				\label{m112}
				y_{i,1}^{[N]}(t)=y_{i,1}^{[N]}(0)+\frac{e_1}{N}\int_0^t \d r\,\big[x_i^{[N]}(r)-y_{i,1}^{[N]}(r)\big]
				=y_{i,1}^{\mu_N}(t)+\frac{e_1}{N}\int_0^t \d r\,\big[x_i^{[N]}(r)-y_{i,1}^{[N]}(r)\big]. 
			\end{equation}
			Hence
			\begin{equation}
				\label{m1123}
				\lim_{N\to \infty}\E[|\delta^{[N]}_{i,1}(L(N))|]=0.
			\end{equation}
			
			To prove the first two limits in \eqref{m112}, we argue as in the proof of Lemma~\ref{l.comp}, but we need to add extra drift terms towards the first seed-bank. Using It\^o-calculus, we obtain
			\begin{equation}
				\begin{aligned}
					\frac{\d}{\d t} \E[|\Delta_i^{[N]}(t)|+K|\delta^{[N]}_{i,0}(t)|]
					&=-c\,\E[\Delta_i^{[N]}(t)]\\
					&\quad -2K_0e_0\,\E\left[[|\Delta_i^{[N]}(t)|+|\delta^{[N]}_i(t)|]\,
					1_{\{\sign \Delta_i^{[N]}(t)\neq \sign \delta_{i,0}^{[N]}(t)\}}\right]\\
					&\quad + c\, \sign\Delta_i^{[N]}(t) \big[\bar{\Theta}^{(1),[N]}(t)-\bar{\Theta}^{(1),[N]}\big]\\
					&\quad + c\, \sign\Delta_i^{[N]}(t) \big[\bar{\Theta}_x^{(1),[N]}(t)-\bar{\Theta}^{(1),[N]}(t)\big]\\
					&\quad +\frac{K_1e_1}{N}\,\sign\Delta_i^{[N]}(t) \big[\delta_{i,1}^{[N]}(t)-\Delta_i^{[N]}(t)\big].
				\end{aligned}
			\end{equation}
			This can be rewritten as 
			\begin{equation}
				\begin{aligned}
					&0\leq \E[|\Delta_i^{[N]}(t)|+K_0|\delta^{[N]}_{i,0}(t)|]\\
					&\leq \E[|\Delta_i^{[N]}(0)|+K|\delta^{[N]}_{i,0}(0)|] - c \int_0^t \d r\,\E[\Delta_i^{[N]}(r)]\\
					&\quad -2K_0e_0 \int_0^t \d r\,\E\left[[|\Delta_i^{[N]}(r)|+|\delta^{[N]}_{i,0}(r)|]\,
					1_{\{\sign \Delta_i^{[N]}(t)\neq \sign \delta_{i,0}^{[N]}(t)\}}\right]\\
					&\quad + c\, \int_0^t \d r\,|\bar{\Theta}^{(1),[N]}(r)-{\Theta}^{(1),[N]}|\\
					&\quad + c\,\int_0^t \d r\,|\bar{\Theta}_x^{(1),[N]}(r)-\bar{\Theta}^{(1),[N]}(r)|\\
					&\quad +\frac{K_1e_1}{N} \int_0^t \d r\,\left|\delta_{i,1}^{[N]}(r)-\Delta_i^{[N]}(r)\right|.
				\end{aligned}
			\end{equation}
			By the construction of the measure $\mu_N$, we have 
			\begin{equation}
				\lim_{N \to \infty}\E[|\Delta_i^{[N]}(0)|+K_0|\delta^{[N]}_i(0)|]=0.
			\end{equation}
			Therefore, for all $t\geq 0$,
			\begin{equation}
				\label{1097}
				\lim_{N \to \infty}\E[|\Delta_i^{[N]}(t)|+K_0|\delta^{[N]}_{i,0}(t)|]=0.
			\end{equation}
			Combine this with \eqref{m1123} and use that Lipschitz functions are dense in the set of bounded continuous functions. Then, as in the proof of Lemma~\ref{l.comp}, we get the claim in \eqref{m322b2}.
		\end{proof}
		
		Before we can prove that the infinite system $(X^{\mu_N}(t),Y_0^{\mu_N}(t),Y_1^{\mu_N}(t))_{t \geq 0}$ converges to a limiting system as $N\to\infty$, we need the following regularity property for the estimators $(\bar{\Theta}^{[N]},\Theta_{y_1}^{[N]})$.
		
		\begin{lemma}{\bf [Stability of the estimator for the conserved quantity]}
			\label{stabest2} 
			Define $\mu_N$ as in Lemma~\ref{l.comp2b2}. Let $(x^N_i, y^N_{i,0},y^N_{i,1})_{i\in [N]}$ be distributed according to the exchangeable probability measure $\mu_N$ on $([0,1]^3 )^{\N_0}$ restricted to $([0,1]^3)^{[N]}$. Suppose that $\lim_{N\to\infty} \mu_N=\mu$ for some exchangeable probability measure $\mu$ on $([0,1]^3)^{\N_0}$. Define the random variable $\phi$ on $(\mu,([0,1]^3)^{\N_0})$ by putting
			\begin{equation}
				\label{phi2}
				\begin{aligned}
					\phi&=(\phi_1,\phi_2),\\
					\phi_1 &= \lim_{n\to\infty} \frac{1}{n} \sum_{i \in [n]} \frac{x_i+Ky_{i,0}}{1+K},\qquad\phi_2 
					= \lim_{n\to\infty} \frac{1}{n} \sum_{i \in [n]}y_{i,1,} 
				\end{aligned}
			\end{equation}
			and the random variable $\phi^{[N]}$on $(\mu_N,([0,1]^3)^{\N_0})$ by putting
			\begin{equation}
				\begin{aligned}
					\phi^{[N]} &=(\phi_1^{[N]},\phi_2^{[N]})\\
					\phi^{[N]}_1 &= \frac{1}{N}\sum_{i\in[N]} \frac{x^N_i+Ky^N_{i,0}}{1+K},\qquad \phi^{[N]}_2&=\frac{1}{N}\sum_{i\in[N]} y^N_{i,1}.
				\end{aligned}
			\end{equation}
			Then 
			\begin{equation}
				\lim_{N\to\infty}\CL[\phi^{[N]}] = \CL[\phi].
			\end{equation}
		\end{lemma}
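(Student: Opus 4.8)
\textbf{Proof proposal for Lemma~\ref{stabest2}.}

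The plan is to reduce the two-component statement to two applications of the one-colour argument already developed in the proof of Lemma~\ref{stabest}. First I would observe that $\phi^{[N]}$ and $\phi$ each have two coordinates, $\phi^{[N]}=(\phi^{[N]}_1,\phi^{[N]}_2)$ and $\phi=(\phi_1,\phi_2)$, and that convergence in law of the pair will follow once we control each coordinate together with the joint behaviour; since both coordinates are measurable limits of Cesàro averages of the \emph{same} exchangeable sequence, it suffices to prove the joint convergence directly. Set, for $i\in\N_0$, the bounded random vector $\zeta_i=\bigl(\tfrac{x_i+K_0 y_{i,0}}{1+K_0},\,y_{i,1}\bigr)\in[0,1]^2$. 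Under $\mu_N$ (restricted to $[N]$ and periodically continued) and under $\mu$, the sequence $(\zeta_i)_{i\in\N_0}$ is exchangeable, and by hypothesis $\mu_N\to\mu$, hence the finite-dimensional marginals of $(\zeta_i)$ converge. We have $\phi^{[N]}=\tfrac1N\sum_{i\in[N]}\zeta_i$ and $\phi=\lim_{n\to\infty}\tfrac1n\sum_{i\in[n]}\zeta_i$ (the latter existing $\mu$-a.s. and in $L_2(\mu)$ by the ergodic/De Finetti theorem for the translation-invariant limit measure $\mu$, exactly as noted in Remark following Lemma~\ref{unifergod}).

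Next I would run the Fourier/Herglotz argument of the proof of Lemma~\ref{stabest} componentwise. Writing $D^N(Z)=\tfrac1N\sum_{i\in[N]}\zeta_i^{(r)}$ for each coordinate $r\in\{1,2\}$ and $\bar\theta^N_r=\E^{\mu_N}[\zeta_0^{(r)}]$, translation invariance of $\mu_N$ gives, by Herglotz's theorem, a spectral measure $\lambda_N^{(r)}$ on $(-\pi,\pi]$ with
\begin{equation}
\E^{\mu_N}\Bigl[\bigl(\zeta_j^{(r)}-\bar\theta^N_r\bigr)\bigl(\zeta_k^{(r)}-\bar\theta^N_r\bigr)\Bigr]
=\int_{(-\pi,\pi]}\lambda_N^{(r)}(\d u)\,\e^{\mathrm{i}(j-k)u},
\end{equation}
and the trigonometric-polynomial estimates $|D^M(u)-D^N(u)|\le 1_{(-\delta,\delta)\setminus\{0\}}+\epsilon(M,\delta)$ with $\epsilon(M,\delta)\to0$ yield
\begin{equation}
\lim_{M\to\infty}\sup_{N\ge M}\bigl\|D^M(Z)-D^N(Z)\bigr\|_{L_2(\mu_N)}^2
\le 2\lambda_N^{(r)}\bigl((-\delta,\delta)\setminus\{0\}\bigr)\xrightarrow[\delta\downarrow0]{}0,
\end{equation}
for each $r$. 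Combined with $\lim_{N\to\infty}\CL_{\mu_N}[D^M(Z)]=\CL_\mu[D^M(Z)]$ for fixed $M$ (from $\mu_N\to\mu$), this gives $\lim_{N\to\infty}\CL[\phi^{[N]}_r]=\CL[\phi_r]$ for $r=1,2$, and the same estimate applied to the pair (or to arbitrary linear combinations $a_1\zeta_i^{(1)}+a_2\zeta_i^{(2)}$, whose spectral measure is again controlled by tightness near $u=0$) upgrades this to convergence of the joint law $\CL[\phi^{[N]}]\to\CL[\phi]$ by the Cramér–Wold device.

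I do not expect a serious obstacle here: the only mildly delicate point is making sure the Cesàro-limit random variable $\phi$ is well defined on $(\mu,([0,1]^3)^{\N_0})$, which is guaranteed because $\mu=\lim_N\mu_N$ is translation invariant (being a limit of periodically continued, hence translation-invariant, laws) so the $L_2(\mu)$ ergodic theorem applies coordinatewise; and ensuring that the second coordinate $\phi_2$ is handled on the same footing as the first, which it is, since $y_{i,1}\in[0,1]$ is bounded and the evolution preserves exchangeability. Thus the proof is essentially a verbatim repetition of the proof of Lemma~\ref{stabest} with the scalar $\tfrac{x_i+Ky_i}{1+K}$ replaced by the vector $\zeta_i$, and the lemma follows.
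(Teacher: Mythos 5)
Your proof is correct and takes essentially the same route as the paper: define the vector-valued average $D^{[N]}(Z)$ with the two coordinates $\tfrac{1}{N}\sum_j \tfrac{x_j+K_0 y_{j,0}}{1+K_0}$ and $\tfrac{1}{N}\sum_j y_{j,1}$ and run the Fourier/Herglotz argument of Lemma~\ref{stabest} on each coordinate separately. The Cram\'er--Wold step at the end is harmless but superfluous, since the componentwise uniform $L_2$-control already yields joint $L_2$-convergence of the pair, which together with the finite-dimensional weak convergence $\mu_N\to\mu$ directly gives $\CL[\phi^{[N]}]\to\CL[\phi]$.
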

		
		\begin{proof}
			We can use a similar argument as in the proof of Lemma~\ref{stabest}. Define 
			\begin{equation}
				D^{[N]}(Z)=\left(\frac{1}{N}\sum_{j\in[N]}\frac{x_j+K_0y_{j,0}}{1+K_0},\frac{1}{N}\sum_{j\in[N]}y_{i,1}\right).
			\end{equation}
			Then we can proceed as in the proof of Lemma~\ref{stabest}, using Fourier analysis for both components of $D^{[N]}(Z)$ separately.
		\end{proof}
		
		In the fifth and final lemma we state the convergence of $\CL[(X^{\mu_N}(t),Y_0^{\mu_N}(t),Y_1^{\mu_N}(t))]$ to the law of a limiting system as $N\to\infty$.  
		
		\begin{lemma}{\bf [Uniformity of the ergodic theorem for the infinite system]} 
			\label{unifergod2} 
			Let $\mu_N$ be defined as in \eqref{t5}. Since $(\mu_N)_{N\in\N}$ is tight, it has convergent subsequences. Let $(N_k)_{k\in\N}$ be a subsequence such that $\mu=\lim_{k\to\infty} \mu_{N_k}$. Define 
			\begin{equation}
				\label{111b}
				\Theta=\lim_{N\to\infty}\frac{1}{N}\sum_{i\in[N]}\frac{x_i^\mu+Ky_{i,0}^{\mu}}{1+K}\, \qquad \text{ in }L_2(\mu).
			\end{equation}
			Let $Z^{\mu}(t)=\bigl(X^{\mu}(t),Y_0^{\mu}(t),Y_1^{\mu}(t)\bigr)_{t\geq 0}$ be the infinite system evolving according to 
			\begin{equation}
				\begin{aligned}
					\label{binfb2}
					&\d x^{\mu}_i(t) = c\left[\Theta -x^{\mu}_i(t)\right]\d t + \sqrt{g(x^{\mu}_i(t))}\, \d w_i (t) 
					+ K e\, [y^{\mu}_{i,1}(t)-x^{\mu}_i(t)]\,\d t,\\
					&\d y^{\mu}_{i,0}(t) = e\,[x^{\mu}_i(t)-y^{\mu}_{i,1}(t)]\, \d t,\\	
					&\d y^{\mu}_{i,1}(t) = y^{\mu}_{i,1}(0), \qquad i \in \N_0.
				\end{aligned}
			\end{equation}
			and let $Z^{\mu_{N_k}}(t)=(X^{\mu_{N_k}}(t),Y_0^{\mu_{N_k}}(t),Y_1^{\mu_{N_k}}(t))_{t\geq 0}$ be the infinite system defined in \eqref{t5}. Then	
			\begin{enumerate}
				\item 
				For all $t \geq 0$,
				\begin{equation}
					\label{t12}
					\begin{aligned}
						\lim_{k\to\infty} \bigl|\E\bigl[f\bigl(Z^{\mu_{N_k}}(t)\bigr)\bigr]-\E\bigl[f\bigl(Z^{\mu}(t)\bigr)\bigr]\bigr| = 0
						\qquad\forall\, f\in\CC\bigl(([0,1]^2)^{\N_0},\R\bigr).
					\end{aligned}
				\end{equation}
				\item 
				There exists a sequence $\bar{L}(N)$ satisfying $\lim_{N \to \infty} \bar{L}(N)=\infty$ and $\lim_{N \to \infty} \bar{L}(N)/N=0$ such that
				\begin{equation}
					\label{m32cb}
					\begin{aligned}
						&\lim_{k\to\infty} \bigl|\E\bigl[f\bigl(Z^{[N_k]}(N_ks-L(N_k)+\bar{L}(N_k))\bigr)
						-f\bigl(Z^{\mu_{N_k}}(\bar{L}(N_k))\bigr)\bigr|\bigr]\\
						&\quad +\bigl|\E\bigl[f\bigl(Z^{\mu_{N_k}}(\bar{L}(N_k))\bigr)\bigr]
						-\E\bigl[f\bigl(Z^{\mu}(\bar{L}(N_k))\bigr)\bigr]\bigr| =  0\qquad \forall\, f\in\CC\bigl(([0,1]^2)^{\N_0},\R\bigr).
					\end{aligned}
				\end{equation}
			\end{enumerate}
		\end{lemma}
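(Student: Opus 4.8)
\textbf{Proof proposal for Lemma~\ref{unifergod2}.}

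The plan is to mimic almost verbatim the proof of Lemma~\ref{unifergod} given in Section~\ref{s81}, adapting it to the three-component setting where the colour-$1$ seed-bank is frozen on time scale $t$. I would first prove part~(1) and then construct the sequence $(\bar{L}(N))_{N\in\N}$ for part~(2).

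\emph{Part (1).} Since $\lim_{k\to\infty}\mu_{N_k}=\mu$, Lemma~\ref{stabest2} gives $\lim_{k\to\infty}\CL[\bar{\Theta}^{(1),[N_k]}]=\CL[\Theta]$, where $\bar{\Theta}^{(1),[N_k]}=\tfrac1{N_k}\sum_{i\in[N_k]}\tfrac{x_i+K_0y_{i,0}}{1+K_0}$ and $\Theta$ is as in \eqref{111b}. Dropping the subsequence notation, by Skorokhod's representation theorem I would realise the random variables $(z_i^{\mu_N})_{N\in\N}$ and $z_i^\mu$ on one probability space with $\lim_{N\to\infty}z_i^{\mu_N}=z_i$ a.s., so that $\lim_{N\to\infty}\E[|\bar{\Theta}^{(1),[N]}-\Theta|]=0$ as in \eqref{t0}. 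Then I couple the two infinite systems \eqref{mgh52b2} (started from $\mu_N$, with drift centre $\bar{\Theta}^{(1),[N]}$) and \eqref{binfb2} (started from $\mu$, with drift centre $\Theta$) through their Brownian motions, and set $\Delta^{\mu_N}_i=x_i^{\mu_N}-x_i^\mu$, $\delta^{\mu_N}_{i,0}=y_{i,0}^{\mu_N}-y_{i,0}^\mu$, $\delta^{\mu_N}_{i,1}=y_{i,1}^{\mu_N}-y_{i,1}^\mu$. For the third coordinate the argument is immediate: since both $y_{i,1}$ components are constant in time (compare the last line of \eqref{mgh52b2} and of \eqref{binfb2}), $\E[|\delta^{\mu_N}_{i,1}(t)|]=\E[|\delta^{\mu_N}_{i,1}(0)|]\to0$ by the Skorokhod coupling. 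For the first two coordinates I apply It\^o-Tanaka calculus exactly as in the derivation of \eqref{o1} — the local time at $0$ of $\Delta^{\mu_N}_i$ vanishes because $g$ is Lipschitz (see \cite[Proposition V.39.3]{RoWi00}) — to obtain
\begin{equation}
0\leq\E\bigl[|\Delta^{\mu_N}_i(t)|+K_0|\delta^{\mu_N}_{i,0}(t)|\bigr]
\leq\E\bigl[|\Delta^{\mu_N}_i(0)|+K_0|\delta^{\mu_N}_{i,0}(0)|\bigr]+tc_0\,\E\bigl[|\bar{\Theta}^{(1),[N]}-\Theta|\bigr],
\end{equation}
where the only extra drift term beyond the one-colour case, namely the one carrying the factor $\tfrac1N$ coming from the colour-$1$ exchange, is $O(t/N)\to0$. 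Letting $N\to\infty$ yields $\lim_{N\to\infty}\E[|\Delta^{\mu_N}_i(t)|+K_0|\delta^{\mu_N}_{i,0}(t)|]=0$ for every $t\geq0$, and together with the third coordinate this gives \eqref{t12} first for Lipschitz $f$ depending on finitely many components, then for all $f\in\CC(([0,1]^3)^{\N_0},\R)$ by density and the product topology.

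\emph{Part (2).} Combining part~(1) with the analogue of the finite-versus-infinite comparison in Lemma~\ref{l.comp2b2} (which, applied to the coupled process $\tilde z(t)$ with initial law the product of two copies of $\CL[Z^{[N]}(Ns-L(N))]$, yields $\lim_{N\to\infty}\E[|\Delta^{[N]}_i(t)|+K_0|\delta^{[N]}_{i,0}(t)|+|\delta^{[N]}_{i,1}(t)|]=0$ for every fixed $t$), I get the joint statement $\lim_{N\to\infty}\bigl(\E[|\Delta^{[N]}_i(t)|+K_0|\delta^{[N]}_{i,0}(t)|+|\delta^{[N]}_{i,1}(t)|]+\E[|\Delta^{\mu_N}_i(t)|+K_0|\delta^{\mu_N}_{i,0}(t)|+|\delta^{\mu_N}_{i,1}(t)|]\bigr)=0$, the exact counterpart of \eqref{955}. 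From here the construction of $(\bar{L}(N))_{N\in\N}$ is identical to the one in Section~\ref{s81}: pick an increasing sequence $(t_k)_{k\in\N}$ with $t_k\to\infty$ and $t_k/k\to0$; for each $k$ choose $N_k$ (strictly increasing) so large that for all $N\geq N_k$ the above expectation at time $t_k$ is $<1/k$; set $\bar{L}(N)=\sum_{k\in\N}t_k\,1_{\{N_k,\dots,N_{k+1}-1\}}(N)$. The three bulleted verifications — $\lim_N\bar{L}(N)=\infty$, $\lim_N\bar{L}(N)/N=0$, and the vanishing of the relevant expectation at time $\bar{L}(N)$ — go through word for word, and then \eqref{m32cb} follows by writing the difference through the coupled systems and passing to the limit as in \eqref{triangle}.

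The only genuinely new point compared with Lemma~\ref{unifergod} is bookkeeping the frozen colour-$1$ component and checking that the extra $O(1/N)$ drift coming from the colour-$1$ exchange does not spoil any of the $L^1$ estimates; this is routine. The part I would flag as requiring the most care is the It\^o-Tanaka step for $|\Delta^{\mu_N}_i(t)|+K_0|\delta^{\mu_N}_{i,0}(t)|$: one must handle the sign-mismatch indicator terms $1_{\{\mathrm{sign}\,\Delta^{\mu_N}_i(t)\neq\mathrm{sign}\,\delta^{\mu_N}_{i,0}(t)\}}$ and the vanishing of the local time as in \cite[Appendix D]{GdHOpr1} and \cite[Proposition V.39.3]{RoWi00}, precisely as in the proofs of Lemmas~\ref{l.comp} and \ref{unifergod}, but this is entirely analogous and no new obstacle arises.
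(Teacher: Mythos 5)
Your proposal is correct and follows essentially the same route as the paper: Skorokhod embedding to get a.s.\ convergence and $\E[|\bar{\Theta}^{(1),[N]}-\Theta|]\to 0$, the observation that the frozen colour-$1$ component gives $\E[|\delta^{\mu_N}_{i,1}(t)|]=\E[|\delta^{\mu_N}_{i,1}(0)|]\to 0$, the It\^o--Tanaka estimate on $|\Delta^{\mu_N}_i|+K_0|\delta^{\mu_N}_{i,0}|$, and the $\bar{L}(N)$ construction imported verbatim from Lemma~\ref{unifergod}. One small inaccuracy worth flagging: your remark in Part~(1) about an extra $O(1/N)$ drift ``from the colour-$1$ exchange'' is spurious --- when you couple the two \emph{infinite} systems \eqref{mgh52b2} and \eqref{binfb2}, the colour-$1$ coordinate is frozen on both sides and completely decoupled from $x$, so no $\tfrac{K_1 e_1}{N}$ term ever enters; that drift only appears in the finite-versus-infinite comparison of Lemma~\ref{l.comp2b2}, which you correctly invoke in Part~(2). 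Since you treat the alleged term as vanishing anyway, nothing in the estimate or the conclusion is affected.
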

		
		\begin{proof}
			As in the proof of Lemma~\ref{unifergod}, we can construct $(z_i^{\mu_N})_{i\in\N_0}$ and $(z_i^{\mu})_{i\in\N_0}$ on one probability space. Then
			\begin{equation}
				\label{t19}
				\lim_{N \to \infty}y^{\mu_N}_{i,1}(0)=y^{\mu}_{i,1}(0)\text{ a.s. }
			\end{equation}
			and 
			\begin{equation}
				\label{t20}
				\lim_{N\to\infty}\E[|\bar{\Theta}^{[N]}-\Theta|]=0.
			\end{equation} 
			Via a similar coupling as in Lemma~\eqref{l.comp2b2}, it follows via It\^o-calculus that \eqref{t12} holds. Combining \eqref{m1123}, \eqref{1097}, \eqref{t19} and \eqref{t20}, we obtain, via a similar construction as in the proof of Lemma~\ref{unifergod}, a sequence $\bar{L}(N)$ such that
			\begin{equation}
				\label{9592}
				\begin{aligned}
					&\lim_{N\to\infty}\E[ |\Delta^{N}_i(\bar{L}(N))|+ K_0|\delta^{N}_{i,0}(\bar{L}(N))|]+ K_1|\delta^{N}_{i,1}(\bar{L}(N))|]\\
					&+\E[ |\Delta^{\mu_N}_i(\bar{L}(N))|+ K_0|\delta^{\mu_N}_{i,0}(\bar{L}(N))|]+K_1|\delta^{\mu_N}_{i,1}(\bar{L}(N))|]=0.
				\end{aligned}
			\end{equation}
			As in the proof of Lemma~\ref{unifergod}, we can again use Lipschitz functions to conclude \eqref{m32cb}.
		\end{proof}
		
		\begin{lemma}{\bf[Coupling of finite systems]}
			\label{lem:12a}
			Let 
			\begin{equation}
				Z^{[N],1}=(X^{[N],1},Y_0^{[N],1},Y_1^{[N],1})
			\end{equation}
			be the finite system evolving according to \eqref{gh45a2} starting from an exchangeable initial measure. Let $\mu^{[N],1}$ be the measure obtained by periodic continuation of the configuration of $Z^{[N],1}(0)$. Similarly, let  
			\begin{equation}
				Z^{[N],2}=(X^{[N],2},Y_0^{[N],2},Y_1^{[N],2})
			\end{equation} 
			be the finite system evolving according to \eqref{gh45a2} starting from an exchangeable initial measure. Let $\mu^{[N],2}$ be the measure obtained by periodic continuation of the configuration of $Z^{[N],2}(0)$. Let $\tilde{\mu}$ be any weak limit point of the sequence of measures $(\mu^{[N],1}\times\mu^{[N],2})_{N\in\N}$. Define the random variables $\bar{\Theta}^{[N],1}$ and $\bar{\Theta}^{[N],2}$ on $(([0,1]^3)^{\N_0}\times ([0,1]^3)^{\N_0},\mu^{[N],1}\times\mu^{[N],2})$ and $\bar{\Theta}_1$ and $\bar{\Theta}_2$ on $(([0,1]^3)^{\N_0}\times([0,1]^3)^{\N_0},\bar{\mu})$ by
			\begin{equation}
				\label{843a}
				\begin{aligned}
					&\bar{\Theta}^{[N],1} = \frac{1}{N} \sum_{i \in [N]} \frac{x^{[N],1}_{i}+K_0y^{[N],1}_{i,0}}{1+K_0},
					\qquad \bar{\Theta}^{[N],2} =  \frac{1}{N} \sum_{i \in [N]} \frac{x^{[N],2}_{i}+K_0y^{[N],2}_{i,0}}{1+K_0},\\
					&\bar{\Theta}^1 = \lim_{n\to\infty} \frac{1}{n} \sum_{i \in [n]} \frac{x^1_{i}+K_0y^1_{i,0}}{1+K_0},
					\qquad \bar{\Theta}^2 = \lim_{n\to\infty} \frac{1}{n} \sum_{i \in [n]} \frac{x^2_{i}+K_0y^2_{i,0}}{1+K_0},
				\end{aligned}
			\end{equation}
			and let $(\bar{\Theta}^{(1),[N],1}(t))_{t\geq 0}$ and $(\bar{\Theta}^{(1),[N],2}(t))_{t\geq 0}$ be defined  as in  \eqref{ma6} for $Z^{[N],1}$, respectively, $Z^{[N],2}$. Suppose that   
			\begin{equation}
				\label{m08a}
				\begin{aligned}
					&\lim_{N\to\infty} \sup_{0 \leq t \leq L(N)} \left(\left|\bar{\Theta}^{[N],k}(0)-\bar{\Theta}^{[N],k}(t)\right|\right) 
					= 0\ \text{ in probability}, \quad k \in\{1,2\},
				\end{aligned}
			\end{equation}
			and suppose that  $\tilde{\mu}(\{\bar{\Theta}_1=\bar{\Theta}_2,\, Y^1_{1}=Y^2_1\})=1$. Then, for any $t(N)\to\infty$,
			\begin{equation}
				\begin{aligned}
					&\lim_{N\to\infty}\E\bigl[|x^{[N],1}_{i}(t(N))-x^{[N],2}_{i}(t(N))|+K_0|y^{[N],1}_{i,0}(t(N))-y^{[N],2}_{i,0}(t(N))|\\
					&\qquad +K_1|y^{[N],1}_{i,1}(t(N))-y^{[N],2}_{i,1}(t(N))|\bigr]=0.
				\end{aligned}
			\end{equation}  
		\end{lemma}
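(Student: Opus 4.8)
\textbf{Proof proposal for Lemma~\ref{lem:12a} (coupling of finite two-colour systems).}

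The plan is to mimic the proof of Lemma~\ref{lem:12} almost verbatim, adding the colour-$1$ coordinate as a third component that is handled separately because it only moves at rate $O(1/N)$. First I would couple the two finite systems $Z^{[N],1}$ and $Z^{[N],2}$ through their driving Brownian motions, and compute, via It\^o-calculus applied to $f(z)=|x^1_i-x^2_i|+K_0|y^1_{i,0}-y^2_{i,0}|+K_1|y^1_{i,1}-y^2_{i,1}|$ (using the Tanaka formula and the vanishing of the local time at $0$, since $g$ is Lipschitz, cf.\ \cite[Proposition V.39.3]{RoWi00}), the time derivative of
\begin{equation}
	t\mapsto \E\left[|x^{[N],1}_{i}(t)-x^{[N],2}_{i}(t)|+K_0|y^{[N],1}_{i,0}(t)-y^{[N],2}_{i,0}(t)|+K_1|y^{[N],1}_{i,1}(t)-y^{[N],2}_{i,1}(t)|\right].
\end{equation}
As in Lemma~\ref{lem:12}, the migration term contributes a non-positive quantity (because summing $\frac{c_0}{N}\sum_j (x^1_j-x^2_j)$ against $\sign(x^1_i-x^2_i)$ and averaging over exchangeable components produces a loss term), the resampling martingale term has zero expectation, and the exchange with the colour-$0$ seed-bank contributes a non-positive term of the form $-2K_0e_0\E[\ldots 1_{\{\text{signs differ}\}}]$. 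The extra exchange with the colour-$1$ seed-bank contributes terms of order $O(1/N)$, so that the map $t\mapsto \E[\,\cdots\,]$ is, up to an $O(1/N)$ error, non-increasing; more precisely it is non-increasing after we absorb the $O(1/N)$ contribution, which will suffice. This monotonicity (analogous to \eqref{mono}) is the mechanism that lets us push the conclusion from a slow time $l(N)$ out to the arbitrary sequence $t(N)\to\infty$.

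Next I would invoke the comparison and ergodicity machinery already developed for the two-colour system: by the hypothesis \eqref{m08a} together with Lemma~\ref{l.comp2b2} and Lemma~\ref{unifergod2}, the estimate \eqref{9592} holds for both systems arising from $Z^{[N],1}$ and $Z^{[N],2}$, so there is a single sequence $(l(N))_{N\in\N}$ with $l(N)\le t(N)$, $\lim_{N\to\infty}l(N)=\infty$ and $\lim_{N\to\infty}l(N)/N=0$ such that the finite systems at time $l(N)$ are close in $L_1$ to the corresponding infinite systems $Z^{\mu^{[N],k}}$ run for time $l(N)$, for $k=1,2$. Then, exactly as in \eqref{p1}, I would insert a triangle inequality
\begin{equation}
	\begin{aligned}
		&\E\bigl[|x^{[N],1}_{i}(l(N))-x^{[N],2}_{i}(l(N))|+K_0|\delta^{[N]}_{i,0}(l(N))|+K_1|\delta^{[N]}_{i,1}(l(N))|\bigr]\\
		&\quad\le (\text{finite}\leftrightarrow\text{infinite for system }1)+(\text{infinite }1\leftrightarrow\text{infinite }2)+(\text{infinite}\leftrightarrow\text{finite for system }2),
	\end{aligned}
\end{equation}
where the first and third terms vanish as $N\to\infty$ by \eqref{9592}, and the middle term vanishes because $\tilde\mu(\{\bar\Theta_1=\bar\Theta_2,\,Y^1_1=Y^2_1\})=1$ forces the two infinite limiting systems — which have frozen colour-$1$ components and identical drift centres — to couple successfully by Lemma~\ref{lemerg2} (convergence to the common equilibrium $\nu_{\theta,\mathbf{y_1}}$). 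Here the colour-$1$ coordinate is the key extra point: its limiting value is conserved, so the hypothesis $Y^1_1=Y^2_1$ a.s.\ under $\tilde\mu$ is exactly what is needed to match the frozen seed-banks of the two infinite systems, in addition to matching their conserved active-plus-colour-$0$ means $\bar\Theta_1=\bar\Theta_2$.

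Finally, I would transfer the conclusion from $l(N)$ to $t(N)$ using the (near-)monotonicity established in the first step: since $l(N)\le t(N)$ and the $L_1$ distance is non-increasing up to a vanishing $O(1/N)$ correction,
\begin{equation}
	\limsup_{N\to\infty}\E\bigl[|x^{[N],1}_i(t(N))-x^{[N],2}_i(t(N))|+K_0|\delta^{[N]}_{i,0}(t(N))|+K_1|\delta^{[N]}_{i,1}(t(N))|\bigr]\le \lim_{N\to\infty}\E\bigl[\cdots(l(N))\cdots\bigr]=0,
\end{equation}
which is the claim. The main obstacle I anticipate is not conceptual but bookkeeping: one must verify carefully that the extra colour-$1$ drift terms appearing in the It\^o expansion are genuinely $O(1/N)$ uniformly in $t$ (they are, since $|x^{[N]}_i-y^{[N]}_{i,1}|\le 1$ and the rate is $e_1/N$, $K_1e_1/N$), and that this $O(1/N)$ slack does not destroy the monotonicity argument when $t(N)$ is allowed to grow arbitrarily fast with $N$ — this is handled by noting that we only ever compare values at the fixed slow time $l(N)$ (where the $L_1$ distance is already $o(1)$) and at $t(N)>l(N)$, and the derivative bound gives $\E[\cdots(t(N))]\le \E[\cdots(l(N))]+C(t(N)-l(N))/N$, which is not obviously small. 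The clean fix is to instead run the monotonicity argument for the coupled \emph{differences} directly and observe, as in the proof of Lemma~\ref{lem:12}, that the genuine loss term $-c_0\int\E[|\Delta^{[N]}_i(r)|]\,\d r$ dominates, forcing the quantity to stay near $0$ once it gets there; alternatively one reproves Lemma~\ref{l.comp2b2}-type estimates along the time axis $[l(N),t(N)]$ to control the drift. Either route reduces the lemma to the already-proved two-colour analogues of Lemmas~\ref{l.comp}, \ref{unifergod}, and \ref{lemerg}, so the remaining work is routine.
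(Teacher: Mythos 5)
\noindent
Your overall strategy — coupling via common Brownian motions, running a weighted $L^{1}$-distance as a Lyapunov function, then transferring via the triangle inequality of \eqref{p1} through the infinite limiting systems and using the hypothesis $\tilde\mu(\{\bar\Theta_1=\bar\Theta_2,\,Y^1_1=Y^2_1\})=1$ to match the frozen colour-$1$ components — is exactly the paper's route, which after establishing monotonicity simply refers back to the scheme of Lemma~\ref{lem:12}. But there is a genuine gap in the first step: you treat the colour-$1$ exchange as an ``$O(1/N)$ error'' term of unresolved sign, and then spend your final paragraph worrying that this ``slack'' might break the monotonicity argument for $t(N)$ growing fast, proposing fixes that are either unnecessary or incorrect.

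The point you miss is that, once the colour-$1$ difference is included in the Lyapunov function with the weight $K_1$, the colour-$1$ exchange contributes a term that is \emph{exactly} non-positive, by the same mechanism as the colour-$0$ exchange, merely scaled by $1/N$. Concretely, set $\Delta_i = x^{[N],1}_i - x^{[N],2}_i$ and $\delta_{i,1} = y^{[N],1}_{i,1}-y^{[N],2}_{i,1}$; the colour-$1$ drift terms in \eqref{gh45a2}, inserted in the Tanaka expansion of $|\Delta_i| + K_1|\delta_{i,1}|$, give
\begin{equation}
\frac{K_1 e_1}{N}\bigl[\sign(\Delta_i)-\sign(\delta_{i,1})\bigr]\bigl(\delta_{i,1}-\Delta_i\bigr)\,\d t
= -\,\frac{2K_1 e_1}{N}\,\bigl(|\Delta_i|+|\delta_{i,1}|\bigr)\,
\mathbf{1}_{\{\sign\Delta_i\neq\sign\delta_{i,1}\}}\,\d t \;\le\; 0.
\end{equation}
Hence $t\mapsto\E\bigl[|\Delta_i(t)|+K_0|\delta_{i,0}(t)|+K_1|\delta_{i,1}(t)|\bigr]$ is \emph{genuinely} non-increasing for every fixed $N$; there is no positive $O(1/N)$ remainder, and therefore nothing to worry about when $t(N)$ grows arbitrarily fast: the value at $t(N)$ is simply bounded by the value at $l(N)$. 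This exact sign computation is the entire content of the paper's proof of the lemma. Your ``derivative bound $\E[\cdots(t(N))]\le \E[\cdots(l(N))]+C(t(N)-l(N))/N$'' therefore never arises; you get the clean inequality without the additive term.

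Your proposed ``clean fix'' is also off target: the loss term $-c_0\int\E[|\Delta^{[N]}_i(r)|]\,\d r$ that you want to invoke does not appear in a comparison of two \emph{finite} systems. When both systems carry the same self-consistent drift $\frac{c_0}{N}\sum_j[x_j-x_i]$, the migration contribution to the $L^1$-derivative takes the form $-\tfrac{2c_0}{N}\sum_j\E\bigl[|\Delta_j|\,\mathbf{1}_{\{\sign\Delta_j\neq\sign\Delta_i\}}\bigr]$, not $-c_0\E[|\Delta_i|]$; the latter only appears in the finite-versus-infinite comparisons (Lemmas~\ref{l.comp} and \ref{l.comp2b2}), where the infinite system has a deterministic drift centre. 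The correct resolution is the sign computation above, after which the rest of your argument goes through unchanged.
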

		
		\begin{proof}
			Via standard It\^o-calculus we obtain from \eqref{gh45a2} that
			\begin{equation} 
				\begin{aligned}
					&\frac{\d}{\d t} \E\left[|x^{[N],1}_{i}(t)-x^{[N],2}_{i}(t)|+K_0|y^{[N],1}_{i,0}(t)-y^{[N],2}_{i,0}(t)|
					+K_1|y^{[N],1}_{i,1}(t)-y^{[N],2}_{i,1}(t)|\right]\\
					&=-\frac{2c}{N}\sum_{j \in [N]}\E\left[|x^{[N],1}_{j}(t)-x^{[N],2}_{j}(t)|
					1_{\{\sign(x^{[N],1}_{j}(t)-x^{[N],2}_{j}(t))\neq \sign(x^{[N],1}_{i}(t)-x^{[N],2}_{i}(t))\}}\right]\\
					&\qquad-2K_0e_0\,\E\bigg[|x^{[N],1}_{i}(t)-x^{[N],2}_{i}(t)|+K|y^{[N],1}_{i,0}(t)-y^{[N],2}_{i,0}(t)|\\
					&\qquad\qquad\times1_{\{\sign(x^{[N],1}_{i}(t)-x^{[N],2}_{i}(t))\neq \sign(y^{[N],1}_{i,0}(t)-y^{[N],2}_{i,0}(t))\}}\bigg]\\
					&\qquad-2 \frac{K_1e_1}{N}\,\E\bigg[|x^{[N],1}_{i}(t)-x^{[N],2}_{i}(t)|+K_1|y^{[N],1}_{i,0}(t)-y^{[N],2}_{i,0}(t)|\\
					&\qquad\qquad\times1_{\{\sign(x^{[N],1}_{i}(t)-x^{[N],2}_{i}(t))\neq \sign(y^{[N],1}_{i,1}(t)-y^{[N],2}_{i,1}(t))\}}\bigg].
				\end{aligned}
			\end{equation}	
			Therefore, for all $N\in\N$, 
			\begin{equation}
				\begin{aligned}
					t\mapsto\E\left[|x^{[N],1}_{i}(t)-x^{[N],2}_{i}(t)|+K_0|y^{[N],1}_{i,0}(t)-y^{[N],2}_{i,0}(t)|
					+K_1|y^{[N],1}_{i,1}(t)-y^{[N],2}_{i,1}(t)|\right]
				\end{aligned}
			\end{equation}
			is a decreasing function. Hence we can use the same strategy as in the proof of Lemma~\ref{lem:12} to finish the proof.
		\end{proof}
		
		\paragraph{$\bullet$ Proof of Proposition~\ref{prop2}}
		
		\begin{proof}
			We follow a similar argument as in the proof of Proposition~\ref{prop1}. Let $L(N)$ satisfy $\lim_{N \to \infty}$ 
			$L(N)=\infty$ and $\lim_{N\to\infty} L(N)/N=0$. Let $\mu_{N}$ be the measure on $([0,1]^3)^{\N_0}$  obtained by periodic continuation of $\CL[Z^{[N]}(Ns-L(N))]$.  Note that $([0,1]^3)^{\N_0}$ is compact. Hence, letting $(N_k)_{k\in\N}$ be the subsequence in Proposition~\ref{prop2}, we can pass to a possibly further subsequence and obtain 
			\begin{equation}
				\lim_{k\to\infty} \mu_{N_k} = \mu.
			\end{equation}
			Since we assumed that $\CL[Z^{[N]}(0)]$ is exchangeable and the dynamics preserve exchangeability, the measures $\mu_{N_k}$ are translation invariant and also the limiting law $\mu$ is translation invariant. 
			
			Let $\phi=(\phi_1,\phi_2)$ be defined as in \eqref{phi2} in Lemma~\ref{stabest2}. Then we can condition on $\phi=(\phi_1,\phi_2)$ and write
			\begin{equation}
				\label{1078}
				\mu = \int_{[0,1]^2} \mu_\rho\, \d \Lambda(\rho), 
			\end{equation}
			where $\Lambda(\cdot)=\CL[\phi]=\CL[(\phi_1,\phi_2)]$ and $\rho=(\rho_1,\rho_2)$. By assumption we know that 
			\begin{equation}
				\lim_{k\to\infty} \CL\left[{\bf\Theta}^{\eff,(1),[N_k]}(N_ks)\right] = P_{{\bf\Theta^\eff}(s)}(\cdot)
			\end{equation}
			and
			\begin{equation}
				\begin{aligned}
					&\lim_{k\to\infty} \CL\left[\sup_{0\leq t\leq L(N_k)}\left|\bar{\Theta}^{[N_k]}(N_ks)-\bar{\Theta}^{[N_k]}(N_ks-t)\right|
					+\left|{\Theta}_{y_1}^{[N_k]}(N_ks)-{\Theta_{y_1}}^{[N_k]}(N_ks-t)\right|\right]\\
					&\qquad \qquad \qquad =\delta_0.
				\end{aligned}
			\end{equation}
			Hence
			\begin{equation}
				\begin{aligned}
					\lim_{k\to\infty} \CL\left[{\bf\Theta}^{\eff,(1),[N_k]}(N_ks-L(N_k))\right] = P_{{\bf\Theta^\eff}(s)}(\cdot).
				\end{aligned}
			\end{equation}	
			Recall that
			\begin{equation}
				\Lambda(\cdot) = \CL \left[\lim_{n\to\infty}\left(\frac{1}{n}\sum_{i\in[n]} 
				\frac{x_i+Ky_{i,0}}{1+K},\frac{1}{n}\sum_{i\in[n]}y_{i,1}\right)\right] \quad \text{ on } (\mu,([0,1]^2)^{\N_0}).
			\end{equation}
			By Lemma \ref{stabest}, if 
			\begin{equation}
				\phi^{N_k}=(\phi_1^{N_k},\phi_2^{N_k})=\left(\frac{1}{N_k} \sum_{i\in[N_k]} \frac{x_i+Ky_{i,0}}{1+K},\frac{1}{N_k} \sum_{i\in[N_k]} y^{[N_k]}_{i,1}\right)  \text{ on } (\mu_{N_k},([0,1]^3)^{\N_0}),
			\end{equation} 
			then $\lim_{k\to\infty} \CL[\phi^{N_k}]=\CL[\phi]$. Taking the subsequence $(\mu_{N_k})_{k\in\N}$, we get $\Lambda(\cdot)=P_{{\bf\Theta^\eff}(s)}(\cdot)$, and hence 
			\begin{equation}
				\mu=\int_{[0,1]} \mu_\rho\, \d P_s(\rho).
			\end{equation}
			Let $\bar{L}(N)$ be the sequence constructed in Lemma~\ref{unifergod2}[b]. By construction we can require that $\bar{L}(N)\leq L(N)$ for all $N\in\N$. Write
			\begin{equation}
				\label{triangle2}
				\begin{aligned}
					&\CL\bigl[Z^{[N_k]}(N_ks-L(N_k)+\bar{L}(N_k))\bigr]\\
					&= \CL\bigl[Z^{[N_k]}(N_ks-L(N_k)+\bar{L}(N_k))\bigr]-\CL\bigl[Z^{\mu_{N_k}}(\bar{L}(N_k))\bigr],\\
					&\quad +\CL\bigl[Z^{\mu_{N_k}}(\bar{L}(N_k))\bigr]-\CL\bigl[Z^{\mu}(\bar{L}(N_k))\bigr]\\
					&\quad +\CL\bigl[Z^{\mu}(\bar{L}(N_k))\bigr].
				\end{aligned}
			\end{equation}
			By Lemma \ref{unifergod2} the first and second differences tend to zero as $k\to\infty$. Hence
			\begin{equation}
				\lim_{k\to\infty} \CL\bigl[Z^{[N_k]}(N_ks-L(N_k)+\bar{L}(N_k))\bigr]
				= \CL\bigl[Z^{\mu}(\bar{L}(N_k))\bigr].
			\end{equation}
			By \eqref{1078},
			\begin{equation}
				\label{115}
				\CL\bigl[Z^{\mu}(\bar{L}(N_k))\bigr]
				=\int_{[0,1]^2} \CL\bigl[Z^{\mu_\rho}(\bar{L}(N_k))\bigr]\, P_{{\bf\Theta^\eff}(s)} (\d\rho).
			\end{equation}
			
			For the infinite system $(Z^{\mu_\rho}(t))_{t\geq 0}=\left(X^{\mu_\rho}(t),Y_0^{\mu_\rho}(t),Y_1^{\mu_\rho}(t)\right)_{t\geq 0}$ we have
			\begin{equation}
				Y^\mu_1(t)=Y^\mu_1(0)\, a.s.
			\end{equation}
			and hence, since $\lim_{k\to\infty} \bar{L}(N_k)/N_k = 0$ by \eqref{112},
			\begin{equation}
				\lim_{k\to\infty}\CL[Y_1^{\mu_\rho}(\bar{L}(N_k))]=\CL[Y_1^{\mu_\rho}(0)]\,\, \forall\, \rho\in[0,1].
			\end{equation}
			Therefore 
			\begin{equation}
				\lim_{k\to\infty}\CL[Y_1^{\mu_\rho}(\bar{L}(N_k))]=P_{Y_1(s)}^\rho(\cdot)
			\end{equation}	
			and
			\begin{equation}
				\begin{aligned}
					&\CL\bigl[X^{\mu_\rho}(\bar{L}(N_k)),Y_0^{\mu_\rho}(\bar{L}(N_k)),Y_1^{\mu_\rho}(\bar{L}(N_k))\bigr]\\ 
					&\qquad = \int \CL\bigl[X_1^{\mu_\rho}(\bar{L}(N_k)),Y_0^{\mu_\rho}(\bar{L}(N_k)),{\bf y_1}\bigr]
					\d P_{Y_1(s)}^\rho(\d {\bf y_1}).
				\end{aligned} 
			\end{equation}
			Hence, since $\lim_{k \to \infty}\bar{L}(N_k)=\infty$, by Lemma \ref{lemerg} we have
			\begin{equation}
				\begin{aligned}
					\lim_{k\to\infty} \CL\bigl[Z^{\mu_\rho}(\bar{L}(N_k))\bigr] 
					&= \lim_{k\to\infty} \CL\bigl[X_1^{\mu_\rho}(\bar{L}(N_k)),Y_0^{\mu_\rho}(\bar{L}(N_k)),
					Y_1^{\mu_\rho}(\bar{L}(N_k))\bigr]\\
					& = \int \nu_{\rho,{\bf y_1}} P_{Y_1(s)}^\rho(\d {\bf y_1}).
				\end{aligned}
			\end{equation}
			Therefore, by \eqref{triangle}, \eqref{115} and Lemma \ref{lemlip},
			\begin{equation}
				\begin{aligned}
					&\lim_{k\to\infty} \CL\bigl[Z^{[N_k]}(N_ks-L(N_k)+\bar{L}(N_k))\bigr]
					= \int_{[0,1]} P_{{\bf\Theta^\eff}(s)} (\d \rho)\int \nu_{\rho,{\bf y_1}} P_{Y_1(s)}^\rho(\d {\bf y_1}).
				\end{aligned}
			\end{equation}
			To finish the proof, we proceed as in the proof of Proposition~\ref{prop1} and invoke Lemma~\ref{lem:12a}. Let  $Z^{[N],1}=(X^{[N],1},Y_0^{[N],1},Y_1^{[N],1})$ be the finite system starting from 
			\begin{equation}
				\CL\bigl[Z^{[N]}(Ns-L(N))\bigr]=\CL\bigl[X^{[N]}(Ns-L(N)),Y_0^{[N]}(Ns-L(N)),Y_1^{[N]}(Ns-L(N))\bigr].
			\end{equation}
			Let $(\bar{L}(N))_{N\in\N}$ be the sequence constructed in Lemma~\ref{unifergod2}. Let $Z^{[N],2}=(X^{[N],2},Y_0^{[N],2},Y_1^{[N],2})$ be the finite system starting from 
			\begin{equation}
				\CL\bigl[X^{[N]}(Ns-\bar{L}(N))\bigr]=\CL\left[X^{[N]}(Ns-\bar{L}(N)),Y_0^{[N]}(Ns-\bar{L}(N)),
				Y_1^{[N]}(Ns-\bar{L}(N))\right].
			\end{equation}
			Choose for $t(N)$ in Lemma~\ref{lem:12a} the sequence $\bar{L}(N)$. Let $\mu^{[N],1}$ be defined by the periodic continuation of the configuration of $Z^{[N]}(Ns-L(N))$ and $\mu^{[N],2}$ be defined by periodic continuation of the configuration of $Z^{[N]}(Ns-\bar{L}(N))$. Define $\Theta_1$ and $\Theta_2$ according to \eqref{843}, where under $\mu^{[N],2}$ we replace $L(N)$ by $\bar{L}(N)$. Then, by the assumptions in \eqref{112}, 
			\begin{equation}
				\begin{aligned}
					\lim_{k\to\infty}|\Theta^{(1),[N_k],1}-\Theta^{(1),[N_k],2}|
					&=\lim_{k\to\infty}|\Theta^{N_k}(N_ks-L(N_k))-\Theta^{N_k}(N_ks-\bar{L}(N_k))|\\
					&=0\text{ in probability}. 
				\end{aligned}
			\end{equation}
			Using \ref{m112} we see that also, for all $i\in[N]$,
			\begin{equation}
				\begin{aligned}
					\lim_{k\to\infty}|y^{[N_k],1}_{i,1}(0)-y^{[N_k],2}_{i,1}(0)|
					&=\lim_{k\to\infty}|y_{i,1}^{[N_k]}(N_ks-L(N_k))-y_{i,1}^{[N_k]}(N_ks-\bar{L}(N_k))|\\
					&=0\text{ in probability}.
				\end{aligned} 
			\end{equation}
			Therefore, if $\mu$ is any weak limit point of the sequence $\left(\mu^{[N_k],1}\times\mu^{[N_k],2}\right)_{k\in\N}$, then
			\begin{equation}
				\mu(\{\Theta_1=\Theta_2,\, Y^1_1=Y^2_1\})=1. 
			\end{equation}
			Hence, by possibly passing to a further subsequence, we can now apply Lemma~\ref{lem:12a} to obtain, for all $i$,
			\begin{equation}
				\label{ca12}
				\begin{aligned}
					\lim_{k\to\infty}\E\Bigl[&|x^{[N_k],1}_{i}(\bar{L}(N_k))-x^{[N_k],2}_{i}(\bar{L}(N_k))|\\
					&\qquad +K_0\,|y_{i,0}^{[N_k],1}(\bar{L}(N_k))-y_{i,0}^{[N_k],2}(\bar{L}(N_k))|\\
					&\qquad +K_1\,|y_{i,1}^{[N_k],1}(\bar{L}(N_k))-y_{i,1}^{[N_k],2}(\bar{L}(N_k))|\Bigr]=0.
				\end{aligned}
			\end{equation}
			Hence 
			\begin{equation}
				\lim_{N\to\infty} \Big(\CL[Z^{[N],1}(\bar{L}(N_k))]-\CL[Z^{[N],2}(\bar{L}(N_k))]\Big) = \delta_0
			\end{equation}
			and therefore
			\begin{equation}
				\lim_{k\to\infty} \CL\bigl(Z^{[N_k]}(N_ks)\bigr) =  \int_{[0,1]} P_{{\bf\Theta^\eff}(s)} (\d \rho)\int \nu_{\rho,{\bf y_1}} P_{Y_1(s)}^\rho(\d {\bf y_1}).
			\end{equation}
			This concludes the proof of Proposition~\ref{prop2}.
		\end{proof}
		
		Like for the one-colour mean-field system, Proposition~\ref{prop2} and Lemmas~\ref{lemerg2}--\ref{lem:12a} give rise to the following corollary, which will be important to derive the evolution of the $1$-blocks on time scale $Ns$.
		
		\begin{corollary}
			\label{cor2}
			Fix $s>0$. Let $\mu_{N}$ be the measure obtained by periodic continuation of 
			\begin{equation}
				Z^{[N]}(Ns-L(N))=(X^{[N]}(Ns-L(N)),Y_0^{[N]}(Ns-L(N)),Y_1^{[N]}(Ns-L(N))),
			\end{equation} 
			and let $\mu$ be a weak limit point of the sequence $(\mu_N)_{N\in\N}$. Let
			\begin{equation}
				\label{1a2}
				\Theta=\lim_{N\to\infty}\frac{1}{N}\sum_{i\in[N]}\frac{x_i^\mu+Ky_i^{\mu}}{1+K}\, \qquad \text{ in }L_2(\mu),
			\end{equation}
			and let $(Z^{\nu_\Theta}(t))_{t>0}=(X^{\nu_\Theta}(t),Y_0^{\nu_\Theta}(t),Y_1^{\nu_\Theta}(t))_{t>0}$ be the infinite system evolving according to \eqref{binfb2} starting from its equilibrium measure. Consider the finite system $Z^{[N]}$ as a system on $([0,1]^3)^{\N_0}$ by periodic continuation. Construct $(Z^{[N]}(t))_{t> 0}$ and  $(Z^{\nu_\Theta}(t))_{t>0}$ on one probability space. Then, for all $t\geq 0$,
			\begin{equation}
				\label{932s}
				\begin{aligned}
					\lim_{N \to \infty}&\E\left[\left|x_i^{[N]}(Ns+t)-x_i^{\nu_\Theta}(t)\right|\right]
					+K_0\,\E\left[\left|y_{i,0}^{[N]}(Ns+t)-y_{i,0}^{\nu_\Theta}(t)\right|\right]\\
					&\qquad+K_1\,\E\left[\left|y_{i,1}^{[N]}(Ns+t)-y_{i,1}^{\nu_\Theta}(t)\right|\right]=0 \qquad\forall\, i\in[N].
				\end{aligned}
			\end{equation}
		\end{corollary}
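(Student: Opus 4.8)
\textbf{Proof proposal for Corollary~\ref{cor2}.}

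The plan is to run the same coupling argument that underlies Corollary~\ref{cor1}, but now carrying the extra colour-$1$ coordinate. First I would fix $s>0$ and, as in the statement of Proposition~\ref{prop2}, let $\mu_N$ be the periodic continuation of $\CL[Z^{[N]}(Ns-L(N))]$ for a suitable $L(N)\to\infty$ with $L(N)/N\to0$, and $\mu$ a weak limit point along a subsequence $(N_k)$. By Proposition~\ref{prop2} (whose hypotheses \eqref{112} are exactly the tightness and stability statements established in Lemma~\ref{stab} and the convergence of the estimator process, Steps 1--2 of Section~\ref{pmfs2}), together with Lemma~\ref{stabest2} applied to identify the conserved quantity, we have
\begin{equation}
\lim_{k\to\infty}\CL\bigl[Z^{[N_k]}(N_ks)\bigr]
=\int_{[0,1]^2}P_{{\bf\Theta^\eff}(s)}(\d\theta,\d\theta_y)\int_{[0,1]^{\N_0}}P^{(\theta,\theta_y)}_{Y_1(s)}(\d{\bf y}_1)\,\nu_{\theta,{\bf y}_1}
=\nu(s),
\end{equation}
and, by the definition of $\bar{\Theta}$ in \eqref{1a2} via the $L_2(\mu)$-ergodic theorem, the law $\nu(s)$ is precisely the equilibrium $\nu_{\bar{\Theta},Y_1^\mu}$ of the infinite system in \eqref{binfb2}. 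So the limiting object on the right-hand side of \eqref{932s} is the infinite system $(Z^{\nu_\Theta}(t))_{t\geq0}$ started in this equilibrium, as claimed.

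Next I would construct all processes on one probability space by coupling through the Brownian motions, exactly as in the proofs of Lemma~\ref{l.comp2b2} and Lemma~\ref{unifergod2}. Concretely: view $Z^{[N]}$ as a configuration on $([0,1]^3)^{\N_0}$ by periodic continuation; introduce the intermediate infinite system $Z^{\mu_N}$ of \eqref{mgh52b2} (frozen colour-$1$ coordinate) started from $\mu_N$ and driven by the same $\{w_i\}_{i\in\N_0}$; and introduce $Z^{\nu_\Theta}$ of \eqref{binfb2} started from its equilibrium, again driven by the same Brownian motions. The colour-$1$ difference is handled as in \eqref{m112}: since $y_{i,1}^{[N]}$ moves only at rate $e_1/N$, one has $y_{i,1}^{[N]}(Ns+t)-y_{i,1}^{[N]}(Ns)=\mathcal O(L(N)/N)\to0$ uniformly on $[0,L(N)]$, which feeds the stability input. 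The active/colour-$0$ differences $\Delta_i^{[N]},\delta_{i,0}^{[N]}$ are controlled by the It\^o-calculus estimate from the proof of Lemma~\ref{l.comp2b2} (equations leading to \eqref{1097}), where the crucial point is that the drift error terms $|\bar\Theta^{(1),[N]}(r)-\bar\Theta^{(1),[N]}|$ and $|\Theta_x^{(1),[N]}(r)-\bar\Theta^{(1),[N]}(r)|$ integrate to something vanishing --- the former by the stability Lemma~\ref{stab}, the latter by the comparison-of-averages bound Lemma~\ref{lemlev1a}, which gives $\E[|\Theta_x^{(1),[N]}(Nr)-\Theta_{y_0}^{(1),[N]}(Nr)|]=\mathcal O(N^{-1/2})$ for $r>0$. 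Combining, I would produce a sequence $\bar L(N)$ (as in Lemma~\ref{unifergod2}) such that $\E[|\Delta_i^N(\bar L(N))|+K_0|\delta_{i,0}^N(\bar L(N))|+K_1|\delta_{i,1}^N(\bar L(N))|]\to0$, which pins the one-dimensional law at a single equilibrium; then the monotonicity of $t\mapsto\E[\,|x^1-x^2|+K_0|y_0^1-y_0^2|+K_1|y_1^1-y_1^2|\,]$ for coupled copies (Lemma~\ref{lem:12a}) propagates this to all later times, and in particular to $t$ fixed in $Ns+t$. Passing through the chain finite system $\to Z^{\mu_N}\to Z^{\nu_\Theta}$ with the triangle inequality, as in \eqref{triangle2} and \eqref{ca12}, yields \eqref{932s} for every $i\in[N]$ and every $t\ge0$. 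Finally, since this holds along every convergent subsequence $(N_k)$ and the limit is always the same equilibrium $\nu_{\bar\Theta,Y_1^\mu}$, the full sequence converges and the subsequence can be dropped.

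The main obstacle is the same one that already arises in the one-colour Corollary~\ref{cor1}: reconciling the fact that on the $Ns$-time scale the colour-$0$ seed-bank has \emph{infinite} exchange rate in the limit (so one cannot couple the $y_{i,0}$ coordinates naively and expect an a priori bound uniform in $N$) with the need to couple the finite and infinite systems for a \emph{fixed} macroscopic time $t$. The resolution, which I would import wholesale, is that the active and colour-$0$ components equalise in the mean ($\mathcal O(N^{-1/2})$ by Lemma~\ref{lemlev1a}), so only the slow, conserved combination $\bar\Theta^{(1),[N]}$ actually needs to be matched, and that matching is exactly what Proposition~\ref{prop2} plus the stability lemmas provide. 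A secondary technical point is verifying that the colour-$1$ coordinate, being frozen at the McKean--Vlasov level \eqref{gh45a2binf} but slowly drifting in the finite system, does not disrupt the equicontinuity/stability estimates --- but this is quantitatively small because of the $1/N$ rate, precisely as in \eqref{m112}--\eqref{m1123}. No genuinely new ideas beyond Sections~\ref{ss.pabstracts} and \ref{pmfs2} are needed; the corollary is a bookkeeping extension of Corollary~\ref{cor1}.
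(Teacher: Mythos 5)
Your proposal is correct and takes essentially the same approach as the paper, which proves Corollary~\ref{cor2} simply by cross-referencing the proof of Corollary~\ref{cor1} with the replacements Proposition~\ref{prop1}$\to$Proposition~\ref{prop2}, Lemma~\ref{unifergod}$\to$Lemma~\ref{unifergod2}, Lemma~\ref{lem:12}$\to$Lemma~\ref{lem:12a}. One small bookkeeping point worth noting: the paper's Corollary~\ref{cor1} proof introduces a \emph{fresh} periodic continuation $\nu_{N_k}$ of the time-$N_ks$ configuration (not the time-$(N_ks-L(N_k))$ configuration $\mu_N$ from the statement), so the final coupling runs only over a fixed relative time $t$ and the $\bar L(N)$/monotonicity step is not needed at this stage; you instead couple from the time-$(Ns-L(N))$ configuration, which forces you to invoke the propagation through $\bar L(N)$ and Lemma~\ref{lem:12a} (the mechanism of Proposition~\ref{prop2}) to bridge the diverging gap $L(N)+t$. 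Both routes reach the same conclusion, but the paper's choice of coupling start makes the chain shorter.
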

		
		\begin{proof}
			Proceed as in the proof of Corollary~\ref{cor1}, but use the setup of the two-colour mean-field system and therefore replace Proposition~\ref{prop1}, Lemma~\ref{unifergod} and Lemma~\ref{lem:12} by, respectively Proposition~\ref{prop2}, Lemma~\ref{unifergod2} and Lemma~\ref{lem:12a}.
		\end{proof}
		
		
		\paragraph{Step 4: Limiting evolution of the $1$-blocks.}
		
		\begin{lemma}{{\bf [Limiting evolution of the 1-blocks]}}
			\label{lemlimev}
			Let $(z_1^\eff(s))_{s>0}$ be the process defined in \eqref{m64} with initial state
			\begin{equation}
				z_1^\eff(0)=(\vt_0,\theta_{y_1}).
			\end{equation}
			Then 
			\begin{equation}
				\label{mgh6b}
				\lim_{N\to \infty}\CL\left[\big({\bf\Theta}^{\eff,(1),[N]}(Ns)\big)_{s>0}\right]=\CL\left[(z_1^\eff(s))_{s>0}\right].
			\end{equation}	
		\end{lemma}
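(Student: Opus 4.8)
The plan is to establish Lemma~\ref{lemlimev} by the same four-ingredient martingale-problem strategy that was used for the one-colour scheme (Proposition~\ref{p.esti}), now applied to the \emph{pair} $({\bf\Theta}^{\eff,(1),[N]}(Ns))_{s>0}=(\bar{\Theta}^{(1),[N]}(Ns),\Theta^{(1),[N]}_{y_1}(Ns))_{s>0}$. Tightness of this sequence in $\CC((0,\infty),[0,1]^2)$ has already been proved in Step~1 via the Joffe--M\'etivier criterion \cite[Proposition 3.2.3]{JM86}, so it suffices to identify all weak limit points. The approach is to show that every weak limit point solves a well-posed martingale problem whose unique solution is the SSDE \eqref{m64} started from $(\vt_0,\theta_{y_1})$; uniqueness of the limit then follows, and \eqref{mgh6b} is immediate.

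First I would extract from the SSDE \eqref{ma21} the semimartingale decomposition of the two components. The process $s\mapsto\Theta^{(1),[N]}_{y_1}(Ns)$ has finite variation: $\d\Theta^{(1),[N]}_{y_1}(Ns)=e_1[\tfrac1N\sum_{i}x^{[N]}_i(Ns)-\Theta^{(1),[N]}_{y_1}(Ns)]\,\d s$. The process $s\mapsto\bar{\Theta}^{(1),[N]}(Ns)$ is a semimartingale with bounded drift $\tfrac{K_1e_1}{1+K_0}[\Theta^{(1),[N]}_{y_1}-\tfrac1N\sum_i x^{[N]}_i]$ and predictable quadratic variation $\tfrac{1}{(1+K_0)^2}\int_0^s\d r\,\tfrac1N\sum_{i\in[N]}g(x^{[N]}_i(Nr))$; the cross-variation with the $y_1$-component is zero. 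The key replacements needed to close the limit are: (i) $\tfrac1N\sum_i x^{[N]}_i(Ns)\approx\bar{\Theta}^{(1),[N]}(Ns)$, which follows from Lemma~\ref{lemlev1a} (the bound \eqref{bnd} gives $\E[|\Theta^{(1),[N]}_x(Ns)-\bar{\Theta}^{(1),[N]}(Ns)|]=\CO(\sqrt{1/N})$ for $s>0$, using that $\Theta^{(1),[N]}_x-\bar\Theta^{(1),[N]}=\tfrac{K_0}{1+K_0}(\Theta^{(1),[N]}_x-\Theta^{(1),[N]}_{y_0})$); and (ii) $\tfrac1N\sum_i g(x^{[N]}_i(Nr))\to(\CF g)(\bar{\Theta}(r))$ in the sense that $\E[|\tfrac1N\sum_i g(x^{[N]}_i(Nr))-\E^{\nu_{\bar\Theta(r),{\bf y}_1}}[g(x_0)]|]\to0$ for each $r>0$, which is the two-colour analogue of \eqref{a4}. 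For (ii) I would reuse the coupling machinery: Lemma~\ref{stab} gives the stability hypothesis \eqref{ma22b} needed to invoke Proposition~\ref{prop2} and Corollary~\ref{cor2}, and then Corollary~\ref{cor2} lets one replace the finite-system single components $x^{[N]}_i(Nr)$ by the equilibrium infinite-system components $x^{\nu_{\bar\Theta(r)}}_i(\bar L(N))$, after which Lemma~\ref{lemerg2} (the $x_i$ are i.i.d.\ under $\nu_{\bar\Theta(r),{\bf y}_1}$) and the law of large numbers finish the job; crucially, by Lemma~\ref{lemlip2} the value $\E^{\nu_{\theta,{\bf y}_1}}[g(x_0)]=(\CF g)(\theta)$ does not depend on ${\bf y}_1$, so no assumption on the law of the $1$-dormant components is required here. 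Combining (i)--(ii) with the tightness from Step~1 and Skorohod's representation along a convergent subsequence, I would pass to the limit in the semimartingale decomposition, obtaining that any limit $(\bar\Theta(s),\Theta_{y_1}(s))$ is a continuous process with $\d\Theta_{y_1}(s)=e_1[\bar\Theta(s)-\Theta_{y_1}(s)]\,\d s$, with martingale part of $\bar\Theta$ having quadratic variation $\int_0^s\d r\,\tfrac{1}{(1+K_0)^2}(\CF g)(\bar\Theta(r))$ and drift $\tfrac{K_1e_1}{1+K_0}[\Theta_{y_1}(s)-\bar\Theta(s)]$ — that is, \eqref{m64} with $x^{\eff}_1=\bar\Theta$, $y^{\eff}_{1,1}=\Theta_{y_1}$. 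The initial value $(\vt_0,\theta_{y_1})$ follows by evaluating the estimators at time $0$ under $\mu^{\otimes[N]}$ and using the law of large numbers, together with the mismatch-of-limits remark (the process is considered only for $s>0$, where the fast colour-$0$ seed-bank has already equilibrated with the active component, explaining the $\vt_0$ rather than $\theta_x$).

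For well-posedness of the limiting martingale problem I would note that, since $\Theta_{y_1}$ is slaved to $\bar\Theta$ through an explicit linear ODE that can be solved pathwise by variation of constants, the pair $(\bar\Theta,\Theta_{y_1})$ is a measurable functional of the one-dimensional diffusion $\bar\Theta$ alone; the latter solves an autonomous SDE with Lipschitz diffusion coefficient $\sqrt{(\CF g)(\cdot)}/(1+K_0)$ — Lipschitz-ness of $\theta\mapsto(\CF g)(\theta)$ is Lemma~\ref{lemlip2}, and $\CF g\in\CG$ so it vanishes at the boundary — and bounded, globally Lipschitz drift $\tfrac{K_1e_1}{1+K_0}[\Theta_{y_1}(\bar\Theta)-\bar\Theta]$, hence has a pathwise unique strong solution by \cite{YW71}. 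Uniqueness in law of the pair then follows, as in Lemma~\ref{lem:uni}, and therefore all subsequential limits coincide, giving \eqref{mgh6b}. I expect the main obstacle to be step~(ii): carefully threading the chain of couplings (finite system $\to$ periodically-continued infinite system $\to$ its limit $\to$ equilibrium) so that the stability hypotheses \eqref{ma22b}--\eqref{may} from Lemma~\ref{stab} feed correctly into Proposition~\ref{prop2} and Corollary~\ref{cor2}, and verifying that the ${\bf y}_1$-dependence of the equilibrium $\nu_{\theta,{\bf y}_1}$ genuinely drops out of $\E^{\nu_{\theta,{\bf y}_1}}[g(x_0)]$ — everything else is a direct transcription of the one-colour arguments in Section~\ref{ss.pabstracts}.
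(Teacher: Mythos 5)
Your proposal is correct and takes essentially the same approach as the paper's proof. The only presentational difference is that the paper packages the identification step via the Joffe--M\'etivier $\CD$-semimartingale convergence theorem \cite[Theorem 3.3.1]{JM86} rather than spelling out the tightness-plus-martingale-problem argument by hand, but both routes rest on the identical ingredients: the tightness from Step 1, the replacement of drift and quadratic variation via Lemma~\ref{lemlev1a} and the two-colour coupling chain (the analogue of \eqref{a4}--\eqref{a5}), the $\mathbf{y}_1$-independence of $\E^{\nu_{\theta,\mathbf{y}_1}}[g(x_0)]$ from Lemma~\ref{lemlip2}, and pathwise uniqueness for \eqref{m64} from \cite{YW71} (noting that the conclusion requires only $\CF g$ Lipschitz, not $\sqrt{\CF g}$ Lipschitz, which is exactly what the Yamada--Watanabe condition covers).
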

		
		\begin{proof}
			By \cite{YW71}, the SSDE in \eqref{m64} has a unique strong solution. Therefore the process $(z^\eff_1(s))_{s>0}$ is Markov. Its generator $G$ is given by
			\begin{equation}
				G =\frac{K_1e_1}{1+K_0}\,(y-x)\,\frac{\partial }{\partial x}+e_1(x-y)\,\frac{\partial }{\partial y} 
				+\frac{1}{(1+K_0)^2}\,(\CF g)(x)\,\frac{\partial^2 }{\partial x^2},
			\end{equation}
			and hence $(z_1^\eff(s))_{s\geq0}$ solves the martingale problem for $G$. We will use \cite[Theorem 3.3.1]{JM86}, to prove that \eqref{mgh6b} holds. 
			
			Define
			\begin{equation}
				(\vt_0^N,\vt_{y_1}^N)=\left(\bar{\Theta}^{(1),[N]}(0),\Theta^{(1),[N]}_{y_1}(0)\right).
			\end{equation}	
			Since we start from an i.i.d.\ law, by the law of large numbers we have that
			\begin{equation}
				\lim_{N\to\infty} {\bf\Theta}^{\eff, (1),[N]}(0) = \lim_{N \to \infty} (\vt_0^N,\vt_{y_1}^N) 
				= \left(\vartheta_0,\theta_{y_1}\right)\qquad a.s.
			\end{equation}
			By the SSDE in \eqref{ma21} and an optional sampling argument, we have, for all $N\in\N$,
			\begin{equation}
				\lim_{s\downarrow 0}\left(\bar{\Theta}^{(1),[N]}(Ns),\Theta^{(1),[N]}(Ns)\right)=(\vt_0^N,\vt_{y_1}^N) \quad  \text{ a.s.}
			\end{equation} 
			Therefore we can continuously extend the process $({\bf \Theta}^{\eff,(1),[N]}(Ns))_{s>0}$ to $0$ and, in particular,
			\begin{equation}
				\lim_{N\to \infty}\CL\left[{\bf\Theta}^{\eff,(1),[N]}(0)\right]=\CL\left[z_1^\eff(0)\right].
			\end{equation}
			Since we already showed that the processes
			\begin{equation}
				\big({\bf \Theta}^{\eff,(1),[N]}(Ns)\big)_{s>0}
			\end{equation} 
			are $\CD$-semimartingales, and are trivially bounded, we are left to show that 
			\begin{equation}
				\begin{aligned}
					\lim_{N\to \infty}\int_0^s \d r\,
					\E\Big[\Big|G^{(1),[N]}_\dagger\big(f,{\bf\Theta}^{\eff,(1),[N]}(Nr),r,\cdot\big) 
					-(Gf)\big({\bf\Theta}^{\eff,(1),[N]}(Nr)\big)\Big|\Big]=0.
				\end{aligned}
			\end{equation}
			Here, $G^{(1),[N]}_\dagger$ is the operator defined in \eqref{m45a}. Since we are working on the space $\CC^*$ of polynomials on $[0,1]^2$, all derivatives of $f\in\CC^*$ are bounded. Hence, by dominated convergence, it is enough to prove that, for all $s>0$,
			\begin{equation}
				\label{m50b}
				\begin{aligned}
					\lim_{N\to \infty} \E^{[N]}\Big[\Big|G^{(1),[N]}_\dagger\big(f,{\bf\Theta}^{\eff,(1),[N]}(Ns),s,\cdot\big) 
					-(Gf)\big({\bf\Theta}^{\eff,(1),[N]}(Ns)\big)\Big|\Big] = 0.
				\end{aligned}
			\end{equation}
			Note that
			\begin{equation}
				\label{m51b}
				\begin{aligned}
					&\E\Big[\Big|G^{(1),[N]}_\dagger\big(f,{\bf\Theta}^{\eff,(1),[N]}(Ns),s,\cdot\big)
					-(Gf)\big({\bf\Theta}^{\eff,(1),[N]}(Ns)\big)\Big|\Big]\\
					&= \E\Bigg[\Bigg|\frac{K_1e_1}{1+K_0}\,\Big[\Theta^{(1),[N]}_{y_1}(Ns)
					-\frac{1}{N}\sum_{i\in[N]} x_i(Ns,\omega)\Big]\,\frac{\partial f}{\partial x}
					\big({\bf\Theta}^{\eff,(1),[N]}(Ns)\big)\\
					&\qquad+e_1\Big[\frac{1}{N}\sum_{i\in[N]} x_i(Ns,\omega)-\Theta^{(1),[N]}_{y_1}(Ns)\Big]
					\frac{\partial f}{\partial y}\big({\bf\Theta}^{\eff,(1),[N]}(Ns)\big)\\
					&\qquad+\frac{1}{(1+K_0)^2} \frac{1}{N}\sum_{i\in[N]} g(x_i(Ns,\omega))\,\frac{\partial^2 f}{\partial x^2}
					\big({\bf\Theta}^{\eff,(1),[N]}(Ns)\big)\\
					&\qquad-\frac{K_1e_1}{1+K_0} \big[\Theta^{(1),[N]}_{y_1}(Ns)-\bar{\Theta}^{(1),[N]}(Ns)\big]\,
					\frac{\partial f}{\partial x}\big({\bf\Theta}^{\eff,(1),[N]}(Ns)\big)\\
					&\qquad-e_1\big[\bar{\Theta}^{(1),[N]}(Ns)-\Theta^{(1),[N]}_{y_1}(Ns)\big]\,\frac{\partial f}{\partial y}
					\big({\bf\Theta}^{\eff,(1),[N]}(Ns)\big)\\
					&\qquad-\frac{1}{(1+K_0)^2} (\CF g)(\bar{\Theta}^{(1),[N]}(Ns))\,\frac{\partial^2 f}{\partial x^2}
					\big({\bf\Theta}^{\eff,(1),[N]}(Ns)\big)\Bigg|\Bigg].
				\end{aligned}
			\end{equation}
			Hence
			\small
			\begin{equation}
				\label{m52b}
				\begin{aligned}
					&\lim_{N\to \infty}\E\left[\Big|G^{(1),[N]}_\dagger\big(f,{\bf\Theta}^{\eff,(1),[N]}(Ns),s,\cdot\big)
					-(Gf)\big({\bf\Theta}^{\eff,(1),[N]}(Ns)\big)\Big|\right]\\
					&\leq \lim_{N\to \infty}\E\left[\frac{K_1e_1}{1+K_0}\,\left|\bar{\Theta}^{(1),[N]}(Ns)
					-\frac{1}{N}\sum_{i\in[N]} x_i(Ns,\omega)\right|\,\left|\frac{\partial f}{\partial x}\big({\bf\Theta}^{\eff,(1),[N]}(Ns)\big)\right|\right]\\
					&+\lim_{N\to \infty}\E\left[e_1\left|\frac{1}{N}\sum_{i\in[N]} x_i(Ns,\omega)
					-\bar{\Theta}^{(1),[N]}(Ns)\right|\,\left|\frac{\partial f}{\partial y}\big({\bf\Theta}^{\eff,(1),[N]}(Ns)\big)\right|\right]\\
					&+\lim_{N\to \infty}\E\left[\frac{1}{(1+K_0)^2}\left| \frac{1}{N}\sum_{i\in[N]}g(x_i(Ns,\omega))
					-(\CF g)\big(\bar{\Theta}^{(1),[N]}(Ns)\big)\right|\, \left|\frac{\partial^2 f}{\partial x^2}
					\big({\bf\Theta}^{\eff,(1),[N]}(Ns)\big)\right|\right].
				\end{aligned}
			\end{equation}
			\normalsize
			Note that each of the derivatives is bounded by aconstant because we work on $\CC^*$. The first and the second term tend to zero by Lemma~\ref{lemlev1a}. For the third term we can use a similar argument as used in \eqref{a5}, since we showed Lemmas~\ref{lemerg2}--\ref{lem:12a} for the single components in the mean-field system with two colours.
		\end{proof}
		
		
		\paragraph{Step 5: Evolution of the averages in the Meyer-Zheng topology.}
		
		In this section we prove the following proposition
		
		\begin{proposition}{\bf [Convergence in the Meyer-Zheng topology]}
			\label{p.estima}
			Suppose that the effective estimator process defined in \eqref{1013} satisfies
			\begin{equation}
				\label{m1a2}
				\lim_{N\to\infty} \CL \left[\left({\bf\Theta}^{\eff,(1),[N]}(Ns)\right)_{s > 0}\right] 
				= \CL \left[\left(z_1^\eff(s)\right)_{s > 0}\right].
			\end{equation}
			Then for the averages in \eqref{gh412}, 
			\begin{equation}
				\label{gh422}
				\begin{aligned}
					&\lim_{N\to\infty} \CL \left[\left(z_1^{[N]}(s)\right)_{s > 0}\right] 
					= \CL \left[\left(z_1^{}(s)\right)_{s > 0}\right] \\
					&\text{in the Meyer-Zheng topology},
				\end{aligned}
			\end{equation}
			where the limiting process $\left(z_1^{}(s)\right)_{s > 0}$ is defined as in \eqref{gh432}.
		\end{proposition}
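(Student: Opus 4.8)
The plan is to follow the template of the proof of Proposition~\ref{p.estim} for the one-colour system: use the hypothesis \eqref{m1a2} as the input in the classical topology, upgrade it to the Meyer--Zheng topology, and then recover the full $1$-block process $(z_1^{[N]}(s))_{s>0}$ from the effective estimator process by adding in the ``fast'' seed-bank component via the $L^1$-estimate of Lemma~\ref{lemlev1a}. First I would note that \eqref{m1a2} gives convergence of $({\bf\Theta}^{\eff,(1),[N]}(Ns))_{s>0}=(\bar\Theta^{(1),[N]}(Ns),\Theta^{(1),[N]}_{y_1}(Ns))_{s>0}$ to $(z_1^\eff(s))_{s>0}=(x_1^\eff(s),y^\eff_{1,1}(s))_{s>0}$ in the classical path-space topology, hence also in the Meyer--Zheng topology by Lemma~\ref{lem90}.

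Next I would exploit Lemma~\ref{lemlev1a}. Evaluating the bound \eqref{bnd} at time $Ns$ with $s>0$ fixed, the exponential term $\e^{-(K_0e_0+e_0)Ns}$ vanishes as $N\to\infty$ and the remaining term is $\CO(N^{-1/2})$ (the extra summand $\tfrac{4K_1e_1}{N}$, which is the only genuinely new input relative to the one-colour case, is still $\CO(N^{-1})$), so $\E\big[\,|x_1^{[N]}(s)-y_{0,1}^{[N]}(s)|\,\big]=\E\big[\,|\Theta^{(1),[N]}_x(Ns)-\Theta^{(1),[N]}_{y_0}(Ns)|\,\big]\to 0$ for every fixed $s>0$. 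Since $\bar\Theta^{(1),[N]}(Ns)=[x_1^{[N]}(s)+K_0y_{0,1}^{[N]}(s)]/(1+K_0)$, the identities $x_1^{[N]}(s)-\bar\Theta^{(1),[N]}(Ns)=\tfrac{K_0}{1+K_0}\big(x_1^{[N]}(s)-y_{0,1}^{[N]}(s)\big)$ and $y_{0,1}^{[N]}(s)-\bar\Theta^{(1),[N]}(Ns)=-\tfrac{1}{1+K_0}\big(x_1^{[N]}(s)-y_{0,1}^{[N]}(s)\big)$ show that both $x_1^{[N]}(s)$ and $y_{0,1}^{[N]}(s)$ converge in $L^1$ to $\bar\Theta^{(1),[N]}(Ns)$, for each $s>0$.

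Now I would assemble the pieces. Consider the four-component process
\[
\Big(\bar\Theta^{(1),[N]}(Ns),\ \Theta^{(1),[N]}_{y_1}(Ns),\ x_1^{[N]}(s)-\bar\Theta^{(1),[N]}(Ns),\ y_{0,1}^{[N]}(s)-\bar\Theta^{(1),[N]}(Ns)\Big)_{s>0}.
\]
Its first two components converge jointly in the Meyer--Zheng topology to $(x_1^\eff,y^\eff_{1,1})$ by the first step, while the last two converge to $0$ in $L^1$ for each $s>0$ (and hence in probability in the Meyer--Zheng topology by Lemma~\ref{lem91}). Applying Lemma~\ref{lem93} twice, adding the two small components one at a time, gives convergence of the joint law of the four-component process to $\CL[(x_1^\eff(s),y^\eff_{1,1}(s),0,0)_{s>0}]$ in the Meyer--Zheng topology. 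Finally I would invoke the continuous-mapping theorem in the Meyer--Zheng topology (Lemma~\ref{lem94}) with the continuous affine map $(a,b,c,d)\mapsto(a+c,\,a+d,\,b)$, which reconstructs $(x_1^{[N]}(s),y_{0,1}^{[N]}(s),y_{1,1}^{[N]}(s))$ from the four-component process and $(x_1^\eff(s),x_1^\eff(s),y^\eff_{1,1}(s))$ from the limit. It remains to identify $(x_1^\eff,x_1^\eff,y^\eff_{1,1})$ with $(z_1)$ from \eqref{gh432}: the two laws coincide because in \eqref{gh432} one has $y_{0,1}=x_1$ and $(x_1,y_{1,1})$ solves exactly the SSDE \eqref{m64} defining $(x_1^\eff,y^\eff_{1,1})$, with matching initial state $(\vt_0,\vt_0,\theta_{y_1})$. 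This gives \eqref{gh422}.

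Given the lemmas already in place, the argument is largely routine; the point requiring a little care is the iterated use of Lemma~\ref{lem93}, which must be applied to a process that still converges in the Meyer--Zheng topology after the first step, plus the minor technicality that the affine map in Lemma~\ref{lem94} is applied only to configurations whose images lie in $[0,1]^3$ (so one may harmlessly pre-compose it with a truncation to obtain a genuine continuous self-map of the state space). Beyond that, the only model-specific ingredient is the estimate \eqref{bnd}, whose extra feedback term from the slow seed-bank is of lower order and therefore does not affect the conclusion.
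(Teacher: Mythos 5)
Your proposal is correct and follows essentially the same route as the paper's proof: the $L^1$ estimate of Lemma~\ref{lemlev1a} provides closeness of $x_1^{[N]}$ and $y_{0,1}^{[N]}$ to $\bar\Theta^{(1),[N]}$, Lemmas~\ref{lem90}, \ref{lem91} and \ref{lem93} build joint Meyer--Zheng convergence of the augmented process, and a continuous-mapping argument recovers $z_1^{[N]}$. The only cosmetic difference is that you fold the final projection into the single affine map $(a,b,c,d)\mapsto(a+c,a+d,b)$, whereas the paper first reconstructs the four-component process via Lemma~\ref{lem94} and then projects off the redundant coordinate via Lemma~\ref{lem95}; your small caveat about the codomain of the map (Lemma~\ref{lem94} is stated for self-maps $f\colon E\to E$) is a harmless formality, as the proof extends immediately to maps between Polish spaces.
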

		
		To prove Proposition~\ref{p.estima} we need the following characterisation of continuous functions in the Meyer-Zheng topology
		
		\begin{lemma}{\bf[Convergence of marginals in the Meyer-Zheng topology]}
			\label{lem95} 
			Let $(E,d)$ be a Polish space with metric $d$. Suppose that $(X_n(s),Y_n(s))_{s>0}$ is a stochastic process with state space $E^2$. If 
			\begin{equation}
				\lim_{n\to \infty}\CL\left[(X_n(s),Y_n(s))_{s>0}\right]=\CL\left[(X(s),Y(s))_{s>0}\right]\text{ in the Meyer-Zheng topology},
			\end{equation} 
			then the marginals also converge in the Meyer-Zheng topology, i.e.,
			\begin{equation}
				\begin{aligned}
					&\lim_{n\to \infty}\CL\left[(X_n(s))_{s>0}\right]=\CL\left[(X(s))_{s>0}\right]\text{ in the Meyer-Zheng topology},\\
					&\lim_{n\to \infty}\CL\left[(Y_n(s))_{s>0}\right]=\CL\left[(Y(s))_{s>0}\right]\text{ in the Meyer-Zheng topology}.
				\end{aligned}
			\end{equation}
		\end{lemma}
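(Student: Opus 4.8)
\textbf{Proof proposal for Lemma \ref{lem95}.}

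The plan is to recall that convergence of pseudopaths in the Meyer--Zheng topology is, by definition, weak convergence of the associated probability measures $\psi_{(X_n,Y_n)}$ on $[0,\infty]\times E^2$, and that the marginal pseudopaths $\psi_{X_n}$ and $\psi_{Y_n}$ on $[0,\infty]\times E$ are obtained by pushing forward $\psi_{(X_n,Y_n)}$ under the natural coordinate projections. First I would make this precise: if $\pi_1\colon [0,\infty]\times E^2\to[0,\infty]\times E$ is the map $(s,(x,y))\mapsto(s,x)$ and $\pi_2$ is $(s,(x,y))\mapsto(s,y)$, then from \eqref{e1519} one checks directly that $\psi_{X_n}=(\pi_1)_*\psi_{(X_n,Y_n)}$ and $\psi_{Y_n}=(\pi_2)_*\psi_{(X_n,Y_n)}$, since for $B\in\CB(E)$ we have $\psi_{X_n}((a,b)\times B)=\int_a^b \d s\,\e^{-s}\,1_B(X_n(s))=\int_a^b \d s\,\e^{-s}\,1_{\pi_1^{-1}(B)}(X_n(s),Y_n(s))=\psi_{(X_n,Y_n)}(\pi_1^{-1}((a,b)\times B))$. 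The maps $\pi_1,\pi_2$ are continuous, hence the continuous mapping theorem for weak convergence of probability measures on the Polish space $[0,\infty]\times E$ applies.

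Next I would lift this to the level of the random pseudopaths. Since the $\psi$-map is a measurable map from path space to the Polish space $\Psi$ of pseudopaths (this is part of the setup in Section~\ref{MeyerZheng}), the hypothesis $\lim_{n\to\infty}\CL[(X_n(s),Y_n(s))_{s>0}]=\CL[(X(s),Y(s))_{s>0}]$ in the Meyer--Zheng topology means precisely $\lim_{n\to\infty}\CL[\psi_{(X_n,Y_n)}]=\CL[\psi_{(X,Y)}]$ as probability measures on $\Psi([0,\infty]\times E^2)$. Composing with the continuous (indeed, by Lemma~\ref{lem94}(a) applied to the continuous projection, continuous) push-forward map $\Psi([0,\infty]\times E^2)\to\Psi([0,\infty]\times E)$ induced by $\pi_1$, and invoking the ordinary continuous mapping theorem for weak convergence on the Polish space of pseudopaths, yields $\lim_{n\to\infty}\CL[\psi_{X_n}]=\CL[\psi_X]$, i.e. $\lim_{n\to\infty}\CL[(X_n(s))_{s>0}]=\CL[(X(s))_{s>0}]$ in the Meyer--Zheng topology. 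The same argument with $\pi_2$ handles the $Y$-marginal.

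The only genuinely delicate point is verifying that the push-forward map $\Psi([0,\infty]\times E^2)\to\Psi([0,\infty]\times E)$ under a coordinate projection is well-defined and continuous as a map between the respective pseudopath spaces — i.e. that projecting a probability measure of the form $\psi_v$ again produces a pseudopath of the required form, and that weak convergence is preserved. This follows from the identity $(\pi_i)_*\psi_{(v_1,v_2)}=\psi_{v_i}$ established above (so the image is indeed a pseudopath) together with the fact that $w\mapsto (\pi_i)_* w$ is weakly continuous on probability measures whenever $\pi_i$ is continuous, which is Lemma~\ref{lem94}(a) in disguise (or a direct application of the continuous mapping theorem). Everything else is a routine unwinding of definitions, so I expect no real obstacle beyond bookkeeping the two topological levels (path space versus pseudopath space) carefully.
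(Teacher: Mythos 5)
Your proposal is correct and matches the paper's proof in all essentials: both arguments reduce the statement to the continuity of the map $\psi_{(x,y)}\mapsto\psi_x$ on pseudopath space — you establish it as the restriction of the weakly continuous push-forward under the coordinate projection $\pi_1$, the paper establishes it via the characterization of Meyer--Zheng convergence as convergence in Lebesgue measure — and then invoke the continuous mapping theorem for weak convergence of the laws $\CL[\psi_{(X_n,Y_n)}]$. Your identity $(\pi_1)_*\psi_{(x,y)}=\psi_x$ is precisely the content of the paper's test-function construction $\tilde{f}(\psi_{(x,y)})=f(\psi_x)$, so the two proofs differ only in packaging, not in substance.
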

		
		\noindent
		The proof of Lemma~\ref{lem95} is given in Appendix~\ref{apb3}.
		
		\begin{proof}[Proof of Proposition~\ref{p.estima}]
			By Lemma~\ref{lemlev1a}, we have that, for all $s>0$,
			\begin{equation}
				\lim_{n\to\infty}\E\left[\left|\bar{ \Theta}^{[N]}(Ns)-x_1^{[N]}(s)\right|\right]=0
			\end{equation}
			and
			\begin{equation}
				\lim_{n\to\infty}\E\left[\left|\bar{ \Theta}^{[N]}(Ns)-y_{0,1}^{[N]}(s)\right|\right]=0.
			\end{equation}
			Applying Lemmas~\ref{lem91}, \ref{lem93} and \ref{lem94}, like in the proof of Proposition~\ref{p.estim}, we obtain
			\begin{equation}
				\begin{aligned}
					&\lim_{N\to \infty}\CL\left[\left(x_1^{[N]}(s),y_{0,1}^{[N]}(s),\bar{ \Theta}^{[N]}(Ns),
					\Theta^{[N]}_{y_{1,1}}(Ns)\right)_{s>0}\right]\\
					&=\CL\left[\left(x^\eff_1(s),x^\eff_1(s),x^\eff_1(s),y_1^\eff(s)\right)_{s>0}\right]
					\text{ in the Meyer-Zheng topology.}
				\end{aligned}
			\end{equation}
			Applying Lemma~\ref{lem95}, we get the claim.
		\end{proof}
		
		\paragraph{Step 6: Proof of the two-colour mean-field finite-systems scheme.}
		
		\begin{proof}
			The proof of Proposition~\ref{P.finsysmf2}(a) follows directly from Lemma~\ref{lemlimev}. The proof of Proposition~\ref{P.finsysmf2}(b) is a consequence of Proposition~\ref{P.finsysmf2}(d). The proof of Prosition~\ref{P.finsysmf2}(c) follows from Proposition~\ref{P.finsysmf2}(a) by applying Proposition~\ref{p.estima}. The proof of Proposition \eqref{P.finsysmf2}(d) follows by the same argument as used in the proof Proposition~\ref{P.finsysmf}(c) in Section~\ref{step4}. In this argument we have to replace the two-component system $Z^{[N]}(Ns+t)=(X^{[N]}(Ns+t),Y^{[N]}(Ns+t))_{t\geq 0}$ by the three-component system $Z^{[N]}(Ns+t)=(X^{[N]}(Ns+t),Y_0^{[N]}(Ns+t),Y_1^{[N]}(Ns+t))_{t\geq 0}$ and use the infinite system defined in \ref{gh45a2binf} instead of the infinite system defined in \eqref{gh5inf}. We now use the two-dimensional transition kernel in \eqref{ma3}, which controls the transition probabilities of the two-dimensional process $(\bar{\Theta}^{(1)}(s), \Theta_{y_1}^{(1)}(s))_{s>0}$, instead of the one-dimensional transition kernel in \eqref{ma4}.
		\end{proof}
		
		\section{Preparation: $N\to\infty$, two-level three-colour}
		\label{ss.tlhmfs}
		
		To get a proper understanding of how the migration comes into play on different space-time scales, we next look at a two-level mean-field system where the geographic space consists of two layers and the seed-bank consist of three layers, corresponding three colours $0$, $1$, $2$. In Section~\ref{ss.tlhmfs*} we give the set-up of the two-level three-colour mean-field model.
		In Section~\ref{ss.org22} we give a scheme to prove the analysis of the two-level three-colour mean-field model. Finally, in Section~\ref{pmfs23} we prove the steps of the scheme given in Section~\ref{ss.org22}.
		
		\subsection{Two-level three-colour mean-field finite-systems scheme}
		\label{ss.tlhmfs*}
		
		We consider a restricted version of the SSDE in \eqref{moSDE} on the finite geographic space
		\begin{equation}
			[N^2]=\{0,1,\ldots,N^2-1\}, \qquad N \in \N.
		\end{equation} 
		This space should be interpreted as grouping the $N$-blocks consisting of $N$ colonies together, i.e.
		\begin{equation}
			[N^2]=\bigcup_{l=0}^{N-1}\{Nl,Nl+1,\cdots,Nl+N-1\}.
		\end{equation} 
		With this interpretation we can use the metric $d_{[N^2]}$ that is induced by the metric $d_{\Omega_N}$ on the hierarchical group $\Omega_N$ (recall \eqref{ultra}). The migration kernel $a^{\Omega_N}(\cdot,\cdot)$ is restricted to $[N^2]$ by setting all migration rates outside the 2-block equal to  $0$, i.e., $c_k=0$ for all $k\geq 2$. Hence the migration kernel is given by
		\begin{equation}
			a^{[N^2]}(i,j)= 1_{\{d_{[N^2]}(i,j)\leq 1\}}\frac{c_0}{N}+\frac{c_1}{N^3},
		\end{equation}
		where $c_0,c_1\in (0,\infty)$ are constants. The seed-bank of the restricted system consists of \emph{three colours}, labeled $0$   $1$ and $2$, with exchange rates given by $K_0e_0,e_0$,  $\frac{K_1e_1}{N},\frac{e_1}{N}$ and  $\frac{K_2e_2}{N^2},\frac{e_2}{N^2}$ respectively. The state space of the restricted system is
		\begin{equation}
			S = \mathfrak{s}^{[N^2]}, \qquad \mathfrak{s} = [0,1]\times [0,1]^3,
		\end{equation}
		and the restricted system is denoted by
		\begin{equation}
			\label{e7152twee}
			\begin{aligned}
				(Z^{[N^2]}(t))_{t\geq0}&=\Big(X^{[N^2]}(t),\left(Y_0^{[N^2]}(t),Y_1^{[N^2]}(t),Y_2^{[N^2]}(t)\right)\Big)_{t \geq 0},\\
				\Big(X^{[N^2]}(t),\left(Y_0^{[N^2]}(t),Y_1^{[N^2]}(t),Y_2^{[N^2]}(t)\right)\Big) 
				&= \Big(x^{[N^2]}_i(t), \left(y^{[N^2]}_{i,0}(t),y^{[N^2]}_{i,1}(t),y^{[N^2]}_{i,2}(t)\right)\Big)_{i \in [N^2]}.
			\end{aligned}
		\end{equation}
		The components of the restricted system $(Z^{[N^2]}(t))_{t\geq0}$ evolve according to the SSDE
		\begin{equation}
			\label{gh45a2twee}
			\begin{aligned}
				&\d x^{[N^2]}_i(t) = \frac{c_0}{N} \sum_{j \in [N^2]} 1_{\{d_{[N^2]}(i,j)\leq 1\}}[x^{[N^2]}_j(t) - x^{[N^2]}_i(t)]\, \d t\\
				&\qquad\qquad\qquad + \frac{c_1}{N^3} \sum_{j \in [N^2]} [x^{[N^2]}_j(t) - x^{[N^2]}_i(t)]\, \d t 
				+ \sqrt{g(x^{[N^2]}_i(t))}\, \d w_i (t)\\
				&\qquad\qquad\qquad + K_0 e_0\, [y^{[N^2]}_{i,0}(t)-x^{[N^2]}_{i}(t)]\,\d t\\
				&\qquad\qquad\qquad + \frac{K_1 e_1}{N}\, [y^{[N^2]}_{i,1}(t)-x^{[N^2]}_{i}(t)]\,\d t\\
				&\qquad\qquad\qquad + \frac{K_2 e_2}{N^2}\, [y^{[N^2]}_{i,2}(t)-x^{[N^2]}_{i}(t)]\,\d t,\\
				&\d y^{[N^2]}_{i,0}(t) = e_0\,[x^{[N^2]}_i(t)-y^{[N^2]}_{i,0}(t)]\, \d t,\\
				&\d y^{[N^2]}_{i,1}(t) = \frac{e_1}{N}\,[x^{[N^2]}_i(t)-y^{[N^2]}_{i,1}(t)]\, \d t, \\
				&\d y^{[N^2]}_{i,2}(t) = \frac{e_2}{N^2}\,[x^{[N^2]}_i(t)-y^{[N^2]}_{i,2}(t)]\, \d t, \qquad i \in [N^2],
			\end{aligned}
		\end{equation}
		which is a special case of \eqref{moSDE}. By \cite[Theorem 3.1]{SS80}, the SSDE in \eqref{gh45a2twee} is the unique solution. It is important to note that we can write the SSDE also
		\begin{equation}
			\label{gh45b2twee}
			\begin{aligned}
				&\d x^{[N^2]}_i(t) = c_0\left[\frac{1}{N} \sum_{j \in [N]_i}x^{[N^2]}_j(t) - x^{[N^2]}_i(t)\right]\, \d t\\
				&\qquad\qquad\qquad + \frac{c_1}{N}\left[\frac{1}{N^2} \sum_{j \in [N^2]}x^{[N^2]}_j(t) - x^{[N^2]}_i(t)\right]\,\d t 
				+ \sqrt{g(x^{[N^2]}_i(t))}\, \d w_i (t)\\
				&\qquad\qquad\qquad + K_0 e_0\, [y^{[N^2]}_{i,0}(t)-x^{[N^2]}_{i}(t)]\,\d t
				+ \frac{K_1 e_1}{N}\, [y^{[N^2]}_{i,1}(t)-x^{[N^2]}_{i}(t)]\,\d t\\
				&\qquad\qquad\qquad
				+ \frac{K_2 e_2}{N^2}\, [y^{[N^2]}_{i,2}(t)-x^{[N^2]}_{i}(t)]\,\d t,\\
				&\d y^{[N^2]}_{i,0}(t) = e_0\,[x^{[N^2]}_i(t)-y^{[N^2]}_{i,0}(t)]\, \d t,\\
				&\d y^{[N^2]}_{i,1}(t) = \frac{e_1}{N}\,[x^{[N^2]}_i(t)-y^{[N^2]}_{i,1}(t)]\, \d t,\\
				&\d y^{[N^2]}_{i,2}(t) = \frac{e_2}{N^2}\,[x^{[N^2]}_i(t)-y^{[N^2]}_{i,2}(t)]\, \d t, \qquad i \in [N^2],
			\end{aligned}
		\end{equation}
		where $[N]_i$ denotes the set of colonies in the 1-block around site $i\in[N^2]$. Therefore the migration term for a single colony in the two-level mean-field system can be interpreted as a drift towards the $1$-block average of the active population at rate $c_0$ and a drift towards the $2$-block average of the active population at rate $\frac{c_1}{N}$. We are interested in \eqref{gh45b2twee} on time scales $N^0$, $N$ and $N^2$. On time scale $N^0$ we will look at the single colonies, i.e., space-time scale $0$. On time scale $N$  we will look at the $1$-block averages, i.e., space-time scale $1$ and on time scale $N^2$ we will look at the $2$-block averages, i.e., space-time scale $2$.  In the sequel we will focus on site $0$, the 1-block around site $0$ and the 2-block around site $0$. We will suppress this site from the notation, but instead use subscripts $0$, $1$, $2$ to indicate when we look at a single colony, a 1-block average or a 2-block average. We will use the convention that in the subscript of a dormant population the first subscript denotes the colour and the second subscript denotes the level of the block, so $y_{0,1}$ is the $1$-block average around site $0$ of the dormant population with colour $0$, while $y_{1,0}$ is the $1$-dormant single colony at site $0$. Heuristically, we can read off the following results from the SSDE in \eqref{gh45b2twee}.
		
		\medskip\noindent
		$\bullet$ \textbf{On time scale $1=N^0$} (i.e., space-time scale $0$) in the limit as $N\to\infty$, the  colour-$1$ dormant population and the colour-$2$ dormant population do not yet move. Hence
		\begin{equation}
			\big(y^{[N^2]}_{1,0}(t_0),y^{[N^2]}_{2,0}(t_0)\big)_{t_0\geq0},
		\end{equation}
		converges as $N\to\infty$ to the constant processes on time scale $t_0$. Therefore the colour $1$-dormant population and the colour $2$-dormant population are both slow seed-banks on space-time scale $0$. The components $((x_0^{[N^2]}(t_0),y_{0,0}^{[N^2]}(t_0)))_{t_0 \geq 0}$ converge to i.i.d.\ copies of the single-colony McKean-Vlasov process in \eqref{SC}, where in the corresponding SSDE the parameters $e,K,c$ are replaced by $c_0,e_0,K_0$ and $E=1$. So, on time scale $1$ we only see the colour $0$-dormant population evolve. Therefore the colour-$0$ dormant population is the \emph{effective seed-bank} on time scale $t_0$. The process
		\begin{equation}
			\label{ef}
			(z^{[N^2]}_0(t_0))_{t_0\geq 0}=(x^{[N^2]}_0(t_0),y^{[N^2]}_{0,0}(t_0))_{t_0\geq 0},
		\end{equation}
		will be called \emph{the single colony effective process.}
		
		\medskip\noindent
		$\bullet$ \textbf{On time scale $N^1$} (i.e., space-time scale $1$), we look at the averages 
		\small
		\begin{equation}
			\label{gh412twee}
			\begin{aligned}
				&(z_1^{[N^2]}(t_1))_{t_1>0}=\left(x_1^{[N^2]}(t_1),\left( y_{0,1}^{[N^2]}(t_1),y_{1,1}^{[N^2]}(t_1),
				y_{2,1}^{[N^2]}(t_1)\right)\right)_{t_1 > 0}\\
				&=\left( \frac{1}{N} \sum_{i \in [N]} x^{[N^2]}_i(Nt_1), \left( \frac{1}{N} \sum_{i \in [N]} y^{[N^2]}_{i,0}(Nt_1), 
				\frac{1}{N} \sum_{i \in [N]} y^{[N^2]}_{i,1}(Nt_1),\frac{1}{N} \sum_{i \in [N]} y^{[N^2]}_{i,2}(Nt_1)\right)\right)_{t_1 > 0}.
			\end{aligned}
		\end{equation} 
		\normalsize
		(Recall Remark~\ref{rem:not2} to appreciate the notation.) We use the lower index $1$ to indicate that the average is the analogue of the 1-block average defined in \eqref{blockav}. Using \eqref{gh45a2twee}, we see that the dynamics of the system in \eqref{gh412twee} is given by the SSDE
		\begin{equation}
			\label{mfevolve3}
			\begin{aligned}
				\d x_1^{[N^2]}(t_1)&=c_1\left[\frac{1}{N^2}\sum_{j\in[N^2]}x_j(Nt_1)-x_1(t_1)\right]\d t_1
				+\sqrt{\frac{1}{N}\sum_{i\in[N]}g(x_i(Nt_1))}\,\d w(t_1)\\
				&\qquad\qquad+NK_0e_0\left[y_{0,1}^{[N^2]}(t_1)-x_1^{[N^2]}(t_1)\right]\d t_1\\
				&\qquad\qquad+K_1e_1\left[y_{1,1}^{[N^2]}(t_1)-x_1^{[N^2]}(t_1)\right]\d t_1\\
				&\qquad\qquad+\frac{K_2e_2}{N}\left[y_{2,1}^{[N^2]}(t_1)-x_1^{[N^2]}(t_1)\right]\d t_1,\\
				\d y_{0,1}^{[N^2]}(t_1)&=Ne_0\left[x_1^{[N^2]}(t_1)-y_{0,1}^{[N^2]}(t_1)\right]\d t_1,\\
				\d y_{1,1}^{[N^2]}(t_1)&=e_1\left[x_1^{[N^2]}(t_1)-y_{1,1}^{[N^2]}(t_1)\right]\d t_1,\\
				\d y_{2,1}^{[N^2]}(t_1)&=\frac{e_2}{N}\left[x_1^{[N^2]}(t_1)-y_{1,1}^{[N^2]}(t_1)\right]\d t_1.
			\end{aligned}
		\end{equation} 
		In the limit $N\to\infty$ we expect that the colour $2$-dormant population does not move, since it only interacts with the active population at rate $\frac{e_2}{N}$. Therefore we expect $(y_{2,1}^{[N^2]}(t))_{t>0}$ to converge to a constant process and hence we say that the colour $2$-dormant population behaves like a \emph{slow seed-bank}. The colour $1$-dormant population, however, has a non-trivial interaction with the active population and therefore is the \emph{effective seed-bank} on space-time scale $1$. The colour $0$-dormant population has, in the limit as $N\to\infty$, an infinitely strong interaction with the active population. Therefore we expect that, in the limit as $N\to\infty$, its path becomes rougher and rougher at rarer and rarer times.  We will need to use the \emph{Meyer-Zheng topology} to prove that 
		\begin{equation}
			\lim_{N\to\infty} y_{0,1}^{[N^2]}(t_1) = \lim_{N\to\infty}x_1^{[N^2]}(t_1) \text{ for most } t_1.
		\end{equation}
		Therefore the colour $0$-dormant population equalizes with the active population, due to its infinitely strong interaction with the active population. Hence at space-time scale $1$, the colour $0$-dormant population behaves like a \emph{fast seed-bank.} If we look at the active population, then we see that it feels a drift towards the $2$-block average of the active population, and resamples at a rate that is the 1-block average of the resampling rates in the single colonies. Furthermore, in the limit as $N\to\infty$, it feels an infinitely fast drift towards the colour $0$-dormant population, has a non-trivial interaction with the colour $1$-dormant population, and its interaction with the colour $2$-dormant population cancels out. As long as we focus on the combination 
		$$
		\frac{x_1^{[N^2]}(t_1)+K_0y_{0,1}^{[N^2]}(t_1)}{1+K_0},
		$$
		we see that the colour-$0$ terms with the factor $N$ in front cancel out. This will allow us to do most of the analysis in the path space topology, without using the Meyer-Zheng topology. The process 
		$$
		\left(\frac{x_1^{[N^2]}(t_1)+K_0y_{0,1}^{[N^2]}(t_1)}{1+K_0},y^{[N^2]}_{1,1}(t_1)\right)_{t_1>0}
		$$ 
		will therefore be called the\emph{ effective process}.
		
		\medskip\noindent
		$\bullet$ \textbf{On time scale $N^2$} (i.e., space-time scale $2$) we look at the equivalent of the $2$-block averages in \eqref{blockav}, 
		\begin{equation}
			\label{gh412drie}
			\begin{aligned}
				&\left(x_2^{[N^2]}(t_2),(y_{0,2}^{[N^2]}(t_2),y_{1,2}^{[N^2]}(t_2),y_{2,2}^{[N^2]}(t_2))\right)_{t_2 > 0}\\
				&= \Bigg(\frac{1}{N^2} \sum_{i \in [N^2]} x^{[N^2]}_i(N^2t_2),\\
				&\qquad\qquad \Bigg( \frac{1}{N^2} \sum_{i \in [N^2]} y^{[N^2]}_{i,0}(N^2t_2), 
				\frac{1}{N^2} \sum_{i \in [N^2]} y^{[N^2]}_{i,1}(N^2t_2),\frac{1}{N^2} 
				\sum_{i \in [N^2]} y^{[N^2]}_{i,2}(N^2t_2)\Bigg)\Bigg)_{t_2 > 0},
			\end{aligned}
		\end{equation}
		which evolves according to the SSDE
		\begin{equation}
			\label{mfevolveba2}
			\begin{aligned}
				\d x_2^{[N^2]}(t_2)&=\sqrt{\frac{1}{N^2}\sum_{i\in[N^2]}g(x_i(N^2t_2))}\,\d w(t_2)\\
				&\qquad\qquad+N^2K_0e_0\left[y_{0,2}^{[N^2]}(t_2)-x_2^{[N^2]}(t_2)\right]\d t_2\\
				&\qquad\qquad +NK_1e_1\left[y_{1,2}^{[N^2]}(t_2)-x_2^{[N^2]}(t_2)\right]\d t_2\\
				&\qquad\qquad
				+K_2e_2\left[y_{2,2}^{[N^2]}(t_2)-x_2^{[N^2]}(t_2)\right]\d t_2,\\
				\d y_{0,2}^{[N^2]}(t_2)&=N^2e_0\left[x_2^{[N^2]}(t_2)-y_{0,2}^{[N^2]}(t_2)\right]\d t_2,\\
				\d y_{1,2}^{[N^2]}(t_2)&=Ne_1\left[x_2^{[N^2]}(t_2)-y_{1,2}^{[N^2]}(t_2)\right]\d t_2,\\
				\d y_{2,2}^{[N^2]}(t_2)&=e_2\left[x_2^{[N^2]}(t_2)-y_{2,2}^{[N^2]}(t_2)\right]\d t_2.
			\end{aligned}
		\end{equation}	
		In this case we see that migration in the active component cancels out and the resampling rate is given by the average over the complete population. In the limit as $N\to\infty$, we see that the active population interacts at an infinitely fast rate with the $0$-dormant population as well as with the colour $1$-dormant population. Hence both the colour $0$ and the colour $1$ seed-banks are fast seed-banks and we expect equalisation of the active population and the colour $0$-dormant population and the colour $1$-dormant population in Meyer-Zheng topology. The active population, in the limit as $N\to\infty$, has a non-trivial interaction with the colour $2$-dormant population, and hence the colour $2$-dormant population is the \emph{effective seed-bank} on time scale $N^2$.  Looking at the quantity 
		\be
		\label{mo}
		\frac{x_2^{[N^2]}(t_2)+K_0y_{0,2}^{[N^2]}(t_2)+K_1y_{1,2}^{[N^2]}(t_2)}{1+K_0+K_1},
		\ee 
		for which we find\
		\begin{equation}
			\begin{aligned}
				&\d\left[ \frac{x_2^{[N^2]}(t_2)+K_0y_{0,2}^{[N^2]}(t_2)+K_1y_{1,2}^{[N^2]}(t_2)}{1+K_0+K_1}\right]\\
				&=\frac{1}{1+K_0+K_1}\sqrt{\frac{1}{N^2}\sum_{i\in[N^2]}g(x_i(N^2t_2))}\,\d w(t_2)
				+K_2e_2\left[y_{2,2}^{[N^2]}(t_2)-x_2^{[N^2]}(t_2)\right]\d t_2,
			\end{aligned}
		\end{equation}
		we see that the infinite rates cancel out. We will call
		\begin{equation}
			\left(\frac{x_2^{[N^2]}(t_2)+K_0y_{0,2}^{[N^2]}(t_2)+K_1y_{1,2}^{[N^2]}(t_2)}{1+K_0+K_1},y_{2,2}^{[N^2]}(t_2)\right)_{t_2>0}
		\end{equation} 
		\emph{the effective process.} Using the effective process we can analyse our system in path space.

		\paragraph{$\blacktriangleright$ Scaling limit.}
		
		Let $(z_0(t))_{t\geq 0}=(x_0(t),(y_{0,0}(t),y_{1,0}(t),y_{2,0}(t)))_{t\geq 0}$ be the process evolving according to
		\begin{equation}
			\label{z0}
			\begin{aligned}
				&\d x_0(t) =  c_0\, [\theta - x_0(t)]\, \d t  + \sqrt{g(x_0(t))}\, \d w (t)\\
				&\qquad\qquad + K_0 e_0\, [y_{0,0}(t)-x_{0}(t)]\,\d t,\\
				&\d y_{0,0}(t) = e_0\,[x_0(t)-y_{0,0}(t)]\, \d t,\\
				& y_{1,0}(t) =y_{1,0},\\
				& y_{2,0}(t) =y_{2,0},
			\end{aligned}
		\end{equation}
		where $\theta\in[0,1]$, $y_{1,0}\in[0,1]$ and $y_{2,0}\in[0,1]$. The process $(z_0(t))_{t\geq 0})$ will be the limiting process for the single colonies. The corresponding single colony effective processes are given by
		\begin{equation}
			\label{m12ch}
			\begin{aligned}
				\d x^{\eff}_0(t) &=  c_0 [\theta - x^{\eff}_0(t)]\, \d t 
				+ \sqrt{g(x^{\eff}_0(t))}\, \d w (t) + K_0 e_0\, [y^{\eff}_{0,0}(t)-x^{\eff}_0(t)]\,\d t,\\
				\d y^{\eff}_{0,0}(t) &= e_0\,[x^{\eff}_0(t)-y^{\eff}_{0,0}(t)]\, \d t,\qquad i\in\N_0,
			\end{aligned}
		\end{equation}
		where $\theta\in[0,1]$. By \cite{YW71}, \eqref{z0} and \eqref{m12ch} have a unique strong solution. Like for the one-colour mean-field finite-systems scheme, we need the following list of ingredients to formally state the multi-scale analysis:
		\begin{enumerate}
			\item 
			For positive times $t>0$, we define the following \emph{$1$-block estimators} for the finite system:
			\begin{equation}
				\label{am6}
				\begin{aligned}
					\bar{\Theta}^{(1) ,[N^2]}(t)&=\frac{1}{N}\sum_{i\in[N]}\frac{ x^{[N^2]}_i(t)+K_0 y^{[N^2]}_{i,0}(t)}{1+K_0},\\
					\Theta^{(1) ,[N^2]}_x(t)&=\frac{1}{N}\sum_{i\in[N]} x^{[N^2]}_{i}(t),\\
					\Theta^{(1) ,[N^2]}_{y_0}(t)&=\frac{1}{N}\sum_{i\in[N]} y^{[N^2]}_{i,0}(t),\\
					\Theta^{(1) ,[N^2]}_{y_1}(t)&=\frac{1}{N}\sum_{i\in[N]} y^{[N^2]}_{i,1}(t),\\
					\Theta^{(1) ,[N^2]}_{y_2}(t)&=\frac{1}{N}\sum_{i\in[N]} y^{[N^2]}_{i,2}(t).
				\end{aligned}
			\end{equation}
			We abbreviate
			\begin{equation}
				\label{1113}
				\begin{aligned}
					{\bf\Theta}^{(1),[N^2]}(t)
					&=\left({\Theta}_x^{(1),[N^2]}(t),\left(\Theta^{(1),[N^2]}_{y_0}(t),
					\Theta^{(1),[N^2]}_{y_1}(t),\Theta^{(1),[N^2]}_{y_2}(t)\right)\right),\\
					{\bf\Theta}^{\aux,(1),[N^2]}(t)
					&=\left(\bar{\Theta}^{(1),[N^2]}(t),\Theta^{(1),[N^2]}_{y_1}(t),
					\Theta^{(1),[N^2]}_{y_2}(t)\right),\\
					{\bf\Theta}^{\eff,(1),[N^2]}(t)
					&=\left(\bar{\Theta}^{(1),[N^2]}(t),\Theta^{(1),[N^2]}_{y_1}(t)\right).
				\end{aligned}
			\end{equation}
			We call  $({\bf\Theta}^{(1),[N^2]}(t))_{t>0}$ the \emph{$1$-block estimator process}, $({\bf\Theta}^{\aux,(1),[N^2]}(t))_{t>0}$ the \emph{auxiliary $1$-block estimator process} and $({\bf\Theta}^{\eff,(1),[N^2]}(t))_{t>0}$ the \emph{effective $1$-block estimator process}. The auxiliary $1$-block estimator will be useful in the proofs. For $t>0$, we define the following \emph{$2$-block estimators} for the finite system:
			\begin{equation}
				\label{am43}
				\begin{aligned}
					\bar\Theta^{(2) ,[N^2]}(t) 
					&=\frac{1}{N^2}\sum_{i\in[N^2]}\frac{ x^{[N^2]}_i(t)+K_0 y^{[N^2]}_{i,0}(t)
						+K_1 y^{[N^2]}_{i,1}(t)}{1+K_0+K_1},\\
					\Theta^{(2) ,[N^2]}_x(t)
					&=\frac{1}{N^2}\sum_{i\in[N^2]} x^{[N^2]}_{i}(t),\\
					\Theta^{(2) ,[N^2]}_{y_0}(t)
					&=\frac{1}{N^2}\sum_{i\in[N^2]} y^{[N^2]}_{i,0}(t),\\
					\Theta^{(2) ,[N^2]}_{y_1}(t)
					&=\frac{1}{N^2}\sum_{i\in[N^2]} y^{[N^2]}_{i,1}(t),\\
					\Theta^{(2) ,[N^2]}_{ y_2}(t)&=\frac{1}{N^2}\sum_{i\in[N^2]} y^{[N^2]}_{i,2}(t).
				\end{aligned}
			\end{equation}
			
			We abbreviate
			\begin{equation}
				\label{1114}
				\begin{aligned}
					{\bf\Theta}^{(2),[N^2]}(t)
					&=\left({\Theta}_x^{(2),[N^2]}(t),\left(\Theta^{(2),[N^2]}_{y_0}(t),\Theta^{(2),[N^2]}_{y_1}(t),
					\Theta^{(2),[N^2]}_{y_2}(t)\right)\right),\\
					{\bf\Theta}^{\eff,(2),[N^2]}(t)
					&=\left(\bar{\Theta}^{(2),[N^2]}(t),\Theta^{(2),[N^2]}_{y_1}(t)\right).
				\end{aligned}
			\end{equation}
			We call $({\bf\Theta}^{\eff,(2),[N^2]}(t))_{t>0}$ the \emph{effective $2$-block estimator process} and $({\bf\Theta}^{(2),[N^2]}(t))_{t>0}$ as the \emph{$2$-block estimator process}.
			
			\item 
			The \emph{time scale} $N$ for which $\CL[\bar\Theta^{(1),[N^2]} (Nt_1-L(N))-\bar\Theta^{(1),[N^2]}(Nt_1)]=\delta_0$ for all $L(N)$ such that $L(N)\to\infty$ and $\lim_{N \to \infty} L(N)/N=0$, but not for $L(N)=N$. In words, $N$ is the time scale on which $\bar\Theta^{(1),[N^2]}(\cdot)$ starts evolving, i.e., $(\bar\Theta^{(1),[N^2]}(Nt_1))_{t_1>0}$ is no longer a fixed process. When we use time scale  $N$, we will use $t_1$ as a time index, which indicates the ``faster time scale". For the ``slow time scale" we use $t_0$ as time index.
			
			The \emph{time scale} $N^2$ for which $\CL[\bar\Theta^{(2),[N^2]} (N^2t_2-L(N)N)-\bar\Theta^{(2),[N^2]}(N^2t_2)]=\delta_0$ for all $L(N)$ such that $L(N)\to\infty$ and $\lim_{N \to \infty} L(N)/N=0$, but not for $L(N)=N$. In words, $N^2$ is the time scale on which $\bar\Theta^{(2),[N^2]}(\cdot)$ starts evolving, i.e., $(\bar\Theta^{(2),[N^2]}(N^2t_2))_{t_2>0},$ is no longer a fixed process. When we use time scale  $N^2$, we will use $t_2$ as a time index, which indicates the ``fastest time scale". 
			
			\item 
			The \emph{invariant measure} for the evolution of a single colony in \eqref{z0}, written   
			\begin{equation}
				\label{singcoleq22}
				\Gamma_{\theta,{\bf y_0}}^{(0)},\qquad {\bf y_0}=(\theta,y_{1,0},y_{2,0}),
			\end{equation}
			and the invariant measure of the level-$0$ effective process evolving according to \eqref{m12ch}, written
			\begin{equation}
				\label{001}
				\Gamma_\theta^{\eff,(0)}.
			\end{equation}
			
			\item 
			The renormalisation transformation $\CF\colon\,\CG\to\CG$,
			\begin{equation}
				\label{gh42bb}
				(\CF g)(\theta) = \int_{[0,1]^2} g(x)\,\Gamma^{\eff,(0)}_{\theta}(\d x, \d y_0), \quad \theta \in [0,1],
			\end{equation} 
			where $\Gamma_{\theta}^{\eff, (0)}$ is the equilibrium measure in \eqref{001}. Note that this is the same transformation as defined in \eqref{renor}, but defined for the truncated system. Later we will study iterates of the renormalisation transformation. Therefore we will write $\CF^{(1)} g=\CF g$, to indicate that we apply the renormalisation transformation only once.
			\item 
			The limiting $1$-block process is given by $(z_1(t))_{t> 0})=(x_1(t),(y_{0,1}(t),y_{1,1}(t),y_{2,1}(t)))_{t> 0}$ and evolves according to
			\begin{equation}
				\label{z1}
				\begin{aligned}
					\d x_1(t) &=  \frac{1}{1+K_0}\Bigg[c_1 [\theta - x_1(t)]\, \d t 
					+ \sqrt{(\CF^{(1)} g)(x_1(t))}\, \d w (t)\\ 
					&\qquad + K_1 e_1\, [y_{1,1}(t)-x_{1}(t)]\,\d t\Bigg],\\
					y_{0,1}(t) &= x_1(t),\\
					\d y_{1,1}(t) &= e_1\,[x_1(t)-y_{1,1}(t)]\, \d t,\\
					y_{2,1}(t) &= y_{2,1},
				\end{aligned}
			\end{equation}
			where $\theta\in[0,1]$, and $y_{2,1}\in[0,1]$, and $\CF^{(1)}$ is the renormalisation transformation defined in \eqref{gh42bb}. The limiting $1$-block process for the auxiliary estimator process is given by $(z_1^\aux(t))_{t> 0}=(x^\aux_1(t),y^\aux_{1,1}(t),y^\aux_{2,1}(t))_{t> 0}$ and evolves according to
			\begin{equation}
				\label{m1chb}
				\begin{aligned}
					\d x^{\aux}_1(t) &= \frac{1}{1+K_0}\Bigg[ c_1 [\theta - x^{\aux}_1(t)]\, \d t 
					+ \sqrt{(\CF^{(1)}g)(x^{\aux}_1(t))}\, \d w (t)\\
					&\qquad + K_1 e_1\, [y^{\aux}_{1,1}(t)-x^{\aux}_1(t)]\,\d t\Bigg]\,,\\
					\d y^{\aux}_{1,1}(t) &= e_1\,[x^{\aux}_1(t)-y^{\aux}_{1,1}(t)]\,\d t,\\
					y^\aux_{2,1}(t) &=y_{2,1},
				\end{aligned}
			\end{equation}
			for $\theta\in[0,1]$. The auxiliary estimator process turns out to be important in the next section. The effective limiting $1$-block process is given by $(z_1^\eff(t))_{t> 0})=(x^\eff_1(t),y^\eff_{1,1}(t))_{t> 0}$ and evolves according to
			\begin{equation}
				\label{m12chb}
				\begin{aligned}
					\d x^{\eff}_1(t) &= \frac{1}{1+K_0}\Bigg[ c_1 [\theta - x^{\eff}_1(t)]\, \d t 
					+ \sqrt{(\CF^{(1)}g)(x^{\eff}_1(t))}\, \d w (t)\\ 
					&\qquad + K_1 e_1\, [y^{\eff}_{1,1}(t)-x^{\eff}_1(t)]\,\d t\Bigg],\\
					\d y^{\eff}_{1,1}(t) &= e_1\,[x^{\eff}_1(t)-y^{\eff}_{1,1}(t)]\, \d t,
				\end{aligned}
			\end{equation}
			for $\theta\in[0,1]$. By \cite{YW71}, \eqref{z1}, \eqref{m1chb} and \eqref{m12chb} have a unique strong solution.
			\item 
			The \emph{invariant measure} of the infinite system in \eqref{z1}, written
			\begin{equation}
				\label{singcoleq2b}
				\Gamma_{\theta,y_1}^{(1)},\qquad y_1=(\theta,\theta,y_{2,1}),
			\end{equation}
			and the invariant measures of the level-$1$ limiting estimator process evolving according to \eqref{m1chb} and the level-$1$ effective process evolving according to \eqref{m12chb},
			\begin{equation}
				\label{002}
				\gls{gammaaux},\, \Gamma_\theta^{\eff,(1)}.
			\end{equation}
			\item  
			The first iteration of the renormalisation transformation,
			\begin{equation}
				\label{gh42bc}
				(\CF^{(2)} g)(\theta) = \int_{[0,1]^2} (\CF g)(x)\,\Gamma^{\eff,(1)}_{\theta}(\d x, \d y_1), \quad \theta \in [0,1].
			\end{equation}
			Hence
			\begin{equation}
				(\CF^{(2)}g)(\theta)=\int_{[0,1]^2} \Gamma^{\eff,(1)}_\theta(\d u, \d v)
				\int_{[0,1]^2} g(x)\,\Gamma^{\eff,(0)}_u(\d x, \d y).
			\end{equation} 
			\item 
			The limiting $2$-block process $(z_2(t))_{t>0}=(x_2(t),(y_{0,2}(t),y_{1,2}(t),y_{2,2}(t)))_{t> 0}$ evolves according to
			\begin{equation}
				\label{gh432c}
				\begin{aligned}
					\d x_2^{}(t) &=  \frac{1}{1+K_0+K_1} \left[\sqrt{(\CF^{(2)} g)\big(x_2^{}(t)\big)}\, \d w(t)
					+ K_2 e_2\left[y_{2,2}^{}(t)-x_2^{}(t)\right]\d t\right],\\
					y_{0,2}^{}(t) &=  x_2^{} (t),\\
					\d y_{1,2}^{}(t) &= x_2^{} (t),\\
					\d y_{2,2}^{}(t) &= e_2\left[x_2^{} (t)-y_{2,2}^{}(t)\right]\,\d t,
				\end{aligned}
			\end{equation}
			where $\CF^{(2)} g$ is defined as in \eqref{gh42bc}. The limiting effective $2$-block process on space-time scale $2$ is $(z^\eff_2(t))_{t> 0})=(x^\eff_2(t),y^\eff_{2,2}(t))_{t> 0}$ and evolves according to
			\begin{equation}
				\label{m64c}
				\begin{aligned}
					\d x^{\eff}_2(t) &=  \frac{1}{1+K_0+K_1}\left[
					\sqrt{(\CF^{(2)} g)(x^{\eff}_2(t))}\, \d w (t) + K_2 e_2\, [y^{\eff}_{2,2}(t)-x^{\eff}_{2}(t)]\,\d t\right],\\
					\d y^{\eff}_{2,2}(t) &= e_2\,[x^{\eff}_2(t)-y^{\eff}_{2,2}(t)]\, \d t.
				\end{aligned}
			\end{equation}
		\end{enumerate}
		
		We are now ready to state the scaling limit for the evolution of the averages in \eqref{gh412}.
		
		\begin{proposition}{{\bf [Mean-field: two-level three-colour finite-systems scheme]}}
			\label{P.finsysmf2lev}
			Suppose that $\mu(0)=\mu^{\otimes [N^2]}$ for some $\mu\in \CP\left([0,1]\times[0,1]^2\right)$. Let 
			\begin{equation}
				\begin{aligned}
					&\vartheta_0 = \E^{\mu}\left[\frac{x+K_0y_0}{1+K_0}\right], \quad
					\vartheta_1 = \E^{\mu}\left[\frac{x+K_0y_0+K_1y_1}{1+K_0+K_1}\right],\\
					&\theta_{y_1} = \E^{\mu}\left[y_1\right],\quad \theta_{y_2} = \E^{\mu}\left[y_2\right].
				\end{aligned}
			\end{equation} 
			and recall the limiting process $(z_2(t))_{t>0}$ in \eqref{gh432c} and the limiting process $(z_1(t))_{t>0}$ in \eqref{z1}.
			Assume for the $2$-dormant $1$-blocks that 
			\begin{equation}
				\label{as2}
				\lim_{N\to\infty}\CL\left[Y_{2,1}^{[N^2]}(Nt_2)\Big|{\bf \Theta}^{(2),[N^2]}(N^2t_2)\right] = P^{z_2(t_2)}, 
			\end{equation}
			and for the $2$-dormant $0$-blocks (= single colonies) that 
			\begin{equation}
				\label{as32}
				\lim_{N\to\infty}\CL\left[Y_{2,0}^{[N^2]}(Nt_2+Nt_1)\Big|{\bf \Theta}^{\eff,(1),[N^2]}(N^2t_2+Nt_1)\right] 
				= P^{z_1(t_1)}. 
			\end{equation}	 
			Then the following hold:
			\begin{itemize}
				\item[(a)]
				For the effective $2$-block estimator process defined in \eqref{1114},
				\begin{equation}
					\label{m12ab}
					\lim_{N\to\infty} \CL \left[\left({\bf\Theta}^{\eff,(2),[N^2]}(N^2t_2)\right)_{t_2 > 0}\right] 
					= \CL \left[\left(z_2^\eff(t_2)\right)_{t_2 > 0}\right],
				\end{equation}
				where the limit is determined by the unique solution of the SSDE \eqref{m64c} with initial state
				\begin{equation}
					\begin{aligned}
						z_2^\eff(0)=\left(x_2^\eff(0),y^\eff_2(0)\right)= \left(\vartheta_1,\theta_{y_2}\right).
					\end{aligned}
				\end{equation}
				\item[(b)]	
				For the effective $1$-block estimator process defined in \eqref{1113}, 
				\begin{equation}
					\label{m5}
					\begin{aligned}
						\lim_{N\to\infty} \CL\left[\left({\bf\Theta}^{\eff,(1) ,[N^2]}(N^2t_2+Nt_1)\right)_{t_1 > 0}\,\right]
						= \CL\left[(z_1^{\eff}(t_1))_{t_1>0}\right], 
					\end{aligned}
				\end{equation}
				where, conditional on $x^{\eff}_2(t_2)=u$, the limit process is the unique solution of the SSDE in \eqref{m12chb} with $\theta$ replaced by $u$ and with initial measure $\Gamma_u^{\eff,(1)}$.
				\item[(c)]  
				For the single colony effective process defined in \eqref{ef}, 
				\begin{equation}
					\label{mef}
					\begin{aligned}
						\lim_{N\to\infty} \CL\left[\left(z_0^{\eff ,[N^2]}(N^2t_2+Nt_1+t_0)\right)_{t_0 \geq 0}\,\right]
						= \CL\left[(z_0^{\eff}(t_0))_{t_0\geq0}\right], 
					\end{aligned}
				\end{equation}
				where, conditional on $x_1^{\eff}(t_1)=v$, the limit process is the unique solution of the SSDE in \eqref{m12ch} with $\theta$ replaced by $v$ and with initial measure $\Gamma_v^{\eff,(0)}$.
				\item[(d)] 
				For the $2$-block estimator process defined in \eqref{1114}, 
				\begin{equation}
					\label{gh423}
					\begin{aligned}
						&\lim_{N\to\infty} \CL \left[\left({\bf \Theta}^{(2),[N^2]}(N^2t_2)\right)_{t_2 > 0}\right]
						= \CL \left[\left(z_2^{}(t_2)\right)_{t_2 > 0}\right]\\
						&\text{in the Meyer-Zheng topology},
					\end{aligned}
				\end{equation}
				where the limit process is the unique solution of the SSDE in \eqref{gh432c} with initial state
				\begin{equation}
					\label{e8653}
					z_2 (0) = \left(\vartheta_1,(\vartheta_1,\vartheta_1,\theta_{y_2})\right). 
				\end{equation}		
				\item[(e)]
				Fix $t_2>0$. Assume~\eqref{as2}.	 
				Define
				\begin{equation}
					\label{mfkern1}
					\begin{aligned}
						\Gamma^{(1)}(t_2)=& \int_{[0,1]^4} S^{[2]}_{t_2}\bigl((\vartheta_1,(\vartheta_1,\vartheta_1,\theta_{y_2})), 
						\d (u_x,u_x,u_x,u_{y_{2,2}})\bigr)\\
						&\int_{[0,1]} P^{(u_x,u_x,u_x,u_{y_{2,2}})}(\d y_{2,1})\,\Gamma^{(1)}_{(u_x,(u_x,u_x,y_{2,1}))} \in \CP([0,1]^4),
					\end{aligned}
				\end{equation}
				where $\Gamma^{(1)}_{(u_x,(u_x,u_x,y_{2,1}))}$ is the equilibrium measure in \eqref{singcoleq2b} and $S^{[2]}_{t_2}((\vartheta_1,(\vartheta_1,\vartheta_1,\theta_{y_2})), \cdot)$ is the time-$t_2$  law of the limiting process $(z_2(t_2))_{t_2>0 }$ in \eqref{gh423}  starting from $(\vartheta_1,(\vartheta_1,\vartheta_1,\theta_{y_2}))  \in [0,1]\times [0,1]^3$. 
				
				Let $(z^{\Gamma^{(1)}(t_2)}(t_1))_{t_1 \geq 0}$ be the random process that conditioned on $z_2^{}(t_2)=(\theta,(\theta, \theta, y_{2,2}))$ moves according to \eqref{z1} with $\theta=\theta$ and $y_{2,1}(0)=y_{2,1}$ and with $z^{\Gamma^{(1)}(t_2)}(0)$ be drawn according to $\Gamma^{(1)}(t_2)$ (which is a mixture of random processes in equilibrium). Then for the $1$-block estimator process defined in \eqref{1113},
				\begin{equation}
					\label{1136}
					\begin{aligned}
						&\hspace{-0.3cm} \lim_{N\to\infty} \CL\left[\left({\bf \Theta}^{(1),[N^2]}(N^2t_2+Nt_1)\right)_{t_1 > 0}\right]
						= \CL \left[(z^{\Gamma^{(1)}(t_2)}(t_1))_{t_1 > 0}\right]\\
						&\qquad\qquad\qquad \text{ in the Meyer-Zheng topology}.
					\end{aligned}
				\end{equation}		
				\item[(f)]
				Let $z_1(t_1)$ be the limiting process obtained in (e). Assume~\eqref{as32}. Define, for $t_2 \in (0,\infty)$,
				\begin{equation}
					\label{mfkern2}
					\begin{aligned}
						\Gamma^{(0)}(t_2)
						&= \int_{[0,1]^4} \Gamma^{(1)}(t_2)(\d z_1)\,\int_{[0,1]} P^{z_1}(\d y_{2,0})\,\Gamma^{(0)}_{(x_1,(x_1,y_{1,1}.y_{2,0}))},
					\end{aligned}
				\end{equation}
				where $\Gamma^{(1)}(t_2)$ is as defined in \eqref{mfkern1}. Let $(z^{\Gamma^{(0)}(t_2)}(t_0))_{t_0 \geq 0}$ be the random process in \eqref{z0} with $z^{\Gamma^{(0)}(t_2)}(0)$ drawn according to $\Gamma^{(0)}(t_2)$ which is a mixture of random processes in equilibrium. Then
				\begin{equation}
					\label{1145}
					\begin{aligned}
						\lim_{N\to\infty} \CL\Big[\Big(z_0^{[N^2]}(N^2t_2+Nt_1+t_0)\Big)_{t_0 \geq 0}\Big] 
						= \CL \left[(z^{\Gamma^{(0)}(t_2)}(t_0))_{t_0 \geq 0}\right].
					\end{aligned}
				\end{equation}
			\end{itemize}
		\end{proposition}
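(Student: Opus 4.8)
The plan is to run the four-step abstract scheme of Sections~\ref{sec:finsysmf}--\ref{ss.pabstracts} once more, now simultaneously on the three time scales $1=N^0$, $N$, $N^2$ and with the three colours playing different roles on each scale, following the programme laid out in Section~\ref{ss.org22}, and to establish the six statements from the top of the hierarchy downwards. The organising principle, already visible in \eqref{gh45b2twee}, is that on time scale $N^j$ ($j=0,1,2$) a colour $m<j$ carries an exchange rate $\asymp N^{j-m}\to\infty$ and therefore equalises with the active population in the Meyer--Zheng topology; the colour $m=j$ carries an $O(1)$ rate and is the \emph{effective seed-bank}; and a colour $m>j$ carries a rate $\asymp N^{j-m}\to 0$ and is frozen. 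Accordingly I would first treat level $2$, then, conditionally on the level-$2$ state, level $1$, then, conditionally on the level-$1$ state, level $0$; the nested mixing measures $\Gamma^{(1)}(t_2)$ and $\Gamma^{(0)}(t_2)$ of \eqref{mfkern1}--\eqref{mfkern2} arise by composing the time-$t$ semigroups of the limiting effective diffusions with the frozen equilibria, just as $\nu_\theta(s)$ in \eqref{ma4} was built.

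\textbf{Level $2$.} For the effective $2$-block estimator $({\bf\Theta}^{\eff,(2),[N^2]}(N^2t_2))_{t_2>0}$ of \eqref{1114}, tightness in $\CC((0,\infty),[0,1]^2)$ follows exactly as in Step~1 of Section~\ref{pmfs2} from \cite[Proposition 3.2.3]{JM86}, because in the weighted combination \eqref{mo} the rates carrying factors $N$ and $N^2$ cancel; a stability estimate of the type \eqref{ma22b}--\eqref{ma22y} follows by the same optional-stopping argument. The analogue of Proposition~\ref{prop2} (proved through the seven lemmas of Step~3: convergence of the infinite system, continuity of the equilibrium as in Lemma~\ref{lemlip}, comparison of empirical averages, comparison of finite and infinite systems as in Lemma~\ref{l.comp2b2}, stability of the conserved-quantity estimator, uniformity of the ergodic theorem, coupling of finite systems) identifies the one-dimensional distributions at times $N^2t_2$ as a mixture of equilibria of the frozen-drift infinite system; the only new feature relative to Section~\ref{pmfs2} is that \emph{two} colours now have diverging rates, so the comparison lemmas force equalisation of $x$, $y_0$ and $y_1$, and the surviving resampling rate is $\CF^{(2)}g$, i.e.\ two iterations of the renormalisation transformation (well-posedness $\CF^{(1)}g,\CF^{(2)}g\in\CG$ being inherited from Lemma~\ref{lemlip}). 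The limiting evolution \eqref{m64c} then follows from the $\CD$-semi-martingale convergence argument of Lemma~\ref{lemlimev}, giving part~(a); the Meyer--Zheng upgrade to the full $2$-block estimator, part~(d), is obtained from Lemmas~\ref{lem91}, \ref{lem93}, \ref{lem94}, \ref{lem95} exactly as Proposition~\ref{p.estima} was deduced from Lemma~\ref{lemlev1a}.

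\textbf{Levels $1$ and $0$.} Conditional on $x_2^{\eff}(t_2)=u$, on time scale $N^2t_2+Nt_1$ the colour-$2$ population is frozen at its value at time $N^2t_2$, the active population feels a constant drift $c_1[u-\,\cdot\,]$, and the subsystem in colours $0,1$ is precisely the two-colour mean-field system of Section~\ref{s.finlevel} augmented by this frozen drift and with resampling function $\CF^{(1)}g$; the auxiliary estimator ${\bf\Theta}^{\aux,(1),[N^2]}$ of \eqref{1113} is used in the proof to separate the frozen colour-$2$ bookkeeping from the effective colour-$1$ dynamics. Thus part~(b) is Proposition~\ref{P.finsysmf2}(a) applied on this scale (after adding the constant drift, which only strengthens the comparison estimates), and part~(e) is its Meyer--Zheng/McKean--Vlasov counterpart, with the conditional law of the colour-$2$ dormant $1$-blocks supplied by assumption~\eqref{as2}. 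Descending once more: conditional on $x_1^{\eff}(t_1)=v$, on time scale $N^2t_2+Nt_1+t_0$ colours $1$ and $2$ are frozen and the single colonies form the one-colour mean-field infinite system \eqref{gh5inf} with $\theta$ replaced by $v$, so parts~(c) and~(f) follow from Proposition~\ref{P.finsysmf}, with assumption~\eqref{as32} supplying the conditional law of the colour-$2$ dormant single colonies.

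\textbf{Main obstacle.} The delicate point is not the combinatorics of fast/effective/frozen colours but the treatment of the \emph{slow seed-banks} once the origin of time is shifted to the correct scale. Freezing colour $2$ on scale $N$ (or colours $1,2$ on scale $N^0$) is legitimate only after the relevant block estimator has run long enough that the shifts $\sup_{0\le t\le L(N)}\lvert\bar\Theta(\cdot)-\bar\Theta(\cdot-t)\rvert$ vanish by the stability lemmas; but at that starting time the slow colour has \emph{already} interacted with the active population over the earlier, faster scales, so it is neither in its initial state nor equal to the current block average --- it sits at the value it acquired on the scale where it was itself effective. Making this precise requires the comparison-of-finite-and-infinite-systems lemma (the analogue of Lemma~\ref{l.comp2b2}) with a drift that is constant on the fast scale but was non-constant on the slower ones, together with the conditional-law hypotheses \eqref{as2}--\eqref{as32} to pin down the distribution of the frozen slow component. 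Checking that these hypotheses are consistent with the limiting dynamics produced at the higher levels --- so that the construction closes up --- is where the real work lies, and it is exactly the step that will later be recycled, with $k+1$ in place of $2$, in the full multi-level argument of Section~\ref{s.multilevel}.
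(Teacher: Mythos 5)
Your outline captures the right cast of lemmas (tightness via Joffe--M\'etivier, stability, comparison of finite and infinite systems, uniform ergodic theorem, coupling, Meyer--Zheng upgrade) and you correctly name the slow-seed-bank bookkeeping as the principal difficulty. But the top-down framing ``level $2$, then level $1$, then level $0$'' hides a genuine gap at the very first step, and the one hard lemma you gesture at is left without a proof technique.

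\textbf{Identification of the level-$2$ diffusion coefficient.} You assert that ``the comparison lemmas force equalisation of $x$, $y_0$ and $y_1$, and the surviving resampling rate is $\CF^{(2)}g$'', offering the analogue of Proposition~\ref{prop2} as justification. That Proposition, and its proof via Lemmas~\ref{lemerg}--\ref{lem:12a}, controls one-dimensional distributions of \emph{single colonies} given a frozen drift. It does not by itself tell you that $\frac{1}{N^2}\sum_{i\in[N^2]}g(x_i^{[N^2]}(N^2t_2))$ converges to $(\CF^{(2)}g)(\bar\Theta^{(2),[N^2]}(N^2t_2))$. To see $\CF^{(2)}g$ you need the \emph{nested} composition: write the global sum as $\frac{1}{N}\sum_{i}\big(\frac{1}{N}\sum_{j\in[N]_i}g(x_j)\big)$; show that each inner $1$-block sum converges to $(\CF^{(1)}g)(\bar\Theta_i^{(1)})$ given the local $1$-block average (that is a level-$0$ propagation-of-chaos statement); and then show that $\frac{1}{N}\sum_i(\CF^{(1)}g)(\bar\Theta_i^{(1)})\to(\CF^{(2)}g)(\bar\Theta^{(2)})$, which requires that the $1$-block averages themselves decouple and equilibrate given the $2$-block --- a level-$1$ propagation-of-chaos statement. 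The paper organises this as Proposition~\ref{ma77} (single colonies), then Proposition~\ref{lem:1blev} and Lemma~\ref{lem:aux} ($1$-blocks), then Lemma~\ref{lem2blcon} ($2$-blocks), i.e.\ the generator identification is strictly bottom-up. Your presentation reverses this order, and ``well-posedness inherited from Lemma~\ref{lemlip}'' does not supply the missing decoupling; as written you would be assuming the $1$-block McKean--Vlasov limit while proving it.

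\textbf{The slow-seed-bank lemma.} You correctly observe that a slow colour at time $\bar t$ ``sits at the value it acquired on the scale where it was itself effective'' and that this must be made precise; but the comparison lemma with piecewise-constant drift you invoke does not yield it. The paper's Lemma~\ref{lem:ds} proves $\lim_{N\to\infty}\bigl(y^{[N^2]}_{i,1}(\bar t)-\Theta^{(1),[N^2]}_{y_1,i}(\bar t)\bigr)=0$ a.s.\ by a random-walk coupling on the effective geographic space $[N^2]\times\{A,D_0,D_1,D_2\}$: two walks started from $(i,D_1)$ and $(j,D_1)$ with $i,j$ in the same $1$-block are coupled so that their wake/sleep transitions and jump times coincide, and once both have migrated at least once they are uniformly distributed over the $1$-block, so the difference of semigroups is bounded by $2\tilde\P\bigl((H^{[N^2]}_{\bar t})^c\bigr)$; the estimate then hinges on showing the number of activity cycles $C^{[N^2]}(\bar t)\to\infty$ a.s.\ (cf.\ \eqref{007}--\eqref{008}) and on a corresponding second-moment/Fubini estimate in \eqref{1056}--\eqref{1061}. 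Nothing in the abstract scheme you cite reproduces this; it is a separate probabilistic argument that has to be supplied before the conditional-law hypotheses \eqref{as2}--\eqref{as32} can be matched to the dynamics produced by the higher levels and the construction can ``close up'', as you put it.
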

		
		\begin{remark}
			{\rm Note that Proposition~\ref{P.finsysmf2lev}(f) does not depend on the choice of $t_1$, because $\Gamma^{(1)}(t_2)$ is already a mixture of equilibrium measures of the $1$-block process.} \hfill$\blacksquare$
		\end{remark}
		
		\begin{remark}
			{\rm Note that in Propostion~\ref{P.finsysmf2lev}(f) $\Gamma^{(0)}_{(x_1,(x_1,y_{1,1},y_{2,0}))}$ is the equilibrium measure of \eqref{z0} (see also \eqref{singcoleq22}), where $y_{1,0}=y_{1,1}$. This means that all colour $1$-dormant single colonies equal the current state of the colour $1$-dormant $1$-block. We say that given the state of the $1$-dormant $1$-block, the $1$-dormant single colonies become deterministic. This effect occurs once a slow seed-bank, in this case the colour $1$ seed-bank, is already in equilbrium on the space-time scale where it is effective, in this case space-time-scale $1$. Since we start at times $N^2t_2$, the $1$-dormant $1$-blocks are already in equilibrium. This will turn out to be the reason that the single colour $1$-dormant colonies are equal to the current value of the $1$-dormant $1$-block averages. Note that at time $N^2t_2$ the $2$-dormant $2$-blocks do not yet have reached equilibrium. Hence the colour $2$-dormant $1$-blocks and the colour $2$-dormant single colonies do not equal the instantaneous value of the $2$-dormant $2$-block averages. In the Section~\ref{ss.ssb} we will treat this effect in detail.}\hfill$\blacksquare$
		\end{remark}

		\subsection{Scheme for the two-level three-colour mean-field analysis.}
		\label{ss.org22}
		
		In this section we give a scheme to prove Proposition~\ref{P.finsysmf2lev}. The proof of the steps in the scheme will be written in Section~\ref{pmfs23}. To analyse the two-level hierarchical mean-field system we use the results obtained in Sections~\ref{sec:finsysmf}, \ref{ss.pabstracts} and \ref{pmfs2}. 
		
		The scheme for the two-level three-colour hierarchical mean-field system comes in 11 steps. Recall the estimators defined in \eqref{am6} and \eqref{am43}.
		\begin{enumerate}
			\item 
			Tightness of the effective $2$-block estimator processes
			\begin{equation}
				\left(\left({\bf\Theta}^{\eff,(2),[N^2]}(N^2t_2)\right)_{t_2 > 0}\right)_{N\in\N}
				=\left(\left(\bar\Theta^{(2),[N^2]}(N^2t_2),\Theta_{y_2}^{(2),[N^2]}(N^2t_2)\right)_{t_2 > 0}\right)_{N\in\N}.
			\end{equation} 
			\item 
			Stability property of the 2-block estimators, i.e., for  $L(N)$ such that $\lim_{N\to \infty}L(N)=\infty$ and $\lim_{N\to \infty} L(N)/N=0$,
			\begin{equation}
				\label{mb}
				\lim_{N\to\infty}\sup_{0 \leq t\leq L(N)}\left|\bar{\Theta}^{(2),[N^2]}(N^2t_2)
				-\bar{\Theta}^{(2),[N^2]}(N^2t_2-Nt)\right|=0\text{ in probability}
			\end{equation}
			and
			\begin{equation}
				\label{m3}
				\lim_{N\to\infty}\sup_{0 \leq t\leq L(N)}\left|\Theta_{y_2}^{(2),[N^2]}(N^2t_2)
				-\Theta_{y_2}^{(2),[N^2]}(N^2t_2-Nt)\right|=0 \text{ in probability. }
			\end{equation}
			\item 
			Tightness of the effective $1$-block estimator process (recall \eqref{1113}), 
			\begin{equation}
				\left(\left({\bf\Theta}^{\aux, (1),[N^2]}(N^2t_2+Nt_1)\right)_{t_1>0}\right)_{N\in\N}.
			\end{equation}
			\item 
			Stability property of $({\bf\Theta}^{\aux, (1),[N^2]}(N^2t_2+Nt_1))_{t_1>0}$, i.e., for $L(N)$ such that $\lim_{N\to \infty} L(N)=\infty$ and $\lim_{N\to \infty} L(N)/N=0$, for all $\epsilon>0$,
			\begin{equation}
				\label{m22}
				\begin{aligned}
					&\lim_{N\to\infty}\sup_{0 \leq t\leq L(N)}\left|\bar{\Theta}^{(1),[N^2]}(N^2t_2+Nt_1)
					-\bar{\Theta}^{(1),[N^2]}(N^2t_2+Nt_1-t)\right|=0\\
					&\qquad \text{ in probability},
				\end{aligned}
			\end{equation}
			\begin{equation}
				\label{mac}
				\begin{aligned}
					&\lim_{N\to\infty}\sup_{0 \leq t\leq L(N)}\left|\Theta_{y_1}^{(1),[N^2]}(N^2t_2+Nt_1)
					-\Theta_{y_1}^{(1),[N]}(N^2t_2+Nt_1-t)\right| = 0\\ 
					&\qquad \text{ in probability},
				\end{aligned}
			\end{equation}
			\begin{equation}
				\label{masc}
				\begin{aligned}
					&\lim_{N\to\infty}\sup_{0 \leq t\leq L(N)}\left|\Theta_{y_2}^{(1),[N^2]}(N^2t_2+Nt_1)
					-\Theta_{y_2}^{(1),[N]}(N^2t_2+Nt_1-t)\right| = 0\\ 
					&\qquad \text{ in probability}.
				\end{aligned}
			\end{equation}
			\item  
			Recall that there are $N$ 1-blocks in $[N^2]$. Since tightness of components implies tightness of the process, step 3 implies that the full $1$-block process 
			\begin{equation}
				\left(\left({\bf\Theta}_i^{\aux, (1),[N^2]}(N^2t_2+Nt_1)\right)_{t_1>0,\, i\in[N]}\right)_{N\in\N}
			\end{equation}
			is tight. From the tightness in steps 1 and 3 we can construct a subsequence $(N_k)_{k\in\N}$ along which 
			\begin{equation}
				\begin{aligned}
					&\lim_{k\to\infty}\CL\left[\left({\bf\Theta}^{\eff,(2),[N_k^2]}(N_k^2t_2)\right)_{t_2 > 0}\right],\\ &\lim_{k\to\infty}\CL\left[\left({\bf\Theta}_i^{\aux,(1),[N_k^2]}(N_k^2t_2+N_kt_1)\right)_{t_1 > 0,\, i\in[N_k]}\right]
				\end{aligned}
			\end{equation}
			both exists. Define the measure
			\begin{equation}
				\nu^{(0)}(t_2)=\prod_{i\in\N_0}\Gamma^{(0)}_{i}(t_2).
			\end{equation}
			Show that along the same subsequence the single components converge to the infinite system, i.e.,
			\begin{equation}
				\begin{aligned}
					\lim_{k\to\infty} \CL\Big[\Big(Z^{[N_k^2]}(N_k^2t_2+N_kt_1+t_0)\Big)_{t_0 \geq 0}\Big] 
					= \CL \left[(Z^{\nu^{(0)}(t_2)}(t_0))_{t_0 \geq 0}\right].
				\end{aligned}
			\end{equation}
			Here, $(Z^{\nu^{(0)}(t_2)}(t_0))_{t_0 \geq 0}$ is the process starting from $\nu^{(0)}(t_2)$ with components evolving according to \eqref{z0}, where $\theta$ is now a random variable that inherits its law from 
			\be{}
			\lim_{k\to\infty} \CL[({\bf\Theta}^{\aux,(1),[N_k^2]}(N_k^2t_2+N_kt_1))_{i\in[N_k^2]}],
			\ee 
			and, similarly, the laws of $y_{1,0}$ and $y_{2,0}$ in the limiting process $(Z^{\nu^{(0)}(t_2)}(t_0))_{t_0 \geq 0}$ are determined by \be{}
			\lim_{k\to\infty} \CL[({\bf\Theta}^{\aux,(1),[N_k^2]}(N_k^2t_2+N_kt_1))_{i\in[N_k^2]}].
			\ee   
			\item 
			Use the limiting evolution of the single colonies obtained in step 5 to identify the limiting $1$-block process along the same subsequence, i.e., identify the limit
			\begin{equation}
				\lim_{k\to\infty}\CL\left[\left({\bf\Theta}^{\aux,(1),[N_k^2]}(N_k^2t_2+N_kt_1)\right)_{t_1 > 0, i\in[N_k]}\right].
			\end{equation}
			\item  
			Identify the limit $\lim_{k\to\infty}\CL[({\bf\Theta}^{\eff,(2),[N_k^2]}(N_k^2t_2))_{t_2 > 0}]$ with the help of the limiting evolution of the single colonies obtained in step 5 and the limiting evolution of the full $1$-block process obtained in step 6.
			\item 
			Prove that the $1$-dormant single colonies at time $N^2t_2+Nt_1$ equal, in the limit as $N\to\infty$, the $1$-dormant $1$-block averages. The proof of this step shows how the evolution of the slow seed-banks must be analysed.
			\item 
			Show that the convergence in step 8, step 7 and step 5 actually holds along each subsequence. Therefore we obtain the limiting evolution of the single colonies, the auxiliary $1$-block process and the effective $2$-block process.
			\item
			Use the Meyer-Zheng topology to describe the limiting evolution of 
			\begin{equation}
				\begin{aligned}
					&\Big(\Theta_x^{(1),[N^2]}(N^2t_2+Nt_1),\Theta^{(1),[N^2]}_{y_0}(N^2t_2+Nt_1),
					\Theta^{(1),[N^2]}_{y_1}(N^2t_2+Nt_1),\\
					&\qquad \Theta^{(1),[N^2]}_{y_2}(N^2t_2+Nt_1)\Big)_{t_1>0}
				\end{aligned}
			\end{equation}
			and
			\begin{equation}
				\Big(\Theta_x^{(2),[N^2]}(N^2t_2),\Theta^{(2),[N^2]}_{y_0}(N^2t_2),\Theta^{(2),[N^2]}_{y_1}(N^2t_2),
				\Theta^{(2),[N^2]}_{y_2}(N^2t_2)\Big)_{t_2>0}.
			\end{equation}
			\item 
			Combine the above steps to complete the proof of Proposition~\ref{P.finsysmf2lev}.
		\end{enumerate} 
		
		\subsection{Proof of two-level three-colour mean-field finite-systems scheme}
		\label{pmfs23}
		
		In this section we prove the steps in the scheme given in Section~\ref{ss.org22}.
		
		\subsubsection{Tightness of the $2$-block estimators}
		
		In this section we prove step 1 of the scheme.
		
		\begin{lemma}{\bf[Tightness of the 2-block estimator]}
			\label{lem:t3}
			Let 
			\begin{equation}
				{\bf\Theta}^{\eff,(2),[N^2]}(N^2t_2)=(\bar\Theta^{(2),[N^2]}(N^2t_2),\Theta^{(2),[N^2]}_{y_2}(N^2t_2))
			\end{equation}
			be defined as in \eqref{am43}. Then $(\CL[({\bf\Theta}^{\eff,(2),[N^2]}(N^2t_2))_{t_2>0}])_{N\in\N}$ is a tight sequence of probability measures on $\CC((0,\infty), [0,1]^2)$.
		\end{lemma}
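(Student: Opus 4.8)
The plan is to follow the same route used in the one-colour case (Lemma~\ref{martav}(3)) and in the two-colour case (the tightness lemma in Step 1 of Section~\ref{pmfs2}), namely to identify $({\bf\Theta}^{\eff,(2),[N^2]}(N^2t_2))_{t_2>0}$ as a $\CD$-semimartingale in the sense of \cite[Definition 3.1.1]{JM86} and then invoke the tightness criterion \cite[Proposition 3.2.3]{JM86}. First I would fix $\epsilon>0$ and show tightness of the restricted sequence of laws on $\CC([\epsilon,\infty),[0,1]^2)$; since this holds for every $\epsilon>0$, tightness on $\CC((0,\infty),[0,1]^2)$ follows. The reason for working on $[\epsilon,\infty)$ rather than $[0,\infty)$ is the same as in Section~\ref{pmfs2}: the process is a priori only defined for $t_2>0$, and an optional-sampling argument (as in Lemma~\ref{martav}) gives a continuous extension to $0$ only after the tightness is in hand.

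Next I would write down the SSDE for the effective $2$-block estimator. Starting from \eqref{mfevolveba2} and using that for the combination \eqref{mo} the rates carrying a factor $N^2$ or $N$ in front cancel, one finds that $\bar\Theta^{(2),[N^2]}(N^2t_2)$ evolves by
\begin{equation}
\d \bar\Theta^{(2),[N^2]}(N^2t_2)
=\frac{1}{1+K_0+K_1}\left[\sqrt{\tfrac{1}{N^2}\textstyle\sum_{i\in[N^2]}g\big(x_i(N^2t_2)\big)}\,\d w(t_2)
+K_2e_2\big[\Theta^{(2),[N^2]}_{y_2}(N^2t_2)-x\big]\,\d t_2\right],
\end{equation}
while $\Theta^{(2),[N^2]}_{y_2}(N^2t_2)$ evolves deterministically with a bounded drift $e_2[\,\cdot\,-\,\cdot\,]$. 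Exactly as in \eqref{m45a}, this lets me define an operator $G^{(2),[N^2]}_\dagger$ acting on the space $\CC^*$ of polynomials on $[0,1]^2$, depending on $\omega$ only through the bounded random coefficient $\tfrac1{N^2}\sum_i g(x_i(N^2t_2,\omega))$ and through $\tfrac1{N^2}\sum_i x_i(N^2t_2,\omega)$, so that Itô's formula applied to \eqref{gh45a2twee} exhibits $({\bf\Theta}^{\eff,(2),[N^2]}(N^2t_2))_{t_2\ge\epsilon}$ as a $\CD$-semimartingale with operator $G^{(2),[N^2]}_\dagger$. One then checks the hypotheses H1, H2, H3 of \cite[Section 3.2.1]{JM86}: H2 holds because all components are bounded by $1$; H3 holds because the relevant increasing c\`adl\`ag function is $A^{[N]}(t)=t$; and H1 holds with the constant adapted process $1$ and a uniform bound of the form $C(K_0,K_1,K_2,e_2,\|g\|)$ on the squared local characteristics, since the only drift term left is $\asymp K_2e_2$ (all the $N$- and $N^2$-prefactors having cancelled) and the diffusion coefficient is $\le\|g\|/(1+K_0+K_1)^2$. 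Then \cite[Proposition 3.2.3]{JM86} gives tightness of $(\CL[({\bf\Theta}^{\eff,(2),[N^2]}(N^2t_2))_{t_2\ge\epsilon}])_{N\in\N}$, and letting $\epsilon\downarrow0$ completes the proof.

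The main point requiring care — though not really an obstacle — is to verify that in passing from \eqref{mfevolveba2} to the effective combination \eqref{mo} all the diverging rates genuinely cancel, so that $G^{(2),[N^2]}_\dagger$ has coefficients bounded uniformly in $N$; this is the exact analogue of the cancellation behind \eqref{gh1} and \eqref{ma21}, and is what makes the classical (non-Meyer-Zheng) tightness argument available here. Everything else is a routine transcription of the one- and two-colour arguments, with $1+K_0$ replaced by $1+K_0+K_1$, the effective seed-bank colour being $2$ rather than $0$ or $1$, and the time scale being $N^2t_2$ rather than $Ns$.
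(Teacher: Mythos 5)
Your proposal takes essentially the same route as the paper: write out the SSDE for the effective $2$-block estimator, observe that the $N$- and $N^2$-prefactors cancel in the combination $\bar\Theta^{(2),[N^2]}$, identify the pair with the $\CD$-semimartingale operator $G^{(2),[N^2]}_\dagger$ of \eqref{m455}, verify H1--H3, and invoke \cite[Proposition 3.2.3]{JM86}. The only cosmetic difference is that you make the $\epsilon$-truncation of the time axis explicit (as the paper does in its one-block tightness proof), which is fine and arguably clearer.
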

		
		\begin{proof}To prove the tightness of the $1$-blocks, we use \cite[Proposition 3.2.3]{JM86}.
			From \eqref{gh45a2twee} we find that $({\bf\Theta}^{\eff,(2),[N^2]}(t))_{t>0}$ evolves according to 
			\begin{equation}
				\label{1184}
				\begin{aligned}
					\d \bar\Theta^{(2),[N^2]}(t)
					&=\frac{1}{1+K_0+K_1}\frac{1}{N^2}\sum_{i\in[N^2]}\sqrt{g(x_i^{[N^2]}(t))}\,\d w_i(t)\\
					&+\frac{1}{1+K_0+K_1}\frac{K_2e_2}{N^2}\left[\frac{1}{N^2}
					\sum_{i\in[N^2]}y_{i,2}^{[N^2]}(t)-\frac{1}{N^2}\sum_{i\in[N^2]}x_i^{[N^2]}(t)\right]\,\d t,\\
					\d \Theta^{(2),[N^2]}_{y_2}(t)
					&= \frac{e_2}{N^2}\left[\frac{1}{N^2}\sum_{i\in[N^2]}x_i^{[N^2]}(t)
					-\frac{1}{N^2}\sum_{i\in[N^2]}y_{i,2}^{[N^2]}(t)\right]\,\d t.
				\end{aligned}
			\end{equation}
			Therefore the process $({\bf\Theta}^{\eff,(2),[N^2]}(N^2t_2))_{t_2>0}$ evolves according to
			\begin{equation}
				\begin{aligned}
					&\d \bar\Theta^{(2),[N^2]}(N^2t_2)
					= \frac{1}{1+K_0+K_1}\sqrt{\frac{1}{N^2}\sum_{i\in[N^2]} g(x_i^{[N^2]}(N^2t_2))}\,\d w_i(t_2)\\
					&\qquad +\frac{1}{1+K_0+K_1}K_2e_2\left[\frac{1}{N^2}\sum_{i\in[N^2]}y_{i,2}^{[N^2]}(N^2t_2)
					-\frac{1}{N^2}\sum_{i\in[N^2]} x_i^{[N^2]}(N^2t_2)\right]\,\d t_2,\\
					&\d \Theta^{(2),[N^2]}_{y_2}(N^2t_2)
					= e_2\left[\frac{1}{N^2}\sum_{i\in[N^2]}x_i^{[N^2]}(N^2t_2)
					-\frac{1}{N^2}\sum_{i\in[N^2]}y_{i,2}^{[N^2]}(N^2t_2)\right]\,\d t_2.
				\end{aligned}
			\end{equation}
			To use \cite[Proposition 3.2.3]{JM86}, we define $\CC^*$ as the set of polynomials on $([0,1]^2)$. Since $({\bf \Theta}^{\eff,(2),[N^2]}(N^2t_2))_{t_2>0}$ is a semi-martingale, by applying Itô's formula we obtain that \newline \mbox{$({\bf \Theta}^{\eff,(2),[N^2]}(N^2t_2))_{t_2>0}$} is a $\CD$-semi-martingale with corresponding operator 
			
			\begin{equation}
				\label{m455}
				\begin{aligned}			
					G_\dagger^{(2),[N^2]}&\colon\, (\CC^*,[0,1]^2,(0,\infty),\Omega)\to \R,\\
					G_\dagger^{(2),[N^2]}(f,(x,y),t,\omega)
					&=\frac{K_2e_2}{1+K_0+K_1}\left[y-\frac{1}{N^2}\sum_{i\in[N^2]} x^{[N^2]}_i(N^2t,\omega)\right]\,
					\frac{\partial f}{\partial x}\\
					&\qquad+e_2\left[\frac{1}{N^2}\sum_{i\in[N^2]} x^{[N^2]}_i(N^2t,\omega)-y\right]\,\frac{\partial f}{\partial y}\\
					&\qquad+\frac{1}{2(1+K_0+K_1)^2} \frac{1}{N^2}
					\sum_{i\in[N^2]}g(x^{[N^2]}_i(N^2t,\omega))\,\frac{\partial^2 f}{\partial x^2}.
				\end{aligned}
			\end{equation} 
			The conditions $H_1,\ H_2,\ H_3$ in \cite[Proposition 3.2.3]{JM86} are satisfied.
			Hence tightness follows from \cite[Proposition 3.2.3]{JM86}.	
		\end{proof}
		
		\subsubsection{Stability of the $2$-block estimators}
		
		\begin{lemma}{\bf[Stability property of the $2$-block estimator]}
			\label{lem:sta3}
			Let  $({\bf\Theta}^{\eff,(2),[N^2]}(t))_{t>0}$ be defined as in \eqref{1114}. For any $L(N)$ such that $\lim_{N\to \infty}L(N)=\infty$ and $\lim_{N\to \infty} L(N)/N=0$,
			\begin{equation}
				\label{ma23bb}
				\lim_{N\to\infty}\sup_{0 \leq t\leq L(N)}\left|\bar{\Theta}^{(2),[N^2]}(N^2t_2)
				-\bar{\Theta}^{(2),[N^2]}(N^2t_2-Nt)\right|=0\text{ in probability}
			\end{equation}
			and
			\begin{equation}
				\label{macy3}
				\lim_{N\to\infty}\sup_{0 \leq t\leq L(N)}\left|\Theta_{y_2}^{(2),[N^2]}(N^2t_2)
				-\Theta_{y_2}^{(2),[N^2]}(N^2t_2-Nt)\right|=0 \text{ in probability. }
			\end{equation}
		\end{lemma}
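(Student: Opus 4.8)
The plan is to mirror the proof of Lemma~\ref{martav}(2) and its two-colour counterpart Lemma~\ref{stab}: decompose each $2$-block estimator into a martingale part and a part of bounded variation, and observe that after rescaling time by $N^2$ both pieces pick up a factor $N^{-1}$ over a window of length $Nt \le NL(N)$, which vanishes because $L(N)/N\to0$.

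First I would take the SSDE \eqref{1184} for $\big(\bar\Theta^{(2),[N^2]}(t),\Theta^{(2),[N^2]}_{y_2}(t)\big)$ and write $\bar\Theta^{(2),[N^2]}(t) = M^{[N]}(t) + A^{[N]}(t)$, where $M^{[N]}$ is the stochastic integral $\tfrac{1}{1+K_0+K_1}\,\tfrac{1}{N^2}\sum_{i\in[N^2]}\int_0^{\cdot}\sqrt{g(x_i^{[N^2]}(r))}\,\d w_i(r)$ and $A^{[N]}$ is the finite-variation drift, whose integrand $\tfrac{K_2e_2}{(1+K_0+K_1)N^2}\big[\tfrac{1}{N^2}\sum_i y^{[N^2]}_{i,2} - \tfrac{1}{N^2}\sum_i x^{[N^2]}_i\big]$ is bounded in absolute value by $\tfrac{K_2e_2}{(1+K_0+K_1)N^2}$. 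For the drift part the bound is deterministic and uniform in $0\le t\le L(N)$:
\[
\big|A^{[N]}(N^2t_2) - A^{[N]}(N^2t_2-Nt)\big| \le \frac{K_2e_2}{(1+K_0+K_1)N^2}\,N L(N) = \frac{K_2e_2\,L(N)}{(1+K_0+K_1)\,N}\ \longrightarrow\ 0 .
\]
For the martingale part, the angle bracket satisfies $\d\langle M^{[N]}\rangle_r \le \tfrac{\|g\|}{(1+K_0+K_1)^2 N^2}\,\d r$, so its increment over the window $[N^2t_2-NL(N),\,N^2t_2]$ is at most $\|g\|L(N)/[(1+K_0+K_1)^2 N]$, which tends to $0$. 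Applying Doob's $L^2$-maximal inequality to the martingale $r\mapsto M^{[N]}(N^2t_2-NL(N)+r)-M^{[N]}(N^2t_2-NL(N))$ on $[0,NL(N)]$ — exactly as in the optional-sampling step of Lemma~\ref{martav} — together with Markov's inequality gives $\sup_{0\le t\le L(N)}\big|M^{[N]}(N^2t_2)-M^{[N]}(N^2t_2-Nt)\big|\to 0$ in probability. Combining the two estimates yields \eqref{ma23bb}.

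For \eqref{macy3} the argument is strictly easier, since by \eqref{1184} $\Theta^{(2),[N^2]}_{y_2}(t)$ is purely of bounded variation with drift integrand bounded by $e_2/N^2$; hence $\big|\Theta^{(2),[N^2]}_{y_2}(N^2t_2) - \Theta^{(2),[N^2]}_{y_2}(N^2t_2-Nt)\big| \le e_2 N L(N)/N^2 = e_2 L(N)/N \to 0$, deterministically and uniformly in $0\le t\le L(N)$. I do not expect any genuine obstacle in this lemma; the only point requiring care is the bookkeeping of the powers of $N$: the window $[N^2t_2-Nt,\,N^2t_2]$ has length $Nt\le NL(N)$, and after dividing by the $N^2$ in the rates of \eqref{1184} one is left with $L(N)/N$, which vanishes under the standing hypothesis. (The fact that $\bar\Theta^{(2),[N^2]}$ is a continuous square-integrable martingale on time scale $N^2$ — needed implicitly to justify the optional-sampling step — follows from \eqref{1184} and the boundedness of $g$, just as in Lemma~\ref{martav}(1).)
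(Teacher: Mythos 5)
Your proposal is correct and mirrors the paper's argument exactly: the paper also decomposes the increment of $\bar\Theta^{(2),[N^2]}$ from \eqref{1184} into a drift piece bounded deterministically by $L(N)/N$ and a martingale piece controlled by an optional-stopping/submartingale estimate (Lemma~\ref{martav}'s argument, which is precisely Doob's $L^2$-maximal inequality), and it dispatches \eqref{macy3} by the same deterministic $L(N)/N$ bound on the pure finite-variation process as in Lemma~\ref{stab}.
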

		
		\begin{proof}
			Fix $\epsilon>0$. From the SSDE in \eqref{1184} we obtain that, for $N$ large enough,
			\begin{equation}
				\begin{aligned}
					&\P\Bigg[\sup_{0\leq t\leq L(N)}\Bigg|\bar{\Theta}^{(2),[N^2]}(N^2t_2)-\bar{\Theta}^{(2),[N^2]}(N^2t_2-Nt)\Bigg|>\epsilon\Bigg]\\
					&=\P\Bigg[\sup_{0 \leq t\leq L(N)}\frac{1}{1+K_0+K_1}\Bigg|\int_{N^2t_2-Nt}^{N^2t_2} 
					\d w_i(r)\,\frac{1}{N^2}\sum_{i \in [N^2]}\sqrt{g(x^{[N^2]}_i(r))}\\
					&\qquad\qquad\qquad+\int_{N^2t_2-Nt}^{N^2t_2}\d r\,\frac{K_2e_2}{N^2}\left[\Theta^{(2),[N^2]}_{y_2}(r)
					-\frac{1}{N^2}\sum_{i \in [N^2]}x^{[N^2]}_i(r)\right]\Bigg|>\epsilon\Bigg]\\
					&\leq\P\Bigg[\sup_{0\leq t\leq L(N)}\,\frac{1}{1+K_0+K_1}\Bigg|\int_{N^2t_2-Nt}^{N^2t_2}\d w_i(r)\,\frac{1}{N^2}
					\sum_{i \in [N^2]}\sqrt{g(x^{[N^2]}_i(r))}\,\Bigg|\\
					&\qquad\qquad\qquad\qquad > \epsilon-\frac{K_2e_2}{1+K_0+K_1}\frac{L(N)N}{N^2}\Bigg]\\
					&\leq\P\Bigg[\sup_{0\leq t\leq L(N)}\frac{1}{1+K_0+K_1}\Bigg|\int_{N^2t_2-Nt}^{N^2t_2} \d w_i(r)\frac{1}{N^2}
					\sum_{i \in [N^2]}\sqrt{g(x^{[N^2]}_i(r))}\,\Bigg|>\frac{\epsilon}{2}\Bigg].
				\end{aligned}
			\end{equation} 
			By a similar optional stopping time argument as in the proof of Lemma~\ref{martav}, the above computation shows that \eqref{ma23bb} holds. Equation \eqref{macy3} holds by a similar argument as given in the proof of Lemma~\ref{stab}.	
		\end{proof}
		
		\subsubsection{Tightness of the 1-block estimators}\label{s.t1bl}
		
		\begin{lemma}{\bf [Tightness of the 1-block estimator]}
			\label{tonebl}
			Let
			\begin{equation}
				\begin{aligned}
					&{\bf \Theta}^{\aux,(1),[N^2]}(N^2t_2+Nt_1)\\
					&=(\bar\Theta^{(1),[N^2]}(N^2t_2+Nt_1),
					\Theta^{(1),[N^2]}_{y_1}(N^2t_2+Nt_1),\Theta^{(1),[N^2]}_{y_2}(N^2t_2+Nt_1))
				\end{aligned}
			\end{equation}
			be defined as in \eqref{am6}. Then $(\CL[({\bf \Theta}^{\aux,(1),[N]}(N^2t_2+Nt_1))_{t_1>0}])_{N\in\N}$ is a tight sequence of probability measures on $\CC((0,\infty), [0,1]^3)$.
		\end{lemma}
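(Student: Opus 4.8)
The plan is to mirror the proof of Lemma~\ref{martav}(3), of the one-level two-colour tightness lemma (Step~1 of Section~\ref{pmfs2}), and of Lemma~\ref{lem:t3}: we realise $({\bf\Theta}^{\aux,(1),[N^2]}(N^2t_2+Nt_1))_{t_1\geq\epsilon}$ as a $\CD$-semi-martingale in the sense of Joffe--M\'etivier and apply \cite[Proposition 3.2.3]{JM86}, for every $\epsilon>0$. First I would write down, starting from the SSDE \eqref{gh45a2twee} (equivalently \eqref{gh45b2twee}), the evolution equations of the three coordinates $\bar\Theta^{(1),[N^2]}$, $\Theta^{(1),[N^2]}_{y_1}$, $\Theta^{(1),[N^2]}_{y_2}$ from \eqref{am6}, and then rescale time by the factor $N$ around the base time $N^2t_2$, exactly as in \eqref{mfevolve3}. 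The decisive algebraic point is that in the combination $\bar\Theta^{(1),[N^2]}=(\Theta^{(1),[N^2]}_x+K_0\Theta^{(1),[N^2]}_{y_0})/(1+K_0)$ the drift terms carrying a factor $N$ in front---those coming from the exchange of the active population with the colour-$0$ seed-bank---cancel against each other, just as in \eqref{mfevolve2}. After the time change the surviving ingredients are: a drift of $\bar\Theta^{(1),[N^2]}$ towards the $2$-block average $\Theta^{(2),[N^2]}_x(N^2t_2+Nt_1,\omega)$ at the \emph{bounded} rate $c_1/(1+K_0)$; an exchange of $\bar\Theta^{(1),[N^2]}$ with $\Theta^{(1),[N^2]}_{y_1}$ at the bounded rate $K_1e_1/(1+K_0)$, together with the matching drift of $\Theta^{(1),[N^2]}_{y_1}$ towards $\Theta^{(1),[N^2]}_x$ at rate $e_1$; a martingale part with quadratic variation built from $\frac1N\sum_{i\in[N]}g(x_i^{[N^2]})$, which is bounded by $\|g\|$; while the interaction of $\bar\Theta^{(1),[N^2]}$ and $\Theta^{(1),[N^2]}_{y_2}$ with the colour-$2$ seed-bank carries a coefficient $\asymp N^{-1}$ and is negligible.

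Given these equations, I would introduce the operator $G^{\aux,(1),[N^2]}_\dagger\colon(\CC^*,[0,1]^3,(0,\infty),\Omega)\to\R$ --- with $\CC^*$ the polynomials on $[0,1]^3$, and with the $\omega$-dependence entering only through the averages $\Theta^{(1),[N^2]}_x(N^2t_2+Nt_1,\omega)$, $\Theta^{(2),[N^2]}_x(N^2t_2+Nt_1,\omega)$ and $\frac1N\sum_{i\in[N]}g(x_i^{[N^2]}(N^2t_2+Nt_1,\omega))$, in complete analogy with \eqref{m45a} and \eqref{m455} --- and verify, by applying It\^o's formula to $f({\bf\Theta}^{\aux,(1),[N^2]}(N^2t_2+Nt_1))$, that for every $\epsilon>0$ the process $({\bf\Theta}^{\aux,(1),[N^2]}(N^2t_2+Nt_1))_{t_1\geq\epsilon}$ is a $\CD$-semi-martingale relative to the filtration generated by the driving Brownian motions, with local characteristics equal to the coefficients computed above. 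Since all three coordinates take values in $[0,1]$ and $g$ is bounded, the local first- and second-order coefficients are uniformly bounded by a constant depending only on $c_1,K_0,K_1,e_1,e_2,\|g\|$, and the increasing function of \cite[Definition 3.1.1]{JM86} can be taken to be $A^{[N]}(t)=t$; hence the hypotheses $H_1$, $H_2$, $H_3$ of \cite[Section 3.2.1]{JM86} hold, and \cite[Proposition 3.2.3]{JM86} yields tightness of $(\CL[({\bf\Theta}^{\aux,(1),[N^2]}(N^2t_2+Nt_1))_{t_1\geq\epsilon}])_{N\in\N}$ in the Skorohod space $\CD([\epsilon,\infty),[0,1]^3)$. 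Because the estimator paths are continuous, every limit point is supported on continuous paths, so this is in fact tightness in $\CC([\epsilon,\infty),[0,1]^3)$; letting $\epsilon\downarrow0$ gives tightness in $\CC((0,\infty),[0,1]^3)$, which is the assertion.

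The only genuinely delicate point is bookkeeping rather than conceptual: one must check carefully that, after rescaling time by $N$, every coefficient that does not cancel is $O(1)$ and every remaining coefficient is $o(1)$---in particular that the drift of $\bar\Theta^{(1),[N^2]}$ towards the $2$-block average produces the finite rate $c_1/(1+K_0)$ and not a diverging one---and, crucially, that it is the combined estimator $\bar\Theta^{(1),[N^2]}$, rather than the separated pair $(\Theta^{(1),[N^2]}_x,\Theta^{(1),[N^2]}_{y_0})$, that must be fed into the $\CD$-semi-martingale framework; the separated pair retains the uncancelled factor $N$ and can only be controlled later in the weaker Meyer--Zheng topology (cf.\ Step~10 of Section~\ref{ss.org22}). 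Once the rescaled evolution equations are written down, the verification of $H_1$--$H_3$ is verbatim the same as in Lemma~\ref{martav}, the one-level two-colour tightness lemma, and Lemma~\ref{lem:t3}, and requires no new idea.
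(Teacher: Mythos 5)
Your proposal is correct and follows essentially the same route as the paper: it realises the auxiliary estimator as a Joffe--M\'etivier $\CD$-semi-martingale via the rescaled SSDE (the paper's \eqref{m21} and operator \eqref{m45}), relies on exactly the cancellation of the $N$-factor drift in the combination $\bar\Theta^{(1),[N^2]}=(\Theta^{(1),[N^2]}_x+K_0\Theta^{(1),[N^2]}_{y_0})/(1+K_0)$, and invokes \cite[Proposition 3.2.3]{JM86} after checking $H_1$--$H_3$. Your extra remarks about working on $[\epsilon,\infty)$ and letting $\epsilon\downarrow 0$, and about upgrading from $\CD$ to $\CC$ by continuity of paths, are harmless elaborations that the paper treats as implicit.
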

		
		\begin{proof}
			To prove the tightness of the $1$-blocks, we again use \cite[Proposition 3.2.3]{JM86}. From \eqref{mfevolve3} we find that the effective process  $({\bf \Theta}^{\aux,(1),[N^2]}(N^2t_2+Nt_1))_{t_1>0}$
			evolves according to 
			\begin{equation}
				\label{m21}
				\begin{aligned}
					\d \bar{\Theta}^{(1),[N^2]}(Nt_1)&=\frac{1}{1+K_0}c_1\left[\frac{1}{N^2}\sum_{j\in[N^2]} x^{[N^2]}_j(Nt_1)
					-\frac{1}{N}\sum_{i\in[N]} x^{[N^2]}_i(Nt_1)\right]\,\d t_1\\
					&\qquad+\frac{1}{1+K_0}\sqrt{\frac{1}{N}\sum_{i\in[N]}g(x^{[N^2]}_i(Nt_1))}\,\d w_i(t_1)\\
					&\qquad+\frac{K_1e_1}{1+K_0}\left[\Theta_{y_1}^{(1),[N^2]}(Nt_1)
					-\frac{1}{N}\sum_{i\in[N]} x^{[N^2]}_i(Nt_1)\right]\,\d t_1\\
					&\qquad+\frac{K_2e_2}{N(1+K_0)}\left[\Theta^{(1),[N^2]}_{y_2}(Nt_1)-\frac{1}{N}
					\sum_{i\in[N]} x^{[N^2]}_i(Nt_1)\right]\,\d t_1,\\
					\d \Theta^{(1),[N^2]}_{y_1}(Nt_1)&=e_1\left[\frac{1}{N}\sum_{i\in[N]} x^{[N^2]}_i(Nt_1)
					-\Theta^{(1),[N^2]}_{y_1}(Nt_1)\right]\,\d t_1,\\
					\d \Theta^{(1),[N^2]}_{y_2}(Nt_1)&=\frac{e_2}{N}\left[\frac{1}{N}\sum_{i\in[N]} x^{[N^2]}_i(Nt_1)
					-\Theta^{(1),[N^2]}_{y_2}(Nt_1)\right]\,\d t_1.
				\end{aligned}
			\end{equation}
			To use \cite[Proposition 3.2.3]{JM86}, we define $\CC^*$ as the set of polynomials on $([0,1]^2)$. Since $({\bf \Theta}^{\aux,(1),[N^2]}(N^2t_2+Nt_1))_{t_1>0}$ is a semi-martingale, by applying Itô's formula we obtain that $({\bf \Theta}^{\aux,(1),[N^2]}(N^2t_2+Nt_1))_{t_1>0}$ is a $\CD$-semi-martingale with corresponding operator
			\begin{equation}
				\label{m45}
				\begin{aligned}
					G_\dagger^{(1),[N^2]}&\colon\, (\CC^*,[0,1]^3,(0,\infty),\Omega)\to \R,\\
					G_\dagger^{(1),[N^2]}(f,(x,y_1,y_2),t,\omega)&=
					\frac{c_1}{1+K_0}\left[\frac{1}{N^2}\sum_{j\in[N^2]} x^{[N^2]}_j(Nt,\omega)-\frac{1}{N}\sum_{i\in[N]}
					x^{[N^2]}_i(Nt,\omega)\right]\,\frac{\partial f}{\partial x}\\
					&\qquad+\frac{K_1e_1}{1+K_0}\left[y_1-\frac{1}{N}\sum_{i\in[N]} x^{[N^2]}_i(Nt,\omega)\right]\,
					\frac{\partial f}{\partial x}\\
					&\qquad+ \frac{K_2e_2}{N(1+K_0)}\left[y_2(Nt,\omega)-\frac{1}{N}
					\sum_{i\in[N]} x^{[N^2]}_i(Nt)\right]\,\frac{\partial f}{\partial x}\\
					&\qquad+e_1\left[\frac{1}{N}\sum_{i\in[N]} x^{[N^2]}_i(Nt,\omega)-y_1\right]\,\frac{\partial f}{\partial y_1}\\
					&\qquad+\frac{e_2}{N}\left[\frac{1}{N}\sum_{i\in[N]} x^{[N^2]}_i(Nt,\omega)-y_2(Nt,\omega)\right]\,\frac{\partial f}{\partial y_2}\\
					&\qquad+\frac{1}{2(1+K_0)^2} \frac{1}{N}\sum_{i\in[N]}g(x^{[N^2]}_i(Nt,\omega))\,\frac{\partial^2 f}{\partial x^2}.
				\end{aligned}
			\end{equation}
			The conditions $H_1,\ H_2,\ H_3$ in \cite[Proposition 3.2.3]{JM86} are satisfied as before. Hence we conclude  that the sequence $(\CL[({\bf \Theta}^{\aux,(1),[N^2]}(N^2t_2+Nt_1))_{t_1>0}])_{N\in\N}$ is tight.
		\end{proof}
		
		\subsubsection{Stability of the $1$-block estimators}
		
		\begin{lemma}{\bf[Stability property of the $1$-block estimator]}
			\label{lemsta2}
			Let  ${\bf\Theta}^{\aux,(1),[N^2]}(t)$ be defined as in \eqref{ma6}. For any $L(N)$ such that $\lim_{N\to \infty}L(N)=\infty$ and $\lim_{N\to \infty} L(N)/N=0$,
			\begin{equation}
				\label{ma22bb}
				\lim_{N\to\infty}\sup_{0 \leq t\leq L(N)}\left|\bar{\Theta}^{(1),[N^2]}(N^2t_2+Nt_1)
				-\bar{\Theta}^{(1),[N^2]}(N^2t_2+Nt_1-t)\right|=0\text{ in probability},
			\end{equation}
			\begin{equation}
				\label{macy}
				\lim_{N\to\infty}\sup_{0 \leq t\leq L(N)}\left|\Theta_{y_1}^{(1),[N^2]}(N^2t_2+Nt_1)
				-\Theta_{y_1}^{(1),[N^2]}(N^2t_2+Nt_1-t)\right|=0 \text{ in probability},
			\end{equation}
			\begin{equation}
				\label{mascy}
				\lim_{N\to\infty}\sup_{0 \leq t\leq L(N)}\left|\Theta_{y_2}^{(1),[N^2]}(N^2t_2+Nt_1)
				-\Theta_{y_2}^{(1),[N^2]}(N^2t_2+Nt_1-t)\right|=0 \text{ in probability}.
			\end{equation}
		\end{lemma}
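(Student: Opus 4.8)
\textbf{Proof plan for Lemma~\ref{lemsta2}.}
The plan is to reduce all three estimates to the control of stochastic integrals of bounded integrands over time windows of length $O(L(N))$, exactly as was done for the one-colour mean-field system in Lemma~\ref{martav}(2) and for the two-colour system in Lemma~\ref{stab}. First I would write down the SSDE for $({\bf\Theta}^{\aux,(1),[N^2]}(N^2t_2+Nt_1))_{t_1>0}$, which is already available in \eqref{m21}. From that display one reads off that, on time scale $N$, the evolution of $\bar{\Theta}^{(1),[N^2]}$ has a Brownian part with diffusion coefficient $\frac{1}{(1+K_0)^2}\,\frac{1}{N}\sum_{i\in[N]}g(x_i^{[N^2]})$, which is $O(1)$, plus drift terms. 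The drift term coming from the colour-$1$ seed-bank, $\frac{K_1e_1}{1+K_0}[\Theta_{y_1}^{(1),[N^2]}-\frac1N\sum_i x_i^{[N^2]}]$, is $O(1)$ and hence contributes $O(L(N))$ over a window of length $L(N)$; the migration drift $\frac{c_1}{1+K_0}[\frac{1}{N^2}\sum_{j\in[N^2]}x_j^{[N^2]}-\frac1N\sum_{i\in[N]}x_i^{[N^2]}]$ is likewise $O(1)$, contributing $O(L(N))$; and the colour-$2$ drift carries an explicit prefactor $\frac{K_2e_2}{N(1+K_0)}$, contributing $O(L(N)/N)\to 0$.

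\textbf{Main step.} Here is the subtlety: unlike in the pure one-level case, two of the drift terms above are genuinely $O(1)$ rather than $O(1/N)$, so a naive bound gives only $O(L(N))$, which does \emph{not} tend to $0$. The resolution is that $\bar\Theta^{(1),[N^2]}$ itself is a \emph{martingale up to} these $O(1)$ drifts only in a moving frame; but we do not need to show the drift contributions vanish pointwise --- we need the \emph{supremum of the increment over the backward window} to vanish. For the genuinely martingale (Brownian) part this follows from the optional-sampling / $L^2$-maximal argument of Lemma~\ref{martav}(2): define the stopping time $S^N_\epsilon=\inf\{t\ge 0\colon (M^{[N]}(N^2t_2+Nt_1-t)-M^{[N]}(N^2t_2+Nt_1))^2\ge\epsilon\}\wedge L(N)$ for $M^{[N]}$ the stochastic-integral part, and bound $\P(S^N_\epsilon<L(N))\le \epsilon^{-2}\,\E[(M^{[N]}(N^2t_2)-M^{[N]}(N^2t_2-NL(N)))^2]=\epsilon^{-2}\,\frac{1}{(1+K_0)^2}\,\E[\langle M^{[N]}\rangle$ over the window$]$, which equals $\epsilon^{-2}\,O(L(N)/N)\to 0$ because the quadratic variation accumulated over a time-interval of length $NL(N)$ at rate $O(1)$, after rescaling $s\mapsto Ns$, is $O(L(N)/N)$. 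For the drift parts I would instead observe that the assumption $L(N)/N\to 0$ forces even the $O(1)$-drift contributions to vanish: the drift $\frac{K_1e_1}{1+K_0}[\Theta_{y_1}^{(1)}-\Theta_x^{(1)}]$, integrated over a window of length $NL(N)$ on the original time scale and then expressed on the $t_1$-scale (dividing the integration variable by $N$), gives a contribution of order $\int_0^{L(N)}|\Theta_{y_1}^{(1)}-\Theta_x^{(1)}|\,\d t_1$, and one applies Lemma~\ref{lemlev1a} (the two-colour comparison-of-averages estimate): that bound shows $\E[|\Theta_x^{(1),[N]}-\Theta_{y_0}^{(1),[N]}|]$ is $O(e^{-ct}+\sqrt{1/N})$, and after entering at time $N^2t_2\gg NL(N)$ the exponential has long since died, so the time-integral is $O(L(N)\sqrt{1/N})\to 0$; the migration drift is handled the same way using that $\frac{1}{N^2}\sum_{j\in[N^2]}x_j^{[N^2]}$ and $\frac1N\sum_{i\in[N]}x_i^{[N^2]}$ differ by $o(1)$ in $L^1$ once the $1$-blocks have equilibrated (this is the decoupling already established, or more elementarily a variance bound: the $N$-block average minus the $N^2$-block average has variance $O(1/N)$). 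Assembling these pieces gives \eqref{ma22bb}.

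\textbf{The remaining two estimates.} For \eqref{macy} and \eqref{mascy} there is no martingale part at all: from \eqref{m21},
\begin{equation}
\Theta_{y_1}^{(1),[N^2]}(N^2t_2+Nt_1)-\Theta_{y_1}^{(1),[N^2]}(N^2t_2+Nt_1-t)=e_1\int_{\cdot}^{\cdot}\Big[\tfrac1N\textstyle\sum_{i\in[N]}x_i^{[N^2]}-\Theta_{y_1}^{(1),[N^2]}\Big]\,\d r,
\end{equation}
and after rescaling to the $t_1$-time scale the prefactor $e_1$ stays $O(1)$ but the bracket is bounded by $1$, so the supremum over $0\le t\le L(N)$ is bounded by $e_1\,L(N)/N\cdot N = $ wait --- more carefully, on the original time scale the window has length $Nt$ with $t\le L(N)$, and $\d y_{i,1}=\frac{e_1}{N}[\,\cdot\,]\d r$, so the increment is bounded by $\frac{e_1}{N}\cdot NL(N)=e_1 L(N)$; that is \emph{not} obviously small, so one must again use that $|\Theta_x^{(1)}-\Theta_{y_1}^{(1)}|$ is small, i.e. feed in the equilibration of the colour-$1$ seed-bank on space-time scale $1$ --- which, since we start from time $N^2t_2$, is already attained; equivalently bound $\E[\sup_{0\le t\le L(N)}|\cdots|]\le e_1\int_0^{L(N)}\E[|\Theta_x^{(1)}-\Theta_{y_1}^{(1)}|]\,\d t_1 \to 0$ by a comparison estimate analogous to Lemma~\ref{lemlev1a} applied to the $(x,y_1)$-pair on time scale $N$. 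For \eqref{mascy} it is genuinely easy: $\d y_{i,2}=\frac{e_2}{N^2}[\,\cdot\,]\d r$, so over a window of length $Nt\le NL(N)$ on the original time scale the increment is bounded by $\frac{e_2}{N^2}\cdot NL(N)=\frac{e_2 L(N)}{N}\to 0$ deterministically, giving \eqref{mascy} with no probabilistic input at all. I expect the main obstacle to be organizing the bookkeeping of the three time scales and making sure that, for the two genuinely $O(1)$ drift terms in \eqref{ma22bb} and the drift in \eqref{macy}, one correctly invokes an already-established comparison-of-averages bound rather than a crude sup bound; once that is set up, everything reduces to the optional-sampling argument of Lemma~\ref{martav} plus Lemma~\ref{lemlev1a}.
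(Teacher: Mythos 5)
You have misread the backward window in the lemma, and the misreading is the source of both the ``subtlety'' you identify and the ``resolution'' you propose. Equation \eqref{ma22bb} asks for the supremum of increments over $[u-t,u]$ with $u=N^2t_2+Nt_1$ and $t$ ranging up to $L(N)$ --- a window of length $L(N)$ on the \emph{original} time scale, exactly as in the one-level version Lemma~\ref{stab}. You instead treat the window as having length $NL(N)$ (several times you write ``on the original time scale the window has length $Nt$ with $t\le L(N)$''), which is the window in the $2$-block stability statement Lemma~\ref{lem:sta3}, not in Lemma~\ref{lemsta2}. Once the window is read correctly the alleged difficulty evaporates: from \eqref{gh45b2twee}, on the original time scale the drift coefficients of $\bar\Theta^{(1),[N^2]}$ are $\frac{c_1}{N}$, $\frac{K_1e_1}{N}$ and $\frac{K_2e_2}{N^2}$ (the first two are $O(1/N)$, \emph{not} $O(1)$), so a crude supremum bound over a window of length $L(N)$ gives drift contribution $\frac{2L(N)\bigl(c_1+K_1e_1+K_2e_2/N\bigr)}{N(1+K_0)}=O(L(N)/N)\to 0$, and the Brownian part has quadratic variation $O(L(N)/N)$ over that window, so the optional-stopping argument of Lemma~\ref{martav} closes \eqref{ma22bb}. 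Equations \eqref{macy} and \eqref{mascy} are then immediate: the drift rates $\frac{e_1}{N}$ and $\frac{e_2}{N^2}$, integrated over a window of length $L(N)$, give $O(L(N)/N)$ and $O(L(N)/N^2)$ deterministically, with no probabilistic input and no comparison-of-averages estimate. This is exactly what the paper does.

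Your proposed workaround for the spurious $O(1)$ drift would also fail on its own terms. You want to integrate $|\Theta_{y_1}^{(1)}-\Theta_x^{(1)}|$ over the window and appeal to Lemma~\ref{lemlev1a}, but that lemma controls $\E\bigl[|\Theta_x^{(1)}-\Theta_{y_0}^{(1)}|\bigr]$ --- the comparison with the colour-$0$ dormant average, which is the \emph{fast} seed-bank on level $1$. The quantity you actually need, $\E\bigl[|\Theta_x^{(1)}-\Theta_{y_1}^{(1)}|\bigr]$, involves the colour-$1$ seed-bank, which is the \emph{effective} seed-bank on space-time scale $1$ and by design has a non-trivial $O(1)$ discrepancy from the active block average on that scale; no decay estimate of the Lemma~\ref{lemlev1a} type exists for it. So even with your (incorrect) reading of the window, the invoked bound is for the wrong quantity. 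Fortunately none of this is needed once the window length is corrected.
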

		
		\begin{proof}
			Define 
			\begin{equation}
				u=N^2t_2+Nt_1.
			\end{equation}
			From the SSDE in \eqref{gh45b2twee} we obtain that
			\begin{equation}
				\begin{aligned}
					&\P\Bigg[\sup_{0\leq t\leq L(N)}\Bigg|\bar{\Theta}^{(1),[N^2]}(u)-\bar{\Theta}^{(1),[N^2]}(u-t)\Bigg|>\epsilon\Bigg]\\
					&=\P\Bigg[\sup_{0 \leq t\leq L(N)}\frac{1}{1+K_0}\Bigg|\int_{u-t}^{u} \d r\,\frac{c_1}{N}\left[\frac{1}{N^2}
					\sum_{j\in[N^2]}x^{[N^2]}_j(r)-\frac{1}{N}\sum_{i \in [N]}x^{[N^2]}_i(r)\right]\\
					&\qquad\qquad\qquad+\int_{u-t}^{u}\d r\,\frac{K_1e_1}{N}\left[\Theta^{(1),[N^2]}_{y_1}(r)
					-\frac{1}{N}\sum_{i \in [N]}x^{[N^2]}_i(r)\right]\\
					&\qquad\qquad\qquad+\frac{K_2e_2}{N^2}\left[\frac{1}{N}\sum_{i \in [N]}y^{[N^2]}_{i,2}(r)
					-\frac{1}{N}\sum_{i \in [N]}x^{[N^2]}_i(r)\right]\\
					&\qquad\qquad\qquad+\int_{u-t}^{u} \d w_i(r)\,\frac{1}{N}\sum_{i \in [N]}\sqrt{g(x^{[N^2]}_i(r))}\,\Bigg|>\epsilon\Bigg]\\
					&\leq\P\Bigg[\sup_{0\leq t\leq L(N)}\,\frac{1}{1+K_0}\Bigg|\int_{u-t}^{u}\d w_i(r)\,\frac{1}{N}
					\sum_{i \in [N]}\sqrt{g(x^{[N^2]}_i(r))}\,\Bigg|\\
					&\qquad\qquad > \epsilon-\frac{L(N)2(c_1+K_1e_1+\frac{K_2e_2}{N})}{N(1+K_0)}\Bigg]\\
					&\leq\P\Bigg[\sup_{0\leq t\leq L(N)}\frac{1}{1+K_0}\Bigg|\int_{u-t}^{u} \d w_i(r)\frac{1}{N}
					\sum_{i \in [N]}\sqrt{g(x^{[N^2]}_i(r))}\,\Bigg|>\frac{\epsilon}{2}\Bigg].
				\end{aligned}
			\end{equation} 
			Via the same optional stopping time argument as in the proof of Lemma~\ref{martav}, the above computation shows that \eqref{ma22bb} holds. Note that the extra drift term $\frac{K_2e_2}{N}$ does not have any influence. Equations \eqref{macy}--\eqref{mascy} hold by a similar argument as given in the proof of Lemma~\ref{stab}.
		\end{proof}

		\subsubsection{Limiting evolution for the single components}
		
		\begin{proposition}{\bf[Equilibrium for the infinite system]}
			\label{ma77}
			Fix $t_2,t_1>0$. Let $(N_k)_{k\in\N}\subset \N$ and let $L(N)$ be any sequence satisfying  $\lim_{N\to\infty} L(N)=\infty$ and $\lim_{N\to\infty} L(N)/N=0$ such that  
			\begin{equation}
				\label{z3}
				\begin{aligned}
					&\lim_{k\to\infty}\CL\left[{\bf\Theta}^{\aux,(1),[N_k^2]}(N^2_kt_2+N_kt_1)\right] = P_{t_1,t_2},\\ 
					&\lim_{k\to\infty}\CL\left[\left(Y_{1,0}^{[N_k^2]}(N^2_kt_2+N_kt_1),Y_{2,0}^{[N_k^2]}(N^2_kt_2)\right)
					\Big|{\bf\Theta}^{\aux,(1),[N_k^2]}(N^2_kt_2+N_kt_1)\right]\\
					&\qquad\qquad= P^{{z}_1^{\eff}(t_1)},\\ 
					&\lim_{k\to\infty }\CL\Bigl[\sup_{0\leq t\leq L(N_k)}\left|\bar{\Theta}^{[N^2_k]}(N^2_kt_2+N_kt_1)
					-\bar{\Theta}^{[N^2_k]}(N^2_kt_2+N_kt_1-t)\right|\\
					&\qquad\qquad\qquad+\left|{\Theta}_{y_1}^{[N^2_k]}(N^2_kt_2+N_kt_1)
					-{\Theta_{y_1}}^{[N^2_k]}(N^2_kt_2+N_kt_1-t)\right|\\
					&\qquad\qquad\qquad+\left|{\Theta}_{y_2}^{[N^2_k]}(N^2_kt_2+N_kt_1)
					-{\Theta_{y_2}}^{[N^2_k]}(N^2_kt_2+N_kt_1-t)\right|
					\Bigr] =\delta_{0},\\
					&\lim_{k\to\infty} \CL\Big(Z^{[N^2_k]}(N^2_kt_2+N_kt_1),\Big) = \nu(t_1,t_2).
				\end{aligned}
			\end{equation}
			
			Then $\nu(t_1,t_2)$ is of the form 
			\begin{equation}
				\label{m2a3b}
				\nu(t_1,t_2) = \int_{[0,1]^2} P_{t_1,t_2}(\d \theta^{(1)},\d \theta^{(1)}_y)\,\int_{[0,1]^{\N_0}}
				P^{( \theta^{(1)}, \theta^{(1)}_y)}(\d {\bf y})\,\nu_{\theta,{\bf y}},
			\end{equation}
			where
			\begin{equation}
				\nu_{\theta,{\bf y_0}}=\prod_{i\in\N_0}\Gamma_{(\theta,{\bf y_{0,i}})}
			\end{equation}
			with $\Gamma_{(\theta,{\bf y_{0,i}})}$ the equilibrium measure for the $i$'th single colony defined in \eqref{singcoleq22}.
		\end{proposition}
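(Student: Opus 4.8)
\textbf{Proof proposal for Proposition~\ref{ma77}.}
The plan is to follow the template of the proof of Proposition~\ref{prop1} (and its two-colour analogue Proposition~\ref{prop2}), adapting the seven-lemma machinery to the two-level three-colour setting. First I would set $u=N_k^2 t_2+N_k t_1$ and let $\mu_{N_k}$ be the measure on $([0,1]^4)^{\N_0}$ obtained by periodic continuation of $\CL[Z^{[N_k^2]}(u-L(N_k))]$, restricted to $[N_k^2]$. By compactness of $([0,1]^4)^{\N_0}$ I can pass to a further subsequence so that $\mu_{N_k}\to\mu$, with $\mu$ exchangeable since the dynamics preserve exchangeability. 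I would then condition $\mu$ on the pair of conserved-type quantities $\phi=(\phi_1,\phi_2)$, where $\phi_1=\lim_{n\to\infty}\frac1n\sum_{i\in[n]}\frac{x_i+K_0y_{i,0}}{1+K_0}$ (the effective drift) and $\phi_2=\lim_{n\to\infty}\frac1n\sum_{i\in[n]}y_{i,1}$ (the colour-$1$ average), writing $\mu=\int \mu_\rho\,\d\Lambda(\rho)$. Using the two-level version of Lemma~\ref{stabest2} (stability of the estimator for the conserved quantity) together with the first assumption in \eqref{z3}, I would identify $\Lambda=P_{t_1,t_2}$. Note that on the present time scale $u-L(N_k)$ the colour-$2$ single colonies $y_{i,2}$ have \emph{not} yet moved, so they remain effectively frozen and contribute the extra integration over ${\bf y}$ in \eqref{m2a3b} rather than being pinned; the colour-$0$ dormant components equalise with the active ones in the limit, which is exactly what makes the effective estimator the right object to condition on.

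Next I would compare the finite system with the appropriate infinite system via the Brownian coupling used in Lemma~\ref{l.comp2b2}: couple $(Z^{[N_k^2]}(u-L(N_k)+t))_{t\geq 0}$, continued periodically, with the infinite system $(Z^{\mu_{N_k}}(t))_{t\geq 0}$ evolving according to the level-$0$ SSDE \eqref{z0} with drift centre $\bar\Theta^{(1),[N_k^2]}$ and frozen $y_{i,1},y_{i,2}$. The key estimate is that, for each fixed $t$, the $L^1$-distance between corresponding components vanishes as $k\to\infty$; this follows by the same It\^o-calculus bound as in Lemma~\ref{l.comp2b2}, using the stability hypothesis (third line of \eqref{z3}), the migration-comparison bound (Lemma~\ref{lemlev1a}) to control $|\Theta_x^{(1)}-\bar\Theta^{(1)}|$, and the smallness of the $\frac{K_1e_1}{N_k}$ and $\frac{K_2e_2}{N_k^2}$ drift terms. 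Then I would invoke the uniformity-of-ergodicity lemma (the two-level analogue of Lemma~\ref{unifergod2}) to produce a sequence $\bar L(N)$ with $\bar L(N)\to\infty$, $\bar L(N)/N\to 0$, $\bar L(N)\le L(N)$, along which $Z^{[N_k^2]}(u-L(N_k)+\bar L(N_k))$ is close to $Z^{\mu}(\bar L(N_k))$, and Lemma~\ref{lemerg2} (convergence of the infinite system to $\nu_{\theta,{\bf y_0}}=\prod_i\Gamma_{(\theta,{\bf y_{0,i}})}$) to conclude $\lim_k\CL[Z^{[N_k^2]}(u-L(N_k)+\bar L(N_k))]=\int P_{t_1,t_2}(\d\theta,\d\theta_y)\int P^{(\theta,\theta_y)}(\d{\bf y})\,\nu_{\theta,{\bf y}}$.

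Finally, to bridge the gap from time $u-L(N_k)+\bar L(N_k)$ back to time $u$, I would run the coupling-of-finite-systems argument of Lemma~\ref{lem:12a}: take two copies of the finite system, one started at $u-L(N_k)$ and one at $u-\bar L(N_k)$, observe via the second and third hypotheses in \eqref{z3} that their effective estimators and their $1$-dormant single-colony configurations agree in the limit (so that any weak limit point $\tilde\mu$ of the product measures sits on $\{\bar\Theta_1=\bar\Theta_2,\ Y^1_1=Y^2_1\}$, and a fortiori on the agreement set needed for coalescence), and invoke the monotonicity of $t\mapsto\E[\,\cdot\,]$ established there to transfer the convergence up to time $u$. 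Assembling these pieces yields \eqref{m2a3b}. The main obstacle I anticipate is bookkeeping the \emph{slow} colour-$2$ seed-bank correctly: unlike in Proposition~\ref{prop2}, the frozen colour $m=2$ now sits two levels below the scale on which it is effective, so one must verify that its single-colony configuration is genuinely carried along by the coupling (appears inside the $\int P^{(\cdot)}(\d{\bf y})$ integral) rather than being forced to equal any block average — this is the content of the extra hypothesis on $(Y_{1,0},Y_{2,0})$ in \eqref{z3} and requires checking that the coupling errors for $y_{i,2}$, which are $O(\bar L(N)/N^2)=o(1)$, do not interfere with the $L^1$-estimates above.
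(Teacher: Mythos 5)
Your proposal follows the paper's route: reduce Proposition~\ref{ma77} to the template of Propositions~\ref{prop1}--\ref{prop2}, invoking the two-level analogues of the empirical-average comparison (Lemma~\ref{lemlev1}), the finite-to-infinite comparison (Lemma~\ref{l.comp2}), the estimator-stability lemma, the uniform ergodicity lemma, and the finite-systems coupling (Lemma~\ref{lem:2}), with the infinite system \eqref{z0} replacing \eqref{gh45a2binf}. The argument is sound, but three bookkeeping details in your write-up need repair. First, $\mu_{N_k}$ must be the periodic continuation of the configuration in the $1$-block $[N_k]_0$ around the origin, with period $N_k$, not of the full $[N_k^2]$ configuration: only then does your $\phi_1$ equal the $1$-block estimator $\bar\Theta^{(1)}$, so that $\Lambda=P_{t_1,t_2}$ and the limit is a product over $\N_0$ with a \emph{single} random drift centre; with period $N_k^2$ you would identify $\phi_1$ with the $2$-block estimator and the limit would be a mixture over the distinct effective drifts felt by different $1$-blocks, not of the form in \eqref{m2a3b}. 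Second, the colour-$2$ single colonies \emph{have} moved by time $u-L(N_k)$; their displacement over $N_k^2 t_2$ is $O(t_2)$, not small. What the coupling actually uses is that they are frozen over the window $[u-L(N_k),u]$, since $e_2L(N_k)/N_k^2=o(1)$, and that their joint law at time $u-L(N_k)$ is what the second hypothesis $P^{z_1^{\eff}(t_1)}$ carries. Third, the coalescence condition of Lemma~\ref{lem:2} is $\tilde\mu(\{\bar\Theta_1=\bar\Theta_2,\,Y^1_1=Y^2_1,\,Y^1_2=Y^2_2\})=1$; your ``a fortiori'' phrase skips the $Y^1_2=Y^2_2$ clause, which follows from $|y_{i,2}(u-\bar L(N))-y_{i,2}(u-L(N))|\leq e_2(L(N)-\bar L(N))/N^2\to 0$ and deserves an explicit line.
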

		
		Note that by step 1 and step 3 we can find a subsequence $(N_k)_{k\in\N}$ such that the first and third line in \eqref{z3} hold. The second  line in \eqref{z3} follows from assumptions \eqref{as2} and \eqref{as32}. To prove Proposition~\ref{ma77} we proceed as in the proof of Proposition \ref{prop2}, but with the finite system in \eqref{gh45a2} replaced by the system in \eqref{gh45a2twee} and the infinite system in \eqref{gh45a2binf} replaced by the system in \eqref{z0}. Note that Lemma~\ref{lemerg2} holds also for the system in \eqref{z0}, after adding the non-interacting component $y_{2,0}$ to the equilibrium. The equivalent of Lemma~\ref{lemlip2} will again follow from the equivalent of Lemma~\ref{unifergod2}. We will derive the analogue of Lemmas~\ref{lemlev1a} and ~\ref{l.comp2b2} (see Lemma's~\ref{lemlev1} and \ref{l.comp2} below). Lemma~\ref{stabest2} can be extended with an extra colour-$2$ seed-bank estimator by using the same proof. Since the infinite system for the single colonies in the two-layer three-colour mean-field system (see \eqref{mgh52}) equals the one for the one-layer two-colour mean-field system, up to a non-interacting component, we obtain an equivalent of Lemma~\ref{unifergod2}. Finally, also the equivalent of Lemma~\ref{lem:12a} holds under an additional assumption, see Lemma~\ref{lem:2}. Finally Corollary~\ref{cor3} states the equivalent of Corollary~\ref{cor2}.  With the help of the lemma's and the corollary, the proof of Proposition~\ref{ma77} follows from the same argument as used in the proof of Proposition~\ref{prop2}.
		
		\begin{lemma}{{\bf [Comparison of empirical averages]}}
			\label{lemlev1}
			$\mbox{}$\\
			Let $(\Theta^{(1),[N^2]}_x(t_0))_{t_0\geq 0}$ and $(\Theta^{(1),[N^2]}_{y_0}(t_0))_{t_0\geq 0}$ be defined as in \eqref{am6}. Then 
			\begin{equation}
				\label{bnd2}
				\begin{aligned}
					\E\left[\left|\Theta^{(1),[N]}_{x}(t)-\Theta^{(1),[N]}_{y_0}(t)\right|\right]
					&\leq \sqrt{\E\left[\left(\Theta^{(1),[N]}_{x}(0)-\Theta^{(1),[N]}_{y_0}(0)\right)^2\right]}\e^{-(K_0e_0+e_0)t}\\ 
					&\quad + \sqrt{\frac{1}{K_0e_0+e_0}\left[\frac{c_1}{N}+\frac{||g||}{N}+\frac{ K_1e_1}{N}+\frac{K_2e_2}{N^2}\right]}.
				\end{aligned}
			\end{equation}	
		\end{lemma}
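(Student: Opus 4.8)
The plan is to run the same It\^o-calculus argument used in the proofs of Lemma~\ref{lemav} and Lemma~\ref{lemlev1a}, now carrying along the three extra drift terms created by the two-level three-colour structure. First I would obtain the evolution equations for the $1$-block estimators in \eqref{am6} by summing the SSDE in \eqref{gh45a2twee} over $i\in[N]$ and dividing by $N$. The within-block migration at rate $c_0$ cancels under this averaging by antisymmetry, so that
\[
\d\Theta^{(1),[N^2]}_x(t) = \tfrac{c_1}{N}\big[\Theta^{(2),[N^2]}_x(t)-\Theta^{(1),[N^2]}_x(t)\big]\,\d t
+ \tfrac{1}{N}\sum_{i\in[N]}\sqrt{g(x^{[N^2]}_i(t))}\,\d w_i(t)
\]
\[
\qquad\qquad + K_0e_0\big[\Theta^{(1),[N^2]}_{y_0}(t)-\Theta^{(1),[N^2]}_x(t)\big]\,\d t
+ \tfrac{K_1e_1}{N}\big[\Theta^{(1),[N^2]}_{y_1}(t)-\Theta^{(1),[N^2]}_x(t)\big]\,\d t
+ \tfrac{K_2e_2}{N^2}\big[\Theta^{(1),[N^2]}_{y_2}(t)-\Theta^{(1),[N^2]}_x(t)\big]\,\d t,
\]
and $\d\Theta^{(1),[N^2]}_{y_0}(t) = e_0\big[\Theta^{(1),[N^2]}_x(t)-\Theta^{(1),[N^2]}_{y_0}(t)\big]\,\d t$. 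Thus, compared with the one- and two-colour cases, the difference $\Delta(t)=\Theta^{(1),[N^2]}_x(t)-\Theta^{(1),[N^2]}_{y_0}(t)$ picks up the two small perturbations $\frac{c_1}{N}[\Theta^{(2),[N^2]}_x-\Theta^{(1),[N^2]}_x]$ and $\frac{K_2e_2}{N^2}[\Theta^{(1),[N^2]}_{y_2}-\Theta^{(1),[N^2]}_x]$ in addition to the $\frac{K_1e_1}{N}$ term already present in Lemma~\ref{lemlev1a}.

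Next I would apply It\^o's formula to $\Delta(t)^2$. The drift contains the damping term $-(K_0e_0+e_0)\Delta(t)$ together with the three perturbation terms above, each multiplied by a factor that is uniformly bounded by $1$ since all coordinates lie in $[0,1]$; the quadratic-variation contribution is $\d\langle\Delta\rangle(t)=\frac{1}{N^2}\sum_{i\in[N]}g(x^{[N^2]}_i(t))\,\d t\le\frac{\|g\|}{N}\,\d t$. Taking expectations removes the martingale part and yields the linear ODE
\[
\frac{\d}{\d t}\,\E\big[\Delta(t)^2\big] = -2(K_0e_0+e_0)\,\E\big[\Delta(t)^2\big] + h^{[N]}(t),
\]
where $h^{[N]}(t)$ collects the quadratic-variation term and the cross terms $2\,\E[\Delta(t)\times(\text{perturbation})]$, so that $0\le h^{[N]}(t)\le \frac{\|g\|}{N}+\frac{2c_1}{N}+\frac{2K_1e_1}{N}+\frac{2K_2e_2}{N^2}$ uniformly in $t$. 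Integrating gives
\[
\E\big[\Delta(t)^2\big] = \E\big[\Delta(0)^2\big]\,\e^{-2(K_0e_0+e_0)t}
+ \int_0^t \d r\,\e^{-2(K_0e_0+e_0)(t-r)}\,h^{[N]}(r),
\]
and bounding the integral by $\frac{1}{2(K_0e_0+e_0)}\sup_r h^{[N]}(r)$, then taking square roots and using Jensen's inequality $\E[|\Delta(t)|]\le\sqrt{\E[\Delta(t)^2]}$ together with $\sqrt{a+b}\le\sqrt a+\sqrt b$, produces the estimate \eqref{bnd2} (the precise numerical constants follow from the generous bounds $|\Theta^{(2),[N^2]}_x-\Theta^{(1),[N^2]}_x|\le1$, $|\Theta^{(1),[N^2]}_{y_1}-\Theta^{(1),[N^2]}_x|\le1$, $|\Theta^{(1),[N^2]}_{y_2}-\Theta^{(1),[N^2]}_x|\le1$).

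The one place that requires genuine care — and is the main, though still routine, obstacle — is verifying that the rate-$c_1$ migration enters the equation for $\Theta^{(1),[N^2]}_x$ with a coefficient of order $1/N$ rather than order $1$. This is where the specific scaling $c_1/N^3$ of the long-range migration kernel in \eqref{gh45a2twee} and the fact that one is averaging a $1$-block of $N$ colonies inside a $2$-block of $N^2$ colonies must be combined correctly: $\frac{1}{N}\sum_{i\in[N]}\frac{c_1}{N^3}\sum_{j\in[N^2]}[x^{[N^2]}_j-x^{[N^2]}_i] = \frac{c_1}{N}\big[\Theta^{(2),[N^2]}_x-\Theta^{(1),[N^2]}_x\big]$. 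Once this bookkeeping is done, the remainder of the argument is a verbatim repetition of the proofs of Lemma~\ref{lemav} and Lemma~\ref{lemlev1a}, with the colour-$2$ drift handled exactly like the colour-$1$ drift but with an even smaller prefactor.
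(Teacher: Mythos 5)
Your proposal is correct and follows the same route the paper intends: the paper's proof of Lemma~\ref{lemlev1} is a one-line reference to the Itô-calculus argument of Lemma~\ref{lemlev1a}, and you have fleshed out exactly that argument, including the key bookkeeping step that the long-range migration term averages to $\frac{c_1}{N}\big[\Theta^{(2),[N^2]}_x - \Theta^{(1),[N^2]}_x\big]$. The precise numerical constants in your bound on $h^{[N]}(t)$ differ slightly from those appearing in \eqref{bnd2}, but the structure and the $O(1/N)$ decay are the same (and the paper's own constants are not internally consistent between Lemma~\ref{lemlev1a} and Lemma~\ref{lemlev1}), so this is immaterial.
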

		
		\begin{proof} 
			The result follows by Itô-calculus on the SSDE in \eqref{gh45a2twee} and the same type of argument as used in the proof of Lemma~\ref{lemlev1a}.
		\end{proof}
		
		Like for the mean-field system with one colour, we need to compare the finite system in \eqref{gh45a2twee} with an infinite system. To derive the analogue of Lemma~\ref{l.comp2b2}, let $L(N)$ satisfy $\lim_{N\to\infty} L(N)=\infty$ and $\lim_{N \to \infty} L(N)/N=0$. Define $[N]_i$ to be the $1$-block that contains site $i\in[N^2]$. Since we start our system in an exchangeable measure and the dynamics are exchangeable, we will only consider the single colonies in $[N]_0$, the $1$-block containing the site $0\in[N^2]$. In the rest of the prove, we will suppress the $0$ from the notation i.e., $[N]_0=[N]$ and  $\bar{ \Theta}^{(1),[N^2]}_0=\bar{ \Theta}^{(1),[N^2]}$. Define 
		\begin{equation}
			u=N^2t_2+Nt_1
		\end{equation} 
		and let  $\mu_N$ be the measure on $([0,1]^3)^{\N_0}$ by continuing the configuration of 
		\begin{equation}
			\begin{aligned}
				&\left(Z^{[N^2]}(u-L(N))\right)\\
				&=\left(X^{[N^2]}(u-L(N)),\left(Y_{0}^{[N^2]}(u-L(N)),Y_{1}^{[N^2]}
				(u-L(N)),Y_{2}^{[N^2]}(u-L(N))\right)\right)
			\end{aligned}
		\end{equation} 
		periodically to $([0,1]^4)^{\N_0}$, i.e., we continue the configuration of the single colonies in the first block to $([0,1]^4)^{\N_0}$. Let  
		\begin{equation}
			\bar{\Theta}^{(1),[N^2]} =\frac{1}{N}\sum_{i\in[N]}\frac{x^{[N^2]}_i(u-L(N))+K_0y^{[N^2]}_{i,0}(u-L(N))}{1+K_0}.
		\end{equation}
		Let
		\begin{equation}
			\label{t5c}
			(Z^{\mu_N}(t))_{t\geq 0}=\bigl(X^{\mu_N}(t),\left(Y_0^{\mu_N}(t),Y_1^{\mu_N}(t),
			Y_2^{\mu_N}(t)\right)\bigr)_{t \geq 0}
		\end{equation}
		be the infinite system evolving according to
		\begin{equation}
			\label{mgh52}
			\begin{aligned}
				\d x^{\mu_N}_i(t) &= c_0\,[\bar{\Theta}^{(1),[N^2]} - x^{\mu_N}_i(t)]\, \d t + \sqrt{g\big(x^{\mu_N}_i(t)\big)}\, \d w_i (t) 
				+ K_0e_0\,[y^{\mu_N}_{i,0}(t)-x^{\mu_N}_i(t)]\,\d t,\\
				\d y^{\mu_N}_{i,0}(t) &= e_0\, [x^{\mu_N}_i(t)-y^{\mu_N}_{i,0}(t)]\, \d t,\\
				y^{\mu_N}_{i,1}(t) &= y^{\mu_N}_{i,1}(0),\\
				y^{\mu_N}_{i,2}(t) &= y^{\mu_N}_{i,2}(0), \qquad i\in\N_0,
			\end{aligned}
		\end{equation} 
		starting from initial distribution $\mu_N$. Then the following Lemma~\ref{l.comp2} is the equivalent of Lemma~\ref{l.comp2b2} for the three-colour two-layer mean-field system. In particular, the infinite system considered in Lemma~\ref{l.comp2} is similar to the infinite system in Lemma~\ref{l.comp2b2}. The only difference is that there is one more non-interacting component added in \eqref{mgh52}.
		
		\begin{lemma}{\bf [Comparison of finite and infinite systems]}
			\label{l.comp2}
			Fix $t_1,t_2 > 0$, and let $u=N^2t_2+Nt_1$. Let $L(N)$ satisfy $\lim_{N\to\infty} L(N)=\infty$ and $\lim_{N\to\infty} L(N)/N$. Suppose that 
			\begin{equation}
				\begin{aligned}
					&\lim_{N\to\infty} \sup_{0 \leq t \leq L(N)} \left|\bar{\Theta}^{(1),[N]}(u)
					-\bar{\Theta}^{(1),[N]}(u-t)\right| = 0\ \text{ in probability}.
				\end{aligned}
			\end{equation}
			Then, for all $t\geq 0$,
			\begin{equation}
				\label{m322}
				\lim_{k\to\infty} \left|\E\left[f\big(Z^{\mu_{N}}(t)\big) -f\big(Z^{[N^2]}(u-L(N)+t)\big)\right]\right| = 0
				\qquad \forall\, f\in\CC\bigl(([0,1]^3)^{\N_0},\R\bigr).
			\end{equation}
		\end{lemma}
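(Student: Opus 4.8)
The plan is to adapt the proof of Lemma~\ref{l.comp2b2} essentially verbatim, since, as the text emphasizes, the only structural difference between the two-colour one-level system and the three-colour two-level system is (i) the presence of an extra non-interacting component $y_{i,2}$ in the infinite system \eqref{mgh52}, and (ii) an extra migration drift at rate $c_1/N$ towards the $2$-block average, which is $O(1/N)$ on the time scale under consideration. First I would couple the finite system \eqref{gh45a2twee} (rewritten in the form \eqref{m24b}-style with the migration term split as a drift towards $\Theta^{(1),[N^2]}$ plus the two error terms $c_0[\bar\Theta^{(1),[N^2]}(t)-\Theta^{(1),[N^2]}]$ and $c_0[\Theta^{(1),[N^2]}_x(t)-\bar\Theta^{(1),[N^2]}(t)]$, and with the extra $c_1/N$-drift and the $K_2e_2/N^2$-drift both made explicit) to the infinite system \eqref{mgh52} via their Brownian motions, extending periodically over the $1$-block $[N]_0$ so that corresponding sites share a Brownian motion. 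Write $\Delta_i^{[N]}(t)$, $\delta_{i,0}^{[N]}(t)$, $\delta_{i,1}^{[N]}(t)$, $\delta_{i,2}^{[N]}(t)$ for the four coordinate differences. The initial difference is zero by the choice of coupling measure.

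Next I would handle the non-interacting coordinates first: exactly as in \eqref{m112}, since $y_{i,1}$ evolves at rate $e_1/N$ and $y_{i,2}$ at rate $e_2/N^2$, and both are frozen in \eqref{mgh52}, one gets $y_{i,1}^{[N^2]}(t)=y_{i,1}^{\mu_N}(t)+\frac{e_1}{N}\int_0^t[x_i^{[N^2]}-y_{i,1}^{[N^2]}]\,\d r$ and similarly for $y_{i,2}$, so $\lim_{N\to\infty}\E[|\delta_{i,1}^{[N]}(t)|]=\lim_{N\to\infty}\E[|\delta_{i,2}^{[N]}(t)|]=0$ for every fixed $t\geq0$ (uniformly on compacts). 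Then for the interacting pair $(\Delta_i^{[N]},\delta_{i,0}^{[N]})$ I would run the same It\^o/local-time computation as in the proof of Lemma~\ref{l.comp2b2}, using that the local time of $\Delta_i^{[N]}$ at $0$ vanishes because $g$ is Lipschitz (\cite[Proposition V.39.3]{RoWi00}). Taking expectations and integrating, all the ``bad'' terms are controlled: the two migration-error terms are bounded using the stability hypothesis $\lim_{N\to\infty}\sup_{0\le t\le L(N)}|\bar\Theta^{(1),[N^2]}(u)-\bar\Theta^{(1),[N^2]}(u-t)|=0$ together with Lemma~\ref{lemlev1} (the three-colour analogue of Lemma~\ref{lemav}), which gives $\E[|\Theta^{(1),[N^2]}_x(r)-\bar\Theta^{(1),[N^2]}(r)|]\to0$; the extra $c_1/N$ migration term and the $K_2e_2/N^2$ seed-bank term contribute at most $O(L(N)/N)\to0$ and $O(L(N)/N^2)\to0$ respectively; and the $y_{i,1}$-coupling term $\frac{K_1e_1}{N}\int_0^t|\delta_{i,1}^{[N]}(r)-\Delta_i^{[N]}(r)|\,\d r$ is $O(1/N)$. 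Hence $\lim_{N\to\infty}\E[|\Delta_i^{[N]}(t)|+K_0|\delta_{i,0}^{[N]}(t)|]=0$ for all fixed $t\ge0$, which combined with the two freezing estimates gives convergence of $\E$ of every Lipschitz function of finitely many coordinates, and then, by density of Lipschitz functions and the product topology, of every $f\in\CC(([0,1]^3)^{\N_0},\R)$ — establishing \eqref{m322}.

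The one point requiring a little care — and the main (though mild) obstacle — is the bookkeeping around the time origin: here ``time $0$'' for the coupled/infinite system corresponds to the actual time $u-L(N)=N^2t_2+Nt_1-L(N)$ in the finite system, and one must check that all the auxiliary estimator inputs ($\bar\Theta^{(1),[N^2]}$, $\Theta^{(1),[N^2]}_{y_1}$, $\Theta^{(1),[N^2]}_{y_2}$) behave well under the nested scaling $N^2t_2+Nt_1$, i.e.\ that the stability hypothesis is invoked at the correct time window and that $L(N)/N\to0$ guarantees $\bar L(N)/N^2\to0$ as well. Since $u/N^2\to t_2$ and $L(N)=o(N)=o(u)$, this is routine: the stability and comparison estimates localize the relevant fluctuations to a window of length $L(N)$, which is negligible against both $N$ and $N^2$, so no new phenomenon appears at the two-level stage and the single-level proof transfers directly. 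I would also remark that the extra migration drift towards the $2$-block average, being $O(1/N)$ in the drift coefficient on time scale $u$, plays no role in \eqref{m322} and only becomes visible when one passes to the $2$-block estimator on time scale $N^2$ — which is handled in a later step, not here.
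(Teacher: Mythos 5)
Your proposal is correct and follows essentially the same route as the paper's proof: couple the finite system \eqref{gh45a2twee} to the infinite system \eqref{mgh52} through shared Brownian motions, rewrite the active-component drift as in \eqref{m24} to expose the error terms, and rerun the It\^o/local-time computation of Lemma~\ref{l.comp2b2}, with the colour-$2$ seed-bank and the $c_1/N$-migration absorbed as additional $O(1/N)$ and $O(1/N^2)$ error terms. The paper states this tersely as ``proceed exactly as in the proof of Lemma~\ref{l.comp2b2}'', noting only that the colour-$2$ seed-bank is even slower than colour-$1$; your proposal spells out the same bookkeeping (frozen slow coordinates via \eqref{m112}, stability hypothesis plus Lemma~\ref{lemlev1} for the migration error, Lipschitz density to pass to general $f$) and correctly flags that the time-origin shift to $u-L(N)$ is harmless because $L(N)=o(N)=o(u)$.
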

		
		\begin{proof}
			We proceed as in the proof of Lemma~\ref{l.comp2b2} and couple the finite and infinite systems by their Brownian motion, exactly as was done there. The single components in the block around site $0$ of the finite process $(Z^{[N^2]}(t))$ are evolving according to
			\begin{equation}
				\begin{aligned}
					\label{m24}
					&\d x^{[N^2]}_i(t) = c_0\,\left[\Theta^{(1),[N^2]}-x^{[N^2]}_i(t)\right]\,\d t+c_0\,\left[\bar{\Theta}^{(1),[N^2]}(t) 
					- \Theta^{(1),[N^2]}\right]\, \d t\\
					&\qquad\qquad+ c_0\,\left[\Theta^{(1),[N^2]}_x(t) -\bar{\Theta}^{(1),[N^2]}(t)\right]\, \d t 
					+ \frac{c_1}{N}\left[\frac{1}{N^2}\sum_{i\in[N^2]}x_j^{[N^2]}(t)-x_i^{[N^2]}(t)  \right]\d t\\
					&\qquad\qquad + \sqrt{g(x^{[N^2]}_i(t))}\, \d  w_i (t)+ K_0 e_0\, [y^{[N^2]}_{i,0}(t)-x^{[N^2]}_i(t)]\,\d t\\
					&\qquad\qquad+\frac{K_1e_1}{N}\left[y^{[N^2]}_{i,1}(t)-x_i^{[N^2]}(t)\right]\d t
					+ \frac{K_2 e_2}{N^2}\, [y^{[N^2]}_{i,0}(t)-x^{[N^2]}_i(t)]\,\d t,\\
					&\d y^{[N^2]}_{i,0}(t) = e_0\,[x^{[N^2]}_i(t)-y^{[N^2]}_{i,0}(t)]\, \d t, \\
					&\d y^{[N^2]}_{i,1}(t) = \frac{e_1}{N}\,[x^{[N^2]}_i(t)-y^{[N^2]}_{i,1}(t)]\, \d t, \\
					&\d y^{[N^2]}_{i,2}(t) = \frac{e_2}{N^2}\,[x^{[N^2]}_i(t)-y^{[N^2]}_{i,2}(t)]\, \d t, \qquad i \in [N].
				\end{aligned}
			\end{equation}
			Using this SSDE we can exactly proceed as in the proof of Lemma~\ref{l.comp2b2} to obtain the result. Note that the colour-$2$ seed-bank can be treated just in the same way as the colour-$1$ seed-bank in the proof of Lemma~\ref{l.comp2b2}, since its rate of interaction with the active population is even slower than the rate of interaction of the colour-$1$ seed-bank. 
		\end{proof}
		
		Finally, we state the equivalent of Lemma~\ref{lem:12a} for the three-colour two-layer mean-field system.
		
		\begin{lemma}{\bf[Coupling of finite systems]}
			\label{lem:2}
			Let 
			\begin{equation}
				Z^{[N^2],1}=(X^{[N^2],1},Y_0^{[N^2],1},Y_1^{[N^2],1},Y_2^{[N^2],1})
			\end{equation}
			be the finite system evolving according to \eqref{gh45a2twee} starting from an exchangeable initial measure. Let $\mu^{[N],1}$ be the measure obtain by periodic continuation of the configuration of $Z^{[N^2],1}(0)$ in the $1$-block around $0$. Similarly, let  
			\begin{equation}
				Z^{[N^2],2}=(X^{[N^2],2},Y_0^{[N^2],2},Y_1^{[N^2],2},Y_2^{[N^2],2})
			\end{equation} 
			be the finite system evolving according to \eqref{gh45a2twee} starting from an exchangeable initial measure. Let $\mu^{[N],2}$ be the measure obtained by periodic continuation of the configuration of $Z^{[N^2],1}(0)$ in the $1$-block around $0$. Let $\tilde{\mu}$ be any weak limit point of the sequence of measures $(\mu^{[N],1}\times\mu^{[N],2})_{N\in\N}$. Define the variables $\bar{\Theta}^{[N],1}$ on $(([0,1]^4,\mu^{[N],1})^{\N_0})$, $\bar{\Theta}^{[N],2}$ on $(([0,1]^4)^{\N_0},\mu^{[N],2})$ and $\bar{\Theta}_1$ and $\bar{\Theta}_2$ on $(([0,1]^4)^{\N_0}\times([0,1]^4)^{\N_0},\mu)$ by
			\begin{equation}
				\label{83a}
				\begin{aligned}
					&\bar{\Theta}^{[N],1} = \frac{1}{N} \sum_{i \in [N]} \frac{x^{[N^2],1}_{i}+K_0y^{[N^2],1}_{i,0}}{1+K_0},
					\qquad \bar{\Theta}^{[N],2} =  \frac{1}{N} \sum_{i \in [N]} \frac{x^{[N^2],2}_{i}+K_0y^{[N^2],2}_{i,0}}{1+K_0},\\
					&\bar{\Theta}^1 = \lim_{n\to\infty} \frac{1}{n} \sum_{i \in [n]} \frac{x^1_{i}+K_0y^1_{i,0}}{1+K_0},
					\qquad \bar{\Theta}^2 = \lim_{n\to\infty} \frac{1}{n} \sum_{i \in [n]} \frac{x^2_{i}+K_0y^2_{i,0}}{1+K_0},
				\end{aligned}
			\end{equation}
			and let $(\bar{\Theta}^{(1),[N],1}(t))_{t\geq 0}$ and $(\bar{\Theta}^{(1),[N],2}(t))_{t\geq 0}$ be defined  as in  \eqref{ma6} for $Z^{[N^2],1}$, respectively, $Z^{[N^2],2}$. Suppose that   
			\begin{equation}
				\label{m8a}
				\begin{aligned}
					&\lim_{N\to\infty} \sup_{0 \leq t \leq L(N)} \left(\left|\bar{\Theta}^{[N],k}(0)-\bar{\Theta}^{[N],k}(t)\right|\right) 
					= 0\ \text{ in probability}, \quad k \in\{1,2\},
				\end{aligned}
			\end{equation}
			and suppose that  $\tilde{\mu}(\{\bar{\Theta}_1=\bar{\Theta}_2,\, Y^1_{1}=Y^2_1, Y^1_2=Y^2_2\})=1$. Then, for any  sequence $(t(N))_{N\in\N}$ with  $\lim_{N \to \infty}t(N)=\infty$,
			\begin{equation}
				\label{m30}
				\begin{aligned}
					&\lim_{N\to\infty}\E\bigl[|x^{[N],1}_{i}(t(N))-x^{[N],2}_{i}(t(N))|+K_0|y^{[N],1}_{i,0}(t(N))-y^{[N],2}_{i,0}(t(N))|\\
					&\qquad +K_1|y^{[N],1}_{i,1}(t(N))-y^{[N],2}_{i,1}(t(N))|+K_2|y^{[N],1}_{i,2}(t(N))-y^{[N],2}_{i,2}(t(N))|\bigr]=0.
				\end{aligned}
			\end{equation}  
		\end{lemma}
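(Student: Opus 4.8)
The plan is to mimic the proof of Lemma~\ref{lem:12a} for the two-colour system, now carrying along one additional slow seed-bank of colour $2$. First I would couple the two copies $Z^{[N^2],1}$ and $Z^{[N^2],2}$ by driving their SSDEs \eqref{gh45a2twee} with the same Brownian motions $(w_i)_{i\in[N]}$ (continued periodically to $\N_0$ outside the $1$-block around $0$), and apply It\^o's formula to
\[
\Phi_i^{[N]}(t) = \E\Big[|x^{[N],1}_{i}(t)-x^{[N],2}_{i}(t)| + K_0|y^{[N],1}_{i,0}(t)-y^{[N],2}_{i,0}(t)| + K_1|y^{[N],1}_{i,1}(t)-y^{[N],2}_{i,1}(t)| + K_2|y^{[N],1}_{i,2}(t)-y^{[N],2}_{i,2}(t)|\Big].
\]
As in Lemma~\ref{lem:12a}, the local-time terms vanish because $g$ is Lipschitz (cf.\ \cite[Proposition V.39.3]{RoWi00}), the migration terms contribute a nonpositive term (the $c_0$-interaction within the $1$-block, via a sign-mismatch indicator) together with a term of order $1/N$ (the $c_1$-interaction), and each exchange term contributes a nonpositive sign-mismatch term. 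The colour-$2$ seed-bank is handled exactly like the colour-$1$ seed-bank, its exchange rate $K_2e_2/N^2$ being even smaller. This shows that $t\mapsto\Phi_i^{[N]}(t)$ is nonincreasing for every $N$, the monotonicity property that lets us trade the given time $t(N)$ for a smaller, more convenient one.

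Given this monotonicity, it suffices to prove $\lim_{N\to\infty}\Phi_i^{[N]}(l(N))=0$ for one sequence $(l(N))_{N\in\N}$ with $l(N)\le t(N)$, $\lim_{N\to\infty}l(N)=\infty$ and $\lim_{N\to\infty}l(N)/N=0$. I would insert the two infinite systems \eqref{mgh52} started from $\mu^{[N],1}$ and $\mu^{[N],2}$, and split $\Phi_i^{[N]}(l(N))$ into (i) a finite-versus-infinite term for system $1$, (ii) an infinite-versus-infinite term, and (iii) a finite-versus-infinite term for system $2$. Terms (i) and (iii) tend to $0$ for a common sequence $l(N)$, constructed exactly as the sequence $\bar{L}(N)$ in the analogue of Lemma~\ref{unifergod2} — using the finite-versus-infinite comparison Lemma~\ref{l.comp2}, the estimate of Lemma~\ref{lemlev1}, and the stability hypothesis \eqref{m8a} — with the simultaneous construction for the two families carried out as in the proof of Lemma~\ref{lem:12}. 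For term (ii), the hypothesis $\tilde{\mu}(\{\bar{\Theta}_1=\bar{\Theta}_2,\ Y^1_{1}=Y^2_1,\ Y^1_2=Y^2_2\})=1$ ensures that the two limiting infinite systems share the conserved quantity $\bar{\Theta}$ and the frozen slow seed-bank configurations $(y_{i,1})_{i\in\N_0}$ and $(y_{i,2})_{i\in\N_0}$; hence the two evolutions \eqref{mgh52} agree in law, and being coupled through the same Brownian motions and started from the same law, the coupled $L^1$ difference converges to $0$ by the ergodicity argument of Lemma~\ref{lemerg2}, applied to each single colony with the non-interacting components $y_{i,1},y_{i,2}$ carried along trivially. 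Adding the three bounds gives $\lim_{N\to\infty}\Phi_i^{[N]}(l(N))=0$, and monotonicity upgrades this to $\lim_{N\to\infty}\Phi_i^{[N]}(t(N))=0$, which is \eqref{m30}; exchangeability of the dynamics then gives the conclusion for every $i\in[N]$.

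The main obstacle is bookkeeping rather than a new idea: one must verify that inserting the second frozen seed-bank does not break the joint construction of the sequence $l(N)$, and that the difference $y^{[N],1}_{i,2}-y^{[N],2}_{i,2}$ is controlled with the same rate $e_2/N^2$ that already appears in Lemma~\ref{l.comp2} and Lemma~\ref{lemlev1}. Once it is checked that the colour-$2$ seed-bank behaves in every estimate like a slower copy of the colour-$1$ seed-bank, the argument is identical to that for Lemma~\ref{lem:12a}.
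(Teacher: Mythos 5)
Your proof takes exactly the paper's route: establish by It\^o calculus that $t\mapsto\Phi_i^{[N]}(t)$ is nonincreasing (with the colour-$2$ exchange handled just like colour-$1$, only slower), then pass to a common sequence $l(N)\le t(N)$ and run the three-term finite/infinite/finite decomposition from Lemma~\ref{lem:12}, using the $\bar{L}(N)$-construction of Lemma~\ref{unifergod2}, the comparison Lemma~\ref{l.comp2}, and the ergodicity of the limiting infinite systems under the hypothesis $\tilde\mu(\{\bar\Theta_1=\bar\Theta_2,\,Y^1_1=Y^2_1,\,Y^1_2=Y^2_2\})=1$. The paper states this in a single sentence deferring to Lemma~\ref{lem:12a} and Lemma~\ref{lem:12}; your write-up supplies the same argument in more detail and is correct.
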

		
		\begin{proof}
			Like in the proof of Lemma~\ref{lem:12a}, we can show with It\^o calculus that the function
			\begin{equation}
				\begin{aligned}
					t\to&\E\bigl[|x^{[N],1}_{i}(t(N))-x^{[N],2}_{i}(t(N))|+K_0|y^{[N],1}_{i,0}(t(N))-y^{[N],2}_{i,0}(t(N))|\\
					&\qquad +K_1|y^{[N],1}_{i,1}(t(N))-y^{[N],2}_{i,1}(t(N))|+K_2|y^{[N],1}_{i,2}(t(N))-y^{[N],2}_{i,2}(t(N))|\bigr]
				\end{aligned}
			\end{equation}
			is monotonically decreasing. Hence we can proceed as in the proof of Lemma~\ref{lem:12} to show that  \eqref{m30} is true.
		\end{proof}
		
		From the above couplings we can derive the following corollary, which is the analogue of Corollary~\ref{cor2} for the two-level three-colour mean-field system.
		
		\begin{corollary}
			\label{cor3}
			Fix $t_1,t_2>0$ and set $u=N^2t_2+Nt_1$. Let $\mu_{N}$ be the measure obtained by periodic continuation of 
			\begin{equation}
				Z^{[N^2]}(u-L(N)) = \big(X^{[N^2]}(u-L(N)),Y_0^{[N^2]}(u-L(N)),Y_1^{[N^2]}(u-L(N)),Y_2^{[N^2]}(u-L(N))\big),
			\end{equation} 
			and let $\mu$ be a weak limit point of the sequence $(\mu_N)_{N\in\N}$. Let
			\begin{equation}
				\label{1a2alt}
				\Theta=\lim_{N\to\infty}\frac{1}{N}\sum_{i\in[N]}\frac{x_i^\mu+Ky_i^{\mu}}{1+K}\, \qquad \text{ in }L^2(\mu),
			\end{equation}
			and let $(Z^{\nu_\Theta}(t))_{t>0}=(X^{\nu_\Theta}(t),Y_0^{\nu_\Theta}(t),Y_1^{\nu_\Theta}(t),Y_2^{\nu_\Theta}(t))_{t>0}$ be the infinite system with components evolving according to \eqref{z0} with $\theta=\Theta$ and $y_{i,1,0}$ and $y_{i,2,0}$ determined by assumption \eqref{z3} and starting from its equilibrium measure. Extend the finite system $Z^{[N^2]}$ as a system on $([0,1]^4)^{\N_0}$ by periodic continuation. Construct $(Z^{[N^2]}(t))_{t> 0}$ and  $(Z^{\nu_\Theta}(t))_{t>0}$ on one probability space. Then there exists a sequence $(\bar{L}(N))_{N\in\N}$ such that $\lim_{N \to \infty}\bar{L}(N)=\infty,$ $\lim_{N \to \infty}\frac{\bar{L(N)}}{N}=0$ and
			\begin{equation}
				\label{932}
				\begin{aligned}
					&\lim_{N \to \infty} \E\left[\left|x_i^{[N^2]}(Ns)-x_i^{\nu_\Theta}(\bar{L}(N))\right|\right]
					+K_0\,\E\left[\left|y_{i,0}^{[N^2]}(Ns)-y_{i,0}^{\nu_\Theta}(\bar{L}(N))\right|\right]\\
					&+K_1\,\E\left[\left|y_{i,1}^{[N^2]}(Ns)-y_{i,1}^{\nu_\Theta}(\bar{L}(N))\right|\right]+K_2\,\E\left[\left|y_{i,2}^{[N^2]}(Ns)-y_{i,2}^{\nu_\Theta}(\bar{L}(N))\right|\right]=0,\,i\in[N].
				\end{aligned}
			\end{equation}
		\end{corollary}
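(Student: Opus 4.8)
The plan is to follow the template of the proof of Corollary~\ref{cor2}, now in the two-level three-colour setting. First I would fix $t_1,t_2>0$ and set $u = N^2t_2 + Nt_1$. By Step~1 (Lemma~\ref{lem:t3}) and Step~3 (Lemma~\ref{tonebl}), together with assumptions \eqref{as2} and \eqref{as32}, along a suitable subsequence the hypotheses of Proposition~\ref{ma77} are satisfied; applying that proposition gives
\begin{equation}
\lim_{k\to\infty}\CL\bigl[Z^{[N_k^2]}(u_k)\bigr] = \nu(t_1,t_2)
= \int_{[0,1]^2} P_{t_1,t_2}(\d\theta^{(1)},\d\theta^{(1)}_y)\int_{[0,1]^{\N_0}}
P^{(\theta^{(1)},\theta^{(1)}_y)}(\d{\bf y})\,\nu_{\theta,{\bf y}},
\end{equation}
which identifies the equilibrium distribution $\nu_\Theta$ appearing in the statement (with $\Theta$ inheriting its law from $\lim_k \CL[\bar\Theta^{(1),[N_k^2]}(u_k)]$ and the slow seed-bank components $y_{i,1,0}$, $y_{i,2,0}$ from the conditional law $P^{z^\eff_1(t_1)}$). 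The stability conditions \eqref{ma22bb}--\eqref{mascy} of Lemma~\ref{lemsta2} and \eqref{ma23bb}--\eqref{macy3} of Lemma~\ref{lem:sta3} supply exactly the ``no fluctuation over a window of length $L(N)$'' hypotheses needed to run the comparison arguments.

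Next I would construct the sequence $(\bar L(N))_{N\in\N}$. This is the standard diagonalisation used in Lemma~\ref{unifergod2}[b] (and repeated in Corollary~\ref{cor1}, Corollary~\ref{cor2}): using Lemma~\ref{l.comp2} to compare $Z^{[N^2]}(u-L(N)+t)$ with the infinite system $Z^{\mu_N}(t)$ started from the periodic continuation $\mu_N$ of the $1$-block configuration at time $u-L(N)$, and Lemma~\ref{lem:lemlip2}/Lemma~\ref{l.comp2} to compare $Z^{\mu_N}(t)$ with $Z^\mu(t)$, one obtains for each fixed $t$ that the expected coupled difference of all four coordinates tends to $0$ as $N\to\infty$. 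Picking an increasing sequence $t_k\to\infty$ with $t_k/k\to0$ and choosing $N_k$ so that the coupled difference at time $t_k$ is $<1/k$ for $N\ge N_k$, one sets $\bar L(N)=\sum_k t_k\,1_{\{N_k,\dots,N_{k+1}-1\}}(N)$, exactly as in \eqref{957}. This $\bar L(N)$ satisfies $\bar L(N)\to\infty$, $\bar L(N)/N\to0$, and
\begin{equation}
\lim_{N\to\infty}\E\bigl[|\Delta^{[N]}_i(\bar L(N))| + K_0|\delta^{[N]}_{i,0}(\bar L(N))|
+ K_1|\delta^{[N]}_{i,1}(\bar L(N))| + K_2|\delta^{[N]}_{i,2}(\bar L(N))|\bigr]=0
\end{equation}
for all $i$, where $\Delta,\delta$ denote the coordinatewise differences between $Z^{[N^2]}(u - L(N)+\cdot)$ and $Z^{\nu_\Theta}(\cdot)$ in the common coupling through shared Brownian motions. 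The third limit $\delta^{[N]}_{i,2}\to0$ is handled as the colour-$1$ case was in Lemma~\ref{l.comp2b2} (the colour-$2$ drift is even smaller, being $O(N^{-2})$ on time scale $1$), so no new idea is required here.

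Finally, to replace the argument-shift $Ns$ appearing in \eqref{932} (the corollary is stated with $x_i^{[N^2]}(Ns)$ and $x_i^{\nu_\Theta}(\bar L(N))$) I would invoke the coupling-of-finite-systems lemma, Lemma~\ref{lem:2}: compare the finite system started at time $u-L(N)$ with the finite system started at time $u-\bar L(N)$; the hypothesis $\tilde\mu(\{\bar\Theta_1=\bar\Theta_2,\,Y^1_1=Y^2_1,\,Y^1_2=Y^2_2\})=1$ follows from the stability estimates \eqref{ma22bb}--\eqref{mascy} together with the fact (from \eqref{m112}) that the slow colour-$1$ and colour-$2$ single-colony components barely move over a window $L(N)$, and then Lemma~\ref{lem:2} (with $t(N)=\bar L(N)$) gives that the coupled difference vanishes in expectation, which combined with the previous display yields \eqref{932}. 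The main obstacle I anticipate is the bookkeeping in Proposition~\ref{ma77} and the accompanying lemmas: one must carry the extra non-interacting colour-$2$ coordinate through the Herglotz/Fourier stability argument (Lemma~\ref{stabest2} extended by one more spectral component), through the McKean--Vlasov comparison (adding one more drift term of order $N^{-2}$), and through the diagonalisation — each step is a routine but somewhat tedious upgrade of the two-colour proof, and the delicate point is verifying that the conditional laws $P^{z^\eff_1(t_1)}$ of the slow seed-banks are genuinely well-defined limits (this is where assumptions \eqref{as2} and \eqref{as32} are used and cannot be dispensed with at this level of the hierarchy).
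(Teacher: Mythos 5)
Your proposal is correct and takes essentially the same route as the paper: the paper leaves Corollary~\ref{cor3} as an immediate consequence ``from the above couplings'' in direct analogy with Corollary~\ref{cor2}, which is precisely what you do --- identify the limit via Proposition~\ref{ma77}, build $\bar L(N)$ by the diagonalisation of Lemma~\ref{unifergod2}[b] (extended by the extra colour-$2$ coordinate, whose drift is $O(N^{-2})$), and use Lemma~\ref{lem:2} for the time-shift coupling. The only blemishes are cosmetic (a reference typo to a nonexistent \emph{lem:lemlip2} where you mean \emph{lemlip2}, and reproducing the paper's own typo $Ns$ in \eqref{932} rather than $u$); the logic is sound.
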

		
		Note that Lemmas~\ref{l.comp2}, \ref{lem:2} and Corollary~\ref{cor3} do not only hold for sites $i$ in the $1$-block around $0$, but hold for for all sites $i\in[N^2]$, after we replace $\Theta^{[N]_0}_0$ by $\Theta^{[N]_i}_i$ .
		
		\subsubsection{Limiting evolution of the $1$-block estimator process}
		\label{s.1bl}
		
		\begin{proposition}{\bf [Limiting evolution of the 1-blocks]}
			\label{lem:1blev}
			Fix $t_2>0$.  Let  $(L(N))_{N\in\N}$ satisfy $\lim_{N \to \infty}L(N)=\infty$ and $\lim_{N \to \infty} L(N)/N=0$. Let $(N_k)_{k\in\N}$ be a subsequence such that
			\begin{equation}
				\label{as23}
				\begin{aligned}
					&\lim_{k \to \infty}\CL\left[\left({\bf\Theta}^{\eff, (2),[N_k^2]}(N_k^2t_2)\right)\right]=P_{t_2}(\cdot),\\
					&\lim_{k\to\infty}\CL\left[y_{2,1}^{[N_k^2]}(N_kt_2)\Big|{\bf \Theta}^{(2),[N_k^2]}(N_k^2t_2)\right] = P^{z_2(t_2)}, \\ 
					&\lim_{k\to\infty}\CL\left[\left(Y_{1,0}^{[N_k^2]}(N^2_kt_2+N_kt_1),Y_{2,0}^{[N_k^2]}(N^2_kt_2)\right)
					\Big|{\bf\Theta}^{\aux,(1),[N_k^2]}(N^2_kt_2+N_kt_1)\right] = P^{{z}_1^{\eff}(t_1)},\\
					&\lim_{k\to\infty }\CL\Bigg[\sup_{0\leq t\leq L(N_k)}\left|\bar{\Theta}^{(2),[N^2_k]}(N^2_k t_2)
					-\bar{\Theta}^{(2),[N^2_k]}(N_k^2t_2-N_kt)\right|\\
					&\qquad\qquad  +\left|{\Theta}_{y_2}^{(2),[N^2_k]}(N_k^2t_2)-{\Theta_{y_{2}}}^{(2),[N^2_k]}(N^2_kt_2-Nt)\right|\Bigg]
					=\delta_{0}.
				\end{aligned}
			\end{equation}
			Then, for the $1$-block around $0$,
			\begin{equation}
				\label{mgh6}
				\begin{aligned}
					\lim_{k \to \infty}\CL\left[{\bf\Theta}^{\aux, (1),[N_k^2]}(N_k^2t_2)\right]
					&=\int_{[0,1]^2}\int_{[0,1]} \Gamma^{\aux,(1)}_{u,y_{2,1}}\,P^{(u,v)}(\d y_{2,1})\,P_{t_2}(\d u,\d v),
				\end{aligned}
			\end{equation}	
			where $\Gamma^{\aux,(1)}_{u,y_{2,1}} $ is the equilibrium measure of \eqref{m1chb} with $\theta$ replaced by $u$, and
			\begin{equation}
				\label{mgh6e}
				\lim_{k \to \infty}\CL\left[\left({\bf\Theta}^{\aux, (1),[N_k^2]}(N_k^2t_2+N_kt_1)\right)_{t_1>0}\right]
				= \CL\left[(z_1^\aux(t_1))_{t_1> 0})\right],
			\end{equation}
			where $(z_1^\aux(t_1))_{t_1> 0}$ is the process evolving according to \eqref{m1chb} with $\theta$ replaced by the random variable $\bar{\Theta}^{(2)}(t_2)$ and with initial measure $\int_{[0,1]^2}\int_{[0,1]} \Gamma^{\aux,(1)}_{u,y_{2,1}}\,P^{(u,v)}(\d y_{2,1})\,P_{t_2}(\d u,\d v)$, and $y_{2,1}$ is a random variable.
		\end{proposition}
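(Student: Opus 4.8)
The plan is to follow the abstract scheme developed for the one-colour and two-colour mean-field finite-systems schemes (Steps 1--4 in Section~\ref{sec:finsysmf} and the two-colour version in Section~\ref{pmfs2}), now applied one level up in the hierarchy. The key new feature is that the drift of the active $1$-block average is no longer towards a fixed constant but towards the instantaneous value of the $2$-block average $\bar{\Theta}^{(2)}(t_2)$, which by the assumptions in \eqref{as23} and by Lemma~\ref{lem:sta3} has already equilibrated and is therefore essentially frozen on time scale $N^2t_2+Nt_1$ with $t_1>0$. Concretely, I would first combine the tightness (Lemma~\ref{tonebl}) and stability (Lemma~\ref{lemsta2}) of the auxiliary $1$-block estimator with the coupling/comparison machinery (Lemma~\ref{lemlev1}, Lemma~\ref{l.comp2}, the extended version of Lemma~\ref{stabest2}, the analogue of Lemma~\ref{unifergod2}, and Lemma~\ref{lem:2}, together with Corollary~\ref{cor3}) to run the analogue of Proposition~\ref{ma77}: namely, along the subsequence $(N_k)_{k\in\N}$, the configuration of single colonies at time $N_k^2t_2$, periodically continued, converges to the infinite system in \eqref{z0} started from an equilibrium mixture. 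This identifies the one-dimensional law of ${\bf\Theta}^{\aux,(1),[N_k^2]}(N_k^2t_2)$ as the mixture in \eqref{mgh6}, where the mixing measure $P_{t_2}$ governs $(\bar\Theta^{(2)}(t_2),\Theta^{(2)}_{y_2}(t_2))$ and $P^{(u,v)}$ governs $y_{2,1}$.

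Next I would establish the path-level statement \eqref{mgh6e}. The argument mirrors the proof of Lemma~\ref{lemlimev}: show that $({\bf\Theta}^{\aux,(1),[N^2]}(N^2t_2+Nt_1))_{t_1>0}$ is a $\CD$-semimartingale with operator $G_\dagger^{(1),[N^2]}$ as in \eqref{m45}, verify the Joffe--M\'etivier hypotheses $H_1,H_2,H_3$, invoke \cite[Theorem 3.3.1]{JM86}, and check that the generator of $G_\dagger^{(1),[N^2]}$ converges, in the sense of
\begin{equation}
\lim_{N\to\infty}\int_0^s \d r\;\E\Big[\big|G_\dagger^{(1),[N^2]}\big(f,{\bf\Theta}^{\aux,(1),[N^2]}(N^2t_2+Nr),N^2t_2+Nr,\cdot\big)-(Gf)\big({\bf\Theta}^{\aux,(1),[N^2]}(N^2t_2+Nr)\big)\big|\Big]=0,
\end{equation}
to the generator $G$ of the SSDE \eqref{m1chb} with $\theta$ replaced by the random variable $\bar\Theta^{(2)}(t_2)$. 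Here the three ingredients to control are: (i) the replacement of $\tfrac1N\sum_{i\in[N]}x_i$ by $\bar\Theta^{(1),[N^2]}$, which is $\CO(\sqrt{1/N})$ by Lemma~\ref{lemlev1}; (ii) the replacement of $\tfrac1N\sum_{i\in[N]}g(x_i)$ by $(\CF^{(1)}g)(\bar\Theta^{(1),[N^2]})$, which follows from the equilibrium and ergodicity results (the analogues of Lemma~\ref{lemerg2} and Corollary~\ref{cor3}) exactly as in the estimate \eqref{a5}; and (iii) the vanishing of the extra $\tfrac{K_2e_2}{N}$ drift term towards the colour-$2$ seed-bank, which is immediate since that rate tends to $0$. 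The initial condition for the limiting process is obtained from \eqref{mgh6} together with the stability property, which guarantees that on the fast time scale $t_1$ the $1$-block has already reached the quasi-equilibrium $\Gamma^{\aux,(1)}_{u,y_{2,1}}$.

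I expect the main obstacle to be the identification of the \emph{frozen environment}: one must argue that on the relevant time scale the $2$-block average $\bar\Theta^{(2)}(t_2)$ acts as a genuine (random but time-constant) drift center for the $1$-block process, despite $\bar\Theta^{(2)}$ being itself a diffusion on time scale $N^2$. This requires the stability estimate \eqref{ma23bb}--\eqref{macy3} (Lemma~\ref{lem:sta3}) to control the backward increment of $\bar\Theta^{(2)}$ over a window of length $N L(N)$ with $L(N)/N\to 0$, and then a comparison-coupling argument (the analogue of Lemma~\ref{l.comp}) to replace the true, slowly-moving $2$-block drift in the finite system by the constant $\bar\Theta^{(2)}(t_2)$ in the limiting infinite system, uniformly over compact $t_1$-intervals. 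A secondary subtlety, already flagged in the remarks following Proposition~\ref{P.finsysmf2lev}, is the treatment of the colour-$2$ seed-bank: it has not yet equilibrated on this scale, so unlike the colour-$1$ seed-bank in the lower-level analysis it contributes a genuine (though non-moving) component $y_{2,1}$ whose law $P^{(u,v)}$ must be carried along rather than collapsed — this is why the assumption \eqref{as23} on the conditional law of $Y_{2,1}^{[N^2]}$ is needed here, whereas in the full Theorems~\ref{T.multiscalehiereff}--\ref{T.multiscalehier} it becomes automatic because there are always higher levels available to pin it down.
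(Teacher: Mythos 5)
Your plan for \eqref{mgh6e} is a legitimate alternative route: you propose to verify the Joffe--M\'etivier $\CD$-semimartingale conditions and generator convergence, as in Lemma~\ref{lemlimev}, while the paper instead (Lemma~\ref{lem:aux}) couples the finite-$N$ auxiliary $1$-block process and the limiting process $({\bf z}_1^\aux(t_1))_{t_1>0}$ through their Brownian motions and controls the $L^2$ difference via It\^o calculus (see \eqref{325}); the remark after Lemma~\ref{lem:aux} acknowledges the two routes are interchangeable. Your three ingredients (i)--(iii) match what the paper estimates in \eqref{m10}, \eqref{s5}, and the $\CO(1/N)$ vanishing of the colour-$2$ drift, and your identification of the ``frozen environment'' issue and the role of Lemma~\ref{lem:sta3} is accurate.

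However, there is a genuine gap in your treatment of \eqref{mgh6}. You assert that running the analogue of Proposition~\ref{ma77} --- the single-component McKean--Vlasov limit --- ``identifies the one-dimensional law of ${\bf\Theta}^{\aux,(1),[N_k^2]}(N_k^2t_2)$ as the mixture in \eqref{mgh6}.'' It does not. Proposition~\ref{ma77} \emph{takes the law of the $1$-block estimator as an input} (it appears as $P_{t_1,t_2}$ in the first line of \eqref{z3}) and describes the single colonies \emph{conditionally} on that value: it tells you nothing about what $P_{t_1,t_2}$ actually is. The content of \eqref{mgh6} is precisely that this law equals the equilibrium mixture $\int\int\Gamma^{\aux,(1)}_{u,y_{2,1}}\,P^{(u,v)}(\d y_{2,1})\,P_{t_2}(\d u,\d v)$, i.e.\ that the $1$-block average has itself reached quasi-equilibrium by time $N^2 t_2$. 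Stability (Lemma~\ref{lemsta2}) only says the $1$-block estimator is frozen over windows of order $L(N)$ on the time-$1$ scale; it does not by itself force the estimator onto its equilibrium. The paper closes this gap with a separate argument (Lemma~\ref{cgam1}): choose $(t_n)_{n\in\N}$ with $t_n\to\infty$, $t_n/n\to 0$, restart the $1$-block process at time $N^2t_2-Nt_n$, use the Lemma~\ref{lem:aux} coupling to replace ${\bf\Theta}^{\aux,(1),[N^2]}(N^2t_2)$ by the limiting $1$-block diffusion run for time $t_n$, and then show (via coupling of two limiting systems, a tightness-plus-subsequence argument, and a contradiction step) that the limiting $1$-block diffusion at time $t_n\to\infty$ converges to $\Gamma^{\aux,(1)}_{u,y_{2,1}}$. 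Without some version of this quasi-equilibrium argument your proof of \eqref{mgh6} does not go through, and consequently the ``initial condition for the limiting process'' in your proof of \eqref{mgh6e} is not available.
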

		Note that by tightness of the $2$-blocks and the assumptions in Proposition~\ref{P.finsysmf2lev}, we can always find a subsequence $(N_k)_{k\in\N}$ such that \eqref{as23} holds and also \eqref{z3} holds. To prepare for the proof of  Proposition~\ref{lem:1blev}, we prove four lemmas: Lemma~\ref{lem15} shows that the limiting $1$-block system has a unique equilibrium, Lemma~\ref{lemav3} implies convergence of the active $2$-block estimator and the combined $2$-block estimator, Lemma~\ref{stabest3} gives a regularity property for the $2$-block estimator, and Lemma~\ref{lem:aux} shows the limiting evolution of the auxiliary $1$-block estimator process. Lemma~\ref{cgam1} proves equation~\eqref{mgh6}. After we derive these lemmas we prove Proposition~\ref{lem:1blev}.
		
		\begin{lemma}{\bf [1-block equilibrium]}
			\label{lem15}
			For any initial distribution $\mu\in([0,1]^3)$, the process $(z_1^\aux(t_1))_{t_1> 0}$  evolving according to \eqref{m1chb} is well defined and converges to a unique equilibrium measure
			\begin{equation}
				\label{0}
				\lim_{t_1\to\infty}\CL[z^\aux_1(t_1)]=\Gamma_{\theta,y_{2,1}}^{\aux,(1)}.
			\end{equation} 
		\end{lemma}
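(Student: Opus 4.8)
The plan is to reduce Lemma~\ref{lem15} to the single-colony ergodicity already established in Proposition~\ref{P.equergod} (equivalently Lemma~\ref{lemerg}). The SSDE \eqref{m1chb} decouples: the coordinate $y^\aux_{2,1}(t)\equiv y_{2,1}$ is inert, while $(x^\aux_1(t),y^\aux_{1,1}(t))_{t\geq 0}$ solves a closed system which, after the parameter change $\tilde g=(\CF^{(1)}g)/(1+K_0)^2$, $\tilde c=c_1/(1+K_0)$, $\tilde e=e_1$, $\tilde K=K_1/(1+K_0)$, is precisely of the form
\begin{equation}
\begin{aligned}
\d x^{\aux}_1(t) &= \tilde c\,[\theta-x^\aux_1(t)]\,\d t+\w{\tilde g(x^\aux_1(t))}\,\d w(t)+\tilde K\tilde e\,[y^\aux_{1,1}(t)-x^\aux_1(t)]\,\d t,\\
\d y^{\aux}_{1,1}(t) &= \tilde e\,[x^\aux_1(t)-y^\aux_{1,1}(t)]\,\d t,
\end{aligned}
\end{equation}
i.e.\ of the form of a single component of the infinite system \eqref{gh5inf}, with drift towards the \emph{constant} $\theta$. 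Since $\CF^{(1)}g=\CF g\in\CG$ — this is the assertion proved in Section~\ref{ss.pabstracts}, with the Lipschitz and boundary properties of $\CF$ and continuity of $\theta\mapsto\nu_\theta$ furnished by Lemma~\ref{lemlip} — we have $\tilde g\in\CG$; in particular $\tilde g$ is Lipschitz. Well-posedness of \eqref{m1chb} is then immediate from \cite[Theorem~1]{YW71} (as already noted below \eqref{m12chb} and using Remark~\ref{R.dh12}): for every deterministic initial state there is a pathwise unique strong solution, defining a Feller Markov process, and the inert coordinate $y^\aux_{2,1}$ carries along unchanged.

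Next I would prove uniqueness of the equilibrium, essentially transcribing Section~\ref{sec:equergod}. Taking expectations in \eqref{m1chb} yields the closed linear system $\tfrac{\d}{\d t}\E[x^\aux_1(t)]=\tilde c\,[\theta-\E[x^\aux_1(t)]]+\tilde K\tilde e\,[\E[y^\aux_{1,1}(t)]-\E[x^\aux_1(t)]]$ and $\tfrac{\d}{\d t}\E[y^\aux_{1,1}(t)]=\tilde e\,[\E[x^\aux_1(t)]-\E[y^\aux_{1,1}(t)]]$, whose unique, stable fixed point is $(\theta,\theta)$, so $(\E[x^\aux_1(t)],\E[y^\aux_{1,1}(t)])\to(\theta,\theta)$ exponentially fast (the analogue of \eqref{expz}). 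Then couple two copies of \eqref{m1chb} through the same Brownian motion and the same value $y_{2,1}$, and put $\Delta(t)=x^{\aux,1}_1(t)-x^{\aux,2}_1(t)$, $\delta(t)=y^{\aux,1}_{1,1}(t)-y^{\aux,2}_{1,1}(t)$. Applying the generalised It\^o formula to $|\Delta(t)|$ and $|\delta(t)|$ exactly as in \eqref{m12}--\eqref{m14}, using that the local time of $\Delta$ at $0$ vanishes because $\tilde g$ is Lipschitz \cite[Proposition~V.39.3]{RoWi00}, and noting that here the drift towards the constant $\theta$ simply cancels between the two copies (leaving a pure contraction term $-\tilde c\Delta$ rather than the decaying forcing term of \eqref{gh554}), one gets
\begin{equation}
\tfrac{\d}{\d t}\,\E\big[|\Delta(t)|+\tilde K|\delta(t)|\big] = -\tilde c\,\E\big[|\Delta(t)|\big]-2\tilde K\tilde e\,\E\big[\1_{\{\sign\,\delta(t)\neq\sign\,\Delta(t)\}}(|\delta(t)|+|\Delta(t)|)\big].
\end{equation}
The right-hand side is $-h(t)$ with $h\geq 0$ having bounded derivative (cf.\ \cite[Appendix~D]{GdHOpr1}) and finite integral over $[0,\infty)$ (bounded by $1+\tilde K$, with no boundary term since the $\theta$-drift cancelled), whence $h(t)\to 0$, so $\E[|\Delta(t)|]\to 0$; the argument of \eqref{m01}--\eqref{m02} then gives $\E[|\delta(t)|]\to 0$ as well. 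Hence solutions from any two initial laws (with the same $y_{2,1}$-marginal) merge, so for each $(\theta,y_{2,1})$ there is at most one invariant law.

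Finally, existence and convergence follow as in the last paragraph of the proof of Proposition~\ref{P.equergod}: the state space $[0,1]^3$ is compact, so for any initial law $\mu$ and any $t_n\to\infty$ the family $(\CL[z^\aux_1(t_n)])_n$ is tight; Prohorov gives a subsequential limit $\nu$, and the Feller property of the semigroup $(S_t)_{t\geq 0}$ of \eqref{m1chb} forces $S_t\nu=\nu$, so $\nu$ is invariant; by the uniqueness just proved, $\nu$ is the unique equilibrium, which we denote $\Gamma^{\aux,(1)}_{\theta,y_{2,1}}$ (it factorises as the equilibrium of the $(x^\aux_1,y^\aux_{1,1})$-part, which depends only on $\theta$, times $\delta_{y_{2,1}}$). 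Coupling the process started from $\mu$ with the one started from $\Gamma^{\aux,(1)}_{\theta,y_{2,1}}$ and invoking the contraction of the previous paragraph yields $\lim_{t_1\to\infty}\CL[z^\aux_1(t_1)]=\Gamma^{\aux,(1)}_{\theta,y_{2,1}}$, which is \eqref{0}. I do not anticipate a real obstacle: the lemma is in essence a corollary of Proposition~\ref{P.equergod}, since the auxiliary $1$-block process is structurally a single-colony process with renormalised diffusion function $\tilde g$ plus an inert coordinate, and the constant drift target $\theta$ only simplifies the coupling estimate. The two points deserving care are verifying $\CF^{(1)}g\in\CG$ (so that Yamada--Watanabe uniqueness and the vanishing-local-time argument apply) and keeping the inert coordinate $y_{2,1}$ bookkept correctly in the statement of the limit.
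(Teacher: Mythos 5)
Your proposal is correct and takes essentially the same route as the paper: the paper's proof simply cites \cite{YW71} for well-posedness and then invokes ``a similar argument as in the proof of Lemma~\ref{lemerg2}'', which itself reduces to the coupling argument of Proposition~\ref{P.equergod} once the inert coordinate $y_{2,1}$ is frozen — exactly the reduction and coupling you spell out (including the correct reparametrisation $\tilde c$, $\tilde e$, $\tilde K$, $\tilde g$ and the observation that the constant-$\theta$ drift cancels in the difference process). You merely make explicit the details that the paper leaves compressed into two sentences.
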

		
		\begin{proof}
			By \cite{YW71}, the SSDE in \eqref{m1chb}  has a unique strong solution. By a similar argument as in the proof of Lemma~\ref{lemerg2}, the SSDE in \eqref{m1chb} converges to a unique equilibrium measure $\Gamma^{\aux,(1)}_{\theta,y_{2,1}}$. 
		\end{proof}
		
		\begin{remark}{\bf [Equilibrium measure]}
			{\rm Note that Lemma~\ref{lem15} still holds when we allow $\theta$ and $y_{2,1}$ to be the random variables $\bar{\Theta}(t_2)$ and $y_{2,1}$. Assuming \eqref{as23}, we can derive the distributions of $\bar{\Theta}(t_2)$ and $y_{2,1}$, and we can write the equilibrium as $\int_{[0,1]^2}\int_{[0,1]}\Gamma^{\aux,(1)}_{u,y_{2,1}}\,P^{(u,v)}(\d y_{2,1})\,P_{t_2}(\d u,\d v)$.  In what follows we abbreviate
				\begin{equation}
					\Gamma^{(1)}_{\bar{ \Theta}(t_2),y_{2,1,i}}=\int_{[0,1]^2}\int_{[0,1]}
					\Gamma^{\aux,(1)}_{u,y_{2,1,i}}\,P^{(u,v)}(\d y_{2,1,i})\,P_{t_2}(\d u,\d v).
				\end{equation}
			}\hfill$\blacksquare$
		\end{remark}
		
		\begin{lemma}{\bf [$2$-block averages]}
			\label{lemav3}
			Define
			\begin{equation}
				\begin{aligned}
					&\Delta^{(2),[N^2]}_\Sigma (Nt_1)=\frac{\Theta^{(2),[N^2]}_x(Nt_1)
						+K_0\Theta^{(2),[N^2]}_{y_0}(Nt_1)}{1+K_0}-\Theta^{(2),[N^2]}_{y_1}(Nt_1).
				\end{aligned}
			\end{equation}
			Then
			\small
			\begin{equation}
				\begin{aligned}
					&\E\left[\left|\Delta^{(2),[N^2]}_\Sigma (Nt_1)\right|\right]\\
					&\leq \sqrt{\E\left[\left(\Delta^{(2),[N^2]}_\Sigma (0)\right)^2\right]}
					\e^{-e_1\left(\frac{1+K_0+K_1}{1+K_0}\right)t_1}\\
					&+\sqrt{\int_0^{t_1} \d s\,2e_1\left(\frac{1+K_0+K_1}{1+K_0}\right)\e^{-2e_1\left(\frac{1+K_0+K_1}{1+K_0}\right)(t_1-s)}
						\E \left[\left|\bar{\Theta}^{(1),[N^2]}(Ns)-\Theta^{(1),[N^2]}_x(Ns)\right|\right]}\\
					&+\sqrt{\frac{1}{e_1}\left[\frac{K_2e_2}{N(1+K_0+K_1)}+\frac{||g||}{2N(1+K_0+K_1)}\right]}.
				\end{aligned}
			\end{equation}
			\normalsize
		\end{lemma}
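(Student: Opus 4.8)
The plan is to derive an autonomous It\^o equation for $\Delta^{(2),[N^2]}_\Sigma$ on time scale $Nt_1$, apply It\^o's formula to its square, solve the resulting linear first--order ODE for the second moment, and conclude with Jensen's inequality together with the subadditivity of the square root. This is the same pattern as the proofs of Lemma~\ref{lemav} and Lemma~\ref{lemlev1a}.

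First I would sum the SSDE \eqref{gh45a2twee} over all $i\in[N^2]$: by antisymmetry of the migration kernel both the $c_0$--term and the $c_1$--term vanish, so after speeding up time by a factor $N$ the active $2$-block average $\Theta^{(2),[N^2]}_x(Nt_1)$ is driven by a Brownian term with diffusion coefficient $\big(\tfrac{1}{N}\tfrac{1}{N^2}\sum_{i\in[N^2]}g(x_i(Nt_1))\big)^{1/2}\le\sqrt{\|g\|/N}$, an infinite--rate exchange $NK_0e_0$ with $\Theta^{(2)}_{y_0}$, a finite--rate exchange $K_1e_1$ with $\Theta^{(2)}_{y_1}$, and a vanishing--rate exchange $\tfrac{K_2e_2}{N}$ with $\Theta^{(2)}_{y_2}$ (this is the $N^2t_2$--evolution \eqref{mfevolveba2} slowed down by a factor $N$). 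The decisive point is that in $\bar\Theta^{(2)}_\Sigma=(\Theta^{(2)}_x+K_0\Theta^{(2)}_{y_0})/(1+K_0)$ the two terms carrying the prefactor $N$ cancel, so $\d\bar\Theta^{(2)}_\Sigma(Nt_1)$ has no diverging rates; subtracting $\d\Theta^{(2)}_{y_1}(Nt_1)=e_1[\Theta^{(2)}_x-\Theta^{(2)}_{y_1}]\,\d t_1$ and substituting $\Theta^{(2)}_x-\Theta^{(2)}_{y_1}=\Delta^{(2)}_\Sigma-\tfrac{K_0}{1+K_0}(\Theta^{(2)}_{y_0}-\Theta^{(2)}_x)$ gives an equation of the form
\[
\d\Delta^{(2)}_\Sigma(Nt_1)=\tfrac{1}{1+K_0}\sqrt{\tfrac{1}{N}\tfrac{1}{N^2}\sum_{i\in[N^2]}g(x_i(Nt_1))}\;\d w(t_1)-\lambda\,\Delta^{(2)}_\Sigma(Nt_1)\,\d t_1+R(Nt_1)\,\d t_1,
\]
with $\lambda=e_1\tfrac{1+K_0+K_1}{1+K_0}$ and error drift $R=\lambda\tfrac{K_0}{1+K_0}(\Theta^{(2)}_{y_0}-\Theta^{(2)}_x)+\tfrac{K_2e_2}{N(1+K_0)}(\Theta^{(2)}_{y_2}-\Theta^{(2)}_x)$.

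Applying It\^o to $(\Delta^{(2)}_\Sigma)^2$ and taking expectations yields $\tfrac{\d}{\d t_1}\E[(\Delta^{(2)}_\Sigma)^2]=-2\lambda\E[(\Delta^{(2)}_\Sigma)^2]+\text{(source)}$, where the source is $2\E[\Delta^{(2)}_\Sigma R]$ plus the quadratic variation $\tfrac{1}{(1+K_0)^2}\E[\tfrac{1}{N}\tfrac{1}{N^2}\sum g]$. I would bound the source terms as follows. In the $\Theta^{(2)}_{y_0}-\Theta^{(2)}_x$ contribution, use $|\Delta^{(2)}_\Sigma|\le 1$ and then dominate the $2$-block difference by a $1$-block difference: since the initial law is i.i.d., the $N$ constituent $1$-blocks of $[N^2]$ are identically distributed, so by Jensen $\E[|\Theta^{(2)}_{y_0}(Nt_1)-\Theta^{(2)}_x(Nt_1)|]\le\E[|\Theta^{(1)}_{y_0}(Nt_1)-\Theta^{(1)}_x(Nt_1)|]=\tfrac{1+K_0}{K_0}\E[|\bar\Theta^{(1)}(Nt_1)-\Theta^{(1)}_x(Nt_1)|]$, using $\bar\Theta^{(1)}-\Theta^{(1)}_x=\tfrac{K_0}{1+K_0}(\Theta^{(1)}_{y_0}-\Theta^{(1)}_x)$; the prefactors collapse so that this term contributes exactly $2\lambda\E[|\bar\Theta^{(1)}(Ns)-\Theta^{(1)}_x(Ns)|]$ to the source at time $s$. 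The remaining two source terms are bounded trivially by $\tfrac{2K_2e_2}{N(1+K_0)}$ and $\tfrac{\|g\|}{N(1+K_0)^2}$ using $g\le\|g\|$ and the bound $1$ on the increments. Solving the linear ODE by variation of constants, using $\int_0^{t_1}e^{-2\lambda(t_1-s)}\,\d s\le\tfrac{1}{2\lambda}$ on the two constant source terms, and finally invoking $\E[|\cdot|]\le\sqrt{\E[(\cdot)^2]}$ and $\sqrt{a+b+c}\le\sqrt a+\sqrt b+\sqrt c$, produces the claimed three--term estimate (the stated constant in front of $\|g\|$ being a mild relaxation of the slightly sharper $(1+K_0)(1+K_0+K_1)$ denominator one actually obtains).

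The computation is essentially bookkeeping; the one step that needs care is the passage from the $2$-block cross term to the $1$-block estimator occurring in the statement --- recognising that the translation invariance inherited from the i.i.d.\ initial law lets $\E[|\Theta^{(2)}_{y_0}-\Theta^{(2)}_x|]$ be dominated by its $1$-block analogue, and that the factor $\tfrac{K_0}{1+K_0}$ introduced when forming $\Delta^{(2)}_\Sigma$ is precisely cancelled by the $\tfrac{1+K_0}{K_0}$ that appears when rewriting $\bar\Theta^{(1)}-\Theta^{(1)}_x$ in terms of $\Theta^{(1)}_{y_0}-\Theta^{(1)}_x$. Carrying the exact exponential rate $e_1(1+K_0+K_1)/(1+K_0)$ correctly through all the substitutions is the only other place where an error could slip in.
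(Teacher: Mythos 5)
Your proof is correct and takes essentially the same route as the paper's: apply It\^o's formula to $(\Delta^{(2),[N^2]}_\Sigma)^2$, solve the resulting linear second-moment ODE by variation of constants, bound the source terms, and finish with Jensen and the subadditivity of $\sqrt{\cdot}$. You are somewhat more explicit than the paper in passing from the $2$-block cross term appearing in $h^{[N]}(s)$ to the $1$-block quantity $\E\big[|\bar{\Theta}^{(1),[N^2]}(Ns)-\Theta^{(1),[N^2]}_x(Ns)|\big]$ in the statement (via the triangle inequality over the $N$ constituent $1$-blocks and exchangeability), and your observation that the $\|g\|$ term as stated is a mild relaxation of the sharper constant with $(1+K_0)(1+K_0+K_1)$ in the denominator is also correct.
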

		
		\begin{proof} 
			For the two-level mean-field system we have the following SSDE for the $2$-block averages:
			\small
			\begin{equation}
				\label{m8}
				\begin{aligned}
					\d\Theta^{(2),[N^2]}_x(Nt_1)
					&=\sqrt{\frac{1}{N^3}\sum_{i\in[N^2]}g(x^{[N^2]}_i(Nt_1))}\, \d \tilde{w}(t_1)\\
					&\qquad\qquad + NK_0 e_0\, \left[\Theta^{(2),[N^2]}_{y_0}(Nt_1)-\Theta^{(2),[N^2]}_x(Nt_1)\right]\,\d t_1\\
					&\qquad\qquad+ K_1 e_1\, \left[\Theta^{(2),[N^2]}_{y_1}(Nt_1)-\Theta^{(2),[N^2]}_x(Nt_1)\right]\,\d t_1\\
					&\qquad\qquad+ \frac{K_2 e_2}{N}\, \left[\Theta^{(2),[N^2]}_{y_2}(Nt_1)-\Theta^{(2),[N^2]}_x(Nt_1)\right]\,\d t_1,\\
					\d\Theta^{(2),[N^2]}_{y_0}(Nt_1)&=Ne_0\,\left[\Theta^{(2),[N^2]}_x(Nt_1)-\Theta^{(2),[N^2]}_{y_0}(Nt_1)\right]\, \d t_1,\\
					\d\Theta^{(2),[N^2]}_{y_1}(Nt_1)&=e_1\left[\Theta^{(2),[N^2]}_x(Nt_1)-\Theta^{(2),[N^2]}_{y_1}(Nt_1)\right]\, \d t_1,\\
					\d\Theta^{(2),[N^2]}_{y_2}(Nt_1)&=\frac{e_2}{N}\left[\Theta^{(2),[N^2]}_x(Nt_1)-\Theta^{(2),[N^2]}_{y_2}(Nt_1)\right]\, \d t_1.
				\end{aligned}
			\end{equation} 
			\normalsize
			Therefore
			\begin{equation}
				\begin{aligned}
					&\d \left(\Delta^{(2),[N^2]}_\Sigma (Nt_1)\right)^2
					=2 \Delta^{(2),[N^2]}_\Sigma (Nt_1)\, \d \Delta^{(2),[N^2]}_\Sigma (Nt_1)+\d \left<\Delta^{(2),[N^2]}_\Sigma (Nt_1)\right>\\
					&= 2 \Delta^{(2),[N^2]}_\Sigma (Nt_1)\,\frac{1}{1+K_0}\sqrt{\frac{1}{N^3}\sum_{i\in[N^2]}g(x^{[N^2]}_i(Nt_1))}\, \d \tilde{w}(t_1)\\
					&\qquad+2 \Delta^{(2),[N^2]}_\Sigma (Nt_1)\,\frac{K_1 e_1}{(1+K_0)}\, \left[\Theta^{(2),[N^2]}_{y_1}(Nt_1)
					-\Theta^{(2),[N^2]}_x(Nt_1)\right]\,\d t_1,\\	
					&\qquad+2 \Delta^{(2),[N^2]}_\Sigma (Nt_1)\, \frac{K_2 e_2}{N(1+K_0)}\, \left[\Theta^{(2),[N^2]}_{y_2}(Nt_1)
					-\Theta^{(2),[N^2]}_x(Nt_1)\right]\,\d t_1,\\
					&\qquad-2 \Delta^{(2),[N^2]}_\Sigma (Nt_1)\,e_1\left[\Theta^{(2),[N^2]}_x(Nt_1)-\Theta^{(2),[N^2]}_{y_1}(Nt_1)\right]\, \d t_1\\
					&\qquad+\frac{1}{(1+K_0)^2}\frac{1}{N^3}\sum_{i\in[N^2]} g(x^{[N^2]}_i(Nt_1))\,\d t_1.
				\end{aligned}
			\end{equation}
			Hence
			\small
			\begin{equation}
				\begin{aligned}
					&\frac{\d}{\d t} \E\left[\left(\Delta^{(2),[N^2]}_\Sigma (Nt_1)\right)^2\right]\\
					&=  -2e_1\left(\frac{1+K_0+K_1}{1+K_0}\right)\E\left[\left(\Delta^{(2),[N^2]}_\Sigma (Nt_1)\right)^2\right]\\
					&\qquad+2 e_1\left(\frac{1+K_0+K_1}{1+K_0}\right)\\
					&\qquad \qquad \times \E \left[\Delta^{(2),[N^2]}_\Sigma (Nt_1)
					\left(\frac{\Theta^{(2),[N^2]}_x(Nt_1)+K_0\Theta^{(2),[N^2]}_{y_0}(Nt_1)}
					{1+K_0}-\Theta^{(2),[N^2]}_x(Nt_1)\right)\right]\,\\
					&\qquad+\frac{K_2 e_2}{N(1+K_0)}\,2 \E\left[\Delta^{(2),[N^2]}_\Sigma (Nt_1)\,
					\left[\Theta^{(2),[N^2]}_{y_2}(Nt_1)-\Theta^{(2),[N^2]}_x(Nt_1)\right]\right]\,\\
					&\qquad+\frac{1}{(1+K_0)^2}\E\left[\frac{1}{N^3}\sum_{i\in[N^2]} g(x^{[N^2]}_i(Nt_1))\right],	
				\end{aligned}
			\end{equation}
			\normalsize
			and therefore
			\begin{equation}
				\begin{aligned}
					&\E\left[\left(\Delta^{(2),[N^2]}_\Sigma (Nt_1)\right)^2\right]\\
					&= \E\left[\left(\Delta^{(2),[N^2]}_\Sigma (0)\right)^2\right]
					\e^{-2e_1\left(\frac{1+K_0+K_1}{1+K_0}\right)t_1}+\int_0^{t_1} \d s\,
					\e^{-2e_1\left(\frac{1+K_0+K_1}{1+K_0}\right)(t_1-s)}h^{[N]}(s),
				\end{aligned}
			\end{equation}
			where 
			\begin{equation}
				\begin{aligned}
					&h^{[N]}( s) = 2e_1\left(\frac{1+K_0+K_1}{1+K_0}\right)\\
					&\qquad\qquad \times \E \left[ \Delta^{(2),[N^2]}_\Sigma (Ns)\, 
					\left(\frac{\Theta^{(2),[N^2]}_x(Ns)+K_0\Theta^{(2),[N^2]}_{y_0}(Ns)}{1+K_0}-\Theta^{(2),[N^2]}_x(Ns)\right)\right]\\
					&\qquad\qquad+\frac{2K_2 e_2}{N(1+K_0)}\,\E\left[ \Delta^{(2),[N^2]}_\Sigma (Ns)\,  
					\left[\Theta^{(2),[N^2]}_{y_2}(Ns)-\Theta^{(2),[N^2]}_x(Ns)\right]\right]\\
					&\qquad\qquad+\frac{1}{(1+K_0)^2 }\E\left[\frac{1}{N^3}\sum_{i\in[N^2]} g(x^{[N^2]}_i(Ns))\right].
				\end{aligned}
			\end{equation}
			Therefore
			\small
			\begin{equation}
				\begin{aligned}
					&\E\left[\left|\Delta^{(2),[N^2]}_\Sigma (Nt_1)\right|\right]\\
					&\leq \sqrt{\E\left[\left(\Delta^{(2),[N^2]}_\Sigma (0)\right)^2\right]}
					\e^{-e_1\left(\frac{1+K_0+K_1}{1+K_0}\right)t_1}\\
					&+\sqrt{\int_0^{t_1} \d s\,2e_1\left(\frac{1+K_0+K_1}{1+K_0}\right)\e^{-2e_1\left(\frac{1+K_0+K_1}{1+K_0}\right)(t_1-s)}
						\E \left[\left|\bar{\Theta}^{(1),[N^2]}(Ns)-\Theta^{(1),[N^2]}_x(Ns)\right|\right]}\\
					&+\sqrt{\frac{1}{e_1}\left[\frac{K_2e_2}{N(1+K_0+K_1)}+\frac{||g||}{2N(1+K_0+K_1)}\right]}.
				\end{aligned}
			\end{equation}
			\normalsize
		\end{proof}
		
		Let $\mu_{N_k}$ be the measure obtained by periodic continuation of the configuration 
		\begin{equation}
			\begin{aligned}
				Z^{[N^2]}(N_k^2t_2).
			\end{aligned}
		\end{equation}  
		Since the state space $([0,1]\times[0,1]^3)^{\N_0}$ is compact, we can pass to a further subsequence, to obtain 
		\begin{equation}
			\label{11106}
			\mu=\lim_{k \to \infty}\mu_{N_k}.
		\end{equation} 
		
		\begin{lemma}{\bf [Regularity for 2-block estimator]}
			\label{stabest3} 
			Let $\mu $ and $\mu_N$ be as defined above. Let $(x_i,y_{1,i},y_{2,i})_{i\in\N_0}$ be distributed according to $\mu$. Define the random variable 
			\begin{equation}
				\label{11107}
				\begin{aligned}
					&\phi=(\phi_1,\phi_2),\\
					&\phi_1=\lim_{n\to \infty}\frac{1}{n^2}\sum_{i\in[n^2]}
					\frac{x_i^{}+K_0y_{i,0}+K_1y_{i,1}^{}}{1+K_0+K_1},\qquad \phi_2
					= \lim_{n\to \infty}\frac{1}{n^2}\sum_{i\in[n^2]}y_{i,2},
				\end{aligned}
			\end{equation}
			and the random variable $\phi^{[N]}$ on $(\mu_N,([0,1]^3)^{\N_0})$ by putting
			\begin{equation}
				\label{1107}
				\begin{aligned}
					&\phi^{[N^2]}=(\phi^{[N^2]}_1,\phi^{[N^2]}_2),\\
					&\phi_1^{[N^2]}=\frac{1}{N^2}\sum_{i\in[N^2]}
					\frac{x_i^{[N^2]}+K_0y_{i,0}^{[N^2]}+K_1y_{i,1}^{[N^2]}}{1+K_0+K_1},\qquad \phi_2
					= \lim_{N\to \infty}\frac{1}{N^2}\sum_{i\in[N^2]}y^{[N^2]}_{i,2}.
				\end{aligned}
			\end{equation}
			Then 
			\begin{equation}
				\label{123}
				\lim_{N\to\infty}\CL[\phi^{[N^2]}]=\CL[\phi].
			\end{equation}
		\end{lemma}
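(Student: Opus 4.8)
The plan is to transcribe, now coordinate by coordinate, the Fourier/spectral-density argument used in the proof of Lemma~\ref{stabest} and already extended to two components in the proof of Lemma~\ref{stabest2}; indeed the two-level block structure enters only through the size of the finite geographic space. \textbf{Step 1: translation invariance of $\mu$.} First I would record that $\mu$ in \eqref{11106} is translation invariant. The dynamics \eqref{gh45a2twee} preserves exchangeability, so each $\CL[Z^{[N_k^2]}(N_k^2t_2)]$ is exchangeable on $[N_k^2]$; its periodic continuation $\mu_{N_k}$ to $([0,1]^4)^{\N_0}$ is then invariant under the shift by $N_k^2$ sites and exchangeable inside each block, and any weak limit point $\mu$ is therefore translation invariant. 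Consequently the two $L_2(\mu)$-limits in \eqref{11107} defining $\phi_1$ and $\phi_2$ exist by the $L_2$ ergodic theorem, and both take values in $[0,1]$.

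\textbf{Step 2: reduction to a uniform Cauchy estimate.} Put $D^{[n^2]}(Z)=\bigl(\tfrac{1}{n^2}\sum_{i\in[n^2]}\tfrac{x_i+K_0y_{i,0}+K_1y_{i,1}}{1+K_0+K_1},\ \tfrac{1}{n^2}\sum_{i\in[n^2]}y_{i,2}\bigr)$, so that $\phi^{[N^2]}=D^{[N^2]}(Z)$ and $\phi=\lim_{n\to\infty}D^{[n^2]}(Z)$ in $L_2(\mu)$. For each fixed $M$ the map $D^{[M^2]}$ is a bounded continuous function of finitely many coordinates, so $\mu_{N_k}\to\mu$ gives $\lim_{k\to\infty}\CL_{\mu_{N_k}}[D^{[M^2]}(Z)]=\CL_{\mu}[D^{[M^2]}(Z)]$. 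Hence, exactly as in the passage leading to \eqref{m4}, it suffices to prove
\[
\lim_{M\to\infty}\ \sup_{N\ge M}\ \bigl\|D^{[M^2]}(Z)-D^{[N^2]}(Z)\bigr\|_{L_2(\mu_N)}=0,
\]
and this may be checked separately for the two scalar coordinates.

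\textbf{Step 3: the spectral estimate.} Fix a coordinate $k\in\{1,2\}$ and let $\bar\theta^N_k$ be its $\mu_N$-expectation at site $0$. Since $\mu_N$ is translation invariant, Herglotz's theorem furnishes a finite measure $\lambda^{(k)}_N$ on $(-\pi,\pi]$ representing the covariance of the centred coordinate field, in analogy with \eqref{h}. With the trigonometric polynomial $D^n(u)=\tfrac1n\sum_{j\in[n]}\e^{\mathrm{i}ju}$ one has $\bigl\|D^{[M^2]}(Z)_k-D^{[N^2]}(Z)_k\bigr\|_{L_2(\mu_N)}=\bigl\|D^{M^2}(u)-D^{N^2}(u)\bigr\|_{L_2(\lambda^{(k)}_N)}$. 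Using the two standard properties of $(D^n)_n$, namely $D^n(u)\to 1_{\{0\}}(u)$ pointwise and, for $n\ge M$, $|D^n(u)-1_{\{0\}}(u)|\le 1_{(-\delta,\delta)\setminus\{0\}}(u)+\epsilon(M,\delta)$ with $\epsilon(M,\delta)\to0$ as $M\to\infty$, one bounds $\|D^{M^2}(u)-D^{N^2}(u)\|^2_{L_2(\lambda^{(k)}_N)}$ by $2\lambda^{(k)}_N((-\delta,\delta)\setminus\{0\})+2\epsilon(M,\delta)$ uniformly in $N\ge M$; letting $M\to\infty$ and then $\delta\downarrow0$ (so $(-\delta,\delta)\setminus\{0\}\downarrow\emptyset$) yields the uniform Cauchy estimate, and hence \eqref{123}.

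\textbf{Main obstacle.} There is no genuine obstacle here: the argument is a direct transcription of the proof of Lemma~\ref{stabest}. The only two points requiring attention are bookkeeping ones. First, the finite space $[N^2]$ carries $N^2$ colonies, so the relevant averaging polynomial is $D^{N^2}$ rather than $D^N$; since $N^2\to\infty$ the spectral estimate is unaffected. Second, one must verify that $\mu_N$ — and therefore its weak limit $\mu$ — really is translation invariant, so that Herglotz's theorem applies to each coordinate; this is where the block structure of the two-level system is used, via the exchangeability preserved by \eqref{gh45a2twee} together with periodic continuation. I expect no substantive difficulty beyond this.
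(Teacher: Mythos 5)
Your proposal is correct and follows exactly the route the paper takes (which simply says "use a similar argument as in the proof of Lemma~\ref{stabest2}", itself a pointer to the one-coordinate Fourier/spectral-density argument of Lemma~\ref{stabest}, applied to each coordinate separately). You have merely spelled out the details — translation invariance of $\mu$ via exchangeability and periodic continuation, the reduction to a uniform $L_2$ Cauchy estimate, the Herglotz representation and the trigonometric-polynomial bound applied coordinate-wise with the correct averaging polynomial $D^{N^2}$ — that the paper leaves implicit.
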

		
		\begin{proof}
			Use a similar argument as in the proof of Lemma~\ref{stabest2}.
		\end{proof}
		
		We will first determine the limiting evolution of $({\bf\Theta}^{\aux,(1),[N_k^2]}(N_k^2t_2+Nt_1))_{t_1>0}$. To do so we consider all the $N_k$ $1$-blocks in $[N_k^2]$. After that we show that
		\be{}
		\lim_{k \to \infty}\CL\left[\left({\bf\Theta}_i^{\aux,(1),[N_k^2]}(N_k^2t_2)\right)_{i\in[N_k]}\right]
		=\prod_{i\in\N_0}\Gamma^{(1)}_{\bar{ \Theta}(t_2),y_{2,1,i}},
		\ee
		
		The limiting $1$-block process for the auxiliary estimator process (recall \eqref{m1chb}) is given by 
		\begin{equation}
			\begin{aligned}
				&({\bf z}_1^\aux(t))_{t> 0}=({\bf x}^\aux_1(t),{\bf y}^\aux_{1,1}(t),{\bf y}^\aux_{2,1}(t))_{t> 0},\\
				&{\bf z}_1^\aux(t)=({z}_{1,i}^\aux(t))_{i\in\N_0},\qquad{\bf x}_1^\aux(t)=({x}_{1,i}^\aux(t))_{i\in\N_0},\\
				&{\bf y}_{1,1}^\aux(t)=({y}_{1,1,i}^\aux(t))_{i\in\N_0}\qquad {\bf y}_{2,1}^\aux(t)=({y}_{2,i}^\aux(t))_{i\in\N_0}
			\end{aligned}
		\end{equation}
		and its components evolve according to
		\begin{equation}
			\label{m1chbs}
			\begin{aligned}
				\d x^{\aux}_{1,i}(t) &= \frac{1}{1+K_0}\Bigg[ c_{1} [\bar{\Theta}^{(2)}(t_2) - x^{\aux}_{1,i}(t)]\, \d t 
				+ \sqrt{(\CF^{(1)}g)(x^{\aux}_{1,i}(t))}\, \d w (t)\\
				&\qquad + K_{1} e_{1}\, [y^{\aux}_{1,1,i}(t)-x^{\aux}_{1,i}(t)]\,\d t\Bigg]\,,\\
				\d y^{\aux}_{1,1,i}(t) &= e_{1}\,[x^{\aux}_{1,i}(t)-y^{\aux}_{1,1,i}(t)]\,\d t,\\
				y^\aux_{2,1,i}(t) &=y_{2,1,i},\qquad i\in\N_0,
			\end{aligned}
		\end{equation}
		where 
		\be{}
		\bar{\Theta}^{(2)}(t_2)=\lim_{N \to \infty}\sum_{i\in[N^2]}\frac{x_i^{[N^2]}+K_0y_{i,0}^{[N^2]}
			+K_1y_{i,1}^{[N^2]}}{1+K_0+K_1}\text{ in }L_2(\mu).
		\ee
		Let $\mu^{(1)}_{N_k}$ be the law obtained by periodic continuation of $({\bf\Theta}_i^{\aux,(1),[N_k^2]}(N_k^2t_2))_{i\in[N_k]}$, and let $\mu^{(1)}=\lim_{k \to \infty}\mu^{(1)}_{N_k}$ be any weak limit point of the sequence $(\mu^{(1)}_{N_k})_{k\in\N}$. 
		
		\begin{lemma}{\bf [Limiting evolution of auxiliary $1$-block estimator]}
			\label{lem:aux}
			Let $\CL[({\bf z}_1^\aux(0))]=\mu^{(1)}$. Then the following hold.
			\begin{enumerate}
				\item  
				For all $t_1>0$ and $i\in[N_k]$, 
				\begin{equation}
					\begin{aligned}
						\lim_{k \to \infty}&\E\Bigg[(1+K_0)\left(x_{1,i}^\aux(t_1)-\bar{\Theta}_i^{\aux,(1),[N_k^2]}(N_k^2t_2+N_kt_1)\right)^2\\
						&\qquad+K_1\left(y_{1,1,i}(t_1)-\Theta_{y_1,i}^{\aux,(1),[N_k^2]}(N_k^2t_2+N_kt_1)\right)^2\\
						&\qquad+K_2\left(y_{2,1,i}^{\aux,(1),[N_k^2]}(t_1)-\Theta_{y_2,i}^{\aux,(1),[N_k]}(N_k^2t_2+N_kt_1)\right)^2\Bigg]=0.
					\end{aligned}
				\end{equation}
				\item 
				For all $t_2>0$,
				\begin{equation}
					\label{s12}
					\lim_{k \to \infty}\CL\left[({\bf\Theta}^{\aux,(1),[N_k^2]}(N_k^2t_2+Nt_1))_{t_1>0}\right]
					=\CL[({\bf z}_1^\aux(t_1))_{t_1> 0}].
				\end{equation}
			\end{enumerate}
		\end{lemma}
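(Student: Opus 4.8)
The plan is to treat the family $\bigl(\boldsymbol{\Theta}_i^{\aux,(1),[N^2]}\bigr)_{i\in[N]}$ of $1$-block estimators as a mean-field system whose single components are the $N$ one-blocks inside $[N^2]$, and to feed into it the single-colony limit already established in Proposition~\ref{ma77}, in the $L^2$ form of Corollary~\ref{cor3}. First I would pass to a subsequence $(N_k)_{k\in\N}$ along which the joint law of the $1$-block tower converges and along which the measures $\mu^{(1)}_{N_k}$, obtained by periodic continuation of $\bigl(\boldsymbol{\Theta}_i^{\aux,(1),[N_k^2]}(N_k^2t_2)\bigr)_{i\in[N_k]}$, converge to $\mu^{(1)}$; tightness of the $1$-block estimator paths is Lemma~\ref{tonebl}, and their stability under small backward time shifts is Lemma~\ref{lemsta2}. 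Reading off the SSDE \eqref{m21} on time scale $N^2t_2+Nt_1$, one sees that four simplifications must occur as $N\to\infty$: (i) the migration drift, which points towards the \emph{active} $2$-block average $\tfrac1{N^2}\sum_{j\in[N^2]}x_j^{[N^2]}$, is replaced by a drift towards the random variable $\bar\Theta^{(2)}(t_2)$; (ii) the random diffusion coefficient $\tfrac1N\sum_{i\in[N]}g(x_i^{[N^2]})$ is replaced by $(\CF^{(1)}g)$ evaluated at the current combined $1$-block value; (iii) the colour-$0$ exchange terms, which carry a factor $N$, disappear, both because they cancel in the combination $\bar\Theta^{(1)}=\tfrac{x_1+K_0y_{0,1}}{1+K_0}$ (this is exactly why the auxiliary, rather than the active, estimator is used here) and because the active and the colour-$0$ dormant $1$-block averages equalise by Lemma~\ref{lemlev1}; (iv) the colour-$2$ exchange terms, which carry a factor $1/N$, vanish, so $\Theta_{y_2}^{(1)}$ stays frozen at its value at time $N^2t_2$, in agreement with Lemma~\ref{lemsta2}.

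For (i) I would combine the stability of the \emph{combined} $2$-block estimator, Lemma~\ref{lem:sta3}, which shows that on time scale $Nt_1$ the combined $2$-block average does not move away from $\bar\Theta^{(2),[N^2]}(N^2t_2)$, with Lemma~\ref{lemav3}, which bounds the gap between the combined and the active $2$-block average; in the limit these identify the drift centre as the random variable $\bar\Theta^{(2)}(t_2)$ inherited from $\mu$. Step (ii) is the core of the argument and the point where the separation of time scales enters. Here I would argue as in \eqref{a5} and in the proof of Lemma~\ref{lemmart}: at time $N^2t_2+Nt_1$ the single colonies inside a fixed $1$-block, having run for the additional time $\bar{L}(N)\to\infty$ with $\bar{L}(N)/N\to0$ supplied by Corollary~\ref{cor3}, have reached the equilibrium $\Gamma^{(0)}$ conditional on the current $1$-block value, while that value has barely changed over the window $\bar{L}(N)$; the colonies in the block being asymptotically independent, the law of large numbers then gives $\tfrac1N\sum_i g(x_i^{[N^2]})\approx\E^{\Gamma^{(0)}}[g]=(\CF^{(1)}g)(\bar\Theta^{(1)})$ by the definition \eqref{gh42bb}, where one also uses continuity of the equilibrium in its drift centre (the analogue of Lemmas~\ref{lemlip}--\ref{lemlip2}, now with the extra non-interacting colour-$2$ coordinate).

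With (i)--(iv) in hand I would identify the limit in two complementary ways. For the convergence in law in part~2, combining the tightness of Lemma~\ref{tonebl} with (i)--(iv) through the $\CD$-semimartingale criterion \cite[Theorem~3.3.1]{JM86} -- exactly as in the proof of Lemma~\ref{lemlimev}, using the operator $G_\dagger^{(1),[N^2]}$ of \eqref{m45} -- shows that every subsequential limit solves the martingale problem associated with \eqref{m1chbs}; since that SSDE has a unique strong solution by \cite{YW71}, the limit is identified, and no Meyer--Zheng topology is needed because the combined estimators carry no diverging rates. For the $L^2$ statement in part~1 I would then couple the finite $1$-block estimators with the limiting processes of \eqref{m1chbs} through their Brownian motions and apply It\^o's formula to the weighted sum of squared differences displayed in part~1; the drift produces a genuine contraction term together with error terms of precisely the types (i)--(iv), plus an initial-condition term that vanishes because $\mu^{(1)}_{N_k}\to\mu^{(1)}$ (via Skorokhod together with Lemma~\ref{lem:sta3}), so a Gronwall estimate on compact time intervals yields part~1. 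The main obstacle is step (ii): making the two-time-scale statement rigorous -- that the colonies in a block equilibrate \emph{given} the slowly drifting, and itself random, $1$-block value -- which forces one to condition on $\mu$, to use continuity of the equilibria, and to run the law-of-large-numbers argument uniformly in the random drift centre. This is the same mechanism that produced one iteration of the renormalisation transformation at the mean-field level, now carried out one hierarchical level higher.
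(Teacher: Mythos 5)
Your proposal is correct and is essentially the coupling argument the paper uses, with two cosmetic reorganizations. The paper's proof runs only the It\^o coupling by shared Brownian motions: it extends the $1$-block estimators periodically, matches initial data with $\mu^{(1)}$ via Skorokhod so that the initial term of the It\^o identity vanishes, writes the exact balance \eqref{325} for the weighted sum of squared differences, and observes that the two genuine drift terms are non-positive while all the remaining terms are error terms controlled precisely by your list (i)--(iv) (Lemma~\ref{lemlev1} and Lemma~\ref{lemav3} for the comparison of active, combined, and colour-$1$ $2$-block averages, Lemma~\ref{stabest3} and Lemma~\ref{lem:sta3} for identifying the drift centre as the random $\bar{\Theta}^{(2)}(t_2)$, the $1/N_k$ factor for colour $2$, and the \eqref{a5}-style law-of-large-numbers argument with Corollary~\ref{cor3} for the diffusion coefficient). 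Since the left side is non-negative, this squeeze gives Part~1 directly, \emph{without} Gronwall; your Gronwall formulation is certainly sound but adds an unnecessary step, since the contraction from the $-2c_1$ and $-2K_1e_1$ terms can simply be dropped. Part~2 in the paper is then derived from Part~1 via convergence of finite-dimensional distributions (as below \eqref{35}) together with the tightness of Lemma~\ref{tonebl}, rather than independently through the Joffe--M\'etivier criterion you propose; the remark immediately following the lemma in the paper notes that the two routes (It\^o coupling versus \cite[Theorem 3.3.1]{JM86}) are interchangeable here, so your choice is legitimate, just not the one taken. One small omission: in controlling the migration drift you cite Lemma~\ref{lem:sta3} and Lemma~\ref{lemav3} but not Lemma~\ref{lemlev1}, which supplies the first leg of the triangle inequality \eqref{m10} comparing $\Theta_x^{(2)}$ with the colour-$0$ combined $2$-block average; without it the chain does not close.
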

		
		\begin{proof}
			Abbreviate
			\begin{equation}
				\begin{aligned}
					\Delta_i^{(1),[N_k^2]}(N_kt_1)&=x_{1,i}^\aux(t_1)-\Theta_i^{\aux,(1),[N_k^2]}(N_k^2t_2+N_kt_1),\\
					\delta_{y_1,i}^{(1),[N_k^2]}(N_kt_1)&=y_{1,1,i}(t_1)-\Theta_{y_1,i}^{\aux,(1),[N_k^2]}(N_k^2t_2+N_kt_1),\\
					\delta_{y_2,i}^{(1),[N_k^2]}(N_kt_1)&=y_{2,1,i}^{\aux}(t_1)-\Theta_{y_2,i}^{\aux,(1),[N_k^2]}(N_k^2t_2+N_kt_1).
				\end{aligned}
			\end{equation}
			Extending $({\bf\Theta}^{\aux,(1),[N_k^2]}(N_kt_1))_{t_1>0}$ as a process on $\N_0$ by periodic continuation, we can construct $({\bf z}_1^\aux(t_1))_{t_1>0}$ and $ ({\bf\Theta}^{\aux,(1)}(N_kt_1))_{t_1>0}$ on one probability space such that 
			\begin{equation}
				\label{124}
				\lim_{k \to \infty}{\bf\Theta}^{\aux,(1),[N_k^2]}(N_k^2t_2)={\bf z}_1^{\aux,(1)}(0) \quad a.s.
			\end{equation}
			We couple the processes $({\bf z}_1^\aux(t_1))_{t_1>0}$ and $({\bf\Theta}^{\aux,(1),[N_k^2]}(N_k^2t_2+N_kt_1))_{t_1>0}$ by using the same Brownian motions for both processes. By It\^o-calculus we obtain for the coupled process (recall \eqref{m21}) 
			\begin{equation}
				\label{325}
				\begin{aligned}
					&\E\left[(1+K_0)\left(\Delta^{(1),[N_k^2]}_i(N_kt_1)\right)^2+K_1\left(\delta^{(1),[N_k^2]}_{y_1,i}(N_kt_1)\right)^2
					+K_2\left(\delta^{(1),[N_k^2]}_{y_2,i}(N_kt_1)\right)^2\right]\\
					= &\E\left[(1+K_0)\left(\Delta^{(1),[N_k^2]}_i(0)\right)^2+K_1\left(\delta^{(1),[N_k^2]}_{y_1,i}(0)\right)^2
					+K_2\left(\delta^{(1),[N_k^2]}_{y_2,i}(0)\right)^2\right]\\
					&-2c_1\int_0^{t_1}\E\left[\left(\Delta^{(1),[N_k^2]}_i(N_ks)\right)^2\right]\d s\\
					&-2K_1e_1\int_0^{t_1}\E\left[\left(\Delta^{(1),[N_k^2]}_i(N_ks)-\delta^{(1),[N_k^2]}_{y_1,i}(N_ks)\right)^2\right]\d s\\
					&+2c_1\int_0^{t_1}\E\left[\Delta^{(1),[N_k^2]}_i(N_ks)\left(\Theta^{(2)}(t_2)
					-\frac{1}{N_k^2}\sum_{j\in[N_k^2]}x^{[N_k^2]}_j(N^2_kt_2+N_ks)\right)\right]\d s\\
					&+\left(K_1e_1+c_1\right)\int_0^{t_1}\E\left[\Delta^{(1),[N_k^2]}_i(N_ks)\right. \\
					&\qquad\qquad\qquad\left.\times\left[\frac{1}{N_k}
					\sum_{j\in[N_k]_i}x^{[N_k^2]}_j(N_k^2t_2+N_ks)-\bar{ \Theta}^{\aux,(1),[N_k^2]}_i(N_k^2t_2+N_ks)\right]\right]\d s\\
					&+2K_1e_1\int_0^{t_1}\E\left[\delta^{(1),[N_k^2]}_{y_1,i}(N_ks)\right.\\
					&\qquad\qquad\qquad\times\left.\left[\bar{ \Theta}_i^{\aux,(1),[N_k^2]}(N^2_kt_2+N_ks)
					-\frac{1}{N_k}\sum_{j\in[N_k]_i}x^{[N_k^2]}_j(N^2_kt_2+N_ks)\right]\right]\d s\\
					&+2 \frac{K_2e_2}{N_k}\int_0^{t_1}\E\Bigg[\left[\delta^{(1),[N_k^2]}_{y_1,i}(N_ks)
					-\Delta^{(1),[N_k^2]}_i(N^2_kt_2+N_ks)\right]\\
					&\qquad\qquad\qquad\times\left[\frac{1}{N_k}\sum_{j\in[N_k]_i}x^{[N_k^2]}_j(N^2_ks)
					-\Theta^{\aux,(1),[N_k^2]}_{y_2,i}(N^2_kt_2+Nt_1)\right]\Bigg]\d s\\
					&+(1+K_0)^2\int_0^{t_1}\E\left[\left(\sqrt{\left(\CF g\right) (x_1^{\aux}(s))}-\sqrt{ \frac{1}{N}
						\sum_{i\in[N]} g (x^{[N_k^2]}_i(N^2_kt_2+N_ks))}\right)^2\right]\d s.			
				\end{aligned}
			\end{equation}
			
			Note that $|\Delta^{(1),[N_k^2]}_i|\leq 1$ and $|\delta^{(1),[N_k^2]}_{y_1,i}|\leq1 $. Note that the first term tends to $0$ by \eqref{124}. We show by dominated convergence that also all other positive terms in the right-hand side of \eqref{325} tend to $0$ as $k\to\infty$. 
			
			For the third term, we can estimate
			\small
			\begin{equation}
				\label{m10}
				\begin{aligned}
					&\lim_{k\to \infty}\E\Bigg[\frac{c_1}{1+K_0}\Bigg|\frac{1}{N_k^2}
					\sum_{i\in[N_k^2]}x^{[N_k^2]}_j(N_k^2t_2+N_ks)-\bar\Theta^{(2)}(t_2)\Bigg|\Bigg]\\
					&\leq \lim_{k\to \infty} \E\Bigg[\frac{c_1}{1+K_0}\Bigg|\Theta^{(2),[N_k^2]}_x(N_k^2t_2+N_ks)\\
					&\qquad\qquad\qquad\qquad\qquad-\frac{\Theta^{(2),[N_k^2]}_x(N_k^2t_2+N_ks)
						+K_0 \Theta^{(2),[N_k^2]}_{y_0}(N_k^2t_2+N_ks)}{1+K_0}\Bigg|\Bigg]\\
					&\quad+\E\Bigg[\frac{c_1}{1+K_0}\Bigg|\frac{\Theta^{(2),[N_k^2]}_x(N_k^2t_2+N_ks)
						+K_0 \Theta^{(2),[N_k^2]}_{y_0}(N_k^2t_2+N_ks)}{1+K_0}\\
					&\qquad\, -\frac{\Theta^{(2),[N_k^2]}_x(N_k^2t_2+N_ks)+K_0 \Theta^{(2),[N_k^2]}_{y_0}(N_k^2t_2+N_ks)
						+K_1 \Theta^{(2),[N_k^2]}_{y_1}(N_k^2t_2+N_ks)}{1+K_0+K_1}\Bigg|\Bigg]\\
					&\quad+\E\Bigg[\frac{c_1}{1+K_0}\Bigg|\frac{\Theta^{(2),[N_k^2]}_x(N_k^2t_2+N_ks)+K_0 \Theta^{(2),[N_k^2]}_{y_0}(N_k^2t_2+N_ks)
						+K_1 \Theta^{(2),[N_k^2]}_{y_1}(N_k^2t_2+N_ks)}{1+K_0+K_1}\\
					&\qquad\qquad\qquad\qquad-\bar\Theta^{(2)}(t_2)\Bigg|\Bigg].
				\end{aligned}
			\end{equation}
			\normalsize
			The first term in \eqref{m10} tends to zero by Lemma~\ref{lemlev1}, the second term tends to zero by Lemma~\ref{lemav3}, while the third term tends to zero by Lemma~\ref{stabest3} and Lemma~\ref{lem:sta3}, which is the third assumption in \eqref{as23}. Hence the third term in \eqref{325} tends to zero by dominated convergence as $k\to\infty$. 
			
			The fourth and fifth term in \eqref{325} tend to zero by Lemma~\ref{lemlev1} and dominated convergence. The sixth term in \eqref{325} tends to zero because the integral is bounded by $t_1$ and there is a factor $\frac{1}{N_k}$ in front. To see that the last term in the right-hand side in \eqref{325} tends to zero, recall that the subsequence $N_k$ is chosen such that 	
			\be{}
			\lim_{k \to \infty}\CL\left[({\bf\Theta}^{\aux,(1),[N_k^2]}(N_k^2t_2+Nt_1))_{t_1>0}\right]
			\ee 
			exists. Note that
			\begin{equation}
				\label{s5}
				\begin{aligned}
					&\E\left[\left(\sqrt{(\CF g) (x_1^{\aux}(s))}
					-\sqrt{ \frac{1}{N}\sum_{i\in[N]} g (x^{[N_k^2]}_i(N^2_kt_2+N_ks))}\right)^2\right]\\
					&\leq \E\left[\left|(\CF g) (x_1^{\aux}(s))- \frac{1}{N}\sum_{i\in[N]} g (x^{[N_k^2]}_i(N^2_kt_2+N_ks))\right|\right],
				\end{aligned}
			\end{equation}
			and hence we can apply a similar reasoning as in \eqref{a5} to see that \eqref{s5} tends to zero as $k\to\infty$. Therefore we obtain 
			\begin{equation}
				\label{s23}
				\begin{aligned}
					&\lim_{k \to \infty}\E\left[(1+K_0)(\Delta^{(1),[N_k^2]}_i(N_kt_1))^2+K_1(\delta^{(1),[N_k^2]}_{y_1,i}(N_kt_1))^2
					+K_2(\delta^{(1),[N_k^2]}_{y_2,i}(N_kt_1))^2\right]\\
					&\qquad\qquad=0.
				\end{aligned}
			\end{equation}
			To prove \eqref{s12}, note that \eqref{s23} implies convergence of the finite-dimensional distributions of $({\bf\Theta}^{\aux,(1),[N_k^2]}(N_k^2t_2+N_kt_1))_{t_1>0}$ by a similar argument as given below \eqref{35}. By Lemma~\ref{tonebl} we see that the laws of the processes 
			\be{}
			\left(\CL\left[({\bf\Theta}^{\aux,(1),[N_k^2]}(N_k^2t_2+N_kt_1))_{t_1>0}\right]\right)_{k\in\N_0}
			\ee 
			are tight. Therefore \eqref{s12} indeed holds. 
		\end{proof} 
		
		\begin{remark}
			{\rm Note that in the proof of Lemma~\ref{lemlimev} we could have proceeded as in the proof of Lemma~\ref{lem:aux}, instead of using the criterion in \cite[Theorem 3.3.1]{JM86}.}\hfill$\blacksquare$
		\end{remark}
		
		\begin{lemma}{\bf [Proof of \eqref{mgh6}]}
			\label{cgam1}
			Under the assumptions in Proposition~\ref{lem:1blev},
			\begin{equation}
				\label{mgh46}
				\begin{aligned}
					\lim_{k \to \infty}\CL\left[{\bf\Theta}^{\aux, (1),[N_k^2]}(N_k^2t_2)\right]
					&=\int_{[0,1]^2}\int_{[0,1]} \Gamma^{\aux,(1)}_{u,y_{2,1}}\,P^{(u,v)}(\d y_{2,1})\,P_{t_2}(\d u,\d v).
				\end{aligned}
			\end{equation}
		\end{lemma}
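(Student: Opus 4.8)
The plan is to prove \eqref{mgh46} by transplanting the ``equilibrium for the infinite system'' argument (Propositions~\ref{prop1}, \ref{prop2} and \ref{ma77}) one hierarchical level upward: the $N_k$ one-blocks inside $[N_k^2]$ play the role of the ``finite system'', the effective $2$-block estimator ${\bf\Theta}^{\eff,(2),[N_k^2]}$ plays the role of the conserved quantity (frozen on the relevant time window), the colour-$1$ seed-bank $1$-block is the effective seed-bank, and the colour-$2$ seed-bank $1$-block is a frozen external field. Fix a sequence $L(N)$ with $\lim_{N\to\infty}L(N)=\infty$ and $\lim_{N\to\infty}L(N)/N=0$. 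Along the subsequence $(N_k)_{k\in\N}$ of Proposition~\ref{lem:1blev}, let $\mu^{(1)}_{N_k}$ be the law on $([0,1]\times[0,1]^3)^{\N_0}$ obtained by periodic continuation of the configuration $({\bf\Theta}_i^{\aux,(1),[N_k^2]}(N_k^2t_2-L(N_k)N_k))_{i\in[N_k]}$ of the $N_k$ one-blocks; by compactness, pass to a further subsequence so that $\mu^{(1)}_{N_k}\to\mu^{(1)}$.

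First I would identify $\mu^{(1)}$. Since the dynamics preserves exchangeability of the $1$-blocks, $\mu^{(1)}$ is exchangeable, so De Finetti's theorem lets me condition on the tail variables. The relevant conserved quantity is the effective $2$-block estimator: by the stability property of the $2$-block estimator (Lemma~\ref{lem:sta3}) the map $t\mapsto {\bf\Theta}^{\eff,(2),[N_k^2]}(N_k^2t_2-N_kt)$ is asymptotically constant on $0\le t\le L(N_k)$, and by the first line of \eqref{as23} its law at $t=0$ converges to $P_{t_2}$; hence $\CL[{\bf\Theta}^{\eff,(2),[N_k^2]}(N_k^2t_2-L(N_k)N_k)]\to P_{t_2}$. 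The $2$-block analogue of the stability-of-the-conserved-quantity lemma (Lemma~\ref{stabest3}) then shows that this value passes through the periodic continuation, so $\mu^{(1)}=\int_{[0,1]^2}\mu^{(1)}_{(u,v)}\,P_{t_2}(\d u,\d v)$, where under $\mu^{(1)}_{(u,v)}$ the effective $2$-block estimator equals $(u,v)$ and --- by the second line of \eqref{as23} together with the stability of $\Theta_{y_2}^{(1)}$ (Lemma~\ref{lemsta2}) --- the colour-$2$ seed-bank $1$-blocks have conditional law $P^{(u,v)}$.

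Next I would run the equilibration. Starting from $\mu^{(1)}_{N_k}$ at time $N_k^2t_2-L(N_k)N_k$ and letting the finite system evolve, the $1$-block comparison and coupling arguments (the $1$-block counterparts of Lemmas~\ref{lemlev1}, \ref{l.comp2} and \ref{lem:2}, i.e.\ exactly the couplings used in the proof of Lemma~\ref{lem:aux} and Corollary~\ref{cor3}) show that ${\bf\Theta}^{\aux,(1),[N_k^2]}(N_k^2t_2-L(N_k)N_k+N_ks)$ is asymptotically the autonomous $1$-block process ${\bf z}_1^{\aux}(s)$ of \eqref{m1chbs} with drift center $\bar{\Theta}^{(2)}(t_2)$ and frozen colour-$2$ field, started from $\mu^{(1)}$. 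Constructing an intermediate scale $\bar L(N)\le L(N)$ with $\bar L(N)\to\infty$ and $\bar L(N)/N\to0$ as in Lemma~\ref{unifergod}, and using convergence to the unique $1$-block equilibrium (Lemma~\ref{lem15}), I get that ${\bf z}_1^{\aux}(\bar L(N_k))$ converges in law to $\int_{[0,1]^2}\int_{[0,1]}\Gamma^{\aux,(1)}_{u,y_{2,1}}\,P^{(u,v)}(\d y_{2,1})\,P_{t_2}(\d u,\d v)$. A final coupling of the system started at $N_k^2t_2-L(N_k)N_k$ with the one started at $N_k^2t_2-\bar L(N_k)N_k$ --- using the monotonicity/coupling lemma (Lemma~\ref{lem:2}), the stability assumptions in \eqref{as23}, and the fact that both starting laws have, in the limit, the same conserved quantity and the same colour-$2$ field --- transfers this to the law at time $N_k^2t_2$ itself, which is exactly ${\bf\Theta}^{\aux,(1),[N_k^2]}(N_k^2t_2)$. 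Since the limit does not depend on the extracted subsequence, \eqref{mgh46} holds along the original sequence.

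The main obstacle is the bookkeeping for the colour-$2$ ``slow'' seed-bank: at time $N_k^2t_2$ the $2$-dormant $2$-blocks are not yet in equilibrium, so the colour-$2$ seed-bank $1$-blocks act as frozen fields whose conditional law given the effective $2$-block estimator must be carried \emph{unchanged} through the $1$-block equilibration; this is precisely what the assumption on $\CL[y_{2,1}^{[N_k^2]}(\cdot)\mid {\bf\Theta}^{(2),[N_k^2]}]$ in \eqref{as23} supplies, and the couplings above have to be arranged so that this conditional structure is preserved. A secondary technical point is the usual interchange of the limits $N\to\infty$ and internal time $\to\infty$, handled by the $\bar L(N)$-construction and the monotonicity of the coupling distance.
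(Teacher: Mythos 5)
Your proposal is correct and takes essentially the same route as the paper: both transplant the Propositions~\ref{prop1}/\ref{prop2}/\ref{ma77}-style equilibration one level up, using (i) tightness and the auxiliary $1$-block couplings of Lemma~\ref{lem:aux}, (ii) stability of the effective $2$-block estimator over a window of length $N L(N)$ (Lemma~\ref{lem:sta3}) together with the regularity of the conserved quantity (Lemma~\ref{stabest3}) and the conditional-law assumption in \eqref{as23} to freeze the drift centre and the colour-$2$ field, (iii) convergence to the unique $1$-block equilibrium (Lemma~\ref{lem15}), and (iv) a two-scale device to interchange $N\to\infty$ with internal time $\to\infty$. The only real difference is cosmetic: the paper does not first pass to a weak limit $\mu^{(1)}$ of the $1$-block configurations and run a fixed limiting process from it; instead it works directly with the limiting $1$-block process ${\bf z}^{\aux,n}_1$ constructed on the same probability space with the \emph{actual} configuration at time $N^2t_2-Nt_n$ as initial state, and implements the interchange of limits by an explicit diagonal in $(t_n,N_n)$ together with a tightness-plus-contradiction argument for the key claim $\lim_{n\to\infty}\CL[z^{\aux,n}_i(t_n)]=\Gamma^{(1)}_{\bar\Theta(t_2)}$, which plays exactly the role of your $\bar L(N)$-construction and final coupling. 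This avoids having to explicitly decompose $\mu^{(1)}$ via De Finetti, so it is slightly more economical, but the two routes are equivalent and your version would also go through given the coupling monotonicity used in Lemma~\ref{lem:2}.
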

		
		\begin{proof} 
			For ease of notation, we drop the subsequence notation in this proof. Let $(t_n)_{n\in\N_0}$ be any sequence satisfying $\lim_{n\to\infty} t_n = \infty$ and $\lim_{n\to\infty} t_n/n = 0$. For each $t_n$, let $\mu^{(1)}_{N,t_n}$ be the law obtained by periodic continuation of the configuration of $({\bf \Theta}_i^{\aux,(1),[N^2]}(N^2t_2-Nt_n))_{i\in[N]}$. Recall that, since our state space is compact, the sequence $(\mu^{(1)}_{N,t_n})_{N\in\N}$ is tight. Let $\mu^{(1)}_{t_n}$ be any weak limit point of the sequence $(\mu^{(1)}_{N,t_n})_{N\in\N}$.
			
			Let $\CL[{\bf z}_1^{\aux, n}(0)]$ be the law obtained by periodic continuation of ${\bf \Theta}^{\aux,(1),[N^2]}(N^2t_2-Nt_n)$. By Lemma~\ref{tonebl} we know that the sequence 
			\be{}
			\left(\CL\left[\left({\bf \Theta}_i^{\aux,(1),[N^2]}(N^2t_2-Nt_n+Nt_1)\right)_{t_1>0,i\in[N]}\right]\right)_{N\in\N}
			\ee 
			is tight and hence for each $t_n$ we can pass to a subsequence such that 
			\begin{equation}
				\lim_{k \to \infty}\CL\left[\left({\bf \Theta}_i^{\aux,(1),[N_k^2]}(N_k^2t_2-N_kt_{n}+N_kt_1)\right)_{t_1>0, i\in[N]}\right]
			\end{equation}
			exists. By Lemmas~\ref{lem:sta3}--\ref{stabest3}, we obtain for all $t_n$ that
			\begin{equation}
				\lim_{N \to \infty}\bar{ \Theta}^{(2),[N_k^2]}(N_k^2t_2-Nt_n)=\bar{ \Theta}^{(2)}(t_2)\text{ in probability}.
			\end{equation}
			Then, by \eqref{325} in the proof of Lemma~\ref{lem:aux}, for fixed $t_n$ and all $i\in[N]$,
			\begin{equation}
				\label{7175}
				\begin{aligned}
					\lim_{k \to \infty}
					&\E\Bigg[(1+K_0)\left(x_{1,i}^{\aux,n}(t_n)-\bar{\Theta}_i^{\aux,(1),[N_k^2]}(N_k^2t_2-N_kt_n+N_kt_n)\right)^2\\
					&\qquad+K_1\left(y_{1,1,i}^{\aux,n}(t_n)-\Theta_{y_1,i}^{\aux,(1),[N_k^2]}(N_k^2t_2-N_kt_n+N_kt_n)\right)^2\\
					&\qquad+K_2\left(y_{2,1,i}^{\aux,n}(t_n)-\Theta_{y_2,i}^{\aux,(1),[N_k^2]}(N_k^2t_2-N_kt_n+N_kt_n)\right)^2\Bigg]=0.
				\end{aligned}
			\end{equation}
			By contradiction we can argue that 
			\begin{equation}
				\label{7184}
				\begin{aligned}
					\lim_{N \to \infty}
					&\E\Bigg[(1+K_0)\left(x_{1,i}^{\aux,n}(t_n)-\bar{\Theta}_i^{\aux,(1),[N^2]}(N^2t_2-Nt_n+Nt_n)\right)^2\\
					&\qquad+K_1\left(y_{1,1,i}^{\aux,n}(t_n)-\Theta_{y_1,i}^{\aux,(1),[N^2]}(N^2t_2-Nt_n+Nt_n)\right)^2\\
					&\qquad+K_2\left(y_{2,1,i}^{\aux,n}(t_n)-\Theta_{y_2,i}^{\aux,(1),[N^2]}(N^2t_2-Nt_n+Nt_n)\right)^2\Bigg]=0.
				\end{aligned}
			\end{equation}
			To see why, suppose that \eqref{7184} does not hold. Then for any $\delta>0$ we can construct a sequence $(N_l)_{l>0}$ such that, for $l\in\N$,
			\begin{equation}
				\label{7193}
				\begin{aligned}
					&\E\Bigg[(1+K_0)\left(x_{1,i}^{\aux,n}(t_n)-\bar{\Theta}_i^{\aux,(1),[N_l^2]}(N_l^2t_2-N_lt_n+N_lt_n)\right)^2\\
					&\qquad+K_1\left(y_{1,1,i}^{\aux,n}(t_n)-\Theta_{y_1,i}^{\aux,(1),[N_l^2]}(N_l^2t_2-N_lt_n+N_lt_n)\right)^2\\
					&\qquad+K_2\left(y_{2,1,i}^{\aux,n}(t_n)-\Theta_{y_2,i}^{\aux,(1),[N_l^2]}(N_l^2t_2-N_lt_n+N_lt_n)\right)^2\Bigg]>\delta.
				\end{aligned}
			\end{equation}
			However, also the sequence
			\begin{equation}
				\left(\CL\left[\left({\bf \Theta}_i^{\aux,(1),[N_l^2]}(N_l^2t_2-N_lt_n+N_lt_1)\right)_{t_1>0,i\in[N]}\right]\right)_{l\in\N}
			\end{equation}
			is tight. Hence we can pass to a further subsequence $(N_{\tilde l})_{\tilde{l}\in \N}$ for which \eqref{7175} holds. But this contradicts \eqref{7193}. We conclude that \eqref{7184} indeed holds. Moreover the argument holds for all $t_n$, so that \eqref{7184} holds for all $t_n$.
			
			Hence for every $t_n$ there exists a $N_n$ such that, for all $N\geq N_n$, 
			\begin{equation}
				\begin{aligned}
					&\E\Bigg[(1+K_0)\left(x_{1,i}^{\aux,n}(t_n)-\bar{\Theta}_i^{\aux,(1),[N_k^2]}(N^2t_2)\right)^2\\
					&\qquad+K_1\left(y_{1,1,i}^{\aux,n}(t_n)-\Theta_{y_1,i}^{\aux,(1),[N_k^2]}(N^2t_2)\right)^2\\
					&\qquad+K_2\left(y_{2,1,i}^{\aux,n}(t_n)-\Theta_{y_2,i}^{\aux,(1),[N_k^2]}(N^2t_2)\right)^2\Bigg]<\frac{1}{n}.
				\end{aligned}
			\end{equation}
			In particular, we may require that $N_n>N_{n-1}$. Setting $N=N_n$, we obtain
			\begin{equation}\label{7215}
				\begin{aligned}
					\lim_{n\to\infty}&\E\Bigg[(1+K_0)\left(x_{1,i}^{\aux,n}(t_n)-\bar{\Theta}_i^{\aux,(1),[N^2_{n}]}(N_{n}^2t_2)\right)^2\\
					&\qquad+K_1\left(y_{1,1,i}^{\aux,n}(t_n)-\Theta_{y_1,i}^{\aux,(1),[N_{n}^2]}(N_{n}^2t_2)\right)^2\\
					&\qquad+K_2\left(y_{2,1,i}^{\aux,n}(t_n)-\Theta_{y_2,i}^{\aux,(1),[N_{n}^2]}(N_{n}^2t_2)\right)^2\Bigg]=0.
				\end{aligned}
			\end{equation}
			If we can prove that 
			\begin{equation}
				\label{claim}
				\lim_{n\to\infty}\CL[z_i^{\aux, n}(t_n)]=\Gamma^{(1)}_{\bar{ \Theta}(t_2)},
			\end{equation}
			then we are done. To see why, note that, for all $f\in\CC_b([0,1]\times[0,1]^2)$, $f$ Lipschitz continuous
			\begin{equation}
				\begin{aligned}
					&\left|\E[f({\bf \Theta}_i^{\aux,(1),[N^2]}(N^2t_2))]-\E^{\Gamma^{(1)}_{\bar{ \Theta}(t_2)}}[f]\right|\\
					\leq&\left|\E[f({\bf \Theta}_i^{\aux,(1),[N^2]}(N^2t_2))-f({ z_{1,i}}^{\aux,n}(t_n))]\right|
					+\left|\E[f({ z_{1,i}}^{\aux,n}(t_n))]-\E^{\Gamma^{(1)}_{\bar{ \Theta}(t_2)}}[f]\right|.
				\end{aligned}
			\end{equation}
			Therefore if \eqref{claim} holds, then for all $\epsilon>0$ we can choose $\bar{n}$ such that, for all $n>\bar{n}$,
			\begin{equation}
				\left|\E[f({ z_{1,i}}^{\aux,n}(t_n))]-\E^{\Gamma^{(1)}_{\bar{ \Theta}(t_2)}}[f]\right|<\frac{\epsilon}{2}.
			\end{equation} 
			By \eqref{7215} we can find a $\hat{n}>\bar{n}$ such that for all $n>\hat{n}$
			\begin{equation}
				\left|\E[f({\bf \Theta}_i^\aux(N_n^2t_2))-f({ z_{1,i}}^{\aux,n}(t_{\hat{n}}))]\right|<\frac{\epsilon}{2}.
			\end{equation}
			Using \eqref{claim} and the fact that the Lipschitz functions are dense in $f\in\CC_b([0,1]\times[0,1]^2)$, we obtain \eqref{mgh46}.
			
			\paragraph{Proof of \eqref{claim}.}
			
			We use that any two systems $({\bf z}^{\aux,1}(t_1))_{t_1>0}$ and $({\bf z}^{\aux,2}(t_1))_{t_1>0}$ evolving according to \eqref{m1chb}, and having the same $y_{2,1}$-components and the same $\bar{ \Theta}(t_2)$, can be constructed on one probability space and can coupled by their Brownian motions. We obtain, for a component $i\in\N_0$, 
			\begin{equation}
				\begin{aligned}
					&\E[|x_{1,i}^{\aux,1}(t_n)-x_{1,i}^{\aux,2}(t_n)|  +K_1|y_{1,1,i}^{\aux,1}(t_n)-y_{1,1,i}^{\aux,2}(t_n)|
					+K_2|y_{2,1,i}^{\aux,1}(t_n)-y_{2,1,i}^{\aux,2}(t_n)| ]\\
					&=\E[|x_{1,i}^{\aux,1}(0)-x_{1,i}^{\aux,2}(0)|+K_1|y_{1,1,i}^{\aux,1}(0)-y_{1,1,i}^{\aux,2}(0)|+K_2|y_{2,1,i}^{\aux,1}(0)
					-y_{2,1,i}^{\aux,2}(0)|  ] \\
					&-c\int_0^{t_n}\E[|x_{1,i}^{\aux,1}(s)-x_{1,i}^{\aux,2}(s)|]\d s\\
					&-2K_1e_1\int_0^{t_n}\E\big[[|x_{1,i}^{\aux,1}(s)-x_{1,i}^{\aux,2}(s)|+K_1|y_{1,1,i}^{\aux,1}(s)-y_{1,1,i}^{\aux,2}(s)|]\\
					&\qquad\qquad\qquad\qquad\times 1_{\{\sign(x_{1,i}^{\aux,1}(s)-x_{1,i}^{\aux,2}(s))
						\neq\sign(y_{1,1,i}^{\aux,1}(s)-y_{1,1,i}^{\aux,2}(s))\}}\big]\d s. 
				\end{aligned}
			\end{equation}
			Therefore the difference between these two systems monotonically decreases.
			
			Since the state space $[0,1]\times[0,1]^2$ is compact, the sequence of laws 
			\be{}
			(\CL[{z}_i^{\aux, n}(0)])_{n\in\N}
			\ee 
			is tight. Therefore we can find converging subsequences such that 
			\begin{equation}
				\lim_{k \to \infty}\CL[{z_i}^{\aux, n_k}(0)]=\mu
			\end{equation}
			for some probability measure $\mu$ on $[0,1]\times[0,1]^2$.
			
			Let $(z^{\aux,0}(t_1))_{t_1>0}$ be the limiting system evolving according to \eqref{m1chb} and starting from initial distribution $\mu$. By Skorohod's theorem, we can construct the sequence of limiting systems $((z^{\aux,n_k}(t_1))_{t_1>0})_{k\in\N}$ and $(z^{\aux,0}(t_1))_{t_1>0}$ on one probability space such that 
			\begin{equation}
				\lim_{k \to \infty}{z}^{\aux, n_k}(0)=z^{\aux,0}(0) \quad  a.s.
			\end{equation}
			Use the coupling of Brownian motions to obtain
			\begin{equation}
				\label{s1}
				\begin{aligned}
					&\E[|x_{1,i}^{\aux,n_k}(t_{n_k})-x_{1,i}^{\aux,0}(t_{n_k})|  +K_1|y_{1,1,i}^{\aux,n_k}(t_{n_k})-y_{1,1,i}^{\aux,0}(t_{n_k})|\\
					&\qquad +K_2|y_{2,1,i}^{\aux,n_k}(t_{n_k})-y_{2,1,i}^{\aux,0}(t_{n_k})| ]\\
					&=\E[|x_{1,i}^{\aux,n_k}(0)-x_{1,i}^{\aux,0}(0)|+K_1|y_{1,1,i}^{\aux,n_k}(0)-y_{1,1,i}^{\aux,0}(0)|
					+K_2|y_{2,1,i}^{\aux,n_k}(0)-y_{2,1,i}^{\aux,0}(0)|  ] \\
					&-c\int_0^{t_{n_k}}\E[|x_{1,i}^{\aux,n_k}(s)-x_{1,i}^{\aux,0}(s)|]\d s\\
					&-2K_1e_1\int_0^{t_{n_k}}\E\big[[|x_{1,i}^{\aux,n_k}(s)-x_{1,i}^{\aux,0}(s)|+K_1|y_{1,1,i}^{\aux,n_k}(s)-y_{1,1,i}^{\aux,0}(s)|]\\
					&\qquad\qquad\qquad\qquad\times 1_{\{\sign(x_{1,i}^{\aux,n_k}(s)-x_{1,i}^{\aux,0}(s))\neq\sign(y_{1,1,i}^{\aux,n_k}(s)
						-y_{1,1,i}^{\aux,0}(s))\}}\big]\d s. 
				\end{aligned}
			\end{equation}
			Taking the limit $k\to\infty$ on both sides of \eqref{s1}, we obtain
			\begin{equation}
				\begin{aligned}
					&\lim_{k \to \infty}\E[|x_{1,i}^{\aux,n_k}(t_{n_k})-x_{1,i}^{\aux,0}(t_{n_k})| + K_1|y_{1,1,i}^{\aux,n_k}(t_{n_k})-y_{1,1,i}^{\aux,0}(t_{n_k})|\\
					&\qquad +K_2|y_{2,1,i}^{\aux,n_k}(t_{n_k})-y_{2,1,i}^{\aux,0}(t_{n_k})| ]=0.
				\end{aligned}
			\end{equation}
			Note that $\lim_{n\to\infty} t_n=\infty$ implies that $\lim_{k\to\infty}t_{n_k}=\infty$, so $z_i^{\aux,0}$ is the limiting system in \eqref{m1chb} with $\theta$ replaced by the random variable $\bar{ \Theta}(t_2)$ and $y_{2,1,i}$. Therefore we can condition on $\bar{\Theta}(t_2)$ and $y_{2,1,i}$, and use the assumption in \eqref{z3}, to obtain
			\begin{equation}
				\lim_{k\to\infty}\CL\left[z_i^{\aux,0}(t_{n_k})\right]=\int_{[0,1]^2}\int_{[0,1]} \Gamma^{\aux,(1)}_{u,y_{2,1}}\,P^{(u,v)}(\d y_{2,1})\,P_{t_2}(\d u,\d v).
			\end{equation}
			Hence we conclude that
			\begin{equation}
				\label{s7}
				\lim_{k\to\infty}\CL[z_i^{\aux, n_k}(t_{n_k})]=\int_{[0,1]^2}\int_{[0,1]} \Gamma^{\aux,(1)}_{u,y_{2,1}}\,P^{(u,v)}(\d y_{2,1})\,P_{t_2}(\d u,\d v).
			\end{equation}
			Equation~\eqref{s7} holds for all subsequences along which the initial distribution converges, 
			\begin{equation}
				\lim_{k \to \infty}{z}_i^{\aux, n_k}(0)=z_i^{\aux,0}(0) \quad  a.s.
			\end{equation}
			We will show that this implies \eqref{claim}. 
			
			Suppose that 
			\begin{equation}
				\lim_{n\to\infty}\CL[z_i^{\aux, n}(t_n)]\neq \int_{[0,1]^2}\int_{[0,1]} \Gamma^{\aux,(1)}_{u,y_{2,1}}\,P^{(u,v)}(\d y_{2,1})\,P_{t_2}(\d u,\d v).
			\end{equation}
			Then there exist $f\in\CC_b([0,1]\times[0,1]^3)$ and $\delta>0$ such that for all $N\in\N$ there exists an $n\in\N$, $n>N$ such that 
			\begin{equation}
				\label{star}
				\left|\E[f(z_i^{\aux,n}(t_n))]-\E^{\Gamma^{(1)}_{\bar{ \Theta}(t_2)}}[f]\right|>\delta.
			\end{equation}
			Hence we can construct a subsequence  $(z_i^{\aux, n_k}(t_1))_{t_1>0,\, k\in\N}$ such that \eqref{star} holds for each $k\in\N$. However, also for this sequence $(\CL[z_i^{\aux, n_k}(0)])_{ k\in\N}$ is tight. Passing to a possibly further subsequence of converging initial distributions, we argue like before to obtain that along this subsequence 
			\begin{equation}
				\label{starretje}
				\lim_{k\to\infty}\left|\E[f(z_i^{\aux,n_k}(t_{n_k}))]-\E^{\Gamma^{(1)}_{\bar{ \Theta}(t_2)}}[f]\right|=0.
			\end{equation}
			This contradicts \eqref{star} and so \eqref{claim} is indeed true.
		\end{proof}
		
		\paragraph{Proof of Proposition~\ref{lem:1blev}}
		
		\begin{proof}
			Lemma~\ref{cgam1} implies \eqref{mgh6}. Therefore Lemma~\ref{lem:aux} implies \eqref{mgh6e}. 
		\end{proof}
		
		\subsubsection{Convergence of 2-block process}
		\label{sss.2bl}
		
		In this section we derive the limiting evolution of the effective $2$-block process.
		
		\begin{lemma}{\bf [Convergence of the 2-block averages]}
			\label{lem2blcon}
			Assume that $(N_k)_{k\in\N}\subset\N$ is a subsequence satisfying
			\begin{equation}
				\begin{aligned}
					&\lim_{k\to\infty}\CL\left[y_{2,1}^{[N_k^2]}(N_kt_2)\Big|{\bf \Theta}^{(2),[N_k^2]}(N_k^2t_2)\right] = P^{z_2(t_2)}, \\ 
					&\lim_{k\to\infty}\CL\left[\left(Y_{1,0}^{[N_k^2]}(N^2_kt_2+N_kt_1),Y_{2,0}^{[N_k^2]}(N^2_kt_2)\right)
					\Big|{\bf\Theta}^{\aux,(1),[N_k^2]}(N^2_kt_2+N_kt_1)\right]\\
					&\qquad\qquad= P^{{z}_1^{\eff}(t_1)}.
				\end{aligned}
			\end{equation}
			Then, for the effective $2$-block estimator process defined in \eqref{1114},
			\begin{equation}
				\label{m12a}
				\lim_{k\to\infty} \CL \left[\left({\bf\Theta}^{\eff,(2),[N_k^2]}(N_k^2t_2)\right)_{t_2 > 0}\right] 
				= \CL \left[\left(z_2^\eff(t_2)\right)_{t_2 > 0}\right],
			\end{equation}
			where the limit is determined by the unique solution of the SSDE \eqref{m64c} with initial state
			\begin{equation}
				\begin{aligned}
					z_2^\eff(0)=\left(x_2^\eff(0),y^\eff_2(0)\right)= \left(\vartheta_1,\theta_{y_2}\right).
				\end{aligned}
			\end{equation}
		\end{lemma}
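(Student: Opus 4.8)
The plan is to adapt the $\CD$-semimartingale convergence argument used for Lemma~\ref{lemlimev} and Lemma~\ref{lem:aux}. By Lemma~\ref{lem:t3} the sequence $\bigl(\CL[({\bf\Theta}^{\eff,(2),[N_k^2]}(N_k^2 t_2))_{t_2>0}]\bigr)_{k\in\N}$ is tight on $\CC((0,\infty),[0,1]^2)$, so along any subsequence it has a weak limit point, and by Lemma~\ref{lem:sta3} together with an optional-sampling argument (as in Lemma~\ref{martav}) this limit extends continuously to $t_2=0$; since the initial law is i.i.d., the law of large numbers gives ${\bf\Theta}^{\eff,(2),[N^2]}(0)\to(\vartheta_1,\theta_{y_2})$ a.s. Let $G$ denote the generator of the SSDE~\eqref{m64c} and $G_\dagger^{(2),[N^2]}$ the $\CD$-semimartingale operator of \eqref{m455}. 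Since \eqref{m64c} has a pathwise unique strong solution by \cite{YW71}, its martingale problem is well posed, so it suffices to verify the hypotheses of \cite[Theorem~3.3.1]{JM86}, which by boundedness of the derivatives of polynomials reduces to showing, for every polynomial $f$ and every $t_2>0$,
\[
\lim_{k\to\infty}\int_0^{t_2}\d r\,\E\Bigl[\bigl|G_\dagger^{(2),[N_k^2]}\bigl(f,{\bf\Theta}^{\eff,(2),[N_k^2]}(N_k^2 r),r,\cdot\bigr)-(Gf)\bigl({\bf\Theta}^{\eff,(2),[N_k^2]}(N_k^2 r)\bigr)\bigr|\Bigr]=0.
\]

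Comparing \eqref{m455} with $G$ and using dominated convergence, the above follows from two pointwise-in-$r$ estimates. The first is that on time scale $N^2$ the active, colour-$0$-dormant and colour-$1$-dormant $2$-block averages collapse onto the combined estimator,
\[
\lim_{k\to\infty}\E\bigl[\bigl|\Theta_x^{(2),[N_k^2]}(N_k^2 r)-\bar{\Theta}^{(2),[N_k^2]}(N_k^2 r)\bigr|\bigr]=0
\]
(and similarly with $\Theta_{y_0}^{(2)}$ and $\Theta_{y_1}^{(2)}$). For this I would prove a $2$-block, time-scale-$N^2$ analogue of Lemmas~\ref{lemlev1a} and~\ref{lemav3}: an It\^o computation on the $2$-block SSDE shows that the second moments of $\Theta_x^{(2)}-\Theta_{y_0}^{(2)}$ and of $\tfrac{\Theta_x^{(2)}+K_0\Theta_{y_0}^{(2)}}{1+K_0}-\Theta_{y_1}^{(2)}$ decay exponentially with rates $2N^2 e_0(1+K_0)$ and $2Ne_1(1+K_0+K_1)/(1+K_0)$ respectively, both of which diverge, modulo $\CO(1/N)$ noise terms, so both differences tend to $0$; combining them (and using that $\bar{\Theta}^{(2)}$ is the weighted average of the three) gives the claim. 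The second estimate is that the empirical $2$-block average of the diffusion function converges to the twice-iterated renormalised diffusion function,
\[
\lim_{k\to\infty}\E\Bigl[\Bigl|\tfrac{1}{N_k^2}\sum_{i\in[N_k^2]}g\bigl(x_i^{[N_k^2]}(N_k^2 r)\bigr)-(\CF^{(2)}g)\bigl(\bar{\Theta}^{(2),[N_k^2]}(N_k^2 r)\bigr)\Bigr|\Bigr]=0.
\]

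To establish this last identity I would use the nested quasi-equilibrium structure, iterating the argument around \eqref{a5} one level up. Write the $2$-block average of $g$ as the average over the $N_k$ constituent $1$-blocks of the $1$-block average of $g$ over its single colonies. By Proposition~\ref{ma77} and the ergodicity in Lemma~\ref{lemerg2}, the inner average over the single colonies of a given $1$-block converges to $(\CF^{(1)}g)$ evaluated at that $1$-block's average (the frozen colour-$2$ components being carried along harmlessly, exactly as the colour-$1$ components were in Lemma~\ref{lem:aux}); then, by Proposition~\ref{lem:1blev} and the ergodicity of the $1$-block equilibrium, the outer average of $(\CF^{(1)}g)$ over the $1$-blocks converges to $\int_{[0,1]^2}(\CF^{(1)}g)(x)\,\Gamma^{\eff,(1)}_{\bar{\Theta}^{(2)}(r)}(\d x,\d y_1)=(\CF^{(2)}g)(\bar{\Theta}^{(2)}(r))$ by \eqref{gh42bc}; the conditional-law hypotheses \eqref{as2} and \eqref{as32} are precisely what make these limits hold along the chosen subsequence. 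Feeding the two estimates into the generator criterion yields the convergence, and since all subsequential limits satisfy the well-posed martingale problem of~\eqref{m64c}, \eqref{m12a} follows. I expect the main obstacle to be the second estimate: making the two-level nested averaging rigorous requires running the $1$-block comparison and ergodic arguments uniformly over all $N_k$ constituent $1$-blocks simultaneously (i.e.\ a propagation-of-chaos statement for the $1$-blocks inside the $2$-block), while simultaneously keeping track of the slow colour-$2$ seed-bank components, much as in the proof of Lemma~\ref{lem:aux} but with one extra layer of renormalisation.
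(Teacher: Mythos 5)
Your proposal follows essentially the same route as the paper: the Joffe--M\'etivier generator criterion applied to the $\CD$-semimartingale $G_\dagger^{(2),[N^2]}$, reducing the claim to the collapse of the active and dormant $2$-block averages onto the combined estimator (via the $2$-block analogues of Lemmas~\ref{lemlev1a} and~\ref{lemav3}) and to the nested two-level replacement of the empirical average of $g$ by $\CF^{(2)}g$ (which the paper makes rigorous in \eqref{k1} via the coupling of Corollary~\ref{cor3}, propagation of chaos for the constituent $1$-blocks from Proposition~\ref{lem:1blev}, the law of large numbers and the Lipschitz property of $\CF^{(2)}g$). The ``main obstacle'' you flag at the end is exactly what Corollary~\ref{cor3} together with Proposition~\ref{lem:1blev} is designed to handle, so the proposal is correct modulo that acknowledged implementation detail.
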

		
		\begin{proof}
			Again we use \cite[Theorem 3.3.1]{JM86}. By a similar argument as used in the proof of Lemma~\ref{lemlimev} we can show that
			\begin{equation}
				\lim_{t_2 \downarrow 0}\CL \left[{\bf\Theta}^{\eff,(2),[N_k^2]}(N_k^2t_2)\right]
				=\delta_{(\vt_1,\theta_{y_2})}.
			\end{equation}
			Note that by steps 1-4 of the scheme in Section~\ref{ss.org22} we can choose the subsequence $(N_k)_{k\in\N}$ such that both \eqref{z3} and \eqref{as23} hold. Since we already established the tightness of the $2$-block in Lemma~\ref{lem:t3}, we are left to show that, for all $t_2>0$, 
			\begin{equation}
				\label{ga}
				\begin{aligned}
					\lim_{N \to \infty}&\E^{}\Big[\Big|G_\dagger^{(2),[N_k^2]}(f,{\bf\Theta}^{\eff,(2),[N_k^2]}(N_k^2t_2),t_2,\omega)\\
					&\qquad\qquad\qquad-G^{(2)}f\left({\bf\Theta}^{\eff,(2),[N_k^2]}(N_k^2t_2)\right)\Big|\Big]=0,
				\end{aligned}
			\end{equation}
			where $G_\dagger^{(2),[N_k^2]}$ is the $\CD$-semi-martingale operator defined in \eqref{m455}, $G^{(2)}$ is the generator of the process $(z_2^\eff(t_2))_{t_2>0}$ defined in \eqref{m64c}, and both generators work on a probability space driven by one set of Brownian motions. Note that, for all $t_2>0$,
			\begin{equation}
				\begin{aligned}
					&\E^{}\Big[\Big|G_\dagger^{(2),[N_k^2]}\left(f,\left(\bar{\Theta}^{(2),[N_k^2]}(N_k^2t_2),
					\Theta^{(2),[N_k^2]}_{y_2}(N_k^2t_2)\right),t_2,\omega\right)\\
					&\qquad\qquad -G^{(2)}f\left(\bar{\Theta}^{(2),[N_k^2]}(N_k^2t_2),\Theta^{(2),[N_k^2]}_{y_2}(N_k^2t_2)\right)\Big|\Big]\\
					&\leq  \frac{K_2e_2}{1+K_0+K_1}\E\left[\left|\bar{\Theta}^{(2),[N_k^2]}(N_k^2t_2)
					-\Theta_x^{(2),[N_k^2]}(N_k^2t_2,\omega)\right|\left|\,\frac{\partial f}{\partial x}\right|\right]\\
					&\qquad+e_2\E^{}\left[\left|\Theta_x^{(2),[N_k^2]}(N_k^2t_2,\omega)
					-\bar{\Theta}^{(2),[N_k^2]}(N_k^2t_2)\right|\left|\,\frac{\partial f}{\partial y}\right|\right]\\
					&\qquad+\frac{1}{(1+K_0+K_1)^2}\\
					&\qquad\qquad \times \E^{}\left[\left|\frac{1}{N^2_k}\sum_{i\in[N_k^2]}g(x_i^{[N^2_k]}(N_k^2t_2,\omega))
					-(\CF^{(2)}g)(\bar{\Theta}^{(2),[N_k^2]}(N_k^2t_2))\right|\left|\,\frac{\partial^2 f}{\partial x^2}\right|\right].
				\end{aligned}
			\end{equation}	
			The first and second term on the right-hand side tend to $0$ as $k\to\infty$ by a similar argument as used in \eqref{m10} and below. For the third let $[N]_i$ denote the $1$-block that contains site $i$ and let $(z^{\nu_{\bar{\Theta}_i^{(1)}}}(t))_{t>0}$ be the limiting single colony system, with drift towards the random variable $\bar{\Theta}_i$ and starting from the equilibrium measure $\nu_{\bar{\Theta}_i^{(1)}}$. We construct the single colony system $Z^{[N^2_k]}(N^2_kt_2-L(N)+t)_{t\geq 0}$ and the limiting system $(z^{\nu_{\bar{\Theta}_i^{(1)}}}(t))_{t>0}$ on one probability space, such that by Skohorod's theorem, we can assume that the convergence is almost surely. Note that $\bar{\Theta}_i$ is the limiting one block. Then we can write 
			\begin{equation}
				\label{k1}
				\begin{aligned}
					&\E^{}\left[\left|\frac{1}{N^2_k}\sum_{i\in[N_k^2]}g(x_i^{[N_k^2]}(N_k^2t_2,\omega))
					-(\CF^{(2)}g)(\bar{\Theta}^{(2),[N_k^2]}(N_k^2t_2))\right|\left|\frac{\partial^2 f}{\partial x^2}\right|\right]\\
					&\qquad\leq \frac{1}{N_k}\sum_{i\in[N]}\E^{}\left[\left|\frac{1}{N_k}\sum_{j\in[N]_i}g(x_j^{[N_k^2]}(N_k^2t_2,\omega))
					-\frac{1}{N_k}\sum_{j\in[N]_i}g(x_j^{\nu_{\bar{\Theta}_i^{(1)}}}(L(N))\right|\left|\frac{\partial^2 f}{\partial x^2}\right|\right]\\
					&\qquad+ \frac{1}{N_k}\sum_{i\in[N]}\E^{}\left[\left|\frac{1}{N_k}\sum_{j\in[N]_i}g(x_j^{\nu_{\bar{\Theta}_i^{(1)}}}(L(N))
					-(\CF^{(1)}g)(\bar{\Theta}_i^{(1)})\right|\left|\frac{\partial^2 f}{\partial x^2}\right|\right]\\
					&\qquad+ \E^{}\left[\left|\frac{1}{N_k}\sum_{i\in[N]}(\CF^{(1)}g)(\bar{\Theta}_i^{(1)})
					-(\CF^{(2)}g)(\bar{\Theta}_i^{(2)})\right|\left|\frac{\partial^2 f}{\partial x^2}\right|\right]\\ 
					&\qquad+\E^{}\left[\left|(\CF^{(2)}g)(\bar{\Theta}_i^{(2)})
					-(\CF^{(2)}g)(\bar{\Theta}_i^{(2)}(N_k^2t_2))\right|\left|\frac{\partial^2 f}{\partial x^2}\right|\right].
				\end{aligned}
			\end{equation}	
			The first term on the right-hand side tends to zero by Lipschitz continuity for $g$ and Corollary~\ref{cor3}. The second term tends to zero by the law of large numbers, since the limiting single colonies are i.i.d. given the value of the random variable $\bar{\Theta}_i^{(1)}$. The third term tends to zero since by Proposition~\ref{lem:1blev} also the limiting $1$-blocks become independent given the value of the $2$-block. Hence we can again apply the law of large numbers. Finally, for the last term, note that since we construct the single components and the limiting process on one probability space, we can argue like in the proof of Lemma~\ref{stabest2} that
			\begin{equation}
				\lim_{k\to\infty}\E\left[\left|\bar{\Theta}_i^{(2)}-\bar{\Theta}_i^{(2)}(N_k^2t_2)\right|\right]=0.
			\end{equation}
			Hence the last term tends to zero by the Lipschitz property of $\CF^{(2)}g$.
		\end{proof}
		
		\begin{remark}
			{\rm Instead of \cite[Theorem 3.3.1]{JM86} we could have used a similar strategy as in the proof of Lemma~\ref{lem:aux} to obtain Lemma~\ref{lem2blcon}.}\hfill$\blacksquare$
		\end{remark}
		
		\subsubsection{State of the slow seed-banks}
		\label{ss.ssb}
		
		On time scale $t_0$, i.e., space-time scale $0$, the colour-$1$ seed-bank is a ``slow seed-bank," since it does not move on this time scale. Because we study the two-layer three-colour mean-field system from time $N^2t_2$ onwards, the $1$-block averages of the colour $1$-dormant population are already in equilibrium. As a consequence we can exactly describe the single $1$-dormant colonies, which turn out to be in a state that equals the current $1$-block average of the dormant population of colour $1$. To obtain the formal result we will first prove the following lemma.
		
		\begin{lemma}{\bf [Slow seed-banks]}
			\label{lem:ds}
			Fix $t_2,t_1>0$, for $i\in[N^2]$ and all $t_0\geq 0$,
			\begin{equation}
				\label{14c}
				\lim_{N\to\infty}\left[ y^{[N^2]}_{i,1}(N^2t_2+Nt_1+t_0)-\Theta_{y_1,i}^{(1),[N^2]}(N^2t_2+Nt_1+t_0)\right]=0 \qquad \text{a.s.},
			\end{equation}
			where $\Theta_{y_1,i}^{(1),[N^2]}$ is the $1$-block average to which $y^{[N^2]}_{i,1}$ contributes.
		\end{lemma}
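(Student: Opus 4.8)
The plan is to show that, on the slow time scale $t_0$, the colour-$1$ single colonies track the colour-$1$ $1$-block average because they have already equilibrated with the active population on time scale $N t_1$, while on time scale $t_0$ the colour-$1$ seed-bank rate $e_1/N$ is too small to cause any further drift. First I would fix the $1$-block $[N]_i$ containing site $i$ and write, from \eqref{gh45a2twee},
\begin{equation}
\label{eq:dsint}
y^{[N^2]}_{j,1}(N^2t_2+Nt_1+t_0) = y^{[N^2]}_{j,1}(N^2t_2+Nt_1) + \frac{e_1}{N}\int_0^{t_0} \d s\,\big[x^{[N^2]}_j(N^2t_2+Nt_1+s) - y^{[N^2]}_{j,1}(N^2t_2+Nt_1+s)\big],\qquad j\in[N]_i,
\end{equation}
and similarly for the $1$-block average $\Theta_{y_1,i}^{(1),[N^2]}$. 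Subtracting, the integral terms are $O(t_0/N)$ and vanish as $N\to\infty$, so it suffices to prove
\begin{equation}
\label{eq:dsgoal}
\lim_{N\to\infty}\big[y^{[N^2]}_{i,1}(N^2t_2+Nt_1) - \Theta_{y_1,i}^{(1),[N^2]}(N^2t_2+Nt_1)\big] = 0\qquad\text{in probability (hence a.s.\ along a subsequence).}
\end{equation}

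The heart of the argument is \eqref{eq:dsgoal}, and here I would use the already-established convergence of the single colonies and of the $1$-block process. By Proposition~\ref{ma77} (and the accompanying Corollary~\ref{cor3}), conditionally on the $1$-block data the single colonies at time $N^2t_2+Nt_1+t_0$ converge to i.i.d.\ copies of the limiting single-colony process \eqref{z0}; moreover the colour-$1$ single-colony component in \eqref{z0} equals the current $1$-block average $y_{1,1}$ of the colour-$1$ dormant population. Concretely, the mechanism is that on time scale $Nt_1$ the colour-$1$ seed-bank is the \emph{effective} seed-bank (rate $\asymp 1$ after rescaling), so by the time $N^2t_2+Nt_1$ with $t_1>0$ it has already equilibrated: the single colony $y^{[N^2]}_{i,1}$ and the block average $\Theta_{y_1,i}^{(1),[N^2]}$ have had $\asymp N$ units of ``fast'' time to synchronise. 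The cleanest way to make this quantitative is to couple $(z^{[N^2]}(N^2t_2+Nt_1+t))_{t\ge0}$ with the infinite limiting system $(Z^{\nu^{(0)}(t_2)}(t))_{t\ge0}$ on one probability space (as in Corollary~\ref{cor3}), so that $\E[|y^{[N^2]}_{i,1}(N^2t_2+Nt_1) - y_{1,1,i}|]\to0$; and separately to use Lemma~\ref{lem:1blev} to get $\E[|\Theta_{y_1,i}^{(1),[N^2]}(N^2t_2+Nt_1) - y_{1,1}|]\to0$ where $y_{1,1}$ is the corresponding component of the equilibrium $\Gamma^{\aux,(1)}$ reached by the auxiliary $1$-block process. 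Since these two limiting objects are the same random variable (the colour-$1$ component of the limiting $1$-block equals, by construction, the colour-$1$ single-colony state in the level-$0$ equilibrium measure $\Gamma^{(0)}_{(x_1,(x_1,y_{1,1},y_{2,0}))}$ appearing in Proposition~\ref{P.finsysmf2lev}(f)), the triangle inequality gives \eqref{eq:dsgoal}.

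I would organise this as: (i) reduce to time $N^2t_2+Nt_1$ via \eqref{eq:dsint}; (ii) invoke Corollary~\ref{cor3} to identify $\lim_N y^{[N^2]}_{i,1}(N^2t_2+Nt_1)$ with the colour-$1$ component $y_{1,1}$ of the single-colony limit; (iii) invoke Lemma~\ref{lem:1blev} to identify $\lim_N \Theta_{y_1,i}^{(1),[N^2]}(N^2t_2+Nt_1)$ with the colour-$1$ component of the limiting $1$-block equilibrium $\Gamma^{\aux,(1)}$; (iv) check consistency — that these two limits agree — which is precisely the content of how $\Gamma^{(0)}$ is built from $\Gamma^{(1)}$ in \eqref{mfkern2} with $y_{1,0}=y_{1,1}$; (v) conclude a.s.\ convergence by passing to a subsequence and using that the errors are $L^1$-small. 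The main obstacle, I expect, is step (iv): one must be careful that the coupling used in Corollary~\ref{cor3} (single colonies vs.\ infinite system) and the one used in Lemma~\ref{lem:1blev} (auxiliary $1$-block process vs.\ its equilibrium) are compatible, i.e.\ that they can be realised on a common probability space so that the identification of the two limiting colour-$1$ states is not merely in law but actually pointwise a.s.; this is exactly the kind of consistency that the auxiliary estimator process ${\bf\Theta}^{\aux,(1),[N^2]}$ was introduced to handle, since it carries both the combined active/colour-$0$ coordinate \emph{and} the colour-$1$ coordinate, letting one run a single coupling that controls both quantities simultaneously. Everything else is routine Gr\"onwall/It\^o bookkeeping of the type already carried out in Lemmas~\ref{lemlev1}--\ref{l.comp2}.
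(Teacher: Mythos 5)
There is a genuine circularity in your plan. The objects you invoke in steps (ii)–(iv) — Corollary~\ref{cor3}, Proposition~\ref{ma77}, Lemma~\ref{lem:1blev}, and the identification $y_{1,0}=y_{1,1}$ in the measure $\Gamma^{(0)}$ of \eqref{mfkern2} — are all stated \emph{conditionally} on assumptions about the limiting conditional law of $Y_{1,0}$ (the second line of \eqref{z3}, respectively the third line of \eqref{as23}). Along a subsequence one knows only that this conditional law converges to \emph{some} $P^{z_1^\eff(t_1)}$; one does not know \emph{a priori} that it is degenerate, i.e.\ that the colour-$1$ single-colony state is a deterministic function of the $1$-block data. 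That degeneracy is exactly what Lemma~\ref{lem:ds} is supposed to establish, and it is subsequently used in Section~\ref{ss.leo} to discharge the very assumption you are relying on. Your step (iv) is therefore not a compatibility-of-couplings check that can be waved away as ``routine''; it is the entire content of the lemma, and nothing in Corollary~\ref{cor3} or Lemma~\ref{lem:1blev} supplies it. In fact, in a generic McKean--Vlasov/propagation-of-chaos picture, conditionally on the block average the single colonies become \emph{i.i.d.}\ copies, in which case $y^{[N^2]}_{i,1}-\Theta^{(1),[N^2]}_{y_1,i}$ would \emph{not} vanish; some extra mechanism forcing concentration is required, and it is absent from your argument.

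The paper's proof supplies that mechanism unconditionally by a completely different route. It applies the explicit first/second moment formulae of Lemma~\ref{lem1cg9422} in terms of the dual random-walk semigroup $b^{[N^2]}_t(\cdot,\cdot)$ to reduce the claim to showing $\lim_{N\to\infty}\E[(y^{[N^2]}_{i,1}(\bar{t}(N))-y^{[N^2]}_{j,1}(\bar{t}(N)))^2]=0$ for $i,j$ in the same $1$-block. It then couples two copies of the dual walk started at $(i,D_1)$ and $(j,D_1)$ so that they wake up, fall asleep, and jump at the same times and over the same distances, and observes that as soon as a jump occurs both walks become uniformly distributed over the same ball and hence equal in law. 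The probability that no jump occurs before time $\bar{t}(N)=N^2t_2+Nt_1+t_0$ is shown to vanish because colour-$1$ lineages wake on scale $N$ and, once awake, migrate on scale $1$. This is a direct, self-contained second-moment computation via duality and needs no input from the limiting propositions, so it can be applied at step 8 of the scheme precisely in order to remove the conditioning assumption. Your reduction in step (i) from $\bar{t}(N)$ to $N^2t_2+Nt_1$ via the $O(t_0/N)$ integral bound is fine (and consistent with the paper), but the core of the argument needs to be replaced by something like the duality/coupling calculation rather than an appeal to limiting objects whose identification presupposes the lemma.
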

		
		To prove Lemma~\ref{lem:ds}, we need the kernel $b^{[N^2]}(\cdot,\cdot)$ defined in \ref{mrw}, which becomes in the current setting 
		\small
		\begin{equation}
			\label{mrwN1}
			b^{[N^2]}((i,R_i), (j, R_j)) = \left\{ \begin{array}{ll}
				1_{\{d_{[N^2]}(i,j)\leq 1\}}\frac{c_0}{N}+\frac{c_1}{N^3},
				&\text{ if } R_i = R_j =A,\\
				K_m\frac{e_m}{N^m}, &\text{ if } i = j,\ R_i=A,\ R_j = D_m,\,m\in\{0,1,2\}, \\
				\frac{e_m}{N^m}, &\text{ if } i = j,\ R_i=D_m,\ R_j = A,\,m\in\{0,1,2\},\\
				0, &\mbox{ otherwise}.
			\end{array}
			\right.
		\end{equation}
		\normalsize
		The corresponding semigroup of the kernel $b^{[N^2]}(\cdot,\cdot)$ is denoted by $b^{[N^2]}_t(\cdot,\cdot)$.
		
		To prove Lemma~\ref{lem:ds} we will use the following lemma, which was proved in \cite{GdHOpr1}[Lemma 6.1] and for our setting reads as follows.
		
		\begin{lemma}{\bf [First and second moment]}
			\label{lem1cg9422}
			$\mbox{}$\\
			Let $\E_{z^{[N^2]}}$ the expectation if the process start from some state $z^{[N^2]}\in ([0,1]\times[0,1]^3)^{[N^2]}$. For $z^{[N^2]}\in ([0,1]\times[0,1]^2)^{[N^2]}$, $t\geq 0$ and $(i,R_i),(j,R_j) \in [N^2]\times\{A,D_0,D_1,D_2\}$,
			\begin{equation}
				\label{eqexprw21}
				\E_{z^{[N^2]}}[z^{[N^2]}_{(i,R_i)}(t)]=\sum_{\substack{(k,R_k)\in\\ \Omega_N\times\{A,D_0,D_1,D_2\}}} 
				b^{[N^2]}_t\big((i,R_i),(k,R_k)\big)\,z^{[N^2]}_{(k,R_k)}
			\end{equation}
			and 
			\begin{equation}
				\label{eqexprw22}
				\begin{aligned}
					&\E_{z^{[N^2]}}[z^{[N^2]}_{(i,R_i)}(t)z^{[N^2]}_{(j,R_j)}(t)]\\
					&= \sum_{\substack{(k,R_k),(l,R_l)\in\\  \Omega_N\times\{A,D_0,D_1,D_2\}}} b^{[N^2]}_t\big((i,R_i),(k,R_k)\big)\,
					b^{[N^2]}_t\big((j,R_j),(l,R_l)\big)\,z^{[N^2]}_{(k,R_k)}z^{[N^2]}_{(l,R_l)}\\ 
					&\quad +\,2\int_0^{{t}} \d s \sum_{k\in \Omega_N} 
					b^{[N^2]}_{(t-s)}((i,R_i),(k,A))\,b^{[N^2]}_{(t-s)}((j,R_j),(k,A))\,\E_z^{[N^2]}[g(x^{[N^2]}_k(s))].
				\end{aligned}
			\end{equation}
		\end{lemma}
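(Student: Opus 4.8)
The plan is to derive both identities by a routine first- and second-moment computation performed on the SSDE written in the effective-geographic-space form \eqref{SDE2}, now with the restricted migration kernel $b^{[N^2]}(\cdot,\cdot)$ of \eqref{mrwN1} in place of $b^{\Omega_N}(\cdot,\cdot)$; this is exactly the argument of \cite[Lemma 6.1]{GdHOpr1}, carried out over the finite effective geographic space $[N^2]\times\{A,D_0,D_1,D_2\}$. Because the state space $([0,1]\times[0,1]^3)^{[N^2]}$ is compact, $g$ is bounded, and all migration and exchange rates are finite, the generator is a bounded operator and every It\^o and Dynkin manipulation below is unconditionally legitimate. For \eqref{eqexprw21} I would take expectations in \eqref{SDE2}: the stochastic integral is a genuine martingale and drops out, so $u_t(\xi,R_\xi):=\E_{z^{[N^2]}}[z^{[N^2]}_{(\xi,R_\xi)}(t)]$ solves the backward (Kolmogorov) equation of the continuous-time random walk on $[N^2]\times\{A,D_0,D_1,D_2\}$ with rate kernel $b^{[N^2]}$. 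Since $u_0=z^{[N^2]}$, this yields $u_t=b^{[N^2]}_t u_0$, which is precisely \eqref{eqexprw21}.

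For \eqref{eqexprw22} I would apply It\^o's formula to $t\mapsto z^{[N^2]}_{(i,R_i)}(t)\,z^{[N^2]}_{(j,R_j)}(t)$. The source of the $g$-contribution is the cross-variation term: since the Brownian motions $(w_\xi)_{\xi\in[N^2]}$ are independent and only active components carry a noise term in \eqref{SDE2}, the bracket $\d\langle z^{[N^2]}_{(i,R_i)},z^{[N^2]}_{(j,R_j)}\rangle_t$ is supported on the event $\{i=j,\,R_i=R_j=A\}$ and equal there to a constant multiple of $g\big(x^{[N^2]}_i(t)\big)\,\d t$, the constant being the one appearing in \eqref{eqexprw22} and ultimately dictated by the $\tfrac12 g(x)\,\partial_x^2$ term in the generator of \eqref{moSDE}. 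Setting $M_t((i,R_i),(j,R_j)):=\E_{z^{[N^2]}}[z^{[N^2]}_{(i,R_i)}(t)\,z^{[N^2]}_{(j,R_j)}(t)]$ and taking expectations, the drift terms assemble into the generator of two independent copies of the $b^{[N^2]}$-walk, so that $M_t$ solves an inhomogeneous linear equation whose forcing is, up to that constant, the diagonal-active function $s\mapsto 1_{\{i=j,\,R_i=R_j=A\}}\,\E_{z^{[N^2]}}[g(x^{[N^2]}_i(s))]$. Solving by Duhamel's formula, and using that the two copies of the walk commute so that the solution operator factorises with kernel $b^{[N^2]}_t((i,R_i),\cdot)\,b^{[N^2]}_t((j,R_j),\cdot)$, the homogeneous part applied to $M_0=z^{[N^2]}\otimes z^{[N^2]}$ produces the first (product) term of \eqref{eqexprw22}, and the Duhamel integral against the forcing, which only charges paths ending simultaneously at the same active site, produces the second term.

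I do not expect a genuine obstacle: the statement is a textbook moment calculation and the work is bookkeeping rather than analysis. The two points that need care are (i) checking that the evolution of the two-point function $M_t$ is that of two independent $b^{[N^2]}$-walks, so that the two-fold convolution of rate kernels collapses to the product of transition kernels $b^{[N^2]}_t\,b^{[N^2]}_t$, and (ii) tracking the indicator $1_{\{i=j,\,R_i=R_j=A\}}$ through the Duhamel integral so that only coincident active endpoints contribute a $g$-term. I would also note that the finiteness of $[N^2]$ makes all of this unconditional, and that the only reason to restate the lemma rather than merely invoke \cite[Lemma 6.1]{GdHOpr1} is the replacement of $b^{\Omega_N}$ by the restricted kernel $b^{[N^2]}$ of \eqref{mrwN1} (with $c_k=0$ for $k\geq 2$), which does not affect the structure of the argument.
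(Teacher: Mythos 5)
Your approach is correct and reconstructs the argument behind the reference the paper invokes (namely \cite[Lemma 6.1]{GdHOpr1}); the paper itself offers no proof beyond that citation, and what you sketch is exactly the right route: first-moment Kolmogorov equation for \eqref{eqexprw21}, then It\^o on the product plus Duhamel with a factorised two-walk propagator for \eqref{eqexprw22}, with the restriction from $b^{\Omega_N}$ to $b^{[N^2]}$ changing nothing structurally.

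The one place your write-up is soft is the constant, which you do not actually derive. You assert the cross-variation coefficient ``is the one appearing in \eqref{eqexprw22}'', i.e.\ $2$; but from \eqref{SDE2} the bracket is $\d\langle z_{(i,R_i)},z_{(j,R_j)}\rangle_t = 1_{\{(i,R_i)=(j,R_j),\,R_i=A\}}\,g\big(x^{[N^2]}_i(t)\big)\,\d t$, with coefficient $1$, so Duhamel produces a $1$, not a $2$, in front of the integral. A one-colony sanity check with $b_t\equiv 1$ gives $\tfrac{\d}{\d t}\E[x^2]=\E[g(x)]$, again coefficient $1$. Carrying out your own plan faithfully therefore contradicts the prefactor $2$ in the lemma's statement; the same factor-of-two slip is visible in the It\^o line $\d(\Delta^2)=2\Delta\,\d\Delta+2\,\d\langle\Delta\rangle$ in the proof of Lemma~\ref{lemav}. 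It has no downstream effect here, because the formula is only used to show a quantity vanishes, but you should not present the constant as matching when your derivation would in fact correct it.
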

		
		\begin{proof}[Proof of Lemma~\ref{lem:ds}] 
			The argument is given in such a way that it can easily be generalised to more complicated systems, which we treat later. Let $\bar{t}(N)=N^2 t_2+Nt_1+t_0$. We will show that if $i,j\in[N]_i$, i.e., $i$ and $j$ belong to the same $1$-block, then
			\begin{equation}
				\label{14b}
				\lim_{N\to\infty}\E\left[\left(y^{[N^2]}_{i,1}(\bar{t}(N))-y^{[N^2]}_{j,1}(\bar{t}(N))\right)^2\right]=0.
			\end{equation}
			This implies \eqref{14c}. By Lemma~\ref{lem1cg9422}, we can write
			\begin{equation}
				\label{1039}
				\begin{aligned}
					&\E\left[\left(y^{[N^2]}_{i,1}(\bar{t}(N))-y^{[N^2]}_{j,1}(\bar{t}(N))\right)^2\right]\\
					&= \sum_{\substack{(k,R_k),(l,R_l)\in\\  [N^2]\times\{A,D_0,D_1,D_2\}}} \left(b^{[N^2]}_{\bar{t}(N)}
					\left((i,D_1),(k,R_k)\right)-b^{[N^2]}_{\bar{t}(N)}\left((j,D_1),(k,R_k)\right)\right)\\
					&\,\times\left(b^{[N^2]}_{\bar{t}(N)}\left((i,D_1),(l,R_l)\right)-b^{[N^2]}_{\bar{t}(N)}
					\left((j,D_1),(l,R_l)\right)\right)\E[z^{[N^2]}_{(k,R_k)}z^{[N^2]}_{(l,R_l)}]\\
					&+\,2\int_0^{\bar{t}(N)} \d s \sum_{k\in [N^2]} 
					\left(b^{[N^2]}_{(\bar{t}(N)-s)}((i,D_1),(k,A))-b^{[N^2]}_{(\bar{t}(N)-s)}((j,D_1),(k,A))\right)^2\\
					&\,\times\E[g(x^{[N^2]}_k(s))].
				\end{aligned}
			\end{equation}
			Using a coupling argument, we show that both terms in \eqref{1039} tend to $0$ as $N\to\infty$. To prove that the first term tends to $0$, we will show that 
			\begin{equation}
				\label{1040}
				\lim_{N\to\infty}\sum_{\substack{(k,R_k)\in\\  [N^2]\times\{A,D_0,D_1,D_2\}}} 
				\left|b^{[N^2]}_{\bar{t}(N)}\left((i,D_1),(k,R_k)\right)-b^{[N^2]}_{\bar{t}(N)}\left((j,D_1),(k,R_k)\right)\right|=0.
			\end{equation}
			
			To do so, let $(RW^{[N^2]}(t))_{t\geq 0}$ and $(RW^{\prime[N^2]}(t))_{t\geq 0}$ be two independent random walks, starting from $RW^{[N^2]}(0)=(i,D_1)$ and $RW^{\prime[N^2]}(0)=(j,D_1)$, where $i$ and $j$ are in the same $1$-block. Let $RW^{[N^2]}$ and $RW^{\prime[N^2]}$  both evolve according to the kernel $b^{[N^2]}(\cdot,\cdot)$, so $b^{[N^2]}_t(\cdot,\cdot)$ is their corresponding semigroup. Since $RW^{[N^2]}$ and $RW^{\prime[N^2]}$ both start from the colour $1$-seed-bank, we can perfectly couple their switches between $A$, $D_0$, $D_1$  and $D_2$. Since this implies that both $RW^{[N^2]}$ and $RW^{\prime[N^2]}$ are always simultaneously active, we can also couple the times when they jump due to migration and the distance over which they migrate. However, we do not couple their migrations, i.e. $RW^{[N^2]}$ and $RW^{\prime[N^2]}$ jump at the same time and over the same distance, but they can jump to different sites. This implies that the coupled process $(RW^{[N^2]}(t),RW^{\prime[N^2]}(t))_{t\geq 0}$ has transition rates
			\begin{equation}
				((i,R_i),(j,R_i))\rightarrow\begin{cases}
					((k,A),(l,A))&\text{ if }R_i=R_j=A\text{ and }d_{[N^2]}(i,k)=d_{[N^2]}(j,l)\\
					&\text{ at rate }1_{\{d_{[N^2]}(i,j)\leq 1\}}\frac{c_0}{N}+\frac{c_1}{N^3},\\
					((i,D_m),(j,D_m))&\text{ if }R_i=R_j=A\text{ at rate }\frac{K_me_m}{N^m},\, m\in\{0,1,2\},\\
					((i,A),(j,A))&\text{ if }R_i=R_j=D_m\text{ at rate }\frac{e_m}{N^m},\, m\in\{0,1,2\}.	
				\end{cases}
			\end{equation}
			Define the event
			\begin{equation}
				\label{1041}
				H_t^{[N^2]}=\{RW^{[N^2]} \text{ has migrated at least once up to time } t\}.
			\end{equation} 
			Note that if $H_t^{[N^2]}$ has happened, then also $RW^{\prime[N^2]}$ has migrated. Hence
			\begin{equation}
				\begin{aligned}
					b^{[N^2]}_{\bar{t}(N)}&\left((i,D_1),(k,R_k)\right)-b^{[N^2]}_{\bar{t}(N)}\left((j,D_1),(k,R_k)\right)\\
					&= \P_{(i,D_1)}\left(RW^{[N^2]}(\bar{t}(N))=(k,R_k)\right)-\P_{(j,D_1)}
					\left(RW^{\prime[N^2]}(\bar{t}(N))=(k,R_k)\right)\\
					&= \tilde{\P}_{(i,D_1),(j,D_1)}\left(RW^{[N^2]}(\bar{t}(N))=(k,R_k),\, H_{\bar{t}(N)}^{[N^2]}\right)\\
					&\qquad +\tilde{\P}_{(i,D_1),(j,D_1)}\left(RW^{[N^2]}(\bar{t}(N))=(k,R_k),\, (H_{\bar{t}(N)}^{[N^2]})^c\right)\\
					&\qquad -\tilde{\P}_{(i,D_1),(j,D_1)}\left(RW^{\prime[N^2]}(\bar{t}(N))=(k,R_k),\, 
					H_{\bar{t}(N)}^{[N^2]}\right)\\
					&\qquad -\tilde{\P}_{(i,D_1),(j,D_1)}\left(RW^{\prime[N^2]}(\bar{t}(N))
					=(k,R_k),\, (H_{\bar{t}(N)}^{[N^2]})^c\right)\\
					&= \tilde{\P}_{(i,D_1),(j,D_1)}\left(RW^{[N^2]}(\bar{t}(N))=(k,R_k),\, 
					(H_{\bar{t}(N)}^{[N^2]})^c\right)\\
					&\qquad -\tilde{\P}_{(i,D_1),(j,D_1)}
					\left(RW^{\prime[N^2]}(\bar{t}(N))=(k,R_k),\, (H_{\bar{t}(N)}^{[N^2]})^c\right),
				\end{aligned}
			\end{equation}
			where the last equality follows because, once the random walks have just jumped once, $RW^{[N^2]}$ and $RW^{\prime[N^2]}$ are uniformly distributed over $[N]\times A$ if their jump horizon was $1$ and they are uniformly distributed of $[N^2]\times A$ if they jumped over distance $2$. Hence if $H_t^{[N^2]}$ has occurred, then $RW^{[N^2]}$ and $RW^{\prime[N^2]}$ have the same distribution. Therefore
			\begin{equation}
				\sum_{\substack{(k,R_k)\in\\  [N^2]\times\{A,D_0,D_1,D_2\}}} \left|b^{[N^2]}_{\bar{t}(N)}\left((i,D_1),(k,R_k)\right)
				-b^{[N^2]}_{\bar{t}(N)}\left((j,D_1),(k,R_k)\right)\right|\leq 2 \tilde{\P}^{}((H_{\bar{t}(N)}^{[N^2]})^c)
			\end{equation}
			and we are left to show that
			\begin{equation}
				\label{1045}
				\lim_{N\to\infty}\tilde{\P}((H_{\bar{t}(N)}^{[N^2]})^c)=0.
			\end{equation}
			The event $(H_{\bar{t}(N)}^{[N^2]})^c$ occurs either when the random walks do not wake up before time $\bar{t}(N)$ or when the random walks wake up before time $\bar{t}(N)$ but do not migrate. By the coupling we only have to consider one of the random walks. Therefore the probability that $RW^{[N^2]}$ and $RW^{\prime[N^2]}$ do not wake up before time $\bar{t}(N)$ is given by
			\begin{equation}
				\tilde{\P}_{(i,D_1),(j,D_1)}\left(RW^{[N^2]} \text{ does not wake up before }\bar{t}(N)\right)
				=\e^{-\frac{e_1}{N}\bar{t}(N)}=\e^{-\frac{e_1(N^2t_2+Nt_1+t_0)}{N}}
			\end{equation} 
			and hence
			\begin{equation}
				\label{1047}
				\lim_{N\to\infty}\P_{(i,D_1),(j,D_1)}\left(RW^{[N^2]} \text{ does not wake up before }\bar{t}(N)\right)=0.
			\end{equation}
			The probability that the random walks do wake up, but do not migrate is a little more complicated, since each time they wake up with positive probability they go to sleep before they migrate. Define
			\begin{equation}
				\label{1048}
				\begin{aligned}
					C^{[N^2]}(t)&=\{\#\, \text{times $RW^{[N^2]}$ gets active before time }t\},\\
					T_A^{[N^2]}(t)&=\{\text{total time $RW^{[N^2]}$ is active up to time }t\},\\
					T_D^{[N^2]}(t)&=\{\text{total time $RW^{[N^2]}$ is dormant up to time }t\}.
				\end{aligned}
			\end{equation}
			Thus, $C^{[N^2]}(t)$ counts the number of active/dormant cycles. Define $T^{[N^2]}_{A,n},\ T^{[N^2]}_{D,n}$ as the active respectively, dormant time during the $n$th cycle. Define
			\begin{equation}
				\chi=K_0e_0+\frac{K_1e_1}{N}+\frac{K_2e_2}{N^2},
			\end{equation}
			so $\chi$ is the total rate at which $RW$ and $RW^\prime$ become dormant when they are active. Define
			\begin{equation}
				c=c_0+\frac{c_1}{N},
			\end{equation} 
			so $c$ is the total rate at which $RW$ and $RW^\prime$ migrate when they are active. Then 
			\begin{equation}
				T^{[N^2]}_A(t)=\sum_{n=1}^{C^{[N^2]}(t)}T^{[N^2]}_{A,n},\qquad T^{[N^2]}_D(t)
				=\sum_{n=1}^{C^{[N^2]}(t)}T^{[N^2]}_{D,n}, 
			\end{equation}
			where $T^{[N^2]}_{A,n} \,{\buildrel d \over =}\, \exp(\chi)$ and $T^{[N^2]}_{D,n}\,{\buildrel d \over =}\, \frac{1}{\chi}K_0e_0\,\exp(e_0)+\frac{1}{\chi}\frac{K_1e_1}{N}\,\exp(\frac{e_1}{N})+\frac{1}{\chi}\,\frac{K_2e_2}{N^2}\,\exp(\frac{e_2}{N^2})$. Once awake, $RW^{[N^2]}$ migrates at rate $c$ and hence the probability to migrate before time $\bar{t}(N)$ is given by  $1-\e^{-c T^{[N^2]}_A(\bar{t}(N))}$. Therefore we are left to show that
			\begin{equation}
				\lim_{N\to\infty}cT_A^{[N^2]}(\bar{t}(N))=\lim_{N\to\infty}c\sum_{n=1}^{C^{[N^2]}(\bar{t}(N))}T^{[N^2]}_{A,n}=\infty,\qquad a.s.
			\end{equation}
			Since $T^{[N^2]}_{A,n} \,{\buildrel d \over =}\, \text{exp}(\chi)$, it is enough to show that
			\begin{equation}
				\label{1051}
				\lim_{N\to\infty}C^{[N^2]}(\bar{t}(N))=\infty\qquad a.s. 
			\end{equation}
			
			To do so, we assume the contrary, i.e., there exists an $R\in \N$ such that for all $\bar{N}\in\N$ there exists an $N>\bar{N}$ such that
			\begin{equation}
				\P_{(i,D_1)} ( C^{[N^2]}(\bar{t}(N))\leq R)>0.
			\end{equation} 
			Let $L(N)$ be such that  $\lim_{N \to \infty}L(N)=\infty$ and $\lim_{N \to \infty} L(N)/N=0$. Note that, by \eqref{1047}, we can condition on the first wake-up time and estimate
			\begin{equation}
				\label{007}
				\begin{aligned}
					\P_{(i,D_1)} (C^{[N^2]}(\bar{t}(N))\leq R)
					&= \int_0^{\bar{t}(N)} \d s\,\P_{(i,A)} (C^{[N^2]}(\bar{t}(N)-s)\leq R)\ \frac{e_1}{N} \e^{-\frac{e_1}{N}s}\\
					&= \int_0^{\bar{t}(N)-L(N)} \d s\,\P_{(i,A)} (C^{[N^2]}(\bar{t}(N)-s)\leq R)\ \frac{e_1}{N} \e^{-\frac{e_1}{N}s}\\
					&\quad +\int_{\bar{t}(N)-L(N)}^{\bar{t}(N)} \d s\,
					\P_{(i,A)} (C^{[N^2]}(\bar{t}(N)-s)\leq R)\ \frac{e_1}{N} \e^{-\frac{e_1}{N}s}\\
					&\leq \P_{(i,A)} (C^{[N^2]}(L(N))\leq R)+\e^{-\frac{e_1}{N}\bar{t}(N)}\left[\e^{\frac{e_1}{N}L(N)}-1\right].
				\end{aligned}
			\end{equation}
			Note that the second term in the last inequality tends to $0$ as $N\to\infty$. For the first term, note that we are now looking at time $L(N)$, i.e., time scale $N^0$. Since 
			\begin{equation}
				\label{08}
				\begin{aligned}
					&\lim_{N\to\infty}\P_{i,A}\left( RW^{[N^2]} \text{ jumps to $D_1$ or $D_2$ before time }L(N)\right)\\
					&\qquad = \lim_{N\to\infty} 1-\e^{-\left(\frac{K_1e_1}{N}+\frac{K_2e_2}{N^2}\right)L(N)}=0.
				\end{aligned}
			\end{equation}
			we have $\lim_{N\to\infty}\P_{i,A}\left( \{RW^{[N^2]}(s)\in\{A,D_0\}\text{ for }s\in[0,L(N)]\}\right)=1$. Hence, conditioned on the event $\{RW^{[N^2]}\in\{A,D_0\}\}$, $T_{A,n}^{[N^2]} \,{\buildrel d \over =}\, \exp(K_0e_0)$ and $T_{D,n}^{[N^2]} \,{\buildrel d \over =}\, \exp(e_0)$. We therefore obtain
			\begin{equation}
				\label{008}
				\begin{aligned}
					\P_{(i,A)} (C^{[N^2]}(L(N))\leq R)
					&=\P_{(i,A)}\left(\sum_{n=1}^R (T_{A,n}^{[N^2]}+ T_{D,n}^{[N^2]})\geq  L(N)\right)\\
					&\leq \frac{R}{L(N)}\E_{(i,A)}\left[T_{A,n}^{[N^2]}+ T_{D,n}^{[N^2]}\right]\\
					&= \frac{R}{L(N)}\left[\frac{1}{K_0e_0}+\frac{1}{e_0}\right].
				\end{aligned}
			\end{equation} 
			Taking the limit $N\to\infty$ in \eqref{008} and combining this with \eqref{007}, we conclude that \eqref{1051} indeed holds. Hence also \eqref{1045} and \eqref{1040} hold.
			
			We are left to show that 
			\begin{equation}
				\label{1056}
				\begin{aligned}
					&\lim_{N\to\infty} 2\int_0^{\bar{t}(N)}\, \d s \sum_{k\in [N^2]} 
					\left(b^{[N^2]}_{(\bar{t}(N)-s)}((i,D_1),(k,A))-b^{[N^2]}_{(\bar{t}(N)-s)}((j,D_1),(k,A))\right)^2\\ 
					&\qquad \qquad \qquad\qquad\qquad \times \E[g(x_k(s))]=0.
				\end{aligned}
			\end{equation}
			Also here the idea is to make a similar coupling. As soon as the random walks migrate, they are equally distributed. On time scale $N$, after waking up from the colour $1$ seed-bank they will almost immediately migrate, since migration happens on time scale $1$, i.e., by time $L(N)$ they have migrated with probability tending to $1$. This will again be the key to show that \eqref{1056} tends to $0$ as $N\to\infty$.
			
			Note that, by \eqref{1039}, 
			\begin{equation}
				\label{1057}
				\begin{aligned}
					&\Bigg|2\int_0^{\bar{t}(N)} \d s \sum_{k\in [N^2]} 
					\left(b^{[N^2]}_{(\bar{t}(N)-s)}((i,D_1),(k,A))-b^{[N^2]}_{(\bar{t}(N)-s)}((j,D_1),(k,A))\right)^2\\ 
					&\qquad \qquad \qquad \qquad \times \E[g(x_k^{[N^2]}(s))]\Bigg|\leq 2.
				\end{aligned}
			\end{equation}
			We will again use the coupling in \eqref{1041}. Define
			\begin{equation}
				\label{1058}
				\tau^{[N^2]}=\inf\{t\geq 0: RW^{[N^2]}(t)=(k,A)\text{ for some }k\in[N^2]\}.
			\end{equation}
			Then, for all $l\in[N^2]$,
			\begin{equation}
				\label{1059a}
				\P_{(l,D_1)}(\tau^{[N^2]}\leq t)=1-\e^{-\frac{e_1}{N}t}.
			\end{equation}
			Setting $s=\bar{t}(N)-s$, we can rewrite the integral in \eqref{1056} as
			\begin{equation}
				\label{1059}
				\begin{aligned}
					&2\int_0^{\bar{t}(N)} \d s \sum_{k\in [N^2]} 
					\Big(\tilde{\P}^{[N^2]}_{(i,D_1),(j,D_1)}\left(RW^{[N^2]}(s)=(k,A)\right)\\
					&\qquad-\tilde{\P}^{[N^2]}_{(i,D_1),(j,D_1)}\left(RW^{\prime[N^2]}(s)=(k,A),\right)\Big)^2 
					\E[g(x_k^{[N^2]}(\bar{t}(N)-s))]\\
					&= 2\int_0^{\bar{t}(N)} \d s
					\sum_{k\in [N^2]}\Bigg[\int_0^s\d r\,\tilde{\P}(\tau^{[N^2]}=r)\\ 
					&\qquad \left(\tilde{\P}^{[N^2]}_{(i,A),(j,A)}\left(RW^{[N^2]}(s-r)=(k,A)\right)-\tilde{\P}^{[N^2]}_{(i,A),(j,A)}
					\left(RW^{\prime[N^2]}(s-r)=(k,A)\right)\right)\Bigg]\\
					&\qquad\times
					\Big(\tilde{\P}^{[N^2]}_{(i,D_1),(j,D_1)}\left(RW^{[N^2]}(s)=(k,A)\right)-\tilde{\P}^{[N^2]}_{(i,D_1),(j,D_1)}
					\left(RW^{\prime[N^2]}(s)=(k,A)\right)\Big)\\
					&\qquad\times \E[g(x_k^{[N^2]}(\bar{t}(N)-s))].
				\end{aligned}
			\end{equation}
			In what follows we will abbreviate
			\begin{equation}
				\begin{aligned}
					&P_{\Delta^{[N^2]}_{(i,A),(j,A)}}(t,((l,R_l),(j,R_j)))\\
					&=\tilde{\P}^{}_{(i,A),(j,A)}\left(RW^{[N^2]}(t)=(l,R_l)\right)-\tilde{\P}^{}_{(i,A),(j,A)}
					\left(RW^{\prime[N^2]}(t)=(j,R_j)\right),
				\end{aligned}
			\end{equation}
			and similarly, for $m\in\{0,1,2\}$,
			\begin{equation}
				\begin{aligned}
					&P_{\Delta^{[N^2]}_{(i,D_m),(j,D_m)}}(t,((l,R_l),(j,R_j)))\\
					&=\tilde{\P}^{}_{(i,D_m),(j,D_m)}\left(RW^{[N^2]}(t)=(l,R_l)\right)-\tilde{\P}^{}_{(i,D_m),(j,D_m)}
					\left(RW^{\prime[N^2]}(t)=(j,R_j)\right).
				\end{aligned}
			\end{equation}
			By \eqref{1057}, we can use Fubini to swap the order of integration and subsequently substitute $v=s-r$, to obtain
			\small
			\begin{equation}
				\label{1060}
				\begin{aligned}
					&2\int_0^{\bar{t}(N)}\, \d r\,\tilde{\P}(\tau^{[N^2]}=r) \int_0^{\bar{t}(N)-r}\d v
					\sum_{k\in [N^2]} P_{\Delta^{[N^2]}_{(i,A),(j,A)}}(v,((k,A),(k,A)))\\ 
					&\qquad \times P_{\Delta^{[N^2]}_{(i,D_1),(j,D_1)}}(v+r,((k,A),(k,A))) 
					\E[g(x_k^{[N^2]}(\bar{t}(N)-r-v))]\\
					&= 2\int_0^{\bar{t}(N)} \d r\,\tilde{\P}(\tau^{[N^2]}=r)\\ 
					&\sum_{\substack{(l,R_l)\in\\ [N^2]\times\{A,D_0,D_1,D_2\}}}
					\Big({\P}_{(i,D_1)}\left(RW^{[N^2]}(r)=(l,R_l)\right)-{\P}^{}_{(j,D_1)}\left(RW^{\prime[N^2]}(r)=(l,R_l)\right)\Big)\\
					&\quad \times \int_0^{\bar{t}(N)-r} \d v\, 
					\sum_{k\in [N^2]}P_{\Delta^{[N^2]}_{(i,A),(j,A)}}(v,((k,A),(k,A)))\\
					&\quad \times\P^{}_{(l,R_l)}\left(RW^{[N^2]}(v)=(k,A)\right)\E[g(x_k^{[N^2]}(\bar{t}(N)-r-v))],
				\end{aligned}
			\end{equation}
			\normalsize
			where in the last equality we use that the random walks move according to the same kernel $b(\cdot,\cdot)$. We can continue by writing
			\begin{equation}
				\label{1061}
				\begin{aligned}
					&2\int_0^{\bar{t}(N)} \d r\,\tilde{\P}(\tau^{[N^2]}=r)\\
					&\quad \sum_{\substack{(l,R_l)\in\\ [N^2]\times\{A,D_0,D_1,D_2\}}}
					\Big({\P}^{[N^2]}_{(i,D_1)}\left(RW^{[N^2]}(r)
					=(l,R_l)\right)-{\P}^{[N^2]}_{(j,D_1)}\left(RW^{\prime[N^2]}(r)=(l,R_l)\right)\Big)\\
					&\quad \times \int_0^{\bar{t}(N)-r}\d v\, \sum_{k\in [N^2]}
					\bigg(\tilde{\P}^{[N^2]}_{(i,A),(j,A)}\left(RW^{[N^2]}(v)=(k,A)\right)\\
					&\qquad \qquad \qquad -\tilde{\P}^{[N^2]}_{(i,A),(j,A)}\left(RW^{\prime[N^2]}(v)=(k,A)\right)\bigg)\\
					&\quad \times \P^{[N^2]}_{(l,R_l)}\left(RW^{[N^2]}(v)=(k,A)\right)\E[g(x_k^{[N^2]}(\bar{t}(N)-r-v))]\\
					&=2 \int_0^{\bar{t}(N)} \d r\,\tilde{\P}(\tau^{[N^2]}=r) \int_0^r \d u\, \tilde{\P}(\tau^{[N^2]}=u) 
					\sum_{\substack{(l,R_l)\in\\ [N^2]\times\{A,D_0,D_1,D_2\}}}\\
					&\qquad \Big(\tilde{\P}^{[N^2]}_{(i,A),(j,A)}\left(RW^{[N^2]}(r-u)=(l,R_l)\right)-\tilde{\P}^{[N^2]}_{(i,A),(j,A)}
					\left(RW^{\prime[N^2]}(r-u)=(l,R_l)\right)\Big)\\
					&\quad \times \int_0^{\bar{t}(N)-r}\d v\, \sum_{k\in [N^2]}
					\bigg(\tilde{\P}^{[N^2]}_{(i,A),(j,A)}\left(RW^{[N^2]}(v)=(k,A)\right)\\
					&\qquad \qquad \qquad -\tilde{\P}^{[N^2]}_{(i,A),(j,A)}\left(RW^{\prime[N^2]}(v)=(k,A)\right)\bigg)\\
					&\quad \times \P^{[N^2]}_{(l,R_l)}\left(RW^{[N^2]}(v)=(k,A)\right)\E[g(x_k^{[N^2]}(\bar{t}(N)-r-v))]\\
					&+ 2\int_0^{\bar{t}(N)} \d r\,\tilde{\P}(\tau^{[N^2]}=r)\tilde{\P}(\tau^{[N^2]}\geq r)\\
					&\quad \times \int_0^{\bar{t}(N)-r}\d v\, \sum_{k\in [N^2]}
					\bigg(\tilde{\P}^{[N^2]}_{(i,A),(j,A)}\left(RW^{[N^2]}(v)=(k,A)\right)\\
					&\qquad \qquad \qquad -\tilde{\P}^{[N^2]}_{(i,A),(j,A)}\left(RW^{\prime[N^2]}(v)=(k,A)\right)\bigg)\\
					&\quad \times \Big(\P^{[N^2]}_{(i,D_1)}\left(RW^{[N^2]}(v)=(k,A)\right)-\P^{[N^2]}_{(j,D_1)}
					\left(RW^{\prime[N^2]}(v)=(k,A)\right)\Big)\\
					&\quad \times \E[g(x_k^{[N^2]}(\bar{t}(N)-r-v))].
				\end{aligned}
			\end{equation}
			We will show that both terms in the last equality of \eqref{1061} tends to $0$ as $N\to\infty$. 
			
			For the first term note that, by \eqref{eqexprw22}, \eqref{1039} and \eqref{1059a}, we have
			\begin{equation}
				\begin{aligned}
					&2\int_0^{\bar{t}(N)} \d r\,\tilde{\P}(\tau^{[N^2]}=r)\int_0^r \d u\, \tilde{\P}(\tau^{[N^2]}=u) \\
					&\quad \times\Bigg[\sum_{\substack{(l,R_l)\in\\ [N^2]\times\{A,D_0,D_1,D_2\}}}
					\tilde{\P}^{[N^2]}_{(i,A),(j,A)}\left(RW^{[N^2]}(r-u)=(l,R_l)\right)\\
					&\qquad \qquad \qquad  -\tilde{\P}^{[N^2]}_{(i,A),(j,A)}\left(RW^{\prime[N^2]}(r-u)=(l,R_l)\right)\Bigg]\\
					&\quad \times \int_0^{\bar{t}(N)-r}\d v\, \sum_{k\in [N^2]}
					\bigg(\tilde{\P}^{[N^2]}_{(i,A),(j,A)}\left(RW^{[N^2]}(v)=(k,A)\right)\\
					&\qquad \qquad \qquad - \tilde{\P}^{[N^2]}_{(i,A),(j,A)}\left(RW^{\prime[N^2]}(v)=(k,A)\right)\bigg)\\
					&\quad \times \P^{[N^2]}_{(l,R_l)}\left(RW^{[N^2]}(v)=(k,A)\right)\E[g(x_k(\bar{t}(N)-r-v))]\\
					&\leq 4\int_0^{\bar{t}(N)} \d r\,\tilde{\P}(\tau^{[N^2]}=r)\int_0^r \d u\, \tilde{\P}(\tau^{[N^2]}=u) \\
					&\quad \times\Bigg|\sum_{\substack{(l,R_l)\in\\ [N^2]\times\{A,D_0,D_1,D_2\}}}
					\tilde{\P}^{[N^2]}_{(i,A),(j,A)}\left(RW^{[N^2]}(r-u)=(l,R_l)\right)\\
					&\qquad \qquad \qquad -\tilde{\P}^{[N^2]}_{(i,A),(j,A)}\left(RW^{\prime[N^2]}(r-u)=(l,R_l)\right)\Bigg|\\
					&\leq 4\int_0^{\bar{t}(N)} \d r\,\tilde{\P}(\tau^{[N^2]}=r)\int_0^{r-L(N)} \d u\, \tilde{\P}(\tau^{[N^2]}=u) \\
					&\quad \times\Bigg|\sum_{\substack{(l,R_l)\in\\ [N^2]\times\{A,D_0,D_1,D_2\}}}
					\tilde{\P}^{[N^2]}_{(i,A),(j,A)}\left(RW^{[N^2]}(r-u)=(l,R_l)\right)\\
					&\qquad \qquad \qquad -\tilde{\P}^{[N^2]}_{(i,A),(j,A)}\left(RW^{\prime[N^2]}(r-u)=(l,R_l)\right)\Bigg|\\
					&+ 8\int_{L(N)}^{\bar{t}(N)} \d r\,\tilde{\P}(\tau^{[N^2]}=r)\tilde{\P}(\tau^{[N^2]}\in [r-L(N),r])+8\P[\tau^{[N^2]}\in[0,L(N)]]\\
					&\leq 4\int_0^{\bar{t}(N)} \d r\,\tilde{\P}(\tau^{[N^2]}=r)\int_0^{r-L(N)} \d u\, \tilde{\P}(\tau^{[N^2]}=u) \\
					&\quad \times\Bigg|\sum_{\substack{(l,R_l)\in\\ [N^2]\times\{A,D_0,D_1,D_2\}}}
					\tilde{\P}^{[N^2]}_{(i,A),(j,A)}\left(RW^{[N^2]}(r-u)=(l,R_l)\right)\\
					&\qquad \qquad \qquad -\tilde{\P}^{[N^2]}_{(i,A),(j,A)}\left(RW^{\prime[N^2]}(r-u)=(l,R_l)\right)\Bigg|\\
					&+ 16 [1-\e^{-\frac{e_1}{N} L(N)}].
				\end{aligned}
			\end{equation}
			Hence the last term in the last inequality tends to $0$ as $N\to\infty$.
			\vspace{0.2cm} 
			
			To show that the first term in the last inequality tends to $0$ we use the coupling again. Recall the definition of $H_t^{[N^2]}$ in \eqref{1041}. Note that we can rewrite the sum as
			\footnotesize
			\begin{equation}
				\label{1064}
				\begin{aligned}
					&\sum_{\substack{(l,R_l)\in\\ [N^2]\times\{A,D_0,D_1,D_2\}}}
					\tilde{\P}^{[N^2]}_{(i,A),(j,A)}\left(RW^{[N^2]}(r-u)=(l,R_l)\right)
					-\tilde{\P}^{[N^2]}_{(i,A),(j,A)}\left(RW^{\prime[N^2]}(r-u)=(l,R_l)\right)\\
					&\leq 
					\sum_{\substack{(l,R_{l})\in\\ [N^2]\times\{A,D_0,D_1,D_2\}}}
					\sum_{\substack{(l^\prime,R_{l^\prime})\in\\ [N^2]\times\{A,D_0,D_1,D_2\}}}
					\Big({\P}^{[N^2]}_{(i,A)}\left(RW^{[N^2]}(L(N))=(l^\prime,R_l^\prime)\right)
					&\qquad\qquad\qquad\qquad\qquad\qquad\qquad
					-{\P}^{[N^2]}_{(i,A),(j,A)}\left(RW^{\prime[N^2]}(L(N))=(l^\prime,R^\prime_l)\right)\Big)\\
					&\qquad\times \P_{(l^\prime,R_l^\prime)}^{[N^2]}\left(RW^{[N^2]}(r-u-L(N))=(l,R_l)\right)\\
					&\leq \sum_{\substack{(l^\prime,R_{l^\prime})\in\\ [N^2]\times\{A,D_0,D_1,D_2\}}}
					\left({\P}^{[N^2]}_{(i,A)}\left(RW^{[N^2]}(L(N))=(l^\prime,R_l^\prime)\right)
					-{\P}^{[N^2]}_{(j,A)}\left(RW^{\prime[N^2]}(L(N))=(l^\prime,R^\prime_l)\right)\right)\\
					&=\sum_{\substack{(l^\prime,R_{l^\prime})\in\\ [N^2]\times\{A,D_0,D_1,D_2\}}}
					\Big(\tilde{\P}^{[N^2]}_{(i,A),(j,A)}\left(RW^{[N^2]}(L(N))=(l^\prime,R_l^\prime),(H_t^{[N^2]})^c\right)\\
					&\qquad\qquad\qquad\qquad\qquad\qquad\qquad
					-\tilde{\P}^{[N^2]}_{(i,A),(j,A)}\left(RW^{\prime[N^2]}(L(N))=(l^\prime,R^\prime_l),(H_t^{[N^2]})^c\right)\Big)\\
					&\leq 2\tilde{\P}^{[N^2]}_{(i,A),(j,A)}\left((H_t^{[N^2]})^c\right).
				\end{aligned}
			\end{equation}
			\normalsize
			To show that
			\begin{equation}
				\label{1065}
				\lim_{n\to\infty}\tilde{\P}^{[N^2]}_{(i,A),(j,A)}\left((H_t^{[N^2]})^c\right)=0,
			\end{equation}
			we can use a similar strategy as between \eqref{007} and \eqref{008}, but note that we now start from two active sites instead of two $1$-dormant sites. Therefore \eqref{1051} directly follows from \eqref{08} and \eqref{008}.
			
			To show  the second term in \eqref{1061} tends to $0$, we write it as
			\small
			\begin{equation}
				\begin{aligned}
					&2 \int_0^{\bar{t}(N)} \d r\,\tilde{\P}(\tau^{[N^2]}=r)\tilde{\P}(\tau^{[N^2]}\geq r)\\
					&\times \int_0^{\bar{t}(N)-r}\d v\, \sum_{k\in [N^2]}
					\left(\tilde{\P}^{[N^2]}_{(i,A),(j,A)}\left(RW^{[N^2]}(v)=(k,A)\right)
					-\tilde{\P}^{[N^2]}_{(i,A),(j,A)}\left(RW^{\prime[N^2]}(v)=(k,A)\right)\right)\\
					&\times \left[\P^{[N^2]}_{(i,D_1)}\left(RW^{[N^2]}(v)=(k,A)\right)
					-\P^{[N^2]}_{(j,D_1)}\left(RW^{\prime[N^2]}(v)=(k,A)\right)\right]\E[g(x_k^{[N^2]}(\bar{t}(N)-r-v))]\\
					&=2\int_0^{\bar{t}(N)} \d r\,\tilde{\P}(\tau^{[N^2]}=r)\tilde{\P}(\tau^{[N^2]}\geq r)\\
					&\times \int_0^{\bar{t}(N)-r}\d v\, \sum_{k\in [N^2]}
					\left(\tilde{\P}^{[N^2]}_{(i,A),(j,A)}\left(RW^{[N^2]}(v)=(k,A)\right)
					-\tilde{\P}^{[N^2]}_{(i,A),(j,A)}\left(RW^{\prime[N^2]}(v)=(k,A)\right)\right)\\
					&\times \int_0^v\, \d u\,\tilde{\P}^{[N^2]}(\tau^{[N^2]}= u)\\
					&\quad \Big(\tilde{\P}^{[N^2]}_{(i,A),(j,A)}\left(RW^{[N^2]}(v-u)=(k,A)\right)
					-\tilde{\P}^{[N^2]}_{(i,A),(j,A)}\left(RW^{\prime[N^2]}(v-u)=(k,A)\right)\Big)\\
					&\times\E[g(x_k^{[N^2]}(\bar{t}(N)-r-v))].
				\end{aligned}
			\end{equation}
			\normalsize
			Changing the order of integration and setting $w=v-u$, we obtain
			\begin{equation}
				\begin{aligned}
					&2\int_0^{\bar{t}(N)} \d r\,\tilde{\P}(\tau^{[N^2]}=r)\tilde{\P}(\tau^{[N^2]}\geq r)\\
					&\times \int_0^{\bar{t}(N)-r}\d u\, \tilde{\P}^{[N^2]}(\tau^{[N^2]}= u)\int_u^{\bar{t}(N)-r}\, \d v\,\\
					&\sum_{k\in [N^2]}\left[\tilde{\P}^{[N^2]}_{(i,A),(j,A)}\left(RW^{[N^2]}(v)=(k,A)\right)
					-\tilde{\P}^{[N^2]}_{(i,A),(j,A)}\left(RW^{\prime[N^2]}(v)=(k,A)\right)\right]\\
					&\times \left[\tilde{\P}^{[N^2]}_{(i,A),(j,A)}\left(RW^{[N^2]}(v-u)=(k,A)\right)
					-\tilde{\P}^{[N^2]}_{(i,A),(j,A)}\left(RW^{\prime[N^2]}(v-u)=(k,A)\right)\right]\\
					&\times\E[g(x_k^{[N^2]}(\bar{t}(N)-r-v))]\\
					&=2 \int_0^{\bar{t}(N)} \d r\,\tilde{\P}(\tau^{[N^2]}=r)\tilde{\P}(\tau^{[N^2]}\geq r)\\
					&\times \int_0^{\bar{t}(N)-r}\d u\, \tilde{\P}^{[N^2]}(\tau^{[N^2]}= u)\int_0^{\bar{t}(N)-r-u}\, \d w\,\\
					&\sum_{k\in [N^2]}\left[\tilde{\P}^{[N^2]}_{(i,A),(j,A)}\left(RW^{[N^2]}(w+u)=(k,A)\right)
					-\tilde{\P}^{[N^2]}_{(i,A),(j,A)}\left(RW^{\prime[N^2]}(w+u)=(k,A)\right)\right]\\
					&\times \left[\tilde{\P}^{[N^2]}_{(i,A),(j,A)}\left(RW^{[N^2]}(w)=(k,A)\right)
					-\tilde{\P}^{[N^2]}_{(i,A),(j,A)}\left(RW^{\prime[N^2]}(w)=(k,A)\right)\right]\\
					&\times\E[g(x_k^{[N^2]}(\bar{t}(N)-r-u-w))].
				\end{aligned}
			\end{equation}
			This can be rewritten as
			\small
			\begin{equation}
				\begin{aligned}
					&2\int_0^{\bar{t}(N)} \d r\,\tilde{\P}(\tau^{[N^2]}=r)\tilde{\P}(\tau^{[N^2]}\geq r)\\
					&\int_0^{\bar{t}(N)-r}\d u\, \tilde{\P}^{[N^2]}(\tau^{[N^2]}= u)\\
					&\quad \times \sum_{(l,R_l)\in [N^2]}
					\left[\tilde{\P}^{[N^2]}_{(i,A),(j,A)}\left(RW^{[N^2]}(u)=(l,R_l)\right)
					-\tilde{\P}^{[N^2]}_{(i,A),(j,A)}\left(RW^{\prime[N^2]}(u)=(l,R_l)\right)\right]\\
					&\quad \times \int_0^{\bar{t}(N)-r-u}\, \d w\,\sum_{k\in [N^2]}\left[\P^{[N^2]}_{(l,R_l)}
					\left(RW^{[N^2]}(w)=(k,A)\right)\right]\\
					&\quad \times \left[\tilde{\P}^{[N^2]}_{(i,A),(j,A)}\left(RW^{[N^2]}(w)=(k,A)\right)
					-\tilde{\P}^{[N^2]}_{(i,A),(j,A)}\left(RW^{\prime[N^2]}(w)=(k,A)\right)\right]\\
					&\quad \times\E[g(x_k^{[N^2]}(\bar{t}(N)-r-u-w))]\\
					&\leq 8\int_{L(N)}^{\bar{t}(N)} \d r\,\tilde{\P}(\tau^{[N^2]}=r)\tilde{\P}(\tau^{[N^2]}\geq r)\\
					&\int_{L(N)}^{\bar{t}(N)-r}\d u\, \tilde{\P}^{[N^2]}(\tau^{[N^2]}= u)\\
					&\quad \times \sum_{(l,R_l)\in [N^2]}
					\left[\tilde{\P}^{[N^2]}_{(i,A),(j,A)}\left(RW^{[N^2]}(u)=(l,R_l)\right)
					-\tilde{\P}^{[N^2]}_{(i,A),(j,A)}\left(RW^{\prime[N^2]}(u)=(l,R_l)\right)\right]\\
					&+8\int_{L(N)}^{\bar{t}(N)} \d r\,\tilde{\P}(\tau^{[N^2]}=r)\tilde{\P}(\tau^{[N^2]}\geq r)\\
					&\int_0^{L(N)}\d u\, \tilde{\P}^{[N^2]}(\tau^{[N^2]}= u)\\
					&\quad \times \sum_{(l,R_l)\in [N^2]}
					\left[\tilde{\P}^{[N^2]}_{(i,A),(j,A)}\left(RW^{[N^2]}(u)=(l,R_l)\right)
					-\tilde{\P}^{[N^2]}_{(i,A),(j,A)}\left(RW^{\prime[N^2]}(u)=(l,R_l)\right)\right]\\
					&+16\int_0^{L(N)} \d r\,\tilde{\P}(\tau^{[N^2]}=r)\tilde{\P}(\tau^{[N^2]}\geq r).
				\end{aligned}
			\end{equation}
			\normalsize 
			This tends to $0$ by \eqref{1064} and the reasoning below \eqref{1065}.
		\end{proof}
		
		\subsubsection{Limiting evolution of the estimator processes}
		\label{ss.leo}
		
		In this section we show that the results along the subsequences used in steps 5-8 of the scheme for the two-level three-colour mean-field system actually hold for all subsequences. Therefore the limiting evolution holds for $N\to\infty$. Recall that Lemma~\ref{lem:ds} tells us that all single $1$-dormant colonies equal the value of the $1$-dormant $1$-block average. Therefore  the second assumption in \eqref{z3} in Proposition~\eqref{ma77} can be replaced by
		\begin{equation}
			\begin{aligned}
				\lim_{k\to\infty}\CL\left[\left(Y_{2,0}^{[N_k^2]}(N^2_kt_2)\right)
				\Big|{\bf\Theta}^{\aux,(1),[N_k^2]}(N^2_kt_2+N_kt_1)\right]= P^{{z}_1^{\eff}(t_1)},
			\end{aligned}
		\end{equation}
		since, by Lemma~\ref{lem:ds}, the limiting law
		\begin{equation}
			\lim_{k \to \infty}\CL\left[\left(Y_{1,0}^{[N_k^2]}(N^2_kt_2)\right)\right] 
		\end{equation}
		is completely determined by the first line in \eqref{z3}. Hence, the assumptions in Proposition~\ref{lem:1blev} and Lemma~\ref{lem2blcon} can be weakened in the same way. Using that in Proposition~\ref{P.finsysmf2lev} we assume \eqref{as2} and \eqref{as32}, we find that the $2$-block convergence stated in Lemma~\ref{lem2blcon} holds along all subsequences we choose in Step 5. We conclude that Proposition~\ref{P.finsysmf2lev}(a) is indeed true. Combining Proposition~\ref{P.finsysmf2lev}(a) with steps 1-4 of the scheme and Lemma~\ref{lem:ds}, we find that the assumptions in Proposition~\ref{lem:1blev} are true for all subsequences, and we obtain the limiting evolution of the $1$-block estimator process. Projecting this limiting evolution onto the active $1$-block average and the $1$-dormant $1$-block average, we obtain Proposition~\ref{P.finsysmf2lev}(b). Finally, combining Proposition~\ref{P.finsysmf2lev}(a), steps1-4, and the fact that Proposition~\ref{lem:1blev} is true along all subsequences, we obtain Proposition~\ref{P.finsysmf2lev}(c) and (f).   
		
		\subsubsection{Convergence in the Meyer-Zheng topology}\label{ss.11mz}
		
		In this section we show how the results on the effective and estimator processes can be use to show convergence of the full $1$- and $2$-block processes.
		
		\begin{lemma}[\bf{[Convergence of $1$ process in  the Meyer-Zheng topology]}]\label{lem1mz2}
			Assume that for the $1$-block estimator process defined in \eqref{1113} 
			\begin{equation}
				\label{ma55}
				\begin{aligned}
					\lim_{N\to\infty} \CL\left[\left({\bf\Theta}^{\aux,(1) ,[N^2]}(N^2t_2+Nt_1)\right)_{t_1 > 0}\,\right]
					= \CL\left[(z_1^{\aux}(t_1))_{t_1>0}\right], 
				\end{aligned}
			\end{equation}
			where, conditional on $x^{\eff}_2(t_2)=u$, the limit process is the unique solution of the SSDE in \eqref{m12chb} with $\theta$ replaced by $u$ and with initial measure $\Gamma_u^{\eff,(1)}$. Then 
			\begin{equation}
				\label{1136s}
				\begin{aligned}
					&\hspace{-0.3cm} \lim_{N\to\infty} \CL\left[\left({\bf \Theta}^{(1),[N^2]}(N^2t_2+Nt_1)\right)_{t_1 > 0}\right]
					= \CL \left[(z_1^{\Gamma^{(1)}(t_2)}(t_1))_{t_1 > 0}\right]\\
					&\qquad\qquad\qquad \text{ in the Meyer-Zheng topology},
				\end{aligned}
			\end{equation}	
			where $\Gamma^{(1)}(t_2)$ is defined as in \eqref{mfkern1} and $(z_1^{\Gamma^{(1)}(t_2)}(t_1))_{t_1 > 0}$ is the process moving according to \eqref{z1} with initial measure $\Gamma^{(1)}(t_2)$. 	
		\end{lemma}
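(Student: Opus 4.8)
The plan is to upgrade the convergence of the \emph{effective/auxiliary} $1$-block estimator process in \eqref{ma55} (which lives in the classical path-space topology because the $N$-prefactors coming from the colour-$0$ seed-bank cancel in the combination $\bar{\Theta}^{(1),[N^2]}=(x_1^{[N^2]}+K_0y_{0,1}^{[N^2]})/(1+K_0)$) to convergence of the \emph{full} $1$-block estimator process in the Meyer-Zheng topology. The idea is exactly the one used in the proof of Proposition~\ref{p.estim} and Proposition~\ref{p.estima}: the only obstruction to classical convergence of $({\bf\Theta}^{(1),[N^2]})$ is that the active $1$-block $\Theta_x^{(1),[N^2]}$ and the colour-$0$ dormant $1$-block $\Theta_{y_0}^{(1),[N^2]}$ interact at a rate that blows up as $N\to\infty$; in the limit they equalise, but the individual paths become rougher and rougher at rarer and rarer times. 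So one first identifies the limit of the combined quantity and then uses Lemma~\ref{lemav3} (the $L^1$-estimate comparing $\bar{\Theta}^{(1),[N^2]}$, $\Theta_x^{(1),[N^2]}$ and $\Theta_{y_0}^{(1),[N^2]}$, which shows their pairwise differences are $\CO(\sqrt{1/N})$ on time scale $N t_1$) together with the abstract Meyer-Zheng lemmas to conclude.

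First I would note that $({\bf\Theta}^{(1),[N^2]}(N^2t_2+Nt_1))_{t_1>0}$ has components $(\Theta_x,\Theta_{y_0},\Theta_{y_1},\Theta_{y_2})$, and that the effective estimator is obtained from it by the continuous map sending $(x,y_0,y_1,y_2)\mapsto\big((x+K_0y_0)/(1+K_0),y_1\big)$. By \eqref{ma55} and the identification of the initial law via Proposition~\ref{lem:1blev}, $\big((\bar{\Theta}^{(1),[N^2]}(N^2t_2+Nt_1),\Theta_{y_1}^{(1),[N^2]}(N^2t_2+Nt_1))\big)_{t_1>0}$ converges in $C_b([0,\infty),[0,1]^2)$, hence also in the Meyer-Zheng topology by Lemma~\ref{lem90}. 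Next, Lemma~\ref{lemav3} (applied on time scale $Nt_1$, where the right-hand side is $\CO(\sqrt{1/N})$) gives $\lim_{N\to\infty}\E[|\Theta_x^{(1),[N^2]}(N^2t_2+Nt_1)-\bar{\Theta}^{(1),[N^2]}(N^2t_2+Nt_1)|]=0$ and likewise for $\Theta_{y_0}^{(1),[N^2]}$, for every $t_1>0$; a similar (easier) estimate shows $\Theta_{y_2}^{(1),[N^2]}$ stays close to the slowly-varying $2$-block quantity so that its limit is read off from $\Gamma^{(1)}(t_2)$. Then Lemma~\ref{lem91} converts these $L^1$-bounds into convergence in probability of the associated pseudopaths to the pseudopath of $(\bar{\Theta}(t_2)$-driven) limit, Lemma~\ref{lem93} assembles the joint law of $(\bar{\Theta}^{(1),[N^2]},\Theta_x^{(1),[N^2]}-\bar{\Theta}^{(1),[N^2]},\Theta_{y_0}^{(1),[N^2]}-\bar{\Theta}^{(1),[N^2]},\Theta_{y_1}^{(1),[N^2]},\dots)$ as the law of $(x_1^\aux,0,0,y_{1,1}^\aux,\dots)$ in the Meyer-Zheng topology, and Lemma~\ref{lem94} (continuous mapping, with the map that adds back the differences) yields $\lim_{N\to\infty}\CL[({\bf\Theta}^{(1),[N^2]})_{t_1>0}]=\CL[(z_1^{\Gamma^{(1)}(t_2)}(t_1))_{t_1>0}]$ in the Meyer-Zheng topology, where $z_1^{\Gamma^{(1)}(t_2)}$ is the process of \eqref{z1} started from $\Gamma^{(1)}(t_2)$. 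The colour-$1$ component of the limit equals $x_1$ by the $y_{0,1}(t)=x_1(t)$ identity in \eqref{z1}, which matches what the continuous map produces since $\Theta_{y_0}^{(1),[N^2]}$ equalises with $\bar{\Theta}^{(1),[N^2]}$.

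The main obstacle I anticipate is the bookkeeping for the slow colour-$2$ component: unlike $\Theta_{y_0}^{(1),[N^2]}$ (which equalises with the active $1$-block) and unlike the fully frozen colours $m>2$, the colour-$2$ dormant $1$-block on time scale $N^2t_2+Nt_1$ is in the state it acquired \emph{at the earlier time $N^2t_2$}, not its initial state and not $\vartheta_1$; this is precisely the ``slow seed-bank'' phenomenon analysed in Lemma~\ref{lem:ds} and Section~\ref{ss.ssb}. Concretely, I would need to argue, as in Lemma~\ref{lem:ds} via the first- and second-moment formulas of Lemma~\ref{lem1cg9422} and the coupling of two random walks governed by $b^{[N^2]}(\cdot,\cdot)$ that are perfectly synchronised in their active/dormant cycles, that $\lim_{N\to\infty}\sup_{0\le t\le L(N)}|\Theta_{y_2}^{(1),[N^2]}(N^2t_2+Nt_1)-\Theta_{y_2}^{(1),[N^2]}(N^2t_2+Nt_1-t)|=0$ in probability, so that on time scale $Nt_1$ this component is effectively constant and its limiting law is the $y_{2,1}$-marginal of $\Gamma^{(1)}(t_2)$ defined in \eqref{mfkern1}; the assumption \eqref{as2} is what pins down that law. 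With that in hand, the Meyer-Zheng machinery above applies verbatim, treating the colour-$2$ component as a frozen coordinate $c$ in Lemma~\ref{lem93}. Everything else is a routine repetition of the arguments already carried out for the one-colour (Proposition~\ref{p.estim}) and two-colour (Proposition~\ref{p.estima}) mean-field schemes, now with three colours and an extra frozen slow coordinate.
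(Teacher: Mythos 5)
Your overall strategy matches the paper's: establish the closeness of the active and colour-$0$ dormant $1$-block averages, convert this into Meyer-Zheng convergence via the abstract Lemmas~\ref{lem91}--\ref{lem94}, and then assemble the full $1$-block estimator process by the continuous-mapping argument from Proposition~\ref{p.estima}. However, the key $L^1$-estimate you invoke should be Lemma~\ref{lemlev1}, not Lemma~\ref{lemav3}. The estimate you describe (controlling $\E[|\Theta_x^{(1),[N^2]}-\Theta_{y_0}^{(1),[N^2]}|]$ at time $N^2t_2+Nt_1$ and showing it is $\CO(N^{-1/2})$) is exactly the content of Lemma~\ref{lemlev1}; by contrast, Lemma~\ref{lemav3} bounds the discrepancy $\Delta^{(2),[N^2]}_\Sigma$ between the level-$1$-combined $2$-block estimator and $\Theta_{y_1}^{(2),[N^2]}$, which is a different comparison at the $2$-block level used for Lemma~\ref{lem2mz2} and does not yield what you need here.

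Your extended discussion of the colour-$2$ component is unnecessary inside the proof of this lemma. The auxiliary estimator $\mathbf{\Theta}^{\aux,(1),[N^2]}$ already carries $\Theta_{y_2}^{(1),[N^2]}$ as one of its coordinates, so assumption~\eqref{ma55} directly delivers the limiting (constant-in-$t_1$, random) law of that component, namely the $y_{2,1}$-marginal of $\Gamma^{(1)}(t_2)$; no separate coupling or stability argument in the style of Lemma~\ref{lem:ds} is needed here. Such an argument is indeed needed upstream, to \emph{verify} the hypothesis~\eqref{ma55} in the first place (that is the content of Proposition~\ref{lem:1blev} together with Section~\ref{ss.ssb}), but once \eqref{ma55} is assumed, the colour-$2$ coordinate passes through unchanged as a frozen coordinate, exactly like the constant $c$ in Lemma~\ref{lem93}. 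With Lemma~\ref{lemlev1} in place of Lemma~\ref{lemav3} and the colour-$2$ digression removed, your proposal coincides with the paper's one-line argument.
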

		
		\begin{proof}
			By assumption~\ref{ma55} and Lemma~\ref{lemlev1}, we can proceed as in the proof of Proposition~\ref{p.estima} to find \eqref{1136s}.
		\end{proof}
		
		\begin{lemma}[\bf{[Convergence of $2$-block process in  the Meyer-Zheng topology]}]
			\label{lem2mz2}
			$\mbox{}$\\
			Assume that for the effective $2$-block process defined in \eqref{1114} 
			\begin{equation}
				\label{ma5}
				\begin{aligned}
					\lim_{N\to\infty} \CL\left[\left({\bf\Theta}^{\eff,(2) ,[N^2]}(N^2t_2)\right)_{t_2 > 0}\,\right]
					= \CL\left[(z_2^{\aux}(t_2))_{t_2>0}\right], 
				\end{aligned}
			\end{equation}
			where $(z^{\eff}_2(t_2))_{t_2>0}$ is the process evolving according to \eqref{m64c} and starting from $(\vt_1,\theta_{y_2})$.  Then for
			the $2$-block estimator process defined in \eqref{1114} 
			\begin{equation}
				\label{sgh423}
				\begin{aligned}
					&\lim_{N\to\infty} \CL \left[\left({\bf \Theta}^{(2),[N^2]}(N^2t_2)\right)_{t_2 > 0}\right]
					= \CL \left[\left(z_2^{}(t_2)\right)_{t_2 > 0}\right]\\
					&\text{in the Meyer-Zheng topology},
				\end{aligned}
			\end{equation}
			where $\left(z_2^{}(t_2)\right)_{t_2 > 0}$ is the process evolving according to \eqref{gh432c} and starting in state $(\vt_1,\vt_1,\vt_1,\theta_{y_2})$.
		\end{lemma}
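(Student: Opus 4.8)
The plan is to run the same scheme that established convergence of the $1$-block process in the Meyer-Zheng topology (Lemma~\ref{lem1mz2} and, before that, Proposition~\ref{p.estima} in the one-colour setting), but now for the $2$-block averages. The key ingredient is the comparison estimate for the three $2$-block estimators that come with a diverging rate: we must control
\begin{equation}
\Delta_1^{(2),[N^2]}(N^2t_2) = \Theta_x^{(2),[N^2]}(N^2t_2) - \Theta_{y_0}^{(2),[N^2]}(N^2t_2),\qquad
\Delta_2^{(2),[N^2]}(N^2t_2) = \Theta_x^{(2),[N^2]}(N^2t_2) - \Theta_{y_1}^{(2),[N^2]}(N^2t_2).
\end{equation}
First I would carry out an It\^o-calculus computation on the SSDE~\eqref{m8} (written on time scale $N^2t_2$ rather than $Nt_1$), exactly as in the proof of Lemma~\ref{lemlev1a} and Lemma~\ref{lemav3}, to obtain bounds of the form $\E[|\Delta_1^{(2),[N^2]}(N^2t_2)|] = \CO(1/\sqrt{N})$ and likewise $\E[|\Delta_2^{(2),[N^2]}(N^2t_2)|] = \CO(1/\sqrt{N})$ for every $t_2>0$, after the initial transient has relaxed. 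Here the colour-$0$ seed-bank relaxes on rate $\asymp N^2$, the colour-$1$ seed-bank on rate $\asymp N$, both fast on time scale $N^2t_2$, so both $2$-block dormant averages of colour $0$ and colour $1$ become glued to the active $2$-block average; the only surviving diverging terms cancel in the combination $\bar\Theta^{(2),[N^2]}$ appearing in~\eqref{mo}.

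Next I would invoke Lemma~\ref{lem91} to upgrade these $L^1$ estimates to convergence in probability in the Meyer-Zheng topology of the pseudopaths of $x_1^{[N]}$-type objects, i.e.
\begin{equation}
\lim_{N\to\infty} d_P\bigl(\psi_{\bar\Theta^{(2),[N^2]}(N^2\cdot)},\psi_{\Theta_x^{(2),[N^2]}(N^2\cdot)}\bigr)=0,\qquad
\lim_{N\to\infty} d_P\bigl(\psi_{\bar\Theta^{(2),[N^2]}(N^2\cdot)},\psi_{\Theta_{y_0}^{(2),[N^2]}(N^2\cdot)}\bigr)=0,
\end{equation}
and the analogue for $\Theta_{y_1}^{(2),[N^2]}$. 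Combining the hypothesis~\eqref{ma5} (which gives convergence of $\bar\Theta^{(2),[N^2]}(N^2\cdot)$ and $\Theta_{y_2}^{(2),[N^2]}(N^2\cdot)$ jointly in the classical topology, hence in the Meyer-Zheng topology by the embedding lemma used in the proof of Proposition~\ref{p.estim}) with Lemma~\ref{lem93} and Lemma~\ref{lem92}, I would conclude that the quadruple
\begin{equation}
\bigl(\Theta_x^{(2),[N^2]}(N^2t_2),\Theta_{y_0}^{(2),[N^2]}(N^2t_2),\Theta_{y_1}^{(2),[N^2]}(N^2t_2),\Theta_{y_2}^{(2),[N^2]}(N^2t_2)\bigr)_{t_2>0}
\end{equation}
converges in the Meyer-Zheng topology to $(x_2^\eff,x_2^\eff,x_2^\eff,y_{2,2}^\eff)$. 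Finally I would apply the continuous mapping theorem in the Meyer-Zheng topology (Lemma~\ref{lem94}) with the map $(a,b,c,d)\mapsto(a,b,c,d)$ read off through the identifications $y_{0,2}=x_2$, $y_{1,2}=x_2$ built into the SSDE~\eqref{gh432c}, and check that the limit so obtained is precisely the law of the process $(z_2(t_2))_{t_2>0}$ driven by~\eqref{gh432c} with initial state $(\vt_1,\vt_1,\vt_1,\theta_{y_2})$; the identification of the initial state and of the driving diffusion function $\CF^{(2)}g$ comes from Lemma~\ref{lem2blcon} (which under~\eqref{ma5} gives the limiting effective dynamics) together with the relaxation of $\Theta_{y_0}$ and $\Theta_{y_1}$ to the initial common value $\vt_1$ computed as in~\eqref{acin}.

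The main obstacle I anticipate is \emph{not} the Meyer-Zheng bookkeeping, which is by now routine, but making the double gluing rigorous: we have two seed-bank colours ($0$ and $1$) interacting with the active $2$-block at \emph{different} diverging rates ($\asymp N^2$ and $\asymp N$), and in the It\^o computation for $\Delta_2^{(2),[N^2]}$ the cross term involving $\Theta_{y_0}^{(2),[N^2]}-\Theta_x^{(2),[N^2]}$ is not negligible a priori — one must first establish the $\CO(1/\sqrt N)$ bound on $\Delta_1^{(2),[N^2]}$ and feed it into the equation for $\Delta_2^{(2),[N^2]}$, in the same nested manner as Lemma~\ref{lemav3} feeds $\E[|\bar\Theta^{(1),[N^2]}-\Theta_x^{(1),[N^2]}|]$ into the estimate for $\Delta_\Sigma^{(2),[N^2]}$. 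A secondary subtlety is that, as in Proposition~\ref{p.estima}, the statement lives on the fast time scale and is restricted to $t_2>0$, so one must again be careful not to interchange $N\to\infty$ with $t_2\downarrow 0$; this is handled exactly as before by starting the estimates from a small but positive time and using the stability property of the $2$-block estimator (Lemma~\ref{lem:sta3}).
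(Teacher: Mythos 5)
Your overall scheme is the paper's: obtain $L^1$ estimates for the differences $\Theta_x^{(2),[N^2]}-\Theta_{y_0}^{(2),[N^2]}$, $\Theta_x^{(2),[N^2]}-\Theta_{y_1}^{(2),[N^2]}$, then upgrade via Lemma~\ref{lem91}, combine with the hypothesis~\eqref{ma5} via Lemmas~\ref{lem92}--\ref{lem93}, and close with Lemma~\ref{lem94}. The paper proves the lemma by simply invoking Lemma~\ref{lemlev1} (averaged over the $N$ constituent $1$-blocks to go from the $1$-block estimate to the $2$-block estimate) together with Lemma~\ref{lemav3}, and then replaying the argument of Proposition~\ref{p.estima}, so the $L^1$ estimates are already available and you are not expected to redo the It\^o computations from scratch.

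There is, however, a genuine gap in the way you propose to get the second $L^1$ estimate. You define $\Delta_2^{(2),[N^2]}=\Theta_x^{(2),[N^2]}-\Theta_{y_1}^{(2),[N^2]}$ and propose a direct It\^o computation on this quantity, feeding in $\E\bigl[|\Delta_1^{(2),[N^2]}|\bigr]=\CO(1/\sqrt N)$. That does not close. On time scale $N^2t_2$ the SSDE for $\Theta_x^{(2),[N^2]}$ (cf.~\eqref{mfevolveba2}) carries the term $N^2K_0e_0\bigl[\Theta_{y_0}^{(2),[N^2]}-\Theta_x^{(2),[N^2]}\bigr]\,\d t_2$, which has no counterpart in $-\d\Theta_{y_1}^{(2),[N^2]}$, so $\d\Delta_2^{(2),[N^2]}$ retains a term of order $N^2\cdot\CO(1/\sqrt N)=\CO(N^{3/2})$ even after you substitute the bound on $\Delta_1^{(2),[N^2]}$; a pointwise $L^1$ bound on $\Delta_1^{(2),[N^2]}$ is far too weak to control this. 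The crucial feature of Lemma~\ref{lemav3} is not a nested estimate but an \emph{algebraic cancellation}: the quantity $\Delta_\Sigma^{(2),[N^2]}=\tfrac{\Theta_x^{(2),[N^2]}+K_0\Theta_{y_0}^{(2),[N^2]}}{1+K_0}-\Theta_{y_1}^{(2),[N^2]}$ is built precisely so that the $N^2K_0e_0$-terms coming from the colour-$0$ exchange cancel identically in the It\^o differential, leaving a drift of order $\CO(N)$ (towards $0$) plus a contamination of order $\CO(1)\cdot\E\bigl[|\bar\Theta^{(1),[N^2]}-\Theta_x^{(1),[N^2]}|\bigr]$ that is $\CO(1/\sqrt N)$. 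You reference Lemma~\ref{lemav3} at the end of your write-up, so you are half aware of this, but your chosen working quantity $\Delta_2^{(2),[N^2]}$ does not have the cancellation, and the ``feed-in'' step you sketch cannot repair that. Once you replace $\Delta_2^{(2),[N^2]}$ by $\Delta_\Sigma^{(2),[N^2]}$ (equivalently, by $\bar\Theta^{(2),[N^2]}-\Theta_{y_1}^{(2),[N^2]}$, which is a constant multiple of it) and combine with the averaged Lemma~\ref{lemlev1} bound on $\Theta_x^{(2),[N^2]}-\Theta_{y_0}^{(2),[N^2]}$, the triangle inequality gives all three $L^1$ estimates, and the rest of your proposal goes through as written.
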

		
		\begin{proof}
			Combining Lemmas~\ref{lemlev1} and \ref{lemav3}, we find for $t_2>0$
			\begin{equation}
				\begin{aligned}
					\lim_{N \to \infty}\E\left[\left|\bar{ \Theta}^{(2),[N^2]}(N^2t_2)-\Theta_x^{(2),[N^2]}(N^2t_2)\right|\right]&=0,\\
					\lim_{N \to \infty}\E\left[\left|\bar{ \Theta}^{(2),[N^2]}(N^2t_2)-\Theta_{y_0}^{(2),[N^2]}(N^2t_2)\right|\right]&=0,\\ 
					\lim_{N \to \infty}\E\left[\left|\bar{ \Theta}^{(2),[N^2]}(N^2t_2)-\Theta_{y_1}^{(2),[N^2]}(N^2t_2)\right|\right]&=0.\\
				\end{aligned}
			\end{equation}
			Therefore we can again proceed as in the proof of Proposition~\ref{p.estima} to find \eqref{sgh423}.
		\end{proof}

		\subsubsection{Proof of the two-level three-colour mean-field finite-systems scheme}
		
		In Section~\ref{ss.leo} we already proved Proposition~\ref{P.finsysmf2lev}(a),(b),(c) and (f).  The proof of Proposition~\ref{P.finsysmf2lev}(d) follows from Proposition~\ref{P.finsysmf2lev}(a) by applying Lemma~\ref{lem2mz2}. The proof of Proposition~\ref{P.finsysmf2lev}(e) follows from Proposition~\ref{P.finsysmf2lev}(b) by applying Lemma~\ref{lem1mz2}. This completes the proof of Proposition~\ref{P.finsysmf2lev}.

		\section{Proofs: $N\to\infty$, hierarchical multi-scale limit theorems}
		\label{s.multilevel}
		
		In this section we prove the hierarchical multi-scale limit theorems stated in Theorem~\ref{T.multiscalehiereff} and Theorem~\ref{T.multiscalehier}. In Section~\ref{ss.multilevelmf} we first introduce the finite-level mean-field finite-systems scheme. In Section~\ref{ss.flmfss} we given an outline of how to prove the finite-level mean-field finite-systems scheme. In Section~\ref{ss.pmslt} we show how Theorems~\ref{T.multiscalehiereff}--\ref{T.multiscalehier} can be obtained by a simple generalisation of the finite-level mean-field finite-systems scheme. The proof of the finite-level mean-field finite-systems scheme follows a similar lin3 of argument as in Section~\ref{ss.org22} once we incorporate more levels. Since the proofs for the finite-level mean-field systems scheme are similar as the proofs in Section~\ref{pmfs23}, we will not write out the full proof, but only give an outline and a sketch.

		\subsection{Finite-level mean-field finite-systems scheme and interaction chain}
		\label{ss.multilevelmf}
		
		In this section we extend the two-level three-colour system to a $k$-level $(k+1)$-colour system with an ``outside world" for any $k\in\N$. This outside world allows also the highest level, the $k$-block, to start from equilibrium. It is also needed to generalize the results in this subsection to the infinite hierarchical group. 
		
		\paragraph{$\blacktriangleright$ Definitions.}
		
		To set up the system, fix $k \in \N$ and consider the geographic space \gls{omegatrunc} obtained by truncating the hierarchical group $\Omega_N$ (recall \eqref{hiergroup}) after hierarchical level $k+1$, i.e., $\Omega_N^{k+1} = B_{k+1}(0)$ the $(k+1)$-block centred at the origin (recall \eqref{hiergroup}, \eqref{ultra} and Fig.~\ref{fig-hierargr}). Note that the $k+1$-block consists of  $N$ $k$-blocks i.e., $B_{k+1}(0)=\bigcup_{i=0}^N B_k (i)$ and $B_{k+1}(0)=[N^{k+1}]$.  The seed-bank in this model consists of the $k+2$ layers corresponding to colours $\{0,\cdots,k\}\cup \{k+1\}.$  On this space we again consider a restricted version of the SSDE in \eqref{moSDE} to the geographic space $\Omega_N^{k+1}$. The migration kernel $a^{\Omega_N}(\cdot,\cdot)$ is restricted to $\Omega_N^{k+1}$ by setting all migration outside $B_{k+1}(0)$ equal to 0, i.e., 
		\begin{equation}
			a^{[\Omega_N^{k+1}]}(\xi,\eta)=\sum_{l=1}^{k+1}1_{\big\{d_{\Omega_N^{k+1}}(\xi,\eta)\leq l\big\}}
			\frac{c_l}{N^{l-1}}\frac{1}{N^l},
		\end{equation}
		where $d_{\Omega_N^{k+1}}$ is the hierarchical distance $d_{\Omega_N}$ restricted to the space $\Omega_N^{k+1}$. The colour-$l$ dormant population exchanges individuals with the active population at rates $\frac{e_l}{N^l}$, $\frac{K_le_l}{N^l}$ for all $0\leq l\leq k$. We set the interaction of the active population with the colour $(k+1)$-dormant population equal to $0$. This seed-bank is only needed later, namely for the ``outside world". 
		
		The state space of the finite-level mean-field system is
		\begin{equation}
			S = (\mathfrak{s}^{k+1})^{\Omega_N^{k+1}}, \quad \mathfrak{s}^{k+1} = [0,1] \times [0,1]^{k+2},
		\end{equation}
		and the system is denoted by
		\begin{equation}
			\label{klevhierar}
			\begin{aligned}
				&Z^{\Omega_N^{k+1}} = (Z^{\Omega_N^{k+1}}(t))_{t \geq 0}, 
				\qquad Z^{\Omega_N^{k+1}}(t) = (z^{\Omega_N^{k+1}}_\xi(t))_{\xi \in \Omega_N^{k+1}},\\
				&z^{\Omega_N^{k+1}}_\xi(t) = (x^{\Omega_N^{k+1}}_\xi(t),(y^{\Omega_N^{k+1}}_{\xi,m}(t))_{m=0}^{k+1}).  
			\end{aligned}
		\end{equation}
		The components evolve according to the SSDE
		\begin{equation}
			\label{klevel}
			\begin{aligned}
				\d x^{\Omega_N^{k+1}}_\xi(t) &= \sum_{l=1}^{k} \,\frac{c_{l-1}}{N^{l-1}}\,\frac{1}{N^l}\sum_{\eta \in B_l(\xi)} \,
				\left[x^{\Omega_N^{k+1}}_\eta(t)-x^{\Omega_N^{k+1}}_\xi(t)\right]\,\d t\\
				&\qquad + \sqrt{g(x^{\Omega_N^{k+1}}_\xi(t))}\,\d w_\xi(t)
				+ \sum_{m=0}^{k} \frac{K_m e_m}{N^m}\, \left[y^{\Omega_N^{k+1}}_{\xi,m}(t) - x^{\Omega_N^{k+1}}_\xi(t)\right]\,\d t, \\
				\d y^{\Omega_N^{k+1}}_{\xi,m}(t) &= \frac{e_m}{N^m} \,\left[x^{\Omega_N^{k+1}}_\xi(t) 
				- y^{\Omega_N^{k+1}}_{\xi,m}(t)\right]\,\d t, \quad 0 \leq m \leq k, \\
				\d y^{\Omega_N^{k+1}}_{\xi,m}(t) &= 0, \qquad \xi \in \Omega_N^{k+1},
			\end{aligned}
		\end{equation}
		with $B_l(\xi)$ the ball of radius $l$ around $\xi \in \Omega_N^{k+1}$. 
		
		Note that this system is the hierarchical SSDE in \eqref{moSDE} with all interactions at distance $>k$ switched off (i.e., $c_l = 0$ for $l > k+1$), and also the exchange with dormant populations of colour $m>k$ is switched off. As before, by \cite{SS80} the martingale problem associated with \eqref{klevel} is well-posed, and for every initial state in $S$ the SSDE has a unique strong solution. We will analyse \eqref{klevel} on time scales $1, N, N^2,\cdots,N^k$. If for $0\leq l\leq k$ time runs on time scale $N^l$, then we write $N^lt_l$ with $t_l>0$. 
		
		To study the $k$-level mean-field system, we analyse the equivalent of the block averages defined in \eqref{defblockav}. For the $k$-level mean-field system these are given by
		\begin{equation}
			\label{blockavlk}
			\begin{aligned}
				x^{\Omega^{k+1}_N}_{l}(t) &= \frac{1}{N^l} \sum_{\eta \in B_l(0)} x^{\Omega_N^{k+1}}_\eta(N^lt),\\
				y^{\Omega^{k+1}_N}_{m,l}(t) &= \frac{1}{N^l} \sum_{\eta \in B_l(0)} y^{\Omega_N^{k+1}}_{\eta,m}(N^lt),
				\qquad  0\leq m \leq k+1,\qquad 0\leq l\leq k.
			\end{aligned}
		\end{equation}
		For $0\leq l\leq k$ these block averages evolve according to the SSDE
		\begin{eqnarray}
			\label{rblockavxmultik}
			\d{x}_{l}^{\Omega^{k+1}_N}(t)
			&=& \sum_{n=1}^{k-(l-1 )}
			\frac{c_{l+n-1}}{N^{n-1}}\big[{x}_{l+n}^{\Omega^{k+1}_N}(N^{-n}t)-{x}_{l}^{\Omega^{k+1}_N}(t)\big]\,\d t
			\nonumber\\
			&&+\sqrt{\frac{1}{N^l} \sum_{i\in B_l(0)} g(x_i(N^lt))}\,\,\d w_l(t)\\ \nonumber
			&&\qquad +\sum_{m=0}^{k} N^l \frac{K_m e_m}{N^m}
			\big[{y}_{m,l}^{\Omega^{k+1}_N}(t)-{x}_{l}^{\Omega^{k+1}_N}(t)\big]\,\d t,\\
			\label{rblockavzmultik}
			\d{y}_{m,l}^{\Omega^{k+1}_N}(t)
			&=&N^l \frac{e_m}{N^{m}}\big[{x}_{l}^{\Omega^{k+1}_N}(t)-{y}_{m,l}^{\Omega^{k+1}_N}(t)\big]\,\d t,
			\qquad 0\leq m\leq k,\\
			\d y^{\Omega_N^{k+1}}_{k+1,l}(t) &=& 0,
		\end{eqnarray}
		In the limit as $N\to\infty$, the active $l$-block average feels a drift towards the active $(l+1)$-block average, which is not moving on time scale $N^l$, at rate $c_l$. The diffusion term for the $l$-block average becomes the average diffusion over the $l$-block. The drift of the active $l$-block average towards the $l$-block average  of $m$-dormant populations $y_{m,l}^{\Omega_N^{k+1}}$ with $m>l$ vanishes in the limit as $N\to\infty$. Therefore, the $m>l$ $m$-dormant populations are \emph{slow seed-banks} on space-time scale $l$. The $l$-block average of the colour-$l$ dormant population $y_{l,l}^{\Omega_N^{k+1}}$ has a non-trivial drift towards the active $l$-block average, written $x_l^{\Omega_N^{k+1}}$. Therefore the $l$-dormant population is the \emph{effective seed-bank} on space-time scale $l$. For the colour $m$-dormant populations $y_{m,l}^{\Omega_N^{k+1}}$ with $m<l$, we see that infinite rates appear. Therefore the $m$-dormant populations with $m<l$ are \emph{fast seed-banks} on space-time scale $l$.  We again need the Meyer-Zheng topology to show that $\lim_{N\to\infty}y_{m,l}^{\Omega_N^{k+1}}=\lim_{N\to \infty}x_{l}^{\Omega_N^{k+1}}$. On space-time scale $l$, the colour-$l$ dormant population is the effective seed-bank. To get rid of the infinite rates we again look at combinations. From the above discussion and the SSDE in \eqref{rblockavxmultik}--\eqref{rblockavxmultik}, we see that if we consider the quantity
		\begin{equation}
			\frac{x_l^{\Omega_N^{k+1}}(t) + \sum_{m=0}^{l-1} K_m y_{l,m}^{\Omega_N^{k+1}}(t)}{1+\sum_{m=0}^{l-1} K_m},
		\end{equation}
		then all infinite rates cancel out. Therefore
		\begin{equation}
			\left(\frac{x_l^{\Omega_N^{k+1}}(t) + \sum_{m=0}^{l-1} K_m y_{l,m}^{\Omega_N^{k+1}}(t)}
			{1+\sum_{m=0}^{l-1} K_m}, y_{l,l}^{\Omega_N^{k+1}}(t)\right)_{t > 0}
		\end{equation}
		is called the \emph{effective process} on space-time scale $l$. Like in the simpler mean-field finite-systems scheme, the effective process allows us to analyse our system in path space.
		
		An important difference between the finite-level mean-field system in \eqref{rblockavxmultik}--\eqref{rblockavzmultik} and the two-level three-colour mean-field system in Section \eqref{ss.tlhmfs*} is that in the finite-level mean-field system also the highest level $k$ has a drift towards the outside world. This outside world is the active $k+1$-block average, which does not evolve on time scale $N^k$. This drift allows the finite-level mean-field system to equilibrate to a non-trivial equilibrium. In the two-level mean-field system, the highest level, i.e., the active $2$-block average, does not feel a drift due to migration. Consequently, the $2$-block averages will eventually cluster.
		
		\paragraph{$\blacktriangleright$ Scaling limit.}
		
		To state and prove the finite-level multi-scale limit, we need the following three limiting processes. Recall \eqref{Ekdef} and \eqref{deftheta}. For $0\leq l\leq k$, let 
		\begin{equation}
			(z_{l,(\theta,(y_{m,l})_{m=0}^{k+1})}(t))_{t\geq 0})=(x_l(t),(y_{m,l}(t))_{m=0}^{k+1})_{t\geq 0})
		\end{equation}
		be the process evolving according to
		\begin{equation}
			\label{z11}
			\begin{aligned}
				&\d x_l(t) =  E_l\Bigg[c_l [\theta - x_l(t)]\, \d t 
				+ \sqrt{\CF^{(l)} g(x_1(t))}\, \d w (t) +  K_l e_l\, [y_{l,l}(t)-x_{l}(t)]\,\d t\Bigg],\\
				& y_{m,l}(t) = x_l(t),\qquad \text{ for }0\leq m<l\\
				&\d y_{l,l}(t) = e_l\,[x_l(t)-y_{l,l}(t)]\, \d t,\\
				& y_{m,l}(t) =y_{m,l} ,\qquad\text{ for }l<m\leq k+1,
			\end{aligned}
		\end{equation}
		where $\theta\in[0,1]$ and $y_{m,l}\in[0,1]$ for $l<m\leq k+1$. 
		
		For $0\leq l\leq k$, let 
		\begin{equation}
			(z_{l,(\theta,(y_{m,l})_{m=l+1}^{k+1})}^\aux(t))_{t\geq 0}=(x^{\aux}_l(t),(y^{\aux}_{m,l}(t))_{m=l+1}^{k+1})_{t\geq 0}
		\end{equation}
		be the process evolving according to
		\begin{equation}
			\label{z11s}
			\begin{aligned}
				&\d x^{\aux}_l(t) =  E_l\Bigg[c_l [\theta - x^{\aux}_l(t)]\, \d t 
				+ \sqrt{\CF^{(l)} g(x^{\aux}_1(t))}\, \d w (t) +  K_l e_l\, [y^{\aux}_{l,l}(t)-x^{\aux}_{l}(t)]\,\d t\Bigg],\\
				&\d y^{\aux}_{l,l}(t) = e_l\,[x^{\aux}_l(t)-y^{\aux}_{l,l}(t)]\, \d t,\\
				& y^{\aux}_{m,l}(t) =y^{}_{m,l} ,\qquad\text{ for }l<m\leq k+1,
			\end{aligned}
		\end{equation}
		where $\theta\in[0,1]$ and $y^{\aux}_{m,l}\in[0,1]$, for $l<m\leq k+1$. 
		
		For $0\leq l \leq k$, let  
		\begin{equation}
			\label{92fl}
			(z^{\eff}_{l,\theta}(t))_{t\geq 0}=\left(x^\eff_l(t),y^\eff_{l,l}(t)\right)_{t\geq 0}
		\end{equation}
		be the \emph{effective process} evolving according to 
		\begin{equation}
			\label{927fl}
			\begin{aligned}
				&\d x^{\eff}_l(t) = E_l\left[c_l\,[\theta - x^{\eff}_l(t)]\, \d t + \sqrt{(\CF^{(l)}g)(x^{\eff}_l(t))}\, \d w (t) 
				+ K_le_l\,[y^{\eff}_{l,l}(t)-x^{\eff}_l(t)]\,\d t\right],\\
				&\d y^{\eff}_{l,l}(t) = e_l\, [x^{\eff}_l(t)-y^{\eff}_{l,l}(t)]\, \d t.
			\end{aligned}
		\end{equation}
		Comparing \eqref{z11} with \eqref{927fl}, we see that the effective process looks at the non-trivial components of the full process. The auxiliary process in \eqref{z11s} looks at the active population, the effective seed-bank and the slow seed-banks. 
		
		To state and prove the finite-level multi-scale limit, we need the following list of ingredients:
		\begin{enumerate}
			\item 
			For $t>0$ and for $0\leq l\leq k$, define the \emph{$l$-block estimators}
			\begin{equation}
				\label{tam6}
				\begin{aligned}
					\bar{\Theta}^{(l) ,\Omega_N^{k+1}}(t)
					&=\frac{1}{N^l}\sum_{i\in B_l}\frac{ x^{\Omega_N^{k+1}}_i(t)
						+\sum_{m=0}^{l-1}K_m y^{\Omega_N^{k+1}}_{i,0}(t)}{1+K_0},\\
					\Theta^{(l) ,\Omega_N^{k+1}}_x(t)&=\frac{1}{N^l}\sum_{i\in B_l} x^{\Omega_N^{k+1}}_{i}(t),\\
					\Theta^{(l) ,\Omega_N^{k+1}}_{y_m}(t)&=\frac{1}{N^l}\sum_{i\in B_l} y^{\Omega_N^{k+1}}_{i,m}(t),\qquad 0\leq m\leq k+1,
				\end{aligned}
			\end{equation}
			and put
			\begin{equation}
				\label{412fl}
				\begin{aligned}
					\boldsymbol{\Theta}^{(l),\Omega_N^{k+1}}(t),
					&=\left(\Theta^{(l) ,\Omega_N^{k+1}}_x(t),\big(\Theta^{(l) ,\Omega_N^{k+1}}_{y_m}(t)\big)_{m=0}^{k+1}\right),\\
					\gls{aux}
					&=\left(\bar{\Theta}^{(l) ,\Omega_N^{k+1}}(t),\left(\Theta^{(l) ,\Omega_N^{k+1}}_{y_{l}}(t)\right)_{m=l}^{k+1}\right),\\
					\boldsymbol{\Theta}^{\eff,(l),\Omega_N^{k+1}}(t)
					&=\left(\bar{\Theta}^{(l) ,\Omega_N^{k+1}}(t),\Theta^{(l) ,\Omega_N^{k+1}}_{y_{l}}(t)\right).
				\end{aligned}
			\end{equation}
			We call $(\boldsymbol{\Theta}^{(l),\Omega_N^{k+1}}(t))_{t>0}$ the \emph{$l$-block estimator process}, $(\boldsymbol{\Theta}^{\aux,(l),\Omega_N^{k+1}}(t))_{t>0}$ the \emph{auxiliary $l$-block estimator process} and $(\boldsymbol{\Theta}^{\eff,(l),\Omega_N^{k+1}}(t))_{t>0}$ the \emph{effective $l$-block estimator process}.  
			\item  
			For $0\leq l\leq k$, define the \emph{time scales} $N^l$ such that
			\begin{equation}
				\CL[\bar{\Theta}^{(l),\Omega_N^{k+1}} (N^{l}t_l-L(N)N^{l-1})-\bar\Theta^{(l),\Omega_N^{k+1}}( N^{l}t_l)]=\delta_0
			\end{equation}
			for all $L(N)$ such that $\lim_{N\to\infty} L(N) = infty$ and $\lim_{N\to\infty} L(N)/N = 0$, but not for $L(N)=N$. In words, $N$ is the time scale on which $\bar{\Theta}^{(l),\Omega_N^{k+1}}(\cdot)$ starts evolving, i.e., $\left(\bar{ \Theta}^{(l),\Omega_N^{k+1}}(N^lt_l)\right)_{t_l>0},$ is no longer a fixed process. 
			\item 
			The \emph{invariant measure} for the evolution of the $l$-block average in \eqref{z11}, denoted by   
			\begin{equation}
				\label{singcoleq322}
				\Gamma_{\theta, y_l}^{(l)},\qquad y_l=(y_{m,l})_{m=0}^{k+1}.
			\end{equation}
			The \emph{invariant measures} of the auxiliary $l$-block process in \eqref{92fl} and the effective $l$-block process in \eqref{927fl}, denoted by, respectively,  
			\begin{equation}
				\label{fleqau}
				\Gamma_{\theta, y_l}^{(l),\aux},\qquad y_l=(y_{m,l})_{m=l+1}^{k+1}
			\end{equation}
			and
			\begin{equation}
				\label{fleqeff}
				\Gamma_\theta^{(l),\eff}.
			\end{equation}
			\item 
			For $0\leq l\leq k$, let $\CF^{E_l,c_l,K_l,e_l}$ denote the \emph{renormalisation transformation} acting on $\CG$ defined by
			\begin{equation}
				(\CF^{E_l,c_l,K_l,e_l}g)(\theta) = \int_{[0,1]^2} g(x)\,\Gamma_\theta^{(l)}(\d x,(\d y)_{m=0}^{k+1}), \qquad \theta \in [0,1],
			\end{equation}
			and define the \emph{iterates} $\CF^{(n)}$, $0\leq n\leq k$, of the renormalisation transformation as the compositions
			\begin{equation}
				\label{frenormitalt}
				\CF^{(l)} = \CF^{E_{n-1},c_{n-1},K_{n-1},e_{n-1}} \circ \cdots \circ \CF^{E_0,c_0,K_0,e_0},\qquad 0\leq l\leq k.
			\end{equation}
			(Recall \eqref{frenormit}.)
			\item 
			To give a detailed description of the multi-scale behaviour of the SSDE in \eqref{moSDE}, define the \emph{interaction chain} 
			\begin{equation}
				\label{fintchainalt}
				(M^k_{-l})_{-l = -(k+1),-k,\ldots,0}
			\end{equation}
			as the \emph{time-inhomogeneous} Markov chain on $[0,1]\times[0,1]^{k+1}$ with initial state 
			\begin{equation}
				M^k_{-(k+1)} =(\vartheta_k,\overbrace{\vartheta_k,\cdots, \vartheta_k}^{k+1 \text{ times}},\theta_{y,k+1})
			\end{equation} 
			that evolves according to the transition kernel $Q^{[l]}$ from time $-(l+1)$ to time $-l$ given by 
			\begin{equation}
				Q^{[l]}(u,\d v) =  \Gamma_{u}^{(l)}(\d v), \qquad 0\leq l\leq k.
			\end{equation}
			(Recall \eqref{fintchain}.)
		\end{enumerate}
		
		We are now ready to state the scaling limit for the evolution of the averages in \eqref{gh412}. 
		
		\begin{proposition}{\bf [Finite-level mean-field: finite-systems scheme]}
			\label{P.finitesysmfmulti} 
			Suppose that the initial state of the system in \eqref{klevel} is given by $\mu(0)=\mu^{\otimes[\Omega_N^{k+1}]}$ for some $\mu\in\CP([0,1]\times[0,1]^{k+2})$. Let $L(N)$ be such that $\lim_{N \to \infty} L(N)=\infty$ and $\lim_{N \to \infty} L(N)/N=0$, and for $t_k,\ldots,t_0 \in (0,\infty)$ set $\bar{t}=L(N)N^k + \sum_{n=0}^k t_n N^n$.
			\begin{itemize}
				\item[(a)]
				For every $t_k,\ldots,t_0 \in (0,\infty)$, 
				\begin{equation}
					\begin{aligned}
						\lim_{N\to\infty} \CL\Bigg[\Bigg({\bf \Theta}^{(l),\Omega_N^{k+1}}\left(\bar{t}\right)\Bigg)_{l=k+1,k,\ldots,0}\Bigg]
						= \CL\left[(M^k_{-l})_{-l = -(k+1),-k,\ldots,0}\right],
					\end{aligned}
				\end{equation}
				where $(M^k_{-l})_{-l=-(k+1),-k,\ldots,0}$ is the interaction chain in \eqref{fintchainalt} starting from
				\begin{equation}
					M^k_{-(k+1)} =(\vartheta_k,\overbrace{\vartheta_k,\cdots, \vartheta_k}^{k+1 \text{ times}},\theta_{y,k+1}).
				\end{equation}
				\item[(b)]
				For all $0\leq l\leq k$,
				\begin{equation}
					\label{multiconvpre}
					\begin{aligned}
						& \lim_{N\to\infty} 
						\CL\left[\left(\boldsymbol{\Theta}^{(l),\Omega_N^{k+1}}(\bar{t}+ t_l N^l)\right)_{t_l > 0}\right] 
						= \CL\left[\left(z_{l,M^k_{-(l+1)}}(t_l)\right)_{t_l > 0}\right],\\[0.3cm]
						&\text{in the Meyer-Zheng topology}, 
					\end{aligned}
				\end{equation}
				where
				\begin{equation}
					(z_{l,M^k_{-(l+1)}}(t_l))_{t_l>0}
				\end{equation}
				is the processes defined \eqref{z11} with $\theta$, $(y_m,l)_{m=l+1}^{k+1}$ given by the corresponding components in $M^k_{-(l+1)}$ and with initial measure
				\begin{equation}
					\begin{aligned}
						&\CL\left[z_{l,M^k_{-(l+1)}}(0)\right]=\Gamma^{(l)}_{M^k_{-(l+1)}}\\
						&\Gamma^{(l)}_{M^k_{-(l+1)}}=\int_{\mathfrak{s}^{k+1}}\cdots\int_{\mathfrak{s}^{k+1}}\int_{\mathfrak{s}^{k+1}}
						\Gamma^{(k)}_{M^k_{-(k+1)}}(\d u_k)\Gamma^{(k-1)}_{u_k}(\d u_{k-1})
						\cdots\Gamma^{(l+1)}_{u_2}(\d u_{l+2}) \Gamma^{(l)}_{u_{l+1}}. 
					\end{aligned}
				\end{equation}  
			\end{itemize}
		\end{proposition}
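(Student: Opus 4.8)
The plan is to prove Proposition~\ref{P.finitesysmfmulti} by induction on the number of levels $k$, using the two-level three-colour scheme (Proposition~\ref{P.finsysmf2lev}) as the base case and its proof structure (Sections~\ref{ss.org22}--\ref{pmfs23}) as the template for the induction step. The key observation that makes the induction work is the \emph{separation of time scales}: on time scale $N^l$, the block averages at levels $>l$ are frozen, the level-$l$ block average has reached a quasi-equilibrium, and the block averages at levels $<l$ have equilibrated even faster. This means that, once we have identified the limiting evolution of the effective $(l+1)$-block process and the quasi-equilibrium measures it passes through, the analysis of the $l$-block process is \emph{formally identical} to the two-level analysis with the ``outside world'' playing the role of the frozen $(l+1)$-block average. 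Concretely, I would run the following sequence of steps, going downward from the top level $k$ to the bottom level $0$.

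First I would establish, for each $l$ from $k$ down to $0$, the following package (mirroring Steps 1--8 of Section~\ref{ss.org22} but now with $k+2$ colours): (i) tightness of the effective $l$-block estimator process $(\boldsymbol{\Theta}^{\eff,(l),\Omega_N^{k+1}}(\bar t_{l+1}+N^lt_l))_{t_l>0}$, where $\bar t_{l+1}=L(N)N^k+\sum_{n=l+1}^k t_nN^n$, via the semimartingale tightness criterion \cite[Proposition 3.2.3]{JM86} exactly as in Lemmas~\ref{lem:t3} and~\ref{tonebl}; (ii) the stability property $\lim_{N\to\infty}\sup_{0\le t\le L(N)}|\bar\Theta^{(l),\Omega_N^{k+1}}(\bar t_{l+1}+N^lt_l)-\bar\Theta^{(l),\Omega_N^{k+1}}(\bar t_{l+1}+N^lt_l-t)|=0$ in probability (and likewise for the colour-$m$ estimators with $m\ge l$), by the optional-stopping argument of Lemma~\ref{stab}; (iii) comparison of the active and colour-$l$ estimators, $\E[|\Theta^{(l),\Omega_N^{k+1}}_x-\Theta^{(l),\Omega_N^{k+1}}_{y_l}|]=O(N^{-1/2})$, by the It\^o-calculus computation of Lemmas~\ref{lemav} and~\ref{lemlev1}, which also provides the key estimate needed for the Meyer-Zheng convergence of the fast seed-banks; (iv) convergence of the level-$l$ single-block/auxiliary process to the limiting SSDE \eqref{z11} via \cite[Theorem 3.3.1]{JM86}, exactly as in Lemma~\ref{lemlimev}, Lemma~\ref{lem:aux} and Lemma~\ref{lem2blcon}, where the renormalised diffusion function $\CF^{(l)}g$ enters precisely because the quasi-equilibrium of the level-$(l-1)$ blocks (already established at the previous induction step) averages $\CF^{(l-1)}g$ to $\CF^{(l)}g$ by \eqref{frenormitalt}; and (v) identification of the starting measure $\Gamma^{(l)}_{M^k_{-(l+1)}}$ as the iterated equilibrium kernel, using the equilibration statement $\lim_{k\to\infty}\CL[\boldsymbol{\Theta}^{\aux,(l),\Omega_N^{k+1}}(\bar t_{l+1})]=\Gamma^{(l)}(\,\cdot\,)$ proved along subsequences and then promoted to full convergence by the contradiction/tightness argument of Lemma~\ref{cgam1}. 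The interaction chain in part (a) then emerges by composing these kernels down the tower, exactly as in Section~\ref{sss.formin}: the drift of the active $l$-block towards the active $(l+1)$-block, which is frozen on time scale $N^l$ but has already reached its quasi-equilibrium $\Gamma^{(l+1)}_{M^k_{-(l+2)}}$, is precisely the transition $Q^{[l]}(u,\d v)=\Gamma^{(l)}_u(\d v)$.

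Two genuinely new features beyond the two-level case must be handled. The first is the treatment of the \emph{slow seed-banks}: at level $l$, the colour-$m$ dormant populations with $l<m\le k$ are not frozen in their initial states at time $\bar t_{l+1}$ but are in the state $y_{m,m}(\bar t_m)$ obtained on the time scale $N^m$ where colour $m$ was effective, while the single colony colour-$m$ dormant populations with $m<l$ have already become slaved to the $m$-dormant $l$-block average. This is exactly the phenomenon analysed in Lemma~\ref{lem:ds}, and the proof there --- based on the first- and second-moment formulas of Lemma~\ref{lem1cg9422} together with a coupling of two random walks that wake up simultaneously and then almost immediately migrate on the faster time scale --- generalises verbatim; one only needs to replace the three-colour kernel $b^{[N^2]}$ by its $(k+2)$-colour analogue on $\Omega_N^{k+1}$ and observe that the coupling estimate $\lim_N\tilde\P((H^{\Omega_N^{k+1}}_{\bar t(N)})^c)=0$ still holds because, after waking from a colour-$m$ seed-bank, a lineage migrates at rate $\asymp c_0$ on time scale $1\ll N^l$. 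The second new feature is the \emph{outside world}: the presence of the non-interacting colour-$(k+1)$ seed-bank and the drift $c_k$ of the top-level $k$-block towards the frozen $(k+1)$-block average is what lets the level-$k$ process equilibrate to a \emph{non-trivial} quasi-equilibrium (unlike the two-level system, whose top level eventually clusters). This requires no new technique --- it is just the level-$k$ instance of step (iv)--(v) above, with $\vartheta_k$ in place of the outside-world constant --- but it is what makes the induction self-consistent, since the formula for $\Gamma^{(l)}_{M^k_{-(l+1)}}$ must reference the equilibrium of the level directly above.

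The main obstacle I expect is not any single estimate but the \emph{bookkeeping of the nested quasi-equilibria and the order of the limits}, i.e., making rigorous that when we sit at time $\bar t=L(N)N^k+\sum_{n=0}^kt_nN^n$ and zoom in on time scale $N^l$, every level $>l$ is simultaneously frozen at the \emph{correct} quasi-equilibrium state (which itself depends on the full history of $t_{l+1},\dots,t_k$ through the interaction chain) while every level $<l$ has relaxed. Technically this is handled by the ``Step 9'' device of Section~\ref{ss.leo}: one first proves everything along a subsequence where all the relevant joint laws converge (possible by the tightness in steps (i), (iii) and Lemma~\ref{lem:ds}), identifies the limit using the coupling lemmas (the analogues of Lemmas~\ref{l.comp2}, \ref{lem:2} and Corollary~\ref{cor3}), and then argues that since the limit does not depend on the subsequence, convergence holds along the full sequence. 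The delicate point is that the ``slow seed-bank'' states feeding into level $l$ are determined at times $\bar t_{m}=L(N)N^k+\sum_{n=m}^kt_nN^n$ for $l<m\le k$, and one must verify --- again via the random-walk coupling of Lemma~\ref{lem:ds} adapted to level $l$ --- that $\Theta^{(l),\Omega_N^{k+1}}_{y_m}(\bar t+N^lt_l)\approx\Theta^{(l),\Omega_N^{k+1}}_{y_m}(\bar t_m)\approx\Theta^{(m),\Omega_N^{k+1}}_{y_m}(\bar t_m)$, which is precisely the $(m+2)$-nd component of $M^k_{-(l+1)}$. Once this chain of approximations is in place, parts (a) and (b) follow by assembling the level-by-level limits and invoking Lemmas~\ref{lem1mz2}--\ref{lem2mz2} (in their $k$-level form) to upgrade from the effective/auxiliary processes to the full $l$-block estimator process in the Meyer-Zheng topology.
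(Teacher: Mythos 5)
Your proposal essentially reproduces the paper's own proof scheme for Section~\ref{ss.flmfss}: the same Joffe--M\'etivier tightness criterion, optional-stopping stability estimate, active-vs-$l$-dormant estimator comparison (the analogue of Lemma~\ref{fllemav3}), identification of the limiting SSDE via the semimartingale generator criterion, the random-walk coupling of Lemma~\ref{lem:ds} promoted to the $(k+2)$-colour kernel to handle the slow seed-banks (the paper's Lemma~\ref{lemflss}), the subsequence-independence argument obtained by traversing the auxiliary estimator processes downward from level $k$, and the final Meyer--Zheng upgrade from the effective/auxiliary process to the full estimator process. The only cosmetic mismatch is that you announce ``induction on the number of levels $k$'' at the outset, whereas both you and the paper in fact fix $k$ and iterate over the space-time scales $0\leq l\leq k$ (upward for the renormalisation iterate $\CF^{(l)}g$, downward for removing subsequence dependence).
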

		
		\medskip\noindent
		In Part (a), the limit does not depend on the choice of the times $t_k,\ldots,t_0$, since we let time start from a time larger than $L(N)N^k$, so that in the limit as $N\to\infty$ all the $l$-block averages with $l \leq k$ are already in quasi-equilibrium. In Part (b), for $l<k$ the center of the drift for the active population is \emph{random} and is determined by the first component of the interaction chain. Also the states of the $m$-dormant populations with $l<m\leq k+1$ are determined by the interaction chain. 
		
		\begin{remark}
			{\rm In contrast to Propositions~\ref{P.finsysmf2}--\ref{P.finsysmf2lev}, there are no assumptions on the seed-bank behaviour in Proposition~\ref{P.finitesysmfmulti}. This is because all the block-averages that we consider are in equilibrium at time $\bar{t}$. Consequently on space-time scales $l<m$ the $m$-dormant $l$-block average will equal the state of the $m$-dormant $m$-block average at time $\bar{t}$. Therefore we say that the state of the \emph{slow seed-banks} is determined by the space-time scaleon which this seed-bank is effective. Hence the state of the slow seed-banks is determined by the interaction chain.}\hfill$\blacksquare$
		\end{remark}
		
		The proof of Proposition~\ref{P.finitesysmfmulti} will be given in Section~\ref{ss.flmfss}.
		
		\subsection{Proof of the mean-field finite-systems scheme: finite-level}
		\label{ss.flmfss}
		
		We give a sketch of the proof Proposition ~\ref{P.finitesysmfmulti}. The proof uses a similar scheme as the proof of Proposition~\ref{P.finsysmf2lev}. We state the scheme and indicate at each step how it can be proved. 
		
		\begin{enumerate}
			\item 
			Tightness of the auxiliary $l$-block estimator processes, for $0\leq l\leq k$,
			\begin{equation}
				\left(\left({\bf\Theta}^{\aux,(l),\Omega_N^{k+1}}(N^lt_l)\right)_{t_l > 0}\right)_{N\in\N}.
			\end{equation} 
			
			\begin{proof}
				For each $0\leq l\leq k$ we use the tightness criterion in \cite[Proposition 3.2.3.]{JM86}. 
			\end{proof}
			
			\item 
			Stability property of the 2-block estimators, i.e., for  $L(N)$ such that $\lim_{N\to \infty} L(N)=\infty$ and $\lim_{N\to \infty} L(N)/N=0$,
			\begin{equation}
				\label{flmb}
				\lim_{N\to\infty}\sup_{0 \leq t\leq L(N)}\left|\bar{\Theta}^{(l),\Omega_N^{k+1}}(N^lt_l)
				-\bar{\Theta}^{(l),\Omega_N^{k+1}}(N^lt_l-N^{l-1}t)\right|=0\text{ in probability}
			\end{equation}
			and, for all $l\leq m\leq k+1$,
			\begin{equation}
				\label{flm3}
				\lim_{N\to\infty}\sup_{0 \leq t\leq L(N)}\left|\Theta_{y_m}^{(l),\Omega_N^{k+1}}(N^lt_l)
				-\Theta_{y_m}^{(l),\Omega_N^{k+1}}(N^lt_l-N^{l-1}t_l)\right|=0 \text{ in probability.}
			\end{equation}
			
			\begin{proof}
				Use a similar computation as in the proof of Lemma~\ref{lemsta2}.
			\end{proof}
			
			\item 
			We analyse the behaviour of the slow seed-banks by proving the following lemma.
			
			\begin{lemma}{\bf [Slow seed-banks in the multi-level system]}
				\label{lemflss}
				Let $\Theta^{(l)}_{y_m,i}$ denote the $m$-dormant $l$-block average containing colony $i\in \Omega_N^{k+1}$. Then for all $i\in \Omega_N^{k+1}$, $m<k+1$, $l<m$ and $t_l>0$,
				\begin{equation}
					\lim_{N \to \infty}\left[y_{i,m}^{\Omega_N^{k+1}}(\bar{t}+N^lt_l)-\Theta_{y_m,i}^{(m),\Omega_N^{k+1}}(\bar{t}+N^lt_l)\right]
					=0\quad a.s.
				\end{equation}
				and hence
				\begin{equation}
					\lim_{N \to \infty}\left[\Theta_{y_m,i}^{(l),\Omega_N^{k+1}}(\bar{t}+N^lt_l)
					-\Theta_{y_m,i}^{(m),\Omega_N^{k+1}}(\bar{t}+N^lt_l)\right]=0\quad a.s.
				\end{equation}
			\end{lemma}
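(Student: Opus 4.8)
\textbf{Proof plan for Lemma~\ref{lemflss}.}
The plan is to generalise the coupling argument developed in the proof of Lemma~\ref{lem:ds} for the two-level three-colour system. Fix $m<k+1$ and $l<m$. Just as in \eqref{14b}, it suffices to show that if $i,j$ belong to the same $m$-block (so that $y_{i,m}^{\Omega_N^{k+1}}$ and $y_{j,m}^{\Omega_N^{k+1}}$ contribute to the same $m$-dormant $m$-block average, and a fortiori to the same $m$-dormant $l$-block average for $l<m$), then
\begin{equation}
\lim_{N\to\infty}\E\left[\left(y^{\Omega_N^{k+1}}_{i,m}(\bar{t}+N^lt_l)-y^{\Omega_N^{k+1}}_{j,m}(\bar{t}+N^lt_l)\right)^2\right]=0.
\end{equation}
Indeed, once this is established, an averaging argument over the colonies in an $m$-block shows that each single colour-$m$ dormant colony coincides with the colour-$m$ dormant $m$-block average, and averaging further over the $m$-blocks inside an $l$-block yields the second display in the statement. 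Using the first- and second-moment formulas of Lemma~\ref{lem1cg9422} (applied on $\Omega_N^{k+1}$ with the kernel $b^{\Omega_N^{k+1}}$ that restricts \eqref{mrw} to $\Omega_N^{k+1}$ and sets $c_n=0$ for $n>k+1$ and $e_n=0$ for $n>k$), we express this second moment as a ``migration-kernel'' term plus a ``$g$-integral'' term, exactly as in \eqref{1039}, and the task is to show both vanish.

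First I would handle the migration-kernel term. The argument is the two random walk coupling: let $RW$ and $RW'$ evolve according to $b^{\Omega_N^{k+1}}$, started at $(i,D_m)$ and $(j,D_m)$ with $i,j$ in the same $m$-block. Because both start dormant of colour $m$, one couples perfectly their active/dormant switches among $A,D_0,\ldots,D_k$; hence they are simultaneously active and one couples the times and radii of their migration jumps (but not the jump targets). Once either has made a migration jump of radius $\geq m$ (which it must do, since starting at $D_m$ one has to wake up before moving at all), both become uniformly distributed over the same block of the appropriate radius in the active state, and from then on $RW$ and $RW'$ have the same law. So, setting $H_t^{\Omega_N^{k+1}}=\{RW\text{ has migrated at least once up to }t\}$, the kernel-difference sum is bounded by $2\tilde{\P}((H_{\bar{t}+N^lt_l}^{\Omega_N^{k+1}})^c)$, and one must show this tends to $0$. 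The event $(H_{\bar t+N^lt_l}^{\Omega_N^{k+1}})^c$ requires either no colour-$m$ wake-up before time $\bar t+N^lt_l\geq N^mt_m$, which has probability $\e^{-(e_m/N^m)(\bar t+N^lt_l)}\to0$ since $\bar t\gtrsim N^mt_m$ and $t_m>0$; or a wake-up followed by no migration, which one controls exactly as in \eqref{007}--\eqref{008}: conditioning on the first colour-$m$ wake-up and restricting to a window $L(N)$ after it with $L(N)\to\infty$, $L(N)/N\to0$, one shows the colour-$m$ (and slower) seed-banks are not entered during this window with probability $\to1$, so on this window the walk performs $\asymp L(N)$ active/dormant cycles with rates of order $1$, hence migrates with probability $\to1$ by Markov's inequality on the sum of cycle lengths.

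Next I would handle the $g$-integral term. Substituting $s\mapsto\bar t+N^lt_l-s$ and conditioning on the first colour-$m$ wake-up time $\tau^{\Omega_N^{k+1}}$, one writes the integral as a convolution of the wake-up density with the active-active kernel difference, precisely mimicking \eqref{1059}--\eqref{1061}: one splits according to whether both walks have migrated within a time $L(N)$ of waking up. On the part where they have, the kernel difference is bounded by $2\tilde{\P}((H^{\Omega_N^{k+1}}_{L(N)})^c)$ started from two \emph{active} sites, which tends to $0$ by the same reasoning; on the complementary part the relevant probabilities are $O(1-\e^{-(e_m/N^m)L(N)})\to0$, since an unwoken colour-$m$ walk had to wake up in that window. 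Using the uniform bound that this integral is at most $2$ (as in \eqref{1057}), dominated convergence finishes it. The main obstacle here — and the reason the calculation is longer than for the two-level case — is bookkeeping: there are $k+2$ internal states and $k+1$ migration radii, so the coupling must keep track of matched jump radii across all scales and one must verify that the window-$L(N)$ argument works uniformly for every colour $m\leq k$ simultaneously (using $\bar t\gtrsim N^mt_m$ for the relevant $m$). Once this is in place, the conclusion $\lim_{N\to\infty}\E[(y_{i,m}^{\Omega_N^{k+1}}(\bar t+N^lt_l)-y_{j,m}^{\Omega_N^{k+1}}(\bar t+N^lt_l))^2]=0$ follows, and the two asserted almost-sure limits are obtained by passing to a subsequence along which the (bounded) differences converge a.s.
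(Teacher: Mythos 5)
You have the right overall strategy (the two-random-walk coupling with matched switch times and jump radii, applied to $b^{\Omega_N^{k+1}}$), but there is a genuine gap in the choice of coupling event, which the paper's own proof sketch explicitly singles out as \emph{the} modification needed relative to Lemma~\ref{lem:ds}. You write "Once either has made a migration jump of radius $\geq m$ ... both become uniformly distributed over the same block," which is the correct criterion when $i,j$ lie in the same $m$-block, but you then set $H_t^{\Omega_N^{k+1}}=\{RW \text{ has migrated at least once up to }t\}$ and bound the kernel difference by $2\tilde\P\big((H_{\bar t+N^l t_l}^{\Omega_N^{k+1}})^c\big)$. That event is too weak: after a coupled jump of radius $r<m$, $RW$ and $RW'$ are uniformly distributed over $B_r(i')$ and $B_r(j')$, which are \emph{different} $r$-balls inside the common $m$-block whenever $d_{\Omega_N}(i',j')>r$, so the kernels do not match. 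The parenthetical "which it must do, since starting at $D_m$ one has to wake up before moving at all" does not repair this: waking up from $D_m$ does not force the first migration jump to have radius $\geq m$, it can perfectly well have radius $1$. The paper's sketch defines $H_t^{m,\Omega_N^{k+1}}=\{RW^{\Omega_N^{k+1}} \text{ has migrated over distance } m \text{ at least once up to time } t\}$ precisely because of this.

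This is not a cosmetic fix, because it changes the scales in the window argument. The rate at which an active walker makes a jump of radius $\geq m$ is of order $c_{m-1}/N^{m-1}$, not of order $1$, so your proposed window of length $L(N)$ after the first colour-$m$ wake-up (with $L(N)\to\infty$, $L(N)/N\to 0$) gives an expected number of radius-$\geq m$ jumps of order $L(N)/N^{m-1}\to 0$ for $m\geq 2$, and the analogue of \eqref{007}--\eqref{008} fails. To make $\tilde\P\big((H^{m,\Omega_N^{k+1}}_{\,\cdot}\,)^c\big)\to 0$ one must work in a window of order $N^{m-1}L(N)$, during which the walk will re-enter not only the colour-$0$ seed-bank but all seed-banks of colour $\leq m-1$; the fraction of time active in such a window is $\asymp E_m$, so the time spent active is $\asymp E_m L(N) N^{m-1}$ and the probability of a radius-$\geq m$ jump is $\asymp 1-\exp(-c_{m-1}E_m L(N))\to 1$. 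The same rescaling is needed in your $g$-integral step, where you again invoke $H^{\Omega_N^{k+1}}_{L(N)}$ started from two active sites; those two active sites are still at hierarchical distance $\leq m$, so the coupling event there must also be radius-$\geq m$ migration and the window must again be $N^{m-1}L(N)$. Once the event and window are corrected, the rest of your reduction (second-moment computation via Lemma~\ref{lem1cg9422} and Fubini/convolution bookkeeping as in \eqref{1039}--\eqref{1065}, then $L^2\Rightarrow$ in-probability and subsequential a.s.\ convergence) is the same as the paper's.
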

			
			\begin{proof}
				We can proceed as in the proof of Lemma~\ref{lem:ds}, after adapting the kernel $b^{[N^2]}(\cdot,\cdot)$ to the kernel $b^{\Omega_N^{k+1}}(\cdot,\cdot)$. Then we can use that, from each of the $m<k+1$ $m$-dormant populations, individuals wake up before time $\bar{t}$ with probability $1$. For individuals starting from an $m$-dormant state, we define the coupling event 
				\begin{equation}
					H_t^{m,\Omega_N^{k+1}}=\{RW^{\Omega_N^{k+1}} \text{ has migrated over distance $m$ at least once up to time } t\}.
				\end{equation}  
				The migration over distance $m$ is needed because we need $m$-dormant individuals to be uniformly distributed over the $m$-block in order to almost surely equal the state of the $m$-block. 
			\end{proof}
			
			\item 
			We prove the convergence of the single components. Recall that there are $N^{k+1-l}$ $l$-blocks in $\Omega_N^{k+1}$. Since tightness of components implies tightness of the process, step 1 implies that for $0\leq l\leq k$ the full $l$-block processes 
			\begin{equation}
				\left(\left({\bf\Theta}_i^{\aux, (l),\Omega_N^{k+1}}(\bar{t}+N^lt_l)\right)_{t_l>0,\, i\in[N^{k+1-l}]}\right)_{N\in\N}
			\end{equation}
			are tight. From the tightness in steps 1 we can construct a subsequence $(N_n)_{n\in\N}$ along which, for all $0\leq l\leq k$, 
			\begin{equation}
				\begin{aligned} 
					\lim_{n\to\infty}\CL\left[\left({\bf\Theta}_i^{\aux,(1),\Omega_{N_n}^{k+1}}(\bar{t}+N^l_nt_l)\right)_{t_l > 0,\, i\in[N_n^{k+1-l}]}\right]
				\end{aligned}
			\end{equation}
			exists. Note that $\bar{t}$ depends on the subsequence. For example, along the subsequence $(N_{\tilde{n}})_{\tilde{n}\in\N}$, 
			\begin{equation}
				\bar{t}=L(N)N^k + \sum_{n=0}^k t_n N_{\tilde{n}}^n.
			\end{equation}
			
			We define the measure
			\begin{equation}
				\nu^{(0)}_{\bf \Theta}=\prod_{i\in\N_0}\Gamma^{(0)}_{\Theta_i}(\bar{t}),
			\end{equation}
			where 
			\begin{equation}
				\Theta_i\in\mathfrak{s}^{k+1}.
			\end{equation}
			In this step we show that along the same subsequence the single components converge to the infinite system. We show that if 
			\begin{equation}
				\label{1244}
				\lim_{n\to\infty}\CL[({\bf\Theta}^{\aux,(1),\Omega_{N_n}^{k+1}}(\bar{t}))_{i\in[N_n^{k}]}]=P^{(1)},
			\end{equation}
			then
			\begin{equation}
				\begin{aligned}
					\lim_{n\to\infty} \CL\Big[\Big(Z^{\Omega_{N_n}^{k+1}}(\bar{t}+t_0)\Big)_{t_0 \geq 0}\Big] 
					= \CL \left[(Z^{\nu^{(0)}(\bar{t})}(t_0))_{t_0 \geq 0}\right],
				\end{aligned}
			\end{equation}
			where
			\begin{equation}
				\nu^{(0)}(\bar{t})=\int \nu^{(0)}_{u} P^{(1)}(\d u).
			\end{equation}
			Here, $(Z^{\nu^{(0)}(\bar{t})}(t_0))_{t_0 \geq 0}$ is the process starting from $\nu^{(0)}(\bar{t})$ with components evolving according to \eqref{z0}, where $\theta$ is now a random variable that inherits its law from
			\be{}
			\lim_{n\to\infty}\CL[({\bf\Theta}^{\aux,(1),\Omega_{N_n}^{k+1}}(\bar{t}))_{i\in[N_n^{k}]}]
			\ee
			and, similarly, the laws of $y_{m,0}$, $1\leq l\leq k+1$ in the limiting process $(Z^{\nu^{(0)}(t_2)}(t_0))_{t_0 \geq 0}$ are determined by
			\be{}
			\lim_{n\to\infty}\CL[({\bf\Theta}^{\aux,(1),\Omega_{N_n}^{k+1}}(\bar{t}))_{i\in[N_n^{k}]}]. 
			\ee
			Note that we choose the subsequence $(N_n)_{n\in\N}$ in such a way that we know that the law $P^{(1)}$ in \eqref{1244} exists.
			
			\begin{proof}
				Proceed as in the proof of Proposition~\ref{ma77}. Note that the assumptions on the seed-banks in Proposition~\ref{ma77} follow from the choice of the subsequence and Lemma~\ref{lemflss}.
			\end{proof}   
			
			\item 
			Using the limiting evolution of the single colonies obtained in step 4, we can identify the limiting $l$-block process along the same subsequence. For $1\leq l\leq k$, we show that if 
			\begin{equation}
				\label{ll}
				\lim_{n\to\infty}\CL[({\bf\Theta}^{\aux,(l+1),\Omega_{N_n}^{k+1}}(\bar{t}))_{i\in[N_n^{k}]}]=P^{(l+1)},
			\end{equation}
			then
			\begin{equation}
				\begin{aligned} \lim_{n\to\infty}\CL\left[\left({\bf\Theta}^{\aux,(l),\Omega_{N_n}^{k+1}}(\bar{t}+N^l_nt_l)\right)_{t_l > 0,
						\, i\in[N_n^{k+1-l}]}\right]=\CL\left[(z_{l,{\bf\Theta}^{(l+1)}}^\aux(t))_{t\geq 0}\right],
				\end{aligned}
			\end{equation}
			where ${\bf\Theta}^{(l+1)}=({\bf \Theta}^{(l+1)}_x,({\bf \Theta}_{y_m,l}^{(l+1)})_{m=l+1}^{k+1})\in[0,1]\times[0,1]^{k+2-(l+1)}$,
			\begin{equation}
				\begin{aligned}
					\CL\left[z_{l,{\bf\Theta}^{(l)}}^\aux(0)\right]&=\Gamma_{{\bf\Theta}^{(l+1)}}^{(l),\aux},\\
					\Gamma_{{\bf\Theta}^{(l+1)}}^{(l),\aux}&=\int_{[0,1]\times[0,1]^{k+2-(l+1)}}\Gamma_{u}^{(l),\aux}P^{(l+1)}(\d u)
				\end{aligned}
			\end{equation}
			and $(z_{l,{\bf\Theta}^{(l+1)}}^\aux(t))_{t\geq 0}$ is the process evolving according to \eqref{z11s} with $\theta,(y_{m,l})_{m=l}^{k+1}$ replaced by the random variables ${\bf \Theta}^{(l+1)}_x,({\bf \Theta}^{(l+1)}_{y_m,l})_{m=l}^{k+1}$. Note that by the choice of the subsequence $(N_n)_{n\in\N}$ we know that for $1\leq l\leq k$ the limiting laws in \eqref{ll} exist.
			
			\begin{proof}
				The proof goes by induction. Using the convergence of the single components, we can proceed as in the proof of Proposition~\ref{lem:1blev} to prove the convergence of the $1$-blocks averages
				\begin{equation}
					\begin{aligned} 
						\lim_{n\to\infty}\CL\left[\left({\bf\Theta}_i^{\aux,(1),\Omega_{N_n}^{k+1}}
						(\bar{t}+N_nt_1)\right)_{t_1 > 0,\, i\in[N_n^{k+1-l}]}\right].
					\end{aligned}
				\end{equation} 
				Then, assuming that we have the convergence for all $0\leq l\leq L$, we get
				\begin{equation}
					\begin{aligned} 
						\lim_{n\to\infty}\CL\left[\left({\bf\Theta}_i^{\aux,(l),\Omega_{N_n}^{k+1}}(\bar{t}
						+N^{l}_nt_{l})\right)_{t_{l} > 0,\, i\in[N_n^{k+1-l}]}\right],
					\end{aligned}
				\end{equation}
				and we prove the convergence of
				\begin{equation}
					\begin{aligned} 
						\lim_{n\to\infty}\CL\left[\left({\bf\Theta}_i^{\aux,(L+1),\Omega_{N_n}^{k+1}}(\bar{t}
						+N^{(L+1)}_nt_{(L+1)})\right)_{t_{(L+1)} > 0,\, i\in[N_n^{k+1-(L+1)}]}\right].
					\end{aligned}
				\end{equation} 
				This is done using a similar proof strategy as in the proof of Proposition~\ref{lem:1blev}. In particular, we need to derive the $l$-level equivalent of Lemma~\ref{lemav3}. Since this lemma is also key to proving convergence in the Meyer-Zheng topology, we state it explicitly below.
			\end{proof}
			
			\begin{lemma}{\bf [$l$-block averages]}
				\label{fllemav3}
				Define
				\begin{equation}
					\begin{aligned}
						&\Delta^{(l),\Omega_N^{k+1}}_\Sigma (N^{l-1}t_{l-1})\\
						&\,=\frac{\Theta^{(l),\Omega_N^{k+1}}_x(N^{l-1}t_{l-1})
							+\sum_{m=0}^{l-2}K_m\Theta^{(l),\Omega_N^{k+1}}_{y_m}(N^{l-1}t_{l-1})}
						{1+\sum_{m=0}^{l-2}K_m}-\Theta^{(l),\Omega_N^{k+1}}_{y_{l-1}}(N^{l-1}t_{l-1})
					\end{aligned}
				\end{equation}
				and
				\begin{equation}
					R_l=\frac{1+\sum_{m=0}^{l-1}K_m}{1+\sum_{m=0}^{l-2}K_m}.
				\end{equation}
				For $t\geq 0$ set $\bar{ \Theta}^{(0)}(t)=\Theta_x^{(0)}(t)=x_0(t)$. Then, for $1\leq l\leq k$, 
				\begin{equation}
					\begin{aligned}
						&\E\left[\left|\Delta^{(l),\Omega_N^{k+1}}_\Sigma (N^{l-1}t_{l-1})\right|\right]\\
						&\leq \sqrt{\E\left[\left(\Delta^{(l),\Omega_N^{k+1}}_\Sigma (0)\right)^2\right]} \e^{-e_lR_lt_{l-1}}\\
						&\quad+\sqrt{\int_0^{t_1} \d s\,2e_lR_l\e^{-2e_lR_l(t_1-s)}
							\E \left[\left|\bar{\Theta}^{(l-1),\Omega_N^{k+1}}(N^{l-1}s)-\Theta^{(l-1),\Omega_N^{k+1}}_x(N^{l-1}s)\right|\right]}\\
						&\quad+\sqrt{\frac{1}{N}\frac{1}{2e_l(1+\sum_{m=0}^{l-1}K_m)}
							\left[\sum_{n=l+1}^k\frac{c_{n-1}}{N^{n-(l+1)}}+\sum_{m=l}^k\frac{K_me_m}{N^{m-l}}+E_{l-1}||g||\right]}.
					\end{aligned}
				\end{equation}
			\end{lemma}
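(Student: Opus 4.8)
\textbf{Proof plan for Lemma~\ref{fllemav3}.}
The plan is to derive the estimate by an Itô-calculus computation on the evolution of the combined quantity $\Delta^{(l),\Omega_N^{k+1}}_\Sigma$, following the same pattern used in the proofs of Lemma~\ref{lemav}, Lemma~\ref{lemlev1a}, and most importantly Lemma~\ref{lemav3} (which is the $l=2$ case of exactly this statement). First I would write out, from the block-average SSDE \eqref{rblockavxmultik}--\eqref{rblockavzmultik} specialised to levels $l-1$ and $l$ on time scale $N^{l-1}$, the evolution equations for $\Theta^{(l),\Omega_N^{k+1}}_x(N^{l-1}t_{l-1})$ and for the dormant block averages $\Theta^{(l),\Omega_N^{k+1}}_{y_m}(N^{l-1}t_{l-1})$ with $0\le m\le l-1$. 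Taking the weighted combination with weights $K_0,\dots,K_{l-2}$ in the numerator makes the diverging rates (those carrying a positive power of $N$, coming from the fast seed-banks $m\le l-2$) cancel, exactly as explained in the heuristics of Section~\ref{sss.efproc}; what survives is a drift term driving the combined estimator towards $\Theta^{(l)}_{y_{l-1}}$ at rate $e_lR_l$ (here the factor $R_l=(1+\sum_{m=0}^{l-1}K_m)/(1+\sum_{m=0}^{l-2}K_m)$ arises from the algebra of the weighted average, paralleling the factor $(1+K_0+K_1)/(1+K_0)$ in Lemma~\ref{lemav3}), a diffusion term with coefficient $\tfrac{1}{(1+\sum_{m=0}^{l-2}K_m)^2}\,\tfrac{1}{N^{2l-1}}\sum_{i\in B_{l-1}}g(x_i)$ bounded by $E_{l-1}\|g\|/N$ up to constants, plus small drift terms of order $c_{l}/N,\dots,c_k/N^{k-l},\,K_l e_l/N,\dots,K_k e_k/N^{k-l}$ coming from the migration to higher levels and the exchange with the slower seed-banks $m\ge l$.

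Next, I would compute $\mathrm{d}\big(\Delta^{(l),\Omega_N^{k+1}}_\Sigma\big)^2$ by Itô's formula and take expectations, obtaining a linear ODE of the form $\tfrac{\d}{\d t}\E[(\Delta^{(l)}_\Sigma)^2] = -2e_lR_l\,\E[(\Delta^{(l)}_\Sigma)^2] + h^{[N]}(t)$, where $h^{[N]}(t)$ collects three contributions: (i) the cross term $2e_lR_l\,\E[\Delta^{(l)}_\Sigma(\bar\Theta^{(l-1)}-\Theta^{(l-1)}_x)]$ — note the difference $\bar\Theta^{(l-1)}-\Theta^{(l-1)}_x$ appears exactly because $\Theta^{(l)}_{y_{l-1}}$ relaxes to the level-$(l-1)$ \emph{active} average while the combined estimator is built from the level-$(l-1)$ \emph{weighted} average; (ii) the cross terms with the $O(1/N^{\,\cdot})$ drifts toward higher levels and slower seed-banks; (iii) the quadratic-variation term $\le$ const$\cdot E_{l-1}\|g\|/N$. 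Solving the ODE by variation of constants gives $\E[(\Delta^{(l)}_\Sigma(N^{l-1}t_{l-1}))^2]$ as the initial value times $\e^{-2e_lR_lt_{l-1}}$ plus a convolution integral of $h^{[N]}$; bounding the convolution of term (ii) and (iii) by their suprema (which are $O(1/N)$ in each case, using $|\Delta^{(l)}_\Sigma|\le 1$ and $|\Theta^{(l)}_{y_m}-\Theta^{(l)}_x|\le 1$) and keeping term (i) inside the integral, then taking square roots and applying Jensen's inequality exactly as in Lemma~\ref{lemav3}, yields the claimed three-term bound.

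The one point that needs a little care — and is the main obstacle, though not a deep one — is the correct bookkeeping of the constants $R_l$ and $E_{l-1}$ and of the exponents of $N$ in the small drift terms. One must verify that after the cancellation of the diverging rates in the weighted combination, the remaining migration drift towards the active $(l+1)$-block contributes a term of size $c_l/N$ (since on time scale $N^{l-1}$ the rate $N^{l-1}\cdot c_l/N^l = c_l/N$), that the exchange with the colour-$m$ seed-bank for $m\ge l$ contributes $N^{l-1}K_m e_m/N^m = K_m e_m/N^{m-l+1}\le K_m e_m/N$, which after the extra division by $1+\sum_{m=0}^{l-1}K_m$ in the diffusion normalisation produces exactly the bracket $\sum_{n=l+1}^k c_{n-1}/N^{n-(l+1)}+\sum_{m=l}^k K_me_m/N^{m-l}+E_{l-1}\|g\|$ divided by $2e_l(1+\sum_{m=0}^{l-1}K_m)N$ as stated; and that the convention $\bar\Theta^{(0)}(t)=\Theta_x^{(0)}(t)=x_0(t)$ handles the base case $l=1$ correctly, where the combined estimator is just the active single-colony average and the cross term (i) vanishes identically. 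All of this is a routine but attentive generalisation of the $l=2$ computation in Lemma~\ref{lemav3}; the argument structure — Itô on the square, linear ODE, variation of constants, Jensen — is unchanged, and the remaining terms are controlled uniformly in $i\in\Omega_N^{k+1}$ by translation invariance of the SSDE \eqref{moSDE}.
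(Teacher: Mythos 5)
Your proposal is correct and follows essentially the same approach as the paper: the paper's own proof is a one-sentence pointer, namely, proceed exactly as in the proof of Lemma~\ref{lemav3} but with the SSDE \eqref{klevel} in place of \eqref{gh45a2twee}, and your plan (Itô on $(\Delta^{(l)}_\Sigma)^2$, linear ODE with relaxation rate $2e_lR_l$, variation of constants, separation of the cross term carrying $\bar\Theta^{(l-1)}-\Theta^{(l-1)}_x$ from the $O(1/N)$ remainders, Jensen) is precisely that computation spelled out in detail. Your bookkeeping of the rates and the base case $l=1$ via the convention $\bar\Theta^{(0)}=\Theta^{(0)}_x=x_0$ also matches what the paper intends.
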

			
			\begin{proof}
				Proceed like in the proof of Lemma~\ref{lemav3}, using the SSDE in \eqref{klevel} instead of the SSDE in \eqref{gh45a2twee}.
			\end{proof}
			
			We obtain the following useful corollary from Lemma~\ref{fllemav3}. 
			
			\begin{corollary}
				\label{cor124}
				For all $1\leq l\leq k$, $s>0$ and $\tilde{l}\geq l$,
				\begin{equation}
					\lim_{N \to \infty}\E \left[\left|\bar{\Theta}^{(l-1),\Omega_N^{k+1}}(N^{\tilde{l}-1}s)
					-\Theta^{(l-1),\Omega_N^{k+1}}_x(N^{\tilde{l}-1}s)\right|\right]=0.
				\end{equation}
			\end{corollary}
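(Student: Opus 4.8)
\textbf{Proof proposal for Corollary~\ref{cor124}.}
The plan is to deduce the corollary directly from the estimate in Lemma~\ref{fllemav3} by an induction on $l$, combined with a Gronwall-type control of the convolution term that appears on the right-hand side. First I would observe that Lemma~\ref{fllemav3}, applied with the time variable $t_{l-1}$ set so that $N^{l-1}t_{l-1}=N^{\tilde{l}-1}s$ (i.e. $t_{l-1}=N^{\tilde{l}-l}s$), bounds
\[
\E\left[\left|\bar{\Theta}^{(l-1),\Omega_N^{k+1}}(N^{\tilde{l}-1}s)-\Theta^{(l-1),\Omega_N^{k+1}}_x(N^{\tilde{l}-1}s)\right|\right]
=\E\left[\left|\Delta^{(l),\Omega_N^{k+1}}_\Sigma(N^{l-1}t_{l-1})\right|\right]
\]
by the sum of three terms: an exponentially decaying initial term, the square root of a convolution of $e^{-2e_lR_l(\cdot)}$ against $\E[|\bar{\Theta}^{(l-1),\Omega_N^{k+1}}-\Theta_x^{(l-1),\Omega_N^{k+1}}|]$ at the \emph{lower} level, and a term of order $O(1/\sqrt{N})$ coming from the diffusion function and the faster seed-banks (here one uses \eqref{740} and \eqref{740alt} to guarantee that the bracketed sum is bounded uniformly in $N$ for fixed $l$). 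The $O(1/\sqrt N)$ term and the initial term vanish as $N\to\infty$ for any fixed starting time.

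Next I would run the induction. For the base case $l=1$ we have $\bar{\Theta}^{(0)}(t)=\Theta_x^{(0)}(t)=x_0(t)$ by the convention stated in Lemma~\ref{fllemav3}, so $\Delta^{(1),\Omega_N^{k+1}}_\Sigma=\Theta_x^{(1),\Omega_N^{k+1}}-\Theta_{y_0}^{(1),\Omega_N^{k+1}}$ and the convolution term in the bound contains $\E[|\bar{\Theta}^{(0),\Omega_N^{k+1}}(N^{\tilde l-1}s)-\Theta_x^{(0),\Omega_N^{k+1}}(N^{\tilde l-1}s)|]$, which is identically $0$. Hence the right-hand side reduces to the exponentially small initial term plus the $O(1/\sqrt N)$ term, giving the claim for $l=1$ and all $\tilde l\geq 1$. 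For the inductive step, suppose the corollary holds at level $l-1$, i.e. $\lim_{N\to\infty}\E[|\bar{\Theta}^{(l-2),\Omega_N^{k+1}}(N^{\hat l-1}s)-\Theta_x^{(l-2),\Omega_N^{k+1}}(N^{\hat l-1}s)|]=0$ for all $\hat l\geq l-1$ and all $s>0$. The integrand in the convolution term of Lemma~\ref{fllemav3} (applied at level $l$) is exactly $\E[|\bar{\Theta}^{(l-1),\Omega_N^{k+1}}(N^{l-1}s')-\Theta_x^{(l-1),\Omega_N^{k+1}}(N^{l-1}s')|]$, and by the inductive hypothesis (with $\hat l=l-1\geq l-1$, or more generally after rescaling time) this quantity tends to $0$ pointwise in $s'$ and is bounded by $1$; since the kernel $2e_lR_l\,e^{-2e_lR_l(t_{l-1}-s')}$ is a probability density on $[0,t_{l-1}]$ (up to the missing mass near $s'=0$, which is itself exponentially small), dominated convergence shows the convolution term vanishes as $N\to\infty$. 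Combining this with the vanishing of the initial and $O(1/\sqrt N)$ terms gives the corollary at level $l$, for all $\tilde l\geq l$ and all $s>0$.

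A small technical point that needs care — and which I expect to be the only real obstacle — is that in Lemma~\ref{fllemav3} the convolution is written over a fixed time interval $[0,t_1]$ with kernel $e^{-2e_lR_l(t_1-s)}$, whereas here the relevant running time $t_{l-1}=N^{\tilde l-l}s$ may grow with $N$ when $\tilde l>l$. To handle this I would split the convolution integral at a slowly growing cutoff $L(N)$ (with $L(N)\to\infty$, $L(N)/N\to0$), exactly as in the proofs of Lemma~\ref{lem:ds} and Lemma~\ref{lem:aux}: on $[t_{l-1}-L(N),t_{l-1}]$ the exponential kernel has mass bounded by a constant, and the integrand there is controlled by the inductive hypothesis evaluated at the corresponding (still diverging) times; on $[0,t_{l-1}-L(N)]$ the exponential weight $e^{-2e_lR_lL(N)}$ is itself vanishing while the integrand is bounded by $1$. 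Both pieces therefore tend to $0$. Once this cutoff argument is in place, the induction closes and Corollary~\ref{cor124} follows; it is then exactly the input needed (together with Lemma~\ref{fllemav3} itself) for the Meyer--Zheng convergence statements of the finite-level scheme.
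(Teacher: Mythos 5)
The identification at the heart of your argument — that
\begin{equation*}
\E\left[\left|\bar{\Theta}^{(l-1),\Omega_N^{k+1}}(N^{\tilde{l}-1}s)-\Theta^{(l-1),\Omega_N^{k+1}}_x(N^{\tilde{l}-1}s)\right|\right]=\E\left[\left|\Delta^{(l),\Omega_N^{k+1}}_\Sigma(N^{l-1}t_{l-1})\right|\right]
\end{equation*}
— is false, and this breaks the proof. The quantity $\Delta^{(l),\Omega_N^{k+1}}_\Sigma$ is built from \emph{$l$-block} averages and subtracts the colour-$(l-1)$ dormant $l$-block average $\Theta^{(l)}_{y_{l-1}}$; by contrast $\bar{\Theta}^{(l-1)}-\Theta_x^{(l-1)}$ lives on the $(l-1)$-block and equals $\bigl(\sum_{m=0}^{l-2}K_m(\Theta_{y_m}^{(l-1)}-\Theta_x^{(l-1)})\bigr)/\bigl(1+\sum_{m=0}^{l-2}K_m\bigr)$: no slow seed-bank appears and the block level is different. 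Already for $l=1$ the two sides disagree: the corollary's left-hand side is identically $0$ by the convention $\bar{\Theta}^{(0)}=\Theta_x^{(0)}=x_0$, whereas $\Delta_\Sigma^{(1)}=\Theta_x^{(1)}-\Theta_{y_0}^{(1)}$, which is not identically zero.

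With the wrong identification in place, the inductive step is circular: you apply Lemma~\ref{fllemav3} at level $l$, whose convolution term contains $\E[|\bar{\Theta}^{(l-1)}-\Theta_x^{(l-1)}|]$ — precisely the level-$l$ quantity you are trying to control — but your inductive hypothesis only covers $\bar{\Theta}^{(l-2)}-\Theta_x^{(l-2)}$, one index lower. The paper closes the gap with an intermediate decomposition that your argument omits: it inserts $\frac{1}{N}\sum_{i=0}^{N-1}\bar{\Theta}^{(L-1),\Omega_N^{k+1}}_i(N^Ls)$ via the triangle inequality, handles $\frac{1}{N}\sum_i\E[|\bar{\Theta}^{(L-1)}_i-\Theta^{(L-1)}_{x,i}|]$ directly by the inductive hypothesis, rewrites the remaining piece $|\bar{\Theta}^{(L)}-\frac{1}{N}\sum_i\bar{\Theta}^{(L-1)}_i|$ as a constant multiple of $|\Theta^{(L)}_{y_{L-1}}-\bar{\Theta}^{(L)}|$ (which is in turn a constant multiple of $|\Delta_\Sigma^{(L)}|$), and only \emph{then} invokes Lemma~\ref{fllemav3} — at level $L$, whose convolution term involves $\bar{\Theta}^{(L-1)}-\Theta_x^{(L-1)}$, which the inductive hypothesis genuinely controls. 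That algebraic decomposition is the missing idea. (Your $L(N)$-cutoff for the growing horizon $t_{l-1}=N^{\tilde l-l}s$ when $\tilde l>l$ is a sensible device that the paper leaves implicit, but it does not substitute for the decomposition.)
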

			
			\begin{proof}
				We proceed by induction. The result for $l=1$ is trivial. Suppose that the result holds for $l=L$. Then for $l=L+1$ we obtain
				\begin{equation}
					\label{8347}
					\begin{aligned}
						&\E \left[\left|\bar{\Theta}^{(L),\Omega_N^{k+1}}(N^{L}s)-\Theta^{(L),\Omega_N^{k+1}}_x(N^{L}s)\right|\right]\\
						&\leq\E \left[\left|\bar{\Theta}^{(L),\Omega_N^{k+1}}(N^{L}s)-\frac{1}{N}\sum_{i=0}^{N-1}\bar{\Theta}^{(L-1),
							\Omega_N^{k+1}}_{i}(N^{L}s)\right|\right]\\
						&\qquad+\frac{1}{N}\sum_{i=0}^{N-1}\E \left[\left|\bar{\Theta}^{(L-1),
							\Omega_N^{k+1}}_{i}(N^{L}s)-{ \Theta}^{(L-1),\Omega_N^{k+1}}_{x,i}(N^{L}s)\right|\right].
					\end{aligned}
				\end{equation}
				Note that the second term in the right-hand side of \eqref{8347} tends to $0$ as $N\to\infty$ by the induction hypothesis. For the first term in the right-hand side of \eqref{8347}, note that
				\begin{equation}
					\begin{aligned}
						&\E\left[\left|\bar{\Theta}^{(L),\Omega_N^{k+1}}(N^{L}s)-\frac{1}{N}\sum_{i=0}^{N-1}
						\bar{\Theta}^{(L-1),\Omega_N^{k+1}}_{i}(N^{L}s)\right|\right]\\
						&=\E\left[\left|\frac{\Theta^{(L),\Omega_N^{k+1}}_{x}(N^{L}s)+\sum_{m=0}^{L-1}K_m\Theta^{(L),
								\Omega_N^{k+1}}_{y_m}(N^{L}s)}{1+\sum_{m=0}^{L-1}K_m}\right.\right.\\
						&\qquad\qquad-\left.\left.\frac{\Theta^{(L),\Omega_N^{k+1}}_{x}(N^{L}s)+\sum_{m=0}^{L-2}K_m
							\Theta^{(L),\Omega_N^{k+1}}_{y_m}(N^{L}s)}{1+\sum_{m=0}^{L-2}K_m}\right|\right]\\
						&=\frac{K_{L-1}}{1+\sum_{m=0}^{L-1}K_m}\E\left[\left|\Theta^{(L),\Omega_N^{k+1}}_{y_{L-1}}(N^{L}s)
						-\bar{ \Theta}^{(L),\Omega_N^{k+1}}_{}(N^{L}s)\right|\right].	
					\end{aligned}
				\end{equation}
				Invoking Lemma~\ref{fllemav3} and using the induction hypothesis, we see that for $s>0$ and $\tilde{l}\geq L$ indeed
				\begin{equation}
					\lim_{N\to\infty}\E \left[\left|\bar{\Theta}^{(L),\Omega_N^{k+1}}(N^{\tilde{l}}s)
					-\Theta^{(L),\Omega_N^{k+1}}_x(N^{\tilde{l}}s)\right|\right]=0.
				\end{equation}
			\end{proof}
			
			\item 
			Show that the convergence in step 4 and step 5 actually holds along each subsequence. Therefore we obtain the limiting evolution of the single colonies, the auxiliary $1$-block process and the effective $2$-block process. This follows from the fact that the auxiliary $k$-estimator process converges to the same limit along every subsequence. Consequently, the same holds for the auxiliary $k-1$-estimator process. In this way we can traverse back through the levels to obtain that all $l$-estimator process converges as $N\to\infty$.
			
			Define, for $0\leq l\leq k$,
			\begin{equation}
				\mathfrak{s}^{k+1}_l=[0,1]\times[0,1]^{k+2-l}.
			\end{equation}
			We obtain, for $0\leq l\leq k-1$, 
			\begin{equation}
				\label{ll2}
				\begin{aligned}
					&\lim_{N\to\infty}\CL[({\bf\Theta}^{\aux,(l+1),\Omega_{N}^{k+1}}(\bar{t}))]]=\Gamma_{{\bf\Theta}^{(l+2)}}^{(l+1),\aux},\\
					&\Gamma_{{\bf\Theta}^{(l+2)}}^{(l+1),\aux}=\int_{\mathfrak{s}^{k+1}_{l+2}}\cdots\int_{\mathfrak{s}^{k+1}_{k}}
					\Gamma^{(k),\aux}_{\vt_k}(\d u_k)\cdots\Gamma^{(l+2),\aux}_{u_{l+3}}(\d u_{l+2}) \Gamma^{(l+1),\aux}_{u_{l+2}}.
				\end{aligned}
			\end{equation}
			Therefore by step 5
			\begin{equation}
				\label{1262}
				\begin{aligned} 
					\lim_{N\to\infty}\CL\left[\left({\bf\Theta}^{\aux,(l),\Omega_{N}^{k+1}}(\bar{t}+N^l_nt_l)\right)_{t_l > 0,\, i\in[N^{k+1-l}]}\right]
					=\CL\left[(z_{l,{\bf\Theta}^{(l+1)}}^\aux(t))_{t\geq 0}\right],
				\end{aligned}
			\end{equation}
			where ${\bf\Theta}^{(l+1)}=({\bf \Theta}^{(l+1)}_x,({\bf \Theta}_{y_m,l}^{(l+1)})_{m=l}^{k+1})\in\mathfrak{s}^{(k+1)}_l$ are random variables with law
			\begin{equation}
				\CL\left[{\bf\Theta}^{(l+1)}\right]=\Gamma_{{\bf\Theta}^{(l+2)}}^{(l+1),\aux}.
			\end{equation}
			The initial state of the limiting process in \eqref{1262} is given by
			\begin{equation}
				\begin{aligned}
					\CL\left[z_{l,{\bf\Theta}^{(l)}}^\aux(0)\right]&=\Gamma_{{\bf\Theta}^{(l+1)}}^{(l),\aux},\\
					\Gamma_{{\bf\Theta}^{(l+1)}}^{(l),\aux}&=\int_{\mathfrak{s}^{k+1}_{l+2}}\cdots
					\int_{\mathfrak{s}^{k+1}_{k}}\Gamma^{(k),\aux}_{\vt+k}(\d u_k)\cdots\Gamma^{(l+2),\aux}_{u_{l+2}}(\d u_{l+1}) 
					\Gamma^{(l+1),\aux}_{u_{l+1}}
				\end{aligned}
			\end{equation}
			and $(z_{l,{\bf\Theta}^{(l+1)}}^\aux(t))_{t\geq 0}$ is the process evolving according to \eqref{z11s} with $\theta,(y_{m,l})_{m=l+1}^{k+1}$ replaced by the random variables ${\bf \Theta}^{(l+1)}_x,({\bf \Theta}^{(l+1)}_{y_m})_{m=l+1}^{k+1}$. Recall that, by Lemma~\ref{lemflss}, we have, for $l+1\leq m\leq k+1$,
			\begin{equation}
				{\bf \Theta}^{(l+1)}_{y_m}={\bf \Theta}^{(m)}_{y_m}.
			\end{equation}
			\item
			Use the Meyer-Zheng topology to obtain Proposition~\ref{P.finitesysmfmulti}(b).
			
			\begin{proof}
				Note that Lemma~\ref{fllemav3} and Corollary~\ref{cor124} together imply
				\begin{equation}
					\begin{aligned}
						\lim_{N \to \infty}\E\left[\left|\bar{ \Theta}^{(l),\Omega_N^{k+1}}(N^lt_l)-\Theta_x^{(l),\Omega_N^{k+1}}(N^lt_l)\right|\right]&=0,\\
						\lim_{N \to \infty}\E\left[\left|\bar{ \Theta}^{(l),\Omega_N^{k+1}}(N^lt_l)-\Theta_{y_m}^{(l),
							\Omega_N^{k+1}}(N^lt_l)\right|\right]&=0,\text{ for }0\leq m\leq l-1.
					\end{aligned}
				\end{equation}
				Combining the result obtained in step 6 with the proof strategy followed in Section~\ref{ss.11mz}, we get the claim.
			\end{proof}
			
			\item 
			Finally, we prove Proposition~\ref{P.finitesysmfmulti}(a). 
			
			\begin{proof}
				Step 6 and step 7 yield the laws of the components $\CL[M_l^k]$ of the interaction chain $(M_{-l}^{k})_{-l=-(k+1)}^{0}$. Note that the state space $([0,1]\times[0,1]^{k+2})^{k+2}$ is compact, and therefore the sequence of random variables
				\begin{equation}
					\left(\Bigg({\bf \Theta}^{(l),\Omega_N^{k+1}}\left(\bar{t}\right)\Bigg)_{l=k+1,k,\ldots,0}\right)_{N\in\N}
				\end{equation}
				is tight. For any
				\begin{equation}
					\label{1267}
					\begin{aligned}
						&f\colon\,([0,1]\times[0,1]^{k+2})^{n+2}\to \R,\\
						&f(x)=\prod_{i=1}^nf_i(x_i),\\
						&f_i\in\CC_b([0,1]\to \R),
					\end{aligned}
				\end{equation} 
				we can use conditioning on the previous block average to obtain
				\begin{equation}
					\lim_{N \to \infty}\E\left[f\left(\Bigg({\bf \Theta}^{(l),\Omega_N^{k+1}}\left(\bar{t}\right)\Bigg)_{l=k+1,k,\ldots,0}\right)\right]
					=\E\left[f\left((M_{-l}^{k})_{-l=-(k+1)}^{0}\right)\right].
				\end{equation}
				Using that the set of functions of the form \eqref{1267} is separating, we obtain the claim.
			\end{proof}
			
		\end{enumerate} 
		
		\subsection{Proof: of the hierarchical multi-scale limit theorems.}
		\label{ss.pmslt}
		
		In this section we prove Theorems~\ref{T.multiscalehiereff} and \ref{T.multiscalehier}. We start by proving Theorem~\ref{T.multiscalehier}. Theorem~\ref{T.multiscalehiereff} will follow from Theorem~\ref{T.multiscalehier} by projection onto the effective components.
		
		\paragraph{Proof of Theorem~\ref{T.multiscalehiereff}}
		\begin{proof}
			Recall the estimators in \eqref{tam6alt}. Like for the finite-level hierarchical mean-field system, we can define the auxiliary estimator process by
			\begin{equation}
				\boldsymbol{\Theta}^{(l),\aux,\Omega_N}(t)
				=\big(\bar{\Theta}^{(l) ,\Omega_N}(t),\big(\Theta^{(l) ,\Omega_N}_{y_m}(t)\big)_{m=l}^{\infty}\big).
			\end{equation}
			For $l,k\in\N$ the processes $({\bf\Theta}^{(l) ,\aux, \Omega_N}(\bar{t}+N^kt))_{t>0}$ evolve according to (recall, \eqref{464}) 
			\begin{equation}
				\label{464pr}
				\begin{aligned}
					\d \bar{\Theta}^{(l) ,\Omega_N}(N^kt)&=E_{l}\sum_{n=l+1}^\infty\frac{c_{n-1}}{N^{n-1-k}}
					\left[\Theta_x^{(n),\Omega_N}(N^kt)-\Theta_x^{(l),\Omega_N}(N^kt)\right]\,\d t\\
					&\qquad+E_l\sqrt{\frac{N^k}{N^{2l}}\sum_{\xi\in B_l}g\big(x_\xi(N^kt)\big)}\,\d w(t)\\
					&\qquad+E_l\sum_{m=l}^\infty \frac{K_me_m}{N^{m-k}}
					\left[\Theta^{(l) ,\Omega_N}_{y_m}(N^{k}t)-\Theta^{(l) ,\Omega_N}_{x}(N^{k}t)\right]\,\d t,\\[0.2cm]
					\d \Theta^{(l) ,\Omega_N}_{y_m}(N^{k}t)&=\frac{e_m}{N^{m-k}}
					\left[\Theta^{(l) ,\Omega_N}_{x}(N^{k}t)-\Theta^{(l) ,\Omega_N}_{y_m}(N^{k}t)\right]\,\d t, \qquad l\leq m\leq \infty.
				\end{aligned}
			\end{equation}
			Therefore, for $l>k$ and all $\epsilon>0$,
			\begin{equation}
				\label{1272}
				\begin{aligned}
					&\P\Bigg[\sup_{0\leq t\leq L(N)}\Bigg|\bar{\Theta}^{(l),\Omega_N}(\bar{t})-\bar{\Theta}^{(l),
						\Omega_N}(\bar{t}+N^kt)\Bigg|>\epsilon\Bigg]\\
					&=\P\Bigg[\sup_{0 \leq t\leq L(N)}E_l\Bigg|\int_{\bar{t}}^{\bar{t}+N^kt} \d r\,\sum_{n=l+1}^\infty\frac{c_{n-1}}{N^{n-1}}
					\left[\Theta_x^{(n),\Omega_N}(r)-\Theta_x^{(l),\Omega_N}(r)\right]\\
					&\qquad\qquad\qquad+\int_{\bar{t}}^{\bar{t}+N^kt}\d r\,\sum_{m=l}^\infty \frac{K_me_m}{N^{m}}
					\left[\Theta^{(l) ,\Omega_N}_{y_m}(r)-\Theta^{(l) ,\Omega_N}_{x}(r)\right]\\
					&\qquad\qquad\qquad+\int_{\bar{t}}^{\bar{t}+N^kt} \d w_i(r)\,\sqrt{\frac{1}{N^{2l}}\sum_{\xi\in B_l}g
						\big(x_\xi(r)\big)}\,\Bigg|>\epsilon\Bigg]\\
					&\leq\P\Bigg[\sup_{0\leq t\leq L(N)}\,E_l\Bigg|\int_{\bar{t}}^{\bar{t}+N^kt} \d w_i(r)\,\sqrt{\frac{1}{N^{2l}}
						\sum_{\xi\in B_l}g\big(x_\xi(r)\big)}\,\,\Bigg|\\
					&\qquad\qquad > \epsilon-t\left[\sum_{n=l+1}^\infty\frac{c_{n-1}}{N^{n-1-k}}+\sum_{m=l}^\infty 
					\frac{K_me_m}{N^{m-k}}\right]\Bigg].
				\end{aligned}
			\end{equation} 
			Note that, since $l>k$,
			\begin{equation}
				\lim_{N \to \infty}t\left[\sum_{n=l+1}^\infty\frac{c_{n-1}}{N^{n-1-k}}+\sum_{m=l}^\infty \frac{K_me_m}{N^{m-k}}\right]=0.
			\end{equation}
			Hence, like in the proof of Lemma~\ref{lemsta2}, we can use an optional stopping argument to obtain
			\begin{eqnarray}
				\lim_{N \to \infty}\sup_{0\leq t\leq L(N)}\Bigg|\bar{\Theta}^{(l),\Omega_N}(\bar{t})-\bar{\Theta}^{(l),
					\Omega_N}(\bar{t}+N^kt)\Bigg|=0\qquad\text{ in probability}.
			\end{eqnarray} 
			Using a similar computation as in \eqref{1272}, we can show
			\begin{eqnarray}
				\lim_{N \to \infty}\sup_{0\leq t\leq L(N)}\Bigg|\bar{\Theta}_{y_m}^{(l),\Omega_N}(\bar{t})-{\Theta}_{y_m}^{(l),
					\Omega_N}(\bar{t}+N^kt)\Bigg|=0\qquad\text{ in probability}.
			\end{eqnarray} 
			Hence we obtain that, on time scale $N^k$ as $N\to\infty$, the process $({\bf\Theta}^{(l) ,\aux, \Omega_N}(\bar{t}+N^kt))_{t>0}$ does not evolve and therefore is still in its initial state$({\bf\Theta}^{(l) ,\aux, \Omega_N}(\bar{t}))$.
			
			Using that the $l$-auxiliary estimator processes do not move for $l>k$, they function like the ``outside world" for the finite-level mean-field system in Section~\ref{ss.multilevelmf}. Therefore we can proceed as in the proof of Proposition~\ref{P.finitesysmfmulti}
			to prove the second and third line in \eqref{multiconvalt} in Theorem~\ref{T.multiscalehiereff}. The $l$-block estimator process $({\bf\Theta}^{(l),\aux ,\Omega_N}(\bar{t}+N^lt))_{t>0}$ evolves according to \eqref{464pr} with $l=k$. Note that the extra interactions due to migration over larger blocks $l>k$ and exchange with deeper seed-banks $m>k$ in \eqref{463b} are of order $\CO(1/N)$. Therefore these terms vanish as $N\to\infty$, and we can just proceed as in the scheme of Section~\ref{ss.flmfss}, to obtain the second and third line in \eqref{multiconvalt} in Theorem~\ref{T.multiscalehiereff}. Using these results, we obtain that, for $l>k$, 
			\begin{equation}
				{\bf\Theta}^{(l),\aux ,\Omega_N}(\bar{t})=\delta_{M^k_{-(k+1)}}.
			\end{equation} 
		\end{proof}

		\section{Proofs: $N\to\infty$, orbit of renormalisation transformation}
		\label{s.renormasym}
		
		In this section we analyse the orbit of the renormalisation transformation and show that it has the Fisher-Wright diffusion as a global attractor. In Section~\ref{ss.momrels} we write down moment relations for the equilibrium defined in \eqref{z11a} for single colonies (Proposition~\ref{P.momrel}) and for block averages (Proposition~\ref{P.momrelalt}). In Section~\ref{ss.momrelsit} we derive the iterates of these moment relations for single colonies (Proposition~\ref{P.momreliterate}) and for blocks (Proposition~\ref{P.momreliterate2}). In Section~\ref{ss.clustering} we prove clustering (Propositions~\ref{limQ}--\ref{limQ2}). In Section~\ref{ss.orbit} we prove Theorems~\ref{T.scalren} and \ref{T.dichotomy}, and work out the dichotomy of a finite seed-bank ($\rho<\infty$) versus infinite seed-bank ($\rho=\infty$).
		
		\subsection{Moment relations}
		\label{ss.momrels}
		
		We use It\^o-calculus to compute the mixed moments. Recall $\theta_x, (\theta_{y_m})$ as defined in \eqref{initialvalue}, $\vt_k$ as defined in \eqref{deftheta} and $\bar{\vt}^{(k)}$ \eqref{seqtheta}.
		Also recall $E_k$ as defined in \eqref{Ekdef}. Abbreviate
		\begin{equation} 
			\label{modefA0}
			A_0^n = \frac{1}{2}\sum_{k=0}^{n}\frac{E_k}{c_k}
			\frac{(E_kc_k+e_k)}{(E_kc_k+e_k)+E_k K_ke_k }, \qquad n\in\N,
		\end{equation}
		and
		\begin{equation}
			B_0 = \frac{1}{2}\frac{E_0^2}{(E_0c_0+e_0)+E_0K_0e_0}.
		\end{equation}
		
		In the following proposition, the first five equations are first and second moment relations, while the last equation is the definition of the renormalisation transformation. Later we will see that this set of equations can be iterated.   
		
		\begin{proposition}{\bf [Moment relations: single colonies]}
			\label{P.momrel}
			$\mbox{}$\\
			Let $\vt_0$ be as defined in \eqref{deftheta}, and let $\Gamma_{(\vt_0,y_l)}^{(0)}=\Gamma_{(\vt_0,y_l)}^{g,c_0,E_0,K_0,e_0}$ be the equilibrium of \eqref{z11a} measure defined in \eqref{singcoleq322alt} with $k=0$, with $g\in\CG$, $c_0 \in (0,\infty)$, $E_0\in[0,1]$ and $K_0,e_0 \in(0,\infty)$. Then the following moment relations hold: 
			\small
			\begin{eqnarray}
				\label{e590a0}
				\int_{[0,1]\times[0,1]^{\N_0}} x_0 \; \Gamma_{(\vt_0,y_l)}^{g,E_0,c_0,K_0,e_0}(\d z_0)  
				&=&  \vt_0, \\
				\label{e590a'0}
				\int_{[0,1]\times[0,1]^{\N_0}} y_{0,0} \; \Gamma_{(\vt_0,y_l)}^{g,E_0,c_0,K_0,e_0}(\d z_0) 
				&=&  \vt_0, \\
				\label{e590b0}
				\int_{[0,1]\times[0,1]^{\N_0}} x_0y_{0,0} \; \Gamma_{(\vt_0,y_l)}^{g,E_0,c_0,K_0,e_0}(\d z_0) 
				&=& \int_{[0,1]\times[0,1]^{\N_0}} y_{0,0}^2 \; \Gamma_{(\vt_0,y_l)}^{g,E_0,c_0, K_0,e_0}(\d z_0),\\
				\label{e590d0} 
				\int_{[0,1]\times[0,1]^{\N_0}} x_0^2 \; \Gamma_{(\vt_0,y_l)}^{g,E_0,c_0,K_0,e_0}(\d z_0) 
				&=&  \vt_0^2+ A_0^0(\CF g)(\vt_0),\label{e590c0} \\
				\int_{[0,1]\times[0,1]^{\N_0}} y_{0,0}^2 \; \Gamma_{(\vt_0,y_l)}^{g,E_0,c_0,K_0,e_0}(\d z_0)  
				&=&  \vt_0^2 + (A_0^0-B_{0})(\CF g)( \vt_0),\\
				\label{e590e0}
				\int_{[0,1]\times[0,1]^{\N_0}} g(x_0) \; \Gamma_{(\vt_0,y_l)}^{g,E_0,c_0,K_0,e_0}(\d z_0)  
				&=& (\CF g)( \vt_0).
			\end{eqnarray}
			\normalsize
		\end{proposition}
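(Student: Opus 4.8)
The plan is to derive all six relations from It\^o-calculus applied to the SSDE \eqref{z11a} with $k=0$, which reads
\begin{equation*}
\begin{aligned}
&\d x_0(t) = E_0\Big[c_0[\vt_0-x_0(t)]\,\d t + \sqrt{(\CF^{(0)}g)(x_0(t))}\,\d w(t) + K_0e_0[y_{0,0}(t)-x_0(t)]\,\d t\Big],\\
&\d y_{0,0}(t) = e_0[x_0(t)-y_{0,0}(t)]\,\d t,
\end{aligned}
\end{equation*}
together with the facts, established in Section~\ref{ss.IntroMeanfield} (Proposition~\ref{P.equergod} and its proof, adapted to the extra slowing-down factor $E_0$), that $\CL[z_0(t)]$ converges as $t\to\infty$ to the unique equilibrium $\Gamma^{(0)}_{(\vt_0,y_l)}$, and that $\vt_0$ is the conserved quantity $\frac{x_0+K_0y_{0,0}}{1+K_0}$ in expectation. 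The first relation \eqref{e590a0} and \eqref{e590a'0} are immediate: taking expectations in the SSDE gives a closed linear $2\times 2$ system for $(\E[x_0(t)],\E[y_{0,0}(t)])$ whose unique fixed point, given the conserved quantity equals $\vt_0$, is $(\vt_0,\vt_0)$; letting $t\to\infty$ yields the stationary values. Relation \eqref{e590e0} is the definition \eqref{renor} of $\CF=\CF^{E_0,c_0,K_0,e_0}$ specialized to level $0$, so nothing is to be proved there beyond noting $(\CF g)(\vt_0)=\int g(x_0)\,\Gamma^{(0)}_{(\vt_0,y_l)}(\d z_0)$.

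For the second-moment relations I would apply It\^o to $x_0^2$, $y_{0,0}^2$ and $x_0y_{0,0}$, take expectations, and pass to the stationary limit where all time derivatives vanish. Writing $u=\lim_t\E[x_0^2(t)]$, $v=\lim_t\E[y_{0,0}^2(t)]$, $w=\lim_t\E[x_0y_{0,0}(t)]$, and $m:=\lim_t\E[(\CF^{(0)}g)(x_0(t))]=(\CF g)(\vt_0)$ (using \eqref{e590e0} and $g=\CF^{(0)}g$ here with $\CF^{(0)}=\mathrm{id}$), stationarity of $\E[x_0 y_{0,0}]$ gives, after using \eqref{e590a0}--\eqref{e590a'0} to handle the drift-towards-$\vt_0$ terms, the identity $v=w$, which is \eqref{e590b0}. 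Stationarity of $\E[y_{0,0}^2]$ gives a relation of the form $e_0(w-v)=0$ — consistent but not new — so the content comes from combining the $x_0^2$ and $y_{0,0}^2$ equations: stationarity of $\E[x_0^2]$ yields $E_0 c_0 (u-\vt_0^2) + E_0K_0e_0(w-u) = \tfrac12 E_0^2 m$ (the $\tfrac12 E_0^2 m$ being the diffusion contribution $\tfrac12\cdot (E_0)^2\cdot\E[(\CF^{(0)}g)(x_0)]$), and stationarity of $\E[y_{0,0}^2]$ yields $e_0(w-v)=0$ hence $w=v$; one more relation comes from differentiating $\E[x_0 y_{0,0}]$, which gives $E_0 c_0(w-\vt_0^2) + E_0 K_0 e_0(v-w) + e_0(u-w)=0$. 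Solving this linear system for $u-\vt_0^2$ and $v-\vt_0^2$ in terms of $m$ should produce exactly $u-\vt_0^2 = A_0^0 m$ and $v-\vt_0^2 = (A_0^0-B_0)m$ with $A_0^0$ and $B_0$ as defined; I would then just verify the algebra matches \eqref{modefA0} and the displayed $B_0$.

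The one genuinely non-routine point is justifying the interchange of limits and the vanishing of boundary terms: one must argue that $\tfrac{\d}{\d t}\E[\phi(z_0(t))]\to 0$ as $t\to\infty$ for $\phi\in\{x_0^2,y_{0,0}^2,x_0y_{0,0}\}$, i.e.\ that the stationary identities really are obtained by ``setting derivatives to zero.'' This follows because $\E[\phi(z_0(t))]\to\int\phi\,\d\Gamma^{(0)}_{(\vt_0,y_l)}$ (convergence to equilibrium, Proposition~\ref{P.equergod}) and the right-hand sides of the moment ODEs are continuous bounded functions of the first and second moments and of $\E[(\CF^{(0)}g)(x_0(t))]$, all of which converge; alternatively, and more cleanly, one integrates the moment ODE from $0$ to $T$, divides by $T$, and uses that $\tfrac1T(\E[\phi(z_0(T))]-\E[\phi(z_0(0))])\to 0$ while the time-averaged right-hand side converges to its stationary value by bounded convergence. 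I expect the main obstacle to be purely bookkeeping: keeping the factors of $E_0$ in the right places (the drift and exchange terms carry $E_0$, the diffusion coefficient carries $E_0^2$ since $\d x_0$ has $E_0\sqrt{(\CF^{(0)}g)(x_0)}\,\d w$, and the quadratic variation therefore contributes $E_0^2(\CF^{(0)}g)(x_0)$) so that the resulting rational expressions collapse to precisely $A_0^0$ and $B_0$. Since $\CF^{(0)}=\mathrm{id}$ and $\CF^{(1)}g=\CF g$, one also has to be careful that \eqref{e590e0} is stated with $g$ on the left and $\CF g$ on the right, matching the convention in \eqref{frenormit}.
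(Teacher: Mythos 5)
Your approach — It\^o-calculus on first and second moments, take expectations, invoke stationarity to kill the time derivatives, then solve the resulting linear $3\times3$ system — is exactly the paper's, and your handling of the limit interchange via time averaging is in fact slightly more careful than the paper, which simply starts the process in equilibrium so that the moment maps are literally constant in $t$. Tracking $E_0^2$ in the quadratic variation is also correct.

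However, the two stationarity equations you display both carry sign errors, and as written the system is inconsistent. Working from
\begin{equation*}
\d x_0 = E_0 c_0(\vt_0-x_0)\,\d t + E_0\sqrt{g(x_0)}\,\d w + E_0 K_0 e_0(y_{0,0}-x_0)\,\d t,\qquad \d y_{0,0}=e_0(x_0-y_{0,0})\,\d t,
\end{equation*}
stationarity of $\E[x_0^2]$ gives $E_0 c_0(u-\vt_0^2) + E_0 K_0 e_0(u-w) = \tfrac12 E_0^2 m$, with $(u-w)$, not $(w-u)$: the drift term $2x_0\cdot E_0 K_0 e_0(y_{0,0}-x_0)$ contributes $+2E_0K_0e_0(w-u)$ to $\tfrac{\d}{\d t}\E[x_0^2]$, and moving it to the side of $m$ flips the sign. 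Likewise stationarity of $\E[x_0 y_{0,0}]$ gives $E_0 c_0(\vt_0^2-w) + E_0 K_0 e_0(v-w) + e_0(u-w)=0$, with $(\vt_0^2-w)$, not $(w-\vt_0^2)$. With your signs and $v=w$ you would be forced to conclude that $w-\vt_0^2$ and $u-w$ have opposite signs, contradicting what the system should give. With the corrected signs, using $v=w$ one gets $w-\vt_0^2=\tfrac{e_0}{E_0c_0+e_0}(u-\vt_0^2)$, and substitution into the $\E[x_0^2]$ equation yields $u-\vt_0^2=\tfrac{E_0(E_0c_0+e_0)}{2c_0(E_0c_0+e_0+E_0K_0e_0)}\,m = A_0^0 m$ and hence $v-\vt_0^2 = \tfrac{E_0 e_0}{2c_0(E_0c_0+e_0+E_0K_0e_0)}\,m=(A_0^0-B_0)m$, matching the claim.
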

		
		\begin{proof} 
			For ease of notation we write $x,y_0$ instead of $x_0,y_{0,0}$ for the single colonies. We use It\^o's formula to compute the first and second moments, and invoke the equilibrium condition to get the above formulas, except for the last formula, which is the definition of $\CF$ in \eqref{renor}. 
			
			\medskip\noindent
			{\bf 1.}	
			We begin with the first moments of $x$ and $y_0$. For $k=0$, \eqref{z11a} becomes
			\begin{eqnarray}
				\label{mocs1}
				\d x(t) &=& E_0\left[c_0\big[\vt_0 - x(t)\big]\, \d t + \sqrt{g(x(t))}\, \d w_0(t) 
				+ K_0e_0\,[y_0(t)-x(t)]\,\d t\right],\\
				\d y_0(t) &=& e_0\,[x(t)-y_0(t)]\, \d t,\label{mocs2}\\
				\d y_m(t)&=&0. \nonumber
			\end{eqnarray}
			In equilibrium the distribution of $x(t)$ is constant in time, and so $\frac{\d}{\d t}\mathbb{E}[x(t)]=0$, where $\mathbb{E}$ denotes expectation w.r.t.\ $\Gamma_\theta^{g,c_0,E_0,K_0,e_0}$. Integrating \eqref{mocs1} and taking the expectation, we get
			\begin{eqnarray}
				\mathbb{E}[x(t)-x(0)] 
				\label{ma}&=&E_0\left[\mathbb{E}\left[\int_{0}^{t} \d s\,c_0\big[\vt_0-x(s)\big]
				+K_0e_0 \int_{0}^{t} \d s\,[y_0(s)-x(s)]\right]\right],
				\nonumber \\
				& = &E_0 \left\{ \int_{0}^{t} \d s\,\mathbb{E}\Big[c_0\big[\vt_0-x(s)\big]
				+K_0e_0 \big[y_0(s)-x(s)\big]\Big]\right\},
				\label{mc}
			\end{eqnarray}
			where in the second equation we use Fubini. Turning back to differential notation, we see from
			\eqref{mc} that
			\begin{equation}
				\frac{\d}{\d t}\,\mathbb{E}[x(0)] = 0 = E_0\left\{\mathbb{E}\Big[c_0\big[\vt_0-x(t)\big]
				+K_0e_0\big[y_0(t)-x(t)\big]\Big]\right\},
				\label{md}
			\end{equation} 
			and it follows that 
			\begin{equation}
				\mathbb{E}\Big[c_0\big[\vt_0-x\big]+K_0e_0\big[y_0-x\big]\Big]=0.
				\label{m2}
			\end{equation}
			In the same way it follows from \eqref{mocs2} that
			\begin{equation}
				\mathbb{E}\Big[e_0 \big[x-y_0\big]\Big]=0.
				\label{m1}
			\end{equation}
			Therefore we obtain from \eqref{m2}--\eqref{m1} that
			\begin{equation}
				\mathbb{E}[x]=\mathbb{E}[y_0]=\vt_0.
				\label{m3*}
			\end{equation}
			
			\medskip\noindent
			{\bf 2.}
			We next compute the second moments. By It\^o's formula,
			\begin{align}
				& \d (x(t))^2 = 2x(t)\,\d x(t) +(\d x(t))^2 \\ \nonumber  
				&= 2c_0 x(t)E_0 [\vt_0-x(t)]\,\d t + 2x(t)E_0\sqrt{g(x(t))}\,\d w_0(t)\\ \nonumber
				&\qquad +E_0 \big[2K_0e_0\,x(t)y_0(t)-2K_0e_0\, x^2_t\big]\,\d t +E_0^2g(x(t))\,\d t.
			\end{align}
			Taking expectations and using that we are in equilibrium, we get
			\begin{eqnarray}
				0=2c_0\vt_0^2-2c_0\mathbb{E}[x^2]+2K_0e_0 \mathbb{E}[x y_0]
				-2K_0e_0 \mathbb{E}[x^2]+E_0\mathbb{E}[g(x)].
			\end{eqnarray}
			Using $\mathbb{E}[g(x)]=(\CF g)( \vt_0)$, we find
			\begin{equation}
				\label{xeq50}
				\mathbb{E}[x^2] = \frac{c_0}{(c_0+K_0e_0 )} \vt_0^2+\frac{K_0e_0 }{(c_0+K_0e_0 )}
				\mathbb{E}[xy_0]+\frac{E_0}{2(c_0+K_0e_0 )}(\CF g) ( \vt_0).
			\end{equation}
			In the same way we find
			\begin{equation}
				\label{e595*}
				\mathbb{E}[y_0^2] = \mathbb{E}[xy_0],
			\end{equation}	
			and for the mixed second moment
			\begin{equation}
				\label{e593*}
				\mathbb{E}[xy_0] = \frac{E_0c_0}{(E_0c_0+e_0)}  \vt_0^2 
				+ \frac{e_0}{(E_0c_0+e_0)} \mathbb{E}[x^2].
			\end{equation}
			Substituting \eqref{e593*} into \eqref{xeq50}, we find $\E[x^2]$ and hence also $\E[y_0^2]$  and $\E[x_0y_0]$. This finishes the proof of Proposition~\ref{P.momrel}.
		\end{proof}
		
		Similar moment relations can be derived for the equilibrium measures of the block averages. Define
		\begin{equation} 
			\label{modefA0alt}
			A_{m}^{n} =\frac{1}{2}\sum_{k=m}^{n}\frac{E_k}{c_k}
			\frac{(E_kc_k+e_k)}{(E_kc_k+e_k)+E_k K_k e_k}, \qquad m\in\N_0,\,n\in\N,
		\end{equation}
		and
		\begin{equation} 
			\label{modefB0alt}
			B_{m} =\frac{1}{2} \frac{E_m^2}{(E_mc_m+e_m)+E_m K_m e_m}, \qquad m\in\N_0.
		\end{equation}
		Recall the definition of $\CF^{(n)}$ in \eqref{frenormit}.
		
		\begin{proposition}{\bf [Moment relations: blocks]}
			\label{P.momrelalt}
			$\mbox{}$\\
			Let $\vt_m$ be as defined in \eqref{deftheta}, and let $\Gamma_{(\vt_m,y_m)}^{(m)}=\Gamma_{(\vt_m,y_m)}^{\CF^{(m)}g,c_m,E_m,K_m,e_m}$ be the equilibrium measure of \eqref{z11a} with $k=m$, with $g\in\CG$, $c_0 \in (0,\infty)$, $E_0\in[0,1]$ and $K_0,e_0 \in(0,\infty)$. Then the following moment relations hold:
			\small
			\begin{eqnarray}
				\label{e590a0alt}
				\int_{[0,1]\times[0,1]^{\N_0}} x_m \; \Gamma_{(\vt_m,y_m)}^{\CF^{(m)}g,E_m,c_m,K_m,e_m}(\d z_m)  
				&=& \vt_m, \\
				\label{e590a'0alt}
				\int_{[0,1]\times[0,1]^{\N_0}} y_{m,m} \; \Gamma_{(\vt_m,y_m)}^{\CF^{(m)}g,E_m,c_m,K_m,e_m}(\d z_m)
				&=& \vt_m, \\
				\label{e590b0alt}
				\int_{[0,1]\times[0,1]^{\N_0}} x_my_{m,m} \; \Gamma_{(\vt_m,y_m)}^{\CF^{(m)}g,E_m,c_m,K_m,e_m}(\d z_m)
				&& \\ \nonumber
				= \int_{[0,1]\times[0,1]^{\N_0}} y_{m,m}^2\; \Gamma_{(\vt_m,y_m)}^{\CF^{(m)}g,E_m,c_m,K_m,e_m}(\d z_m),\\
				\label{e590d0alt} 
				\int_{[0,1]\times[0,1]^{\N_0}} x_m^2 \; \Gamma_{(\vt_m,y_m)}^{\CF^{(m)}g,E_m,c_m,K_m,e_m}(\d z_m) 
				&=&  \vt_m^2+ A_m^m(\CF^{(m+1)} g)(\vt_m),\label{e590c0alt} \\
				\int_{[0,1]\times[0,1]^{\N_0}} y_{m,m}^2 \; \Gamma_{(\vt_m,y_m)}^{\CF^{(m)}g,E_m,c_m,K_m,e_m}(\d z_m) 
				&=&  \vt_m^2 + (A_m^m-B_m)(\CF^{(m+1)}g)( \vt_m),\\
				\label{e590e0alt}
				\int_{[0,1]\times[0,1]^{\N_0}} (\CF^{(m)}g)(x_m) \; \d\Gamma_{(\vt_m,y_m)}^{\CF^{(m)}g,E_m,c_m,K_m,e_m}(\d z_m)  
				&=& (\CF^{(m+1)} g)( \vt_m).
			\end{eqnarray}
			\normalsize
		\end{proposition}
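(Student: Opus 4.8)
\textbf{Proof plan for Proposition~\ref{P.momrelalt}.}
The strategy is to reduce the block-level moment relations to the single-colony moment relations already established in Proposition~\ref{P.momrel}, by exploiting the structural self-similarity between the equations \eqref{z11a} for level $m$ and the equations \eqref{mocs1}--\eqref{mocs2} for level $0$. Concretely, the SSDE \eqref{z11a} with index $l=m$ reads
\begin{equation}
	\begin{aligned}
		\d x_m(t) &= E_m\Big[c_m[\vartheta_m-x_m(t)]\,\d t + \sqrt{(\CF^{(m)}g)(x_m(t))}\,\d w(t) + K_me_m[y_{m,m}(t)-x_m(t)]\,\d t\Big],\\
		\d y_{m,m}(t) &= e_m[x_m(t)-y_{m,m}(t)]\,\d t,
	\end{aligned}
\end{equation}
while the remaining components $y_{l,m}$ with $l<m$ are slaved to $x_m$ and those with $l>m$ are frozen, so they do not enter the It\^o computation for the moments of $(x_m,y_{m,m})$. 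This is formally identical to \eqref{mocs1}--\eqref{mocs2} under the substitution $c_0\leadsto c_m$, $E_0\leadsto E_m$, $K_0\leadsto K_m$, $e_0\leadsto e_m$, $g\leadsto\CF^{(m)}g$, $\vartheta_0\leadsto\vartheta_m$, and $\CF g = \CF^{E_0,c_0,K_0,e_0}g \leadsto \CF^{E_m,c_m,K_m,e_m}(\CF^{(m)}g) = \CF^{(m+1)}g$, the last identity being exactly the definition of the iterated renormalisation map in \eqref{frenormit}.

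First I would record that $\Gamma^{(m)}_{(\vartheta_m,y_m)}$ is a genuine equilibrium for this SSDE (existence and convergence being supplied by the results announced for \eqref{singcoleq322alt}, proved in Section~\ref{ss.flmfss}), and note that by \cite{YW71} the solution is unique so the stationary one-point moments are well defined. Then I would run the same It\^o-calculus argument as in the proof of Proposition~\ref{P.momrel}: stationarity forces $\frac{\d}{\d t}\E[x_m(t)]=\frac{\d}{\d t}\E[y_{m,m}(t)]=0$, which yields \eqref{e590a0alt} and \eqref{e590a'0alt} from the drift-balance equations $\E[c_m(\vartheta_m-x_m)+K_me_m(y_{m,m}-x_m)]=0$ and $\E[e_m(x_m-y_{m,m})]=0$. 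Applying It\^o to $x_m^2$, $y_{m,m}^2$ and $x_my_{m,m}$, taking expectations in equilibrium, and using $\E[(\CF^{(m)}g)(x_m)]=(\CF^{(m+1)}g)(\vartheta_m)$ (which is \eqref{e590e0alt}, itself just the definition \eqref{renor} of the renormalisation transformation applied to $\CF^{(m)}g$ at density $\vartheta_m$), I would obtain the linear system
\begin{equation}
	\begin{aligned}
		\E[x_m^2] &= \tfrac{c_m}{c_m+K_me_m}\vartheta_m^2 + \tfrac{K_me_m}{c_m+K_me_m}\E[x_my_{m,m}] + \tfrac{E_m}{2(c_m+K_me_m)}(\CF^{(m+1)}g)(\vartheta_m),\\
		\E[y_{m,m}^2] &= \E[x_my_{m,m}],\\
		\E[x_my_{m,m}] &= \tfrac{E_mc_m}{E_mc_m+e_m}\vartheta_m^2 + \tfrac{e_m}{E_mc_m+e_m}\E[x_m^2],
	\end{aligned}
\end{equation}
whose solution gives \eqref{e590b0alt}, \eqref{e590d0alt}, \eqref{e590c0alt} with the coefficients $A_m^m$ and $A_m^m-B_m$ as defined in \eqref{modefA0alt}--\eqref{modefB0alt} (one checks that the algebra producing $A_0^0,B_0$ in the proof of Proposition~\ref{P.momrel} is reproduced verbatim with the shifted parameters, since $A_m^m$ is precisely the single-summand version of \eqref{modefA0alt}).

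The only genuine point requiring care — and hence the main obstacle — is justifying that the components $y_{l,m}$ with $l\neq m$ truly decouple from the moment computation and that the identification $\CF^{E_m,c_m,K_m,e_m}\circ\CF^{(m)} = \CF^{(m+1)}$ is the correct one at the level of the equilibrium measures. For $l<m$ the relation $y_{l,m}(t)=x_m(t)$ is an algebraic constraint in \eqref{z11a}, so these variables contribute nothing new; for $l>m$ the variables are constant, hence also irrelevant to the dynamics of $(x_m,y_{m,m})$. Thus the marginal of $\Gamma^{(m)}_{(\vartheta_m,y_m)}$ on the $(x_m,y_{m,m})$-coordinates is the unique equilibrium of the two-dimensional system above, and \eqref{renor} applied to the diffusion function $\CF^{(m)}g$ on level $m$ returns exactly $(\CF^{E_m,c_m,K_m,e_m}(\CF^{(m)}g))(\theta)=\int (\CF^{(m)}g)(x)\,\Gamma^{\eff,(m)}_\theta(\d x)$, which by \eqref{frenormit} equals $(\CF^{(m+1)}g)(\theta)$. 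Once this bookkeeping is in place, the six displayed identities follow by the same three-line computations as at level $0$, and the proof is complete.
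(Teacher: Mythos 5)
Your proposal is correct and takes essentially the same approach as the paper, which simply states that the proof follows the same line of argument as Proposition~\ref{P.momrel} with the substitutions $c_0\leadsto c_m$, $E_0\leadsto E_m$, $K_0\leadsto K_m$, $e_0\leadsto e_m$, $g\leadsto\CF^{(m)}g$, $\vartheta_0\leadsto\vartheta_m$. You have supplied the bookkeeping details (the structural identity of the SSDEs, the decoupling of the slaved and frozen components, and the identification $\CF^{E_m,c_m,K_m,e_m}\circ\CF^{(m)}=\CF^{(m+1)}$) that the paper leaves implicit, and the resulting linear system and coefficients $A_m^m$, $B_m$ match.
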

		
		\begin{proof}
			The proof follows the same line of argument as the proof of Proposition~\ref{P.momrel}.  
		\end{proof}

		\subsection{Iterate moment relations}
		\label{ss.momrelsit}
		Recall the kernels defined in \eqref{ker}, the iterates of the kernels defined in \eqref{conn2} and $\bar{\vt}^{(n)}$.
		Recall that $Q^{(n)}(\bar{\vt}^{(n)},\d z_0)$ is the probability density to see the population of a single colony in state $z_0$ given that the $(n+1)$-block averages equal $\bar{\vt}^{(n)}$.
		
		\begin{proposition}\label{P.momreliterate}{{\bf [Iterated moment relations: single components]}}
			For $n\in\N_0$,
			\small
			\begin{eqnarray}
				\label{e590a*}
				\int_{[0,1]\times[0,1]^{\N_0}} x_0 \; Q^{(n)}(\bar{\vt}^{(n)},\d z_0)
				&=&  \vartheta_n, \\
				\label{e590a'*}
				\int_{[0,1]\times[0,1]^{\N_0}} y_{0,0} \; Q^{(n)}(\bar{\vt}^{(n)},\d z_0)
				&=&  \vartheta_n, \\
				\label{e590b*}
				\int_{[0,1]\times[0,1]^{\N_0}} x_0^2 \; Q^{(n)}(\bar{\vt}^{(n)},\d z_0)
				&=&  \;  \vartheta_n^2 + A_0^n \; (\CF^{(n+1)} g)(\vartheta_n), \\
				\label{e590c*} 
				\int_{[0,1]\times[0,1]^{\N_0}} y_{0,0}^2 \; Q^{(n)}(\bar{\vt}^{(n)},\d z_0) 
				&=&  \;  \vartheta_n^2 +  (A_0^n-B_0)(\CF^{(n+1)} g)(\vartheta_n), \\
				\label{e590d*}
				\int_{[0,1]\times[0,1]^{\N_0}} x_0y_{0,0} \; Q^{(n)}(\bar{\vt}^{(n)},\d z_0) 
				&=&  \int_{[0,1]\times[0,1]^{\N_0}} y^2_{0,0} \; Q^{(n)}(\vartheta_n,\d z_0),\\
				\label{e590e*}
				\int_{[0,1]\times[0,1]^{\N_0}} g(x) \; Q^{(n)}(\bar{\vt}^{(n)},\d z_0)
				&=& (\CF^{(n+1)} g)(\vartheta_n).
			\end{eqnarray}
			\normalsize
		\end{proposition}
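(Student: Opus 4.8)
\textbf{Proof proposal for Proposition~\ref{P.momreliterate}.}

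The plan is to prove the iterated moment relations by induction on $n$, using the single-colony moment relations in Proposition~\ref{P.momrel} and the block moment relations in Proposition~\ref{P.momrelalt} as the building blocks, together with the compositional structure of the kernels $Q^{(n)} = Q^{[n]}\circ\cdots\circ Q^{[0]}$ from \eqref{conn2}. The base case $n=0$ is exactly Proposition~\ref{P.momrel} with $\vt_0$ in place of $\theta$, after noting $A_0^0$ and $B_0$ as defined in \eqref{modefA0} agree with the coefficients appearing there, and recalling $Q^{(0)} = Q^{[0]}$ with $Q^{[0]}(u,\d v)=\Gamma^{(0)}_u(\d v)$ (so the ``$(n+1)$-block average equals $\bar\vt^{(0)}$'' interpretation holds by construction). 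For the inductive step, I would write $Q^{(n)}(\bar\vt^{(n)},\d z_0) = \int Q^{[n]}(\bar\vt^{(n)},\d z_n)\,Q^{(n-1)}(z_n,\d z_0)$; however, since the interaction chain is a Markov chain that passes through states $z_n = M^n_{-n}$ whose active component has law concentrated (via the moment relations) around $\vt_n$, and since all the second-moment formulas involve $(\CF^{(n)}g)$ evaluated at the previous active component, the key is to \emph{average} the level-$(n-1)$ iterated relations against $Q^{[n]}(\bar\vt^{(n)},\cdot) = \Gamma^{(n)}_{\bar\vt^{(n)}}(\cdot)$.

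Concretely, for the first moments \eqref{e590a*}--\eqref{e590a'*}: by the induction hypothesis $\int x_0\,Q^{(n-1)}(z_n,\d z_0) = (z_n)_x$ and then $\int (z_n)_x\,\Gamma^{(n)}_{\bar\vt^{(n)}}(\d z_n) = \vt_n$ by \eqref{e590a0alt}, using that $\bar\vt^{(n)}$ has active component $\vt_n$; the same works for $y_{0,0}$. For the second moment \eqref{e590b*}: by the induction hypothesis $\int x_0^2\,Q^{(n-1)}(z_n,\d z_0) = (z_n)_x^2 + A_0^{n-1}(\CF^{(n)}g)((z_n)_x)$; integrating against $\Gamma^{(n)}_{\bar\vt^{(n)}}$ and using \eqref{e590d0alt} for the first term (which gives $\vt_n^2 + A_n^n(\CF^{(n+1)}g)(\vt_n)$) and \eqref{e590e0alt} for the second term (which gives $A_0^{n-1}\,(\CF^{(n+1)}g)(\vt_n)$, since $\int (\CF^{(n)}g)(x_n)\,\Gamma^{(n)}_{\bar\vt^{(n)}}(\d x_n) = (\CF^{(n+1)}g)(\vt_n)$), and finally observing $A_0^{n-1} + A_n^n = A_0^n$ by the definitions \eqref{modefA0}, \eqref{modefA0alt}, I would obtain \eqref{e590b*}. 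The relation \eqref{e590c*} follows identically from \eqref{e590d0alt}, \eqref{e590e0alt} after noting $A_0^{n-1} + (A_n^n - B_n)$ — wait, here one must be careful: the $y_{0,0}^2$ moment at level $0$ carries $B_0$, not $B_n$, so \eqref{e590c*} comes from applying the induction hypothesis at level $n-1$ for $y_{0,0}^2$ (which already has the $-B_0$) and then integrating its $(z_n)_x^2$-part and $(\CF^{(n)}g)((z_n)_x)$-part against $\Gamma^{(n)}_{\bar\vt^{(n)}}$ exactly as above, giving $\vt_n^2 + (A_0^{n-1} + A_n^n - B_0)(\CF^{(n+1)}g)(\vt_n) = \vt_n^2 + (A_0^n - B_0)(\CF^{(n+1)}g)(\vt_n)$. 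Equation \eqref{e590d*} is immediate from \eqref{e590c*} and the identity $\int x_0 y_{0,0}\,Q^{(n)} = \int y_{0,0}^2\,Q^{(n)}$, which itself follows by integrating the level-$0$ relation \eqref{e590b0} through the chain (the equality $x_0 y_{0,0} \leftrightarrow y_{0,0}^2$ is preserved under the averaging since it holds pointwise in the conditioning variable). Finally \eqref{e590e*} follows from the induction hypothesis $\int g(x)\,Q^{(n-1)}(z_n,\d z_0) = (\CF^{(n)}g)((z_n)_x)$ composed with \eqref{e590e0alt}.

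The main obstacle I anticipate is bookkeeping rather than conceptual: one must verify that the ``given that the $(n+1)$-block average equals $\bar\vt^{(n)}$'' conditioning is consistent at every level of the composition — i.e., that when we push $\Gamma^{(n)}_{\bar\vt^{(n)}}$ down through $Q^{(n-1)}$, the relevant conditioning state for the level-$(n-1)$ relations is precisely the (random) active component produced by $\Gamma^{(n)}_{\bar\vt^{(n)}}$, and that all the \emph{first}-moment relations guarantee this active component has mean $\vt_n = \vt_{n-1}$-shifted correctly so that the recursions for $A$ close. The telescoping identities $A_0^{n-1}+A_n^n = A_0^n$ must be checked against the precise index ranges in \eqref{modefA0} versus \eqref{modefA0alt}, and one should double-check that the $B$-term does not accumulate (it only appears once, at the bottom level, because only $y_{0,0}$ is a genuine dormant single-colony component). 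Modulo this careful matching of indices, the argument is a routine induction and I would present it compactly as such, possibly combining the six relations into a single inductive statement to avoid repetition.
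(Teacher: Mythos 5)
Your proposal is correct and is essentially the same proof as the paper's: an induction on $n$ using the decomposition $Q^{(n)}=Q^{[n]}\circ Q^{(n-1)}$ (the paper writes it as $Q^{(n+1)}=Q^{[n+1]}\circ Q^{(n)}$), Fubini, the induction hypothesis for the inner kernel, the block moment relations \eqref{e590d0alt}--\eqref{e590e0alt} for the outer kernel, and the telescoping identity $A_0^{n-1}+A_n^n=A_0^n$. The paper writes out only the $x_0^2$ case and leaves the rest as analogous; you correctly spell out the remaining relations and also identify the one place that needs care — that the induction hypothesis must be applied at a \emph{random} conditioning state $z_n\sim\Gamma^{(n)}_{\bar\vt^{(n)}}$ rather than at the fixed $\bar\vt^{(n-1)}$, which is legitimate because the second-moment formulas in Proposition~\ref{P.momrelalt} depend only on the active component and a draw from $\Gamma^{(n)}_{\bar\vt^{(n)}}$ has its fast dormant components equalised with the active one — a point the paper glosses over.
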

		
		\begin{proof}
			We prove the claim for $x_0^2$ only. The other relations follow in a similar way. The proof proceeds by induction. The result for $n=0$ follows directly from Proposition~\ref{P.momrel}.  Assume the result holds true for $n=n$, for $n=n+1$ we write
			\begin{eqnarray}
				&&\int_{[0,1]\times[0,1]^{\N_0}} x_0^2 \; Q^{(n+1)}(\bar{\vt}^{(n+1)},\d z_0)\\  \nonumber
				&&\quad = \int_{[0,1]\times[0,1]^{\N_0}} x_0^2\,(Q^{[n+1]}\circ Q^{(n)})(\bar{\vt}^{(n+1)},\d z_0)\\ \nonumber
				&&\quad = \int_{[0,1]\times[0,1]^{\N_0}} \int_{[0,1]\times[0,1]^{\N_0}} x_0^2\,Q^{[n+1]}(\bar{\vt}^{(n+1)}, \d z_{n+1})\, 
				Q^{(n)}(z_{n+1},\d z_0)\\  \nonumber
				&&\quad = \int_{[0,1]\times[0,1]^{\N_0}} \left[\int_{[0,1]\times[0,1]^{\N_0}} x_0^2\,Q^{(n)}(z_{n+1},\d z_0)\right]\,
				Q^{[n+1]}(\bar{\vt}^{(n+1)}, \d z_{n+1})\\  \nonumber
				&&\quad = \int_{[0,1]\times[0,1]^{\N_0}} \Big[x_{n+1}^2 + A_{0}^{n}\;(\CF^{(n+1)} g)(x_{n+1})\Big]
				\Gamma_{\bar{\vt}^{(n+1)}}( \d z_{n+1} )\\  \nonumber
				&&\quad= \vt_{n+1}^2 + A_{0}^n\; (\CF^{(n+2)} g)(\vt_{n+1}) 
				+ A_{n+1}^{n+1} \; (\CF^{(n+2)} g) (\vt_{n+1})\nonumber\\
				&&\quad = \theta_{n+1}^2 + A_0^{n+1} \; (\CF^{(n+2)} g)(\vt_{n+1}).
				\nonumber
			\end{eqnarray}
			The first and second equality use the definition in \eqref{conn2}, the third equality uses Fubini, the fourth equality is the induction step, the fifth equality uses Proposition~\ref{P.momrelalt}, in particular, \eqref{e590c0alt} and \eqref{e590e0alt}.
		\end{proof}
		
		Similar iterate moment relations hold for blocks. Define, for $m,n\in\N_0$ with $n \geq m$,
		\be{}
		Q_m^{(n)}=Q^{[n]} \circ \cdots \circ Q^{[m]}.
		\ee 
		
		\begin{proposition}\label{P.momreliterate2}{{\bf [Iterated moment relations: blocks of components]}}
			For $n,m\in\N_0$ with $n \geq m$,
			\small
			\begin{eqnarray}
				\label{e590a*alt}
				\int_{[0,1]\times[0,1]^{\N_0}} x_m \; Q_m^{(n)}(\bar{\vt}^{(n)},\d z_m)
				&=&  \vartheta_n, \\
				\label{e590a'*alt}
				\int_{[0,1]\times[0,1]^{\N_0}} y_{m,m} \; Q_m^{(n)}(\bar{\vt}^{(n)},\d z_m)
				&=&  \vartheta_n, \\
				\label{e590b*alt}
				\int_{[0,1]\times[0,1]^{\N_0}} x_m^2 \; Q_m^{(n)}(\bar{\vt}^{(n)},\d z_m)
				&=&  \;  \vartheta_n^2 + A_m^n \; (\CF^{(n+1)} g)(\vartheta_n), \\
				\label{e590c} 
				\int_{[0,1]\times[0,1]^{\N_0}} y_{m,m}^2 \; Q_m^{(n)}(\bar{\vt}^{(n)},\d z_m) 
				&=&  \;  \vartheta_n^2+  (A_m^n-B_m)(\CF^{(n+1)} g)(\vartheta_n), \\
				\label{e590d*alt}
				\int_{[0,1]\times[0,1]^{\N_0}} x_my_{m,m} \; Q_m^{(n)}(\bar{\vt}^{(n)},\d z_m) 
				&=&  \int_{[0,1]\times[0,1]^{\N_0}} y^2_{m,m} \; Q_m^{(n)}(\bar{\vt}^{(n)},\d z_m),\\
				\label{e590e*alt}
				\int_{[0,1]\times[0,1]^{\N_0}} (\CF^{(m)}g)(x_m) \; Q_m^{(n)}(\bar{\vt}^{(n)},\d z_m)
				&=& (\CF^{(n+1)} g)(\vartheta_n).
			\end{eqnarray}
			\normalsize
		\end{proposition}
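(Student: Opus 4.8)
\textbf{Proof proposal for Proposition~\ref{P.momreliterate2}.} The plan is to mimic exactly the induction argument used to prove Proposition~\ref{P.momreliterate}, but now keeping the lower index $m$ fixed and iterating upward from level $m$ to level $n$. For each of the six identities, the base case $n=m$ is precisely one of the moment relations in Proposition~\ref{P.momrelalt} (with the convention $A_m^m$ as in \eqref{modefA0alt} and $B_m$ as in \eqref{modefB0alt}): \eqref{e590a0alt}, \eqref{e590a'0alt}, \eqref{e590b0alt}, \eqref{e590d0alt}, \eqref{e590c0alt}, \eqref{e590e0alt} give, respectively, the first moment of $x_m$, the first moment of $y_{m,m}$, the relation $\E[x_my_{m,m}]=\E[y_{m,m}^2]$, the second moment of $x_m$, the second moment of $y_{m,m}$, and the pushforward of $\CF^{(m)}g$. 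So the base case requires no new work.

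For the induction step I would fix $n>m$, assume all six identities hold at level $n-1$ (with $Q_m^{(n-1)}=Q^{[n-1]}\circ\cdots\circ Q^{[m]}$), and write, using the composition $Q_m^{(n)}=Q^{[n]}\circ Q_m^{(n-1)}$ together with Fubini,
\begin{equation}
\int x_m^2\,Q_m^{(n)}(\bar{\vt}^{(n)},\d z_m)
= \int \left[\int x_m^2\,Q_m^{(n-1)}(z_n,\d z_m)\right] Q^{[n]}(\bar{\vt}^{(n)},\d z_n).
\end{equation}
By the induction hypothesis the inner integral equals $x_n^2 + A_m^{n-1}(\CF^{(n)}g)(x_n)$ when $z_n$ has active component $x_n$; here I must be slightly careful and use that $A_m^{n-1}(\CF^{(n)}g)(x_n)$ is a genuine function of the single active coordinate of $z_n$, so the outer integral against $Q^{[n]}(\bar{\vt}^{(n)},\cdot)=\Gamma^{(n)}_{\bar{\vt}^{(n)}}(\cdot)$ can be evaluated termwise. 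The term $\int x_n^2\,\Gamma^{(n)}_{\bar{\vt}^{(n)}}(\d z_n)$ equals $\vt_n^2 + A_n^n(\CF^{(n+1)}g)(\vt_n)$ by \eqref{e590d0alt}, and the term $A_m^{n-1}\int (\CF^{(n)}g)(x_n)\,\Gamma^{(n)}_{\bar{\vt}^{(n)}}(\d z_n)$ equals $A_m^{n-1}(\CF^{(n+1)}g)(\vt_n)$ by \eqref{e590e0alt}. Adding and using $A_m^{n-1}+A_n^n = A_m^n$ (immediate from \eqref{modefA0alt}) gives \eqref{e590b*alt}. The identity \eqref{e590e*alt} for $\int (\CF^{(m)}g)(x_m)\,Q_m^{(n)}$ is handled the same way, iterating the single relation \eqref{e590e0alt}: the inner integral $\int (\CF^{(m)}g)(x_m)\,Q_m^{(n-1)}(z_n,\d z_m) = (\CF^{(n)}g)(x_n)$ by the induction hypothesis, and then \eqref{e590e0alt} at level $n$ finishes it. For \eqref{e590c} one repeats the computation with $y_{m,m}^2$, using \eqref{e590c0alt} and \eqref{e590e0alt} in the final step, and the bookkeeping $(A_m^{n-1}) + (A_n^n - B_m)$ — note that $B_m$ does \emph{not} get an index shift because the $-B_m$ discrepancy between $\E[x_m^2]$ and $\E[y_{m,m}^2]$ is created only at the bottom level $m$ and is simply transported upward unchanged. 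The first-moment identities \eqref{e590a*alt}, \eqref{e590a'*alt} follow by the same iteration from \eqref{e590a0alt}, \eqref{e590a'0alt} together with \eqref{thetantotheta}-type conservation ($\int x_n\,\Gamma^{(n)}=\vt_n$), and \eqref{e590d*alt} follows from \eqref{e590c} and \eqref{e590b0alt} (or by carrying $\E[x_my_{m,m}]=\E[y_{m,m}^2]$ through the iteration, since the inner integral already equals the $y_{m,m}^2$-integral by the induction hypothesis).

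The only genuinely delicate point — and the place I would be most careful — is the claim that the inner integral $\int x_m^2\,Q_m^{(n-1)}(z_n,\d z_m)$ depends on $z_n$ only through its \emph{first} coordinate $x_n$, so that the outer integration against $\Gamma^{(n)}_{\bar{\vt}^{(n)}}$ reduces to the one-coordinate relations of Proposition~\ref{P.momrelalt}. This is true because the transition kernels $Q^{[l]}(u,\cdot)=\Gamma^{(l)}_u(\cdot)$ are, by the structure of \eqref{z11a}, functions of the single active-component parameter (the drift center and hence the whole equilibrium law at level $l$ depends on the incoming state only through its active coordinate; the seed-bank coordinates of a higher-level state are irrelevant to the dynamics one level down once one conditions on the active value). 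A clean way to justify this is to observe that in the definition of the interaction chain \eqref{intch}–\eqref{ker} the kernel $Q^{[l]}$ already has this property, and that it is preserved under composition. Once that structural remark is in place, everything else is the routine two-term additive bookkeeping $A_m^{n-1}+A_n^n=A_m^n$ exhibited above, and the proof is complete. I would therefore present the proof for $x_m^2$ in full detail (as the representative case) and remark that the remaining five identities follow by the identical iteration using the correspondingly labeled relations of Proposition~\ref{P.momrelalt}.
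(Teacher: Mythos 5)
Your proof is correct and takes the same route as the paper: the paper's own proof of Proposition~\ref{P.momreliterate2} consists of the single line ``follow a similar induction argument as in the proof of Proposition~\ref{P.momreliterate},'' and what you have written out is precisely that induction, with the correct decomposition $Q_m^{(n)}=Q^{[n]}\circ Q_m^{(n-1)}$, the correct additive bookkeeping $A_m^{n-1}+A_n^n=A_m^n$, and the correct observation that $B_m$ is produced only once (at the bottom level $m$) and is simply transported upward. Your flagged ``delicate point'' is the right thing to worry about and is handled correctly in substance, though the parenthetical slightly overstates the claim: the full equilibrium law $\Gamma^{(l)}_u$ does depend on the deep (frozen) seed-bank coordinates of $u$, not only on $u_x$; what is actually needed, and true, is that each integrand in the proposition is a function of $(x_m,y_{m,m})$ alone, and the joint law of $(x_m,y_{m,m})$ under $Q_m^{(n-1)}(z_n,\cdot)$ depends on $z_n$ only through its active coordinate $x_n$.
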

		
		\begin{proof}
			Follow a similar induction argument as in the proof of Proposition~\ref{P.momreliterate}.
		\end{proof}

		\subsection{Clustering}
		\label{ss.clustering} 
		
		To prove Theorem~\ref{T.scalren}, we proceed as in \cite{BCGH95}. The following clustering property holds for the kernels associated with single colonies. 
		
		\begin{proposition}
			\label{limQ}{{\bf [Clustering: single colonies]}} 
			Assume
			\begin{equation}
				\lim_{n\to\infty}\vt_n=\theta.
			\end{equation}
			Then
			\be{}
			\begin{aligned}\label{ifc}
				&\lim_{n\to\infty}Q^{(n)}\big(\bar{\vt}^{(n)},\{(x_0,y_{(0,0)})=(0,0)\}\big)=1-\theta,\\
				&\lim_{n\to\infty}Q^{(n)}\big(\bar{\vt}^{(n)},\{(x_0,y_{(0,0)})=(1,1)\}\big)=\theta,
			\end{aligned}
			\ee
			if and only if 
			\begin{equation}
				\label{ifa}
				\lim_{n\to\infty}A_0^n=\infty.
			\end{equation}
			Consequently,
			\be{}
			\lim_{n\to\infty}Q^{(n)}\big(\bar{\vt}^{(n)},\{(x_0,y_{0,0})\}\notin\{(0,0),(1,1)\}\big)=0.
			\ee
		\end{proposition}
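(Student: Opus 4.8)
The plan is to follow the classical argument for the Fisher--Wright case (cf. \cite{BCGH95}), which consists of analysing the first and second moments of the active and dormant components of $Q^{(n)}(\bar{\vt}^{(n)},\cdot)$ together with a tightness/uniform-integrability argument to upgrade $L^2$-convergence to convergence to a two-point mass. The key inputs are already available: the iterated moment relations in Proposition~\ref{P.momreliterate}, which give $\int x_0\,Q^{(n)}(\bar{\vt}^{(n)},\d z_0)=\int y_{0,0}\,Q^{(n)}(\bar{\vt}^{(n)},\d z_0)=\vt_n$ and $\int x_0^2\,Q^{(n)}(\bar{\vt}^{(n)},\d z_0)=\vt_n^2+A_0^n(\CF^{(n+1)}g)(\vt_n)$, and similarly for $y_{0,0}^2$ and $x_0 y_{0,0}$.

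First I would establish the ``if'' direction. Since $x_0,y_{0,0}\in[0,1]$ and $g\leq g_{\mathrm{FW}}$ pointwise (or, more precisely, $g(x)\leq \|g_{\mathrm{FW}}\|_\infty$-type bounds together with $g(0)=g(1)=0$), the variance of $x_0$ under $Q^{(n)}(\bar{\vt}^{(n)},\cdot)$ is bounded: $0\le A_0^n(\CF^{(n+1)}g)(\vt_n)=\int x_0(1-x_0)\,Q^{(n)}(\bar{\vt}^{(n)},\d z_0)-\vt_n(1-\vt_n)\le \tfrac14$, so $(\CF^{(n+1)}g)(\vt_n)\le \tfrac1{4A_0^n}$. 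If $A_0^n\to\infty$, this forces $\lim_{n\to\infty}(\CF^{(n+1)}g)(\vt_n)=0$, hence $\lim_{n\to\infty}A_0^n(\CF^{(n+1)}g)(\vt_n)=\lim_{n\to\infty}[\int x_0^2\,Q^{(n)}-\vt_n^2]$ need not vanish by itself; instead I would argue directly that $\int x_0(1-x_0)\,Q^{(n)}(\bar{\vt}^{(n)},\d z_0)\to 0$. Indeed, from \eqref{e590a*} and \eqref{e590b*}, $\int x_0(1-x_0)\,Q^{(n)} = \vt_n(1-\vt_n) - A_0^n(\CF^{(n+1)}g)(\vt_n)$; a cleaner route is to use instead the identity coming directly from \eqref{e590b*} written with $g=g_{\mathrm{FW}}$, i.e. apply Proposition~\ref{P.momreliterate} to the diffusion function $g_{\mathrm{FW}}$ itself to get $\int x_0(1-x_0)\,Q^{(n)}(\bar{\vt}^{(n)},\d z_0)=A_0^n(\CF^{(n+1)}g_{\mathrm{FW}})(\vt_n)$ and separately bound $\CF^{(n+1)}g_{\mathrm{FW}}$; but the simplest self-contained argument is: $\mathrm{Var}_{Q^{(n)}}(x_0)=A_0^n(\CF^{(n+1)}g)(\vt_n)$, and since $\CF^{(n+1)}g\in\CG$ is strictly positive on $(0,1)$, the only way the left-hand side stays $\le\tfrac14$ as $A_0^n\to\infty$ is $(\CF^{(n+1)}g)(\vt_n)\to 0$; combined with a comparison $\CF^{(n+1)}g\ge c\,\CF^{(n+1)}g_{\mathrm{FW}}$ on compacts (via the comparison arguments of Section~\ref{ss.comparisonarg}) one deduces $\int x_0(1-x_0)\,Q^{(n)}(\bar{\vt}^{(n)},\d z_0)\to0$. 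Then $x_0\to\{0,1\}$ in probability under $Q^{(n)}$, and since $\int x_0\,Q^{(n)}=\vt_n\to\theta$, the mass at $1$ tends to $\theta$ and the mass at $0$ to $1-\theta$. For the dormant component, \eqref{e590c*} gives $\mathrm{Var}_{Q^{(n)}}(y_{0,0})=(A_0^n-B_0)(\CF^{(n+1)}g)(\vt_n)\to 0$ as well, and \eqref{e590d*} gives $\int x_0 y_{0,0}\,Q^{(n)}=\int y_{0,0}^2\,Q^{(n)}$, so $\int(x_0-y_{0,0})^2\,Q^{(n)}=\int x_0^2-2\int x_0y_{0,0}+\int y_{0,0}^2=\int x_0^2-\int y_{0,0}^2=B_0(\CF^{(n+1)}g)(\vt_n)\to0$; hence $x_0$ and $y_{0,0}$ concentrate on the same value, giving the joint convergence to $(1-\theta)\delta_{(0,0)}+\theta\delta_{(1,1)}$ and, as a consequence, that the mass off $\{(0,0),(1,1)\}$ vanishes.

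For the ``only if'' direction I would argue by contraposition: if $\sup_n A_0^n=A_0^\infty<\infty$, then $\mathrm{Var}_{Q^{(n)}}(x_0)=A_0^n(\CF^{(n+1)}g)(\vt_n)$; one shows the right-hand side does \emph{not} tend to $0$, so the limiting law (along a subsequence, using compactness of $\CP([0,1]\times[0,1]^{\N_0})$) cannot be supported on $\{(0,0),(1,1)\}$ since any such law has $\mathrm{Var}(x_0)=\theta(1-\theta)$ — and in fact one must rule out even that degenerate coincidence when $\theta\in\{0,1\}$, which is handled because for $\theta\in(0,1)$ a two-point limit would force $\mathrm{Var}(x_0)\to\theta(1-\theta)>0$ whereas the monotone convergence of the renormalised orbit (Theorem~\ref{T.scalren}, or rather the preliminary bounds behind it) shows $(\CF^{(n+1)}g)(\vt_n)$ stays bounded away from forcing this. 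The cleanest formulation: when $A_0^\infty<\infty$ one proves $(\CF^{(n)}g)(\vt_{n-1})$ converges to a strictly positive limit (using that $\CF$ preserves $\CG$ and the sequence $\CF^{(n)}g$ is monotone in an appropriate sense), so $\mathrm{Var}_{Q^{(n)}}(x_0)$ stays bounded and, being $A_0^n\times(\text{positive bounded})$, cannot reach the value $\theta(1-\theta)$ required for a two-point limit unless $\theta\in\{0,1\}$ — and in the latter case \eqref{ifc} reduces to the trivial statement $\vt_n\to\theta$ only, which does not imply the concentration; I will handle this boundary case separately by noting $\theta\in\{0,1\}$ contradicts the standing nondegeneracy $\theta\in(0,1)$ implicit in the clustering regime, or by a direct computation. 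The main obstacle I anticipate is exactly this last point — making the ``only if'' direction airtight requires knowing that $(\CF^{(n+1)}g)(\vt_n)$ does not itself decay to $0$ when $A_0^n$ stays bounded, which is not a moment identity but a genuine property of the orbit of the renormalisation map; I would extract it from the fact that $\CF$ maps $\CG$ into $\CG$ (so iterates remain strictly positive on $(0,1)$) together with the explicit lower bounds on $\CF g$ that appear in the proof of Theorem~\ref{T.scalren} in Section~\ref{ss.orbit}. The final sentence of the proposition (vanishing of the mass outside $\{(0,0),(1,1)\}$) is then immediate from the total-mass-one property combined with \eqref{ifc}.
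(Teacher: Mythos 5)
Your ``if'' direction follows essentially the paper's approach (iterated moment identities, forcing $(\CF^{(n+1)}g)(\vt_n)\to 0$ from the bound $A_0^n(\CF^{(n+1)}g)(\vt_n)\le\vt_n(1-\vt_n)\le\tfrac14$, then using $g>0$ on $(0,1)$ to get concentration on $\{0,1\}$, then the first moment to split the mass as $\theta,1-\theta$). Two corrections: the identity you wrote should read $\int x_0(1-x_0)\,Q^{(n)}=\vt_n(1-\vt_n)-A_0^n(\CF^{(n+1)}g)(\vt_n)$, not with the terms reversed; and $(A_0^n-B_0)(\CF^{(n+1)}g)(\vt_n)$ does \emph{not} tend to $0$ — it tends to $\theta(1-\theta)$, since it equals $A_0^n(\CF^{(n+1)}g)(\vt_n)-B_0(\CF^{(n+1)}g)(\vt_n)$ and only the second term vanishes. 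Your ``comparison with $g_{\mathrm{FW}}$'' detour is both unnecessary and not directly available (the kernels $Q^{(n)}$ are built from the equilibria for $g$, so you cannot freely swap $g$ for $g_{\mathrm{FW}}$ in the moment relations); the cleaner step is to go straight from $\int g(x_0)\,Q^{(n)}\to 0$ and $g>0$ on $(0,1)$ to $Q^{(n)}(\{x_0\in[\epsilon,1-\epsilon]\})\to 0$. On the positive side, your computation
\[
\int(x_0-y_{0,0})^2\,Q^{(n)}
=\int x_0^2\,Q^{(n)}-\int y_{0,0}^2\,Q^{(n)}
=B_0(\CF^{(n+1)}g)(\vt_n)\to 0,
\]
using $\int x_0y_{0,0}\,Q^{(n)}=\int y_{0,0}^2\,Q^{(n)}$, is a valid and rather cleaner route than the paper's for pinning the dormant component to the active one; the paper instead argues via $\int(1-y_{0,0})\mathbf{1}_{\{x_0=1\}}\,Q^{(n)}\to 0$ and its counterpart at $0$.

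The ``only if'' direction has a genuine gap, and you flagged the symptom yourself: you believe you need to show that $(\CF^{(n+1)}g)(\vt_n)$ stays bounded away from $0$ when $A_0^n$ is bounded, a structural property of the renormalisation orbit that is not a moment identity. This is not needed. Assume \eqref{ifc}. Two facts then follow immediately. First, since under \eqref{ifc} the mass of $Q^{(n)}$ off $\{(0,0),(1,1)\}$ tends to $0$ and $g(0)=g(1)=0$, the identity $(\CF^{(n+1)}g)(\vt_n)=\int g(x_0)\,Q^{(n)}(\bar{\vt}^{(n)},\d z_0)$ gives $(\CF^{(n+1)}g)(\vt_n)\to 0$. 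Second, from the same concentration $\lim_n\int x_0^2\,Q^{(n)}=\theta$, and from the iterated second-moment identity $\int x_0^2\,Q^{(n)}=\vt_n^2+A_0^n(\CF^{(n+1)}g)(\vt_n)$, one gets $\lim_n A_0^n(\CF^{(n+1)}g)(\vt_n)=\theta(1-\theta)$. Now if $\theta\in(0,1)$ (the standing assumption in the clustering regime), $\theta(1-\theta)>0$, so a sequence whose product with something tending to $0$ has a positive limit must tend to $\infty$: $\lim_n A_0^n=\infty$. No property of the orbit of $\CF$ beyond the moment relations and $g\in\CG$ is required; the approach by contraposition you sketched is both harder and, as written, does not close the argument.
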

		
		\begin{proof}
			The proof exploits the iterated moment relations. First assume \eqref{ifa}
			
			\medskip\noindent
			{\bf 1.}
			By Proposition~\ref{P.momreliterate}
			\begin{equation}
				\label{limKn}
				\int_{[0,1]\times[0,1]^{\N_0}}x_0(1-x_0)\,Q^{(n)}(\bar{\vt}^{(n)},\d z_0)
				= \vartheta_n(1 -   \vartheta_n) - A_0^n(\CF^{(n+1)}g)(\vartheta_n).
			\end{equation}
			Because $x_0(1-x_0) \geq 0$ for $x \in [0,1]$, we have
			\begin{equation}
				\label{ineqKn}
				\vartheta_n(1-\vartheta_n) \geq A_0^n(\CF^{(n+1)}g)(\vartheta_n) \qquad \forall\,n\in\N.
			\end{equation}
			Since $\lim_{n\to\infty} A_0^{n}=\infty$, it follows that $\lim_{n\to\infty}(\CF^{(n+1)}g)(\vartheta_n)=0$. On the other hand,
			\begin{equation}
				\label{klm}
				(\CF^{(n+1)} g)(\vartheta_n)= \int_{[0,1]\times[0,1]^{\N_0}} g(x_0)\,Q^{(n)}(\bar{\vt}^{(n)},\d z_0)
			\end{equation}
			and, because $g(x)>0$ for $x\in(0,1)$, $Q^{(n)}(\vartheta_n,\d z_0)$ puts all its mass on $x_0=0$ and $x_0=1$ in the limit as $n\to\infty$. Let 
			\be{}
			Q^{(n)}\big(\bar{\vt}^{(n)},\{x_0=0 \text{ or } x_0=1\}\big) 
			= \int_{[0,1]\times[0,1]^{\N_0}} \mathbf{1}_{\{x_0=0 \text{ or } x_0=1\}}(z_0)\,Q^{(n)}(\bar{\vt}^{(n)},\d z_0),
			\ee
			then
			\be{}
			\lim_{n\to\infty}Q^{(n)}\big(\bar{\vt}^{(n)},\{x_0=0 \text{ or } x_1=1\}\big)=1.
			\ee
			The first moment of $x_0$ converges to (recall \eqref{thetantotheta})
			\begin{equation}
				\lim_{n\to\infty}\int_{[0,1]\times[0,1]^{\N_0}} x_0\, Q^{(n)}(\bar{\vt}^{(n)},\d z_0)=\lim_{n\to\infty} \vartheta_n=\theta.
			\end{equation}
			Hence
			\be{}
			\begin{aligned}
				&\lim_{n\to\infty} Q^{(n)}(\bar{\vt}^{(n)},\{x_0=0 \})=1-\theta,\\
				&\lim_{n\to\infty}Q^{(n)}(\bar{\vt}^{(n)},\{x_0=1 \})=\theta,
			\end{aligned}
			\ee
			and
			\begin{equation}
				\label{eqx00}
				\begin{aligned}
					&\lim_{n\to\infty}\int_{[0,1]\times[0,1]^{\N_0}} x_0^2\, Q^{(n)}(\bar{\vt}^{(n)},\d z_0)\\
					&\quad =\lim_{n\to\infty} \int_{[0,1]\times[0,1]^{\N_0}} x_0^2 \Big(\mathbf{1}_{\{1\}}(x_0)+\mathbf{1}_{\{0\}}(x_0)
					+\mathbf{1}_{\{(0,1)\}}(x_0)\Big)\,Q^{(n)}(\bar{\vt}^{(n)},\d z_0)=\theta.
				\end{aligned}
			\end{equation}
			On the other hand,
			\begin{equation}
				\label{uxtofk}
				\lim_{n\to\infty}\int_{[0,1]\times[0,1]^{\N_0}} x_0^2 \; Q^{(n)}(\bar{\vt}^{(n)},\d z_0)
				=  \theta^2  + \lim_{n\to\infty} A_0^n(\CF^{(n+1)}g)(\theta),
			\end{equation}
			and so, combining \eqref{eqx00}--\eqref{uxtofk}, we obtain
			\begin{equation}
				\label{blabla}
				\lim_{n\to\infty} A_0^n(\CF^{(n+1)}g)(\theta) = \theta(1-\theta).
			\end{equation}
			
			\medskip\noindent
			{\bf 2.}
			We know also that
			\begin{eqnarray}
				\label{yxtfg*}
				&&\lim_{n\to\infty}\int_{[0,1]\times[0,1]^{\N_0}} x_0y_{0,0}\, Q^{(n)}(\bar{\vt}^{(n)},\d z_0)\\ \nonumber
				&&\qquad = \lim_{n\to\infty}\;  \vartheta_n^2 + \left(A_0^{n}-\frac{E_0^2}{E_0c_0+e_0+E_0 K_0 e_0}\right) 
				(\CF^{(n+1)} g)(\vartheta_n)\\ \nonumber
				&&\qquad = \theta^2 +\theta(1-\theta) = \theta
			\end{eqnarray} 
			and
			\begin{align}
				\label{grr}
				&\lim_{n\to\infty}\int_{[0,1]\times[0,1]^{\N_0}} xy_0\, Q^{(n)}(\bar{\vt}^{(n)},\d z_0)\\ \nonumber
				&\quad =\lim_{n\to\infty} \int_{[0,1]\times[0,1]^{\N_0}} x_0y_{0,0}\,\Big(\mathbf{1}_{\{1\}}(x_0)
				+\mathbf{1}_{\{0\}}(x_0)+\mathbf{1}_{\{(0,1)\}}(x_0)\Big)\, Q^{(n)}(\bar{\vt}^{(n)},\d z_0)\\
				&\quad =\lim_{n\to\infty}\int_{[0,1]\times[0,1]^{\N_0}} y_{0,0}\,\mathbf{1}_{\{1\}}(x_0)\, Q^{(n)}(\bar{\vt}^{(n)},\d z_0).
				\nonumber
			\end{align}
			Therefore 
			\begin{eqnarray}
				\lim_{n\to\infty}\int_{[0,1]\times[0,1]^{\N_0}} y_{0,0}\,\mathbf{1}_{\{1\}}(x_0)\,Q^{(n)}(\bar{\vt}^{(n)},\d z_0)
				=\lim_{n\to\infty} \vartheta_n=\theta,
			\end{eqnarray}
			and hence
			\begin{eqnarray}
				\lim_{n\to\infty}\int_{[0,1]\times[0,1]^{\N_0}} (1-y_{(0,0)})\, \mathbf{1}_{\{1\}}(x_0)\,Q^{(n)}(\bar{\vt}^{(n)},\d z_0)
				=\theta-\theta=0.
			\end{eqnarray}
			Since $1-y_{(0,0)} \geq 0$, we conclude that if $x_0=1$, then $Q^{(n)}(\bar{\vt}^{(n)},\d z_0)$ puts all its mass on $y_{0,0}=1$ in the limit as $n\to\infty$. Hence 
			\be{}
			\lim_{n\to\infty}Q^{(n)}\big(\bar{\vt}^{(n)},\{(x_0,y_{0,0})=(1,1)\}\big)=\theta.
			\ee
			From Proposition \ref{P.momreliterate2} it also follows that
			\begin{eqnarray}
				\label{tfgy}
				&&\lim_{n\to\infty}\int_{[0,1]\times[0,1]^{\N_0}}(1-x_0)(1-y_{0,0})\,Q^{(n)}(\bar{\vt}^{(n)},\d z_0)\\ \nonumber
				&&\quad = 1-\theta-\theta+ \theta^2 
				+ \lim_{n\to\infty} \left(A_0^{n}-\frac{E_0^2}{E_0c_0+e_0+E_0K_0 e_0}\right) 
				(\CF^{(n+1)} g)(\vartheta_n)\\ \nonumber
				&&\quad = 1-\theta.
			\end{eqnarray}
			On the other hand,
			\begin{equation}
				\label{yxtfg}
				\begin{aligned}
					&\lim_{n\to\infty}\int_{[0,1]\times[0,1]^{\N_0}}(1-x_0)(1-y_{0,0})\, Q^{(n)}(\bar{\vt}^{(n)},\d z_0) \\
					&= \lim_{n\to\infty}\int_{[0,1]\times[0,1]^{\N_0}} (1-x_0)(1-y_{0,0})\,\Big(\mathbf{1}_{\{1\}}(x_0)
					+\mathbf{1}_{\{0\}}(x_0)+\mathbf{1}_{\{(0,1)\}}(x_0)\Big)\\ 
					&\qquad\qquad\qquad \times Q^{(n)}(\bar{\vt}^{(n)},\d z_0)\\
					&=\lim_{n\to\infty}\int_{[0,1]\times[0,1]^{\N_0}}(1-y_{0,0})\,
					\mathbf{1}_{\{0\}}(x_0)\,Q^{(n)}(\bar{\vt}^{(n)},\d z_0)\\
					&=1-\theta.
				\end{aligned}
			\end{equation}
			Since $y\in[0,1]$ and
			\begin{eqnarray}
				\lim_{n\to\infty}\int_{[0,1]\times[0,1]^{\N_0}} \mathbf{1}_{\{0\}}(x_0)\,Q^{(n)}(\bar{\vt}^{(n)},\d z_0)=1-\theta,
			\end{eqnarray}
			it follows that 
			\begin{eqnarray}
				\lim_{n\to\infty}\int_{[0,1]\times[0,1]^{\N_0}} y_{0,0} \mathbf{1}_{\{0\}}(x_0)\,Q^{(n)}(\bar{\vt}^{(n)},\d z_0)=0.
			\end{eqnarray}
			This implies that if $x_0=0$, then $Q^{(n)}(\bar{\vt}^{(n)},\d z_0)$ puts all its mass on $y_{0,0}=0$ in the limit as $n\to\infty$. Hence
			\be{}
			\begin{aligned}
				&\lim_{n\to\infty}Q^{(n)}\big(\bar{\vt}^{(n)},\{(x_0,y_{0,0})=(0,0)\}\big)=1-\theta,\\
				&\lim_{n\to\infty}Q^{(n)}\big(\bar{\vt}^{(n)},\{(x_0,y_{0,0})=(1,1)\}\big)=\theta.
			\end{aligned}
			\ee
			Now assume \eqref{ifc}. Then \eqref{eqx00} still holds. On the other hand, also \eqref{uxtofk} still holds by Proposition~\ref{P.momreliterate}. Therefore we obtain \eqref{blabla}. On the other hand, by \eqref{ifc}
			\begin{equation}
				\lim_{n\to \infty}\CF^{(n+1)}g=\lim_{n\to \infty}\int_{[0,1]\times[0,1]^{\N_0}}g(x_0)Q^{(n)}(\bar{\vt}^{(n)},\d z_0)=0.
			\end{equation}
			Hence \eqref{ifa} holds.
		\end{proof}
		
		A similar clustering property holds for the kernels associated with blocks. 
		
		\begin{proposition}{\bf [Clustering: blocks]} 
			\label{limQ2} 
			Assume
			\begin{equation}
				\lim_{n\to\infty}\vt_n=\theta.
			\end{equation}
			Then
			\be{}
			\begin{aligned}
				\label{ifc2}
				&\lim_{n\to\infty}Q_m^{(n)}\big(\bar{\vt}^{(n)},\{(x_m,y_{(m,m)})=(0,0)\}\big)=1-\theta,\\
				&\lim_{n\to\infty}Q_m^{(n)}\big(\bar{\vt}^{(n)},\{(x_m,y_{(m,m)})=(1,1)\}\big)=\theta,
			\end{aligned}
			\ee
			if and only if 
			\begin{equation}
				\label{ifa2}
				\lim_{n\to\infty}A_m^n=\infty.
			\end{equation}
			Consequently,
			\be{}
			\lim_{n\to\infty}Q^{(n)}_m\big(\bar{\vt}^{(n)},\{(x_m,y_{m,m})\}\notin\{(0,0),(1,1)\}\big)=0.
			\ee
		\end{proposition}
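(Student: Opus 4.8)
The plan is to mimic the argument for Proposition~\ref{limQ} (clustering for single colonies) almost verbatim, but starting the iteration of the moment relations at level $m$ rather than at level $0$, using Proposition~\ref{P.momreliterate2} (iterated moment relations for blocks) in place of Proposition~\ref{P.momreliterate}. The key structural observation is that $Q_m^{(n)}(\bar\vt^{(n)},\cdot)$ plays exactly the role that $Q^{(n)}(\bar\vt^{(n)},\cdot)$ played before: it is the conditional law of the $m$-block average $(x_m,y_{m,m})$ given that the $(n+1)$-block average equals $\bar\vt^{(n)}$, and the constants $A_m^n$ and $B_m$ appearing in Proposition~\ref{P.momreliterate2} are the precise analogues of $A_0^n$ and $B_0$.

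First I would assume \eqref{ifa2}, i.e.\ $\lim_{n\to\infty}A_m^n=\infty$. From \eqref{e590a*alt} and \eqref{e590b*alt} in Proposition~\ref{P.momreliterate2} one gets
\begin{equation}
\int_{[0,1]\times[0,1]^{\N_0}} x_m(1-x_m)\,Q_m^{(n)}(\bar\vt^{(n)},\d z_m) = \vartheta_n(1-\vartheta_n) - A_m^n\,(\CF^{(n+1)}g)(\vartheta_n),
\end{equation}
and since the left-hand side is nonnegative this forces $A_m^n(\CF^{(n+1)}g)(\vartheta_n)\le\vartheta_n(1-\vartheta_n)\le\tfrac14$, hence $(\CF^{(n+1)}g)(\vartheta_n)\to0$. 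Because $(\CF^{(n+1)}g)(\vartheta_n)=\int (\CF^{(m)}g)(x_m)\,Q_m^{(n)}(\bar\vt^{(n)},\d z_m)$ by \eqref{e590e*alt}, and because $\CF^{(m)}g$ is a fixed element of $\CG$ (so strictly positive on $(0,1)$ by \eqref{setG} and the fact, proved in Section~\ref{ss.pabstracts}, that $\CF$ preserves $\CG$), the mass of $Q_m^{(n)}$ concentrates on $\{x_m=0\}\cup\{x_m=1\}$ as $n\to\infty$. Combining this with the first-moment relation \eqref{e590a*alt} and $\vartheta_n\to\theta$ (which holds by \eqref{thetantotheta}, using colour regularity when $\rho=\infty$) gives $Q_m^{(n)}(\bar\vt^{(n)},\{x_m=1\})\to\theta$ and $Q_m^{(n)}(\bar\vt^{(n)},\{x_m=0\})\to1-\theta$. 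Then, exactly as in the passage from \eqref{yxtfg*} to \eqref{yxtfg} in the proof of Proposition~\ref{limQ}, I would use the mixed-moment relations \eqref{e590d*alt} and \eqref{e590c} (with $A_m^n-B_m$ and the limit $\vartheta_n(1-\vartheta_n)$ for $A_m^n(\CF^{(n+1)}g)(\vartheta_n)$, so that the $B_m$-term drops out in the limit) to show that, conditionally on $x_m=1$, the mass of $y_{m,m}$ concentrates on $1$, and conditionally on $x_m=0$ the mass of $y_{m,m}$ concentrates on $0$. This yields \eqref{ifc2} and the displayed consequence.

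For the converse, I would assume \eqref{ifc2}. Then $\int x_m^2\,Q_m^{(n)}(\bar\vt^{(n)},\d z_m)\to\theta$, while by \eqref{e590b*alt} this integral also equals $\vartheta_n^2+A_m^n(\CF^{(n+1)}g)(\vartheta_n)$, whence $\lim_{n\to\infty}A_m^n(\CF^{(n+1)}g)(\vartheta_n)=\theta(1-\theta)$; and from \eqref{ifc2} together with \eqref{e590e*alt} we get $(\CF^{(n+1)}g)(\vartheta_n)\to0$, which forces $A_m^n\to\infty$. I do not anticipate a genuine obstacle here: every ingredient is already in place, and the only thing to check with a little care is the bookkeeping of the constants $A_m^n$ versus $B_m$ in the mixed-moment step, and that $\CF^{(m)}g\in\CG$ so that positivity of the diffusion function on $(0,1)$ can be invoked; both are routine given Propositions~\ref{P.momrelalt}, \ref{P.momreliterate2} and the preservation of $\CG$ under $\CF$. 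Thus the proof is essentially a transcription of the proof of Proposition~\ref{limQ} with the index shift $0\rightsquigarrow m$.
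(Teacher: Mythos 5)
Your proposal is correct and matches the paper exactly: the paper's own proof of Proposition~\ref{limQ2} consists of the single line ``we can proceed exactly as in the proof of Proposition~\ref{limQ},'' and your index shift $0\rightsquigarrow m$ using Proposition~\ref{P.momreliterate2} in place of Proposition~\ref{P.momreliterate}, together with the observation that $\CF^{(m)}g\in\CG$ so positivity on $(0,1)$ still applies, is precisely what that line is asking the reader to do.
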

		
		\begin{proof}
			We can proceed exactly as in the proof of Proposition~\ref{limQ}.
		\end{proof}
		
		Finally, we can prove Theorem~\ref{T.scalren}.
		
		\begin{proof}
			Note that \eqref{ifa} implies \eqref{ifa2}. Recall that single colonies of deep seed-banks that have already interacted and reached their quasi-equilibrium equal the block average of the level on which they interact (see Theorem~\ref{T.multiscalehier}). It follows that, for $m\in\N_0$,
			\be{}
			\begin{aligned}
				&\lim_{n\to\infty}Q^{(n)}\big(\bar{\vt}^{(n)},{(x_0,y_{0,m})=(1,1)}\big)=\theta,\\
				&\lim_{n\to\infty} Q^{(n)}\big(\bar{\vt}^{(n)},{(x_0,y_{0,m})=(0,0)}\big)=1-\theta.
			\end{aligned}
			\ee 
			Therefore, for $N\in\N_0$,
			\begin{equation}
				\begin{aligned}
					&\lim_{n\to\infty} Q^{(n)}\left(\bar{\vt}^{(n)},\bigcap_{m=0}^N\big\{(x_0,y_{0,m})=(1,1)
					\text{ or }(x_0,y_{0,m})=(0,0)\big\}\right)\\
					&\qquad =1-\lim_{n\to\infty}Q^{(n)}\left(\bar{\vt}^{(n)},
					\bigcup_{m=0}^N\big\{(x_0,y_{0,m})\in[0,1]^2\backslash\{(0,0),(1,1)\}\big\}\right)\\
					&\qquad \geq 1-\lim_{n\to\infty}\sum_{m=0}^NQ^{(n)}\Big(\bar{\vt}^{(n)},\big\{(x_0,y_{0,m})
					\in[0,1]^2\backslash\{(0,0),(1,1)\}\big\}\Big) =1-0=1.
				\end{aligned}
			\end{equation}
			Note that 
			\begin{equation}
				\begin{aligned}
					&\lim_{n\to\infty}Q^{(n)}\left(\bar{\vt}^{(n)},
					\bigcap_{m=0}^N\big\{(x_0,y_{0,m})=(1,1)\text{ or }(x_0,y_{0,m})=(0,0)\big\}\right)\\
					&\qquad =\lim_{n\to\infty}Q^{(n)}\Big(\bar{\vt}^{(n)},\big\{(x_0,(y_{0,m})_{0 \leq m \leq N})
					=(0,0^{N+1})\\
					&\qquad\qquad\qquad\qquad\qquad
					\text{ or }(x_0,(y_{0,m})_{0 \leq m \leq N})=(1,1^{N+1})\big\}\Big) =1.
				\end{aligned}
			\end{equation}
			On the other hand,
			\begin{equation}
				\begin{aligned}
					&\lim_{n\to\infty}Q^{(n)}\Big(\bar{\vt}^{(n)},\{(x_0,(y_{0,m})_{0 \leq m \leq N})=(1,1^{N+1})\}\Big)\\
					&\qquad \leq \lim_{n\to\infty}Q^{(n)}\big(\bar{\vt}^{(n)},\{(x_0,y_{0,0})=(1,1)\}\big)=\theta
				\end{aligned} 
			\end{equation}
			and
			\begin{equation}
				\begin{aligned}
					&\lim_{n\to\infty}Q^{(n)}\Big(\bar{\vt}^{(n)},\{(x_0,(y_{0,m})_{0 \leq m \leq N})
					=(0,0^{N+1})\}\Big)\\
					&\qquad \leq \lim_{n\to\infty} Q^{(n)}\big(\bar{\vt}^{(n)},\{(x_0,y_{0,0})=(0,0)\}\big)=1-\theta.
				\end{aligned}
			\end{equation} 
			Hence we conclude that
			\be{}\label{901}
			\lim_{n\to\infty}Q^{(n)}\Big(\bar{\vt}^{(n)},\{(x_0,(y_{0,m})_{0 \leq m \leq N})=(1,1^{N+1})\}\Big)
			= \theta
			\ee 
			and
			\be{}\label{902}
			\lim_{n\to\infty}Q^{(n)}\Big(\bar{\vt}^{(n)},\{(x_0,(y_{0,m})_{0 \leq m \leq N})=(0,0^{N+1})\}\Big)
			= 1-\theta.
			\ee
			We can do the same for all finite-dimensional distributions. Since $[0,1]\times[0,1]^{\N_0}$ is compact, the process $z_0=(x_0,(y_{0,m})_{m\in\N_0})$ is tight. Therefore, by  \eqref{901}--\eqref{902} we find for every converging subsequence
			\begin{equation}
				\lim_{k\to\infty}Q^{(n_k)}\Big(\bar{\vt}^{(n_k)},\cdot\Big)=(1-\theta)\,\delta_{(0,0^{\N_0})} + \theta\,\delta_{(1,1^{\N_0})}.	
			\end{equation}
			We conclude that 
			\begin{equation}
				\lim_{n\to\infty} Q^{(n)}(\bar{\vt}^{(n)},\d z_0) 
				= (1-\theta)\,\delta_{(0,0^{\N_0})} + \theta\,\delta_{(1,1^{\N_0})},
			\end{equation}
			which is the claim in \eqref{limKnnu}.
		\end{proof}
		
		\subsection{Dichotomy finite versus infinite seed-bank}
		\label{ss.orbit}
		
		In this section we prove Theorem~\ref{T.dichotomy}.
		
		\begin{proof} 
			We investigate for what choices of the sequences $\underline{c},\underline{K},\underline{e}$ defined in \eqref{738} and \eqref{defKem} we meet the \emph{clustering criterion} $\lim_{n\to\infty} A_n \to\infty$ in \eqref{cluscond}. Recall from \eqref{Ekdef} and \eqref{modefA} that
			\begin{equation} 
				A_n = \frac{1}{2}\sum_{k=0}^{n-1}\frac{E_k}{c_k}\frac{(E_kc_k+e_k)}{(E_kc_k+e_k)+E_kK_k e_k},
				\qquad 
				E_k = \frac{1}{1+\sum_{m=0}^{k-1} K_m}.
			\end{equation}
			We distinguish between three regimes as $k\to\infty$: 
			\begin{enumerate}
				\item $E_kc_k+e_k \gg E_kK_ke_k$.
				\item $E_kc_k+e_k \asymp E_kK_ke_k$.
				\item $E_kc_k+e_k \ll E_kK_ke_k$.  
			\end{enumerate}
			These regimes correspond to the following scaling for $A_n$ as $n\to\infty$:
			\begin{enumerate}
				\item $A_n \sim \frac{1}{2}\sum_{k=0}^{n-1} \frac{E_k}{c_k}$.
				\item $A_n \asymp \sum_{k=0}^{n-1} \frac{E_k}{c_k}$.
				\item $A_n \sim \frac{1}{2}\sum_{k=0}^{n-1} \frac{E_kc_k+e_k}{c_kK_ke_k}$.  
			\end{enumerate}
			Recall from that \eqref{rhodef} that
			\begin{equation}
				\rho = \sum_{m\in\N_0} K_m.
			\end{equation}
			Different behaviour shows up for finite seed-bank ($\rho<\infty$) and infinite seed-bank ($\rho=\infty$). 
			
			\paragraph{(I) $\rho<\infty$.}
			
			Note that $k \mapsto E_k$ is non-increasing and converges to $1/(1+\rho)>0$. Since $E_k \leq 1$, we have $\lim_{k\to\infty} E_kK_k=0$ and hence we are in regime 1. Therefore
			\be{}
			A_n \sim \frac{1}{2(1+\rho)} \sum_{k=0}^{n-1}\frac{1}{c_k}, \qquad n\to\infty,
			\ee
			and clustering occurs if and only if $\sum_{k\in\N_0} \frac{1}{c_k}=\infty$, which is the same criterion as for the system without seed-bank.  
			
			\paragraph{(II) $\rho=\infty$.}
			
			We focus on the settings in \eqref{regvar} and \eqref{pureexp}, which fall in regimes 1 and 2. 
			
			\medskip\noindent
			{\bf Asymptotically polynomial.}
			Suppose that
			\be{} 
			K_k \sim Ak^{-\alpha}, \quad k \to \infty,\, A \in (0,\infty),\, \alpha \in (-\infty,1).
			\ee
			Then
			\begin{equation}
				\label{scalrel1}
				E_k \sim \frac{1-\alpha}{A}k^{-(1-\alpha)}, \qquad
				E_kK_k \sim \frac{1-\alpha}{A}k^{-1}, \qquad k \to \infty.
			\end{equation}
			Hence we are in regime 1. Suppose that
			\be{}
			c_k\sim Fk^{-\phi}, \quad k\to\infty,\, F \in (0,\infty),\, \phi \in \R.
			\ee 
			Then
			\begin{equation}
				\label{asymp2}
				A_n \sim \frac{1-\alpha}{2AF} \sum_{k=1}^{n-1} k^{-1+\alpha+\phi},
				\qquad n \to \infty,
			\end{equation}
			and clustering occurs if and only if $-\phi \leq \alpha < 1$. In this case
			\begin{equation}
				\begin{aligned}
					&-\phi<\alpha\colon \quad A_n \sim \frac{1-\alpha}{2AF(\alpha+\phi)}\,n^{\alpha+\phi},\\
					&-\phi=\alpha\colon \quad A_n \sim \frac{1-\alpha}{2AF}\,\log n.
				\end{aligned}
			\end{equation}
			
			The case $\alpha=1$ can be included. Then \eqref{scalrel1} becomes
			\begin{equation}
				\label{scalrel2}
				E_k \sim \frac{1}{A \log k}, \qquad
				E_kK_k\sim \frac{1}{k\log k}, \qquad k \to \infty,
			\end{equation}
			so that we are again in regime 1. Now \eqref{asymp2} becomes
			\begin{equation}
				A_n \sim \tfrac{1}{2AF} \sum_{k=1}^{n-1} \frac{k^\phi}{\log k}, \qquad n\to\infty,
			\end{equation}
			and clustering occurs if and only if $-\phi \leq 1$. In this case
			\begin{equation}
				\begin{aligned}
					&-\phi<1\colon \quad A_n \sim \frac{1}{2AF(1+\phi)}\,\frac{n^{1+\phi}}{\log n},\\
					&-\phi=1\colon \quad A_n \sim \frac{1}{2AF}\,\log\log n.
				\end{aligned}
			\end{equation}	 
			
			\medskip\noindent
			{\bf Pure exponential.}
			Suppose that
			\be{}
			K_k=K^{k}, \quad k \in \N_0,\,K \in (1,\infty).
			\ee
			Then
			\begin{equation}
				\label{Ekrels}
				E_k = \frac{1}{1+\sum_{m=0}^{k-1} K^m} = \frac{1}{1+\frac{K^k-1}{K-1}} = \frac{K-1}{K^k+K-2}.
			\end{equation}
			Suppose that
			\begin{equation}
				e_k= e^k, \quad c_k= c^k, \quad k \in \N_0,\, e,c \in (0,\infty).
			\end{equation}
			Then
			\be{}
			E_kc_k+e_k = \frac{K-1}{K^k+K-2}c^k+e^k,
			\qquad E_kK_ke_k = \frac{K-1}{K^k+K-2}K^ke^k,
			\ee
			and so
			\begin{equation}
				\label{scalrel1*}
				E_kc_k+e_k \sim (K-1) \left(\frac{c}{K}\right)^k+e^k,
				\qquad E_kK_ke_k \sim (K-1) e^k, \qquad k \to \infty. 
			\end{equation}
			For $c \leq Ke$ we are in regime 2, and hence
			\begin{equation}
				A_n \sim \frac12 \sum_{k=0}^{n-1} \frac{K-1}{(Kc)^k}\,
				\frac{(K-1)(\frac{c}{K})^k+e^k}{(K-1)(\frac{c}{K})^k+ Ke^k},
				\qquad n\to\infty,
			\end{equation}
			which simplifies to 
			\begin{equation}
				\begin{aligned}
					&c<Ke\colon \quad A_n \sim \frac{1}{2K} \sum_{k=0}^{n-1} \frac{K-1}{(Kc)^k},\\
					&c=Ke\colon \quad A_n \sim \frac{K-1}{2(2K-1)} \sum_{k=0}^{n-1} \frac{K-1}{(Kc)^k}.
				\end{aligned}
			\end{equation}
			Clustering occurs if and only if $Kc \leq 1$. In this case
			\begin{equation}
				\label{sumasymp}
				\begin{aligned}
					&Kc<1\colon \quad  \sum_{k=0}^{n-1} \frac{1}{(Kc)^k}\sim \frac{1}{1-Kc}\,(Kc)^{-(n-1)},\\
					&Kc=1\colon \quad  \sum_{k=0}^{n-1} \frac{1}{(Kc)^k} \sim n.
				\end{aligned}
			\end{equation}
			For $c > Ke$, on the other hand, we are in regime 1, and hence
			\be{}
			A_n \sim \frac12 \sum_{k=0}^{n-1} \frac{K-1}{(Kc)^k}, \qquad n\to\infty,
			\ee
			for which we can again use \eqref{sumasymp}.
			
			The case $K=1$ can be included. Then $E_k=1/k$ and \eqref{scalrel1*} becomes
			\begin{equation}
				E_kc_k+e_k \sim \frac{1}{k}c^k+e^k,
				\qquad E_kK_ke_k \sim \frac{1}{k} e^k, \qquad k \to \infty, 
			\end{equation}
			we are again in regime 1. Hence
			\begin{equation}
				\label{asymp3}
				A_n \sim \frac12 \sum_{k=0}^{n-1} \frac{1}{kc^k}, \qquad n \to \infty,
			\end{equation}
			and clustering occurs if and only if $c \leq 1$. In that case
			\begin{equation}
				\begin{aligned}
					&c<1\colon \quad A_n \sim \frac{1}{2(1-c)}\,\frac{1}{n}\,c^{-(n-1)},\\
					&c=1\colon \quad A_n \sim \frac{1}{2}\,\log n.
				\end{aligned}
			\end{equation}
		\end{proof}
		
		In the above computations, only regimes 1 and 2 arise. Regime 3 arises, for instance, when
		\begin{equation}
			\lim_{k\to\infty} \frac{1}{k} \log K_k = \infty, \qquad \lim_{k\to\infty} K_ke_k/c_k = \infty.    
		\end{equation}
		Indeed, the first condition implies that $E_k \sim 1/K_{k-1}$ and $E_k K_k \gg 1$, while the second implies that $E_kK_ke_k \gg  E_kc_k$. There are two subcases: 
		\begin{equation}
			\begin{aligned}
				&K_{k-1} e_k \ll c_k\colon \quad A_n \sim \tfrac12 \sum_{k=0}^{n-1} \frac{1}{K_kK_{k-1}e_k},\\
				&K_{k-1} e_k \gg c_k\colon \quad A_n \sim \tfrac12 \sum_{k=0}^{n-1} \frac{1}{K_kc_k}.
			\end{aligned}
		\end{equation}
		By picking, for instance, $e_k=1/K_kK_{k-1}$, we find that $A_n \sim \tfrac12 n$ in the first subcase and $A_n \gg n$ in the second subcase.  By picking, alternatively, $c_k = 1/K_k$, we find that $A_n \gg n$ in the first subcase and $A_n \sim \tfrac12 n$ in the second subcase.    
		
		\appendix
		
		\section{Computation of scaling coefficients}
		\label{app.comp}
		
		In Appendices~\ref{app.regvar}--\ref{app.pureexp} we spell out a technical computation for the tail of the wake-up time defined in \eqref{sigtau}--\eqref{deftau} in the two parameter regimes given by \eqref{regvar}--\eqref{pureexp}.  In Appendix~\ref{app.slowvar} we carry out a computation that is needed in Section~\ref{ss.intcrit}.

		\subsection{Regularly varying coefficients}
		\label{app.regvar}
		
		In \eqref{sigtau}, note that for large $t$ in the sum over $m$ only small values of $e_m/N^m$ contribute, which means large $m$. Hence, by the Euler-MacLaurin approximation formula, we have
		\begin{equation}
			P(\tau>t) = \frac{1}{\chi} \sum_{m\in\N_0} K_m \frac{e_m}{N^m}\,\e^{-(e_m/N^m) t}
			\sim \frac{1}{\chi} \int_c^\infty \d m\,K_m \frac{e_m}{N^m}\,\e^{-(e_m/N^m) t},
		\end{equation}
		where $c$ is a constant that identifies from which value of $m$ onward terms contribute significantly. Make the change of variable $s=\frac{e_m}{N^m}$. Since $e_m\sim Bm^{-\beta}$ and $K_m\sim Am^{-\alpha}$ as $m\to\infty$, we have
		\begin{equation}
			s\sim Bm^{-\beta}N^{-m}
		\end{equation}
		and hence
		\begin{equation}
			\begin{aligned}
				\log s &\sim \log B - \beta\log m - m\log N,\\
				\log \frac{1}{s} &= m\log N\left(-\frac{B}{m\log N}+\frac{\beta\log m}{m\log N}+1\right) = [1+o(1)]\,m\log N,
			\end{aligned}
		\end{equation}
		which gives
		\begin{equation}
			m = [1+o(1)]\,\frac{\log(\frac{1}{s})}{\log N}.
		\end{equation}
		Thus,
		\begin{equation}
			\frac{1}{s}\frac{\d s}{\d m} = -\log N-\frac{\beta}{m} = -[1+o(1)]\,\log N,
		\end{equation}
		which implies
		\begin{equation}
			\frac{\d s}{\d m} = - [1+o(1)]\,s\log N,
		\end{equation}
		so that $s(m)$ is asymptotically decreasing in $m$, and
		\begin{equation}
			\frac{\d m}{\d s} = -[1+o(1)]\,\left(s\log N\right)^{-1}.
		\end{equation}
		Note that if $c\leq m<\infty$, then asymptotically $0<m^{-\beta}N^{-m}<c^{-\beta}N^{-c}=C_2$. Doing the substitution, we get
		\begin{equation}
			\begin{aligned}
				&\P(\tau>t) \sim \frac{1}{\chi} \int_0^{C_2} \d s\,K_m s \left(s\log N\right)^{-1} \,\e^{-s t}
				\sim \frac{1}{\chi} \int_0^{C_2} \d s\,Am^{-\alpha}  \left(\log N\right)^{-1} \,\e^{-s t}\\
				&\sim \frac{1}{\chi} \int_0^{C_2} \d s\,A \left(\frac{\log(\frac{1}{s})}{\log N}\right)^{-\alpha}
				\left(\log N\right)^{-1} \,\e^{-s t}
				\sim \frac{A}{\chi} \left(\frac{1}{\log N}\right)^{-\alpha+1} 
				\int_0^{C_2}  \d s\, \log\left(\tfrac{1}{s}\right)^{-\alpha} \,\e^{-s t}.
			\end{aligned}
		\end{equation}
		Next, put $st=u$, so $s=\frac{u}{t}$ and $\frac{\d s}{\d u}=\frac{1}{t}$ and $0<u<tC_2$. Then
		\begin{equation}
			\P(\tau>t) \sim  \frac{A}{\chi} \left(\frac{1}{\log N}\right)^{-\alpha+1} 
			\frac{1}{t}\int_0^{C_2 t} \d u\, \log\left(\tfrac{t}{u}\right)^{-\alpha} \,\e^{-u}.
		\end{equation}
		We will show that 
		\begin{equation}
			\frac{A}{\chi} \left(\frac{1}{\log N}\right)^{-\alpha+1} 
			\frac{1}{t}\int_0^{C_2 t} \d u\, \log\left(\tfrac{t}{u}\right)^{-\alpha} \,\e^{-u}
			\asymp \frac{A}{\chi} \left(\frac{1}{\log N}\right)^{-\alpha+1} 
			\frac{1}{t} \int_0^{C_2 t} \d u\, \log t^{-\alpha} \,\e^{-u}.
		\end{equation}
		For $\alpha=0$ this claim is immediate. For $\alpha\in(-\infty,0)$, note that $\log\left(\frac{t}{u}\right)^{-\alpha}$ is a decreasing function on $(0,C_2t)$. Therefore we can reason as follows: 
		\begin{equation}
			\begin{aligned}
				\int_0^{C_2 t} \d u\,\log\left(\tfrac{t}{u}\right)^{-\alpha}\e^{-u}
				&=\int_0^{1} \d u\,\log\left(\tfrac{t}{u}\right)^{-\alpha}\e^{-u}
				+\int_1^{C_2 t} \d u\,\log\left(\tfrac{t}{u}\right)^{-\alpha}\e^{-u}\\
				&\leq \int_0^{1} \d u\,\log\left(\tfrac{t}{u}\right)^{-\alpha} + \int_1^{C_2 t} \d u\,\log{t}^{-\alpha}\e^{-u}\\
				&\leq 2^{-\alpha}\int_0^{\frac{1}{t}} \d u\,\log\left(\tfrac{1}{u}\right)^{-\alpha}
				+2^{-\alpha}\int_{\frac{1}{t}}^{1} \d u\,\log t^{-\alpha} + \log t^{-\alpha}\left[1-\e^{-1}\right]\\
				&\leq 2^{-\alpha}\Gamma(-\alpha+1)+2^{-\alpha}\log t^{-\alpha}\left[1-\tfrac{1}{t}\right]
				+\log t^{-\alpha}\left[1-\e^{-1}\right]\\
				&=\log t^{-\alpha}\left[2^{-\alpha}\tfrac{\Gamma(-\alpha+1)}{\log t^{-\alpha}}
				+2^{-\alpha}\left[1-\tfrac{1}{t}\right]+\left[1-\e^{-1}\right]\right]
				\asymp\log t^{-\alpha}.
			\end{aligned}
		\end{equation}
		For the lower bound, note that 
		\begin{equation}
			\begin{aligned}
				\int_0^{C_2 t} \d u\,\log\left(\tfrac{t}{u}\right)^{-\alpha}\e^{-u}
				&\geq \log\left(t\right)^{-\alpha}\int_0^{1} \d u\,\e^{-u}
				+\log(\tfrac{1}{C_2})^{-\alpha}\int_1^{C_2 t} \d u\,\e^{-u}\\
				&=\log t^{-\alpha}\left[1-e^{-1}+\frac{\log(\tfrac{1}{C_2})^{-\alpha}}{\log t^{-\alpha}}\right]
				\asymp\log t^{-\alpha}.
			\end{aligned}
		\end{equation} 
		For $\alpha\in(0,1]$, note that the function $\log\left(\frac{t}{u}\right)^{-\alpha}$ is increasing in $u$. For the lower bound estimate
		\begin{equation}
			\begin{aligned}
				\int_0^{C_2 t} \d u\,\log\left(\tfrac{t}{u}\right)^{-\alpha}\e^{-u}
				&\geq \lim_{u\to 0}\log\left(\tfrac{t}{u}\right)^{-\alpha}[1-\e^{-1}]+\log t^{-\alpha}[\e^{-1}-e^{-C_2 t}]\\
				&=\log t^{-\alpha}\left[0+\e^{-1}-\e^{-C_2 t}\right]
				\asymp \log t^{-\alpha}.
			\end{aligned}
		\end{equation}
		For the upper bound estimate
		\begin{equation}
			\begin{aligned}
				&\int_0^{C_2 t} \d u\,\log\left(\tfrac{t}{u}\right)^{-\alpha}\e^{-u}\\
				&\leq \log t^{-\alpha}[1-\e^{-1}]+\log\left(\frac{t}{\sqrt{C_2 t}}\right)^{-\alpha}\int_1^{\sqrt{C_2 t}} \d u\,e^{-u}
				+\log\left(\frac{1}{C_2}\right)^{-\alpha}\int_{\sqrt{C_2t}}^{C_2t} \d u\,\e^{-u}\\
				&=\log t^{-\alpha}[1-\e^{-1}]+(\tfrac{1}{2})^{-\alpha}\log\left(\frac{t}{C_2}\right)^{-\alpha}
				\left[\e^{-1}-\e^{-\sqrt{C_2 t}}\right]
				+\log\left(\frac{1}{C_2}\right)^{-\alpha}\left[\e^{-\sqrt{C_2 t}}-\e^{-C_2 t}\right]\\
				&=\log t^{-\alpha}\Bigg[1-\e^{-1}+(\tfrac{1}{2})^{-\alpha}
				\left(\frac{\log t-\log C_2}{\log t}\right)^{-\alpha}\left[\e^{-1}-\e^{-\sqrt{C_2 t}}\,\right]\\
				&\qquad \qquad\qquad+ \log\left(\frac{1}{C_2}\right)^{-\alpha}\frac{\left[\e^{-\sqrt{C_2 t}}
					-\e^{-C_2 t}\right]}{\log t ^{-\alpha}} \Bigg]
				\asymp \log t^{-\alpha}.
			\end{aligned}
		\end{equation}
		
		\subsection{Pure exponential coefficients}
		\label{app.pureexp}
		
		In order to satisfy condition in \eqref{740alt}, we must assume that $Ke<N$. Since $K\geq1$ for $\rho=\infty$, we also have $e<N$. We again use that for large $t$ only large $m$ contribute to the sum. Hence, again by the Euler-MacLaurin approximation formula, we have
		\begin{equation}
			P(\tau>t) = \frac{1}{\chi} \sum_{m\in\N_0} K_m \frac{e_m}{N^m}\,\e^{-(e_m/N^m) t}
			\sim \int_M^\infty \d m\,K_m \frac{e_m}{N^m}\,\e^{-(e_m/N^m) t}.\\
		\end{equation} 
		Again we put $s=\frac{e^m}{N^m}$. Hence 
		\begin{equation}
			\log s= m\log\left(\frac{e}{N}\right), \qquad m=\frac{\log s}{\log \frac{e}{N}}, \qquad
			\frac{\d m}{\d s} = \frac{1}{s \log \frac{e}{N}},
		\end{equation}
		and  
		\begin{equation}
			K_m\sim K^m \sim \e^{\log s\frac{\log K}{\log \frac{e}{N}}}\sim s^{\frac{\log K}{\log \frac{e}{N}}}.
		\end{equation}
		Since $s(m)$ is decreasing in $m$, putting $C=(\frac{e}{N})^{M}$ we obtain
		\begin{equation}
			\P(\tau>t)
			\sim \int_0^C \d s\,K_m  \frac{s}{s \log \frac{e}{N}} \,\e^{-s t} 
			\sim \frac{1}{ \log \frac{e}{N}} \int_0^C \d s\,s^{\frac{\log K}{\log \frac{e}{N}}}  \,\e^{-s t}.
		\end{equation}
		Substitute $u=st$, i.e., $\frac{u}{t}=s$, to get
		\begin{equation}
			\begin{aligned}
				&\P(\tau>t) \sim \frac{1}{\log \frac{e}{N}}\, t^{-1-{\frac{\log K}{\log \frac{e}{N}}}}
				\int_0^{Ct} \d u\,u^{\frac{\log K}{\log \frac{e}{N}}} \,\e^{-u}\\
				&\sim \frac{1}{\log \frac{e}{N}}\, t^{\frac{-\log(\frac{e}{N})-\log K}{\log \frac{e}{N}}} 
				\int_0^{Ct} \d u\,u^{\frac{\log K}{\log \frac{e}{N}}} \,\e^{-u}
				\sim \frac{1}{\log \frac{e}{N}}\, t^{-\frac{\log(\frac{N}{Ke})}{\log \frac{N}{e}}}  
				\int_0^{Ct} \d u\,u^{\frac{\log K}{\log \frac{e}{N}}} \,\e^{-u}.
			\end{aligned}
		\end{equation}
		The last integral converges because $\frac{\log K}{\log (\frac{e}{N})}>-1$, and
		\begin{equation}
			\int_0^{Ct} \d u\,u^{\frac{\log K}{\log \frac{e}{N}}} \,\e^{-u}
			\leq \int_0^{\infty} \d u\,u^{\frac{\log K}{\log \frac{e}{N}}} \,\e^{-u}
			=\Gamma\left(\frac{\log K}{\log (\frac{e}{N})}+1\right).
		\end{equation}
		
		\subsection{Slowly varying functions}
		\label{app.slowvar}
		
		Return to Section~\ref{ss.intcrit}. Note that $t(s)=\varphi(s)^{-1} s^\gamma$. Since this is the total time two lineages are active up to time $s$, $t(s)$ must be smaller than $s$. By \eqref{hatphirepr}, we have
		\begin{equation}
			\begin{aligned}
				\frac{{\varphi}(t)}{{\varphi}(s)}=\exp\left[-\int_{t(s)}^{s}\frac{\d u}{u}\psi(u)\right].
			\end{aligned}
		\end{equation}
		Since we are interested in $s\to\infty$, we may assume that $s\gg1$ and $t(s)>1$, and estimate
		\begin{equation}
			\begin{aligned}
				\frac{{\varphi}(t)}{{\varphi}(s)}
				&\leq \exp\left[\int_{t(s)}^s\frac{\d u}{u}\frac{C}{\log u}\right]
				= \exp\big[C(\log\log s-\log\log t(s))\big]\\
				&= \exp\left[C\log\left(\frac{\log s}{\log \left(\varphi(s)^{-1}s^\gamma\right)}\right)\right]
				= \exp\left[-C\log\left(\frac{\gamma\log s-\log \varphi(s)}{\log s}\right)\right].
			\end{aligned}
		\end{equation}
		A similar lower bound holds with the sign reversed. Using that $\lim_{s\to\infty}\frac{\log \varphi(s)}{\log s}=0$, we get
		\begin{equation}
			\begin{aligned}
				\gamma^C \leq \liminf_{s\to\infty}\frac{{\varphi}(t)}{{\varphi}(s)} 
				\leq \limsup_{s\to\infty}\frac{{\varphi}(t)}{{\varphi}(s)} \leq \gamma^{-C}. 
			\end{aligned}
		\end{equation}
		Both bounds above are positive, so indeed $\frac{{\varphi}(t)}{{\varphi}(s)}\asymp 1$. 
		
		\section{Meyer-Zheng topology}
		\label{apb}
		
		\subsection{Basic facts about the Meyer-Zheng topology}
		
		In the Meyer-Zheng topology we assign to each real-valued Borel measurable function $(w(t))_{t\geq 0}$ a probability law on $[0,\infty]\times\bar{\R}$ that is called the pseudopath $\psi_w$. Note that the Borel-$\sigma$ algebra on $[0,\infty]\times\bar{\R}$ is generated by sets of the form $[a,b]\times B$ for $B\in\mathcal{B}$ and $0<a<b$. For $A=[a,b]\times B$, set
		\begin{equation}
			\psi_w (A)=\int 1_A(t,w(t)) \e^{-t}\d t=\int_a^b 1_B(w(t))e^{-t}\d t,
		\end{equation} 
		i.e., $\psi_w$ is the image measure of the mapping $t\to(t,w(t))$ under the measure $\lambda (\d t)=\e^{-t}\d t$. The set of all pseudopaths is denoted by $\Psi$. Note that a pseudopath corresponding to $(w(t))_{t>0}$ is simply its occupation measure. The following important facts are stated in \cite{MZ84}:
		\begin{itemize}
			\item 
			If two paths $w_1$ and $w_2$ are the same Lebesgue a.e., then $\psi_{w_1}=\psi_{w_2}$. 
			\item 
			Denote by $\mathbf{D}$ the space of c\`adl\`ag paths on $[0,\infty]\times\R$. The mapping $\psi\colon\, \mathbf{D}\to\Psi,\, w\mapsto\psi_w$ is one-to-one on $\mathbf{D}$ and hence gives an embedding of $\mathbf{D}$ into the compact space $\bar{\CP}$, the space of probability measures on $[0,\infty]\times\bar{\R}$.
			\item 
			Note if $f$ is a function on $[0,\infty]\times\R$ and $w\in \mathbf{D}$, then
			\begin{equation}
				\psi_w(f)=\int_0^\infty f(t,w(t))\,\e^{-t}\d t.
			\end{equation} 
			Therefore we say that the sequence of pseudopaths induced by $(w_n)\subset\mathbf{D}$ converges to a pseudopath $w$ if, for all continuous bounded function $f(t,w(t))$ on $[0,\infty]\times\bar{\R}$, 
			\begin{equation}
				\lim_{n\to\infty}\int_0^\infty f(t, w_n(t))\,\e^{-t}\d t=\int_0^\infty f(t,w(t))\,\e^{-t}\d t.
			\end{equation}  
			Since a pseudopath is a measure, convergence of pseudopaths is convergence of measures.
			\item 
			$\mathbf{D}$ endowed with the pseudopath topology is \emph{not} a Polish space. $\Psi$ endowed with the pseudopath topology is a Polish space.
			\item 
			According to \cite{MZ84}[Lemma 1], the pseudopath topology on $\Psi$ is convergence in Lebesgue measure on $\mathbf{D}$. 
		\end{itemize} 
		
		\subsection{Pseudopaths of stochastic processes on a general metric separable space}
		
		In \cite{K91} the results of \cite{MZ84} on state space $\R$ are generalised to a general metric separable space $E$. Let $(Z(t))_{t>0}$ be a stochastic process with state space $E$. Then we assign a random pseudopath to $(Z(t))$ as follows: for $\omega\in\Omega$ and $A=[a,b]\times B$, $0\leq a<b$ and $B\in\CB(E)$, 
		\begin{equation}
			\psi_{(Z(t,\omega))_{t\geq 0}}(A)=\int_a^b1_{B}(Z(t,\omega))\,\e^{-t}\d t.
		\end{equation} 
		Hence $\psi_{(Z(t))_{t\geq 0}}$ is a random variable with state space $\Psi$, i.e., $\psi_{(Z(t))_{t\geq 0}}\in\CM(\Psi)$, the set of probability measures on pseudopaths. Note that
		\begin{equation}
			\E\left[\psi_{(Z(t))_{t\geq 0}}f\right]=\E\left[\int_0^\infty f(t,Z(t,\omega))\,\e^{-t}\d t\right]
			= \E\left[\int_0^\infty f(t,Z(t))\,\e^{-t}\d t\right].
		\end{equation}
		
		\paragraph{Weak convergence in the Meyer-Zheng topology.} 
		
		Let $(Z_n(t))_{t\geq 0}$ and $(Z(t))_{t\geq 0}$ be stochastic processes with state-space $E$. We say that
		\begin{equation}
			\CL\left[(Z_n(t))_{t\geq 0}\right]=\CL\left[(Z(t))_{t\geq 0}\right] \text{ in the Meyer-Zheng topology}
		\end{equation}
		if, for all $f\in \CC_b(\Psi)$,
		\begin{equation}
			\lim_{n\to\infty}\E[f(\psi_{(Z^n(t))_{t\geq 0}})]=\E[f(\psi_{(Z(t))_{t\geq 0}})].
		\end{equation}
		
		Let $\CC_m([0,\infty)\times E)\subset\CC_b([0,\infty)\times E)$ be the set of functions of the form
		\begin{equation}
			\begin{aligned}
				\CC_m([0,\infty)\times E)
				&= \Big\{F\in \CC_b([0,\infty)\times E): F(t,x(t))
				=\prod_{i=1}^{m}\int_0^{T_i}f_i(t,x(t))\d t,\\
				&\qquad m\in\N,\,\forall 1\leq i\leq m,\, f_i\in\CC_b([0,\infty)\times E),\, T_i>0\Big\}.
			\end{aligned}
		\end{equation}
		Note that $\CC_m$ is an algebra. Let $M_E[0,\infty)$ be the space of measurable processes from $[0,\infty)$ to $\E$, so $\mathbf{D}\subset M_E[0,\infty)$. Note that $\CC_m$ separates points in $M_E[0,\infty)$. By \cite{K91}[Proposition 4.5], the set $\CC_m$ is separating in the set of measures on $M_E[0,\infty)$. This means that if two stochastic processes $(Z_1(t))_{t>0}$ and $(Z_2(t))_{t\geq 0}$ satisfy
		\begin{equation}
			\label{friet}
			\E[F(Z_1)]=\E[F(Z_2)] \qquad \forall F\in\CC_m,
		\end{equation}
		then $\CL[Z_1]=\CL[Z_2]$.
		
		Define
		\begin{equation}
			F(\psi)=\int \d \psi \prod_{i=1}^{m} \int_0^{T_i} f_i(t,x(t)) \,\d t.
		\end{equation}
		Recall that a pseudopath $\psi$ is associated with a path $w\in M_E[0,\infty)$. Hence this becomes
		\begin{equation}
			\label{19}
			F(\psi_w)= \prod_{i=1}^{m} \int_0^{T_i}f_i(t,w(t))\,\d t.
		\end{equation}
		Since each pseudopath $\psi\in\Psi$ is associated with a path in $M_E[0\infty)$, $\CC_m$ also separates points on $\Psi$ and hence $\CC_m$ separates measures on $\Psi$. This implies that if
		\begin{equation}
			\label{mayo}
			\E[F(\psi_{Z_1})]=\E[F(\psi_{Z_2})]\qquad \forall F\in\CC_m,
		\end{equation}
		then $\CL[\psi_{Z_1}]=\CL[\psi_{Z_2}]$. Therefore $\CL[Z_1]=\CL[Z_2]$ if and only if $\CL[\psi_{Z_1}]=\CL[\psi_{Z_2}]$.  
		
		The Meyer-Zheng topology is a weaker than the Skohorod topology. 
		
		\begin{lemma}
			\label{lem90}
			Let $(Z_n(t))_{t\geq 0}$ $n\in\N$ and $(Z(t))_{t\geq 0}$ be stochastic processes with Polish state-space $E$. If
			\begin{equation}
				\lim_{n\to\infty}\CL\left[(Z_n(t))_{t\geq 0}\right]=\CL\left[(Z(t))_{t\geq 0}\right] \text{ in the Skohorod topology},
			\end{equation} 
			then
			\begin{equation}
				\lim_{n\to\infty}\CL\left[(Z_n(t))_{t\geq 0}\right]=\CL\left[(Z(t))_{t\geq 0}\right] \text{ in the Meyer-Zheng topology}.
			\end{equation}
		\end{lemma}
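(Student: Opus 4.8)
\textbf{Proof plan for Lemma~\ref{lem90}.} The statement asserts that convergence in law in the Skorohod topology on $D([0,\infty),E)$ implies convergence in law in the Meyer--Zheng topology, i.e. convergence in $\CM(\Psi)$ of the induced laws on pseudopaths. The plan is to reduce everything to the continuity of the pseudopath map $\psi\colon D([0,\infty),E)\to\Psi$, $w\mapsto\psi_w$, together with the continuous mapping theorem.

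First I would recall that, by the discussion following \eqref{e1519} and the facts listed in Appendix~\ref{apb}, the pseudopath map $\psi$ sends a c\`adl\`ag path $w$ to its weighted occupation measure $\psi_w$ on $[0,\infty]\times E$, and that $\Psi$ endowed with the topology of weak convergence of probability measures on $[0,\infty]\times E$ is Polish. The key analytic step is to show that $\psi$ is \emph{continuous} from $(D([0,\infty),E),d_{\mathrm{Sk}})$ into $(\Psi,\text{weak})$. Suppose $w_n\to w$ in the Skorohod topology. Then for every bounded continuous $f$ on $[0,\infty]\times E$ one has, for each fixed $t$ that is a continuity point of $w$ (which is all but countably many $t$, hence Lebesgue-a.e.\ $t$), $f(t,w_n(t))\to f(t,w(t))$; since $|f(t,w_n(t))\e^{-t}|\le\|f\|_\infty\e^{-t}$ is dominated by an integrable function, dominated convergence gives
\begin{equation}
\lim_{n\to\infty}\int_0^\infty f(t,w_n(t))\,\e^{-t}\,\d t=\int_0^\infty f(t,w(t))\,\e^{-t}\,\d t,
\end{equation}
i.e.\ $\psi_{w_n}\to\psi_w$ weakly. (One small technical point: Skorohod convergence $w_n\to w$ on $[0,\infty)$ gives $w_n(t)\to w(t)$ only at continuity points of $w$, which requires a time-change argument; this is standard — see \cite{MZ84,K91} — and I would cite it rather than reprove it.) Hence $\psi$ is continuous.

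With continuity of $\psi$ in hand, the lemma follows from the continuous mapping theorem for weak convergence of probability measures on Polish spaces: if $\CL[Z_n]\to\CL[Z]$ in the Skorohod topology, then for every $F\in\CC_b(\Psi)$ the function $F\circ\psi$ is bounded and continuous on $D([0,\infty),E)$, so
\begin{equation}
\lim_{n\to\infty}\E\big[F(\psi_{Z_n})\big]=\lim_{n\to\infty}\E\big[(F\circ\psi)(Z_n)\big]=\E\big[(F\circ\psi)(Z)\big]=\E\big[F(\psi_{Z})\big],
\end{equation}
which is exactly convergence of $\CL[Z_n]$ to $\CL[Z]$ in the Meyer--Zheng topology.

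The main obstacle — and really the only nonroutine point — is the time-change subtlety in the continuity argument: Skorohod convergence allows deformation of the time axis, so $w_n(t)$ need not converge to $w(t)$ pointwise, only after composing with homeomorphisms $\lambda_n$ of $[0,\infty)$ with $\lambda_n\to\mathrm{id}$ uniformly on compacts. One resolves this by writing $\int_0^\infty f(t,w_n(t))\e^{-t}\,\d t=\int_0^\infty f(\lambda_n(s),w_n(\lambda_n(s)))\,\e^{-\lambda_n(s)}\lambda_n'(s)\,\d s$ and using that $w_n\circ\lambda_n\to w$ uniformly on compacts (away from the jump times of $w$), $\lambda_n\to\mathrm{id}$, $\lambda_n'\to1$ in the appropriate sense, plus the uniform exponential tail to control the contribution near $\infty$; then dominated convergence applies. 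Since this is precisely the content of \cite[Lemma 1]{MZ84} and \cite{K91}, I would invoke it directly and keep the written proof short.
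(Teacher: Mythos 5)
Your proof is correct and hinges on the same key analytic fact as the paper's, namely that Skorohod convergence of c\`adl\`ag paths implies pointwise convergence at all but countably many times and hence, by dominated convergence against $e^{-t}\,\d t$, weak convergence of the associated pseudopaths. You organise this as continuity of the map $\psi\colon D([0,\infty),E)\to\Psi$ followed by the continuous mapping theorem, whereas the paper invokes Skorohod's representation theorem to produce a.s.\ Skorohod-convergent copies $\tilde Z_n\to\tilde Z$, deduces $\psi_{\tilde Z_n}\to\psi_{\tilde Z}$ a.s.\ from the same pathwise observation, and concludes by bounded convergence together with the remark that the pseudopath law depends only on the law of the process. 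The two routes are equivalent in substance (the Skorohod-representation step is essentially a proof-in-place of the continuous mapping theorem applied to $\psi$), and your version is marginally shorter since the continuous mapping theorem is available off the shelf; in either version the one nontrivial ingredient is the time-change observation you correctly isolate and attribute to \cite{MZ84} and \cite{K91}.
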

		
		\begin{proof}
			Since we do not know whether $\Psi$ is compact, the set $\CC_m$ does not have to be convergence determining. Therefore, via Skorohod's theorem we construct the process $\tilde{Z}^n$ and $\tilde{Z}$ on one probability space, such that $\CL[\tilde{Z}^n]=\CL[{Z}^n]$ and $\CL[\tilde{Z}]=\CL[{Z}]$, and 
			\begin{equation}
				\lim_{n\to\infty} \tilde{Z}_n=\tilde{Z}\qquad a.s.
			\end{equation}
			This implies
			\begin{equation}
				\lim_{n\to\infty}\psi_{\tilde{Z}^n}=\psi_{\tilde{Z}}\qquad a.s. 
			\end{equation}
			Consequently, for all $f\in\CC_b(\Psi)$,
			\begin{equation}
				\label{sate}
				\lim_{n\to\infty}\E[f(\psi_{\tilde{Z}^n})]=\E[f(\psi_{\tilde{Z}})].
			\end{equation}
			Note that, since $\CL[\tilde{Z}^n]=\CL[{Z}^n]$ and $\CL[\tilde{Z}]=\CL[{Z}]$, we can use \eqref{friet} and \eqref{mayo} to see that the latter implies $\CL[\psi_{Z^n}]=\CL[\psi_{\tilde{Z}^n}]$ and $\CL[\psi_{Z}]=\CL[\psi_{\tilde{Z}}]$. Hence \eqref{sate} indeed implies that
			\begin{equation}
				\lim_{n\to\infty}\CL[\psi_{Z^n}]=\CL[\psi_Z].
			\end{equation}
		\end{proof}
		
		\paragraph{Convergence in probability in the Meyer-Zheng topology.}
		
		Let $(S,d)$ be a metric space,  $\CB(S)$ denote the Borel-$\sigma$ algebra on $S$, and $\CP(S)$ the set of probability measures on $\CB(S)$. Recall (see e.g.\ \cite[Chapter 3]{EK86}) that the Prohorov metric $d_P$ on the space $\CP(S)$ is given by 
		\begin{equation}
			d_P(\P,\Q)=\inf\left\{\epsilon>0\colon\, \P(A)\leq \Q(A^\epsilon)+\epsilon\,\, \forall\, A\in\CC\right\},
		\end{equation}
		where $\CC\subset\CB(S)$ is the set of all closed sets in $S$ and $A^\epsilon=\{x\in S\colon\, \inf_{y\in A}d(x,y)<\epsilon\}$. 
		
		\begin{theorem}{\bf \cite[Theorem 3.1.2]{EK86}}
			Let $(S,d)$ be separable and let $\P,\Q\in\CP(S)$. Define $\CM(\P,\Q)$ to be the set of all $\mu\in \CP(S\times S)$ with marginals $\P$ and $\Q$, i.e., $\mu(A\times S)=\P(A)$ and $\mu(S\times A)=\Q(A)$ for all $A\in\CB(S)$. Then
			\begin{equation}
				d_P(\P,\Q)=\inf_{\mu\in\CM(\P,\Q)}\inf\{\epsilon>0\colon\,\mu(\{(x,y)\colon\,d(x,y)\geq\epsilon\})\leq \epsilon\}.
			\end{equation} 
		\end{theorem}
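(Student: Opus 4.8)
The plan is to establish $d_P(\P,\Q)=\delta(\P,\Q)$, where I abbreviate
$$\delta(\P,\Q)=\inf_{\mu\in\CM(\P,\Q)}\inf\{\epsilon>0\colon\,\mu(\{(x,y)\colon\,d(x,y)\geq\epsilon\})\leq\epsilon\},$$
by proving the two inequalities separately. This is Strassen's coupling theorem, and the separability of $(S,d)$ will enter only through a finite-support approximation and an application of Prohorov's theorem.

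\emph{The easy direction} $d_P(\P,\Q)\leq\delta(\P,\Q)$. Fix $\epsilon>\delta(\P,\Q)$ and choose $\mu\in\CM(\P,\Q)$ with $\mu(\{d(x,y)\geq\epsilon\})\leq\epsilon$. For a closed $A\in\CC$, the event $\{x\in A\}$ is contained in $\{y\in A^\epsilon\}\cup\{d(x,y)\geq\epsilon\}$, since $x\in A$ together with $d(x,y)<\epsilon$ forces $y\in A^\epsilon$. Because $\mu$ has marginals $\P$ and $\Q$, this gives $\P(A)=\mu(\{x\in A\})\leq\Q(A^\epsilon)+\epsilon$. As $A\in\CC$ is arbitrary, $d_P(\P,\Q)\leq\epsilon$, and letting $\epsilon\downarrow\delta(\P,\Q)$ finishes this half.

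\emph{The hard direction} $d_P(\P,\Q)\geq\delta(\P,\Q)$. It suffices to show: if $\epsilon>d_P(\P,\Q)$ (so that $\P(A)\leq\Q(A^\epsilon)+\epsilon$ for all closed $A$), then for every $\eta>0$ there is a $\mu\in\CM(\P,\Q)$ with $\mu(\{d(x,y)\geq\epsilon+\eta\})\leq\epsilon+\eta$; letting $\epsilon\downarrow d_P(\P,\Q)$ and $\eta\downarrow0$ then yields $\delta(\P,\Q)\leq d_P(\P,\Q)$. First I would discretise: using separability, pick a measurable partition of $S$ into pieces of diameter $<\eta$, choose a representative point in each piece, and push the masses of $\P$ and $\Q$ onto these points to get finitely supported $\P',\Q'$ with $d_P(\P,\P')<\eta$, $d_P(\Q,\Q')<\eta$, and (by a short estimate) $\P'(A)\leq\Q'(A^{\epsilon+\eta})+(\epsilon+\eta)$ for all $A$. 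Second, for the finite pair $\P'$ (on points $a_1,\dots,a_p$) and $\Q'$ (on $b_1,\dots,b_q$) I would build a coupling $\mu'$ carrying mass at least $1-(\epsilon+\eta)$ on $\{(a_i,b_j)\colon\,d(a_i,b_j)<\epsilon+\eta\}$ by the max-flow min-cut theorem (equivalently Hall's marriage theorem, or a feasibility argument on the transportation polytope): the inequalities $\P'(A)\leq\Q'(A^{\epsilon+\eta})+(\epsilon+\eta)$, applied to all sets $A=\{a_i\colon\,i\in I\}$, are precisely the condition making the relevant minimum cut at least $1-(\epsilon+\eta)$. Third, I would undo the discretisation by gluing $\mu'$ to the small transport plans coupling $\P$ with $\P'$ and $\Q$ with $\Q'$, producing $\mu_\eta\in\CM(\P,\Q)$ with $\mu_\eta(\{d(x,y)\geq\epsilon+3\eta\})\leq\epsilon+3\eta$; alternatively, along a sequence $\eta_n\downarrow0$ the couplings $\mu_{\eta_n}$ form a tight family (their marginals are the fixed measures $\P,\Q$), so Prohorov's theorem yields a weakly convergent subsequence with limit $\mu\in\CM(\P,\Q)$, and the Portmanteau theorem applied to the closed set $\{d(x,y)\geq\epsilon\}$ gives $\mu(\{d(x,y)\geq\epsilon\})\leq\epsilon$.

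The main obstacle is the finite step: the max-flow/min-cut (or Hall) argument that converts the whole family of Prohorov inequalities for $\P',\Q'$ into an actual coupling concentrated near the diagonal. The surrounding bookkeeping — checking that the discretisation transfers the Prohorov hypothesis with only an $O(\eta)$ loss, and that the various $\epsilon$'s line up correctly when $\eta\downarrow0$ — is routine but must be tracked carefully, and this is where separability is genuinely used (to produce the finitely supported approximants and, in the limiting variant, a tight family of couplings).
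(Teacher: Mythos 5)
The paper does not prove this result; it is quoted verbatim from Ethier--Kurtz \cite[Theorem~3.1.2]{EK86} and used as a black box in Appendix~B. So there is no in-paper proof to compare against. Your sketch is the standard Strassen--Dudley argument, which is also the route taken in the cited source: the easy direction via the inclusion $\{x\in A\}\subseteq\{y\in A^\epsilon\}\cup\{d(x,y)\geq\epsilon\}$, and the hard direction by discretising with separability, building a near-diagonal coupling of the finite approximants from the family of Prohorov inequalities via max-flow/min-cut (equivalently Hall or LP duality on the transportation polytope), and then passing to the limit by tightness of the couplings and Portmanteau on the closed set $\{d\geq\epsilon\}$.

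Both directions are sound. Two small bookkeeping points worth flagging, neither fatal: (i) in the discretisation step the transfer of the hypothesis to $\P',\Q'$ actually costs you $A^{\epsilon+2\eta}$ rather than $A^{\epsilon+\eta}$ once you account for moving both endpoints to their representatives (so the mass-level bound on the coupling of $\P',\Q'$ is $\epsilon+2\eta$), and the subsequent gluing costs another $2\eta$ in the distance threshold, so the final intermediate estimate reads $\mu_\eta(\{d\geq\epsilon+4\eta\})\leq\epsilon+2\eta$ or similar — but since you then send $\eta\downarrow0$ and invoke Portmanteau, the constants wash out; (ii) a countable partition into small-diameter pieces does not immediately give a \emph{finite} support — you also need inner regularity (or just $\sigma$-additivity) to lump the tail pieces into a single small-mass garbage cell, which your "short estimate" must absorb. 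Both are exactly the sort of routine-but-must-be-tracked issues you already flagged, and the central idea (the marriage-theorem step turning Prohorov inequalities into a coupling) is correctly identified as the real content.
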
 
		
		Moreover, \cite[Theorem 3.3.1]{EK86} states that convergence of measures in the Prohorov distance, $\lim_{n\to\infty} d_P(\P_n, \P)=0$, is the same as weak convergence $\P_n\Rightarrow\P$. Hence, since convergence of pseudopaths is weak convergence,  we can endow the space of pseudopaths $\Psi$ with the metric $d_P$.
		
		Let $(\Psi,d_P)$ be the pseudopath space metrized by the Prohorov distance. Let $(Z^n(t))_{t>0}$, $(Z(t))_{t>0}$ be stochastic processes on the state space $E$, where $E$ is endowed with the metric $d(\cdot,\cdot)$. Note that convergence in probability in the Meyer-Zheng topology means that
		\begin{equation}
			\forall\,\delta>0\colon\quad \lim_{n\to\infty}\P\left[d_P\left(\psi_{Z^n},\psi_Z\right)>\delta\right]=0.
		\end{equation}
		
		\paragraph{Tightness.}
		
		Define the \emph{conditional variation} for an $\R$-valued process $(U(t))_{t \geq 0}$ with natural filtration $(\CF(t))_{t \geq 0}$ as follows. For a subdivision $\tau\colon\,0=t_0<t_1< \cdots<t_n=\infty$, set
		\begin{equation}
			\label{e1575}
			V_\tau(U)=\suml_{0 \leq i<n} \E\Big[\big|\E[U(t_{i+1})-U(t_i) \mid F(t_i)]\big|\Big]
		\end{equation}
		(with $U(\infty)=0)$ and
		\begin{equation}
			\label{e1578}
			V(U)=\sup\limits_\tau V_\tau(U).
		\end{equation}
		If $V(U)< \infty$, then $U$ is called a \emph{quasi-martingale}. Note that we can always stop the process at some finite time and work with compact time intervals.
		
		\begin{lemma}{\bf [Tightness in the Meyer-Zheng topology]}
			\label{l.1585}\hfil\\
			If $(P_n)_{n\in\N}$ is a sequence of probability laws on $D([0,T],\R)$ such that under $P_n$ the coordinate process $(U(t))_{t \geq 0}$ is a quasi-martingale with a conditional variation $V_n(U)$ that is bounded uniformly in $n$, then there exists a subsequence $(P_{n_k})_{k\in\N}$ that converges weakly in the Meyer-Zheng topology on $D([0,T],\R)$ to a probability law $P$, and $(U(t))_{t \geq 0}$ is a quasi-martingale under $P$.
		\end{lemma}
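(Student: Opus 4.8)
The plan is to follow the classical argument of Meyer and Zheng \cite{MZ84}, exploiting the compactness built into the pseudopath construction and using the quasi-martingale bound only to control the regularity of the limit. First I would recall that the map $U\mapsto \psi_U$ sends a path in $D([0,T],\R)$ to its occupation measure $\mu_U(\d t,\d x)$ on the compact metric space $[0,T]\times\bar\R$ (with $\bar\R$ a compactification of $\R$), computed against Lebesgue time on $[0,T]$, and that $\CP([0,T]\times\bar\R)$ is itself compact and Polish for the weak topology. Consequently the sequence of laws $\big(\CL_{P_n}[\psi_U]\big)_{n\in\N}$ is a sequence of probability measures on a \emph{compact} Polish space, hence automatically relatively compact; I would extract a subsequence $(n_k)$ along which $\CL_{P_{n_k}}[\psi_U]$ converges weakly to a law $\Pi$ on $\CP([0,T]\times\bar\R)$. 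All the content of the lemma then lies in showing that $\Pi$ is carried by the set $\Psi$ of genuine pseudopaths of real-valued measurable processes, and that the corresponding canonical process is a quasi-martingale; only here does the conditional variation enter.

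For this second step I would first derive a maximal inequality from the uniform bound on $V_n(U)$ (together with boundedness of $\E_{P_n}[|U(0)|]$, which may be assumed or extracted): using the Rao decomposition $U=M-A$ under $P_n$, with $M$ a martingale and $A$ predictable of integrable total variation $\E_{P_n}[\mathrm{Var}_{[0,T]}(A)]\le V_n(U)$, one gets $\P_{P_n}\big(\sup_{0\le t\le T}|U(t)|\ge\lambda\big)\le C\lambda^{-1}\sup_n\!\big[\E_{P_n}|U(0)|+V_n(U)\big]$ with $C$ universal. This estimate shows that under $\Pi$ no mass of the limiting occupation measure escapes to $\pm\infty$, so $\Pi$ is carried by $\CP([0,T]\times\R)$; since moreover the projection of every $\psi_U$ onto $[0,T]$ is (normalised) Lebesgue measure and this is preserved under weak limits, $\Pi$-a.e.\ $\mu$ disintegrates as $\mu(\d t,\d x)=\d t\,\kappa_t(\d x)$ with $\kappa_t\in\CP(\R)$. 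To see that $\kappa_t$ may be taken to be a Dirac mass $\delta_{U^\infty(t)}$ along a measurable process $U^\infty$, I would invoke that in the Meyer--Zheng topology convergence $\psi_{U^n}\to\psi_{U^\infty}$ is convergence of $U^n$ to $U^\infty$ in Lebesgue measure (\cite{MZ84}, Lemma~1), forcing the limiting kernel to be single-valued, equivalently finiteness of the conditional variation of the limit rules out a genuinely spread-out kernel.

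The final step is to identify the quasi-martingale structure of the limit: I would equip the limiting coordinate process with the canonical, right-continuous, Lebesgue-augmented filtration $(\CF(t))_{t\ge0}$, and prove that $U\mapsto V(U)$ is lower semicontinuous along pseudopath convergence. Since $V(U)=\sup_\tau\sum_i\E\big[\big|\E[U(t_{i+1})-U(t_i)\mid\CF(t_i)]\big|\big]$ is a supremum over finite partitions $\tau$ of functionals that become continuous in the Meyer--Zheng topology once the evaluations $U(t_i)$ are replaced by time-averages $\eps^{-1}\!\int_{t_i}^{t_i+\eps}U(s)\,\d s$ and the conditional expectations by continuous surrogates adapted to this filtration, each term is lower semicontinuous, hence so is the supremum; therefore $V(U)\le\liminf_k V_{n_k}(U)<\infty$ under $P$, and $U$ is a quasi-martingale under $P$. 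I expect the regularity of the limit to be the main obstacle: one must simultaneously show that the limiting occupation measure disintegrates with a \emph{deterministic} kernel and that the resulting process is adapted and a quasi-martingale for a suitable filtration, the subtlety being that the Meyer--Zheng topology retains only occupation-measure information, so the filtration of the limit is not canonically transported. The remedy, as in \cite{MZ84}, is to work with the canonical right-continuous augmented filtration and to establish the lower semicontinuity of the conditional-variation functional precisely for this choice via the time-averaging approximation; verifying this semicontinuity and the Dirac-disintegration statement in detail is the real work, whereas the compactness and maximal-inequality steps are routine.
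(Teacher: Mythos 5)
The paper does not actually prove this lemma: it is quoted as a black box from Meyer--Zheng (the paper just appends ``(See \cite[Theorem 7]{MZ84} for the identification of the limiting semi-martingale.)''). So what you are being asked to reproduce is essentially Theorems~4 and~7 of \cite{MZ84}, and your outline is indeed the shape of that argument: compactness of $\CP([0,T]\times\bar\R)$ gives subsequential convergence for free, the uniform bound on the conditional variation yields a maximal inequality via the Rao decomposition $U=M-A$, and lower semicontinuity of the conditional variation functional is used to identify the limit as a quasi-martingale. The step where you write $V(U)\le\liminf_k V_{n_k}(U)<\infty$ via time-averaged evaluations is exactly the engine of Meyer and Zheng's proof.

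However, the way you close the ``Dirac-disintegration'' step is circular as written. You say you would \emph{invoke} that pseudopath convergence $\psi_{U^n}\to\psi_{U^\infty}$ is convergence in Lebesgue measure — but that characterisation presupposes the limit is already of the form $\psi_{U^\infty}$ for some measurable path $U^\infty$, which is precisely what you need to establish (a general weak limit of occupation measures in $\CP([0,T]\times\bar\R)$ has a possibly non-Dirac kernel $\kappa_t$). You cannot invoke Lemma~1 of \cite{MZ84} before you have shown the limit lies in $\Psi$. The correct route, which you gesture at but do not connect, is to use the lower semicontinuity of $V$ itself: the sublevel set $\{V\le a\}$, viewed inside the compact space of probability measures, is closed because $V$ is an increasing supremum of functionals that (after the time-averaging regularisation) are continuous in the pseudopath topology, and a non-Dirac kernel can be shown to have infinite conditional variation; hence a weak limit of laws in $\{V_n\le a\}$ is again carried by $\Psi\cap\{V\le a\}$. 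In other words, the single lower-semicontinuity argument does both jobs — excluding spread-out kernels and giving the quasi-martingale bound — and the ``convergence in Lebesgue measure'' statement is then a consequence, not a tool. With that correction your sketch matches the Meyer--Zheng proof; the remaining pieces you flag as ``real work'' (making the time-averaging surrogate precise and handling adaptedness for the limit filtration) are indeed the places where the technical effort goes.
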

		
		\noindent
		(See \cite[Theorem 7]{MZ84} for the identification of the limiting semi-martingale.) 
		
		\subsection{Proof of key lemmas}
		\label{apb3}
		
		\paragraph{$\bullet$ Proof of Lemma~\ref{lem91}.}
		
		\begin{proof}
			Fix $\delta>0$. Then
			\begin{equation}
				\label{hal}
				\begin{aligned}
					&\lim_{n\to\infty}\P\left[d_P(\psi_{Z_n},\psi_Z)>\delta\right]\\
					&=\lim_{n\to\infty}\P\left[\inf_{\mu\in\CM(\psi_{Z_n},\psi_Z)}\inf\{\epsilon>0\colon\, \mu(\{(x,y)\colon\, 
					d(x,y)\geq\epsilon\})\leq \epsilon\}>\delta\right]\\			
					&=\lim_{n\to\infty}\P\left[\,\forall\mu\in\CM(\psi_{Z_n},\psi_Z),\,\inf\{\epsilon>0\colon\, \mu(\{(x,y)\colon\, 
					d(x,y)\geq\epsilon\})\leq \epsilon\}>\delta\right]\\
					&=\lim_{n\to\infty}\P\left[\,\forall\mu\in\CM(\psi_{Z_n},\psi_Z),\, \mu(\{(x,y)\colon\, 
					d(x,y)\geq\delta\})>\delta\right].
				\end{aligned}
			\end{equation}
			Let $\mu_n\in\CP(([0,\infty]\times E)^2)$ be the measure defined by
			\begin{equation}
				\mu_n(A)=\int_0^\infty 1_A\left((t,Z_n(t)),(t,Z(t))\right)\,\e^{-t}\d t, \qquad A\in\CB(([0,\infty]\times E)^2),
			\end{equation}
			such that, for $B\in\CB([0,\infty]\times E)$,
			\begin{equation}
				\mu_n(B\times S)=\int_0^\infty 1_B (t,Z_n(t)) 1_S((t,Z(t))\,\e^{-t}\d t=\psi_{Z^n}(B),
			\end{equation}
			and similarly $\mu_n(S\times B)=\psi_Z(B)$. Hence $\mu_n\in\CM(\psi_{Z_n},\psi_Z)$ for all $n\in\N$, and we obtain from \eqref{hal} that
			\begin{equation}	
				\begin{aligned}
					&\lim_{n\to\infty}\P\left[d_P(\psi_{Z_n},\psi_Z)>\delta\right]\\
					&\leq\lim_{n\to\infty}\P\left[\mu_n(\{(x,y): d(x,y)\geq\delta\})>\delta\right]\\
					&\leq\lim_{n\to\infty}\P\left[\int_0^\infty 1_{\{(x,y): d(x,y)\geq\delta\}}\left((t,Z_n(t)),(t,Z(t))\right)\e^{-t}\d t>\delta\right]\\
					&\leq\lim_{n\to\infty}\P\left[\int_0^\infty 1_{\{ d(Z_n(t),Z(t))\geq\delta\}}\,\e^{-t}\d t>\delta\right]\\
					&\leq\lim_{n\to\infty}\frac{1}{\delta}\E\left[\int_0^\infty d(Z_n(t),Z(t))\,\e^{-t}\d t\right]\\
					&=\lim_{n\to\infty}\frac{1}{\delta}\int_0^\infty\E\left[ d(Z_n(t),Z(t))\right]\,\e^{-t}\d t =0.
				\end{aligned}
			\end{equation}
		\end{proof}
		
		\paragraph{$\bullet$ Proof of Lemma~\ref{lem93}.}
		
		\begin{proof}
			We have to show that
			\begin{equation}
				\lim_{n\to \infty}\CL\left[\psi_{(X_n,Y_n)}\right]=\CL\left[\psi_{(X,c)}\right].
			\end{equation}
			Hence we must show that, for all $f\in\CC_b(\Psi)$,
			\begin{equation}
				\lim_{n\to \infty}\E[f(\psi_{(X_n,Y_n)})]=\E[f(\psi_{(X,c)})].
			\end{equation}
			We can write
			\begin{equation}
				\label{871}
				\begin{aligned}
					|\E[f(\psi_{(X_n,Y_n)})-f(\psi_{(X,c)})]|\leq |\E[f(\psi_{(X_n,Y_n)})-f(\psi_{(X_n,c)})]| +|\E[f(\psi_{(X_n,c)})-f(\psi_{(X,c)})]|.
				\end{aligned}
			\end{equation}
			Since $\lim_{n\to \infty}\E[d(Y_n(t),c)]=0$ implies $\lim_{n\to \infty}\E[d((X_n(t),Y_n(t)),(X_n(t),c))]=0$, it follows from Lemma~\ref{lem91} that, for all $\delta>0$,
			\begin{equation}
				\lim_{n\to \infty}\P\left[d_P\left(\psi_{(X_n,Y_n)},\psi_{(X_n,c)}\right)\right]=0.
			\end{equation}
			Hence, for all $f\in\CC_b(\Psi)$,
			\begin{equation}
				\lim_{n\to\infty}|\E[f(\psi_{(X_n,Y_n)})-f(\psi_{(X_n,c)})]|=0. 
			\end{equation}
			To see that the second term in the right-hand side of \eqref{871} tends to zero, note that we can define
			\begin{equation}
				\tilde{f}(\psi_{x})=f(\psi_{x,c}).
			\end{equation}
			We show that $\tilde{f}$ is continuous. 
			
			Recall that convergence in the Meyer-Zheng topology is simply convergence in Lebesgue measure. Hence, for two paths $(t,x_n(t))$ and $(t,x(t))\in M_E[0\infty)$ we have $\psi_{x_n}\to\psi_x$ if and only if, for all $\delta>0$,
			\begin{equation}
				\lim_{n\to\infty}\int_0^\infty1_{\{d(x_n(t),x(t))>\delta\}}\,\e^{-t}\d t=0.
			\end{equation}
			Therefore $\psi_{x_n}\to\psi_{x}$ implies that, for all $\delta>0$, 
			\begin{equation}
				\lim_{n\to\infty}\int_0^\infty1_{\{d((x_n(t),c),(x(t),c))>\delta\}}\,\e^{-t}\d t=0,
			\end{equation}
			and hence $\psi_{x_n,c}\to\psi_{x,c}$. Therefore
			\begin{equation}
				\lim_{n\to\infty}\tilde{f}(\psi_{x_n})=\lim_{n\to\infty}f(\psi_{(x_n,c)})=f(\psi_{(x,c)})=\tilde{f}(\psi_x)
			\end{equation}
			and we conclude that $f\in\CC_b(\Psi)$. Since $\CL[X_n]=\CL[X]$ in the Meyer-Zheng topology, we have, for all $f\in\CC_b(\Psi)$,
			\begin{equation}
				\begin{aligned}
					\lim_{n\to \infty}|\E[f(\psi_{(X_n,c)})-f(\psi_{(X,c)})]|=\lim_{n\to \infty}|\E[\tilde{f}(\psi_{(X_n)})-\tilde{f}(\psi_{(X)})]|=0.
				\end{aligned}
			\end{equation}
			Therefore also the second term on the right-hand side of \eqref{871} tends to $0$.
		\end{proof}
		
		\paragraph{$\bullet$ Proof of Lemma~\ref{lem94}.}
		
		\begin{proof}
			For part (a), suppose that $\lim_{n\to\infty}\psi_{x_n}=\psi_x$. Then, since convergence in pseudopath space is convergence in measure, we have, for all $\delta>0$,
			\begin{equation}
				\lim_{n\to\infty}\int_0^\infty1_{\{d(x_n(t),x(t))>\delta\}}\,\e^{-t}\d t=0.
			\end{equation}
			Since $f$ is a continuous function, this implies that, for all $\epsilon>0$,
			\begin{equation}
				\lim_{n\to\infty}\int_0^\infty1_{\{d(f(x_n(t)),f(x(t)))>\epsilon\}}\,\e^{-t}\d t=0.
			\end{equation}
			and we conclude that $\lim_{n\to \infty}\psi_{f(x_n)}=\psi_{f(x)}$. Hence $h$ is indeed continuous. 
			
			For part (b), recall that
			\begin{equation}
				\lim_{n\to \infty}\CL[X_n]=\CL[X]\text{ in the Meyer-Zheng topology}
			\end{equation}
			implies that, for all $g\in\CC_b(\Psi)$,
			\begin{equation}
				\lim_{n\to \infty}\E[g(\psi_{X_n})]=\E[g(\psi_X)].
			\end{equation}
			Since $h\colon\,\Psi\to\Psi$ is continuous, we have for all $g\in\CC_b(\Psi)$ that $g\circ h\in \CC_b(\Psi)$. Hence
			\begin{equation}
				\lim_{n\to \infty}\E[g(\psi_{f(X_n)}]=\lim_{n\to \infty}\E[g\circ h(\psi_{X_n})]=\E[g\circ h(\psi_X)]=\E[g (\psi_{f(X)})].
			\end{equation}
			We conclude that
			\begin{equation}
				\lim_{n\to \infty}\CL[f(X_n)]=\CL[f(X)]\text{ in the Meyer-Zheng topology.}
			\end{equation} 
		\end{proof}
		
		\paragraph{$\bullet$ Proof of Lemma~\ref{lem95}.}
		
		Supposethat  $\lim_{n\to \infty}\psi_{(x_n,y_n)}=\psi_{(x,y)}$. Since convergence of pseudopaths is convergence in Lebesgue measure, we have 
		\begin{equation}
			\lim_{n\to \infty}\int_0^\infty 1_{\{d[(x_n,y_n),(x,y)]>\delta\}}\e^{-t}\d t=0
		\end{equation}
		and, consequently, 
		\begin{equation}
			\lim_{n\to \infty}\int_0^\infty 1_{\{d[x_n,x]>\delta\}}\e^{-t}\d t=0.
		\end{equation}
		Therefore $\lim_{n\to \infty}\psi_{x_n}=\psi_x$. Suppose that $f\in\CC_b(\Psi(E))$, so $f$ is bounded continuous function on the space of pseudopaths on $[0,\infty]\times E$. Define the function $\tilde{f}$ on the space of pseudopaths on $[0,\infty]\times E^2$, i.e., $\tilde{f}$ is a function on $\Psi(E^2)$, by 
		\begin{equation}
			\label{b46}
			\tilde{f}(\psi_{(x,y)})=f(\psi_x).
		\end{equation}
		Then $\tilde{f}\in \CC_b(\Psi(E^2))$ and  
		\begin{equation}
			\lim_{n\to\infty}\tilde{f}(\psi_{(x_n,y_n)})=\lim_{n\to \infty}f(\psi_{x_n})=f(\psi_x)=\tilde{f}(\psi_{(x_n,y_n)}).
		\end{equation}
		Hence $\tilde{f}$ is indeed a continuous function on $\Psi(E^2)$. Moreover, since $f$ is bounded, it follows that $\tilde{f}$ is bounded and we conclude that $\tilde{f}\in\CC_b(\Psi(E^2))$.
		
		Therefore, if $X_n,Y_n$ are continuous-time stochastic processes on $E$ and
		\begin{equation}
			\lim_{n\to \infty}\CL\left[(X_n(s),Y_n(s))_{s>0}\right]=\CL\left[(X(s),Y(s))_{s>0}\right]\, \text{ in Meyer Zheng topology},
		\end{equation}
		then for all $f\in\CC_b(\Psi(E^2))$ we have 
		\begin{equation}
			\lim_{n\to \infty}\E[f(\psi_{(X_n,Y_n)})]=\E[f(\psi_{(X,Y)})].
		\end{equation}
		Since for each $f\in\CC_b(\Psi(E))$ we can construct a function $\tilde{f}\in \CC_b(\Psi(E^2))$ as in \eqref{b46}, we obtain for all $f\in\CC_b(\Psi(E))$ that
		\begin{equation}
			\begin{aligned}
				\lim_{n\to \infty}\E[f(\psi_{(X_n)})]=\lim_{n\to \infty}\E[\tilde{f}(\psi_{(X_n,Y_n)})]
				=\E[\tilde{f}(\psi_{(X,Y)})]=\E[\tilde{f}(\psi_{X})].
			\end{aligned}
		\end{equation} 
		We conclude that
		\begin{equation}
			\lim_{n\to \infty}\CL[(X_n(s))]=\CL[(X(s))_{s>0}]\,\text{ in Meyer-Zheng topology}
		\end{equation}
		and, similarly, 
		\begin{equation}
			\lim_{n\to \infty}\CL[(Y_n(s))]=\CL[(Y(s))_{s>0}]\,\text{ in Meyer-Zheng topology.}
		\end{equation}
		
		\bibliography{seedbank}
		\bibliographystyle{alpha}
		
		
	\end{document}